\documentclass{article}
\title{Functionary Models of Real Analysis}
\author{Matouš Vladimír Schnabel}
\usepackage{times}
\usepackage{etoolbox}
\usepackage{amsmath}
\usepackage{amsfonts}
\usepackage{amssymb}
\usepackage{amsthm}
\usepackage{fancyhdr}
\usepackage{cases}
\usepackage[a4paper, margin=1.25in]{geometry}
\usepackage{setspace}
\usepackage{nicefrac}
\usepackage[english]{babel}
\usepackage{thmtools}
\usepackage{enumitem}
\usepackage{tikz}
\usetikzlibrary{decorations.pathreplacing,arrows.meta,calc,positioning,fit,backgrounds}
\tikzset{
  settled/.style={draw=blue!58!black, fill=blue!12, line width=0.6pt},
  active/.style ={draw=orange!78!black, fill=orange!28, line width=0.6pt},
  baseaxis/.style={draw=black!55, line width=0.7pt},
  poslab/.style ={font=\footnotesize, text=black!70},
  wlab/.style   ={font=\scriptsize, text=black!50},
  read/.style   ={font=\small, text=black!85},
}
\newenvironment{subtheorems}
 {\enumerate[
     label=\Roman*.\protect\thissubtheorem,
     ref=\Roman*.,
     font=\normalfont\bfseries,
     leftmargin=0pt,itemindent=!,
     align=left,
  ]}
 {\endenumerate}
\newcommand{\thissubtheorem}{}
\newcommand{\subtheorem}[1][]{%
  \if\relax\detokenize{#1}\relax
    \def\thissubtheorem{}%
  \else
    \def\thissubtheorem{ (#1)}%
  \fi
  \item
}
\makeatletter
\newenvironment{proofidea}[1][]{%
  \par\addvspace{0.18cm}%
  \begin{list}{}{%
    \setlength{\leftmargin}{1.6em}\setlength{\rightmargin}{0.6em}%
    \setlength{\labelwidth}{0pt}\setlength{\labelsep}{0pt}%
    \setlength{\itemindent}{0pt}\setlength{\listparindent}{1em}%
    \setlength{\topsep}{0pt}\setlength{\parsep}{0.4ex}}%
  \item[]\small\itshape Idea of the proof\ifstrempty{#1}{}{ (#1)}.\upshape
  \hspace{0.35em}\ignorespaces
}{%
  \end{list}\addvspace{0.18cm}%
}
\makeatother

\declaretheoremstyle[
  spaceabove=0.8\baselineskip, spacebelow=0.6\baselineskip,
  headfont=\normalfont\bfseries,
  bodyfont={\normalfont\itshape\setlength{\parskip}{\stmtsep}},
  headpunct={.}, postheadspace={\newline},
  postheadhook={\hspace*{\parindent}}
]{fmtheorem}
 
\declaretheoremstyle[
  spaceabove=0.8\baselineskip, spacebelow=0.6\baselineskip,
  headfont=\normalfont\bfseries\scshape,
  bodyfont={\normalfont\itshape\setlength{\parskip}{\stmtsep}},
  headpunct={.}, postheadspace={\newline},
  postheadhook={\hspace*{\parindent}}
]{fmaxiom}
 
\declaretheoremstyle[
  spaceabove=0.7\baselineskip, spacebelow=0.55\baselineskip,
  headindent=1em,
  headfont=\normalfont\bfseries,
  bodyfont={\normalfont\itshape\setlength{\parskip}{\stmtsep}},
  headpunct={.}, postheadspace={\newline},
  postheadhook={\hspace*{\parindent}}
]{fmlemma}
 
\declaretheoremstyle[
  spaceabove=0.6\baselineskip, spacebelow=0.5\baselineskip,
  headindent=2em,
  headfont=\normalfont\bfseries\itshape,
  bodyfont={\normalfont\itshape\setlength{\parskip}{\stmtsep}},
  headpunct={.}, postheadspace={\newline},
  postheadhook={\hspace*{\parindent}}
]{fmcorollary}
 
\declaretheoremstyle[
  spaceabove=0.8\baselineskip, spacebelow=0.6\baselineskip,
  headfont=\normalfont\bfseries,
  bodyfont={\normalfont\upshape\setlength{\parskip}{\stmtsep}},
  headpunct={.}, postheadspace={\newline},
  postheadhook={\hspace*{\parindent}}
]{fmdefinition}
 
\declaretheoremstyle[
  spaceabove=0.6\baselineskip, spacebelow=0.5\baselineskip,
  headfont=\normalfont\itshape,
  bodyfont={\normalfont\upshape\setlength{\parskip}{\stmtsep}},
  headpunct={.}, postheadspace=0.5em
]{fmintuition}
 
\declaretheoremstyle[
  spaceabove=0.5\baselineskip, spacebelow=0.45\baselineskip,
  headindent=1em,
  headfont=\normalfont\scshape,
  bodyfont={\normalfont\upshape\setlength{\parskip}{\stmtsep}},
  headpunct={.}, postheadspace=0.5em
]{fmexample}
 
\declaretheoremstyle[
  spaceabove=0.5\baselineskip, spacebelow=0.45\baselineskip,
  headfont=\normalfont\itshape,
  bodyfont={\normalfont\upshape\small\setlength{\parskip}{\stmtsep}},
  headpunct={.}, postheadspace=0.5em
]{fmremark}

\makeatletter
\newcommand{\HelperDeclareTheorem}[5][plain]{%
  \declaretheorem[
    style=#1,
    numberlike=subsubsection,
    name=#4,
    refname={#2, #3},
    Refname={#4, #5},
    postheadhook={%
      \addcontentsline{toc}{subsubsection}{\protect\numberline{\thesubsubsection} #4 \ifdefempty{\thmt@optarg}{}{(\thmt@optarg)}}%
    }
  ]{#2}%
}
\makeatother
 
\HelperDeclareTheorem[fmtheorem]{theorem}{theorems}{Theorem}{Theorems}
\HelperDeclareTheorem[fmdefinition]{definition}{definitions}{Definition}{Definitions}
\HelperDeclareTheorem[fmaxiom]{axiom}{axioms}{Axiom}{Axioms}
 
\theoremstyle{fmlemma}     \newtheorem{lemma}{Lemma}[theorem]
\theoremstyle{fmcorollary} \newtheorem{corollary}{Corollary}[lemma]



\theoremstyle{fmintuition} \newtheorem*{intuition}{Intuition}
\theoremstyle{fmexample}   \newtheorem*{example}{Example}
\theoremstyle{fmremark}    \newtheorem*{remark}{Remark}

\makeatletter
\renewenvironment{proof}[1][\proofname]{\par
  \pushQED{\qed}%
  \normalfont \topsep6\p@\@plus6\p@\relax
  \setlength{\parskip}{\stmtsep}%
  \trivlist
  \item[\hskip\labelsep\normalfont\bfseries #1\@addpunct{.}]\ignorespaces
}{%
  \popQED\endtrivlist\@endpefalse
}
\makeatother

\tikzset{
  fnbox/.style={draw=black!55, fill=white, rounded corners=2pt, inner sep=3.5pt,
                font=\footnotesize, minimum height=6mm},
  prim/.style ={fnbox, draw=blue!58!black, fill=blue!9},
  sec/.style  ={fnbox, draw=orange!78!black, fill=orange!14},
  wbox/.style ={draw=black!45, dashed, rounded corners=4pt, inner sep=5pt},
  rbox/.style ={draw=black!70, rounded corners=6pt, inner sep=9pt, line width=0.7pt},
  lbl/.style  ={font=\scriptsize, text=black!65},
  hd/.style   ={font=\scriptsize\itshape, text=black!70},
}
\usepackage{longtable}
\usepackage{array}
\usepackage{booktabs}
\numberwithin{equation}{section}
\usepackage[hidelinks]{hyperref}
\usepackage{indentfirst}
\usepackage{indentfirst}
\newlength{\stmtsep}
\makeatletter
\let\FMoldcr\\                     

\DeclareRobustCommand{\FMparcr}{\@ifstar{\FMparcr@i}{\FMparcr@i}}
\newcommand{\FMparcr@i}[1][\z@]{%
  \par\ifdim\dimexpr#1\relax>\z@\vskip#1\relax\fi\ignorespaces}
\AtBeginDocument{\let\\\FMparcr}
\makeatother
\begin{document}
\onehalfspacing
\pagenumbering{roman}
\maketitle
\tableofcontents
\begin{abstract}
    A real number can have more than one name: $0.5$ and $0.4999\ldots$ denote the same thing, and which numbers enjoy such a doubling depends on the base one writes in. We make that dependence the object of study. Fixing a \emph{system function} $\vartheta$, which assigns a base $\vartheta(n)\geq2$ to every position independently, we build the real numbers as equivalence classes of digit functions $f:\omega\rightarrow\omega$, where the equivalence is generated by two local carrying moves, \emph{contraction} and \emph{broadening}, and tested by agreement on finite initial segments. Each class turns out to contain at most two canonical representatives (up to a sign representation), its \emph{primary} and \emph{secondary} auxiliary functions, so the doubling above is a theorem of the theory rather than a convention imposed on it. We define order, addition and multiplication and verify the axioms of a Dedekind-complete ordered field, so that by categoricity every choice of $\vartheta$ delivers $\mathbb{R}$ itself. The models are therefore indistinguishable as ordered fields and differ only in how their elements are named, and since $\vartheta$ ranges over an uncountable parameter space, that naming can be chosen to suit a problem. We show what this buys: in any base whose partial products absorb every denominator, a non-zero real number is rational precisely when it has a second name, and Cantor's 1869 irrationality criterion for Cantor series follows from the representation theory rather than from number theory.
\end{abstract}
\section{Introduction}
Every mathematician knows that $0.999\ldots$ and $1$ are the same real number, and most regard the fact as an awkwardness of decimal notation rather than as information. It is worth asking what the awkwardness depends on. In base ten the numbers with two names are exactly the rationals whose denominators divide a power of ten; in base two they are a different set; and one may ask what happens if the base is allowed to change from one position to the next. This paper is an attempt to answer that question properly, which requires first building a setting in which it can be asked.
 
There are two classical routes from the rationals to the reals: Dedekind's cuts and Cantor's Cauchy sequences \cite{Dedekind1901,Cantor1872}. Both are constructions \emph{about} the reals rather than constructions one computes \emph{in}, and neither has anything to say about naming: a cut is not an object one writes down digit by digit. A third route, from digit expansions, is the one every student first meets and the one that working mathematics silently uses, yet it is the least travelled in the foundational literature, for a well-known reason. Addition of infinite digit strings cannot be performed digit by digit from the left, the identification $0.4999\ldots=0.5$ has to be imposed by hand, and checking that arithmetic is well defined on the identified strings is disagreeable. The obstacle, in a word, is carrying.
 
The difficulty is real but not fatal, and it has been faced before. Faltin, Metropolis, Ross and Rota constructed the reals from digit strings with carrying, organising the carry structure algebraically as a wreath product \cite{FALTIN1976271}; de Bruijn gave a digit-based construction not routed through the rationals \cite{de1976defining}; and the Eudoxus reals build $\mathbb{R}$ from almost-homomorphisms of $\mathbb{Z}$, avoiding expansions altogether \cite{arthan2004eudoxus}. On the representation side, expansions of a continuum already in hand, rather than constructions of one, writing numbers over an arbitrary sequence of bases goes back to Cantor's second publication \cite{Cantor1869} and has grown into the literature on Cantor series \cite{galambos1976representations,oppenheim1954criteria,hancl2004irrationality}.
 
Our resolution of the carrying problem is to stop treating carrying as an obstacle inside an operation and to treat it instead as the generating move of an equivalence. Fix a system function $\vartheta:\omega_+\rightarrow\left[2,\infty\right]_\omega$, assigning to each position its own base. A candidate real number is any function $f:\omega\rightarrow\omega$, the parity of $f(0)$ recording the sign and $f(n)$ the digit at position $n$; crucially, digits are \emph{not} required to be reduced below their base. Two local moves act on such functions: a contraction $c_n$ performs one carry at position $n$, and a broadening $b_n$ undoes one. Two functions name the same number when each can be carried into agreement with a common form on every finite initial segment. Three facts make this workable where naive digit arithmetic is not. The moves cancel in pairs, so the identification is generated rather than imposed. Carrying transports along domination, which is the engine behind every proof that an operation is well defined. And every class has a normal form consisting of at most two auxiliary functions (up to a sign representation), the primary and the secondary, which are the terminating and $(\vartheta-1)$-tailed expansions of the same number. The identification $0.5=0.4999\ldots$ is thereby not an axiom of the construction but a theorem about it.
 
Because nothing in the definitions mentions a particular $\vartheta$, the construction is uniform in the base, and the system functions form a set of cardinality $2^{\aleph_0}$. What varies across that family is precisely the thing we set out to study. At a constant base $b$ the numbers with two names are the $b$-adically terminating rationals (\ref{b-adic}); when every integer divides some partial product they are exactly the rationals (\ref{Irrational Models Theorem}); intermediate choices of $\vartheta$ realise intermediate sets. Prior digit constructions fix the base \cite{FALTIN1976271,de1976defining} and so cannot exhibit this; constructions parametrised by an infinite sequence do exist \cite{shiu1974,pintilie1988}, but their parameter is a summation scheme and they have no notion of a second representation for the variation to act on.
 
That this freedom costs nothing mathematically is the content of a classical fact. Each of our models satisfies the axioms of a Dedekind-complete ordered field, and any two such fields are isomorphic by a unique isomorphism; so every model is canonically $\mathbb{R}$, and a statement about real numbers proved in one holds in all. Moving between models therefore changes nothing about the numbers and everything about their names, which is exactly the licence we want: one may choose the naming scheme to suit the problem without incurring any obligation to check that the answer survives.
 
Section \ref{sec:cantor} puts the licence to work. In any model whose system function has Cantor's divisibility property, every positive integer divides some partial product, we show that a non-zero real number is rational \emph{if and only if} it possesses a secondary auxiliary function. Rationality, in such a base, simply is the property of having two names (or being $0$). Cantor's irrationality criterion for Cantor series \cite{Cantor1869} follows as a corollary, with the analytic content of the classical argument absorbed into the representation theory. The criterion is Cantor's, and its first strengthening, the removal of the divisibility hypothesis, is Oppenheim's \cite{oppenheim1954criteria}; what is ours is the route, and the route is the point. Whether the same instrument reaches into the post-Oppenheim regime, where the rationality of general Cantor series remains only partially understood \cite{galambos1976representations,hancl2004irrationality}, is a question we regard as the natural next one and do not answer here.
 
The paper proceeds as follows. Section \ref{sec:prelim} fixes notation and the two arithmetic facts we need. Section \ref{sec:construction} builds the classes and their normal forms and proves the combinatorial core; Section \ref{sec:structure} establishes the structure theory that makes the normal forms canonical. Sections \ref{sec:order} to \ref{sec:completeness} develop the order, the analytic tools, and completeness, and Sections \ref{sec:addition} and \ref{sec:mult} the two operations, together verifying the field axioms. Section \ref{sec:cantor} contains the rationality theorem and its Cantor corollary, Section \ref{sec:related} the relation to earlier work, and Section \ref{sec:conclusion} the directions we intend to pursue.
\section{Preliminaries}\label{sec:prelim}
This section fixes the notation we use and records the two pieces of arithmetic the later chapters lean on. Neither is deep: the first is a convention for intervals of ordinals, the second the elementary theory of rational floors, which we shall need only in Section \ref{sec:mult}, where multiplication has to decide how much of a partial product belongs at a given position. We collect it here rather than there so that the multiplication chapter can proceed without interruption, and a reader willing to take the floors on trust may pass over Subsection \ref{Methods} on a first reading.
\subsection{Set-Theoretic Notation}\label{Notation}
We use standard set-theoretic notation, which we review below so that readers unfamiliar with the subject can still follow the paper. We also introduce some notation of our own. A summary of the notation introduced here and throughout the paper is collected in Appendix~\ref{app:notation}.
\subparagraph{$\omega$:} The set of all finite ordinals. Readers unfamiliar with set theory might think about $\omega$ as the set of natural numbers (including zero).
\subparagraph{$\omega_+$:} This will be a set which we are going to use very often and it will denote $\omega-\left\{0\right\}$ (or $\omega-1$) where the ``$-$" is a relative complement and not an arithmetic minus.
\subparagraph{$\left[a,b\right]_{\left(A,\leq\right)}$:} Let $A$ be a set where $a,b\in A$ with a total ordering $\leq$. Then $\left[a,b\right]_{\left(A,\leq\right)}:=\left\{c\in A\middle|a\leq c\wedge c\leq b\right\}$. For particular sets we are going to drop the ``$\leq$" part, for example for $\omega$ we have a reflexive total ordering $\leq$ defined as $a\leq b\iff a\in b\text{ or }a=b$, we shall write only $\left[a,b\right]_\omega$.\\
Another special case for this notation which we will use is $\left[a,\infty\right]_\omega:=\omega-a$. For those unfamiliar with ordinals, that is the same set as $\left\{b\in\omega\middle|a\leq b\right\}$, where the $\leq$ is the same as the one described above.
\subparagraph{${}^AB$:=}$\left\{f\middle|f:A\rightarrow B\right\}$ is the set of all functions from a set $A$ to a set $B$.
\subparagraph{$-$} This shall denote two distinct operations in two settings. First it should denote the relative complement in the setting of general sets. However, in the setting of finite ordinals this denotes the subtraction operation. We define such operation as $\forall\alpha,\beta,\gamma\in\omega,\left(\alpha\geq\beta\right)\implies\left(\alpha-\beta=\gamma\iff\alpha=\gamma+\beta\right)$. Notice that this is a well-defined operation as addition on finite ordinals is commutative and associative. As one of the conditions is $\alpha\geq\beta$ we shall always make sure that that is the case when we use subtraction.
\subparagraph{Quantifiers:} In this work we will be using many quantifiers. Even though it will go against the convention in logic we opt to use commas after quantifiers to make the script more readable. Anyone dissatisfied with this may erase as many commas as they wish.
\subparagraph{$\mathcal{N}$:=}${}^\omega\omega$. The convention is that this set is denoted as $N$. However, as we will be using that symbol to represent many different finite ordinals throughout the work, we introduce this new notation to distinguish the two.\\\\
We will also use simplified notation for functions $f:\omega\rightarrow\omega$ in examples which will be $f=\left(a;b,c,d,\dots\right)$ to mean
\begin{align*}
    f\left(n\right)=\begin{cases}
        a&n=0\\
        b&n=1\\
        c&n=2\\
        d&otherwise
    \end{cases}
\end{align*}
The position $0$ is singled out due to it acting in a different manner. In our definitions, statements, proofs and calculations we shall use the formal way of writing the function to be explicit. This convention is adopted for the simplicity of the examples.
\subsection{Rational Floors}\label{Methods}
We shall be using a few methods in the final part of this paper without citing them. This is the list of them.
\paragraph{Permutation:} of two bound-dependent sums
\begin{align*}
\sum\limits_{M=1}^K\left(\sum\limits_{n=1}^M\left(\sum\limits_{k=1}^M\left(\dots\right)\right)\right)=\sum\limits_{n=1}^K\left(\sum\limits_{k=1}^K\left(\sum\limits_{M=\max\left(\left\{n,k\right\}\right)}^K\left(\dots\right)\right)\right)
\end{align*}
\paragraph{Rational Floor Function:} is the natural number reached by
\begin{align*}
\max\left(\left\{t\in\omega\middle|ta\leq b\right\}\right)\indent\text{where $a,b\in\omega$.}
\end{align*}
We have two methods that we will be using in regards to this set.
\subparagraph{Inequalities:}
\begin{align*}
\max\left(\left\{t\in\omega\middle|ta\leq bc\right\}\right)-c\leq c\max\left(\left\{t\in\omega\middle|ta\leq b\right\}\right)\leq\max\left(\left\{t\in\omega\middle|ta\leq bc\right\}\right)
\end{align*}
and paired with sums
\begin{align*}
&\max\left(\left\{t\in\omega\middle|ta\leq\sum\limits_{n=1}^K\left(b_nc_n\right)\right\}\right)-\sum\limits_{n=1}^K\left(c_n\right)\\
\leq&\sum\limits_{n=1}^K\left(c_n\max\left(\left\{t\in\omega\middle|ta\leq b_n\right\}\right)\right)\\
\leq&\max\left(\left\{t\in\omega\middle|ta\leq\sum\limits_{n=1}^K\left(b_nc_n\right)\right\}\right)
\end{align*}
Notice that in both inequalities, we need to ensure the existence of the first part, as we have not defined negative integers.
\subparagraph{Composition:}
\begin{align*}
\max\left(\left\{t\in\omega\middle|ta\leq\max\left(\left\{s\in\omega\middle|sb\leq c\right\}\right)\right\}\right)=\max\left(\left\{t\in\omega\middle|tab\leq c\right\}\right)
\end{align*}
\newpage
\pagenumbering{arabic}
\section{The Construction}\label{sec:construction}
We wish to construct a system of models of the real analysis. In this section we shall construct the set of this model. In the next section we shall construct the relations and functions on this set.
\subsection{System Functions, Contractions and Broadenings}
Firstly, we will define a few operations and results on the space $\mathcal{N}:={}^\omega\omega=\left\{f\middle|f:\omega\rightarrow\omega\right\}$. These definitions and theorems will be the foundation on which we will construct the set of real numbers.
\begin{definition}[System Function]\label{System Function}\noindent\\
Consider any $\vartheta:\omega_+\rightarrow\left[2,\infty\right]_\omega$. For any such function chosen we shall define a construction (model) of the reals using this function. For any system function $\vartheta$ chosen we wish to define any real number $x$ as the set of all functions $f:\omega\rightarrow\omega$ such that
\begin{align*}
x=\pm\sum\limits_{n=1}^\infty\frac{f\left(n\right)}{\prod\limits_{k=1}^{n-1}\left(\vartheta\left(k\right)\right)}
\end{align*}
\end{definition}

\begin{definition}[Contraction]\label{Contraction}\noindent\\
If $\exists n\in\omega_+\text{ such that }f\left(n+1\right)\geq\vartheta\left(n\right)$ we call the function $c_n\left(f\right):\omega\rightarrow\omega$ defined as
\begin{align*}
c_n\left(f\right)\left(k\right):=\begin{cases}f\left(n\right)+1&\text{if $k=n$}\\f\left(n+1\right)-\vartheta\left(n\right)&\text{if $k=n+1$}\\f\left(k\right)&\text{otherwise}\end{cases}
\end{align*}
 for any $k\in\omega$ the contraction of $f$ about $n$.
\end{definition}

\begin{definition}[Broadening]\label{Broadening}\noindent\\
If $\exists n\in\omega_+\text{ such that }f\left(n\right)\geq1$ we call the function $b_n\left(f\right):\omega\rightarrow\omega$ defined as
\begin{align*}
b_n\left(f\right)\left(k\right):=\begin{cases}f\left(n\right)-1&\text{if $k=n$}\\f\left(n+1\right)+\vartheta\left(n\right)&\text{if $k=n+1$}\\f\left(k\right)&\text{otherwise}\end{cases}
\end{align*} 
 for any $k\in\omega$ the broadening of $f$ about $n$.
\end{definition}

\begin{intuition}[Contractions and Broadenings] We want to express any real number by many different interpretations within a chosen system. For example $1$ can be described as one integer or as two halves, i.e. $1+\nicefrac{0}{2}$ or $0+\nicefrac{2}{2}$. These definitions describe doing ``one step forward" and ``one step backward". Notice that neither contractions nor broadenings affect the value at $0$. This is because the value at $0$ should give us the sign, whereas contractions and broadenings tell us how to manipulate the magnitude.
\end{intuition}

\begin{example} Take the standard binary base, so that $\vartheta$ is the constant function $\vartheta(n)=2$, and in this system take the constant function $f=(1;1,1,1,\dots)$.\\
There is no $m\in\omega_+$ with $f(m+1)\geq\vartheta(m)=2$, since $f$ is constantly $1$, so $f$ is not contractable about any $m$. Broadening, however, only requires $f(m)\geq1$, which holds everywhere, so $f$ may be broadened about any $m\in\omega_+$. Given such an $m$ we have $b_m(f)=g$ where $g(m)=0$, $g(m+1)=f(m+1)+\vartheta(m)=3$, and $g(n)=1$ otherwise; broadening about $2$, for instance, gives
\begin{align*}
b_2(f)=(1;1,0,3,1,1,\dots).
\end{align*}
The two operations are therefore not symmetric in their availability: a function may always be spread out further, but it can only be gathered up when some position has accumulated enough to give.
\end{example}

\begin{example} Take a base where the $n$th place is $\nicefrac{1}{n}$ of the previous place, that is $\vartheta(n)=n+1$; the weights are then $1,\nicefrac{1}{2},\nicefrac{1}{6},\nicefrac{1}{24},\dots$, and position $n$ carries weight $\nicefrac{1}{n!}$. In this system take $f=(0;1,2,3,4,\dots)$, so that $f(n)=n$.\\
Here every position is exactly full: for any $m\in\omega_+$ we have $f(m+1)=m+1=\vartheta(m)$, so $f$ is contractable about every $m$, and $c_m(f)=g$ where $g(m)=m+1$, $g(m+1)=0$ and $g(n)=n$ otherwise. Contracting about $2$ gives
\begin{align*}
c_2(f)=(0;1,3,0,4,5,\dots).
\end{align*}
Equally, $f(m)=m\geq1$ for every $m\in\omega_+$, so $f$ may also be broadened anywhere, and $b_m(f)=h$ where $h(m)=m-1$, $h(m+1)=(m+1)+\vartheta(m)=2m+2$ and $h(n)=n$ otherwise; about $2$ this gives
\begin{align*}
b_2(f)=(0;1,1,6,4,5,\dots).
\end{align*}
Figure~\ref{fig:contraction} shows the same move as a redistribution of units: the $\vartheta(2)=3$ units at position~3 bundle into one unit at position~2, and the value is unchanged.
\end{example}

\begin{figure}[htbp]
  \centering
\begin{tikzpicture}[x=0.95cm, y=0.46cm]
 
  \begin{scope}
    \draw[baseaxis] (0.45,0) -- (3.55,0);
    \filldraw[settled] (0.7,0) rectangle (1.3,1);
    \foreach \k in {0,1,2}{\filldraw[active] (2.7,\k) rectangle (3.3,\k+1);}
    \draw[decorate,decoration={brace,amplitude=5pt,mirror},black!60]
        (3.42,0) -- (3.42,3)
        node[midway,right=5pt,poslab,align=left]{$\vartheta(2)=3$\\units};
    \foreach \p/\w in {1/{1},2/{\frac12},3/{\frac16}}{
        \node[poslab] at (\p,-0.85){$\p$};
        \node[wlab]   at (\p,-1.75){$w=\w$};}
    \node[read] at (2.0,4.15){$f=(\,\cdot\,,\,1,\,0,\,3\,,\dots)$};
  \end{scope}
 
  \draw[-{Stealth[length=2.6mm]},line width=1pt,blue!55!black]
      (3.95,2.55) -- (5.05,2.55) node[midway,above,read]{$c_2$};
  \draw[-{Stealth[length=2.6mm]},line width=1pt,orange!75!black]
      (5.05,0.5) -- (3.95,0.5) node[midway,below,read]{$b_2$};
 
  \begin{scope}[shift={(5,0)}]
    \draw[baseaxis] (0.45,0) -- (3.55,0);
    \filldraw[settled] (0.7,0) rectangle (1.3,1);      
    \filldraw[active]  (1.7,0) rectangle (2.3,1);      
    \foreach \p/\w in {1/{1},2/{\frac12},3/{\frac16}}{
        \node[poslab] at (\p,-0.85){$\p$};
        \node[wlab]   at (\p,-1.75){$w=\w$};}
    \node[read] at (2.0,4.15){$f=(\,\cdot\,,\,1,\,1,\,0\,,\dots)$};
  \end{scope}
 
  \node[align=center] at (4.75,-3.1)
    {value $\;=\;1\!\cdot\!1+0\!\cdot\!\tfrac12+3\!\cdot\!\tfrac16
      \;=\;1\!\cdot\!1+1\!\cdot\!\tfrac12+0\!\cdot\!\tfrac16\;=\;\tfrac32$\\
     \qquad(one unit at position $2\;=\;\vartheta(2)$ units at position $3$:\ $\tfrac12=3\cdot\tfrac16$)};
\end{tikzpicture}
  \caption{A contraction $c_2$---a single carry---in the flowing base
  $\vartheta(1)=2,\ \vartheta(2)=3$. Three units at position~3 (each of weight
  $\tfrac16$) bundle into one unit at position~2 (weight $\tfrac12$); the
  represented value $\tfrac32$ is unchanged. The broadening $b_2$ is the
  same move run backwards, so $c_2$ and $b_2$ are mutually inverse
  (\nameref{Cancellation Theorems} \ref{Cancellation Theorems}). The leading ``$\cdot$'' is the sign slot $f(0)$.}
  \label{fig:contraction}
\end{figure}
\subsection{The Calculus of Moves}
\begin{theorem}[Cancellation Theorems]\label{Cancellation Theorems}\noindent\\
\textbf{Part 1.} Suppose that for a function $f:\omega\rightarrow\omega$ a contraction about some $n$ in $\omega_+$ is applicable to $f$. Then a broadening about the same $n$ is applicable to $c_n\left(f\right)$ and furthermore $b_n\left(c_n\left(f\right)\right)=f$.\\
\textbf{Part 2.} Suppose that for a function $f:\omega\rightarrow\omega$ a broadening about some $n$ in $\omega_+$ is applicable to $f$. Then a contraction about the same $n$ is applicable to $b_n\left(f\right)$ and furthermore $c_n\left(b_n\left(f\right)\right)=f$.
\end{theorem}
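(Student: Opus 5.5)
This is a direct verification from Definitions \ref{Contraction} and \ref{Broadening}. The only subtlety is the truncated subtraction on $\omega$, which is defined only when the minuend is at least the subtrahend; the applicability hypotheses are exactly what make each subtraction legitimate. Throughout, I compare functions pointwise on $\omega$ and split into the three cases $k=n$, $k=n+1$, and all other $k$ (including $k=0$, which neither move touches).

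For Part 1, assume $f(n+1)\geq\vartheta(n)$ and write $g=c_n(f)$.

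\emph{Applicability of $b_n$.} We have $g(n)=f(n)+1\geq1$, so $b_n$ applies to $g$.

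\emph{Computing $b_n(g)$.}
\begin{itemize}
\item At $k=n$: $b_n(g)(n)=(f(n)+1)-1=f(n)$, by the defining property of subtraction on finite ordinals.
\item At $k=n+1$: $b_n(g)(n+1)=(f(n+1)-\vartheta(n))+\vartheta(n)$. Since $f(n+1)\geq\vartheta(n)$, the subtraction $f(n+1)-\vartheta(n)=\gamma$ is defined and satisfies $f(n+1)=\gamma+\vartheta(n)$. Hence the value is $f(n+1)$.
\item Elsewhere: both moves leave the value $f(k)$ unchanged.
\end{itemize}
So $b_n(c_n(f))=f$ pointwise.

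For Part 2, assume $f(n)\geq1$ and write $h=b_n(f)$.

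\emph{Applicability of $c_n$.} We have $h(n+1)=f(n+1)+\vartheta(n)\geq\vartheta(n)$, so $c_n$ applies to $h$.

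\emph{Computing $c_n(h)$.}
\begin{itemize}
\item At $k=n$: $c_n(h)(n)=(f(n)-1)+1=f(n)$. This uses $f(n)\geq1$, which guarantees that $f(n)-1$ is defined and that $(f(n)-1)+1=f(n)$.
\item At $k=n+1$: $c_n(h)(n+1)=(f(n+1)+\vartheta(n))-\vartheta(n)=f(n+1)$, directly from the definition of subtraction with $\gamma=f(n+1)$.
\item Elsewhere: the values are untouched.
\end{itemize}
So $c_n(b_n(f))=f$.

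No step is a real obstacle. The only point requiring care is justifying each use of subtraction: the cancellation identities $(a-b)+b=a$ and $(a+b)-b=a$ hold on $\omega$ precisely because of the convention fixed in Subsection \ref{Notation} together with the hypotheses $f(n+1)\geq\vartheta(n)$ and $f(n)\geq1$. I will cite that convention explicitly at each such step, using commutativity of ordinal addition on $\omega$ where needed.
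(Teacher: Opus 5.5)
Your proof is correct and follows the paper's argument. The paper likewise establishes applicability through $c_n(f)(n)=f(n)+1\geq1$ and $b_n(f)(n+1)=f(n+1)+\vartheta(n)\geq\vartheta(n)$, then verifies each identity pointwise in the cases $k=n$, $k=n+1$ and all other $k$; your explicit justification of each subtraction on $\omega$ only spells out what the paper leaves implicit.
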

\begin{proof}[Proof of part 1.] Suppose a contraction about $n\in\omega_+$ is applicable to a function $f:\omega\rightarrow\omega$, then $c_n\left(f\right)\left(n\right)=f\left(n\right)+1\geq1$, thus the condition to apply broadening to $c_n\left(f\right)$ is met. Then for any $k\in\omega$:
\begin{alignat*}{2}
\hspace{-0.25cm}
b_n\left(c_n\left(f\right)\right)\left(k\right)=
\begin{cases}
c_n\left(f\right)\left(n\right)-1=f\left(n\right)+1-1&=f\left(n\right)\\
c_n\left(f\right)\left(n+1\right)+\vartheta\left(n\right)=f\left(n+1\right)-\vartheta\left(n\right)+\vartheta\left(n\right)\hspace{-0.25cm}&=f\left(n+1\right)\\
c_n\left(f\right)\left(k\right)&=f\left(k\right)
\end{cases}
\def\arraystretch{1.2}\begin{array}{@{}l}
\text{if $k=n$}\\
\text{if $k=n+1$}\\
\text{otherwise}
\end{array}
\end{alignat*}
Therefore, for all $k\in\omega$ we have $f\left(k\right)=b_n\left(c_n\left(f\right)\right)\left(k\right)$ and thus $b_n\left(c_n\left(f\right)\right)=f$.
\end{proof}
\begin{proof}[Proof of part 2.] Suppose a broadening about $n\in\omega_+$ is applicable to our function $f:\omega\rightarrow\omega$, then $b_n\left(f\right)\left(n+1\right)=f\left(n+1\right)+\vartheta\left(n\right)\geq\vartheta\left(n\right)$, thus the condition to apply contraction to $b_n\left(f\right)$ is met. Then for any $k\in\omega$:
\begin{alignat*}{2}
\hspace{-0.25cm}
c_n\left(b_n\left(f\right)\right)\left(k\right)=
\begin{cases}
b_n\left(f\right)\left(n\right)+1=f\left(n\right)-1+1&=f\left(n\right)\\
b_n\left(f\right)\left(n+1\right)-\vartheta\left(n\right)=f\left(n+1\right)+\vartheta\left(n\right)-\vartheta\left(n\right)\hspace{-0.25cm}&=f\left(n+1\right)\\
b_n\left(f\right)\left(k\right)&=f\left(k\right)
\end{cases}
\def\arraystretch{1.2}\begin{array}{@{}l}
\text{if $k=n$}\\
\text{if $k=n+1$}\\
\text{otherwise}
\end{array}
\end{alignat*}
Therefore, for all $k\in\omega$ we have $f\left(k\right)=c_n\left(b_n\left(f\right)\right)\left(k\right)$ and thus $c_n\left(b_n\left(f\right)\right)=f$.
\end{proof}

\begin{example} Take again the standard binary base, $\vartheta(n)=2$, and the function $f=(0;0,2,0,\dots)$, whose value is $2\cdot\nicefrac{1}{2}=1$.\\
Since $f(2)=2\geq\vartheta(1)$, the function $f$ is contractable about $1$, and $c_1(f)=g$ where $g=(0;1,0,0,\dots)$. This $g$ may be broadened about $1$, and doing so returns $b_1(g)=(0;0,2,0,\dots)=f$, as the first Cancellation Theorem requires.\\
Running the pair the other way, $f$ may be broadened about $2$, giving $b_2(f)=(0;0,1,2,0,\dots)$, and that function is contractable about $2$, with $c_2(b_2(f))=(0;0,2,0,\dots)=f$ again, as the second Cancellation Theorem requires. All three functions
\begin{align*}
(0;1,0,0,\dots),\qquad(0;0,2,0,\dots),\qquad(0;0,1,2,0,\dots)
\end{align*}
have value $1$, which is the first hint of what is to come: a single real number will be described by many functions, and the moves that pass between them are exactly the contractions and broadenings.
\end{example}

\begin{definition}[Sequences of Contractions and Broadenings]\label{Sequences of Contractions and Broadenings}\noindent\\
We define the set of ``CB-functions" as 
\begin{align*}
\mathcal{CB}:=\left\{\mathcal{D}\middle|\mathcal{D}:\omega\rightarrow\left\{c_n\middle|n\in\omega_+\right\}\cup\left\{b_n\middle|n\in\omega_+\right\}\right\}
\end{align*}
in words this is a set of all functions which assign to a value in $\omega$ a particular contraction or broadening about a particular value. For any $\mathcal{D}\in\mathcal{CB}$ and any $r\in\omega,q\in\omega_+$ we call $\mathcal{D}_{k=q}^r$ a sequence of contractions and broadenings where $\mathcal{D}_{k=q}^r:=\mathcal{D}\left(r\right)\circ\mathcal{D}\left(r-1\right)\circ\dots\circ\mathcal{D}\left(q\right)$. We define $\mathcal{D}_{k=q}^r\left(f\right)=f$ for any $f:\omega\rightarrow\omega$ and any $r\in\omega,q\in\omega_+$ where $r<q$ and we say that this sequence of contractions is applicable to any $f:\omega\rightarrow\omega$.\\
We say that $\mathcal{D}_{k=q}^r$ is applicable to some $f:\omega\rightarrow\omega$ if for any $k\in\left[q,r\right]_\omega$ we have $\mathcal{D}\left(k\right)$ being applicable to $\mathcal{D}_{j=q}^{k-1}\left(f\right)$. Let $\mathcal{FS}$ be the set of all sequences of contractions and broadenings.\\
Furthermore, define the ``conditional of $D_{k=q}^r$" as
\begin{align*}
\text{con}\left({D_{k=q}^r}\right):=\left\{n+1\in\omega_+\middle|\exists k\in\left[q,r\right]_\omega,\mathcal{D}\left(k\right)=c_n\right\}\cup\left\{n\in\omega_+\middle|\exists k\in\left[q,r\right]_\omega,\mathcal{D}\left(k\right)=b_n\right\}
\end{align*}
A special case of sequences of contractions and broadenings are $\mathcal{C}_{k=q}^r$ and $\mathcal{B}_{k=q}^r$ which are sequences on CB-functions of only contractions and only broadenings respectively.
\end{definition}
\begin{remark} Notice that all sequences of contractions and broadenings (by our construction) are finite.
\end{remark}

\begin{intuition}[CB-functions] These are functions which assign to each value $n\in\omega$ a contraction or broadening about some $m\in\omega$. We do not allow infinitely many contractions or broadenings to be applied to a function, since allowing infinitely many moves would collapse the construction, identifying every real number with every other.
\end{intuition}

\begin{intuition}[Sequence of contractions and broadenings] These are transformations on functions which are equivalent to a composition of contractions and broadenings. We intuitively take a contraction and broadening as a shifting the distribution of the modulus of a real number. Therefore, a sequence of contractions and broadenings does not change the modulus of the number described by the function, just changes the distribution of it. The set of all sequences of contractions and broadenings is denoted as $\mathcal{FS}$ which stands for ``finite sequences".
\end{intuition}
\begin{lemma}[Conditional Lemma]\label{Conditional Lemma}\noindent\\
For any function $f:\omega\rightarrow\omega$ and any sequence of contractions $\mathcal{C}_{k=q}^r\in\mathcal{FS}$ applicable to $f$ we have that for all $n\in\omega$ such that $n>\max\text{con}\left(\mathcal{C}_{k=q}^r\right)$ we have $\mathcal{C}_{k=q}^r\left(f\right)\left(n\right)=f\left(n\right)$.
\end{lemma}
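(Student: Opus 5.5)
Since a contraction $c_m$ modifies only positions $m$ and $m+1$, and the conditional records $m+1$ for each contraction $c_m$ occurring in $\mathcal{C}_{k=q}^r$, every position touched by the sequence is at most $\max\text{con}\left(\mathcal{C}_{k=q}^r\right)$. The plan is therefore an induction on the length of the sequence, showing that positions above the maximum of the conditional are never altered. If $r<q$ the sequence is empty, $\mathcal{C}_{k=q}^r\left(f\right)=f$ by definition, and there is nothing to prove. The conditional is then empty, so its maximum is undefined; I would read the statement as vacuous or trivially true here, since the conclusion holds for every $n$ anyway.

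For $r\geq q$, I would fix $n>M:=\max\text{con}\left(\mathcal{C}_{k=q}^r\right)$ and prove by induction on $s\in\left[q-1,r\right]_\omega$ that $\mathcal{C}_{k=q}^s\left(f\right)\left(n\right)=f\left(n\right)$. The base case $s=q-1$ is the empty sequence. For the step, write $\mathcal{D}\left(s+1\right)=c_m$. Applicability of $\mathcal{C}_{k=q}^r$ to $f$ gives that $c_m$ is applicable to $g:=\mathcal{C}_{k=q}^{s}\left(f\right)$, so $c_m\left(g\right)$ is defined by \nameref{Contraction}. That definition changes only the values at $m$ and $m+1$. Since $s+1\in\left[q,r\right]_\omega$, we have $m+1\in\text{con}\left(\mathcal{C}_{k=q}^r\right)$, hence $m<m+1\leq M<n$. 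So $n\notin\left\{m,m+1\right\}$, which gives $c_m\left(g\right)\left(n\right)=g\left(n\right)=f\left(n\right)$ by the induction hypothesis. Taking $s=r$ yields the lemma.

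The only point needing care is bookkeeping rather than mathematics. The conditional of the whole sequence must dominate the conditional of each prefix, which is immediate because the index set $\left[q,s\right]_\omega\subseteq\left[q,r\right]_\omega$. The position-$0$ case needs no separate treatment: $0$ is never greater than $M$, since $M\geq1$ when the conditional is nonempty.
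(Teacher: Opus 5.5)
Your proof is correct and is essentially the paper's argument: an induction along the prefixes of the sequence, using that $c_m$ changes only positions $m$ and $m+1$, and that $m+1\in\text{con}\left(\mathcal{C}_{k=q}^r\right)$ forces both below $n$. The differences are cosmetic: you start the induction at the empty prefix $s=q-1$ rather than at $s=q$, you treat the empty sequence separately, and you write $\mathcal{D}(s+1)$ where $\mathcal{C}(s+1)$ is meant.
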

\begin{proof} We shall prove this by induction. Let such a function and sequence of contractions be given. We see that for all $n>\max\text{con}\left(\mathcal{C}_{k=q}^r\right)$ and thus not equal to $m$ and $m+1$ where $\mathcal{C}\left(q\right)=c_m$ we have 
\begin{align*}
\mathcal{C}\left(q\right)\left(f\right)\left(n\right)=f\left(n\right)
\end{align*}
Now suppose that for some $K\in\left[q,r-1\right]_\omega$ we have for all $n>\max\text{con}\left(\mathcal{C}_{k=q}^r\right)$ the result $\mathcal{C}_{k=q}^K\left(f\right)\left(n\right)=f\left(n\right)$. Then for all $n>\max\text{con}\left(\mathcal{C}_{k=q}^r\right)$ and thus not equal to $m$ and $m+1$ where $\mathcal{C}\left(K+1\right)=c_m$ we have
\begin{align*}
\mathcal{C}_{k=q}^{K+1}\left(f\right)\left(n\right)=\mathcal{C}\left(K+1\right)\left(\mathcal{C}_{k=q}^{K}\left(f\right)\right)\left(n\right)=\mathcal{C}_{k=q}^{K}\left(f\right)\left(n\right)=f\left(n\right)
\end{align*}
Therefore, by induction we have the desired result.
\end{proof}
\begin{theorem}[Submission Theorem]\label{Submission Theorem}\noindent\\
For any two functions $f,g:\omega\rightarrow\omega$ and any sequence of contractions and broadenings $\mathcal{D}_{k=q}^r\in\mathcal{FS}$ applicable to $f$, if there exist $S\subset\omega_+$ such that $\text{con}\left(\mathcal{D}_{k=q}^r\right)\subset S$ and $\forall n\in S,f\left(n\right)\leq g\left(n\right)$ then $\mathcal{D}_{k=q}^r$ is applicable to $g$ and furthermore for all $n\in\omega$ we have $\mathcal{D}_{k=q}^r\left(f\right)\left(n\right)+g\left(n\right)=f\left(n\right)+\mathcal{D}_{k=q}^r\left(g\right)\left(n\right)$.\footnote{Remark: All this is saying that the difference at each value pre- and post-application of $\mathcal{D}_{k=q}^r$ stays the same. Better visualised as $\mathcal{D}_{k=q}^r\left(f\right)\left(n\right)-\mathcal{D}_{k=q}^r\left(g\right)\left(n\right)=f\left(n\right)-g\left(n\right)$. However, we choose not to write it as such, as we do not know that for all values $n$ we have $f\left(n\right)\geq g\left(n\right).$}
\end{theorem}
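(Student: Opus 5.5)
The plan is to argue by induction on the length of the sequence, following the pattern of the \nameref{Conditional Lemma}. For $K\in\left[q-1,r\right]_\omega$ I will carry the following invariant: $\mathcal{D}_{k=q}^K$ is applicable to $g$, and for every $m\in\omega$
\begin{align*}
\mathcal{D}_{k=q}^K\left(f\right)\left(m\right)+g\left(m\right)=f\left(m\right)+\mathcal{D}_{k=q}^K\left(g\right)\left(m\right).
\end{align*}
The case $K=q-1$ is the empty sequence. It is applicable to everything by definition, and both sides of the identity read $f\left(m\right)+g\left(m\right)$. This base case also disposes of the degenerate case $r<q$.

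The first consequence of the invariant is an order-preservation fact on $S$. For $m\in S$ we have $f\left(m\right)\leq g\left(m\right)$, so we may write $g\left(m\right)=f\left(m\right)+d$ with $d\in\omega$. The additive identity then gives $\mathcal{D}_{k=q}^K\left(g\right)\left(m\right)=\mathcal{D}_{k=q}^K\left(f\right)\left(m\right)+d$ by cancellation in $\omega$, and hence
\begin{align*}
\mathcal{D}_{k=q}^K\left(f\right)\left(m\right)\leq\mathcal{D}_{k=q}^K\left(g\right)\left(m\right)\quad\text{for all }m\in S.
\end{align*}
I keep everything additive here on purpose. Subtraction in $\omega$ is only defined when the minuend is at least the subtrahend, and that is exactly the concern raised in the footnote.

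For the inductive step, assume the invariant at some $K<r$. Suppose first that $\mathcal{D}\left(K+1\right)=c_n$. Then $n+1\in\text{con}\left(\mathcal{D}_{k=q}^r\right)\subset S$. Applicability to $f$ gives $\mathcal{D}_{k=q}^K\left(f\right)\left(n+1\right)\geq\vartheta\left(n\right)$. By the order-preservation fact, the same bound holds for $\mathcal{D}_{k=q}^K\left(g\right)\left(n+1\right)$, so $c_n$ is applicable at stage $K$ for $g$. Now suppose $\mathcal{D}\left(K+1\right)=b_n$. Then $n\in S$, and the same reasoning gives $\mathcal{D}_{k=q}^K\left(g\right)\left(n\right)\geq1$.

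It remains to propagate the identity through the move. The move changes only positions $n$ and $n+1$, and it changes both functions by the same amount: $+1$ and $-\vartheta\left(n\right)$ for $c_n$, or $-1$ and $+\vartheta\left(n\right)$ for $b_n$. At all other positions the identity is inherited unchanged.

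At positions $n$ and $n+1$ I expect the main, though modest, obstacle: the bookkeeping with truncated subtraction. The fix is to add the subtracted quantity to both sides of the new identity. For instance, at $n+1$ under $c_n$, both sides of the new identity become the corresponding sides of the old identity once $\vartheta\left(n\right)$ is added to each. Cancellation in $\omega$ then yields the new identity, and each subtraction is legitimate precisely because of the applicability just established.

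Taking $K=r$ gives both conclusions of the \nameref{Submission Theorem}.
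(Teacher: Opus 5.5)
Your proposal is correct and follows the paper's proof essentially step for step. Both argue by induction on the length of the sequence, with the empty sequence as the base case. Both split the inductive step into a contraction case, using $n+1\in\text{con}\left(\mathcal{D}_{k=q}^r\right)\subset S$, and a broadening case, using $n\in S$, and both verify the identity at positions $n$, $n+1$ and elsewhere. Your additive derivation of $\mathcal{D}_{k=q}^K\left(f\right)\left(m\right)\leq\mathcal{D}_{k=q}^K\left(g\right)\left(m\right)$ on $S$ is a slightly cleaner version of the paper's footnote, which writes the difference $\mathcal{D}_{k=q}^r\left(g\right)\left(\alpha+1\right)-\mathcal{D}_{k=q}^r\left(f\right)\left(\alpha+1\right)$ before knowing it is defined in $\omega$.
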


\begin{intuition}[Submission Theorem] If we have two functions, where one is larger than the other at every point on the interval on which we apply a sequence of contractions and broadenings to the other, then this sequence is applicable to the larger function. Therefore, we say that if the conditions are satisfied by a smaller function, then they will also be satisfied by the larger function as this function ``has more material to play with".\\
The second part of the theorem tells us that the differences at every point of the two functions after applying the sequence to both of them is the same as the difference before.\\
From now on we shall say that
\begin{align*}
\begin{array}{ll@{}}
\text{$f$ dominates $g$ on $S$}&\text{if for all $n\in S$ we have $f\left(n\right)\geq g\left(n\right)$}\\
\text{$f$ dominates $g$}&\text{if for all $n\in\omega_+$ we have $f\left(n\right)\geq g\left(n\right)$}\\ 
\text{$f$ strictly dominates $g$}&\text{if $f$ dominates $g$ and there exists an $M\in\omega_+$}\\&\text{such that we have $f\left(M\right)>g\left(M\right)$}
\end{array}
\end{align*}

\end{intuition}
\begin{proof} We shall do a proof by induction. Let any $q\in\omega_+$ be given. For any $r\in\left[0,q-1\right]_\omega$ we have $\text{con}\left(\mathcal{D}_{k=q}^r\right)=\emptyset\subset S$. By definition $\mathcal{D}_{k=q}^r$ is applicable to $g$ and furthermore $\mathcal{D}_{k=q}^r\left(f\right)=f$ and $\mathcal{D}_{k=q}^r\left(g\right)=g$ and thus $\forall n\in\omega,\mathcal{D}_{k=q}^r\left(f\right)\left(n\right)+g\left(n\right)=f\left(n\right)+g\left(n\right)=f\left(n\right)+\mathcal{D}_{k=q}^r\left(g\right)\left(n\right)$.
\\
Now suppose that for some $r\in\left[q-1,\infty\right]_\omega$ all the sequences of contractions and broadenings $\mathcal{D}_{k=q}^r\in\mathcal{FS}$ which are applicable to $f$ such that $\text{con}\left(\mathcal{D}_{k=q}^r\right)\subset S$ are also applicable to $g$ and for all $n\in\omega$ we have $\mathcal{D}_{k=q}^r\left(f\right)\left(n\right)+g\left(n\right)=f\left(n\right)+\mathcal{D}_{k=q}^r\left(g\right)\left(n\right)$.
\subparagraph{Case 1. $\mathcal{D}\left(r+1\right)=c_\alpha$:} We suppose that $\alpha+1\in\text{con}\left(\mathcal{D}_{k=q}^{r+1}\right)\subset S$ and $c_{\alpha}$ is applicable to $\mathcal{D}_{k=q}^r\left(f\right)$ which means that $\mathcal{D}_{k=q}^r\left(f\right)\left(\alpha+1\right)\geq\vartheta\left(\alpha\right)$. As $\alpha+1\in S$ we know that $\mathcal{D}_{k=q}^r\left(g\right)\left(\alpha+1\right)\geq\mathcal{D}_{k=q}^r\left(f\right)\left(\alpha+1\right)\geq\vartheta\left(\alpha\right)$\footnote{We know this, as $\mathcal{D}_{k=q}^r\left(g\right)\left(\alpha+1\right)-\mathcal{D}_{k=q}^r\left(f\right)\left(\alpha+1\right)=g\left(\alpha+1\right)-f\left(\alpha+1\right)\geq0$ which implies that $\mathcal{D}_{k=q}^r\left(g\right)\left(\alpha+1\right)\geq\mathcal{D}_{k=q}^r\left(f\right)\left(\alpha+1\right)$.} and thus $c_{\alpha}$ is applicable to $\mathcal{D}_{k=q}^r\left(g\right)$.\\
We see that:
\begin{align*}
\begin{cases}
\mathcal{D}_{k=q}^{r+1}\left(f\right)\left(\alpha\right)+g\left(\alpha\right)&=\mathcal{D}_{k=q}^r\left(f\right)\left(\alpha\right)+1+g\left(\alpha\right)\\
&=f\left(\alpha\right)+\mathcal{D}_{k=q}^r\left(g\right)\left(\alpha\right)+1\\
&=f\left(\alpha\right)+\mathcal{D}_{k=q}^{r+1}\left(g\right)\left(\alpha\right)\\
\mathcal{D}_{k=q}^{r+1}\left(f\right)\left(\alpha+1\right)+g\left(\alpha+1\right)&=\mathcal{D}_{k=q}^r\left(f\right)\left(\alpha+1\right)-\vartheta\left(\alpha\right)+g\left(\alpha+1\right)\\
&=f\left(\alpha+1\right)+\mathcal{D}_{k=q}^r\left(g\right)\left(\alpha+1\right)-\vartheta\left(\alpha\right)\\
&=f\left(\alpha+1\right)+\mathcal{D}_{k=q}^{r+1}\left(g\right)\left(\alpha+1\right)\\
\mathcal{D}_{k=q}^{r+1}\left(f\right)\left(n\right)+g\left(n\right)&=\mathcal{D}_{k=q}^r\left(f\right)\left(n\right)+g\left(n\right)\\
&=f\left(n\right)+\mathcal{D}_{k=q}^r\left(g\right)\left(n\right)\\&=f\left(n\right)+\mathcal{D}_{k=q}^{r+1}\left(g\right)\left(n\right)
\end{cases}
\def\arraystretch{1.2}\begin{array}{@{}l}
\hspace{1cm}\text{if $n=\alpha$}\\\\\\
\hspace{1cm}\text{if $n=\alpha+1$}\\\\\\
\hspace{1cm}\text{otherwise}
\end{array}
\end{align*}
Therefore for all $n\in\omega$ we have $\mathcal{D}_{k=q}^{r+1}\left(f\right)\left(n\right)+g\left(n\right)=f\left(n\right)+\mathcal{D}_{k=q}^{r+1}\left(g\right)\left(n\right)$.
\subparagraph{Case 2. $\mathcal{D}\left(r+1\right)=b_\alpha$:} We suppose that $\alpha\in\text{con}\left(\mathcal{D}_{k=q}^{r+1}\right)\subset S$ and $b_{\alpha}$ is applicable to $\mathcal{D}_{k=q}^r\left(f\right)$ which means that $\mathcal{D}_{k=q}^r\left(f\right)\left(\alpha\right)\geq1$. As $\alpha\in S$ we know that $\mathcal{D}_{k=q}^r\left(g\right)\left(\alpha\right)\geq\mathcal{D}_{k=q}^r\left(f\right)\left(\alpha\right)\geq1$ and thus $b_{\alpha}$ is applicable to $\mathcal{D}_{k=q}^r\left(g\right)$.\\
We see that:
\begin{align*}
\begin{cases}
\mathcal{D}_{k=q}^{r+1}\left(f\right)\left(\alpha\right)+g\left(\alpha\right)&=\mathcal{D}_{k=q}^r\left(f\right)\left(\alpha\right)-1+g\left(\alpha\right)\\
&=f\left(\alpha\right)+\mathcal{D}_{k=q}^r\left(g\right)\left(\alpha\right)-1\\
&=f\left(\alpha\right)+\mathcal{D}_{k=q}^{r+1}\left(g\right)\left(\alpha\right)\\
\mathcal{D}_{k=q}^{r+1}\left(f\right)\left(\alpha+1\right)+g\left(\alpha+1\right)&=\mathcal{D}_{k=q}^r\left(f\right)\left(\alpha+1\right)+\vartheta\left(\alpha\right)+g\left(\alpha+1\right)\\
&=f\left(\alpha+1\right)+\mathcal{D}_{k=q}^r\left(g\right)\left(\alpha+1\right)+\vartheta\left(\alpha\right)\\
&=f\left(\alpha+1\right)+\mathcal{D}_{k=q}^{r+1}\left(g\right)\left(\alpha+1\right)\\
\mathcal{D}_{k=q}^{r+1}\left(f\right)\left(n\right)+g\left(n\right)&=\mathcal{D}_{k=q}^r\left(f\right)\left(n\right)+g\left(n\right)\\
&=f\left(n\right)+\mathcal{D}_{k=q}^r\left(g\right)\left(n\right)\\
&=f\left(n\right)+\mathcal{D}_{k=q}^{r+1}\left(g\right)\left(n\right)
\end{cases}
\def\arraystretch{1.2}\begin{array}{@{}l}
\hspace{1cm}\text{if $n=\alpha$}\\\\\\
\hspace{1cm}\text{if $n=\alpha+1$}\\\\\\
\hspace{1cm}\text{otherwise}
\end{array}
\end{align*}
Therefore for all $n\in\omega$ we have $\mathcal{D}_{k=q}^{r+1}\left(f\right)\left(n\right)+g\left(n\right)=f\left(n\right)+\mathcal{D}_{k=q}^{r+1}\left(g\right)\left(n\right)$. Hence we have proven the theorem by cases and induction.
\end{proof}

\begin{theorem}[Consequences Theorem]\label{Consequences Theorem}\noindent\\
Let $f:\omega\rightarrow\omega$ and a sequence of contractions $\mathcal{C}_{k=q}^r$ where $r\geq q$ applicable to $f$ be given. Then for $t:=\min\left(\left\{n\in\omega_+\middle|\exists k\in\left[q,r\right]_\omega,\mathcal{C}\left(k\right)=c_n\right\}\right)$ we have $\mathcal{C}_{k=q}^r\left(f\right)\left(t\right)>f\left(t\right)$ and $\forall n\in\left[0,t-1\right]_\omega,\mathcal{C}_{k=q}^r\left(f\right)\left(n\right)=f\left(n\right)$.
\end{theorem}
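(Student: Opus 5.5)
The plan is to track, along the sequence, which positions can change at all. Each contraction $c_m$ alters only positions $m$ and $m+1$: it raises position $m$ by one and lowers position $m+1$ by $\vartheta(m)$. By the definition of $t$, every contraction $\mathcal{C}(k)$ with $k\in\left[q,r\right]_\omega$ is some $c_m$ with $m\geq t$. Hence positions $n\leq t-1$ are never among $\{m,m+1\}$ for any step. Position $t$, on the other hand, can only appear as the "$m$" of a step, and never as the "$m+1$", since that would need $m=t-1<t$. So position $t$ is only ever increased.

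I would formalise this by induction on $K\in\left[q-1,r\right]_\omega$, mirroring the proof of the \nameref{Conditional Lemma}. The inductive claim $P(K)$ is that
\begin{align*}
\forall n\in\left[0,t-1\right]_\omega,\ \mathcal{C}_{k=q}^K\left(f\right)\left(n\right)=f\left(n\right)\quad\text{and}\quad\mathcal{C}_{k=q}^K\left(f\right)\left(t\right)=f\left(t\right)+\left|\left\{k\in\left[q,K\right]_\omega\middle|\mathcal{C}\left(k\right)=c_t\right\}\right|.
\end{align*}
The base case $K=q-1$ is the empty sequence, which is the identity by definition. For the inductive step, write $\mathcal{C}(K+1)=c_m$ with $m\geq t$; applicability of the sequence guarantees that $c_m$ is applicable to $\mathcal{C}_{k=q}^K(f)$, so the step is defined. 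Positions $n<t\leq m$ are untouched. If $m=t$, position $t$ gains exactly one. If $m>t$, then $t\notin\{m,m+1\}$ and position $t$ is unchanged. This gives $P(K+1)$.

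To conclude, apply $P(r)$. By the definition of $t$ as a minimum over a nonempty set (nonempty because $r\geq q$), some $k\in\left[q,r\right]_\omega$ has $\mathcal{C}(k)=c_t$. The counting term is therefore at least one, giving $\mathcal{C}_{k=q}^r(f)(t)>f(t)$. The other clause of $P(r)$ gives agreement on $\left[0,t-1\right]_\omega$. This includes position $0$, which contractions never touch since $m\in\omega_+$.

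I expect no real obstacle. The only care needed is choosing the inductive invariant to count the occurrences of $c_t$ rather than just asserting ``$\geq$''. A bare inequality would not, at the final step, distinguish a strict increase from no change.
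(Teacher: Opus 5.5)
Your proof is correct and takes essentially the paper's approach: induction along the sequence, using only that $c_m$ changes nothing except positions $m$ and $m+1$, and raises position $m$ by exactly one. The only difference is the inductive invariant. The paper recomputes the minimum $t_1$ for each prefix, splits on whether the new contraction $c_\alpha$ has $\alpha>t_1$ or $\alpha\leq t_1$, and carries the strict inequality itself through the induction. You instead fix the final $t$ in advance and carry an exact count of the occurrences of $c_t$, which removes that case split.
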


\begin{intuition}[Consequences Theorem] If we contract a function multiple times, then the resulting function will have a higher output for the smallest value about which we contracted. Allegorically, if you push material to be concentrated more at the start, there will be more material where you stop pushing.
\end{intuition}

\begin{proof} We shall prove this by induction. Let $q\in\omega_+$ be given. Take $r=q$. Then $\mathcal{C}\left(q\right)=c_\alpha$ for some $\alpha\in\omega_+$ and thus $t=\alpha$. By definition, we know that $c_t\left(f\right)\left(t\right)=f\left(t\right)+1$ and therefore $\mathcal{C}_{k=q}^r\left(f\right)\left(t\right)=\mathcal{C}_{k=q}^q\left(f\right)\left(t\right)=c_t\left(f\right)\left(t\right)>f\left(t\right)$. Moreover, as for all $n\neq t\text{ or }t+1$ we have $\mathcal{C}_{k=q}^r\left(f\right)\left(n\right)=f\left(n\right)$ we must have $\forall n\in\left[0,t-1\right]_\omega,\mathcal{C}_{k=q}^r\left(f\right)\left(n\right)=f\left(n\right)$.
\\
Now suppose that for some $r\geq q$ and any sequence of contractions $\mathcal{C}_{k=q}^r$ applicable to $f$ we have for $t_1:=\min\left(\left\{n\in\omega_+\middle|\exists k\in\left[q,r\right]_\omega,\mathcal{C}\left(k\right)=c_n\right\}\right)$ the result $\mathcal{C}_{k=q}^r\left(f\right)\left(t_1\right)>f\left(t_1\right)$ and $\forall n\in\left[0,t_1-1\right]_\omega,\mathcal{C}_{k=q}^r\left(f\right)\left(n\right)=f\left(n\right)$. Denote $\mathcal{C}\left(r+1\right)=c_\alpha$ and we suppose that it is applicable to $\mathcal{C}_{k=q}^r\left(f\right)$. There are two cases:
\subparagraph{Case 1. $\alpha>t_1$:} We have $t_2:=\min\left(\left\{n\in\omega_+\middle|\exists k\in\left[q,r+1\right]_\omega,\mathcal{C}\left(k\right)=c_n\right\}\right)=t_1<\alpha$.\\
As for all $n\neq\alpha\text{ or }\alpha+1$ we have $\mathcal{C}_{k=q}^{r+1}\left(f\right)\left(n\right)=\mathcal{C}_{k=q}^r\left(f\right)\left(n\right)$ we must have
\begin{align*}
\forall n\in\left[0,t_2-1\right]_\omega=\left[0,t_1-1\right]_\omega\subset\left[0,\alpha-1\right]_\omega,\mathcal{C}_{k=q}^{r+1}\left(f\right)\left(n\right)=\mathcal{C}_{k=q}^r\left(f\right)\left(n\right)=f\left(n\right)
\end{align*}
and $\mathcal{C}_{k=q}^{r+1}\left(f\right)\left(t_2\right)=\mathcal{C}_{k=q}^{r+1}\left(f\right)\left(t_1\right)=\mathcal{C}_{k=q}^r\left(f\right)\left(t_1\right)>f\left(t_1\right)=f\left(t_2\right)$.
\subparagraph{Case 2. $\alpha\leq t_1$:} We see that $t_2:=\min\left(\left\{n\in\omega_+\middle|\exists k\in\left[q,r+1\right]_\omega,\mathcal{C}\left(k\right)=c_n\right\}\right)=\alpha$. By definition for all $n\neq\alpha\text{ or }\alpha+1$ we have $\mathcal{C}_{k=q}^{r+1}\left(f\right)\left(n\right)=\mathcal{C}_{k=q}^r\left(f\right)\left(n\right)$ and thus 
\begin{align*}
\forall n\in\left[0,t_2-1\right]_\omega=\left[0,\alpha-1\right]_\omega,\mathcal{C}_{k=q}^{r+1}\left(f\right)\left(n\right)=\mathcal{C}_{k=q}^r\left(f\right)\left(n\right)=f\left(n\right)
\end{align*}
 and $\mathcal{C}_{k=q}^{r+1}\left(f\right)\left(t_2\right)=\mathcal{C}_{k=q}^r\left(f\right)\left(t_2\right)+1>\mathcal{C}_{k=q}^r\left(f\right)\left(t_2\right)\geq f\left(t_2\right)$.
\end{proof}

\subsection{Signs and Auxiliary Functions}
As we have gathered information about the functions in $\mathcal{N}$ we will now start shaping those functions into the set of real numbers. We will see more and more familiar ideas (at least intuitively) in this section than in the previous one.

\begin{definition}[Sign of a Function]\label{Sign of a Function}\noindent\\
We call the function $\sigma:\mathcal{N}\rightarrow\left\{0,1\right\}$ the sign of a function $f\in\mathcal{N}$ (remember $\mathcal{N}={}^\omega\omega$ and thus $f:\omega\rightarrow\omega$) defined as $\sigma\left(f\right)=0$ if and only if $f\left(0\right)\in\left\{2n+1\middle|n\in\omega\right\}$ and $\sigma\left(f\right)=1$ if and only if $f\left(0\right)\in\left\{2n\middle|n\in\omega\right\}$.
\end{definition}

\begin{intuition}[Sign of a function] We say that all the positive real numbers will be described by functions with an even output for $0$ and all the negative real numbers will be described by functions with an odd output for $0$.
\end{intuition}

\newpage
\begin{definition}[Primary Auxiliary Function]\label{Primary Auxiliary Function}\noindent\\
We call a function $h:\omega\rightarrow\omega$ a primary auxiliary function if and only if it satisfies the two conditions
\begin{align*}
\begin{array}{ll@{}}
\forall k\in\left[2,\infty\right]_\omega,h\left(k\right)<\vartheta\left(k-1\right)&\text{\indent and \indent}\\
\forall N\in\omega_+,\exists M\in\left[N+1,\infty\right]_\omega,h\left(M\right)\neq\vartheta\left(M-1\right)-1
\end{array}
\end{align*}
Denote the set of all primary auxiliary functions as $A_p$.
\end{definition}

\begin{definition}[Secondary Auxiliary Function]\label{Secondary Auxiliary Function}\noindent\\
We call a function $h:\omega\rightarrow\omega$ a secondary auxiliary function if and only if it satisfies the two conditions
\begin{align*}
\begin{array}{ll@{}}
\forall k\in\left[2,\infty\right]_\omega,h\left(k\right)<\vartheta\left(k-1\right)&\text{\indent and \indent}\\
\exists N\in\omega_+,\forall M\in\left[N+1,\infty\right]_\omega,h\left(M\right)=\vartheta\left(M-1\right)-1
\end{array}
\end{align*}
Denote the set of all secondary auxiliary functions as $A_s$. For any secondary auxiliary function $h$ denote 
\begin{align*}
N_{\min}^h:=\min\left(\left\{N\in\omega_+\middle|\forall M\in\left[N+1,\infty\right]_\omega,h\left(M\right)=\vartheta\left(M-1\right)-1\right\}\right)
\end{align*}
\end{definition}

\begin{remark}Notice that all auxiliary functions are not contractable about any $n\in\omega_+$ and any such function is an auxiliary function. This is by design, as it tells us that those functions are in their most compact form.
\end{remark}

\begin{intuition}[Auxiliary Functions] Not all functions would define a number in the way we want to construct the reals, but we know that auxiliary functions will.
\\
We have said that we wish to think about the number described by the function $f:\omega\rightarrow\omega$, in the system governed by some $\vartheta:\omega\rightarrow\omega$, as $\pm\sum^{\infty}_{n=1}\frac{f\left(n\right)}{\prod_{k=1}^{n-1}\vartheta\left(k\right)}$. It is easy to see that in any system not all functions would describe a real number. For example $f\left(n\right)=\prod_{k=1}^{n-1}\vartheta\left(k\right)$ clearly does not.\footnote{This shows that there does not exist a $\vartheta$ such that all the functions $\omega\rightarrow\omega$ would describe a real number.}
\\
We see that a function which has an output $f\left(n\right)\in\left[0,\vartheta\left(n-1\right)-1\right]_\omega$ for all values of $n$ greater or equal $2$ clearly describes a real number in our system (this could be seen by comparison theorem to a series of $2^{2-n}$). And naïvely we also see that any real number should be describable by such a function. Therefore we want to sort through functions describing a real number and those not describing a real number.
\end{intuition}

\begin{intuition}[Secondary Auxiliary Function and $N_{\min}^h$] The notion of a secondary auxiliary function is based on the thought that (in decimal base) $1$ and $0.9999...$ or $0.7$ and $0.6999...$ are the same numbers. In our system the first one would be the representation by a primary auxiliary function and the second one the representation by a secondary auxiliary function. $N_{\min}^h$ describes the minimal value after which the secondary auxiliary function is always equal to $\vartheta\left(M-1\right)-1$. Therefore, this is the value where the primary auxiliary function and the secondary auxiliary function of the same number start disagreeing.
\end{intuition}
\begin{example} Work in base ten, so $\vartheta(n)=10$ for every $n$, and recall that position $n$ carries weight $\nicefrac{1}{\prod_{j=1}^{n-1}(\vartheta(j))}$; position $1$ therefore carries weight $1$, position $2$ weight $\nicefrac{1}{10}$, and so on.\\
Consider $f:\omega\rightarrow\omega$ with $f(0)=0$, $f(2)=5$ and $f(n)=0$ otherwise, which we write as $(0;0,5,0,0,\dots)$. Its value is $5\cdot\nicefrac{1}{10}=\nicefrac{1}{2}$, and since $f(0)=0$ is even the number is positive. No digit reaches its base, so $f$ is a primary auxiliary function.\\
Now take $g=(0;0,4,9,9,9,\dots)$. Its value is $\nicefrac{4}{10}+\nicefrac{9}{100}+\nicefrac{9}{1000}+\cdots=\nicefrac{1}{2}$ as well, and from position $3$ onwards it is constantly $\vartheta(n-1)-1=9$, so it is a secondary auxiliary function with $N^{g}_{\min}=\Psi(f,g)=2$. These two functions are the two names of $\nicefrac{1}{2}$, and the familiar identity $0.5=0.4999\ldots$ is exactly the statement that they lie in the same real number.\\
Figure~\ref{fig:normalforms} shows the carry-wave that takes one name to the other, and why the two agree on every finite initial segment.
\end{example}
\begin{figure}[htbp]
  \centering
\begin{tikzpicture}[x=0.98cm, y=0.94cm,
  dcell/.style={rounded corners=2pt, minimum width=0.74cm, minimum height=0.6cm,
                inner sep=1pt, font=\small, line width=0.6pt},
  dset/.style ={dcell, draw=blue!58!black,  fill=blue!12,   text=blue!25!black},
  dact/.style ={dcell, draw=orange!78!black,fill=orange!28, text=orange!32!black},
  dnew/.style ={dcell, draw=black!28,       fill=black!6,   text=black!45},
  barr/.style ={-{Stealth[length=2.2mm]}, line width=0.7pt, orange!72!black},
]
 
  \node[wlab] at (-0.15,0.92){position:};
  \foreach \c in {1,2,3,4,5}{\node[poslab] at (\c,0.92){$\c$};}
  \node[poslab] at (5.95,0.92){$\cdots$};
 
  \foreach \c/\r/\v/\s in {%
    1/0/1/dset,  2/0/0/dnew, 3/0/0/dnew, 4/0/0/dnew, 5/0/0/dnew,
    1/1/0/dset,  2/1/10/dact,3/1/0/dnew, 4/1/0/dnew, 5/1/0/dnew,
    1/2/0/dset,  2/2/9/dset, 3/2/10/dact,4/2/0/dnew, 5/2/0/dnew,
    1/3/0/dset,  2/3/9/dset, 3/3/9/dset, 4/3/10/dact,5/3/0/dnew,
    1/4/0/dset,  2/4/9/dset, 3/4/9/dset, 4/4/9/dset, 5/4/10/dact}
    {\node[\s] at (\c,-\r){$\v$};}
 
  \foreach \r in {0,1,2,3,4}{\node[poslab] at (5.95,-\r){$\cdots$};}
 
  \foreach \c in {1,2,3,4,5}{\node[text=black!45,font=\small] at (\c,-4.78){$\vdots$};}
  \foreach \c/\v in {1/0,2/9,3/9,4/9,5/9}{\node[dset] at (\c,-5.55){$\v$};}
  \node[poslab] at (5.95,-5.55){$\cdots$};
 
  \foreach \r/\lab in {0/1,1/2,2/3,3/4}
    {\draw[barr] (0.2,-\r-0.30) -- (0.2,-\r-0.70)
        node[midway,left=1pt,wlab]{$b_{\lab}$};}
  \draw[barr,dashed] (0.2,-4.35) -- (0.2,-5.25)
        node[midway,left=1pt,wlab]{};
 
  \node[read,anchor=west] at (6.35,0)     {primary\ \ $a_p=(1,0,0,0,\dots)$};
  \node[read,anchor=west] at (6.35,-5.55) {secondary\ \ $a_s=(0,9,9,9,\dots)$};
 
  \node[wlab,anchor=west] at (6.35,-1)
     {\,$\vartheta=10$: an over-full digit (the carry-wave)};
  \draw[black!45,line width=0.5pt] (6.3,-1) -- (2.42,-1);
\end{tikzpicture}
 
\par\vspace{4pt}
\caption{The two normal forms of the real number $1$, in base $\vartheta\equiv 10$. Broadenings $b_1,b_2,\dots$ carry the primary $(1,0,0,\dots)$ toward the secondary $(0,9,9,\dots)$: an over-full digit $\vartheta=10$ (amber) travels rightward, locking the prefix behind it (blue) while positions ahead (grey) are still untouched. Once the wave passes position~$m$, the first $m$ digits never change again, so the two forms agree on every finite initial segment, which is exactly why they name the same real number. This is $1=0.999\ldots$ inside the model. The sign slot $f(0)$ is suppressed. This is only an illustration for intuition, as we do not allow infinitely many operations to be done (the reader might think what that would mean in the context where we broaden about every position as much as we can).}
  \label{fig:normalforms}
\end{figure}
\subsection{Base Set and a Real Number}
\begin{definition}[Base Set]\label{Base Set}\noindent\\
Define the set
\begin{align*}
W_1:=\left\{\left(h_p,h_s\right)\in A_p\times A_s\middle|
\begin{array}{ll@{}}
&\forall k\in\left[0,N_{\min}^{h_s}-1\right]_\omega,h_p\left(k\right)=h_s\left(k\right)\\
\wedge&h_p\left(N_{\min}^{h_s}\right)=h_s\left(N_{\min}^{h_s}\right)+1\\
\wedge&\forall k\in\left[N_{\min}^{h_s}+1,\infty\right]_\omega,h_p\left(k\right)=0
\end{array}
\right\}
\end{align*}
Define the equivalence relation for $\left(w_{1p},w_{1s}\right),\left(w_{2p},w_{2s}\right)\in W_1$ which groups together all pairs with the same sign as $\left(w_{1p},w_{1s}\right)R_1\left(w_{2p},w_{2s}\right)$ if and only if 
\begin{align*}
&\left(\forall k\in\omega_+,w_{1p}\left(k\right)=w_{2p}\left(k\right)\right)&\text{and}\\
&\left(\sigma\left(w_{1p}\right)=\sigma\left(w_{2p}\right)\right)
\end{align*} Let the set $G_1$ be the set of equivalence classes of $W_1$ for the relation $R_1$.\\
Now define $W_2:=\left\{h_p\in A_p\middle|h_p\not\in\bigcup\bigcup\left(W_1\right)\right\}$. Define the equivalence relation for $w_1,w_2\in W_2$ as $w_1R_2w_2$ if and only if 
\begin{align*}
&\left(\forall k\in\omega_+,w_1\left(k\right)=w_2\left(k\right)\right)&\text{and}\\
&\left(\sigma\left(w_{1}\right)=\sigma\left(w_{2}\right)\right)
\end{align*}
Let the set $G_2$ be the set of equivalence classes of $W_2$ for the relation $R_2$.\\
Define $F:=\left\{\bigcup\bigcup\left(b\right)\middle|b\in G_1\right\}$. We call the set $B:=F\cup G_2$ the base set.
\end{definition}

\begin{intuition}[Base Set] We wish to group together all the auxiliary functions describing the same real number in one set. We need to group together those with the same sign, but different output at $0$, and also the pairs of primary and secondary auxiliary functions. By this we mean pairing $1$ and $0.99999...$ and similar cases.
\end{intuition}

\begin{intuition}[Construction of the Base Set] $W_1$ corresponds to the set of pairs of auxiliary primary and auxiliary functions describing the same real number and having the same output at $0$. $R_1$ then takes all the pairs which describe the same real number, by having the same magnitude and sign but different output at $0$ and groups them together. This is how we get $G_1$.
\\
$W_2$ is the set of all the primary auxiliary functions which do not have a corresponding secondary auxiliary function. $R_2$ then does the equivalent to $R_1$. Therefore, $G_2$ is a set of sets of primary auxiliary functions describing the same real number.
\\
However, as $G_1$ is a set of pairs we have to define $F$ in such a way, as we want a set of sets containing the functions contained in equivalence classes of $W_1$, not the pairs. Thus we need to go down two layers of sets as $G_1$ is a set of sets containing ordered pairs of those functions. This way $B$ describes what we wanted it to describe. Figure~\ref{fig:baseset} carries out this assembly for $\nicefrac{1}{2}$ in base ten.
\end{intuition}

\begin{figure}[tbp]
  \centering
\begin{tikzpicture}[x=1cm,y=1cm]
 
 \foreach \i/\z in {0/0, 1/2, 2/4}{
   \node[prim] (p\i) at (\i*4.3, 0)   {$(\,\z;0,5,0,0,\dots)$};
   \node[sec]  (s\i) at (\i*4.3, -0.95){$(\,\z;0,4,9,9,\dots)$};
   \node[wbox, fit=(p\i)(s\i)] (w\i) {};
   \node[lbl, above=2.5pt of p\i] {$f(0)=\z$};
 }
 
 \node[hd, right=8pt of w2.east, align=left, text width=2.4cm]
      {paired by $W_1$: primary with its secondary};
 
 \begin{scope}[on background layer]
   \node[rbox, fit=(w0)(w1)(w2), fill=black!3] (R) {};
 \end{scope}
 \node[hd, below=3pt of R.south, align=center]
   {merged by $R_1$: same magnitude and sign, different value at $0$};
 
 \node[font=\small, above=17pt of R.north] (num)
   {$\left[\tfrac12\right]\;=\;$ this whole box $\;\in\mathcal{S}_\mathbb{R}$};
 \draw[-{Stealth[length=2.4mm]}, black!55] (R.north) -- (num.south);
 
 \node[lbl, below=20pt of R.south, align=left] (leg)
   {\textcolor{blue!58!black}{$\blacksquare$} primary auxiliary function
    \qquad
    \textcolor{orange!78!black}{$\blacksquare$} secondary auxiliary function};
\end{tikzpicture}
  \caption{Assembling the base set of a single real number,
shown for $\tfrac12$ in base ten. Each sign slot $f(0)\in\{0,2,4,\dots\}$
carries its own primary/secondary pair; $W_1$ ties each primary to its
secondary, and $R_1$ then merges the columns, which differ only in the value
recorded at position $0$. The real number is the union: one class holding
every legitimate way of writing $\tfrac12$. Negative numbers occupy the odd
sign slots, and the two zero classes are merged separately.}
  \label{fig:baseset}
\end{figure}

\begin{definition}[Real Number]\label{Real Number}\noindent\\
For any set $b$ in the base set $B$ we call the set
\begin{align*}
r_b:=\left\{f\in\mathcal{N}\middle|\exists h\in b,\forall N\in\omega_+,\exists\mathcal{C}_{k=q}^r\in\mathcal{FS},\forall n\in\left[0,N\right]_\omega,\mathcal{C}_{k=q}^r\left(f\right)\left(n\right)=h\left(n\right)\right\}
\end{align*}
the real number of $b$. We shall call $N$ and $\left[0,N\right]_\omega$ the number of inspection and the range of inspection respectively.
\end{definition}
\begin{lemma}[Equivalence Lemma]\label{Equivalence Lemma}\noindent\\
For all functions $f,g:\omega\rightarrow\omega$ such that for all values $n\in\omega_+$ we have $f\left(n\right)=g\left(n\right)$ and if there exists a value $m\in\omega$ such that $f(0)+g(0)=2m$ then if $f\in r_b$ then $g\in r_b$. Explicitly:
\begin{align*}
\hspace{-0.5cm}\begin{array}{c@{}}
\forall f,g\in\mathcal{N}\\
\left(\left(\forall n\in\omega_+,f\left(n\right)=g\left(n\right)\right)\wedge\left(\exists m\in\omega,f\left(0\right)+g\left(0\right)=2m\right)\right)\implies\left(\forall b\in B,\left(f\in r_b\implies g\in r_b\right)\right)
\end{array}
\end{align*}
\end{lemma}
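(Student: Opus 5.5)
Since $f\in r_b$, the definition of a real number (\ref{Real Number}) gives an $h\in b$ with the following property: for every number of inspection $N\in\omega_+$ there is a sequence of contractions $\mathcal{C}_{k=q}^r$, applicable to $f$, with $\mathcal{C}_{k=q}^r(f)(n)=h(n)$ for all $n\in\left[0,N\right]_\omega$. The plan is to show that the same sequence is applicable to $g$ and produces a function agreeing with some $h'\in b$ on the same range of inspection. The natural witness is the function $h'$ defined by $h'(0):=h(0)+g(0)-f(0)$ when $g(0)\geq f(0)$, by $h'(0):=h(0)-(f(0)-g(0))$ otherwise, and by $h'(n):=h(n)$ for $n\in\omega_+$.

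\textbf{Step 1: the same sequence works for $g$.} Since $f$ and $g$ coincide on $\omega_+$ and contractions only read and write positions in $\omega_+$, the sequence $\mathcal{C}_{k=q}^r$ is applicable to $g$. Formally, apply the \nameref{Submission Theorem} with $S=\omega_+$ in both directions, $f\le g$ and $g\le f$ on $S$. It also gives $\mathcal{C}_{k=q}^r(f)(n)+g(n)=f(n)+\mathcal{C}_{k=q}^r(g)(n)$ for all $n\in\omega$. For $n\in\omega_+$ this yields $\mathcal{C}_{k=q}^r(g)(n)=h(n)=h'(n)$. At $n=0$ no move changes the value, so $\mathcal{C}_{k=q}^r(g)(0)=g(0)$ and $\mathcal{C}_{k=q}^r(f)(0)=f(0)=h(0)$; here we use $N\geq 0$, so $0$ lies in every range of inspection. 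Hence $\mathcal{C}_{k=q}^r(g)(0)=g(0)=h'(0)$, and $h'(0)$ is a well-defined natural number: in the case $g(0)<f(0)$ it equals $h(0)-f(0)+g(0)=g(0)\ge 0$.

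\textbf{Step 2: $h'\in b$.} This step is the main obstacle, because it requires unwinding the \nameref{Base Set} definition. The hypothesis $f(0)+g(0)=2m$ says that $f(0)$ and $g(0)$ have the same parity, so $\sigma(h')=\sigma(h)$.

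If $b\in G_2$, then $h'\in A_p$, since the defining conditions only involve positions $k\ge 2$. Moreover $h'\notin\bigcup\bigcup(W_1)$: otherwise $h'$ would pair with some secondary $h_s'$, and changing $h_s'(0)$ to $h(0)$ would place $h$ in $\bigcup\bigcup(W_1)$ as well, since the pairing conditions on $W_1$ involve the same values once the position-$0$ entries are matched. So $h'\in W_2$, and $h' R_2 h$ gives $h'\in b$.

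If $b\in F$, then $h$ is either the primary or the secondary member of some pair $(h_p,h_s)\in W_1$. Replacing the value at $0$ in both members by $g(0)$ yields a pair $(h_p',h_s')$. Here $N_{\min}^{h_s'}=N_{\min}^{h_s}$ because it depends only on positions in $\omega_+$, and this pair again lies in $W_1$; the condition at $k=0$ holds because both entries equal $g(0)$ (note $N_{\min}\ge1$). Since $h_p' R_1 h_p$ by equal parity, both new pairs lie in the same class, so $h'\in b$.

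Taking $h'$ as the witness then gives $g\in r_b$.
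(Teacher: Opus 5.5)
Your proof is correct and follows the paper's argument. You use the same witness $h'$: since $h(0)=f(0)$, it is just $h$ with its value at $0$ replaced by $g(0)$. You also make the same application of the Submission Theorem with $S=\omega_+$ and $g$ dominating $f$. The only difference is that your Step 2 unwinds the base-set definition in the $G_2$ and $F$ cases to verify $h'\in b$, which the paper simply asserts ``by the construction of the base set''.
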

\begin{proof} Suppose $f\in r_b$ for some $b\in B$. By the definition of $r_b$ we have that there is an auxiliary function $h\in b$ such that for any $N\in\omega$ there exists a sequence of contractions $\mathcal{C}_{k=q}^r$  such that for all values $n$ on the interval $\left[0,N\right]_\omega$ we have $\mathcal{C}_{k=q}^r\left(f\right)\left(n\right)=h\left(n\right)$. By the definition of contractions for any sequence of contractions $\mathcal{C}_{k=q}^r\left(f\right)\left(0\right)=f\left(0\right)$. Therefore, $f\left(0\right)=h\left(0\right)$. Thus $\exists m\in\omega,h\left(0\right)+g\left(0\right)=2m$, which means that $g\left(0\right)=2m-h\left(0\right)$.
\\
Take $h':\omega\rightarrow\omega$ such that $h'\left(0\right)=2m-h\left(0\right)$ and $h'\left(n\right)=h\left(n\right)$ otherwise. We see that $h'$ is an auxiliary function (it is primary if and only if $h$ is primary and secondary if and only if $h$ is secondary). Furthermore, $\sigma\left(h'\right)=\sigma\left(h\right)$ as if $h\left(0\right)$ is even then $2m-h\left(0\right)=h'\left(0\right)$ is even and if $h\left(0\right)$ is odd then $2m-h\left(0\right)=h'\left(0\right)$ is odd. Therefore, by the construction of the base set $h'\in b$.
\\
Now, for any $N\in\omega_+$ given, take the same sequence of contractions $\mathcal{C}_{k=q}^r$ as for $f$, which is applicable to $g$, by the \nameref{Submission Theorem} \ref{Submission Theorem}, as $\text{con}\left(\mathcal{C}_{k=q}^r\right)\subset\omega_+$ and $g$ dominates $f$. Therefore, by the \nameref{Submission Theorem} \ref{Submission Theorem} $\forall n\in\left[0,N\right]_\omega,\mathcal{C}_{k=q}^r\left(g\right)\left(n\right)=h'(n)$. This shows that $g\in r_b$.
\end{proof}
\subsection{Unicity and Shifting Theorems}
\begin{theorem}[Shifting Theorem]\label{Shifting Theorem}\noindent\\ 
For any function $f:\omega\rightarrow\omega$ such that $f\in r_b$ and any finite sequence of contractions and broadenings $\mathcal{D}_{k=q}^r\in\mathcal{FS}$ applicable to $f$ we have $\mathcal{D}_{k=q}^r\left(f\right)\in r_b$.
\end{theorem}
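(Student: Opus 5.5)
The plan is to reduce to a single move and induct on the length of $\mathcal{D}_{k=q}^r$. If $r<q$ there is nothing to prove. Otherwise $\mathcal{D}_{k=q}^{r}(f)=\mathcal{D}(r)\bigl(\mathcal{D}_{k=q}^{r-1}(f)\bigr)$. So it suffices to show that $r_b$ is closed under one applicable broadening $b_m$ and under one applicable contraction $c_m$, for $m\in\omega_+$. Throughout, $f\in r_b$ is witnessed by some $h\in b$: for every $N$ there is a sequence of contractions $\mathcal{C}_{k=q}^r$ with $\mathcal{C}_{k=q}^r(f)$ agreeing with $h$ on $\left[0,N\right]_\omega$.

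\emph{Broadening.} Let $g=b_m(f)$. By the \nameref{Cancellation Theorems} \ref{Cancellation Theorems} (Part 2), $c_m$ is applicable to $g$ and $c_m(g)=f$. Given $N$, take the sequence $\mathcal{C}_{k=q}^r$ that works for $f$ and prepend $c_m$. This gives a sequence of contractions $\mathcal{C}'$ with $\mathcal{C}'(q-1)=c_m$ (after reindexing so the first index is still in $\omega_+$). It is applicable to $g$, and $\mathcal{C}'(g)=\mathcal{C}_{k=q}^r(f)$, which agrees with $h$ on $\left[0,N\right]_\omega$. Hence $g\in r_b$.

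\emph{Contraction.} This is the main obstacle. Let $g=c_m(f)$. Given $N$, it suffices to handle $N\geq m+1$. Take $\mathcal{C}_{k=q}^r$ with $\mathcal{C}_{k=q}^r(f)=h$ on $\left[0,N\right]_\omega$.

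The first step is to show that $c_m$ occurs in this sequence. Each contraction changes values additively, so $\mathcal{C}_{k=q}^r(f)(m+1)=f(m+1)+a_{m+1}-\vartheta(m)a_m$, where $a_n$ is the number of indices $k$ with $\mathcal{C}(k)=c_n$. Suppose $a_m=0$. Then $h(m+1)=\mathcal{C}_{k=q}^r(f)(m+1)\geq f(m+1)\geq\vartheta(m)$, because $c_m$ was applicable to $f$. This contradicts $h(m+1)<\vartheta(m)$, which holds because $h$ is auxiliary and $m+1\geq2$. So there is a least index $k_0$ with $\mathcal{C}(k_0)=c_m$.

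The second step is to delete that occurrence and apply the remaining sequence $\mathcal{C}'$ (the indices $k\neq k_0$, reindexed) to $g$. We track the intermediate states by induction. Before step $k_0$, the $g$-state equals the $f$-state plus $1$ at position $m$ and minus $\vartheta(m)$ at position $m+1$. The $f$-state at step $k_0$ is the first to invoke $c_m$, so along this stretch only the deficit at $m+1$ could block a step, and the only move needing mass at $m+1$ is $c_m$, which does not occur before $k_0$. A $c_{m-1}$ step needs mass at $m$, where $g$ has surplus, so it is still applicable. Every other move sees identical values, so each step stays applicable and the offset is preserved. At step $k_0$ the $f$-run performs exactly $c_m$, which erases the offset, so from then on the two states coincide. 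Therefore $\mathcal{C}'(g)=\mathcal{C}_{k=q}^r(f)$, which agrees with $h$ on $\left[0,N\right]_\omega$.

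The one delicate point is this bookkeeping of applicability before $k_0$. It is essentially a localized version of the \nameref{Submission Theorem} \ref{Submission Theorem}: the domination required there fails only at $m+1$, which lies outside the conditional set of the initial segment $\mathcal{C}_{k=q}^{k_0-1}$.

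Combining the two single-move cases with the induction on length gives $\mathcal{D}_{k=q}^r(f)\in r_b$.
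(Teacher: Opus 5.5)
Your proposal is correct and follows the paper's proof. You reduce to a single move, handle $b_m$ by prepending $c_m$ via the Cancellation Theorems, and handle $c_m$ by taking the inspection range to be at least $m+1$, using $h(m+1)<\vartheta(m)$ to force an occurrence of $c_m$, and deleting its first occurrence. The only cosmetic difference is that you justify the deletion by checking directly that, before the deleted step, the run on $c_m(f)$ equals $c_m$ applied to the run on $f$, whereas the paper gets the same fact by repeatedly applying its commutation identity $c_m\circ c_n=c_n\circ c_m$.
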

\begin{proofidea} It is enough to treat a single move and induct, so the question becomes: if $f$ lies in $r_b$, why do $c_m\left(f\right)$ and $b_m\left(f\right)$ lie there too? For a broadening the Cancellation Theorems answer it immediately, contracting about the same position returns $f$, and a broadening always satisfies the condition to be contracted. A contraction needs work, because the sequence that matched $f$ against the auxiliary function $h$ need not match the moved function. We therefore build a new sequence $\mathcal{C}'$ from the old one and use the Submission Theorem to certify that it is applicable and still reaches agreement with the same $h$ on the interval of inspection.
\end{proofidea}
\begin{proof} As a proof, we show that if $f\in r_b$ then $c_m\left(f\right)\in r_b$ and $b_m\left(f\right)\in r_b$ for any $m\in\omega_+$. The Shifting Theorem follows inductively.
\\
We know that if $f\in r_b$ then $b_m\left(f\right)\in r_b$ as we simply apply $c_m$ to $b_m\left(f\right)$.\footnote{We know that we may do this by the \nameref{Cancellation Theorems} \ref{Cancellation Theorems}. In particular we use the fact used in the proof that a broadening fulfils the condition to be contractable.} This results in $f$. Therefore, we know that 
\begin{align*}
\exists h\in b,\forall N\in\omega_+,\exists\mathcal{C}_{k=q}^r\in\mathcal{FS},\forall n\in\left[0,N\right]_\omega,\mathcal{C}_{k=q}^r\left(c_m\left(b_m\left(f\right)\right)\right)\left(n\right)=h\left(n\right)
\end{align*}
Thus, take $\mathcal{C'}\in\mathcal{CB}$ defined as 
\begin{align*}
\mathcal{C'}\left(k\right)=\begin{cases}c_m&k=q\\
\mathcal{C}\left(k-1\right)&k\in\left[q+1,r+1\right]_\omega\\
c_1&otherwise
\end{cases}
\end{align*}
By the construction of $\mathcal{C'}$ and the \nameref{Submission Theorem} \ref{Submission Theorem} we have that for all $n\in\left[0,N\right]_\omega$ we have $\mathcal{C'}_{k=q}^{r+1}\left(b_m\left(f\right)\right)\left(n\right)=h\left(n\right)$. Therefore we conclude that 
\begin{align*}
\exists h\in b,\forall N\in\omega_+,\exists\mathcal{C'}_{k=q}^{r+1}\in\mathcal{FS},\forall n\in\left[0,N\right]_\omega,\mathcal{C'}_{k=q}^{r+1}\left(b_m\left(f\right)\right)\left(n\right)=h\left(n\right)
\end{align*}
Hence $b_m\left(f\right)\in r_b$.
\\
We shall show that $c_m\left(f\right)\in r_b$. Let $N\in\omega_+$ be given. Take $R:=\max\left(\left\{N,m+1\right\}\right)$. Let $\mathcal{C}_{k=q}^r$ be a sequence of contractions, such that for all $n\in\left[0,R\right]_\omega$ we have $\mathcal{C}_{k=q}^r\left(f\right)\left(n\right)=h\left(n\right)$. Thus as $\left[0,N\right]_\omega\subset\left[0,R\right]_\omega$ we see that for all $n\in\left[0,N\right]_\omega$ we have $\mathcal{C}_{k=q}^r\left(f\right)\left(n\right)=h\left(n\right)$.
\\
Define $T:=\min\left(\left\{k\in\left[q,r\right]_\omega\middle|\mathcal{C}\left(k\right)=c_m\right\}\right)$. We will show that for all $n$ in $\omega$ the equality $\mathcal{C}_{k=q}^T\left(f\right)\left(n\right)=\mathcal{C}_{k=q}^{T-1}\left(c_m\left(f\right)\right)\left(n\right)$ holds. In order to prove this we need to prove the result:\\
\textit{For all $f:\omega\rightarrow\omega$, for all $m,n\in\omega_+$, $n\neq m$ and $f\left(m+1\right)\geq\vartheta\left(m\right)$ and $f\left(n+1\right)\geq\vartheta\left(n\right)$ implies} \begin{align}\label{eq:commute}
c_m\left(c_n\left(f\right)\right)=c_n\left(c_m\left(f\right)\right)
\end{align}
We see that both $c_m\left(c_n\left(f\right)\right)$ and $c_n\left(c_m\left(f\right)\right)$ exist. We see the first case as $f\left(n+1\right)\geq\vartheta\left(n\right)$ and $c_n\left(f\right)\left(m+1\right)\geq f\left(m+1\right)\geq\vartheta\left(m\right)$\footnote{This is the case as $n\neq m$ and the only values of $f$ affected by $c_n$ are $f\left(n\right)$ and $f\left(n+1\right)$. So if $n\neq m+1$ then $c_n\left(f\right)\left(m+1\right)=f\left(m+1\right)\geq\vartheta\left(m\right)$ and if $n=m+1$ then $c_n\left(f\right)\left(m+1\right)=f\left(m+1\right)+1>\vartheta\left(m\right)$.} and thus $c_m\left(c_n\left(f\right)\right)$. $c_n\left(c_m\left(f\right)\right)$ exists by the same argument. The only thing left to check is that the output of the functions is the same at all the values for all the cases.\\
\newpage
\textbf{Case 1: $m\neq n+1\wedge n\neq m+1$} We want to show that for all $k\in\omega$ we have $c_m\left(c_n\left(f\right)\right)\left(k\right)=c_n\left(c_m\left(f\right)\right)\left(k\right)$. We verify:
\begin{align*}
\begin{cases}
c_m\left(c_n\left(f\right)\right)\left(n\right)&=c_n\left(f\right)\left(n\right)=f\left(n\right)+1\\
&=c_m\left(f\right)\left(n\right)+1=c_n\left(c_m\left(f\right)\right)\left(n\right)
\\
c_m\left(c_n\left(f\right)\right)\left(n+1\right)&=c_n\left(f\right)\left(n+1\right)=f\left(n+1\right)-\vartheta\left(n\right)\\
&=c_m\left(f\right)\left(n+1\right)-\vartheta\left(n\right)=c_n\left(c_m\left(f\right)\right)\left(n+1\right)
\\
c_m\left(c_n\left(f\right)\right)\left(m\right)&=c_n\left(f\right)\left(m\right)+1=f\left(m\right)+1\\
&=c_m\left(f\right)\left(m\right)=c_n\left(c_m\left(f\right)\right)\left(m\right)
\\
c_m\left(c_n\left(f\right)\right)\left(m+1\right)&=c_n\left(f\right)\left(m+1\right)-\vartheta\left(m\right)=f\left(m+1\right)-\vartheta\left(m\right)\\
&=c_m\left(f\right)\left(m+1\right)=c_n\left(c_m\left(f\right)\right)\left(m+1\right)
\\
c_m\left(c_n\left(f\right)\right)\left(k\right)&=c_n\left(f\right)\left(k\right)=f\left(k\right)\\
&=c_m\left(f\right)\left(k\right)=c_n\left(c_m\left(f\right)\right)\left(k\right)\\
\end{cases}
\def\arraystretch{1.2}\begin{array}{@{}l}
\text{\indent if $k=n$}\\\\
\text{\indent if $k=n+1$}\\\\
\text{\indent if $k=m$}\\\\
\text{\indent if $k=m+1$}\\\\
\text{\indent otherwise}
\end{array}
\end{align*}
\textbf{Case 2: $m=n+1$} We want to show that $\forall k\in\omega,c_m\left(c_n\left(f\right)\right)\left(k\right)=c_n\left(c_m\left(f\right)\right)\left(k\right)$. We verify:
\begin{align*}
\begin{cases}
c_m\left(c_n\left(f\right)\right)\left(n\right)&=c_n\left(f\right)\left(n\right)=f\left(n\right)+1\\
&=c_m\left(f\right)\left(n\right)+1=c_n\left(c_m\left(f\right)\right)\left(n\right)
\\
c_m\left(c_n\left(f\right)\right)\left(n+1\right)&=c_n\left(f\right)\left(n+1\right)+1=f\left(n+1\right)-\vartheta\left(n\right)+1\\
&=c_m\left(f\right)\left(n+1\right)-\vartheta\left(n\right)=c_n\left(c_m\left(f\right)\right)\left(n+1\right)
\\
c_m\left(c_n\left(f\right)\right)\left(n+2\right)&=c_n\left(f\right)\left(n+2\right)-\vartheta\left(n+1\right)=f\left(n+2\right)-\vartheta\left(n+1\right)\\
&=c_m\left(f\right)\left(n+2\right)=c_n\left(c_m\left(f\right)\right)\left(n+2\right)
\\
c_m\left(c_n\left(f\right)\right)\left(k\right)&=c_n\left(f\right)\left(k\right)=f\left(k\right)\\
&=c_m\left(f\right)\left(k\right)=c_n\left(c_m\left(f\right)\right)\left(k\right)\\
\end{cases}
\def\arraystretch{1.2}\begin{array}{@{}l}
\text{\indent if $k=n$}\\\\
\text{\indent if $k=n+1$}\\\\
\text{\indent if $k=n+2$}\\\\
\text{\indent otherwise}
\end{array}
\end{align*}
\newpage
\textbf{Case 3: $n=m+1$} We want to show that $\forall k\in\omega,c_m\left(c_n\left(f\right)\right)\left(k\right)=c_n\left(c_m\left(f\right)\right)\left(k\right)$. We verify:
\begin{align*}
\begin{cases}
c_m\left(c_n\left(f\right)\right)\left(m\right)&=c_n\left(f\right)\left(m\right)+1=f\left(m\right)+1\\
&=c_m\left(f\right)\left(m\right)=c_n\left(c_m\left(f\right)\right)\left(m\right)
\\
c_m\left(c_n\left(f\right)\right)\left(m+1\right)&=c_n\left(f\right)\left(m+1\right)-\vartheta\left(m\right)=f\left(m+1\right)-\vartheta\left(m\right)+1\\
&=c_m\left(f\right)\left(m+1\right)+1=c_n\left(c_m\left(f\right)\right)\left(m+1\right)
\\
c_m\left(c_n\left(f\right)\right)\left(m+2\right)&=c_n\left(f\right)\left(m+2\right)=f\left(m+2\right)-\vartheta\left(m+1\right)\\
&=c_m\left(f\right)\left(m+2\right)-\vartheta\left(m+1\right)=c_n\left(c_m\left(f\right)\right)\left(m+2\right)
\\
c_m\left(c_n\left(f\right)\right)\left(k\right)&=c_n\left(f\right)\left(k\right)=f\left(k\right)\\
&=c_m\left(f\right)\left(k\right)=c_n\left(c_m\left(f\right)\right)\left(k\right)
\end{cases}
\def\arraystretch{1.2}\begin{array}{@{}l}
\text{\indent if $k=n$}\\\\
\text{\indent if $k=n+1$}\\\\
\text{\indent if $k=n+2$}\\\\
\text{\indent otherwise}
\end{array}
\end{align*}
This proves the identity.\\
By continual repetition of the identity we get that for all $n\in\omega$ we have $\mathcal{C}_{k=q}^T\left(f\right)\left(n\right)=\mathcal{C}_{k=q}^{T-1}\left(c_m\left(f\right)\right)\left(n\right)$. Here we use the fact that as for all $K\in\left[q,T-1\right]_\omega$ we have 
\begin{align*}
\mathcal{C}_{k=q}^{K-1}\left(f\right)\left(m+1\right)\geq f\left(m+1\right)\geq\vartheta\left(m\right)
\end{align*}
(see remark) This tells us that $\left(c_m\circ\mathcal{C}\left(K\right)\right)\left(\mathcal{C}_{k=q}^{K-1}\left(f\right)\right)=\left(\mathcal{C}\left(K\right)\circ c_m\right)\left(\mathcal{C}_{k=q}^{K-1}\left(f\right)\right)$.
\\
We know that for all $n\in\left[0,N\right]_\omega$ we have 
\begin{align*}
h\left(n\right)=\mathcal{C}_{k=q}^r\left(f\right)\left(n\right)=\mathcal{C}_{k=T+1}^r\left(\mathcal{C}_{k=q}^T\left(f\right)\right)\left(n\right)=\mathcal{C}_{k=T+1}^r\left(\mathcal{C}_{k=q}^{T-1}\left(c_m\left(f\right)\right)\right)\left(n\right)
\end{align*}
Hence, by taking $\mathcal{C'}\in\mathcal{CB}$ defined as 
\begin{align*}
\mathcal{C'}\left(k\right)=\begin{cases}
\mathcal{C}\left(k\right)&k\in\left[q,T-1\right]_\omega\\
\mathcal{C}\left(k+1\right)&k\in\left[T,r-1\right]_\omega\\
c_1&otherwise
\end{cases}
\end{align*} 
we get that for all $n\in\left[0,N\right]_\omega$ we have $\mathcal{C'}_{k=q}^{r-1}\left(c_m\left(f\right)\right)\left(n\right)=h\left(n\right)$. This shows that $c_m\left(f\right)\in r_b$.
\\
By induction the theorem follows. Notice that $f$ and $g$ share the same auxiliary function.
\end{proof}

\begin{remark}[$T$ is not empty] Firstly, as $c_m$ is applicable to $f$ we know that $f\left(m+1\right)\geq\vartheta\left(m\right)$. Suppose then that for all $k\in\left[q,r\right]_\omega$ we have $\mathcal{C}\left(k\right)\neq c_m$. We prove by induction that $\mathcal{C}_{k=q}^r\left(f\right)\left(m+1\right)\geq f\left(m+1\right)$.
\\
As $\mathcal{C}\left(q\right)\neq c_m$ we have that $\mathcal{C}_{k=q}^q\left(f\right)\left(m+1\right)\geq f\left(m+1\right)$. Suppose that for some $K\in\left[q,r-1\right]_\omega$ we have $\mathcal{C}_{k=q}^K\left(f\right)\left(m+1\right)\geq f\left(m+1\right)$. As $\mathcal{C}\left(K+1\right)\neq c_m$ we must have
\begin{align*}
\mathcal{C}(K+1)\left(\mathcal{C}_{k=q}^K\left(f\right)\right)\left(m+1\right)\geq\mathcal{C}_{k=q}^K\left(f\right)\left(m+1\right)\geq f\left(m+1\right)
\end{align*}
Hence, $\mathcal{C}_{k=q}^r\left(f\right)\left(m+1\right)\geq f\left(m+1\right)$. As $R\geq m+1$ then $\left\{k\in\left[q,r\right]_\omega\middle|\mathcal{C}\left(k\right)=c_m\right\}$ cannot be empty, as if it was then $\mathcal{C}_{k=q}^r\left(f\right)\left(m+1\right)\geq f\left(m+1\right)\geq\vartheta\left(m\right)>h\left(m+1\right)$, a contradiction to what we supposed.
\end{remark}

\begin{theorem}[Unicity of Real Numbers]\label{Unicity of Real Numbers}\noindent\\
For all $b_1,b_2\in B$, $b_1\neq b_2$ implies $r_{b_1}\cap r_{b_2}=\emptyset$.
\end{theorem}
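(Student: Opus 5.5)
Suppose, for a contradiction, that some $f\in r_{b_1}\cap r_{b_2}$ with $b_1\neq b_2$. I will fix witnesses $h_1\in b_1$ and $h_2\in b_2$ and show that $h_1$ and $h_2$ must lie in the same class of the base set, contradicting $b_1\neq b_2$.

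\textbf{Sign.} Contractions never alter position $0$, and a range of inspection always contains $0$. Hence $h_1(0)=f(0)=h_2(0)$, so $\sigma(h_1)=\sigma(h_2)$. This is the argument already used in the \nameref{Equivalence Lemma} \ref{Equivalence Lemma}. By the construction of $B$ via $R_1$ and $R_2$, it therefore suffices to show one of two things: either $h_1(n)=h_2(n)$ for all $n\in\omega_+$, or $\{h_1,h_2\}$ is a primary/secondary pair of the form appearing in $W_1$.

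\textbf{An integer invariant.} To compare $h_1$ and $h_2$ without real-number arithmetic, I will use, for each $N\in\omega_+$ and $g\in\mathcal{N}$,
\begin{align*}
V_N(g):=\sum\limits_{n=1}^N g(n)\prod\limits_{k=n}^{N-1}\vartheta(k)\in\omega .
\end{align*}
A direct check on Definition~\ref{Contraction} gives two facts. A contraction $c_n$ with $n<N$ leaves $V_N$ unchanged, because $+1$ at $n$ cancels $-\vartheta(n)$ at $n+1$. A contraction $c_N$ raises $V_N$ by exactly $1$. Contractions with $n>N$ leave $V_N$ unchanged.

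Take sequences $\mathcal{C}^1$ and $\mathcal{C}^2$ matching $f$ to $h_1$ and $h_2$ on $[0,N+1]_\omega$. Then
\[
V_N(h_i)=V_N(f)+a_i,
\]
where $a_i$ is the number of occurrences of $c_N$ in $\mathcal{C}^i$.

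\textbf{Main obstacle: forcing $a_1=a_2$ up to one.} I would try to show $|a_1-a_2|\leq1$ by comparing $V_{N+1}$ in the same way. We have $V_{N+1}(h_i)=\vartheta(N)V_N(h_i)+h_i(N+1)$, and also $V_{N+1}(h_i)=V_{N+1}(f)+a_i'$, where $a_i'$ counts the occurrences of $c_{N+1}$. Since $0\le h_i(N+1)<\vartheta(N)$, the number $a_i$ is pinned to the floor of $(V_{N+1}(f)+a_i'-h_i(N+1))/\vartheta(N)$ relative to $V_N(f)$.

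Iterating this up to a larger inspection range shows that $V_N(h_1)-V_N(h_2)$ is controlled by a quantity of the form
\[
\frac{V_{M}(h_1)-V_M(h_2)}{\prod_{k=N}^{M-1}\vartheta(k)}
\]
for every $M>N$. Using the digit bound $h_i(k)<\vartheta(k-1)$ from the definition of auxiliary functions, this forces $|V_N(h_1)-V_N(h_2)|\leq1$ for all $N$. If the counting argument is too loose, the fallback is to first normalise the sequences with the commutation identity \eqref{eq:commute} from the proof of the \nameref{Shifting Theorem} \ref{Shifting Theorem}, pushing all $c_N$ moves to the end, and then use the remark that $T$ is not empty to see that $c_N$ moves are forced whenever a prefix digit overflows.

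\textbf{Conclusion from $|V_N(h_1)-V_N(h_2)|\le1$.} Let $j$ be the first position in $\omega_+$ where $h_1$ and $h_2$ differ, if any, and assume without loss of generality $h_1(j)<h_2(j)$. Then $V_j$ differs by at least $1$. Because $V_N$ scales by $\vartheta(N)$ from one step to the next, the gap can remain at most $1$ for all larger $N$ only under two conditions. First, $h_2(j)=h_1(j)+1$. Second, for every $k>j$, $h_1(k)=\vartheta(k-1)-1$ and $h_2(k)=0$.

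Thus $h_1$ is secondary with $N_{\min}^{h_1}=j$, $h_2$ is primary, and $(h_2,h_1)$ (with the common value at $0$) lies in $W_1$. By the definitions of $W_1$, $W_2$, $F$ and $B$, the functions $h_1$ and $h_2$ are then in the same element of $B$, giving $b_1=b_2$. This contradicts the assumption $b_1\neq b_2$.
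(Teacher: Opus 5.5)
Your proposal has a genuine gap in the ``main obstacle'' paragraph, which is where all the content of the theorem sits. The facts you use there are the rescaling $V_N(h_i)=\lfloor V_M(h_i)/\prod_{k=N}^{M-1}\vartheta(k)\rfloor$ and the digit bound $h_i(k)<\vartheta(k-1)$. Both hold for \emph{any} pair of auxiliary functions, so they cannot force $|V_N(h_1)-V_N(h_2)|\leq1$. Iterating them only rewrites the gap at level $N$ in terms of the gap at level $M$, and you never bound the latter. The only link to $f$ you record is $a_i=V_N(h_i)-V_N(f)\geq0$, and nothing constrains $a_1-a_2$. For instance, in base ten with $f=(0;1,0,0,\dots)$, both $(0;1,0,\dots)$ and $(0;5,0,\dots)$ satisfy $V_N(h)\geq V_N(f)$ at every level. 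The fallback via \eqref{eq:commute} and the remark that $T$ is not empty is not carried out, and it does not obviously supply the missing comparison either.

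What is missing is a cross-comparison through a \emph{fixed} finite sequence, using your own observation that $c_n$ with $n<M$ leaves $V_M$ unchanged. Fix $N$ and let $\mathcal{C}^2$ match $f$ to $h_2$ on $[0,N]_\omega$. Choose $M>N$ exceeding every $n$ for which $c_n$ occurs in $\mathcal{C}^2$, and put $P:=\prod_{k=N}^{M-1}\vartheta(k)$. Three facts then combine:
\begin{itemize}
\item $V_M(f)=V_M(\mathcal{C}^2(f))\geq P\,V_N(h_2)$;
\item a sequence matching $f$ to $h_1$ on $[0,M]_\omega$ gives $V_M(h_1)\geq V_M(f)$;
\item the digit bound gives $V_M(h_1)\leq P\,V_N(h_1)+P-1$.
\end{itemize}
Hence $V_N(h_2)\leq V_N(h_1)$. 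By symmetry equality holds for every $N$, so $h_1=h_2$ outright, and the primary/secondary case in your last paragraph never arises (that deduction is correct but unnecessary).

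Repaired this way, your route genuinely differs from the paper's. The paper sets $N=\Psi(h_1,h_2)$ and contracts $f$ to agree with the larger of $h_1,h_2$ on $[0,N]_\omega$. It keeps the result in $r_{b_2}$ by the \nameref{Shifting Theorem} \ref{Shifting Theorem}. It then uses the \nameref{Consequences Theorem} \ref{Consequences Theorem} (further contractions can only raise the leftmost digit they touch) to show $h_2$ is out of reach. Your counting argument bypasses the Shifting Theorem entirely. Along the way it proves $V_N(h)=\max_{\mathcal{C}}V_N(\mathcal{C}(f))$, a contraction-only form of the \nameref{Paradise City Lemma} \ref{Paradise City Lemma}.
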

\begin{proof} See that as the base set was constructed from equivalence classes $b_1$ and $b_2$ do not share any elements. Now suppose for the sake of contradiction that there exists an $f:\omega\rightarrow\omega$ such that $\left(f\in r_{b_1}\wedge f\in r_{b_2}\right)$. Call $h_1\in b_1$ and $h_2\in b_2$ the respective auxiliary functions from the definition of a real number for $f$. Notice that by the presumption $h_1\left(0\right)=h_2\left(0\right)=f\left(0\right)$.
\\
Define a new function which will be used throughout the entire work, $\Psi\in\mathcal{N}\times\mathcal{N}\rightarrow\omega$ defined as 
\begin{align*}
\Psi\left(f,g\right):=\min\left(\left\{n\in\omega_+\middle|f\left(n\right)\neq g\left(n\right)\right\}\right)
\end{align*}
Suppose without loss of generality that $h_1\left(\Psi\left(h_1,h_2\right)\right)>h_2\left(\Psi\left(h_1,h_2\right)\right)$. Now, set the number of inspection $N=\Psi\left(h_1,h_2\right)$. We know that 
\begin{align*}
\exists\mathcal{C}_{k=q}^{r}\in\mathcal{FS},\forall n\in\left[0,N\right]_\omega,\mathcal{C}_{k=q}^{r}\left(f\right)\left(n\right)=h_1\left(n\right)
\end{align*}
By the \nameref{Shifting Theorem} \ref{Shifting Theorem} $\mathcal{C}_{k=q}^{r}\left(f\right)\in r_{b_2}$ as $f\in r_{b_2}$.
\\
However, as only contractions may be applied to $\mathcal{C}_{k=q}^{r}\left(f\right)$ we see that we may not find any sequence of contractions such that it would match with $h_2$ on $\left[0,N\right]_\omega$ as it already matches on $\left[0,N-1\right]_\omega$ and at the same time $\mathcal{C}_{k=q}^{r}\left(f\right)\left(N\right)=h_1\left(N\right)>h_2\left(N\right)$.
\\
This is shown by taking any sequence of contractions $\mathcal{C'}_{k=a}^{b}$ applicable to $\mathcal{C}_{k=q}^{r}\left(f\right)$. Take $t:=\min\left(\left\{n\in\omega_+\middle|\exists k\in\left[a,b\right]_\omega,\mathcal{C'}\left(k\right)=c_n\right\}\right)$.
\\
If $t\in\left[1,N\right]_\omega$ then by the \nameref{Consequences Theorem} \ref{Consequences Theorem} 
\begin{align*}
\mathcal{C'}_{k=a}^{b}\left(\mathcal{C}_{k=q}^{r}\left(f\right)\right)\left(t\right)>\mathcal{C}_{k=q}^{r}\left(f\right)\left(t\right)\geq h_2\left(t\right)
\end{align*}
If $t\in\left[N+1,\infty\right]_\omega$ then by the \nameref{Consequences Theorem} \ref{Consequences Theorem} as $N\in\left[1,t-1\right]_\omega$ we have 
\begin{align*}
\mathcal{C'}_{k=a}^{b}\left(\mathcal{C}_{k=q}^{r}\left(f\right)\right)\left(N\right)=\mathcal{C}_{k=q}^{r}\left(f\right)\left(N\right)>h_2\left(N\right)
\end{align*}
Therefore, for all sequences of contractions $\mathcal{C'}_{k=a}^{b}\in\mathcal{FS}$ applicable to $\mathcal{C}_{k=q}^{r}\left(f\right)$ there exists a value $n$ in $\left[0,N\right]_\omega$ such that $\mathcal{C'}_{k=a}^{b}\left(\mathcal{C}_{k=q}^{r}\left(f\right)\right)\left(n\right)>h_2\left(n\right)$. However, this is a contradiction to the \nameref{Shifting Theorem} \ref{Shifting Theorem} and the presumption that $h_2$ is an auxiliary function of $f$ in $r_{b_2}$.
\end{proof}

\begin{remark} In the definition of $\Psi$ notice that even if the functions agree on $\omega_+$ then we have a minimum of the empty set, which is the empty set, which is $0$. And as we are taking a set of non-zero finite ordinals, this is in fact the only way to obtain $0$. Notice also that $\Psi$ is commutative, a property we will be using in this work without specifying it.
\end{remark}

\begin{definition}[Set of Real Numbers - $\mathcal{S}_\mathbb{R}$]\label{Set of Reals}\noindent\\
Define the set $\mathcal{S}_\mathbb{R}:=\left\{r_b\middle|b\in B\right\}-\left\{r_{b_1}\right\}-\left\{r_{b_2}\right\}\cup\left\{r_{b_1}\cup r_{b_2}\right\}$, where $f\in b_1,g\in b_2$  where $f,g:\omega\rightarrow\omega$ such that for all $k\in\omega$ we have
\begin{align*}
\begin{array}{ll@{}}
f\left(k\right)=0&\hspace{2cm}g\left(k\right)=\begin{cases}1&k=0\\0&otherwise\end{cases}
\end{array}
\end{align*}
Formally we should call this set the functionary space of real numbers described by $\vartheta$ (as this set depends on the choice of the \nameref{System Function} \ref{System Function}). However, we will call this set the set of real numbers for brevity.
\\
We shall denote $\left[0\right]:=r_{b_1}\cup r_{b_2}$ and call this set zero. Define also $\mathcal{S}_\mathbb{R}^*:=\mathcal{S}_\mathbb{R}-{\left[0\right]}$ and call this set the non-zero real numbers.
\\
We also define $\mathcal{N}_{\mathcal{F}in}:=\bigcup\mathcal{S}_\mathbb{R}$ and $\mathcal{N}_{\mathcal{F}in}^*=\mathcal{N}_{\mathcal{F}in}-\bigcup\left(\left[0\right]\right)$. We call the functions $f\in\mathcal{N}_{\mathcal{F}in}$ finite.
\\
We shall define a relation $E$ on $\mathcal{N}_{\mathcal{F}in}$ where $fEg$ if and only if $\exists x\in\mathcal{S}_{\mathbb{R}}, f,g\in x$. By the \nameref{Unicity of Real Numbers} \ref{Unicity of Real Numbers} we know that $E$ is an equivalence relation. We choose the notation of $\forall f\in \mathcal{N}_{\mathcal{F}in},\left[f\right]:=x\in\mathcal{S}_{\mathbb{R}}$ where $f\in x$, again, by the \nameref{Unicity of Real Numbers} \ref{Unicity of Real Numbers} this set is unique. 
\end{definition}

\begin{intuition}[Set of Real Numbers] Before we had ``two zeros" a positive zero and a negative zero. This would cause problem later, thus we unite them into a single real number and call the number zero. As we wish to work with real numbers represented by functions and we have already established that in any system there are functions not defining any real number. We do not need these functions and therefore we restrict our original set $\mathcal{N}$ to $\mathcal{N}_{\mathcal{F}in}$ so that we work only with functions which define some real number.
\end{intuition}

\begin{lemma}[Null Lemma]\label{Null Lemma}\noindent\\
For all finite functions $w$ we have that $w\in\left[0\right]$ if and only if $\forall n\in\omega_+,w\left(n\right)=0$.
\end{lemma}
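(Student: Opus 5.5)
The plan is to prove the two directions separately. The backward direction comes from exhibiting the obvious auxiliary function together with the \nameref{Equivalence Lemma} \ref{Equivalence Lemma}. The forward direction uses the \nameref{Consequences Theorem} \ref{Consequences Theorem} to rule out any non-zero digit.

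First I pin down the classes $b_1,b_2$ from the \nameref{Set of Reals} \ref{Set of Reals}. The zero function $f$ (and likewise $g$, which differs only at $0$) satisfies $f(k)=0<\vartheta(k-1)$ for all $k\geq2$. It is never eventually equal to $\vartheta(M-1)-1\geq1$. Hence it is a primary auxiliary function.

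Next I check that $f$ cannot occur in any pair of $W_1$. Such a pair would force $h_p\left(N_{\min}^{h_s}\right)=h_s\left(N_{\min}^{h_s}\right)+1\geq1$ with $N_{\min}^{h_s}\in\omega_+$, contradicting $f(n)=0$ on $\omega_+$. So $f,g\in W_2$, and $b_1,b_2$ are their $R_2$-classes. By the definition of $R_2$, every $h\in b_1\cup b_2$ satisfies $h(n)=0$ for all $n\in\omega_+$.

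For $(\Leftarrow)$, let $w$ be finite with $w(n)=0$ on $\omega_+$. Then $f\in r_{b_1}$ trivially: for every $N$ take an empty sequence $\mathcal{C}_{k=q}^r$ with $r<q$, which acts as the identity and is declared applicable. Similarly $g\in r_{b_2}$. Now split on the parity of $w(0)$. If $w(0)$ is even, then $f(0)+w(0)$ is even and $f,w$ agree on $\omega_+$, so the \nameref{Equivalence Lemma} \ref{Equivalence Lemma} gives $w\in r_{b_1}$. If $w(0)$ is odd, then $g(0)+w(0)=1+w(0)$ is even and the same lemma gives $w\in r_{b_2}$. Either way $w\in[0]$.

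For $(\Rightarrow)$, let $w\in[0]$, so $w\in r_{b_i}$ for some $i$. Let $h\in b_i$ be the witnessing auxiliary function; by the above, $h(n)=0$ on $\omega_+$. Suppose for contradiction that $w(M)>0$ for some $M\in\omega_+$. Take the number of inspection $N=M$ and a sequence of contractions $\mathcal{C}_{k=q}^r$ with $\mathcal{C}_{k=q}^r(w)(n)=h(n)$ on $[0,M]_\omega$. There are three cases.
\begin{itemize}
\item If $r<q$, the sequence is the identity, so $\mathcal{C}_{k=q}^r(w)(M)=w(M)>0=h(M)$, a contradiction.
\item Otherwise, let $t$ be the minimal contraction index as in the \nameref{Consequences Theorem} \ref{Consequences Theorem}. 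If $t\leq M$, that theorem gives $\mathcal{C}_{k=q}^r(w)(t)>w(t)\geq0=h(t)$ with $t\in[1,M]_\omega$, a contradiction.
\item If $t>M$, the same theorem gives $\mathcal{C}_{k=q}^r(w)(M)=w(M)>0=h(M)$, again a contradiction.
\end{itemize}
Hence $w(n)=0$ for all $n\in\omega_+$.

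The only delicate point I expect is the bookkeeping of the base-set construction, namely verifying that the zero functions land in $W_2$ and not $W_1$. This is what guarantees that every auxiliary function of $[0]$ vanishes on $\omega_+$. The remaining steps are direct applications of earlier results.
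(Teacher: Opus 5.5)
Your proof is correct and follows the paper's argument. For $(\Leftarrow)$ you use the Equivalence Lemma, split by the parity of $w(0)$; for $(\Rightarrow)$ you inspect at a non-zero position and apply the Consequences Theorem, as in the Unicity proof. Your version is more careful than the paper's in three ways: you explicitly verify that every auxiliary function in $b_1\cup b_2$ vanishes on $\omega_+$ (which the paper uses tacitly), you treat the empty sequence separately, and you note that taking the minimal non-zero position is unnecessary.
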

\begin{proof}\noindent\\
$\left(\impliedby\right):$\noindent\\
Either $w\left(0\right)+f\left(0\right)=2m$ or $w\left(0\right)+g\left(0\right)=2m$ for some $m\in\omega$ where $f$ and $g$ are the same functions as described in the definition of \nameref{Set of Reals} \ref{Set of Reals}. Then by the \nameref{Equivalence Lemma} \ref{Equivalence Lemma} we have $w\in\left[0\right]$.\\
$\left(\implies\right):$\noindent\\
Suppose for the sake of contradiction that there exists $n\in\omega_+,w\left(n\right)\neq0$. Then there exists a minimal such $n$. Denote that value as $m$. Thus by taking $m$ as the value of inspection we arrive at a contradiction to $w\in\left[0\right]$ by the \nameref{Consequences Theorem} \ref{Consequences Theorem} and the same technique used in the proof of the \nameref{Unicity of Real Numbers} \ref{Unicity of Real Numbers}.
\end{proof}

\begin{definition}[Sign of a Real Number]\label{Sign of a Real Number}\noindent\\
By the definition of our non-zero equivalence classes $\forall a,b\in\left[a\right]\in\mathcal{S}_\mathbb{R}^*,\sigma\left(a\right)=\sigma\left(b\right)$. Therefore, we introduce a sign of the whole equivalence class (the real numbers). Define $\sigma:{\mathcal{S}_\mathbb{R}^*}\rightarrow\left\{0,1\right\}$ as 
\begin{align*}
\begin{array}{llll@{}}
\sigma\left(\left[a\right]\right)=0&\iff&\forall a\in\left[a\right],\sigma\left(a\right)=0&\text{and}\\
\sigma\left(\left[a\right]\right)=1&\iff&\forall a\in\left[a\right],\sigma\left(a\right)=1
\end{array}
\end{align*}
\end{definition}
\section{Structure of the Models}\label{sec:structure}
Now that we have constructed the set of real numbers we shall show crucial theorems and lemmas which will help us prove the axioms of real analysis on our model.
\subsection{Paradise City Lemma}
\begin{lemma}[Paradise City Lemma]\label{Paradise City Lemma}\noindent\\
For any finite function $f$ we have that for all $m$ in $\omega_+$ the inequality 
\begin{align*}
\sum\limits_{n=1}^{m}\left(f\left(n\right)\prod\limits_{j=n}^{m-1}\left(\vartheta\left(j\right)\right)\right)\leq\sum\limits_{n=1}^{m}\left(h_f\left(n\right)\prod\limits_{j=n}^{m-1}\left(\vartheta\left(j\right)\right)\right)
\end{align*}
where $h_f$ is the auxiliary function of $f$ holds.\footnote{Even though it is simple to see that there is only one such function, we have not proved it and thus we take ``any of the auxiliary functions of f" even though we know that there is exactly one.}
\end{lemma}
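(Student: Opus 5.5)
The plan is to track a single integer-valued invariant that contractions can only increase. For any $g\in\mathcal{N}$ and $m\in\omega_+$ put
\begin{align*}
V_m\left(g\right):=\sum\limits_{n=1}^{m}\left(g\left(n\right)\prod\limits_{j=n}^{m-1}\left(\vartheta\left(j\right)\right)\right),
\end{align*}
the value of the first $m$ positions measured in units of position $m$. The inequality to prove is then $V_m\left(f\right)\leq V_m\left(h_f\right)$. Since $f$ is finite, $f\in r_b$ for some $b$, and there is an auxiliary function $h_f\in b$ with the property in the definition of \nameref{Real Number} \ref{Real Number}. I would fix $m\in\omega_+$ and take the number of inspection $N=m$. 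This gives a sequence of contractions $\mathcal{C}_{k=q}^r$ applicable to $f$ with $\mathcal{C}_{k=q}^r\left(f\right)\left(n\right)=h_f\left(n\right)$ for all $n\in\left[0,m\right]_\omega$. Since $V_m$ depends only on positions $1,\dots,m$, we get $V_m\left(\mathcal{C}_{k=q}^r\left(f\right)\right)=V_m\left(h_f\right)$. It therefore suffices to show that a single contraction never decreases $V_m$, and then induct along the sequence, exactly as in the proof of the \nameref{Consequences Theorem} \ref{Consequences Theorem}.

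For the single step, take $g$ with $c_\alpha$ applicable and compare $V_m\left(c_\alpha\left(g\right)\right)$ with $V_m\left(g\right)$ in three cases.
\begin{enumerate}[label=(\roman*)]
\item If $\alpha\geq m+1$, both altered positions $\alpha,\alpha+1$ lie beyond $m$, so $V_m$ is unchanged.
\item If $\alpha=m$, only position $m$ inside the window changes, and it gains $1$. So $V_m$ increases by $\prod_{j=m}^{m-1}\vartheta\left(j\right)=1$, the empty product.
\item If $\alpha\leq m-1$, position $\alpha$ gains $1$ with weight $\prod_{j=\alpha}^{m-1}\vartheta\left(j\right)$. Position $\alpha+1$ loses $\vartheta\left(\alpha\right)$ with weight $\prod_{j=\alpha+1}^{m-1}\vartheta\left(j\right)$. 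These cancel, and $V_m$ is unchanged.
\end{enumerate}
In every case $V_m\left(c_\alpha\left(g\right)\right)\geq V_m\left(g\right)$. Inducting over $k\in\left[q,r\right]_\omega$ gives $V_m\left(f\right)\leq V_m\left(\mathcal{C}_{k=q}^r\left(f\right)\right)=V_m\left(h_f\right)$. The empty sequence, $r<q$, is trivial.

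The main obstacle is only bookkeeping. Everything lives in $\omega$, with no negatives, so in case (iii) the subtraction $g\left(\alpha+1\right)-\vartheta\left(\alpha\right)$ must be justified. It is, because applicability of $c_\alpha$ gives $g\left(\alpha+1\right)\geq\vartheta\left(\alpha\right)$. The cancellation should be written additively, as $V_m\left(c_\alpha\left(g\right)\right)+\vartheta\left(\alpha\right)\prod_{j=\alpha+1}^{m-1}\vartheta\left(j\right)=V_m\left(g\right)+\prod_{j=\alpha}^{m-1}\vartheta\left(j\right)$, in the same spirit as the \nameref{Submission Theorem} \ref{Submission Theorem}. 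The empty-product convention at $n=m$ should also be stated explicitly.

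The argument does not depend on which auxiliary function of $f$ is chosen: any $h\in b$ witnessing membership works. This matches the footnote's caveat that uniqueness of $h_f$ has not yet been proved.
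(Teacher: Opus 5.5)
Your proof is correct, and it takes a genuinely different and much shorter route than the paper's.

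The paper argues by contradiction, in four steps:
\begin{enumerate}
\item It shows that $\sum_{n=1}^{m}f(n)\prod_{j=n}^{m-1}\vartheta(j)$ is the digit at position $m$ of the function obtained by broadening $f$ completely on $\left[1,m-1\right]_\omega$.
\item It keeps that broadened function in $[f]$, with the same auxiliary function, via the Shifting Theorem.
\item It broadens $h_f$ completely as well, inverts those broadenings into contractions, and transports them onto the broadened $f$ with the Submission Theorem.
\item It uses the Consequences Theorem to see that the resulting function, which exceeds $h_f$ at position $m$, can never be contracted back to $h_f$ on $\left[0,m\right]_\omega$.
\end{enumerate}

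You instead note that the level-$m$ truncation is monotone under a single contraction: it is unchanged unless $\alpha=m$, in which case it rises by one. You then read the inequality off a single witnessing sequence at inspection number $m$. No broadenings are needed, and none of the Shifting, Submission or Consequences Theorems.

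Your case (iii) also gives the identity \eqref{eq:contraction-invariance} for free. The paper only proves it later, inside the Tying Theorem, by reordering contractions and running a minimal-counterexample argument. With your analysis, when $K\geq\max\text{con}(\mathcal{C}_{k=q}^r)$ every step of the sequence falls in case (iii), so the truncation is invariant.

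What the paper's route buys in return is the complete-broadening identity \eqref{eq:complete-broadening}, together with the fact that the completely broadened function lies in the same real number. That is what makes the maximum in Paradise City Corollary III actually attained, and it is reused in the proof of the Irrational Models Theorem. With your proof, that fact would have to be recorded separately; it follows from the Shifting Theorem and the same inductive computation.
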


\begin{intuition}[Paradise City Lemma] As primary auxiliary functions are the most compact way to describe a real number no function in that real number can have ``more material" in any interval $\left[1,m\right]_\omega$.
\end{intuition}

\begin{proofidea} The observation that makes this work is that the weighted truncation $$\sum_{n=1}^{m}f\left(n\right)\prod_{j=n}^{m-1}\vartheta\left(j\right)$$ is not merely a number computed from $f$: it is the value \emph{at position $m$} of the function obtained by broadening $f$ completely on $\left[1,m-1\right]_\omega$, all of the material pushed down into a single position. That is verified by induction, and by the \nameref{Shifting Theorem} \ref{Shifting Theorem} the broadened function stays in the same real number. Comparing truncations is therefore comparing single digits of completely broadened functions. Supposing for contradiction that $f$ overshoots at some $m$, we broaden $h_f$ completely as well, invert that sequence of broadenings into contractions, and apply it to the broadened $f$, applicable by the \nameref{Submission Theorem} \ref{Submission Theorem}, since one dominates the other throughout $\left[0,m\right]_\omega$, and the \nameref{Shifting Theorem} \ref{Shifting Theorem} places the result in the same real number, where the digits contradict.
\end{proofidea}

\begin{proof} We shall prove this by contradiction. Suppose that there is a finite $f$ such that $\exists m\in\omega_+$ such that 
\begin{align*}
\sum\limits_{n=1}^{m}\left(f\left(n\right)\prod\limits_{j=n}^{m-1}\left(\vartheta\left(j\right)\right)\right)\geq\sum\limits_{n=1}^{m}\left(h_f\left(n\right)\prod\limits_{j=n}^{m-1}\left(\vartheta\left(j\right)\right)\right)+1
\end{align*}
Now we see that
\begin{equation}\label{eq:complete-broadening}
\sum\limits_{n=1}^{m}\left(f\left(n\right)\prod\limits_{j=n}^{m-1}\left(\vartheta\left(j\right)\right)\right)=\mathcal{B}_{k=q}^{r}\left(f\right)\left(m\right)
\end{equation}
where $\mathcal{B}_{k=q}^{r}$ broadens $f$ completely on $\left[1,m-1\right]_\omega$
\\
We may show this inductively. For $m=2$ we have the original function broadened $f\left(1\right)$ times about $1$ and we get precisely $\vartheta\left(1\right)f\left(1\right)+f\left(2\right)$ for the resulting output for $2$. Now suppose that this holds for some $m$. Then we have a completely broadened function on $\left[1,m-1\right]_\omega$ and for $m$ we have 
\begin{align*}
\sum_{n=1}^{m}\left(f\left(n\right)\prod_{j=n}^{m-1}\left(\vartheta\left(j\right)\right)\right)
\end{align*}
Thus, by broadening this all about $m$ we get 
\begin{align*}
\vartheta\left(m\right)\sum_{n=1}^{m}\left(f\left(n\right)\prod_{j=n}^{m-1}\left(\vartheta\left(j\right)\right)\right)
\end{align*} taken over to the value of $m+1$. Therefore, the resulting value at $m+1$ is 
\begin{align*}
\vartheta\left(m\right)\sum_{n=1}^{m}\left(f\left(n\right)\prod_{j=n}^{m-1}\left(\vartheta\left(j\right)\right)\right)+f\left(m+1\right)=\sum_{n=1}^{m+1}\left(f\left(n\right)\prod_{j=n}^{m}\left(\vartheta\left(j\right)\right)\right)
\end{align*} Hence, we have proven this inductively.
\\
Thus, by the \nameref{Shifting Theorem} \ref{Shifting Theorem}, the function resulting from completely broadening $f$ on $\left[1,m-1\right]_\omega$ is in the same real number and has the same auxiliary function as $f$, call this function $g$.
\\
Now, take the function resulting from broadening $h_f$ completely on $\left[1,m-1\right]_\omega$, denote this sequence $\mathcal{B}_{k=q}^r$. That function has for the value at $m$
\begin{align*}
\sum\limits_{n=1}^{m}\left(h_f\left(n\right)\prod\limits_{j=n}^{m-1}\left(\vartheta\left(j\right)\right)\right)
\end{align*}
Now take the inverse of the sequence of the broadening which we define for any $\mathcal{B}_{k=q}^r$ as $\mathcal{C}_{k=q}^r\in\mathcal{CB}$ where
\begin{align*}
\mathcal{C}\left(k\right)=\begin{cases}
\begin{array}{llll@{}}
c_n&\text{where}&\mathcal{B}\left(r+q-k\right)=b_n&k\in\left[q,r\right]_\omega\\
c_1&&&otherwise
\end{array}
\end{cases}
\end{align*}
Apply it to the function $\mathcal{B}_{k=q}^r\left(h_f\right)$ (we may apply it by induction on the \nameref{Cancellation Theorems} \ref{Cancellation Theorems}). By the \nameref{Submission Theorem} \ref{Submission Theorem}, as for all $n$ in the interval $\left[0,m\right]_\omega$ we have $\mathcal{B}_{k=q}^r\left(h_f\right)\left(n\right)\leq g\left(n\right)$ this sequence of contractions is applicable to $g$. Furthermore, by the \nameref{Shifting Theorem} \ref{Shifting Theorem} the resulting function $\mathcal{C}_{k=q}^r\left(g\right)$ has the same auxiliary function as $g$, $h_f$.\\
However, we have 
\begin{align*}
\begin{array}{ll@{}}
\forall n\in\left[0,m-1\right]_\omega,h_f\left(n\right)=\mathcal{C}_{k=q}^r\left(\mathcal{B}_{k=q}^r\left(h_f\right)\right)\left(n\right)=\mathcal{C}_{k=q}^r\left(g\right)\left(n\right)&\text{and}\\
h_f\left(m\right)+1\leq\mathcal{C}_{k=q}^r\left(g\right)\left(m\right)
\end{array}
\end{align*} Thus taking as the interval of inspection $\left[0,m\right]_\omega$ we obtain that $\mathcal{C}_{k=q}^r\left(g\right)$ may not be contracted to $h_f$, by the \nameref{Consequences Theorem} \ref{Consequences Theorem}, which is a contradiction to $h_f$ being an auxiliary function of $\mathcal{C}_{k=q}^r\left(g\right)$ and thus $g$ and thus $f$ as, by the \nameref{Shifting Theorem} \ref{Shifting Theorem} all of these share the same auxiliary function.\\
Therefore, 
\begin{align*}
\forall f\in \mathcal{N}_{\mathcal{F}in},\forall m\in\omega_+,\sum\limits_{n=1}^{m}\left(f\left(n\right)\prod\limits_{j=n}^{m-1}\left(\vartheta\left(j\right)\right)\right)\leq\sum\limits_{n=1}^{m}\left(h_f\left(n\right)\prod\limits_{j=n}^{m-1}\left(\vartheta\left(j\right)\right)\right)
\end{align*}
\end{proof}
\begin{corollary}[Paradise City Corollary I]\label{PC Cor I} We see that for all primary auxiliary functions $f_p$ and secondary auxiliary functions $f_s$ (if there are any) of a real number $\left[f\right]$ we have that for all $m\in\left[1,\Psi\left(f_p,f_s\right)-1\right]_\omega$
\begin{align*}
\sum\limits_{n=1}^{m}\left(f_s\left(n\right)\prod\limits_{j=n}^{m-1}\left(\vartheta\left(j\right)\right)\right)=\sum\limits_{n=1}^{m}\left(f_p\left(n\right)\prod\limits_{j=n}^{m-1}\left(\vartheta\left(j\right)\right)\right)
\end{align*}
and for all $m\in\left[\Psi\left(f_p,f_s\right),\infty\right]_\omega$
\begin{align*}
&\sum\limits_{n=1}^{m}\left(f_s\left(n\right)\prod\limits_{j=n}^{m-1}\left(\vartheta\left(j\right)\right)\right)\\
=&\sum\limits_{n=1}^{\Psi\left(f_p,f_s\right)}\left(f_s\left(n\right)\prod\limits_{j=n}^{m-1}\left(\vartheta\left(j\right)\right)\right)+\hspace{-0.5cm}\sum\limits_{n=\Psi\left(f_p,f_s\right)+1}^{m}\left(f_s\left(n\right)\prod\limits_{j=n}^{m-1}\left(\vartheta\left(j\right)\right)\right)\\
=&\sum\limits_{n=1}^{\Psi\left(f_p,f_s\right)}\left(f_p\left(n\right)\prod\limits_{j=n}^{m-1}\left(\vartheta\left(j\right)\right)\right)-\hspace{-0.45cm}\prod\limits_{j=\Psi\left(f_p,f_s\right)}^{m-1}\left(\vartheta\left(j\right)\right)+\hspace{-0.5cm}\sum\limits_{n=\Psi\left(f_p,f_s\right)+1}^{m}\left(\left(\vartheta\left(n-1\right)-1\right)\prod\limits_{j=n}^{m-1}\left(\vartheta\left(j\right)\right)\right)\\
=&\sum\limits_{n=1}^{\Psi\left(f_p,f_s\right)}\left(f_p\left(n\right)\prod\limits_{j=n}^{m-1}\left(\vartheta\left(j\right)\right)\right)-\hspace{-0.45cm}\prod\limits_{j=\Psi\left(f_p,f_s\right)}^{m-1}\left(\vartheta\left(j\right)\right)+\hspace{-0.5cm}\prod\limits_{j=\Psi\left(f_p,f_s\right)}^{m-1}\left(\vartheta\left(j\right)\right)-1\\
=&\sum\limits_{n=1}^{m}\left(f_p\left(n\right)\prod\limits_{j=n}^{m-1}\left(\vartheta\left(j\right)\right)\right)-1
\end{align*}
\end{corollary}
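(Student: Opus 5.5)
The corollary is, at bottom, a computation with the explicit shape that the pair $(f_p,f_s)$ must have. The plan is first to identify that shape, and then to split the sum at $\Psi(f_p,f_s)$. Write $P:=\Psi(f_p,f_s)$.

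Since $f_p$ and $f_s$ are auxiliary functions of the same real number, they lie in the same element of the base set. A class containing a secondary auxiliary function arises only from $F$, that is, from pairs in $W_1$, merged by $R_1$. Because $R_1$ identifies pairs whose primaries agree on $\omega_+$, every primary in the class agrees with every other on $\omega_+$. The same holds for the secondaries, since each is determined on $\omega_+$ by its primary via $N_{\min}$. So, ignoring position $0$ (which the corollary never uses), we may assume $(f_p,f_s)\in W_1$. The three defining clauses of $W_1$ then give, with $N:=N_{\min}^{f_s}$:
\begin{itemize}
\item $f_p(k)=f_s(k)$ for $k\in[1,N-1]_\omega$;
\item $f_p(N)=f_s(N)+1$;
\item $f_p(k)=0$ for $k>N$.
\end{itemize}
Hence $P=N$. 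By the definition of $N_{\min}^{f_s}$ we also have $f_s(k)=\vartheta(k-1)-1$ for every $k\geq P+1$.

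For $m\leq P-1$ the two truncated sums involve only positions where $f_p=f_s$, so they are termwise equal. This gives the first identity.

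For $m\geq P$, we split each sum into the part over $n\leq P$ and the part over $n>P$.
\begin{itemize}
\item On $[1,P]_\omega$, replace $f_s(n)$ by $f_p(n)$. The only correction occurs at $n=P$, which contributes $-\prod_{j=P}^{m-1}\vartheta(j)$.
\item On $[P+1,m]_\omega$, substitute $f_s(n)=\vartheta(n-1)-1$.
\end{itemize}
The remaining step is the telescoping identity
\begin{align*}
\sum_{n=P+1}^{m}\left(\left(\vartheta\left(n-1\right)-1\right)\prod_{j=n}^{m-1}\vartheta\left(j\right)\right)=\prod_{j=P}^{m-1}\vartheta\left(j\right)-1 .
\end{align*}
Each summand equals $\prod_{j=n-1}^{m-1}\vartheta(j)-\prod_{j=n}^{m-1}\vartheta(j)$, so the sum collapses. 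Alternatively, one can prove the identity by induction on $m$, multiplying by $\vartheta(m)$ and adding $\vartheta(m)-1$. Note that $\vartheta(n-1)\geq2$, so all subtractions are legitimate in $\omega$.

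Finally, because $f_p(n)=0$ for $n>P$, the sum of $f_p$ over $[1,P]_\omega$ equals its sum over $[1,m]_\omega$. Combining the pieces yields the final $-1$ identity.

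The only genuinely delicate point is the first step: justifying that an arbitrary primary and an arbitrary secondary auxiliary function of $[f]$ really form a $W_1$-pair up to position $0$. This requires unwinding the base set construction, together with the Unicity of Real Numbers theorem and the fact that $h$ and its sign-variants share values on $\omega_+$. I expect this bookkeeping, not the telescoping, to be the main obstacle. If needed, I would state it as a small preliminary observation before the calculation.
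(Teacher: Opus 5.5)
Your proposal is correct and follows the paper's route. The paper's proof is the displayed chain in the corollary itself: split at $\Psi(f_p,f_s)$, substitute the $W_1$ shape ($f_p(\Psi)=f_s(\Psi)+1$, and past $\Psi$ both $f_s(n)=\vartheta(n-1)-1$ and $f_p(n)=0$), then telescope.

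Your preliminary observation, that an arbitrary primary and secondary of $[f]$ agree on $\omega_+$ with a $W_1$-pair, is a step the paper leaves implicit. It goes through as you describe, because auxiliary functions admit no contractions, so the auxiliary functions lying in $r_b$ are exactly those in $b$.
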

\begin{corollary}[Paradise City Corollary II]\label{PC Cor II} For all primary auxiliary functions $\pi_1$ and $\pi_2$ of $\left[f\right]$ we have that $\pi_1\left(n\right)=\pi_2\left(n\right)$ for all $n\in\omega_+$ . Thus for all $m$ in $\omega_+$ we have 
\begin{align*}
\sum\limits_{n=1}^{m}\left(\pi_1\left(n\right)\prod\limits_{j=n}^{m-1}\left(\vartheta\left(j\right)\right)\right)=\sum\limits_{n=1}^{m}\left(\pi_2\left(n\right)\prod\limits_{j=n}^{m-1}\left(\vartheta\left(j\right)\right)\right)
\end{align*}
 For all secondary auxiliary functions $\delta_1$ and $\delta_2$ of $\left[f\right]$ we have that $\delta_1\left(n\right)=\delta_2\left(n\right)$ for all $n\in\omega_+$ . Thus for all $m$ in $\omega_+$ we have 
\begin{align*}
\sum\limits_{n=1}^{m}\left(\delta_1\left(n\right)\prod\limits_{j=n}^{m-1}\left(\vartheta\left(j\right)\right)\right)=\sum\limits_{n=1}^{m}\left(\delta_2\left(n\right)\prod\limits_{j=n}^{m-1}\left(\vartheta\left(j\right)\right)\right)
\end{align*}
\end{corollary}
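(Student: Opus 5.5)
The plan is to apply the \nameref{Paradise City Lemma} \ref{Paradise City Lemma} symmetrically. Let $\pi_1,\pi_2$ be primary auxiliary functions of $\left[f\right]$. Each is a finite function lying in $\left[f\right]$. By the definition of a real number, each of them is an auxiliary function of every element of $\left[f\right]$; in particular $\pi_2$ is an auxiliary function of $\pi_1$ and vice versa. Applying the lemma to the finite function $\pi_1$ with auxiliary function $\pi_2$ gives $S_m(\pi_1)\leq S_m(\pi_2)$ for every $m\in\omega_+$, where
\begin{align*}
S_m(u):=\sum\limits_{n=1}^{m}\left(u\left(n\right)\prod\limits_{j=n}^{m-1}\left(\vartheta\left(j\right)\right)\right).
\end{align*}
Exchanging the roles gives the reverse inequality, so $S_m(\pi_1)=S_m(\pi_2)$ for all $m$. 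This already yields the displayed equality of truncations; the same argument works verbatim for two secondary functions $\delta_1,\delta_2$.

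Next I recover pointwise equality from equality of truncations by induction on $m$. For $m=1$ we have $S_1(u)=u(1)$, so $\pi_1(1)=\pi_2(1)$. For the step, note that $S_{m+1}(u)=\vartheta(m)S_m(u)+u(m+1)$. Equality of $S_m$ and of $S_{m+1}$ for both functions therefore forces $\pi_1(m+1)=\pi_2(m+1)$, after cancelling, which is legitimate in $\omega$. Notice that this step does not even need the digit bounds $u(k)<\vartheta(k-1)$; only the recursion is used.

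The main obstacle is justifying that the lemma really applies with an arbitrary auxiliary function of the class. The footnote to the lemma says it is stated for ``any of the auxiliary functions of $f$'', so I will read it that way. Concretely, I must check that $\pi_2\in b$, the underlying base-set element with $\left[f\right]=r_b$ (or the merged zero class), witnesses $\pi_1\in r_b$. This holds trivially: $\pi_2$ matches itself via the empty sequence of contractions, so the witness can be taken to be $\pi_2$. For $\pi_1$ to match $\pi_2$ on each range of inspection we need $\pi_1(0)$ and $\pi_2(0)$ to agree or to be reconciled by sign parity. I would handle this with the \nameref{Equivalence Lemma} \ref{Equivalence Lemma} and by noting that the lemma's inequality only concerns positions in $\omega_+$.

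The zero class needs separate attention, since there the two signs are merged. In that case the \nameref{Null Lemma} \ref{Null Lemma} settles everything directly: all such functions vanish on $\omega_+$.
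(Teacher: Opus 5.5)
\textbf{The proposal has a genuine gap: its key step is circular.} Everything rests on the claim that $\pi_2$ is an auxiliary function of $\pi_1$ (and $\delta_2$ of $\delta_1$). The definition of a real number does not give this, because it is existential: $u\in r_b$ only means that \emph{some} $h\in b$ is reached from $u$ by contractions on every range of inspection.

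\textbf{Why the claim is equivalent to the conclusion.} $\pi_1$ is itself an auxiliary function, so $\pi_1(n+1)<\vartheta(n)$ for every $n\in\omega_+$. No contraction applies to it, so only the trivial sequences are applicable. Hence the only $h$ that $\pi_1$ can be matched with on every $[0,N]_\omega$ is $\pi_1$ itself. So ``$\pi_2$ is an auxiliary function of $\pi_1$'' is equivalent to $\pi_1=\pi_2$, which is the corollary. It is in fact stronger, since it also forces $\pi_1(0)=\pi_2(0)$; that fails for $(0;0,5,0,\dots)$ and $(2;0,5,0,\dots)$, which both lie in $\left[\tfrac12\right]$ in base ten.

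\textbf{Your attempted justifications do not close this.} The observation that $\pi_2$ matches itself via the empty sequence only shows $\pi_2\in r_b$. The Equivalence Lemma only changes position $0$. Neither makes $\pi_2$ a witness for $\pi_1$.

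\textbf{The Paradise City Lemma cannot be read as you propose.} It does not hold for ``any auxiliary function lying in the class''. In base ten, $(0;0,5,0,\dots)$ and its secondary $(0;0,4,9,9,\dots)$ lie in the same class, yet in your notation $S_m$ of the first exceeds $S_m$ of the second by $1$ for every $m\geq2$. The lemma's proof closes only because the Shifting Theorem carries the \emph{witness} of $f$ over to $\mathcal{C}(g)$. So the lemma says nothing about two functions neither of which is the other's witness.

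\textbf{How to repair it.} The observation above is exactly what proves the corollary, but the work is done by the construction of the base set, not by the Paradise City Lemma.
\begin{itemize}
\item If an auxiliary function $u$ lies in $r_b$, its witness is $u$ itself, so $u\in b$. Thus $\pi_1,\pi_2\in b$. For $\left[0\right]$ they lie in $b_1\cup b_2$, where everything vanishes on $\omega_+$, as you say.
\item Primary auxiliary functions in one $b$ agree on $\omega_+$ by the definition of $R_1$ or $R_2$.
\item A secondary auxiliary function can only lie in some $b\in F$, as the second member of a pair in $W_1$. It is determined on $\omega_+$ by its primary partner: $N^{h_s}_{\min}$ is the last non-zero position of $h_p$, and $h_s$ copies $h_p$ below it, is one less there, and equals $\vartheta(k-1)-1$ beyond it. Hence $\delta_1$ and $\delta_2$ also agree on $\omega_+$.
\end{itemize}
The truncation identities then follow at once from pointwise agreement. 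Your induction recovering digits from truncations is correct but is not needed.
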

\begin{corollary}[Paradise City Corollary III]\label{Maximum Corollary} Combined all together we have that for all $m\in\omega_+$
\begin{align*}
\max\left(\left\{f\left(m\right)\middle|f\in\left[x\right]\right\}\right)=\sum\limits_{n=1}^{m}\left(\pi\left(n\right)\prod\limits_{j=n}^{m-1}\left(\vartheta\left(j\right)\right)\right)
\end{align*}
where $\pi$ is any primary auxiliary function of $\left[x\right]$.
\end{corollary}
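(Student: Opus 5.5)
The plan is to prove the two inequalities that together give the stated equality. Fix $m\in\omega_+$ and a primary auxiliary function $\pi$ of $\left[x\right]$, and write
\begin{align*}
S_m:=\sum\limits_{n=1}^{m}\left(\pi\left(n\right)\prod\limits_{j=n}^{m-1}\left(\vartheta\left(j\right)\right)\right).
\end{align*}
By the \nameref{PC Cor II} \ref{PC Cor II}, $S_m$ does not depend on which primary auxiliary function is chosen, so the right-hand side is well defined.

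\textbf{Upper bound.} Let $f\in\left[x\right]$ be arbitrary. The quantity $S_m$ is the weighted truncation at $m$ computed from the auxiliary function of $f$. If that auxiliary function is primary, the \nameref{Paradise City Lemma} \ref{Paradise City Lemma} gives directly
\begin{align*}
f\left(m\right)\leq\sum\limits_{n=1}^{m}\left(f\left(n\right)\prod\limits_{j=n}^{m-1}\left(\vartheta\left(j\right)\right)\right)\leq S_m,
\end{align*}
where the first inequality holds because the $n=m$ term is $f\left(m\right)$ and all other terms are non-negative. If the auxiliary function $h_f$ is secondary, the \nameref{PC Cor I} \ref{PC Cor I} shows that the secondary truncation never exceeds the primary one (it equals it, or is one less). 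So in every case the truncation is bounded by $S_m$, and hence $f\left(m\right)\leq S_m$. A further subtlety arises when $f$ lies in the zero class or when there is a sign-slot discrepancy at position $0$. These cases are handled by the \nameref{Null Lemma} \ref{Null Lemma} (where both sides are $0$) and by the fact that position $0$ plays no role in the sums.

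\textbf{Attainment.} I would exhibit an element of $\left[x\right]$ whose value at $m$ is exactly $S_m$. Take $\pi\in\left[x\right]$ itself and apply the complete broadening $\mathcal{B}_{k=q}^r$ on $\left[1,m-1\right]_\omega$, that is, broaden about $1$ as many times as possible, then about $2$, and so on up to $m-1$. The inductive computation \eqref{eq:complete-broadening} from the proof of the \nameref{Paradise City Lemma} \ref{Paradise City Lemma} shows that
\begin{align*}
\mathcal{B}_{k=q}^r\left(\pi\right)\left(m\right)=S_m.
\end{align*}
The \nameref{Shifting Theorem} \ref{Shifting Theorem} places $\mathcal{B}_{k=q}^r\left(\pi\right)$ in the same real number, because $\pi\in r_b$ trivially: the empty sequence of contractions matches $\pi$ with itself on every range of inspection. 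The set $\left\{f\left(m\right)\middle|f\in\left[x\right]\right\}$ is therefore non-empty, bounded above by $S_m$, and attains $S_m$, so its maximum exists and equals $S_m$.

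\textbf{Main obstacle.} The delicate step is the upper bound in the secondary case. The \nameref{Paradise City Lemma} \ref{Paradise City Lemma} is phrased with ``the auxiliary function $h_f$'', which may be secondary, so I must check carefully that the comparison between the secondary and primary truncations in the \nameref{PC Cor I} \ref{PC Cor I} really holds for every $m$, and not just for $m\geq\Psi$. For $m<\Psi$ the corollary gives equality. For $m\geq\Psi$ it gives the primary truncation minus $1$. Together these cover every $m$. I also need to confirm that the secondary and primary functions belong to the same class $b$ by the construction of the base set, so that both are legitimately ``auxiliary functions of $\left[x\right]$''.
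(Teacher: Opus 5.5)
Your proposal is correct and is essentially the argument the paper compresses into ``combined all together'': the upper bound comes from the Paradise City Lemma together with Corollaries I and II, with Corollary I handling the case where $f$ contracts to a secondary auxiliary function. Attainment comes from completely broadening $\pi$ on $[1,m-1]_\omega$ via \eqref{eq:complete-broadening}, which the Shifting Theorem keeps inside $[x]$.
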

\begin{example} Return to the two names of $\nicefrac{1}{2}$ in base ten, where
$\vartheta\left(n\right)=10$ for every $n$ and position $n$ carries weight
$\nicefrac{1}{10^{n-1}}$. Write them again as
\begin{align*}
f_p=\left(0;0,5,0,0,0,\dots\right)\qquad\text{and}\qquad f_s=\left(0;0,4,9,9,9,\dots\right),
\end{align*}
so that $f_p$ is primary, $f_s$ is secondary, and $\Psi\left(f_p,f_s\right)=2$.\\
The Paradise City Lemma and its corollaries are statements about truncations rather than about digits, so it is worth seeing the truncations themselves. The first few levels of inspection read
\begin{align*}
\begin{array}{r|ccccc}
m&1&2&3&4&5\\\hline
\prod_{j=1}^{m-1}\left(\vartheta\left(j\right)\right)&1&10&100&1000&10000\\
\sum_{n=1}^{m}\left(f_p\left(n\right)\prod_{j=n}^{m-1}\left(\vartheta\left(j\right)\right)\right)&0&5&50&500&5000\\
\sum_{n=1}^{m}\left(f_s\left(n\right)\prod_{j=n}^{m-1}\left(\vartheta\left(j\right)\right)\right)&0&4&49&499&4999
\end{array}
\end{align*}
Below $\Psi\left(f_p,f_s\right)=2$ the two truncations agree; from $\Psi\left(f_p,f_s\right)$ onwards the secondary sits exactly one unit beneath the primary, at every level and forever. That is precisely what \nameref{PC Cor I} \ref{PC Cor I} asserts, and it is worth noting how little room the statement leaves: not that the gap is small, or that it shrinks, but that it is the constant $1$.\\
Dividing the level-$m$ truncation by $\prod_{j=1}^{m-1}\left(\vartheta\left(j\right)\right)$ returns the value being approximated, and recovers the familiar picture: the primary reads $0,\;0.5,\;0.5,\;0.5,\dots$ and the secondary reads $0,\;0.4,\;0.49,\;0.499,\dots$, the second climbing towards the first by one unit of the current scale at each step without ever arriving. The two functions name the same real number, and a single unit of scale is the whole of the difference between the two names.
\end{example}
\subsection{Domination Theorem}
\begin{theorem}[Domination Theorem]\label{Domination Theorem}\noindent\\
For any real number $[x]$ there do not exist two functions $f$ and $g$ in the real number $[x]$ such that $g$ strictly dominates $f$.
\end{theorem}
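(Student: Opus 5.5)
We argue by contradiction. Suppose $f,g\in[x]$ with $g$ dominating $f$, and fix $M\in\omega_+$ with $g(M)>f(M)$. The idea is to push all the material of $g$ and $f$ on $\left[1,M\right]_\omega$ into position $M$ by complete broadening, as in the proof of the \nameref{Paradise City Lemma} \ref{Paradise City Lemma}, and then to show that $g$ holds strictly more material at some finite stage than any member of $[x]$ can. The natural comparison is with the maximum from \nameref{Maximum Corollary} \ref{Maximum Corollary}. However, $f$ need not attain that maximum at level $M$. So instead of comparing $g$ directly with the primary auxiliary function, we first transport $f$ onto its auxiliary function and drag $g$ along with it.

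Concretely, pick a primary auxiliary function $\pi$ of $[x]$. Since $f\in[x]$, for the number of inspection $N:=M$ there is a sequence of contractions $\mathcal{C}_{k=q}^r$ applicable to $f$ with $\mathcal{C}_{k=q}^r(f)(n)=h(n)$ on $\left[0,M\right]_\omega$, where $h$ is an auxiliary function of $[x]$. Because $g$ dominates $f$ on all of $\omega_+\supset\text{con}(\mathcal{C}_{k=q}^r)$, the \nameref{Submission Theorem} \ref{Submission Theorem} shows that $\mathcal{C}_{k=q}^r$ is applicable to $g$. It also preserves pointwise differences, so $g':=\mathcal{C}_{k=q}^r(g)$ dominates $f':=\mathcal{C}_{k=q}^r(f)$ and still satisfies $g'(M)>f'(M)=h(M)$. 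By the \nameref{Shifting Theorem} \ref{Shifting Theorem}, $g'\in[x]$. We now have $g'\geq h$ on $\left[1,M\right]_\omega$ with strict inequality at $M$, so
\begin{align*}
\sum_{n=1}^{M}g'(n)\prod_{j=n}^{M-1}\vartheta(j)\geq\sum_{n=1}^{M}h(n)\prod_{j=n}^{M-1}\vartheta(j)+1 .
\end{align*}

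We then split according to the type of $h$. If $h$ is primary, this inequality contradicts the \nameref{Paradise City Lemma} \ref{Paradise City Lemma} (equivalently \nameref{Maximum Corollary} \ref{Maximum Corollary}) applied to $g'$. If $h$ is secondary, \nameref{PC Cor I} \ref{PC Cor I} says the truncation of $h$ is either equal to that of $\pi$, when $M<\Psi(\pi,h)$, giving the same contradiction, or exactly one less, when $M\geq\Psi(\pi,h)$. In that last subcase the bound above only shows that $g'$ matches the maximum at level $M$, which is not yet a contradiction. This is the main obstacle. To get around it we enlarge the inspection: choose the number of inspection $N:=\max(\{M,\Psi(\pi,h)\})+1$ instead of $M$, and run the same transport. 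The gap at $M$ persists, and $g'$ still dominates a function agreeing with $h$ on $\left[0,N\right]_\omega$. At level $N$, since $N>M$, the surplus of at least one unit at $M$ becomes at least $\prod_{j=M}^{N-1}\vartheta(j)\geq2$ units. The truncation of $g'$ at $N$ therefore exceeds that of $h$ by at least $2$, hence exceeds that of $\pi$ by at least $1$, contradicting the \nameref{Paradise City Lemma} \ref{Paradise City Lemma}.

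The case $[x]=[0]$ can be handled directly by the \nameref{Null Lemma} \ref{Null Lemma}: every member of $[0]$ vanishes on $\omega_+$, so strict domination is impossible there.
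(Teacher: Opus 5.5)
Your proof is correct and is essentially the paper's argument. You transport $f$ onto its auxiliary function, drag $g$ along by the Submission Theorem, keep the result in $[x]$ by the Shifting Theorem, and push the unit surplus at $M$ one level deeper so it becomes at least $\vartheta(M)\geq 2$, which exceeds the gap of at most $1$ between truncations of auxiliary functions of one real number (Paradise City Lemma with Corollaries I and II). The paper simply takes the number of inspection $N=M+1$ from the outset (with $M=\Psi(f,g)$), so your preliminary pass at $N=M$ and the case split on the type of $h$ and on $\Psi(\pi,h)$ are unnecessary, though harmless.
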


\begin{intuition}[Domination Theorem] There may not be any two functions representing the same real number where one of them is larger than the other.
\end{intuition}

\begin{proof} Suppose for the sake of contradiction, that for some $[x]\in\mathcal{S}_\mathbb{R}$ there exist $f\in\left[x\right]$ and $g\in\left[x\right]$ such that $g$ strictly dominates $f$. We know by the definition of $\left[x\right]$ that 
\begin{align*}
\forall N\in\omega_+,\exists\mathcal{C}_{k=q}^r\in\mathcal{FS},\forall n\in\left[0,N\right]_\omega,\mathcal{C}_{k=q}^r\left(f\right)\left(n\right)=h_f\left(n\right)
\end{align*} where $h_f\in b\in B$.
\\
Now as we suppose that for all $n\in\omega_+$ we have $f\left(n\right)\leq g\left(n\right)$ any sequence of contractions applicable to $f$ is also applicable to $g$, by the \nameref{Submission Theorem} \ref{Submission Theorem}. Take $N=M+1$ where $M:=\Psi\left(f,g\right)$. We know that there exists a sequence of contractions mapping $f$ to $h_f$ on the interval $\left[0,N\right]_\omega$, call the sequence $\mathcal{C}_{k=q}^r$.
\\
By the \nameref{Shifting Theorem} \ref{Shifting Theorem} $\mathcal{C}_{k=q}^r\left(g\right)\in\left[g\right]=\left[x\right]=\left[f\right]$. By the \nameref{Submission Theorem} \ref{Submission Theorem} we have
\begin{align*}
\begin{array}{lll@{}}
\forall n\in\left[1,N\right]_\omega&\mathcal{C}_{k=q}^r\left(g\right)\left(n\right)\geq\mathcal{C}_{k=q}^r\left(f\right)\left(n\right)=h_f\left(n\right)&\text{and}\\
&\mathcal{C}_{k=q}^r\left(g\right)\left(M\right)\geq\mathcal{C}_{k=q}^r\left(f\right)\left(M\right)+1=h_f\left(M\right)+1
\end{array}
\end{align*}
\footnote{This is due to the result that $\mathcal{C}_{k=q}^r\left(f\right)\left(M\right)+g\left(M\right)=f\left(M\right)+\mathcal{C}_{k=q}^r\left(g\right)\left(M\right)$. Which means that $\mathcal{C}_{k=q}^r\left(g\right)\left(M\right)-\mathcal{C}_{k=q}^r\left(f\right)\left(M\right)=g\left(M\right)-f\left(M\right)>0$ and thus $\mathcal{C}_{k=q}^r\left(g\right)\left(M\right)>\mathcal{C}_{k=q}^r\left(f\right)\left(M\right)$.}Therefore, by the \nameref{Paradise City Lemma} \ref{Paradise City Lemma} we have that
\begin{align*}
\sum\limits_{n=1}^N\left(h_g\left(n\right)\prod\limits_{j=n}^{N-1}\left(\vartheta\left(j\right)\right)\right)&\geq\sum\limits_{n=1}^N\left(\mathcal{C}_{k=q}^r\left(g\right)\left(n\right)\prod\limits_{j=n}^{N-1}\left(\vartheta\left(j\right)\right)\right)\\
&\geq\sum\limits_{n=1}^N\left(h_f\left(n\right)\prod\limits_{j=n}^{N-1}\left(\vartheta\left(j\right)\right)\right)+\vartheta\left(M\right)\\
&\geq\sum\limits_{n=1}^N\left(h_f\left(n\right)\prod\limits_{j=n}^{N-1}\left(\vartheta\left(j\right)\right)\right)+2
\end{align*}
where $h_g$ is an auxiliary function of $g$ and $\mathcal{C}_{k=q}^r\left(g\right)$. Hence, by the \nameref{PC Cor I} \ref{PC Cor I} and \nameref{PC Cor II} \ref{PC Cor II} we have that $h_f$ and $h_g$ are not in the same real number, and thus neither are $f$ and $g$.
\end{proof}

\begin{corollary}[Domination Theorem Corollary]\label{Domination Theorem Corollary} By expanding the \nameref{Domination Theorem} \ref{Domination Theorem} out and applying it twice we obtain that for all $[x]\in\mathcal{S}_{\mathbb{R}}$, for all $f,g\in\left[x\right]$ we have that either
\begin{align*}
\begin{array}{ll@{}}
\left(\exists n_1\in\omega_+,f\left(n_1\right)>g\left(n_1\right)\wedge\exists n_2\in\omega_+,g\left(n_2\right)>f\left(n_2\right)\right)&\text{or}\\
\left(\forall M\in\omega_+,g\left(M\right)=f\left(M\right)\right)
\end{array}
\end{align*}
\end{corollary}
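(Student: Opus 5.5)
The Domination Theorem Corollary is a case analysis on how $f$ and $g$ compare pointwise on $\omega_+$, combined with two applications of the \nameref{Domination Theorem} \ref{Domination Theorem}, one in each direction. Fix $[x]\in\mathcal{S}_\mathbb{R}$ and $f,g\in[x]$. Suppose the second alternative fails, so there is some $M\in\omega_+$ with $g(M)\neq f(M)$. By the remark following the \nameref{Unicity of Real Numbers} \ref{Unicity of Real Numbers}, this is exactly the situation $\Psi(f,g)\neq0$, so $M_0:=\Psi(f,g)$ is a genuine position in $\omega_+$ where the two functions differ. We must then produce both $n_1$ and $n_2$.

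Since $f(M_0)\neq g(M_0)$, either $f(M_0)>g(M_0)$ or $g(M_0)>f(M_0)$; by symmetry of the statement in $f$ and $g$, assume the former. Then $n_1:=M_0$ works. For $n_2$, argue by contradiction: if no $n\in\omega_+$ had $g(n)>f(n)$, then $f(n)\geq g(n)$ for every $n\in\omega_+$, i.e.\ $f$ dominates $g$. Together with $f(M_0)>g(M_0)$, this makes $f$ strictly dominate $g$ in the sense fixed in the intuition after the \nameref{Submission Theorem} \ref{Submission Theorem}. Both functions lie in $[x]$, which contradicts the \nameref{Domination Theorem} \ref{Domination Theorem}. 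Hence some $n_2$ with $g(n_2)>f(n_2)$ exists. The case $g(M_0)>f(M_0)$ is identical with the roles swapped, which is the second application of the theorem.

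There is no real obstacle; the only care point is the zero class. $[0]$ is formed as a union $r_{b_1}\cup r_{b_2}$ in the \nameref{Set of Reals} \ref{Set of Reals}, so one should confirm the \nameref{Domination Theorem} is being applied to this merged class as well. Alternatively, the zero case can be settled directly: by the \nameref{Null Lemma} \ref{Null Lemma}, every $f,g\in[0]$ vanish on $\omega_+$, so the second alternative holds outright. Noting that the two alternatives are not required to be exclusive finishes the plan.
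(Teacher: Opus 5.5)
Your proposal is correct and matches the paper's own (one-line) justification: apply the Domination Theorem to the pair $(f,g)$ and to the pair $(g,f)$, so that any position in $\omega_+$ where the two functions differ forces a strict inequality in each direction. Your separate handling of $[0]$ through the Null Lemma, using that every representative of $[0]$ vanishes on $\omega_+$, is a harmless extra safeguard that the paper does not spell out.
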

\subsection{Tying Theorem}
\begin{theorem}[Tying Theorem]\label{Tying Theorem}\noindent\\
A function $f$ is finite if and only if there exists a constant $C\in\omega_+$ such that 
\begin{align*}
\sum\limits_{n=1}^K\left(f\left(n\right)\prod\limits_{p=n}^{K-1}\left(\vartheta\left(p\right)\right)\right)<C\prod\limits_{p=1}^{K-1}\left(\vartheta\left(p\right)\right)
\end{align*} is true for any $K$ in $\omega_+$.
\end{theorem}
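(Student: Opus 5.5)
For the forward direction I will lean on the \nameref{Paradise City Lemma} \ref{Paradise City Lemma}. If $f$ is finite, pick its primary auxiliary function $h_f$. The lemma bounds the truncation of $f$ by that of $h_f$, for every $K\in\omega_+$:
\begin{align*}
\sum_{n=1}^K\left(f\left(n\right)\prod_{p=n}^{K-1}\left(\vartheta\left(p\right)\right)\right)\leq\sum_{n=1}^K\left(h_f\left(n\right)\prod_{p=n}^{K-1}\left(\vartheta\left(p\right)\right)\right).
\end{align*}
Since $h_f\left(n\right)\leq\vartheta\left(n-1\right)-1$ for $n\geq2$, the tail $\sum_{n=2}^K h_f\left(n\right)\prod_{p=n}^{K-1}\vartheta\left(p\right)$ telescopes to at most $\prod_{p=1}^{K-1}\vartheta\left(p\right)-1$. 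This is the same computation as in \nameref{PC Cor I} \ref{PC Cor I}. The whole right-hand side is therefore strictly less than $\left(h_f\left(1\right)+1\right)\prod_{p=1}^{K-1}\vartheta\left(p\right)$, so $C:=h_f\left(1\right)+1$ works uniformly in $K$.

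For the converse, I assume the bound with constant $C$. Write $T_K$ for the level-$K$ truncation of $f$ and $P_K:=\prod_{p=1}^{K-1}\vartheta\left(p\right)$. My plan is to construct a candidate auxiliary function $h$ explicitly and then show $f\in r_b$ for the appropriate $b$. The hypothesis says exactly that $T_K/P_K<C$. Because $T_{K+1}=\vartheta\left(K\right)T_K+f\left(K+1\right)\geq\vartheta\left(K\right)T_K$, the sequence $T_K/P_K$ is nondecreasing and bounded, hence it converges.

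I then define the digits of the limit by reduction. Let $a_1\left(K\right):=\lfloor T_K/P_K\rfloor$, which is the rational floor of Subsection \ref{Methods}; it is nondecreasing in $K$ and bounded by $C-1$, so it is eventually constant, and I set $h\left(1\right)$ to that eventual value. Continuing position by position, $h\left(m\right)$ is the eventual value of $\lfloor T_K\prod_{p=1}^{m-1}\vartheta\left(p\right)/P_K\rfloor-\vartheta\left(m-1\right)\lfloor T_K\prod_{p=1}^{m-2}\vartheta\left(p\right)/P_K\rfloor$. Each of these is eventually constant by the same monotone-bounded argument, and each lies in $\left[0,\vartheta\left(m-1\right)-1\right]_\omega$. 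I put $h\left(0\right):=f\left(0\right)$.

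If $h$ is not primary, it is secondary, and I replace it by its paired primary as in $W_1$. Either way, $h$ belongs to some $b\in B$ after the sign identification.

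The main obstacle is producing, for each number of inspection $N$, a finite sequence of contractions $\mathcal{C}$ with $\mathcal{C}\left(f\right)$ agreeing with $h$ on $\left[0,N\right]_\omega$. I would first choose $K\geq N$ large enough that the stabilised values are already reached at level $K$. I then perform carries right-to-left on $\left[1,K\right]_\omega$: contract about $K-1$ as many times as $f\left(K\right)$ allows, then about $K-2$, and so on down to $1$. This is ordinary finite carrying. By the identity \eqref{eq:complete-broadening} run in reverse, the resulting digits on $\left[1,K\right]_\omega$ are the mixed-radix digits of $T_K$, which coincide with $h$ on $\left[1,N\right]_\omega$ by the choice of $K$.

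The delicate case is when $h$ is secondary, where the digits of $T_K$ never reach the primary's $N_{\min}$ digit. There I would instead match against the secondary representative, which also lies in $b$. That is allowed because the definition of $r_b$ only asks for some $h\in b$, but it requires $h$ to be fixed independently of $N$, so I would choose primary versus secondary once, according to whether the eventual digits are all $\vartheta-1$ from some point. Finally, the Null case ($T_K=0$) is handled by the \nameref{Null Lemma} \ref{Null Lemma}.
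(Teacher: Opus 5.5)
Your forward direction is the paper's argument: the Paradise City Lemma followed by the telescoping bound, giving $C=h_f(1)+1$. Your converse is also correct, but it reaches the auxiliary function by a different route.

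The paper defines $h$ extremally. $D_1$ is the largest value any contraction sequence can place at position $1$, and $D_n$ is the largest value at position $n$ among sequences already showing $D_1,\dots,D_{n-1}$. With this definition, agreement with $h$ on each $[0,N]_\omega$ holds automatically. The bound $D_n\leq\vartheta(n-1)-1$ holds because otherwise one more contraction about $n-1$ would exceed $D_{n-1}$. The only substantive input is that $\mathcal{C}(f)(1)<C$ for every contraction sequence. The paper gets this from the general invariance \eqref{eq:contraction-invariance} of weighted truncations under arbitrary contraction sequences, proved by reordering the contractions in decreasing order. It then reuses that identity in the Boundedness and Extended Extension Theorems.

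You instead write the digits down explicitly. Each digit is the stabilised difference $A_m-\vartheta(m-1)A_{m-1}$, where $A_m(K)=\lfloor T_K/\prod_{p=m}^{K-1}\vartheta(p)\rfloor$ is nondecreasing in $K$ and bounded. The witnessing sequence is right-to-left carrying at a level $K$ past all stabilisation points for $m\leq N$. This needs only two facts: a single contraction about $j\in[1,K-1]_\omega$ leaves $T_K$ unchanged (a one-line check), and the composition rule for rational floors. It also shows concretely why $h$ can come out secondary. At every level the carried digits of $(0;0,9,9,\dots)$ in base ten are $0,9,\dots,9$, so the value $1$ is approached but never reached. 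The two constructions give the same $h$: for any contraction sequence the level-$m$ truncation of $\mathcal{C}(f)$ is at most $A_m$, and your carrying attains it.

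Some points to tidy up:
\begin{itemize}
\item Drop ``hence it converges''. A bounded monotone sequence of rationals need not converge in $\mathbb{Q}$, and invoking its limit while constructing the reals would be circular. You never use the limit anyway, only the eventual constancy of the integer floors.
\item The identity \eqref{eq:complete-broadening} ``run in reverse'' is the wrong citation, since it concerns broadening $f$ rather than carrying it. What you actually use is the single-step invariance of $T_K$. After carrying, positions $2,\dots,K$ lie below their bases, so the digit at $m\leq K$ is $A_m(K)-\vartheta(m-1)A_{m-1}(K)$.
\item Delete the sentence that replaces a secondary $h$ by its paired primary. Matching against that primary fails: no contraction sequence applied to $(0;0,9,9,\dots)$ ever shows a $1$ at position $1$. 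Just match against $h$ itself, as your last paragraph ends up doing. It lies in some $b\in B$ whether primary or secondary, because every secondary auxiliary function has its $W_1$ partner.
\item The null case needs no separate treatment. Your construction gives $h\equiv0$ on $\omega_+$, a primary auxiliary function lying in $W_2$, matched by the empty sequence. Citing the Null Lemma, which is stated for finite functions, to establish finiteness looks circular.
\end{itemize}
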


\begin{intuition}[Tying Theorem] Any function representing a real number must have a finite magnitude and therefore its magnitude has to be less than some natural number. On the other hand, if we know that a magnitude of a function is smaller than some natural number then this function can define a real number.
\end{intuition}

\begin{proofidea} Forwards, the \nameref{Paradise City Lemma} \ref{Paradise City Lemma} bounds the truncations of $f$ by those of its primary auxiliary function, and telescoping the maximal-digit tail bounds those in turn by $\left(h_f\left(1\right)+1\right)\prod_{p=1}^{K-1}\vartheta\left(p\right)$, which is the required constant. Backwards, the bound is first converted into the statement that no sequence of contractions can push the value at position $1$ up to $C$; the auxiliary function is then obtained position by position, and the identity that contractions preserve weighted truncations above their condition set is what guarantees the construction is consistent and does not stall.
\end{proofidea}

\begin{proof}\noindent\\
$\left(\implies\right):$ Suppose $f\in \mathcal{N}_{\mathcal{F}in}$. By the \nameref{Paradise City Lemma} \ref{Paradise City Lemma} we have that for all $K$ in $\omega_+$
\begin{align*}
\sum\limits_{n=1}^{K}\left(f\left(n\right)\prod\limits_{j=n}^{K-1}\left(\vartheta\left(j\right)\right)\right)&\leq\sum\limits_{n=1}^K\left(h_f\left(n\right)\prod\limits_{p=n}^{K-1}\left(\vartheta\left(p\right)\right)\right)\\
&=h_f\left(1\right)\prod\limits_{p=1}^{K-1}\left(\vartheta\left(p\right)\right)+\sum\limits_{n=2}^K\left(h_f\left(n\right)\prod\limits_{p=n}^{K-1}\left(\vartheta\left(p\right)\right)\right)\\
&\leq h_f\left(1\right)\prod\limits_{p=1}^{K-1}\left(\vartheta\left(p\right)\right)+\sum\limits_{n=2}^K\left(\left(\vartheta\left(n-1\right)-1\right)\prod\limits_{p=n}^{K-1}\left(\vartheta\left(p\right)\right)\right)\\
&=h_f\left(1\right)\prod\limits_{p=1}^{K-1}\left(\vartheta\left(p\right)\right)+\sum\limits_{n=1}^{K-1}\left(\prod\limits_{p=n}^{K-1}\left(\vartheta\left(p\right)\right)\right)-\sum\limits_{n=2}^{K}\left(\prod\limits_{p=n}^{K-1}\left(\vartheta\left(p\right)\right)\right)\\
&=h_f\left(1\right)\prod\limits_{p=1}^{K-1}\left(\vartheta\left(p\right)\right)+\prod\limits_{p=1}^{K-1}\left(\vartheta\left(p\right)\right)-1\\
&<\left(h_f\left(1\right)+1\right)\prod\limits_{p=1}^{K-1}\left(\vartheta\left(p\right)\right)
\end{align*}
By taking $C=h_f\left(1\right)+1$ we have proven that
\begin{align*}
f\in \mathcal{N}_{\mathcal{F}in}\implies\exists C\in\omega_+,\forall K\in\omega_+,\sum\limits_{n=1}^K\left(f\left(n\right)\prod\limits_{p=n}^{K-1}\left(\vartheta\left(p\right)\right)\right)<C\prod\limits_{p=1}^{K-1}\left(\vartheta\left(p\right)\right)
\end{align*}
$\left(\impliedby\right):$ Suppose that there exists a constant $C$ in $\omega_+$ such that for any $K$ in $\omega_+$ we have $\sum\limits_{n=1}^K\left(f\left(n\right)\prod\limits_{p=n}^{K-1}\left(\vartheta\left(p\right)\right)\right)<C\prod\limits_{p=1}^{K-1}\left(\vartheta\left(p\right)\right)$. This means that $\forall\mathcal{C}_{k=q}^r\in\mathcal{FS},\mathcal{C}_{k=q}^r\left(f\right)\left(1\right)<C$. We shall prove this by first showing that for all $f:\omega\rightarrow\omega$ and for all sequences of contractions $\mathcal{C}_{k=q}^r$ applicable to $f$ and for all $K$ in $\omega_+$ such that $K\geq\max\left(\text{con}\left(\mathcal{C}_{k=q}^r\right)\right)$ we have 
\begin{equation}\label{eq:contraction-invariance}
\sum\limits_{n=1}^{K}\left(f\left(n\right)\prod\limits_{j=n}^{K-1}\left(\vartheta\left(j\right)\right)\right)=\sum\limits_{n=1}^{K}\left(\mathcal{C}_{k=q}^r\left(f\right)\left(n\right)\prod\limits_{j=n}^{K-1}\left(\vartheta\left(j\right)\right)\right)
\end{equation}
To prove this we have to prove one result first. For any function $f:\omega\rightarrow\omega$ and for any sequence of contractions $\mathcal{C}_{k=q}^r$ applicable to $f$ we have that $\mathcal{C}_{k=q}^r\left(f\right)={}^o\mathcal{C}_{k=q}^r\left(f\right)$ where ${}^o\mathcal{C}_{k=q}^r$ is a special sequence of contractions. This is the sequence where we first apply the contractions about the largest values and then continuing in a decreasing order. Formally, let 
\begin{align*}
\begin{array}{lll@{}}
\kappa&:=\left|\left\{\mathcal{C}\left(k\right)\middle|k\in\left[q,r\right]_\omega\right\}\right|\\
t_i&:=\max\left(\left\{m\in\omega_+\middle|c_m\in\left\{\mathcal{C}\left(k\right)\middle|k\in\left[q,r\right]_\omega\right\}-\left\{c_{t_n}\middle|n\in\left[1,i-1\right]_\omega\right\}\right\}\right)&\text{for all $i$ in $\left[1,\kappa\right]_\omega$}\\
v_i&:=\left|\left\{k\in\left[q,r\right]_\omega\middle|\mathcal{C}\left(k\right)=c_{t_i}\right\}\right|&\text{for all $i$ in $\left[1,\kappa\right]_\omega$}
\end{array}
\end{align*}
We define ${}^o\mathcal{C}\in\mathcal{CB}$ as
\begin{align*}
{}^o\mathcal{C}\left(k\right)=\begin{cases}
c_{t_i}&\text{if $k\in\left[q+\sum\limits_{j=1}^{i-1}\left(v_j\right),q+\sum\limits_{j=1}^{i}\left(v_j\right)-1\right]_\omega$ for some $i\in\left[1,\kappa\right]_\omega$}\\
c_1&\text{otherwise}
\end{cases}
\end{align*}
We show this by adjusting the identity proven for the \nameref{Shifting Theorem} \ref{Shifting Theorem} we obtain that for all $f:\omega\rightarrow\omega$ and for all $n$ and $m$ in $\omega_+$ such that $n<m$ we have that $c_m\left(c_n\left(f\right)\right)$ exists implies that $c_n\left(c_m\left(f\right)\right)$ exists and moreover $c_m\left(c_n\left(f\right)\right)=c_n\left(c_m\left(f\right)\right)$.
\\
As $n<m$ we have that $f\left(m+1\right)=c_n\left(f\right)\left(m+1\right)\geq\vartheta\left(m\right)$. At the same time as $n<m$ we have that $c_m\left(f\right)\left(n+1\right)\geq f\left(n+1\right)\geq\vartheta\left(n\right)$. Therefore, the conditions for the existence of $c_n\left(c_m\left(f\right)\right)$ are satisfied. By \eqref{eq:commute} we have that $c_m\left(c_n\left(f\right)\right)=c_n\left(c_m\left(f\right)\right)$. By continual repetition of this identity, we obtain that $\mathcal{C}_{k=q}^r\left(f\right)={}^o\mathcal{C}_{k=q}^r\left(f\right)$.
\\
To prove $\left(*\right)$ we suppose that it is false for the sake of contradiction. We take a case with the minimal $\max\left(\text{con}\left(\mathcal{C}_{k=q}^r\right)\right)$ (as this value is in $\omega_+$ we are assured that a minimal case exists, however, it may not be unique). For simplicity we introduce the notation $\mathcal{M}\left(\mathcal{C}_{k=q}^r\right):=\max\left(\text{con}\left(\mathcal{C}_{k=q}^r\right)\right)$. We observe that
\begin{align*}
&&\sum\limits_{n=1}^{K}\left(\mathcal{C}_{k=q}^r\left(f\right)\left(n\right)\prod\limits_{j=n}^{K-1}\left(\vartheta\left(j\right)\right)\right)&\neq&&\sum\limits_{n=1}^{K}\left(f\left(n\right)\prod\limits_{j=n}^{K-1}\left(\vartheta\left(j\right)\right)\right)\\
\iff&&\sum\limits_{n=1}^{\mathcal{M}\left(\mathcal{C}_{k=q}^r\right)}\left(\mathcal{C}_{k=q}^r\left(f\right)\left(n\right)\hspace{-0.35cm}\prod\limits_{j=n}^{\mathcal{M}\left(\mathcal{C}_{k=q}^r\right)-1}\hspace{-0.35cm}\left(\vartheta\left(j\right)\right)\right)&\neq&&\hspace{-0.35cm}\sum\limits_{n=1}^{\mathcal{M}\left(\mathcal{C}_{k=q}^r\right)}\left(f\left(n\right)\hspace{-0.35cm}\prod\limits_{j=n}^{\mathcal{M}\left(\mathcal{C}_{k=q}^r\right)-1}\hspace{-0.35cm}\left(\vartheta\left(j\right)\right)\right)
\end{align*}
As, by the \nameref{Conditional Lemma} \ref{Conditional Lemma} for all values $n$ greater than $\mathcal{M}\left(\mathcal{C}_{k=q}^r\right)$ we have $\mathcal{C}_{k=q}^r\left(f\right)\left(n\right)=f\left(n\right)$. Now take ${}^o\mathcal{C}_{k=q}^r$ as defined above for the particular $\mathcal{C}_{k=q}^r$ we are dealing with. This tells us that
{\small
\begin{align*}
&\sum\limits_{n=1}^{\mathcal{M}\left(\mathcal{C}_{k=q}^r\right)}\left(\mathcal{C}_{k=q}^r\left(f\right)\left(n\right)\hspace{-0.35cm}\prod\limits_{j=n}^{\mathcal{M}\left(\mathcal{C}_{k=q}^r\right)-1}\hspace{-0.35cm}\left(\vartheta\left(j\right)\right)\right)=\hspace{-0.35cm}\sum\limits_{n=1}^{\mathcal{M}\left(\mathcal{C}_{k=q}^r\right)}\left({}^o\mathcal{C}_{k=q}^r\left(f\right)\left(n\right)\hspace{-0.35cm}\prod\limits_{j=n}^{\mathcal{M}\left(\mathcal{C}_{k=q}^r\right)-1}\hspace{-0.35cm}\left(\vartheta\left(j\right)\right)\right)\\
=&\hspace{-0.35cm}\sum\limits_{n=1}^{\mathcal{M}\left(\mathcal{C}_{k=q}^r\right)-2}\left({}^o\mathcal{C}_{k=q}^r\left(f\right)\left(n\right)\hspace{-0.35cm}\prod\limits_{j=n}^{\mathcal{M}\left(\mathcal{C}_{k=q}^r\right)-1}\hspace{-0.35cm}\left(\vartheta\left(j\right)\right)\right)\\
&+\vartheta\left(\mathcal{M}\left(\mathcal{C}_{k=q}^{r}\right)-1\right){}^o\mathcal{C}_{k=q}^r\left(f\right)\left(\mathcal{M}\left(\mathcal{C}_{k=q}^r\right)-1\right)+{}^o\mathcal{C}_{k=q}^r\left(f\right)\left(\mathcal{M}\left(\mathcal{C}_{k=q}^r\right)\right)\\
=&\hspace{-0.35cm}\sum\limits_{n=1}^{\mathcal{M}\left(\mathcal{C}_{k=q}^r\right)-2}\left({}^o\mathcal{C}_{k=q+v_1}^{r}\left(f\right)\left(n\right)\hspace{-0.35cm}\prod\limits_{j=n}^{\mathcal{M}\left(\mathcal{C}_{k=q}^r\right)-1}\hspace{-0.35cm}\left(\vartheta\left(j\right)\right)\right)\\
&+\vartheta\left(\mathcal{M}\left(\mathcal{C}_{k=q}^{r}\right)-1\right)\left({}^o\mathcal{C}_{k=q+v_1}^{r}\left(f\right)\left(\mathcal{M}\left(\mathcal{C}_{k=q}^r\right)-1\right)+v_1\right)+{}^o\mathcal{C}_{k=q+v_1}^{r}\left(f\right)\left(\mathcal{M}\left(\mathcal{C}_{k=q}^r\right)\right)\\
&-v_1\vartheta\left(\mathcal{M}\left(\mathcal{C}_{k=q}^{r}\right)-1\right)\\
=&\hspace{-0.35cm}\sum\limits_{n=1}^{\mathcal{M}\left(\mathcal{C}_{k=q}^r\right)-2}\left({}^o\mathcal{C}_{k=q+v_1}^{r}\left(f\right)\left(n\right)\hspace{-0.35cm}\prod\limits_{j=n}^{\mathcal{M}\left(\mathcal{C}_{k=q}^r\right)-2}\hspace{-0.35cm}\left(\vartheta\left(j\right)\right)\right)\\
&+\vartheta\left(\mathcal{M}\left(\mathcal{C}_{k=q}^{r}\right)-1\right){}^o\mathcal{C}_{k=q+v_1}^{r}\left(f\right)\left(\mathcal{M}\left(\mathcal{C}_{k=q}^r\right)-1\right)+{}^o\mathcal{C}_{k=q+v_1}^{r}\left(f\right)\left(\mathcal{M}\left(\mathcal{C}_{k=q}^r\right)\right)\\
=&\hspace{-0.35cm}\sum\limits_{n=1}^{\mathcal{M}\left(\mathcal{C}_{k=q}^r\right)}\left({}^o\mathcal{C}_{k=q+v_1}^{r}\left(f\right)\left(n\right)\hspace{-0.35cm}\prod\limits_{j=n}^{\mathcal{M}\left(\mathcal{C}_{k=q}^r\right)-1}\hspace{-0.35cm}\left(\vartheta\left(j\right)\right)\right)=\hspace{-0.35cm}\sum\limits_{n=1}^{\mathcal{M}\left(\mathcal{C}_{k=q}^r\right)}\left(f\left(n\right)\hspace{-0.35cm}\prod\limits_{j=n}^{\mathcal{M}\left(\mathcal{C}_{k=q}^{r}\right)-1}\hspace{-0.35cm}\left(\vartheta\left(j\right)\right)\right)
\end{align*}
}%
Here we simply applied the first $v_1$-many contractions, which are by construction all about $t_1=\mathcal{M}\left(\mathcal{C}_{k=q}^r\right)-1$ and the fact that as $\mathcal{M}\left({}^o\mathcal{C}_{k=q+v_1}^{r}\right)<\mathcal{M}\left(\mathcal{C}_{k=q}^r\right)$ (and therefore by our presumption the last equality holds). This shows us that there is no case contradicting \eqref{eq:contraction-invariance} with the minimal value of $\mathcal{M}\left(\mathcal{C}_{k=q}^r\right)$. Therefore, there is no such case at all.
\\
Hence $\forall\mathcal{C}_{k=q}^r\in\mathcal{FS},\mathcal{C}_{k=q}^r\left(f\right)\left(1\right)<C$, as if there was any sequence of contractions $\mathcal{C}_{k=q}^r$ contradicting this, then we see that
\begin{align*}
\sum\limits_{n=1}^{\mathcal{M}\left(\mathcal{C}_{k=q}^r\right)}\left(f\left(n\right)\hspace{-0.35cm}\prod\limits_{j=n}^{\mathcal{M}\left(\mathcal{C}_{k=q}^{r}\right)-1}\hspace{-0.35cm}\left(\vartheta\left(j\right)\right)\right)&=\hspace{-0.35cm}\sum\limits_{n=1}^{\mathcal{M}\left(\mathcal{C}_{k=q}^r\right)}\left(\mathcal{C}_{k=q}^{r}\left(f\right)\left(n\right)\hspace{-0.35cm}\prod\limits_{j=n}^{\mathcal{M}\left(\mathcal{C}_{k=q}^{r}\right)-1}\hspace{-0.35cm}\left(\vartheta\left(j\right)\right)\right)\\
&\geq\mathcal{C}_{k=q}^{r}\left(f\right)\left(1\right)\hspace{-0.35cm}\prod\limits_{j=n}^{\mathcal{M}\left(\mathcal{C}_{k=q}^{r}\right)-1}\hspace{-0.35cm}\left(\vartheta\left(j\right)\right)\\
&>C\hspace{-0.35cm}\prod\limits_{j=n}^{\mathcal{M}\left(\mathcal{C}_{k=q}^{r}\right)-1}\hspace{-0.35cm}\left(\vartheta\left(j\right)\right)
\end{align*}
Which directly contradicts our original presumption. Thus there exists maximal such value, call it $D_1:=\max\left(\left\{\mathcal{C}_{k=q}^r\left(f\right)\left(1\right)\middle|\mathcal{C}_{k=q}^r\in\mathcal{FS}\right\}\right)$.
\\
Now, for all $\mathcal{C}_{k=q}^r\in\mathcal{FS}$ we have
\begin{align*}
    \mathcal{C}_{k=q}^r\left(f\right)\left(1\right)=D_1\implies\mathcal{C}_{k=q}^r\left(f\right)\left(2\right)\leq\vartheta\left(1\right)-1
\end{align*} as otherwise, we could contract to obtain a value greater than $D_1$. Pick the greatest such value and call it $D_2$.
\\
Following inductively, 
\begin{align*}
D_n:=\max\left(\left\{t\in\omega\middle|\exists\mathcal{C}_{k=q}^r\in\mathcal{FS},\begin{array}{ll@{}}&\forall k\in\left[1,n-1\right]_\omega,\mathcal{C}_{k=q}^r\left(f\right)\left(k\right)=D_k\\\wedge&\mathcal{C}_{k=q}^r\left(f\right)\left(n\right)=t\end{array}\right\}\right)
\end{align*}
$D_n$ is well-defined by induction. The case for $D_1$ has been shown. If for $n\geq1$, $D_n$ is well-defined, then $\exists\mathcal{C}_{k=q}^r\in\mathcal{FS},\forall k\in\left[1,n\right]_\omega,\mathcal{C}_{k=q}^r\left(f\right)\left(k\right)=D_k$. This shows that the set we define $D_{n+1}$ from is not empty. We also have that for all $\mathcal{C}_{k=q}^r\in\mathcal{FS}$ we have 
\begin{align*}
    \forall k\in\left[1,n\right]_\omega,\mathcal{C}_{k=q}^r\left(f\right)\left(k\right)=D_k\implies\mathcal{C}_{k=q}^r\left(f\right)\left(n+1\right)\leq\vartheta\left(n\right)-1
\end{align*} as otherwise, we could contract to obtain a value greater than $D_n$. Therefore, the defining set is non-empty, bounded set of finite ordinals. Therefore, it has a maximum in $\omega$ and thus the value $D_{n+1}$ is well-defined.\\
As a consequence we have $\forall n\in\left[2,\infty\right]_\omega,D_n\leq\vartheta\left(n-1\right)-1$. Therefore, the function $h$ where $h\left(0\right)=f\left(0\right)$ and $\forall n\in\omega_+,h\left(n\right)=D_n$ is an auxiliary function and by its definition $\forall N\in\omega_+,\exists\mathcal{C}_{k=q}^r\in\mathcal{FS},\forall n\in\left[0,N\right]_\omega,\mathcal{C}_{k=q}^r\left(f\right)\left(n\right)=h\left(n\right)$ and therefore $f\in \mathcal{N}_{\mathcal{F}in}$.
\end{proof}

We have constructed the set which we are going to show is a model of the real numbers. But in order to do that we have to define order, addition and multiplication on this set. That will be precisely the intent of the next chapter. However, before we do that, we would like to point out what is the particular strength of this model of analysis. One set in our reals, one of our real numbers, consists of many representations. In the upcoming chapter we will see that there are results which suffice to be shown for one representation and will globally apply to all.
\\
This adjustability gives a lot of power to the working mathematician who wishes to use this meta-system of creating models. Precisely because they may create a base and thus a model fitting their particular problem and then they may choose to work with the most convenient set of representations while knowing that there is nothing to be worried about in regards of rigour or well-definedness.
\section{Order}\label{sec:order}
In order to show that we have actually constructed the real numbers we show that we have a complete ordered field. Hence, we are to define order, addition and multiplication on our set of the numbers. Afterwards we are to prove all the standard axioms. We might be going in an un-traditional order as to build everything properly.
\begin{definition}[Order]\noindent\\ 
We shall define a total order on our set. We construct a binary relation $\leq_{\mathcal{S}_\mathbb{R}}$. There are $3$ types of evaluation of this relation, both variables being non-zero, one being zero, and both being zero. Firstly we define for all $\left[a\right],\left[b\right]\in\mathcal{S}_{\mathbb{R}}^*$ the relation $\left[a\right]\leq_{\mathcal{S}_\mathbb{R}}\left[b\right]$ if and only if:
\begin{align*}
\begin{array}{ll@{}}
\sigma\left(\left[a\right]\right)<\sigma\left(\left[b\right]\right)&\vee\vspace{0.35cm}\\
\left(\sigma\left(\left[a\right]\right)=\sigma\left(\left[b\right]\right)=1\wedge\forall a\in\left[a\right],\exists b\in\left[b\right],\forall n\in\omega_+,a\left(n\right)\leq b\left(n\right)\right)&\vee\vspace{0.35cm}\\
\left(\sigma\left(\left[a\right]\right)=\sigma\left(\left[b\right]\right)=0\wedge\forall b\in\left[b\right],\exists a\in\left[a\right],\forall n\in\omega_+,a\left(n\right)\geq b\left(n\right)\right)
\end{array}
\end{align*}
We furthermore define for all $\left[a\right]\in\mathcal{S}_{\mathbb{R}}^*$
\begin{align*}
\begin{array}{lll@{}}
\left[a\right]\leq_{\mathcal{S}_\mathbb{R}}\left[0\right]&\text{if and only if}&\sigma\left(\left[a\right]\right)=0\vspace{0.35cm}\\
\left[0\right]\leq_{\mathcal{S}_\mathbb{R}}\left[a\right]&\text{if and only if}&\sigma\left(\left[a\right]\right)=1
\end{array}
\end{align*}
And finally, we define $\left[0\right]\leq_{\mathcal{S}_\mathbb{R}}\left[0\right]$ as true.
\end{definition}

\begin{remark} For the sake of brevity, we shall stop using $\leq_{\mathcal{S}_\mathbb{R}}$ and instead we will use simply $\leq$. As we are using $\left[\cdot\right]$ to denote the real numbers, no confusion should arise about when we use the order for the naturals and when for the reals. As of now, we do not claim that this definition is an order, or total, this is all to be shown.
\end{remark}

\begin{axiom}[Reflexivity of Order]\label{Reflexivity of Order}\noindent\\
For all real numbers $\left[a\right]$ we have that $\left[a\right]\leq\left[a\right]$. Formally
\begin{align*}
\forall\left[a\right]\in\mathcal{S}_\mathbb{R},\left[a\right]\leq\left[a\right]
\end{align*}
\end{axiom}
\begin{proof} By definition, if $\left[a\right]=\left[0\right]$ then by the definition of order $\left[a\right]\leq\left[a\right]$.
\\
If $\left[a\right]\in\mathcal{S}_\mathbb{R}^*$ then we see that $\sigma\left(\left[a\right]\right)=\sigma\left(\left[a\right]\right)$. Let $a\in\left[a\right]$ be given. Then we see that there exists a $b\in\left[a\right]$ (where $b=a$) such that
\begin{align*}
\forall n\in\omega_+,a\left(n\right)\leq b\left(n\right)
\end{align*}
thus $\left[a\right]\leq\left[a\right]$.
\end{proof}

\begin{axiom}[Anti-Symmetry of Order]\label{Anti-Symmetry of Order}\noindent\\
For all real numbers $\left[a\right]$ and $\left[b\right]$ we have that $\left[a\right]\leq\left[b\right]$  and $\left[b\right]\leq\left[a\right]$ implies that $\left[a\right]=\left[b\right]$.
\end{axiom}
\begin{proof} Suppose both $\left[a\right]\leq\left[b\right]$ and $\left[b\right]\leq\left[a\right]$ are true, then $\left[a\right]=\left[b\right]=\left[0\right]$ or $\sigma\left(\left[a\right]\right)=\sigma\left(\left[b\right]\right)$.
\\
This is due to the fact that if we have without loss of generality $\left[a\right]=\left[0\right]$ then we cannot have both $\sigma\left(\left[b\right]\right)=0$ and $\sigma\left(\left[b\right]\right)=1$. Therefore, the only way for both $\left[a\right]\leq\left[b\right]$ and $\left[b\right]\leq\left[a\right]$ to be true is if $\left[b\right]=\left[0\right]$.
\\
On the other hand, if both $\left[a\right]$ and $\left[b\right]$ are not $\left[0\right]$ and they differ in sign we have a similar problem. Take without loss of generality $\sigma\left(\left[a\right]\right)=1$ and $\sigma\left(\left[b\right]\right)=0$. Thus, we cannot have $\left[a\right]\leq\left[b\right]$.
\\
Suppose $\sigma\left(\left[a\right]\right)=\sigma\left(\left[b\right]\right)=1$. From what we have supposed we have 
\begin{align*}
\begin{array}{lll@{}}
\forall a\in\left[a\right],\exists b\in\left[b\right],\forall n\in\omega_+,a\left(n\right)\leq b\left(n\right)&\text{and}\\
\forall b\in\left[b\right],\exists a\in\left[a\right],\forall n\in\omega_+,a\left(n\right)\leq b\left(n\right)
\end{array}
\end{align*}
This implies that 
\begin{align*}
\forall a_1\in\left[a\right],\exists b\in\left[b\right],\exists a_2\in\left[a\right],\forall n\in\omega_+,a_1\left(n\right)\leq b\left(n\right)\leq a_2\left(n\right)
\end{align*}
Now, as $a_1,a_2\in\left[a\right]$ and $\forall n\in\omega_+,a_1\left(n\right)\leq a_2\left(n\right)$ we must have $\forall n\in\omega_+,a_1\left(n\right)=a_2\left(n\right)$, by the \nameref{Domination Theorem Corollary} \ref{Domination Theorem Corollary}.\\
Therefore, $\forall n\in\omega_+, a_1\left(n\right)=b\left(n\right)=a_2\left(n\right)$. By the \nameref{Equivalence Lemma} \ref{Equivalence Lemma} as $\sigma\left(a_1\right)=\sigma\left(b\right)$ we have $a_1\left(0\right)+b\left(0\right)=2m$ for some $m\in\omega$ and therefore $b\in\left[a_1\right]=\left[a\right]$. Thus, by the \nameref{Unicity of Real Numbers} \ref{Unicity of Real Numbers}, $\left[a\right]=\left[b\right]$.
\\
The case $\sigma\left(\left[a\right]\right)=\sigma\left(\left[b\right]\right)=0$ is proven in the exact same manner.
\\
Ergo $\left[a\right]\leq\left[b\right]\wedge\left[b\right]\leq\left[a\right]\implies\left[a\right]=\left[b\right]$.
\end{proof}
\begin{corollary}[ Corollary of the Anti-Symmetry of Order]\label{Corollary of the Anti-Symmetry of Order} By the contrapositive of the property above
$$\neg\left(\left[a\right]=\left[b\right]\right)\implies\neg\left(\left[a\right]\leq\left[b\right]\right)\vee\neg\left(\left[b\right]\leq\left[a\right]\right)$$
Thus, supposing $\left[a\right]$ and $\left[b\right]$ distinct real numbers and $\left[a\right]\leq\left[b\right]$ we must have $\neg\left(\left[b\right]\leq\left[a\right]\right)$.
\end{corollary}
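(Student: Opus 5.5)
The corollary is a purely logical consequence of the \nameref{Anti-Symmetry of Order} \ref{Anti-Symmetry of Order}, so I will not return to any of the structural machinery. No appeal to the \nameref{Domination Theorem} \ref{Domination Theorem} or the \nameref{Unicity of Real Numbers} \ref{Unicity of Real Numbers} is needed beyond what anti-symmetry already used.

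First I take the contrapositive of anti-symmetry. Anti-symmetry states that for all $\left[a\right],\left[b\right]\in\mathcal{S}_\mathbb{R}$ the conjunction $\left[a\right]\leq\left[b\right]\wedge\left[b\right]\leq\left[a\right]$ implies $\left[a\right]=\left[b\right]$. Contraposition gives
\begin{align*}
\neg\left(\left[a\right]=\left[b\right]\right)\implies\neg\left(\left[a\right]\leq\left[b\right]\wedge\left[b\right]\leq\left[a\right]\right),
\end{align*}
and De Morgan's law turns the right-hand side into $\neg\left(\left[a\right]\leq\left[b\right]\right)\vee\neg\left(\left[b\right]\leq\left[a\right]\right)$. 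This is the displayed formula of the corollary.

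Second, I derive the \emph{thus} clause. Suppose $\left[a\right]\neq\left[b\right]$ and $\left[a\right]\leq\left[b\right]$. By the disjunction just obtained, one of its two disjuncts must hold. The first disjunct, $\neg\left(\left[a\right]\leq\left[b\right]\right)$, contradicts the hypothesis, so by disjunctive syllogism the second holds, namely $\neg\left(\left[b\right]\leq\left[a\right]\right)$.

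The only point that needs any care is that the relation $\leq$ is defined case-by-case: both arguments non-zero, exactly one equal to $\left[0\right]$, or both equal to $\left[0\right]$. I will note that anti-symmetry was proved across all of these cases, so the contrapositive holds uniformly for every pair of reals and no case analysis has to be repeated. With that checked, the argument is elementary.
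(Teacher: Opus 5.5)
Your proof is correct and is exactly the paper's: the corollary is the contrapositive of anti-symmetry, rewritten by De Morgan's law, and the second clause follows by eliminating the disjunct $\neg\left(\left[a\right]\leq\left[b\right]\right)$ against the hypothesis $\left[a\right]\leq\left[b\right]$. Your check that anti-symmetry was established across the zero and non-zero cases of the definition is accurate but not needed, since the contrapositive holds purely logically for every pair of reals.
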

\begin{axiom}[Transitivity of Order]\noindent\\
For all real numbers $\left[a\right],\left[b\right]$ and $\left[c\right]$ if $\left[a\right]\leq\left[b\right]$ and $\left[b\right]\leq\left[c\right]$ then $\left[a\right]\leq\left[c\right]$.
\end{axiom}
\begin{proofidea} Nine sign configurations are enumerated. Wherever a sign gap appears anywhere in the chain the comparison is decided by the sign clause alone, and those cases are dispatched in a line each. The substantive work is confined to the configurations in which all three numbers carry the same sign, and there it is the per-position form of the order that does the chaining: it allows the witness for the first comparison and the witness for the second to be different functions, which is what makes the two dominations composable at all.
\end{proofidea}
\begin{proof} Let $\left[a\right],\left[b\right],\left[c\right]\in\mathcal{S}_\mathbb{R}^*$ such that $\left[a\right]\leq\left[b\right]$ and $\left[b\right]\leq\left[c\right]$ are given. We have 9 cases to go through, which are as follows:
\begin{align}
\sigma\left(\left[a\right]\right)<\sigma\left(\left[b\right]\right)&\wedge\sigma\left(\left[b\right]\right)<\sigma\left(\left[c\right]\right)\\
\sigma\left(\left[a\right]\right)<\sigma\left(\left[b\right]\right)&\wedge\sigma\left(\left[b\right]\right)=\sigma\left(\left[c\right]\right)=1\\
\sigma\left(\left[a\right]\right)<\sigma\left(\left[b\right]\right)&\wedge\sigma\left(\left[b\right]\right)=\sigma\left(\left[c\right]\right)=0\\
\sigma\left(\left[a\right]\right)=\sigma\left(\left[b\right]\right)=1&\wedge\sigma\left(\left[b\right]\right)<\sigma\left(\left[c\right]\right)\\
\sigma\left(\left[a\right]\right)=\sigma\left(\left[b\right]\right)=1&\wedge\sigma\left(\left[b\right]\right)=\sigma\left(\left[c\right]\right)=1\\
\sigma\left(\left[a\right]\right)=\sigma\left(\left[b\right]\right)=1&\wedge\sigma\left(\left[b\right]\right)=\sigma\left(\left[c\right]\right)=0\\
\sigma\left(\left[a\right]\right)=\sigma\left(\left[b\right]\right)=0&\wedge\sigma\left(\left[b\right]\right)<\sigma\left(\left[c\right]\right)\\
\sigma\left(\left[a\right]\right)=\sigma\left(\left[b\right]\right)=0&\wedge\sigma\left(\left[b\right]\right)=\sigma\left(\left[c\right]\right)=1\\
\sigma\left(\left[a\right]\right)=\sigma\left(\left[b\right]\right)=0&\wedge\sigma\left(\left[b\right]\right)=\sigma\left(\left[c\right]\right)=0
\end{align}
Observe, that by the definition of $\sigma$, the cases (1),(3),(4),(6),(8) are impossible. Hence, we shall prove the remaining ones.
\paragraph{(2)} We have:
\begin{align*}
\sigma\left(\left[a\right]\right)<\sigma\left(\left[b\right]\right)\wedge\sigma\left(\left[b\right]\right)=\sigma\left(\left[c\right]\right)=1\implies\sigma\left(\left[a\right]\right)<\sigma\left(\left[c\right]\right)\implies\left[a\right]\leq\left[c\right]
\end{align*}
\paragraph{(5)} We have:
\begin{align*}
\sigma\left(\left[a\right]\right)=\sigma\left(\left[b\right]\right)=1\wedge\sigma\left(\left[b\right]\right)=\sigma\left(\left[c\right]\right)=1\implies\sigma\left(\left[a\right]\right)=\sigma\left(\left[c\right]\right)=1
\end{align*}
Additionally:
\begin{align*}
\begin{array}{lcl@{}}
\forall a\in\left[a\right],\exists b\in\left[b\right],\forall n\in\omega_+,a\left(n\right)\leq b\left(n\right)\text{ and }\forall b\in\left[b\right],\exists c\in\left[c\right],\forall n\in\omega_+,b\left(n\right)\leq c\left(n\right)
\vspace{0.35cm}\\
\implies\forall a\in\left[a\right],\exists c\in\left[c\right],\forall n\in\omega_+,a\left(n\right)\leq c\left(n\right)\implies\left[a\right]\leq\left[c\right]
\end{array}
\end{align*}
\paragraph{(7)} We have:
\begin{align*}
\sigma\left(\left[a\right]\right)=\sigma\left(\left[b\right]\right)=0\wedge\sigma\left(\left[b\right]\right)<\sigma\left(\left[c\right]\right)\implies\sigma\left(\left[a\right]\right)<\sigma\left(\left[c\right]\right)\implies\left[a\right]\leq\left[c\right]
\end{align*}
\paragraph{(9)} We have:
\begin{align*}
\sigma\left(\left[a\right]\right)=\sigma\left(\left[b\right]\right)=0\wedge\sigma\left(\left[b\right]\right)=\sigma\left(\left[c\right]\right)=0\implies\sigma\left(\left[a\right]\right)=\sigma\left(\left[c\right]\right)=0
\end{align*}
Additionally:
\begin{align*}
\begin{array}{lcl@{}}
\forall b\in\left[b\right],\exists a\in\left[a\right],\forall n\in\omega_+,a\left(n\right)\geq b\left(n\right)\text{ and }\forall c\in\left[c\right],\exists b\in\left[b\right],\forall n\in\omega_+,b\left(n\right)\geq c\left(n\right)
\vspace{0.35cm}\\
\implies\forall c\in\left[c\right],\exists a\in\left[a\right],\forall n\in\omega_+,a\left(n\right)\geq c\left(n\right)\implies\left[a\right]\leq\left[c\right]
\end{array}
\end{align*}
We also must consider the three additional cases brought to us by comparing two non-zero real numbers with $\left[0\right]$.
\paragraph{Case 1:} $\left[0\right]\leq\left[a\right]$ and $\left[a\right]\leq\left[b\right]$. Therefore:
\begin{align*}
\sigma\left(\left[a\right]\right)=1\implies\sigma\left(\left[b\right]\right)=1\implies\left[0\right]\leq\left[b\right]
\end{align*}
\paragraph{Case 2:} $\left[a\right]\leq\left[0\right]$ and $\left[0\right]\leq\left[b\right]$. Therefore:
\begin{align*}
\sigma\left(\left[a\right]\right)=0\wedge\sigma\left(\left[b\right]\right)=1\implies\left[a\right]\leq\left[b\right]
\end{align*}
\paragraph{Case 3:} $\left[a\right]<\left[b\right]$ and $\left[b\right]<\left[0\right]$. Therefore:
\begin{align*}
\sigma\left(\left[b\right]\right)=0\implies\sigma\left(\left[a\right]\right)=0\implies\left[a\right]\leq\left[0\right]
\end{align*}
All the possible cases show that for any three distinct real numbers $\left[a\right],\left[b\right],\left[c\right]$ we have $\left[a\right]\leq\left[b\right]\wedge\left[b\right]\leq\left[c\right]\implies\left[a\right]\leq\left[c\right]$.\footnote{Note that for the non-distinct cases we just invoke substitutivity of equality.}
\end{proof}
\begin{theorem}[Equivalent to the Definition of Order]\label{Equivalent to the Definition of Order}\noindent\\
For all $\left[a\right],\left[b\right]\in\mathcal{S}_\mathbb{R}^*$ we have $\left[a\right]\leq\left[b\right]$ if and only if
\begin{align*}
\begin{array}{ll@{}}
\sigma\left(\left[a\right]\right)<\sigma\left(\left[b\right]\right)&\vee\vspace{0.35cm}\\
\left(\sigma\left(\left[a\right]\right)=\sigma\left(\left[b\right]\right)=1\wedge\forall a\in\left[a\right],\forall n\in\omega_+,\exists b\in\left[b\right],a\left(n\right)\leq b\left(n\right)\right)&\vee\vspace{0.35cm}\\
\left(\sigma\left(\left[a\right]\right)=\sigma\left(\left[b\right]\right)=0\wedge\forall b\in\left[b\right],\forall n\in\omega_+,\exists a\in\left[a\right],a\left(n\right)\geq b\left(n\right)\right)
\end{array}
\end{align*}
Also, the same way as before, we say that for all $\left[a\right]\in\mathcal{S}_{\mathbb{R}}^*$
\begin{align*}
\begin{array}{lll@{}}
\left[a\right]\leq_{\mathcal{S}_\mathbb{R}}\left[0\right]&\text{if and only if}&\sigma\left(\left[a\right]\right)=0\vspace{0.35cm}\\
\left[0\right]\leq_{\mathcal{S}_\mathbb{R}}\left[a\right]&\text{if and only if}&\sigma\left(\left[a\right]\right)=1
\end{array}
\end{align*}
And finally, we once again take $\left[0\right]\leq_{\mathcal{S}_\mathbb{R}}\left[0\right]$ as true.
\end{theorem}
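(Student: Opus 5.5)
The forward implication is immediate, since a single witness $b\in\left[b\right]$ with $a\left(n\right)\leq b\left(n\right)$ for all $n\in\omega_+$ serves for every $n$ separately. The zero clauses are literally the same as in the definition of order, and the negative case is the mirror image of the positive one with the roles of $\left[a\right]$ and $\left[b\right]$ exchanged. So everything reduces to this: for $\sigma\left(\left[a\right]\right)=\sigma\left(\left[b\right]\right)=1$, the pointwise condition $\forall a\in\left[a\right],\forall n\in\omega_+,\exists b\in\left[b\right],a\left(n\right)\leq b\left(n\right)$ should imply the uniform condition $\forall a\in\left[a\right],\exists b\in\left[b\right],\forall n\in\omega_+,a\left(n\right)\leq b\left(n\right)$. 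In other words, I have to swap the quantifiers $\forall n$ and $\exists b$.

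\textbf{Step 1: compare truncations.} Write $T_f\left(m\right):=\sum_{n=1}^{m}\left(f\left(n\right)\prod_{j=n}^{m-1}\left(\vartheta\left(j\right)\right)\right)$, and let $\pi_a$ and $\pi_b$ be the primary auxiliary functions of $\left[a\right]$ and $\left[b\right]$. Fix $m$. By \nameref{Maximum Corollary} \ref{Maximum Corollary} some $a'\in\left[a\right]$ attains $a'\left(m\right)=T_{\pi_a}\left(m\right)$; concretely, $a'$ is $\pi_a$ broadened completely on $\left[1,m-1\right]_\omega$, which lies in $\left[a\right]$ by the \nameref{Shifting Theorem} \ref{Shifting Theorem}. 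The hypothesis then supplies $b'\in\left[b\right]$ with $a'\left(m\right)\leq b'\left(m\right)$, and the same corollary gives $b'\left(m\right)\leq T_{\pi_b}\left(m\right)$. Hence $T_{\pi_a}\left(m\right)\leq T_{\pi_b}\left(m\right)$ for every $m$. Combining this with the \nameref{Paradise City Lemma} \ref{Paradise City Lemma}, every $a\in\left[a\right]$ satisfies $T_a\left(m\right)\leq T_{\pi_b}\left(m\right)$ for all $m$. If $\pi_a=\pi_b$ on $\omega_+$, the \nameref{Equivalence Lemma} \ref{Equivalence Lemma} and \nameref{Unicity of Real Numbers} \ref{Unicity of Real Numbers} give $\left[a\right]=\left[b\right]$, and reflexivity finishes. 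Otherwise, at $p:=\Psi\left(\pi_a,\pi_b\right)$ we must have $\pi_a\left(p\right)<\pi_b\left(p\right)$, and then by \nameref{PC Cor I} \ref{PC Cor I} the gap $D\left(m\right):=T_{\pi_b}\left(m\right)-T_a\left(m\right)$ grows at least like $\prod_{j=p}^{m-1}\vartheta\left(j\right)-1$.

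\textbf{Step 2: build the dominating witness.} Given $a\in\left[a\right]$, I look for $b=a+e$ with $e\geq0$ on $\omega_+$, sign slot $b\left(0\right)=\pi_b\left(0\right)$, and $b\in\left[b\right]$. The naive choice $e\left(m\right)=D\left(m\right)-\vartheta\left(m-1\right)D\left(m-1\right)=\pi_b\left(m\right)-a\left(m\right)$ can be negative, so I would not use $\pi_b$ itself. Instead I would distribute the growing surplus $D$ in a controlled way: put $e=0$ on $\left[1,p-1\right]_\omega$, put the excess $\pi_b\left(p\right)-\pi_a\left(p\right)-1$ at $p$, and leave one unit of position $p$ to be paid forward one position at a time as a broadening, exactly as in the carry-wave of Figure~\ref{fig:normalforms}. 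That unit supplies $\vartheta-1$ spare units at each later position, and these should absorb the deficits $a\left(m\right)-\pi_b\left(m\right)$. The reason this should work is that, by the \nameref{Paradise City Lemma} \ref{Paradise City Lemma} applied to $a$, those deficits are controlled by the carries already present in $a$ relative to $\pi_a$.

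\textbf{Step 3: verify membership.} I then have to show $b\in\left[b\right]$. Finiteness of $b$ follows from the \nameref{Tying Theorem} \ref{Tying Theorem}, since $T_b\leq T_a+D+\text{const}$ is bounded by a multiple of $\prod\vartheta$. For agreement, I would show that for each inspection number $N$ a finite sequence of contractions carries $b$ to $\pi_b$ (or to the secondary auxiliary function of $\left[b\right]$) on $\left[0,N\right]_\omega$. The tool is \eqref{eq:contraction-invariance}: contractions preserve truncations, and $T_b\left(m\right)$ equals $T_{\pi_b}\left(m\right)$ up to a bounded tail term.

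The main obstacle is Step~2: producing a nonnegative $e$ with the right truncations. Any single element of $\left[b\right]$ has only a finite amount of broadening available, yet $a$ may itself be heavily broadened at infinitely many positions. If the explicit carry-wave construction fails, my fallback is to build $b$ as a coherent limit. For each $N$, pick $b_N\in\left[b\right]$ dominating $a$ on $\left[1,N\right]_\omega$; such a $b_N$ can be obtained by broadening $\pi_b$ finitely often, which is possible because of the bound $T_a\leq T_{\pi_b}$. I would then choose the $b_N$ to be compatible on initial segments, using a König-type argument: there are only finitely many candidate digit values at each position by the \nameref{Tying Theorem} \ref{Tying Theorem} bound. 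Finally I would verify that the limit function lies in $\left[b\right]$, using the \nameref{Domination Theorem} \ref{Domination Theorem} to rule out overshoot.
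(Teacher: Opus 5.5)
\textbf{Verdict: there is a genuine gap.} Your reduction to the positive same-sign case is fine, and so is the truncation comparison in Step~1, with one exception. The claim that $D(m)$ grows like $\prod_{j=p}^{m-1}\vartheta(j)-1$ is false. In base ten take $\pi_b=(0;0,5,0,\dots)$ and $a=\pi_a=(0;0,4,9,9,9,0,\dots)$; then $D(m)=1$ along the whole block of nines, so there is no growing surplus to distribute.

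The gap is Step~2, which you leave open. You never fix $e$, and the mechanism you sketch does not supply one. The deficits $a(m)-\pi_b(m)$ are unbounded when $a$ is heavily broadened, so $\vartheta-1$ spare units per position cannot absorb them. The Paradise City Lemma only bounds truncations of $a$, whereas membership in $[b]$ requires exhibiting contraction sequences.

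What blocks you is the belief that an element of $[b]$ has only finitely much broadening available. This is false. Membership only asks for a finite contraction sequence per inspection window, and $a$ itself may be broadened at infinitely many positions.

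The fallback does not close the gap, because a K\"{o}nig-type limit of compatible $b_N\in[b]$ need not lie in $[b]$. Take $a=\pi_a$ and, for $N\geq2$, let $b_N$
\begin{itemize}
\item agree with $\pi_a$ on $[1,N-1]_\omega$,
\item equal $T_{\pi_b}(N)-\vartheta(N-1)T_{\pi_a}(N-1)$ at $N$, and
\item agree with $\pi_b$ beyond $N$.
\end{itemize}
Using your $T_{\pi_a}\leq T_{\pi_b}$, each $b_N$ arises from $\pi_b$ by finitely many moves and dominates $\pi_a$ on $[1,N]_\omega$, and the $b_N$ agree on initial segments. Yet they converge on $\omega_+$ to $\pi_a$, so the limit lies in $[a]$, not $[b]$. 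Mass escapes to infinity, and the Domination Theorem, which only forbids strict domination inside one class, cannot detect that.

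\textbf{The missing idea} is to make $e$ independent of $a$, so that the witness inherits every broadening of $a$. Let $\xi$ agree with $\pi_b$ on $[0,p-1]_\omega$, with $\xi(p)=\pi_b(p)-1$ and $\xi(k)=\pi_b(k)+\vartheta(k-1)-1$ for $k>p$. Contracting the tail back shows $\xi\in[b]$, and $\xi\geq\pi_a$ on $\omega_+$ simply because $\pi_a(k)\leq\vartheta(k-1)-1$. Put $e:=\xi-\pi_a$ on $\omega_+$ and $b:=a+e$ with $b(0)=\pi_b(0)$; this is the paper's $\zeta_a$.

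Membership then comes from the Submission Theorem, not from truncations:
\begin{itemize}
\item A contraction sequence taking $a$ to its auxiliary function $h_a$ on $[0,K]_\omega$ is applicable to $b$, since $b$ dominates $a$. It produces $\xi+h_a-\pi_a$ on $[1,K]_\omega$.
\item If $h_a=\pi_a$, this is already $\xi$.
\item If $h_a$ is the secondary auxiliary function, a further chain of contractions about $K-1,K-2,\dots,N^{h_a}_{\min}$ turns it into $\xi$. This needs $K$ chosen beyond the conditional of $\xi$'s own contraction sequence and with $\xi(K)\geq\pi_a(K)+1$.
\item That contraction sequence of $\xi$ then finishes the job.
\end{itemize}

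Your Step~3 plan to certify membership by truncation estimates would also be circular at this point. The results that turn truncation bounds into statements about classes are the Boundedness and Extension Theorems. Both go through the characterisation of strict order, whose proof uses the theorem you are proving.
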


\begin{intuition}[Equivalent to the Definition of Order] The definition of order asks, for a fixed writing of $\left[a\right]$, for a \emph{single} writing of $\left[b\right]$ that beats it at every position at once. That is awkward to verify. This theorem says we may instead check the same thing one position at a time: at each $n$ it is enough to find \emph{some} representative of $\left[b\right]$ that is at least $\left[a\right]$ there, and the witness is allowed to change from position to position. The two formulations agree, and the per-position version is the one we can actually work with.
\end{intuition}
\begin{proofidea} One direction is immediate. The other must turn infinitely many position-by-position witnesses into a single representative dominating everywhere at once, and the whole difficulty is manufacturing that representative. We build it explicitly: truncate the primary auxiliary function of $\left[b\right]$ at a suitable position and continue with a maximal, $\left(\vartheta-1\right)$-valued tail. That tail is what supplies the surplus needed at every later position, and the Submission Theorem then transports the required sequence of moves onto it. The two cases correspond to whether the witness for $\left[a\right]$ is its primary or its secondary auxiliary function.
\end{proofidea}
\begin{proof}\noindent\\
\textbf{First}, we wish to show that for all non-zero real numbers $\left[a\right]$ and $\left[b\right]$ if $\sigma\left(\left[a\right]\right)=\sigma\left(\left[b\right]\right)$ then
$$\left(\forall a\in\left[a\right],\forall n\in\omega_+,\exists b\in\left[b\right],a\left(n\right)\leq b\left(n\right)\right)\implies\left(\forall a\in\left[a\right],\exists b\in\left[b\right],\forall n\in\omega_+,a\left(n\right)\leq b\left(n\right)\right)$$
We see that if $\left[a\right]=\left[b\right]$ then the implication above is true. Going forward we suppose that $\left[a\right]\neq\left[b\right]$.
\\
For definiteness we shall establish a convention, for any real number $\left[x\right]$ we have the functions $x_p\in\left[x\right]\cap A_p$ and $x_s\in\left[x\right]\cap A_s$ (if there are secondary auxiliary functions in the real number $\left[x\right]$) such that 
\begin{align*}
\begin{array}{llllll@{}}
x_p\left(0\right)=x_s\left(0\right)=0&\text{if}&\sigma\left(\left[x\right]\right)=1&\text{or}&\left[x\right]=\left[0\right]&\text{and}\\
x_p\left(0\right)=x_s\left(0\right)=1&\text{if}&\sigma\left(\left[x\right]\right)=0
\end{array}
\end{align*} 
As $\left[a\right]\neq\left[b\right]$, by the \nameref{Unicity of Real Numbers} \ref{Unicity of Real Numbers} and \nameref{Equivalence Lemma} \ref{Equivalence Lemma} as $\sigma\left(\left[a\right]\right)=\sigma\left(\left[b\right]\right)$ we have $\Psi\left(a_p,b_p\right)\neq0$. Furthermore, by our presumption for all $n$ in $\omega_+$ we have $\max\left(\left\{f\left(n\right)\middle|f\in\left[a\right]\right\}\right)\leq\max\left(\left\{g\left(n\right)\middle|g\in\left[b\right]\right\}\right)$. Thus, by the \nameref{Maximum Corollary} \ref{Maximum Corollary} we have that for all $n\in\omega_+$
\begin{align*}
\sum\limits_{k=1}^{n}\left(a_p\left(k\right)\prod\limits_{j=k}^{n-1}\left(\vartheta\left(j\right)\right)\right)&=\max\left(\left\{f\left(n\right)\middle|f\in\left[a\right]\right\}\right)\\
&\leq\max\left(\left\{g\left(n\right)\middle|g\in\left[b\right]\right\}\right)=\sum\limits_{k=1}^{n}\left(b_p\left(k\right)\prod\limits_{j=k}^{n-1}\left(\vartheta\left(j\right)\right)\right)
\end{align*}
Thus we have that
\begin{align*}
\begin{array}{ll@{}}
\forall n\in\left[1,\Psi\left(a_p,b_p\right)-1\right]_\omega,\sum\limits_{k=1}^{n}\left(a_p\left(k\right)\prod\limits_{j=k}^{n-1}\left(\vartheta\left(j\right)\right)\right)=\sum\limits_{k=1}^{n}\left(b_p\left(k\right)\prod\limits_{j=k}^{n-1}\left(\vartheta\left(j\right)\right)\right)\indent&\text{and}\\
\sum\limits_{k=1}^{\Psi\left(a_p,b_p\right)}\left(a_p\left(k\right)\hspace{-0.35cm}\prod\limits_{j=k}^{\Psi\left(a_p,b_p\right)-1}\hspace{-0.35cm}\left(\vartheta\left(j\right)\right)\right)\leq\hspace{-0.35cm}\sum\limits_{k=1}^{\Psi\left(a_p,b_p\right)}\left(b_p\left(k\right)\hspace{-0.35cm}\prod\limits_{j=k}^{\Psi\left(a_p,b_p\right)-1}\hspace{-0.35cm}\left(\vartheta\left(j\right)\right)\right)&\text{and}\\
a_p\left(\Psi\left(a_p,b_p\right)\right)\neq b_p\left(\Psi\left(a_p,b_p\right)\right)
\end{array}
\end{align*}
Therefore, $a_p\left(\Psi\left(a_p,b_p\right)\right)<b_p\left(\Psi\left(a_p,b_p\right)\right)$. Define $\xi:\omega\rightarrow\omega$ such that 
\begin{align*}
\xi\left(k\right):=
\begin{cases}
b_p\left(0\right)&k=0\\
b_p\left(k\right)&k\in\left[1,\Psi\left(a_p,b_p\right)-1\right]_\omega\\
b_p\left(\Psi\left(a_p,b_p\right)\right)-1&k=\Psi\left(a_p,b_p\right)\\
b_p\left(k\right)+\vartheta\left(k-1\right)-1&k\in\left[\Psi\left(a_p,b_p\right)+1,\infty\right]_\omega 
\end{cases}
\end{align*}
We shall show that $\xi\in\left[b\right]$. We show this by cases.
\\
\textbf{Case 1. $\exists H\in\omega_+,\forall m\in\left[H+1,\infty\right]_\omega,b_p\left(m\right)=0$}
\\
Denote the minimal such $H$ as $K$. Then $K\geq\Psi\left(a_p,b_p\right)$.\footnote{As we may not have $a_p\left(\Psi\left(a_p,b_p\right)\right)<b_p\left(\Psi\left(a_p,b_p\right)\right)=0$} Furthermore, we see that if $K=\Psi\left(a_p,b_p\right)$ then $\xi=b_s$ and if $\Psi\left(a_p,b_p\right)<K$ then $\xi$ is contractable about $K-1$, as $\xi\left(K\right)\geq\vartheta\left(K-1\right)$. Then if $\Psi\left(a_p,b_p\right)=K-1$ we have $c_{K-1}\left(\xi\right)=b_s$. If $\Psi\left(a_p,b_p\right)<K-1$ then $c_{K-1}\left(\xi\right)$ is contractable about $K-2$. We continue this way and inductively show that by taking $\mathcal{C}\in\mathcal{CB}$ such that 
\begin{align*}
\mathcal{C}\left(k\right)=
\begin{cases}
c_{K-k}&\forall k\in\left[1,K-\Psi\left(a_p,b_p\right)\right]\\
c_1&otherwise
\end{cases}
\end{align*}
we have $\mathcal{C}_{k=1}^{K-\Psi\left(a_p,b_p\right)}\left(\xi\right)=b_s$. Therefore, by the \nameref{Shifting Theorem} \ref{Shifting Theorem} $\xi\in\left[b\right]$.
\\
\textbf{Case 2. $\forall H\in\omega_+,\exists m\in\left[H+1,\infty\right]_\omega,b_p\left(m\right)\geq1$}
\\ 
We let a number of inspection $N\in\omega_+$ be given. If $N<\Psi\left(a_p,b_p\right)$ then for all $n\in\left[0,N\right]_\omega$ we have $\xi\left(n\right)=b_p\left(n\right)$. If $N\geq\Psi\left(a_p,b_p\right)$ then there exists $m\in\left[N+1,\infty\right]_\omega,b_p\left(m\right)\geq1$ and thus by using the same construction of the sequence of contractions as above we obtain that for all $n$ in $\left[0,N\right]_\omega$ we have $\mathcal{C}_{k=1}^{m-\Psi\left(a_p,b_p\right)}\left(\xi\right)\left(n\right)=b_p\left(n\right)$. Therefore, by the \nameref{Shifting Theorem} \ref{Shifting Theorem} $\xi\in\left[b\right]$.
\\
We see that 
\begin{align*}
\begin{cases}
a_p\left(k\right)=b_p\left(k\right)=\xi\left(k\right)&k\in\left[0,\Psi\left(a_p,b_p\right)-1\right]_\omega\\
a_p\left(\Psi\left(a_p,b_p\right)\right)\leq b_p\left(\Psi\left(a_p,b_p\right)\right)-1=\xi\left(\Psi\left(a_p,b_p\right)\right)&k=\Psi\left(a_p,b_p\right)\\
a_p\left(k\right)\leq\vartheta\left(k-1\right)-1\leq\xi\left(k\right)&k\in\left[\Psi\left(a_p,b_p\right)+1,\infty\right]_\omega
\end{cases}
\end{align*}
Hence, for all $n$ in $\omega$ we have $a_p\left(n\right)\leq\xi\left(n\right)$. Thus $\xi$ dominates $a_p$. Define for all $a\in\left[a\right]$ the function $\zeta_a:\omega\rightarrow\omega$
\begin{align*}
\zeta_a\left(k\right):=
\begin{cases}
b_p\left(0\right)&k=0\\
a\left(k\right)+\xi\left(k\right)-a_p\left(k\right)&otherwise
\end{cases}
\end{align*}
Notice that $\zeta_a$ dominates $a$ as for all $k$ in $\omega_+$ we have $a\left(k\right)\leq\zeta_a\left(k\right)$. We shall show that $\zeta_a\in\left[b\right]$.
\\
As $\xi\in\left[b\right]$ by definition there exists an auxiliary function $h_\xi$ in $\left[b\right]$ such that for all numbers of inspection $N$ in $\omega_+$ there exists a sequence of contractions ${}^\xi\mathcal{C}_{k=q'}^{r'}\in\mathcal{FS}$ such that for all $n$ in the interval of inspection $\left[0,N\right]_\omega$ we have ${}^\xi\mathcal{C}_{k=q'}^{r'}\left(\xi\right)\left(n\right)=h_\xi\left(n\right)$.
\\
Denote $\Xi:=\min\left(\left\{t\in\left[\max\left(\text{con}\left({}^\xi\mathcal{C}_{k=q'}^{r'}\right)\cup\left\{N\right\}\right),\infty\right]_\omega\middle|\xi\left(t\right)\geq a_p\left(t\right)+1\right\}\right)$\footnote{Note that we have an infinity of cases where $\xi\left(t\right)\geq a_p\left(t\right)+1$ as $a_p$ is a primary auxiliary function and for $n\in\left[\Psi\left(a_p,b_p\right)+1,\infty\right]_\omega$ we have $\xi\left(n\right)\geq\vartheta\left(n-1\right)-1$}. There are two cases we need to investigate. For all $n$ in $\omega_+$ we have $h_a\left(n\right)=a_p\left(n\right)$. And for all $n$ in $\omega_+$ we have $h_a\left(n\right)=a_s\left(n\right)$.\footnote{In some cases, there are no secondary auxiliary functions. Also notice that here we only talk about $\omega_+$, as we do not know if $a\left(0\right)=a_p\left(0\right)$, however, by the construction of the base set we know that the auxiliary function of $a$ has to agree with either of those functions on $\omega_+$.}
\\
\textbf{Case 1.} We know that for all numbers of inspection $K$ in $\omega_+$ there exists a sequence of contractions $\mathcal{C}_{k=q}^r\in\mathcal{FS}$ such that for all $n$ in the interval of inspection $\left[0,K\right]_\omega$ we have $\mathcal{C}_{k=q}^r\left(a\right)\left(n\right)=h_a\left(n\right)$.\\
Let a number of inspection $N\in\omega_+$ be given, we take $K=\Xi$. Then as $\zeta_a$ dominates $a$, by the \nameref{Submission Theorem} \ref{Submission Theorem}, $\mathcal{C}_{k=q}^r$ is applicable to $\zeta_a$ and furthermore for all $n$ in $\left[1,\Xi\right]_\omega$ we have
\begin{align*}
\mathcal{C}_{k=q}^r\left(\zeta_a\right)\left(n\right)+a\left(n\right)&=\zeta_a\left(n\right)+\mathcal{C}_{k=q}^r\left(a\right)\left(n\right)=a\left(n\right)+\xi\left(n\right)-a_p\left(n\right)+h_a\left(n\right)\\
&=a\left(n\right)+\xi\left(n\right)-a_p\left(n\right)+a_p\left(n\right)=\xi\left(n\right)+a\left(n\right)
\end{align*}
which implies that for all $n$ in $\left[1,\Xi\right]_\omega$ we have $\mathcal{C}_{k=q}^r\left(\zeta_a\right)\left(n\right)=\xi\left(n\right)$. Denote $\delta:=\mathcal{C}_{k=q}^r\left(\zeta_a\right)$.
\\
By the \nameref{Submission Theorem} \ref{Submission Theorem} ${}^\xi\mathcal{C}_{k=q'}^{r'}$ is applicable to $\delta$ and furthermore for all $n$ in $\left[1,N\right]_\omega$ we have 
\begin{align*}
{}^\xi\mathcal{C}_{k=q'}^{r'}\left(\delta\right)\left(n\right)+\xi\left(n\right)=\delta\left(n\right)+{}^\xi\mathcal{C}_{k=q'}^{r'}\left(\xi\right)\left(n\right)=\xi\left(n\right)+h_\xi\left(n\right)
\end{align*}
This implies that for all $n$ in $\left[1,N\right]_\omega$ we have ${}^\xi\mathcal{C}_{k=q'}^{r'}\left(\delta\right)\left(n\right)=h_\xi\left(n\right)$. Thus, as $\zeta_a\left(0\right)=\xi\left(0\right)=h_\xi\left(0\right)$ we have that for all $n$ in $\left[0,N\right]_\omega$ we have ${}^\xi\mathcal{C}_{k=q'}^{r'}\left(\mathcal{C}_{k=q}^r\left(\zeta_a\right)\right)\left(n\right)=h_\xi\left(n\right)$.
\\
Therefore, by taking ${}^{\zeta_a}\mathcal{C}\in\mathcal{CB}$ such that
\begin{align*}
\begin{array}{ll@{}}
{}^{\zeta_a}\mathcal{C}\left(k\right)=
\begin{cases}\mathcal{C}\left(k\right)&\text{if $k\in\left[q,r\right]_\omega$}\\
{}^\xi\mathcal{C}\left(k+q'-r-1\right)&\text{if $k\in\left[r+1,r'-q'+r+1\right]_\omega$}\\
c_1&otherwise
\end{cases}
\end{array}
\end{align*}
Thus by construction we know that for all $n$ in $\left[0,N\right]_\omega$ we have ${}^{\zeta_a}\mathcal{C}_{k=q}^{r'-q'+r+1}\left(\zeta_a\right)\left(n\right)=h_\xi\left(n\right)$. Therefore $\zeta_a\in\left[\xi\right]=\left[b\right]$.
\\
\textbf{Case 2.} We know that for all numbers of inspection $K$ in $\omega_+$ there exists a sequence of contractions $\mathcal{C}_{k=q}^r\in\mathcal{FS}$ such that for all $n$ in the interval of inspection $\left[0,K\right]_\omega$ we have $\mathcal{C}_{k=q}^r\left(a\right)\left(n\right)=h_a\left(n\right)$.
\\
Let $N\in\omega_+$ be given. Take $K=\Xi$. Then as $\zeta_a$ dominates $a$, by the \nameref{Submission Theorem} \ref{Submission Theorem}, $\mathcal{C}_{k=q}^r$ is applicable to $\zeta_a$ and furthermore for all $n$ in $\left[1,\Xi\right]_\omega$ we have 
\begin{align*}
\mathcal{C}_{k=q}^r\left(\zeta_a\right)\left(n\right)+a\left(n\right)&=\zeta_a\left(n\right)+\mathcal{C}_{k=q}^r\left(a\right)\left(n\right)=a\left(n\right)+\xi\left(n\right)-a_p\left(n\right)+h_a\left(n\right)\\
&=a\left(n\right)+\xi\left(n\right)-a_p\left(n\right)+a_s\left(n\right)=\xi\left(n\right)+a\left(n\right)-a_p\left(n\right)+a_s\left(n\right)
\end{align*}
This implies that for all $n$ in $\left[1,\Xi\right]_\omega$ we have $\mathcal{C}_{k=q}^r\left(\zeta_a\right)\left(n\right)=\xi\left(n\right)+a_s\left(n\right)-a_p\left(n\right)$. Denote $\lambda=\mathcal{C}_{k=q}^r\left(\zeta_a\right)$.
\\
We know that 
\begin{align*}
\begin{array}{llll@{}}
a_p\left(n\right)=a_s\left(n\right)&\text{if $n\in\left[1,N_{\min}^{a_s}-1\right]_\omega$}\\
a_p\left(N_{\min}^{a_s}\right)=a_s\left(N_{\min}^{a_s}\right)+1&\text{if $n=N_{\min}^{a_s}$}\\
a_p\left(n\right)=0\text{ and }a_s\left(n\right)=\vartheta\left(n-1\right)-1&\text{if $n\in\left[N_{\min}^{a_s}+1,\infty\right]_\omega$}
\end{array}
\end{align*}
If $\Xi<N_{\min}^{a_s}$ we have the same case as \textbf{Case 1}. If $\Xi\geq N_{\min}^{a_s}$ then we do the following.\\
Define $w:\omega\rightarrow\omega$ where 
\begin{align*}
w\left(k\right):=
\begin{cases}
a_s\left(\Xi\right)+1&k=\Xi\\
a_s\left(k\right)&otherwise
\end{cases}
\end{align*}
Take $\mathcal{C'}\in\mathcal{CB}$ defined as
\begin{align*}
\mathcal{C'}\left(k\right)=\begin{cases}
c_{\Xi+r-k}&k\in\left[r+1,\Xi-N_{\min}^{a_s}+r\right]_\omega\\
c_1&otherwise
\end{cases}
\end{align*}
We then see that $\mathcal{C'}_{k=r+1}^{\Xi-N_{\min}^{a_s}+r}$ is applicable to $w$ and furthermore for all $n$ in $\left[1,\Xi\right]_\omega$ we have $\mathcal{C'}_{k=r+1}^{\Xi-N_{\min}^{a_s}+r}\left(w\right)\left(n\right)=a_p\left(n\right)$.\\
By the \nameref{Submission Theorem} \ref{Submission Theorem}, as $\lambda$ dominates $w$ on $\left[1,\Xi\right]_\omega$ because by construction we have $\xi\left(n\right)\geq a_p\left(n\right)$ and thus
\begin{align*}
\begin{array}{ll@{}}
     \lambda\left(n\right)=\xi\left(n\right)-a_p\left(n\right)+a_s\left(n\right)\geq a_s\left(n\right)+1=w\left(n\right)&n=\Xi\\
     \lambda\left(n\right)=\xi\left(n\right)-a_p\left(n\right)+a_s\left(n\right)\geq a_s\left(n\right)=w\left(n\right)&n\neq \Xi
\end{array}
\end{align*}, 
$\mathcal{C'}_{k=r+1}^{\Xi-N_{\min}^{a_s}+r}$ is applicable to $\lambda$ and furthermore for all $n$ in $\left[1,\Xi\right]_\omega$ we have 
\begin{align*}
\mathcal{C'}_{k=r+1}^{\Xi-N_{\min}^{a_s}+r}\left(\lambda\right)\left(n\right)+w\left(n\right)&=\mathcal{C'}_{k=r+1}^{\Xi-N_{\min}^{a_s}+r}\left(\lambda\right)\left(n\right)+a_s\left(n\right)=\lambda\left(n\right)+\mathcal{C'}_{k=r+1}^{\Xi-N_{\min}^{a_s}+r}\left(w\right)\left(n\right)\\
&=\xi\left(n\right)+a_s\left(n\right)-a_p\left(n\right)+a_p\left(n\right)=\xi\left(n\right)+a_s\left(n\right)
\end{align*}
This implies that for all $n$ in $\left[1,\Xi\right]_\omega$ we have $\mathcal{C'}_{k=r+1}^{\Xi-N_{\min}^{a_s}+r}\left(\lambda\right)\left(n\right)=\xi\left(n\right)$. Denote $\delta:=\mathcal{C'}_{k=r+1}^{\Xi-N_{\min}^{a_s}+r}\left(\lambda\right)$.
\\
By the \nameref{Submission Theorem} \ref{Submission Theorem} ${}^\xi\mathcal{C}_{k=q'}^{r'}$ is applicable to $\delta$ and furthermore for all $n$ in $\left[1,N\right]_\omega$ we have 
\begin{align*}
{}^\xi\mathcal{C}_{k=q'}^{r'}\left(\delta\right)\left(n\right)+\xi\left(n\right)=\delta\left(n\right)+{}^\xi\mathcal{C}_{k=q'}^{r'}\left(\xi\right)\left(n\right)=\xi\left(n\right)+h_\xi\left(n\right)
\end{align*}
This implies that for all $n$ in $\left[1,N\right]_\omega$ we have ${}^\xi\mathcal{C}_{k=q'}^{r'}\left(\delta\right)\left(n\right)=h_\xi\left(n\right)$. Thus, as $\zeta_a\left(0\right)=\xi\left(0\right)=h_\xi\left(0\right)$ we have that for all $n$ in $\left[0,N\right]_\omega$ we have ${}^\xi\mathcal{C}_{k=q'}^{r'}\left(\mathcal{C'}_{k=r+1}^{\Xi-N_{\min}^{a_s}+r}\left(\mathcal{C}_{k=q}^r\left(\zeta_a\right)\right)\right)\left(n\right)=h_\xi\left(n\right)$. Therefore, by taking ${}^{\zeta_a}\mathcal{C}\in\mathcal{CB}$ such that
\begin{align*}
\hspace{-0.5cm}{}^{\zeta_a}\mathcal{C}\left(k\right)=
\begin{cases}
\mathcal{C}\left(k\right)&k\in\left[q,r\right]_\omega\\
c_{\Xi+r-k}&k\in\left[r+1,\Xi-N_{\min}^{a_s}+r\right]_\omega\\
{}^\xi\mathcal{C}\left(k+q'-\Xi+N_{\min}^{a_s}+r-1\right)&k\in\left[\Xi-N_{\min}^{a_s}+r+1,r'-q'+\Xi-N_{\min}^{a_s}+r+1\right]_\omega\\
c_1&otherwise
\end{cases}
\end{align*}
Thus by construction we know that for all $n$ in $\left[0,N\right]_\omega$ we have ${}^{\zeta_a}\mathcal{C}_{k=q}^{r'-q'+\Xi-N_{\min}^{a_s}+r+1}\left(\zeta_a\right)\left(n\right)=h_\xi\left(n\right)$. Therefore $\zeta_a\in\left[\xi\right]=\left[b\right]$.\\\\
Also, it follows from logic that 
$$\left(\forall a\in\left[a\right],\exists b\in\left[b\right],\forall n\in\omega_+,a\left(n\right)\leq b\left(n\right)\right)\implies\left(\forall a\in\left[a\right],\forall n\in\omega_+,\exists b\in\left[b\right],a\left(n\right)\leq b\left(n\right)\right)$$
Ergo:
$$\left(\forall a\in\left[a\right],\forall n\in\omega_+,\exists b\in\left[b\right],a\left(n\right)\leq b\left(n\right)\right)\iff\left(\forall a\in\left[a\right],\exists b\in\left[b\right],\forall n\in\omega_+,a\left(n\right)\leq b\left(n\right)\right)$$
Second, by relabeling in the result above we obtain
\begin{align*}
\left(\forall b\in\left[b\right],\forall n\in\omega_+,\exists a\in\left[a\right],b\left(n\right)\leq a\left(n\right)\right)\iff\left(\forall b\in\left[b\right],\exists a\in\left[a\right],\forall n\in\omega_+,b\left(n\right)\leq a\left(n\right)\right)
\end{align*}
Which is the same as 
\begin{align*}
\left(\forall b\in\left[b\right],\forall n\in\omega_+,\exists a\in\left[a\right],a\left(n\right)\geq b\left(n\right)\right)\iff\left(\forall b\in\left[b\right],\exists a\in\left[a\right],\forall n\in\omega_+,a\left(n\right)\geq b\left(n\right)\right)
\end{align*}
Thus this finishes the proof of the desired result.
\end{proof}

\begin{axiom}[Totality of Order]\label{Totality of Order}\noindent\\
For any real numbers $\left[a\right]$ and $\left[b\right]$ either $\left[a\right]\leq\left[b\right]$ or $\left[b\right]\leq\left[a\right]$.
\end{axiom}
\begin{proofidea} Identical numbers are immediate, so the content is that two distinct numbers cannot be incomparable. Rather than compare them directly we assume $\neg\left(\left[a\right]\leq\left[b\right]\vee\left[b\right]\leq\left[a\right]\right)$ and negate the per-position form of the order supplied by the previous theorem. That turns the assumption into a conjunction of concrete existential claims, a position at which some representative of $\left[a\right]$ beats every representative of $\left[b\right]$, and simultaneously one where the reverse happens, and these are what we play against each other to reach a contradiction.
\end{proofidea}
\begin{proof} We see that for $\left[a\right]=\left[b\right]$ we always have $\left[a\right]\leq\left[b\right]$. This shows the totality for non-distinct elements
\\
Let two distinct non-zero real numbers $\left[a\right]$ and $\left[b\right]$ be given. We shall prove totality by showing that $\neg\left(\left[a\right]\leq\left[b\right]\vee\left[b\right]\leq\left[a\right]\right)$ is a contradiction for any two non-zero distinct real numbers. We use the \nameref{Equivalent to the Definition of Order} \ref{Equivalent to the Definition of Order} and negate it:
\begin{align*}
\begin{array}{ll@{}}
\sigma\left(\left[a\right]\right)\geq\sigma\left(\left[b\right]\right)&\wedge\vspace{0.35cm}\\
\left(\sigma\left(\left[a\right]\right)=0\vee\sigma\left(\left[b\right]\right)=0\vee\exists a\in\left[a\right],\exists n\in\omega_+,\forall b\in\left[b\right],a\left(n\right)>b\left(n\right)\right)&\wedge\vspace{0.35cm}\\
\left(\sigma\left(\left[a\right]\right)=1\vee\sigma\left(\left[b\right]\right)=1\vee\exists b\in\left[b\right],\exists n\in\omega_+,\forall a\in\left[a\right],a\left(n\right)<b\left(n\right)\right)
\end{array}
\end{align*}
We shall show that $\neg\left(\left[a\right]\leq\left[b\right]\right)\wedge\neg\left(\left[b\right]\leq\left[a\right]\right)$ is a contradiction. We see that the presumption that such $\left[a\right]$ and $\left[b\right]$ exist requires $\sigma\left(\left[a\right]\right)\geq\sigma\left(\left[b\right]\right)$ and $\sigma\left(\left[b\right]\right)\geq\sigma\left(\left[a\right]\right)$ which means that $\sigma\left(\left[a\right]\right)=\sigma\left(\left[b\right]\right)$.
\\
Both cases now boil down to showing that:
\begin{align*}
\begin{array}{ll@{}}
\left(\exists b_1\in\left[b\right],\exists n_1\in\omega_+,\forall a\in\left[a\right],a\left(n_1\right)<b_1\left(n_1\right)\right)&\wedge\\
\left(\exists a_2\in\left[a\right],\exists n_2\in\omega_+,\forall b\in\left[b\right],b\left(n_2\right)<a_2\left(n_2\right)\right)
\end{array}
\end{align*}
is a contradiction.\\
We know that there exists $b_1\in\left[b\right]$ and $t_1:=\min\left(\left\{n_1\in\omega_+\middle|\forall a\in\left[a\right],a\left(n_1\right)<b_1\left(n_1\right)\right\}\right)$ and $a_2\in\left[a\right]$ and $t_2:=\min\left(\left\{n_2\in\omega_+\middle|\forall b\in\left[b\right],b\left(n_2\right)<a_2\left(n_2\right)\right\}\right)$. It would be contradictory to have $t_1=t_2$. Take without loss of generality $t_1<t_2$.
\\
By the \nameref{Maximum Corollary} \ref{Maximum Corollary}, we have that
\begin{align*}
\sum\limits_{n=1}^{t_1}b_p\left(n\right)\prod\limits_{k=n}^{t_1-1}\left(\vartheta\left(k\right)\right)&=\max\left(\left\{f\left(t_1\right)\middle|f\in\left[b\right]\right\}\right)\geq b_1\left(t_1\right)\\
&>\max\left(\left\{g\left(t_1\right)\middle|g\in\left[a\right]\right\}\right)=\sum\limits_{n=1}^{t_1}a_p\left(n\right)\prod\limits_{k=n}^{t_1-1}\left(\vartheta\left(k\right)\right)
\end{align*}
and thus
\begin{align*}
\sum\limits_{n=1}^{t_1}b_p\left(n\right)\prod\limits_{k=n}^{t_1-1}\left(\vartheta\left(k\right)\right)-\sum\limits_{n=1}^{t_1}a_p\left(n\right)\prod\limits_{k=n}^{t_1-1}\left(\vartheta\left(k\right)\right)\geq1
\end{align*}
Therefore as $\forall n\in\left[t_1+1,\infty\right]_\omega$ we have $a_p\left(n\right)\leq\vartheta\left(n-1\right)-1$ we must have $\forall m\in\left[t_1+1,\infty\right]_\omega$
\begin{align*}
\sum\limits_{n=t_1+1}^{m}\left(a_p\left(n\right)\prod\limits_{j=n}^{m-1}\left(\vartheta\left(j\right)\right)\right)\leq\sum\limits_{n=t_1+1}^{m}\left(\left(\vartheta\left(n-1\right)-1\right)\prod\limits_{j=n}^{m-1}\left(\vartheta\left(j\right)\right)\right)=\prod\limits_{j=t_1}^{m-1}\left(\vartheta\left(j\right)\right)-1
\end{align*} This means that $\forall m\in\left[t_1+1,\infty\right]_\omega$
\begin{align*}
\sum\limits_{n=1}^{m}b_p\left(n\right)\prod\limits_{k=n}^{m-1}\left(\vartheta\left(k\right)\right)-\sum\limits_{n=1}^{m}a_p\left(n\right)\prod\limits_{k=n}^{m-1}\left(\vartheta\left(k\right)\right)&\geq\sum\limits_{n=1}^{t_1-1}b_p\left(n\right)\prod\limits_{k=n}^{m-1}\left(\vartheta\left(k\right)\right)-\sum\limits_{n=1}^{m}a_p\left(n\right)\prod\limits_{k=n}^{m-1}\left(\vartheta\left(k\right)\right)\\
&\geq\prod\limits_{k=t_1}^{m-1}\left(\vartheta\left(k\right)\right)-\sum\limits_{n=t_1+1}^{m}\left(a_p\left(n\right)\prod\limits_{j=n}^{m-1}\left(\vartheta\left(j\right)\right)\right)\geq1
\end{align*}
and thus
\begin{align*}
\sum\limits_{n=1}^{m}b_p\left(n\right)\prod\limits_{k=n}^{m-1}\left(\vartheta\left(k\right)\right)>\sum\limits_{n=1}^{m}a_p\left(n\right)\prod\limits_{k=n}^{m-1}\left(\vartheta\left(k\right)\right)
\end{align*}
However, by the usage of the \nameref{Maximum Corollary} \ref{Maximum Corollary} we have a contradiction to our presumption that
\begin{align*}
\sum\limits_{n=1}^{t_2}b_p\left(n\right)\prod\limits_{k=n}^{t_2-1}\left(\vartheta\left(k\right)\right)&=\max\left(\left\{f\left(t_2\right)\middle|f\in\left[b\right]\right\}\right)<a_2\left(t_2\right)\\
&\leq\max\left(\left\{g\left(t_2\right)\middle|g\in\left[a\right]\right\}\right)=\sum\limits_{n=1}^{t_2}a_p\left(n\right)\prod\limits_{k=n}^{t_2-1}\left(\vartheta\left(k\right)\right)
\end{align*}
Therefore, $\neg\left(\left[a\right]\leq\left[b\right]\right)\wedge\neg\left(\left[b\right]\leq\left[a\right]\right)$ is a contradiction. Hence $\left[a\right]\leq\left[b\right]\vee\left[b\right]\leq\left[a\right]$ is a tautology. For the zero case $\neg\left(\left[a\right]\leq\left[0\right]\vee\left[0\right]\leq\left[a\right]\right)\iff\neg\left(\sigma\left(\left[a\right]\right)=0\vee\sigma\left(\left[a\right]\right)=1\right)$ which is a contradiction, thus $\left[a\right]\leq\left[0\right]\vee\left[0\right]\leq\left[a\right]$ is a tautology. This shows totality for distinct elements.
\end{proof}

\begin{lemma}[One-Case Lemma]\label{One-Case Lemma}\noindent\\
For all real numbers $\left[f\right]$ and $\left[g\right]$ such that there exist functions $f\in\left[f\right]$ and $g\in\left[g\right]$ such that for all $n\in\omega_+$ we have $f\left(n\right)\geq g\left(n\right)$ and $\sigma\left(f\right)=\sigma\left(g\right)$ we then have that
\begin{align*}
\begin{array}{ll@{}}
\left[f\right]\geq\left[g\right]&\text{if $\sigma\left(f\right)=1$}\\
\left[f\right]\leq\left[g\right]&\text{if $\sigma\left(f\right)=0$}
\end{array}
\end{align*}
\end{lemma}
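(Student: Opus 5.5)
The plan is to reduce the lemma to the per-position form of the order from the \nameref{Equivalent to the Definition of Order} \ref{Equivalent to the Definition of Order}, and to compare the two numbers through the truncations of their primary auxiliary functions.

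\textbf{Zero cases.} These are disposed of first by the \nameref{Null Lemma} \ref{Null Lemma}.
\begin{itemize}
\item If $[f]=[0]$, then $f(n)=0$ on $\omega_+$. Domination forces $g(n)=0$ on $\omega_+$, so $[g]=[0]$ and both inequalities hold by reflexivity.
\item If $[g]=[0]$ and $[f]\neq[0]$, then $\sigma([f])=\sigma(f)$. The definition of $\leq$ against $[0]$ gives $[f]\geq[0]$ when $\sigma(f)=1$ and $[f]\leq[0]$ when $\sigma(f)=0$.
\end{itemize}

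\textbf{Main case.} Assume both numbers are non-zero and $\sigma(f)=\sigma(g)=1$. Write $T_x(m):=\sum_{k=1}^{m}x_p(k)\prod_{j=k}^{m-1}\vartheta(j)$. By the \nameref{Maximum Corollary} \ref{Maximum Corollary}, $T_x(m)=\max\{x'(m)\mid x'\in[x]\}$. The per-position condition $\forall g'\in[g],\forall n,\exists f'\in[f],g'(n)\leq f'(n)$ then reduces to $T_g(n)\leq T_f(n)$ for every $n\in\omega_+$. I will prove this in three steps.

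First, let $h_g$ be the auxiliary function of $g$, and fix an inspection number $M$. Take a sequence of contractions $\mathcal{C}_{k=q}^r$ carrying $g$ to $h_g$ on $[0,M]_\omega$. Since $f$ dominates $g$, the \nameref{Submission Theorem} \ref{Submission Theorem} makes $\mathcal{C}_{k=q}^r$ applicable to $f$ and shows that $f':=\mathcal{C}_{k=q}^r(f)$ dominates $h_g$ on $[1,M]_\omega$. By the \nameref{Shifting Theorem} \ref{Shifting Theorem}, $f'\in[f]$.

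Second, weighting these digits, the \nameref{Paradise City Lemma} \ref{Paradise City Lemma} gives
\begin{align*}
T_f(M)\geq\sum_{k=1}^{M}f'(k)\prod_{j=k}^{M-1}\vartheta(j)\geq\sum_{k=1}^{M}h_g(k)\prod_{j=k}^{M-1}\vartheta(j)\geq T_g(M)-1 .
\end{align*}
The last step is an equality if $h_g$ is primary, and otherwise follows from the \nameref{PC Cor I} \ref{PC Cor I}. So $T_f(M)\geq T_g(M)-1$ for every $M\in\omega_+$.

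Third, suppose for contradiction that $T_f(n)<T_g(n)$ for some $n$. Imitating the computation in the proof of \nameref{Totality of Order} \ref{Totality of Order}, bound the tail of $f_p$ by its maximal digits. This yields, for $m>n$,
\begin{align*}
T_g(m)-T_f(m)\geq\prod_{j=n}^{m-1}\vartheta(j)-\sum_{k=n+1}^{m}f_p(k)\prod_{j=k}^{m-1}\vartheta(j)\geq 1 .
\end{align*}
The second inequality is strict as soon as some digit $f_p(k)$ with $k\in[n+1,m]_\omega$ is not maximal. Because $f_p$ is primary, such a $k$ exists, so choosing $m$ beyond it gives $T_g(m)-T_f(m)\geq2$. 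This contradicts $T_f(m)\geq T_g(m)-1$, hence $[f]\geq[g]$.

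\textbf{Negative sign.} For $\sigma(f)=0$ the required clause is $\forall f'\in[f],\forall n,\exists g'\in[g],g'(n)\geq f'(n)$. This is the same statement with the roles reversed: the number with the dominating representative has the larger magnitude, so the identical argument gives $[f]\leq[g]$.

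\textbf{Main obstacle.} The delicate step is the secondary case $h_g=g_s$. Domination only yields $T_f(M)\geq T_g(M)-1$, not $T_f(M)\geq T_g(M)$, and the unit deficit has to be absorbed by the non-maximal digit of $f_p$ as above. That step must be checked carefully, including small $n$ and the normalisation of the value at $0$.
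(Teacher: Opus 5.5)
Your proof is correct, but it follows a genuinely different and much longer route than the paper. The paper never computes truncations. It splits on whether $f$ and $g$ agree on $\omega_+$.
If they do, the Equivalence Lemma gives $[f]=[g]$.
If they do not, $f$ strictly dominates $g$, so no $w\in[g]$ can dominate $f$, since such a $w$ would strictly dominate $g$ inside $[g]$, contradicting the Domination Theorem. Taking $f$ itself as the witness, this is exactly $\neg([f]\leq[g])$, and Totality of Order then gives $[f]\geq[g]$. The negative case is handled the same way.

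You instead use the Equivalent to the Definition of Order and the Maximum Corollary to turn the claim into the quantitative statement $T_g(n)\leq T_f(n)$ for all $n\in\omega_+$. You prove that in three moves: transport the contraction sequence of $g$ onto $f$ (Submission and Shifting Theorems); weigh with the Paradise City Lemma and its first corollary to get $T_f\geq T_g-1$; then use a non-maximal digit of $f_p$ to show that any strict deficit grows to at least $2$. The secondary case you flag is handled correctly by Step 3, and position $0$ plays no role, since all truncations live on $\omega_+$.

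In effect you re-derive inline two things the paper uses as black boxes: the ``two names differ by at most one unit of scale'' mechanism behind the Domination Theorem, and the tail estimate from the proof of Totality. The paper's route buys brevity from those two results, and it does not need to separate out the zero classes. Yours needs the zero cases split off, because the per-position criterion is stated only for non-zero classes. In exchange, yours gives an explicit digit-level conclusion: the primary truncations of $[f]$ dominate those of $[g]$ at every level. It also avoids appealing to Totality, although both routes ultimately rest on the Equivalent to the Definition of Order.

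One correction to your negative-sign paragraph. The clause you display, $\forall f'\in[f],\forall n,\exists g'\in[g],\ g'(n)\geq f'(n)$, is the per-position form of $[g]\leq[f]$. It would require $T_f\leq T_g$, the opposite of what your three steps prove. For $\sigma([f])=\sigma([g])=0$, the clause for $[f]\leq[g]$ is $\forall g'\in[g],\forall n,\exists f'\in[f],\ f'(n)\geq g'(n)$. This is word for word the clause from the positive case, so Steps 1--3 apply verbatim with no role reversal. Your closing sentence, that the dominating representative carries the larger magnitude, is the right conclusion.
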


\begin{intuition}[One-Case Lemma] The definition of order quantifies over all ways of writing the two numbers, but in practice we almost never want to inspect all of them. This lemma says a single lucky pair is enough: if we can catch $\left[f\right]$ and $\left[g\right]$ in \emph{one} pair of same-sign representatives with $f$ dominating $g$ at every position, the order between the classes is already decided. It is the tool that turns a concrete pointwise inequality into a statement about the real numbers.
\end{intuition}

\begin{proof} \textbf{Case 1. $\sigma\left(f\right)=1$:} Suppose that $\sigma\left(f\right)=\sigma\left(g\right)$. If for all $n\in\omega_+$ we have that $f\left(n\right)=g\left(n\right)$ then as $\sigma\left(f\right)=\sigma\left(g\right)$ we have that $f\left(0\right)+g\left(0\right)=2m$ for some $m\in\omega$ and thus by the \nameref{Equivalence Lemma} \ref{Equivalence Lemma} we have that $\left[f\right]=\left[g\right]$ and hence $\left[f\right]\geq\left[g\right]$.
\\
If $\exists k\in\omega_+,f\left(k\right)>g\left(k\right)$ then by the \nameref{Domination Theorem Corollary} \ref{Domination Theorem Corollary} $\forall w\in\left[g\right],\exists n\in\omega_+,w\left(n\right)<f\left(n\right)$ and thus $\neg\left(\left[f\right]\leq\left[g\right]\right)$, which entails by totality $\left[f\right]\geq\left[g\right]$.\\
The proof for the \textbf{Case 2. $\sigma\left(f\right)=0$} is the same.
\end{proof}

\begin{definition}[Strict Order]\noindent\\
We shall now define an exclusive order ``$<$" as $\left[a\right]<\left[b\right]\iff\neg\left(\left[b\right]\leq\left[a\right]\right)$.
\end{definition}

\begin{theorem}[Equivalent to the Definition of Strict Order]\label{Equivalent to the Definition of Strict Order}\noindent\\
We have that $\left[a\right]<\left[b\right]$ is equivalent to
\begin{align*}
\begin{array}{ll@{}}
\sigma\left(\left[b\right]\right)>\sigma\left(\left[a\right]\right)&\vee\vspace{0.35cm}\\
\left(\sigma\left(\left[b\right]\right)=1\wedge\sigma\left(\left[a\right]\right)=1\wedge a_p\left(\Psi\left(a_p,b_p\right)\right)<b_p\left(\Psi\left(a_p,b_p\right)\right)\right)&\vee\vspace{0.35cm}\\
\left(\sigma\left(\left[b\right]\right)=0\wedge\sigma\left(\left[a\right]\right)=0\wedge a_p\left(\Psi\left(a_p,b_p\right)\right)>b_p\left(\Psi\left(a_p,b_p\right)\right)\right)
\end{array}
\end{align*}
We furthermore have $\forall\left[a\right]\in\mathcal{S}_{\mathbb{R}}^*$
\begin{align*}
\begin{array}{ll@{}}
\left[a\right]<\left[0\right]&\text{if and only if $\sigma\left(\left[a\right]\right)=0$}\\
\left[0\right]<\left[a\right]&\text{if and only if $\sigma\left(\left[a\right]\right)=1$}
\end{array}
\end{align*}
and finally $\left[0\right]<\left[0\right]$ as false.
\end{theorem}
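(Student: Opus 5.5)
The plan is to reduce the strict order to the non-strict one and then read the comparison off the primary auxiliary functions through the \nameref{Maximum Corollary} \ref{Maximum Corollary}. By definition $\left[a\right]<\left[b\right]$ means $\neg\left(\left[b\right]\leq\left[a\right]\right)$. By the \nameref{Totality of Order} \ref{Totality of Order} and the \nameref{Anti-Symmetry of Order} \ref{Anti-Symmetry of Order}, this is equivalent to $\left[a\right]\leq\left[b\right]\wedge\left[a\right]\neq\left[b\right]$.

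The zero cases come directly from negating the clauses of the order definition involving $\left[0\right]$. For example, $\left[a\right]<\left[0\right]$ holds iff $\neg\left(\left[0\right]\leq\left[a\right]\right)$, iff $\sigma\left(\left[a\right]\right)\neq1$, iff $\sigma\left(\left[a\right]\right)=0$. Also $\left[0\right]<\left[0\right]$ is false because $\left[0\right]\leq\left[0\right]$ holds. For non-zero numbers of different sign, the first disjunct follows immediately from the sign clause. So the real work is the same-sign case.

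\textbf{Same sign, $\sigma=1$.} I use the convention from the proof of the \nameref{Equivalent to the Definition of Order} \ref{Equivalent to the Definition of Order}: $a_p\left(0\right)=b_p\left(0\right)=0$. By the \nameref{Unicity of Real Numbers} \ref{Unicity of Real Numbers} and the \nameref{Equivalence Lemma} \ref{Equivalence Lemma}, $\left[a\right]\neq\left[b\right]$ iff $\Psi\left(a_p,b_p\right)\neq0$. Next, combining the per-position form of the \nameref{Equivalent to the Definition of Order} \ref{Equivalent to the Definition of Order} with the \nameref{Maximum Corollary} \ref{Maximum Corollary} gives
$$\left[a\right]\leq\left[b\right]\iff\forall m\in\omega_+,\ \sum_{n=1}^{m}a_p\left(n\right)\prod_{j=n}^{m-1}\vartheta\left(j\right)\leq\sum_{n=1}^{m}b_p\left(n\right)\prod_{j=n}^{m-1}\vartheta\left(j\right).$$
This works because the maximum of $f\left(m\right)$ over $f\in\left[b\right]$ is attained, so it serves as the per-position witness.

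\emph{($\Rightarrow$).} Assume $\left[a\right]<\left[b\right]$ and put $\Psi:=\Psi\left(a_p,b_p\right)\neq0$. The truncations of $a_p$ and $b_p$ agree at every level $m<\Psi$. At level $\Psi$ they differ exactly by $b_p\left(\Psi\right)-a_p\left(\Psi\right)$. The displayed inequality at $m=\Psi$, together with $a_p\left(\Psi\right)\neq b_p\left(\Psi\right)$, forces $a_p\left(\Psi\right)<b_p\left(\Psi\right)$. This is the same step already used in the proof of the \nameref{Equivalent to the Definition of Order} \ref{Equivalent to the Definition of Order}.

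\emph{($\Leftarrow$).} Assume $a_p\left(\Psi\right)<b_p\left(\Psi\right)$. Then $\Psi\neq0$, so $\left[a\right]\neq\left[b\right]$. For $m<\Psi$ the truncations are equal, and at $m=\Psi$ the gap is at least $1$. For $m>\Psi$ I reuse the tail estimate from the proof of the \nameref{Totality of Order} \ref{Totality of Order}. Since $a_p\left(n\right)\leq\vartheta\left(n-1\right)-1$, we get $\sum_{n=\Psi+1}^{m}a_p\left(n\right)\prod_{j=n}^{m-1}\vartheta\left(j\right)\leq\prod_{j=\Psi}^{m-1}\vartheta\left(j\right)-1$. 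A unit gap at level $\Psi$ scales up to a gap of $\prod_{j=\Psi}^{m-1}\vartheta\left(j\right)$ at level $m$, so the $b_p$-truncation still strictly exceeds the $a_p$-truncation. Hence the displayed criterion holds and $\left[a\right]\leq\left[b\right]$.

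\textbf{Same sign, $\sigma=0$.} The order clause is reversed, with the roles of $\left[a\right]$ and $\left[b\right]$ swapped. The same argument therefore yields $a_p\left(\Psi\right)>b_p\left(\Psi\right)$.

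I expect the main obstacle to be the clean translation of $\left[a\right]\leq\left[b\right]$ into the truncation inequality for all $m$. One has to check that the per-position witness from the \nameref{Equivalent to the Definition of Order} \ref{Equivalent to the Definition of Order} can always be taken to be the maximiser from the \nameref{Maximum Corollary} \ref{Maximum Corollary}, in both directions. One also has to track carefully the unit bookkeeping in the tail estimate when $m$ runs past $\Psi$.
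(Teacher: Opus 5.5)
Your proof is correct, and it departs from the paper's only in the backward direction.

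For $(\Rightarrow)$ the paper does essentially what you do. It takes a representative of $[b]$ dominating $a_p$ and bounds its truncations by those of $b_p$ via the Paradise City Lemma and its first corollary, which is the content of the Maximum Corollary you use. It then reads $a_p(\Psi)<b_p(\Psi)$ off the first level at which the truncations differ.

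For $(\Leftarrow)$, however, the paper never compares truncations. It points back into the proof of the Equivalent to the Definition of Order and reuses the construction made there from the hypothesis $a_p(\Psi)<b_p(\Psi)$:
\begin{itemize}
\item the function $\xi\in[b]$, obtained by lowering $b_p$ by one at $\Psi(a_p,b_p)$ and adding $\vartheta(k-1)-1$ at every later position $k$;
\item the functions $\zeta_a\in[b]$, one dominating each $a\in[a]$;
\item and then anti-symmetry to conclude.
\end{itemize}
You instead show that the primary truncations are ordered at every level: equal below $\Psi$, and strictly ordered from $\Psi$ on by the tail estimate borrowed from the totality proof. You then apply the per-position theorem only through its statement, with the Maximum Corollary supplying the witnesses.

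Your route is more modular. It uses only stated results, and it isolates a reusable criterion: for positive numbers, $[a]\leq[b]$ holds exactly when every truncation of $a_p$ is at most the corresponding truncation of $b_p$, with the inequality reversed for negatives.

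The paper's route is shorter, because the $(\vartheta-1)$-valued tail built into $\xi$ already does the work of your tail estimate. Indeed, inside the proof of the per-position theorem your truncation inequality is converted straight back into $a_p(\Psi)<b_p(\Psi)$ before $\xi$ is built. So your argument makes a small round trip to reach the same construction.
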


\begin{intuition}[Equivalent to the Definition of Strict Order] This is the rule everyone already uses for decimals: to compare two numbers, look at the first position where their most compact forms disagree, and whoever is larger there is larger overall. Here the ``most compact forms" are the primary auxiliary functions and the first disagreement is at $\Psi\left(a_p,b_p\right)$. For positives the bigger digit wins; for negatives it loses. That the primary auxiliary functions suffice, rather than every representative, is what makes strict comparison finite and mechanical.
\end{intuition}
\begin{proofidea} Totality is the way in. It guarantees that at least one of the two comparisons holds, so denying one of them pins down the other, and unwinding which comparison survives. through the per-position form of the order, converts a negative statement into a positive one about digits. What emerges is the familiar rule: pass to the primary auxiliary functions and read off the first position at which they disagree.
\end{proofidea}
\begin{proof} We will first focus on the case where $\left[a\right]$ and $\left[b\right]$ are both non-zero real numbers.\\
($\implies$):\indent By totality $\left[a\right]\leq\left[b\right]\vee\left[b\right]\leq\left[a\right]$ and thus 
\begin{align*}
\begin{array}{ll@{}}
\left[a\right]<\left[b\right]\iff\neg\left(\left[b\right]\leq\left[a\right]\right)\implies\left[a\right]\leq\left[b\right]&\text{and}\\
\left[a\right]<\left[b\right]\iff\neg\left(\left[b\right]\leq\left[a\right]\right)\implies\neg\left(\left[b\right]\leq\left[a\right]\wedge\left[a\right]\leq\left[b\right]\right)\implies\neg\left(\left[a\right]=\left[b\right]\right)
\end{array}
\end{align*} thus:
\begin{align*}
\begin{array}{ll@{}}
\sigma\left(\left[a\right]\right)<\sigma\left(\left[b\right]\right)&\vee\vspace{0.35cm}\\
\left(\sigma\left(\left[a\right]\right)=\sigma\left(\left[b\right]\right)=1\wedge\forall a\in\left[a\right],\exists b\in\left[b\right],\forall n\in\omega_+,a\left(n\right)\leq b\left(n\right)\right)&\vee\vspace{0.35cm}\\
\left(\sigma\left(\left[a\right]\right)=\sigma\left(\left[b\right]\right)=0\wedge\forall b\in\left[b\right],\exists a\in\left[a\right],\forall n\in\omega_+,a\left(n\right)\geq b\left(n\right)\right)
\end{array}
\end{align*}

If $\sigma\left(\left[a\right]\right)<\sigma\left(\left[b\right]\right)$ then $\sigma\left(\left[a\right]\right)<\sigma\left(\left[b\right]\right)$ and thus our equivalent definition works.
\\
If $\sigma\left(\left[a\right]\right)=\sigma\left(\left[b\right]\right)=1\wedge\forall a\in\left[a\right],\exists b\in\left[b\right],\forall n\in\omega_+,a\left(n\right)\leq b\left(n\right)$\\
 then $\sigma\left(\left[a\right]\right)=\sigma\left(\left[b\right]\right)=1$. Additionally, as $\forall a\in\left[a\right],\exists b\in\left[b\right],\forall n\in\omega_+,a\left(n\right)\leq b\left(n\right)$, we have by the \nameref{PC Cor I} \ref{PC Cor I} 
\begin{align*}
\sum\limits_{k=1}^n\left(a_p\left(k\right)\prod\limits_{j=k}^{n-1}\left(\vartheta\left(j\right)\right)\right)&\leq\sum\limits_{k=1}^n\left(b\left(k\right)\prod\limits_{j=k}^{n-1}\left(\vartheta\left(j\right)\right)\right)\\
&\leq\sum\limits_{k=1}^n\left(h_b\left(k\right)\prod\limits_{j=k}^{n-1}\left(\vartheta\left(j\right)\right)\right)\leq\sum\limits_{k=1}^n\left(b_p\left(k\right)\prod\limits_{j=k}^{n-1}\left(\vartheta\left(j\right)\right)\right)
\end{align*}
We know that as $a_p\left(0\right)=b_p\left(0\right)$ (by definition) if for all $n\in\omega_+$ we have that $a_p\left(n\right)=b_p\left(n\right)$ then we would have $a_p=b_p$ and thus by the \nameref{Unicity of Real Numbers} \ref{Unicity of Real Numbers} we would have $\left[a\right]=\left[b\right]$ which is a contradiction to our presumption. Hence, we have that $\Psi\left(a_p,b_p\right)\neq0$. Therefore, as for all $k\in\left[0,\Psi\left(a_p,b_p\right)-1\right]_\omega$ we have $a_p\left(k\right)=b_p\left(k\right)$ we obtain that
\begin{align*}
\begin{array}{rll@{}}
\sum\limits_{k=1}^{\Psi\left(a_p,b_p\right)-1}\left(a_p\left(k\right)\hspace{-0.35cm}\prod\limits_{j=k}^{\Psi\left(a_p,b_p\right)-2}\hspace{-0.35cm}\left(\vartheta\left(j\right)\right)\right)&=\hspace{-0.35cm}\sum\limits_{k=1}^{\Psi\left(a_p,b_p\right)-1}\left(b_p\left(k\right)\hspace{-0.35cm}\prod\limits_{j=k}^{\Psi\left(a_p,b_p\right)-2}\hspace{-0.35cm}\left(\vartheta\left(j\right)\right)\right)&\text{and}\\
\sum\limits_{k=1}^{\Psi\left(a_p,b_p\right)}\left(a_p\left(k\right)\hspace{-0.35cm}\prod\limits_{j=k}^{\Psi\left(a_p,b_p\right)-1}\hspace{-0.35cm}\left(\vartheta\left(j\right)\right)\right)&\neq\hspace{-0.35cm}\sum\limits_{k=1}^{\Psi\left(a_p,b_p\right)}\left(b_p\left(k\right)\hspace{-0.35cm}\prod\limits_{j=k}^{\Psi\left(a_p,b_p\right)-1}\hspace{-0.35cm}\left(\vartheta\left(j\right)\right)\right)
\end{array}
\end{align*}
This means that $a_p\left(\Psi\left(a_p,b_p\right)\right)<b_p\left(\Psi\left(a_p,b_p\right)\right)$. Ergo,
\begin{align*}
\left(\sigma\left(\left[b\right]\right)=1\wedge\sigma\left(\left[a\right]\right)=1\wedge a_p\left(\Psi\left(a_p,b_p\right)\right)<b_p\left(\Psi\left(a_p,b_p\right)\right)\right)
\end{align*}
The proof that
\begin{align*}
&\left(\sigma\left(\left[a\right]\right)=\sigma\left(\left[b\right]\right)=0\wedge\forall b\in\left[b\right],\exists a\in\left[a\right],\forall n\in\omega_+,a\left(n\right)\geq b\left(n\right)\right)\\
\implies&\left(\sigma\left(\left[b\right]\right)=0\wedge\sigma\left(\left[a\right]\right)=0\wedge a_p\left(\Psi\left(a_p,b_p\right)\right)>b_p\left(\Psi\left(a_p,b_p\right)\right)\right)
\end{align*}
is the same as the proof above, but simply relabelled.\\\\
($\impliedby$):\\
If $\sigma\left(\left[a\right]\right)<\sigma\left(\left[b\right]\right)$ then $\sigma\left(\left[a\right]\right)<\sigma\left(\left[b\right]\right)$ and thus our equivalent definition works.\\
If $\left(\sigma\left(\left[a\right]\right)=\sigma\left(\left[b\right]\right)=1\wedge a_p\left(\Psi\left(a_p,b_p\right)\right)<b_p\left(\Psi\left(a_p,b_p\right)\right)\right)$ then $\sigma\left(\left[a\right]\right)=\sigma\left(\left[b\right]\right)=1$. We have also showed in the proof of the \nameref{Equivalent to the Definition of Order} \ref{Equivalent to the Definition of Order} that
\begin{align*}
&\left(\sigma\left(\left[a\right]\right)=\sigma\left(\left[b\right]\right)\wedge a_p\left(\Psi\left(a_p,b_p\right)\right)<b_p\left(\Psi\left(a_p,b_p\right)\right)\right)\\
\implies&\left(\forall a\in\left[a\right],\exists b\in\left[b\right],\forall n\in\omega_+,a\left(n\right)\leq b\left(n\right)\right)
\end{align*}
\begin{center}
thus
\end{center}
\begin{align*}
\left(\sigma\left(\left[a\right]\right)=\sigma\left(\left[b\right]\right)=1\right)\wedge\left(\forall a\in\left[a\right],\exists b\in\left[b\right],\forall n\in\omega_+,a\left(n\right)\leq b\left(n\right)\right)
\end{align*}
The proof that
\begin{align*}
&\left(\left(\sigma\left(\left[b\right]\right)=0\wedge\sigma\left(\left[a\right]\right)=0\right)\wedge\left(a_p\left(\Psi\left(a_p,b_p\right)\right)>b_p\left(\Psi\left(a_p,b_p\right)\right)\right)\right)\\
\implies&\left(\left(\sigma\left(\left[a\right]\right)=\sigma\left(\left[b\right]\right)=0\right)\wedge\left(\forall b\in\left[b\right],\exists a\in\left[a\right],\forall n\in\omega_+,a\left(n\right)\geq b\left(n\right)\right)\right)
\end{align*}
is the same as the proof above, but simply relabelled.
\\
Therefore, the Equivalent to the Definition of Strict Order for two non-zero real numbers $\left[a\right]<\left[b\right]$ implies $\left[a\right]\neq\left[b\right]$ and $\left[a\right]\leq\left[b\right]$, hence by the \nameref{Corollary of the Anti-Symmetry of Order} \ref{Corollary of the Anti-Symmetry of Order} $\neg\left(\left[a\right]\geq\left[b\right]\right)$.
\\
Now, $\left[a\right]<\left[0\right]$ implies $\left[a\right]\neq\left[0\right]$ and $\left[a\right]\leq\left[0\right]$, hence $\neg\left(\left[a\right]\geq\left[0\right]\right)$ and $\left[0\right]<\left[a\right]$ implies $\left[0\right]\neq\left[a\right]$ and $\left[0\right]\leq\left[a\right]$, hence $\neg\left(\left[0\right]\geq\left[a\right]\right)$ and finally $\left[0\right]<\left[0\right]$ is false and thus $\neg\left(\left[0\right]\geq\left[0\right]\right)$.
\end{proof}
\section{Analytic Tools}
The results of this section are not about order, and they are not yet about the arithmetic; they are the instruments with which every later inequality is proved. Two questions recur throughout the rest of the paper. Given two numbers, how do we establish an inequality between them without exhibiting a dominating representative, which is often impossible to write down? And given two numbers that appear to agree, how do we conclude that they do? The Boundedness Theorem answers the first by allowing us to weigh initial segments instead of comparing digits, and the Extension Theorems answer the second by showing that a difference smaller than every scale is no difference. Both are stated in terms of weighted truncations, which are the numerical shadow a name casts, and it is through them that facts about names become facts about numbers.
\subsection{Boundedness}
\begin{theorem}[Boundedness Theorem]\label{Boundedness Theorem}\noindent\\
Let $f,g\in \mathcal{N}_{\mathcal{F}in}^*$ where $\sigma\left(f\right)=\sigma\left(g\right)$. Then if for all $M\in\omega$ there exists $m\in\left[M,\infty\right]_\omega$ such that we have 
\begin{align*}
\sum\limits_{n=1}^{m}f\left(n\right)\prod\limits_{j=n}^{m-1}\left(\vartheta\left(j\right)\right)\geq\sum\limits_{n=1}^{m}g\left(n\right)\prod\limits_{j=n}^{m-1}\left(\vartheta\left(j\right)\right)
\end{align*}
then $\left[f\right]\geq\left[g\right]$ if $\sigma\left(\left[f\right]\right)=\sigma\left(\left[g\right]\right)=1$ and $\left[f\right]\leq\left[g\right]$ if $\sigma\left(\left[f\right]\right)=\sigma\left(\left[g\right]\right)=0$.
\end{theorem}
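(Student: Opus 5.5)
We argue by contradiction, using the strict order characterisation. Write $T_m(u):=\sum_{n=1}^{m}u(n)\prod_{j=n}^{m-1}\vartheta(j)$ for the weighted truncation of a function $u$. Suppose $\sigma([f])=\sigma([g])=1$ and $\neg([f]\geq[g])$, that is $[f]<[g]$; the case of sign $0$ is the same argument with the roles of $f$ and $g$ exchanged. By the \nameref{Equivalent to the Definition of Strict Order} \ref{Equivalent to the Definition of Strict Order}, with $f_p,g_p$ the primary auxiliary functions and $t:=\Psi(f_p,g_p)\neq0$, we have $f_p(k)=g_p(k)$ for $k<t$ and $f_p(t)<g_p(t)$.

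First step: show that $T_m(g_p)-T_m(f_p)\geq 1$ for every $m\geq t$, and in fact that this gap grows. At $m=t$ the gap is at least $1$. For $m>t$ it is at least
\begin{align*}
\prod_{j=t}^{m-1}\vartheta(j)-\sum_{n=t+1}^{m}f_p(n)\prod_{j=n}^{m-1}\vartheta(j)\geq\prod_{j=t}^{m-1}\vartheta(j)-\Bigl(\prod_{j=t}^{m-1}\vartheta(j)-1\Bigr)=1,
\end{align*}
by the telescoping already used in the proof of totality.

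Second step: relate $f$ and $g$ to their auxiliary functions. The \nameref{Paradise City Lemma} \ref{Paradise City Lemma} gives $T_m(f)\leq T_m(f_p)$, since the primary dominates in truncation (by \nameref{PC Cor I} \ref{PC Cor I}, the secondary sits one below the primary). This already bounds $f$ above. For $g$ we need a lower bound $T_m(g)\geq T_m(g_p)-1$, or at least $T_m(g)>T_m(f_p)$, for all sufficiently large $m$. Combined with the first step, this gives $T_m(g)>T_m(f)$ for all large $m$, contradicting the hypothesis, which supplies arbitrarily large $m$ with $T_m(f)\geq T_m(g)$.

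The main obstacle is this lower bound on $T_m(g)$: membership in $[g]$ only says that contractions bring $g$ into agreement with $h_g$ on finite windows. My plan is as follows. Fix the position $t$ and take the number of inspection $N=t$. Choose a sequence of contractions $\mathcal{C}$ with $\mathcal{C}(g)$ agreeing with $h_g$ on $[0,t]_\omega$. Here $h_g$ is $g_p$ or $g_s$, which agree on $[1,t-1]_\omega$ and satisfy $h_g(t)\geq g_p(t)-1\geq f_p(t)$; if necessary, enlarge $N$ to the first position past $t$ where $g_p$ exceeds $f_p$ in a suitable sense, so as to obtain strictness. By the invariance identity \eqref{eq:contraction-invariance} in the proof of the \nameref{Tying Theorem} \ref{Tying Theorem}, $T_m(g)=T_m(\mathcal{C}(g))$ for all $m\geq\max\text{con}(\mathcal{C})$. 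Moreover $T_m(\mathcal{C}(g))\geq T_t(\mathcal{C}(g))\prod_{j=t}^{m-1}\vartheta(j)=T_t(h_g)\prod_{j=t}^{m-1}\vartheta(j)$, while $T_m(f_p)<(T_t(f_p)+1)\prod_{j=t}^{m-1}\vartheta(j)$.

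So it suffices to have $T_t(h_g)\geq T_t(f_p)+1$, or, when $h_g=g_s$ and equality only gives $T_t(h_g)=T_t(f_p)$, to push the same comparison to a later inspection point $t'>t$. There the secondary's $(\vartheta-1)$-tail keeps $T_{t'}(g_s)$ at least $T_{t'}(f_p)+1$ as soon as $f_p$ has a non-maximal digit in $(t,t']$. Such a digit exists because $f_p$ is primary. This case split on $h_g$, together with choosing $t'$ via the primary condition on $f_p$, is where I expect the care to be needed. The $\sigma=0$ case follows by symmetry.
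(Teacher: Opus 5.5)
Your proposal is correct and is essentially the paper's proof. It argues the contrapositive through the strict-order characterisation, uses the Paradise City Lemma with Corollary I to get $T_m(f)\leq T_m(f_p)$, and uses the contraction-invariance identity together with the scaling $T_m(\mathcal{C}(g))\geq T_N(h_g)\prod_{j=N}^{m-1}\vartheta(j)$ to beat $T_m(f_p)$ for all large $m$. The only difference is that the paper fixes the inspection point from the start at the first non-maximal digit $R$ of $f_p$ past $\Psi(f_p,g_p)$; there $T_R(g_p)-T_R(f_p)\geq 2$ and $T_R(h_g)\geq T_R(g_p)-1$ hold whether $h_g$ is primary or secondary, so your case split on $h_g$ is unnecessary.
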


\begin{intuition}[Boundedness Theorem] Comparing two functions by producing a representative of one which dominates a representative of the other is often inconvenient, and for products it is close to impossible. This theorem offers a second route: instead of comparing functions position by position, \emph{weigh} them. What it asks for is remarkably little. We need not check that the running weighted total of $f$ stays above that of $g$ from some point onwards; it is enough that it catches up with it at arbitrarily deep levels of inspection, infinitely often, however sparsely. That weakness in the hypothesis is exactly what makes the theorem usable in practice, since to establish an inequality one need only produce, beyond any given depth, a single level at which the truncations fall the right way.
\end{intuition}

\begin{proof} Suppose $\sigma\left(\left[f\right]\right)=\sigma\left(\left[g\right]\right)=1$. Suppose for the sake of contrapositive that $\left[f\right]<\left[g\right]$. Then by the \nameref{Equivalent to the Definition of Strict Order} \ref{Equivalent to the Definition of Strict Order} $g_p\left(\Psi\left(f_p,g_p\right)\right)>f_p\left(\Psi\left(f_p,g_p\right)\right)$. Now, as $g$ is not in $\left[0\right]$ we know that $\exists P\in\omega_+,\forall p\in\left[P+1,\infty\right]_\omega$
\begin{align*}
\sum\limits_{k=1}^{p}h_g\left(k\right)\prod\limits_{j=k}^{p-1}\left(\vartheta\left(j\right)\right)\geq\sum\limits_{k=1}^{p}g_p\left(k\right)\prod\limits_{j=k}^{p-1}\left(\vartheta\left(j\right)\right)-1
\end{align*}
\footnote{This is due to the possibility of $h_g$ being a secondary auxiliary function. However, in that case $\forall n\in\left[\Psi\left(h_g,g_p\right),\infty\right]_\omega,\sum\limits_{k=1}^{n}g_p\left(k\right)\prod\limits_{j=k}^{n-1}\left(\vartheta\left(j\right)\right)-\sum\limits_{k=1}^{n}h_g\left(k\right)\prod\limits_{j=k}^{n-1}\left(\vartheta\left(j\right)\right)=1$. This takes care of both of the cases.} As $f_p$ is a primary auxiliary function, there exists $L\in\left[\Psi\left(f_p,g_p\right)+1,\infty\right]_\omega$ such that $f_p\left(L\right)<\vartheta\left(L-1\right)-1$. Denote minimal such value as $R$. We then see that
\begin{align*}
&\sum\limits_{n=1}^{R}\left(g_p\left(n\right)\prod\limits_{j=n}^{R-1}\left(\vartheta\left(j\right)\right)\right)-\sum\limits_{n=1}^{R}\left(f_p\left(n\right)\prod\limits_{j=n}^{R-1}\left(\vartheta\left(j\right)\right)\right)\\
\geq&\sum\limits_{n=1}^{\Psi\left(f_p,g_p\right)}\left(g_p\left(n\right)\prod\limits_{j=n}^{R-1}\left(\vartheta\left(j\right)\right)\right)-\sum\limits_{n=1}^{\Psi\left(f_p,g_p\right)}\left(f_p\left(n\right)\prod\limits_{j=n}^{R-1}\left(\vartheta\left(j\right)\right)\right)\\
&-\sum\limits_{n=\Psi\left(f_p,g_p\right)+1}^{R-1}\left(\left(\vartheta\left(n-1\right)-1\right)\prod\limits_{j=n}^{R-1}\left(\vartheta\left(j\right)\right)\right)-f_p\left(R\right)\\
\geq&\prod\limits_{j=\Psi\left(f_p,g_p\right)}^{R-1}\left(\vartheta\left(j\right)\right)-\vartheta\left(R-1\right)\left(\prod\limits_{j=\Psi\left(f_p,g_p\right)}^{R-2}\left(\vartheta\left(j\right)\right)-1\right)-f_p\left(R\right)\\
=&\vartheta\left(R-1\right)-f_p\left(R\right)\geq2
\end{align*}
which implies
\begin{align*}
\sum\limits_{n=1}^{R+1}\left(g_p\left(n\right)\prod\limits_{j=n}^{R}\left(\vartheta\left(j\right)\right)\right)-\sum\limits_{n=1}^{R+1}\left(f_p\left(n\right)\prod\limits_{j=n}^{R}\left(\vartheta\left(j\right)\right)\right)\geq2\vartheta\left(R\right)-\left(\vartheta\left(R\right)-1\right)=\vartheta\left(R\right)+1\geq3
\end{align*}
Thus, as $\forall m\in\left[R+2,\infty\right]_\omega$
\begin{align*}
\sum\limits_{n=R+2}^{m}\left(f_p\left(n\right)\prod\limits_{j=n}^{m-1}\left(\vartheta\left(j\right)\right)\right)\leq\sum\limits_{n=R+2}^{m}\left(\left(\vartheta\left(n-1\right)-1\right)\prod\limits_{j=n}^{m-1}\left(\vartheta\left(j\right)\right)\right)=\prod\limits_{j=R+1}^{m-1}\left(\vartheta\left(j\right)\right)-1
\end{align*}
and therefore
\begin{align*}
&\prod\limits_{j=R+1}^{m-1}\left(\vartheta\left(j\right)\right)\left(\sum\limits_{n=1}^{R+1}\left(g_p\left(n\right)\prod\limits_{j=n}^{R}\left(\vartheta\left(j\right)\right)\right)\right)-\sum\limits_{k=1}^{m}\left(f_p\left(k\right)\prod\limits_{j=k}^{m-1}\left(\vartheta\left(j\right)\right)\right)\\
\geq&3\prod\limits_{j=R+1}^{m-1}\left(\vartheta\left(j\right)\right)-\left(\prod\limits_{j=R+1}^{m-1}\left(\vartheta\left(j\right)\right)-1\right)=2\prod\limits_{j=R+1}^{m-1}\left(\vartheta\left(j\right)\right)+1
\end{align*}
Hence, $\forall m\in\left[\max\left(\left\{\Psi\left(h_g,g_p\right),R+2\right\}\right),\infty\right]_\omega$
\begin{align*}
&\prod\limits_{j=R+1}^{m-1}\left(\vartheta\left(j\right)\right)\left(\sum\limits_{k=1}^{R+1}\left(h_g\left(k\right)\prod\limits_{j=k}^{R}\left(\vartheta\left(j\right)\right)\right)\right)\geq\prod\limits_{j=R+1}^{m-1}\left(\vartheta\left(j\right)\right)\left(\sum\limits_{k=1}^{R+1}\left(g_p\left(k\right)\prod\limits_{j=k}^{R}\left(\vartheta\left(j\right)\right)\right)-1\right)\\
&=\prod\limits_{j=R+1}^{m-1}\left(\vartheta\left(j\right)\right)\left(\sum\limits_{k=1}^{R+1}\left(g_p\left(k\right)\prod\limits_{j=k}^{R}\left(\vartheta\left(j\right)\right)\right)\right)-\prod\limits_{j=R+1}^{m-1}\left(\vartheta\left(j\right)\right)\\
&\geq\sum\limits_{n=1}^{m}\left(f_p\left(n\right)\prod\limits_{j=n}^{m-1}\left(\vartheta\left(j\right)\right)\right)+\prod\limits_{j=R+1}^{m-1}\left(\vartheta\left(j\right)\right)+1
\end{align*}
Now, take the interval of inspection $\left[0,N\right]_\omega$ where $N=R+1$. By the definition of the finite function, there exists a sequence of contractions $\mathcal{C}_{k=q}^r\in\mathcal{FS}$ such that
\begin{align*}
\forall n\in\left[0,N\right]_\omega,\mathcal{C}_{k=q}^r\left(g\right)\left(n\right)=h_g\left(n\right)
\end{align*}
Define $\Xi:=\max\left(\text{con}\left(\mathcal{C}_{k=q}^r\right)\cup\left\{N\right\}\right)$. Let $m\in\left[\Xi+1,\infty\right]_\omega$ be given. By \eqref{eq:contraction-invariance} we have that
\begin{align*}
&\sum\limits_{n=1}^{m}\left(g\left(n\right)\prod\limits_{j=n}^{m-1}\left(\vartheta\left(j\right)\right)\right)=\sum\limits_{n=1}^{m}\left(\mathcal{C}_{k=q}^r\left(g\right)\left(n\right)\prod\limits_{j=n}^{m-1}\left(\vartheta\left(j\right)\right)\right)\\
\geq&\prod\limits_{j=R+1}^{m-1}\left(\vartheta\left(j\right)\right)\sum\limits_{n=1}^{R+1}\left(\mathcal{C}_{k=q}^r\left(g\right)\left(n\right)\prod\limits_{j=n}^{R}\left(\vartheta\left(j\right)\right)\right)=\prod\limits_{j=R+1}^{m-1}\left(\vartheta\left(j\right)\right)\sum\limits_{n=1}^{R+1}\left(h_g\left(n\right)\prod\limits_{j=n}^{R}\left(\vartheta\left(j\right)\right)\right)
\end{align*}
ergo $\forall m\in\left[\max\left(\left\{\Psi\left(h_g,g_p\right),\Xi+1\right\}\right),\infty\right]_\omega$
\begin{align*}
&\sum\limits_{n=1}^{m}\left(g\left(n\right)\prod\limits_{j=n}^{m-1}\left(\vartheta\left(j\right)\right)\right)\geq\prod\limits_{j=R+1}^{m-1}\left(\vartheta\left(j\right)\right)\sum\limits_{n=1}^{R+1}\left(h_g\left(n\right)\prod\limits_{j=n}^{R}\left(\vartheta\left(j\right)\right)\right)\\
>&\sum\limits_{n=1}^{m}\left(f_p\left(n\right)\prod\limits_{j=n}^{m-1}\left(\vartheta\left(j\right)\right)\right)+\prod\limits_{j=R+1}^{m-1}\left(\vartheta\left(j\right)\right)>\sum\limits_{n=1}^{m}\left(f_p\left(n\right)\prod\limits_{j=n}^{m-1}\left(\vartheta\left(j\right)\right)\right)
\end{align*}
This means that $\neg\left(\left[f\right]\geq\left[g\right]\right)$ and $\sigma\left(\left[f\right]\right)=\sigma\left(\left[g\right]\right)=1$ implies by the \nameref{Paradise City Lemma}\ref{Paradise City Lemma} that $\exists M\in\omega_+,\forall m\in\left[M,\infty\right]_\omega$
\begin{align*}
\sum\limits_{n=1}^{m}\left(f\left(n\right)\prod\limits_{j=n}^{m-1}\left(\vartheta\left(j\right)\right)\right)<\sum\limits_{n=1}^{m}\left(g\left(n\right)\prod\limits_{j=n}^{m-1}\left(\vartheta\left(j\right)\right)\right)
\end{align*}
Therefore, by contrapositive $\sigma\left(\left[f\right]\right)=\sigma\left(\left[g\right]\right)=1$ and $\forall M\in\omega_+,\exists m\in\left[M,\infty\right]_\omega$
\begin{align*}
\sum\limits_{n=1}^{m}\left(f\left(n\right)\prod\limits_{j=n}^{m-1}\left(\vartheta\left(j\right)\right)\right)\geq\sum\limits_{n=1}^{m}\left(g\left(n\right)\prod\limits_{j=n}^{m-1}\left(\vartheta\left(j\right)\right)\right)
\end{align*}
implies $\left[f\right]\geq\left[g\right]$.\\
The case for $\sigma\left(\left[f\right]\right)=\sigma\left(\left[g\right]\right)=0$ is the same.
\end{proof}
\subsection{Extension and Extended Extension Theorems}
\begin{theorem}[Extension Theorem]\label{Extension Theorem}\noindent\\
Define the difference function $\Upsilon:\omega\times\omega\rightarrow\omega$ as 
\begin{align*}\Upsilon\left(\begin{array}{l@{}}a\\b\end{array}\right)=\max\left(\left\{a,b\right\}\right)-\min\left(\left\{a,b\right\}\right)
\end{align*}
Let $f,g\in \mathcal{N}_{\mathcal{F}in}$ where $\sigma\left(f\right)=\sigma\left(g\right)$. Then if $\forall N\in\omega_+,\exists M\in\left[N,\infty\right]_\omega,\forall m\in\left[M,\infty\right]_\omega$
\begin{align*}
\Upsilon\left(\begin{array}{l@{}}\sum\limits_{n=1}^{m}\left(f\left(n\right)\prod\limits_{j=n}^{m-1}\left(\vartheta\left(j\right)\right)\right)\\\sum\limits_{n=1}^{m}\left(g\left(n\right)\prod\limits_{j=n}^{m-1}\left(\vartheta\left(j\right)\right)\right)\end{array}\right)\leq\prod\limits_{j=N}^{m-1}\left(\vartheta\left(j\right)\right)
\end{align*}
then $\left[f\right]=\left[g\right]$.
\end{theorem}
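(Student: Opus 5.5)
The plan is to argue by contradiction through the strict-order characterisation. Zero needs separate treatment first. If $f\in[0]$, then by the \nameref{Null Lemma} \ref{Null Lemma} its truncations vanish. The hypothesis then gives, for every $N$ and all large $m$,
\[
\sum_{n=1}^{m}g(n)\prod_{j=n}^{m-1}\vartheta(j)\leq\prod_{j=N}^{m-1}\vartheta(j).
\]
If $g$ had a non-zero digit at some position $k$, taking $N=k+1$ would give a truncation at least $\prod_{j=k}^{m-1}\vartheta(j)\geq2\prod_{j=k+1}^{m-1}\vartheta(j)$, which is a contradiction. So $g\in[0]$ as well. From now on both are non-zero with common sign; by symmetry I take $\sigma=1$. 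If $[f]\neq[g]$, totality and anti-symmetry give without loss of generality $[f]<[g]$. The \nameref{Equivalent to the Definition of Strict Order} \ref{Equivalent to the Definition of Strict Order} then gives $f_p(\Psi)<g_p(\Psi)$, where $\Psi=\Psi(f_p,g_p)$.

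The core estimate is already contained in the proof of the \nameref{Boundedness Theorem} \ref{Boundedness Theorem}, and I would reuse it. Let $R>\Psi$ be minimal with $f_p(R)<\vartheta(R-1)-1$. That argument produces, for every sufficiently large $m$,
\[
\sum_{n=1}^{m}g(n)\prod_{j=n}^{m-1}\vartheta(j)>\sum_{n=1}^{m}f_p(n)\prod_{j=n}^{m-1}\vartheta(j)+\prod_{j=R+1}^{m-1}\vartheta(j),
\]
and the \nameref{Paradise City Lemma} \ref{Paradise City Lemma} lets me replace $f_p$ by $f$ on the right. So the gap between the truncations of $g$ and of $f$ is strictly greater than $\prod_{j=R+1}^{m-1}\vartheta(j)$ for all large $m$.

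Now apply the hypothesis with $N:=R+1$. It supplies $M\geq N$ such that for all $m\geq M$ the difference $\Upsilon$ of the two truncations is at most $\prod_{j=R+1}^{m-1}\vartheta(j)$. Taking $m$ larger than both thresholds contradicts the strict inequality above, so $[f]=[g]$. The case $\sigma=0$ is the same argument with the roles of $f$ and $g$ swapped.

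The main obstacle is the bookkeeping in the first step. The Boundedness proof states its conclusion only in the form ``$>f_p$-truncation $+\prod_{j=R+1}^{m-1}\vartheta(j)$''. I must check carefully that the surplus it produces really is at least $\prod_{j=R+1}^{m-1}\vartheta(j)$ plus one. I also need the threshold on $m$ to be explicit, namely $m>\max(\Psi(h_g,g_p),\Xi)$. That explicit threshold is what makes the ``for all $m\geq M$'' quantifier in the hypothesis collide with it. If the margin there proves too tight, the fallback is to take $N=R+2$ instead: the same chain then yields a surplus of at least $2\prod_{j=R+1}^{m-1}\vartheta(j)+1$, which comfortably exceeds $\prod_{j=R+2}^{m-1}\vartheta(j)$.
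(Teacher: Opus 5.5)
Your proof is correct and follows the paper's own route. You dispose of the zero case with the Null Lemma, then, assuming $[f]<[g]$, reuse the estimate from the proof of the Boundedness Theorem (the truncations of $g$ exceed those of $f$ by more than $\prod_{j=R+1}^{m-1}\vartheta(j)$ for all $m\geq\Xi+1$) and collide it with the hypothesis.

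Your choice $N=R+1$ is cleaner than the paper's footnoted $N=\max(\{\Psi(h_g,g_p),\Xi+1\})$. The strict inequality already in that chain makes your fallback unnecessary; note that its figure $2\prod_{j=R+1}^{m-1}\vartheta(j)+1$ is the surplus before the one-unit loss from $h_g$ possibly being secondary. Your zero case, with $N=k+1$ and $\vartheta(k)\geq 2$, is more careful than the paper's, which only bounds $\Upsilon$ below by $\prod_{j=k}^{m-1}\vartheta(j)$ and so does not quite reach a contradiction.
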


\begin{intuition}[Extension Theorem]If the running totals of two same-sign numbers eventually stay within $\nicefrac{1}{\prod_{j<N}\vartheta\left(j\right)}$ much of each other, and this holds for every choice of $N$, then no gap survives: a genuine difference between two distinct reals exceeds some fixed scale, and would therefore be detected at that one. A difference smaller than every scale is no difference at all.\\
The following variant (\nameref{Extension Theorem} \ref{Extended Extension Theorem}), needed for multiplication, bounds only one side, instead of bounding the gap symmetrically, we bound a rescaled coarse truncation of one number by a fine truncation of the other plus a single unit of scale. Combined with an already-known inequality in the opposite direction, this one-sided sandwich is still tight enough to force the two numbers to coincide.
\end{intuition}

\begin{proof} We shall prove the theorem in two cases. One where both $\left[f\right]$ and $\left[g\right]$ are non-zero and the second where one of them is.\\
Firstly let $\left[f\right],\left[g\right]\in\mathcal{S}_{\mathbb{R}}^*$, such that our presumption is satisfied, be given. Suppose that $\left[f\right]\neq\left[g\right]$ and $\sigma\left(\left[f\right]\right)=\sigma\left(\left[g\right]\right)=1$. Suppose without loss of generality $\left[f\right]<\left[g\right]$. By using the proof of the \nameref{Boundedness Theorem} \ref{Boundedness Theorem} above we have $\exists N\in\omega_+$\footnote{$N$ is $\max\left(\left\{\Psi\left(h_g,g_p\right),\Xi+1\right\}\right)$ from the proof above.},$\forall M\in\left[N,\infty\right]_\omega,\exists m\in\left[M,\infty\right]_\omega$
\begin{align*}
&\sum\limits_{n=1}^{m}\left(g\left(n\right)\prod\limits_{j=n}^{m-1}\left(\vartheta\left(j\right)\right)\right)>\sum\limits_{n=1}^{m}\left(f\left(n\right)\prod\limits_{j=n}^{m-1}\left(\vartheta\left(j\right)\right)\right)+\prod\limits_{j=N}^{m-1}\left(\vartheta\left(j\right)\right)\\
\implies&\sum\limits_{n=1}^{m}\left(g\left(n\right)\prod\limits_{j=n}^{m-1}\left(\vartheta\left(j\right)\right)\right)-\sum\limits_{n=1}^{m}\left(f\left(n\right)\prod\limits_{j=n}^{m-1}\left(\vartheta\left(j\right)\right)\right)>\prod\limits_{j=N}^{m-1}\left(\vartheta\left(j\right)\right)
\end{align*}
This means that $\exists N\in\omega_+,\forall M\in\left[N,\infty\right]_\omega,\exists m\in\left[M,\infty\right]_\omega$
\begin{align*}
\Upsilon\left(\begin{array}{l@{}}\sum\limits_{n=1}^{m}\left(f\left(n\right)\prod\limits_{j=n}^{m-1}\left(\vartheta\left(j\right)\right)\right)\\\sum\limits_{n=1}^{m}\left(g\left(n\right)\prod\limits_{j=n}^{m-1}\left(\vartheta\left(j\right)\right)\right)\end{array}\right)>\prod\limits_{j=N}^{m-1}\left(\vartheta\left(j\right)\right)
\end{align*}
Ergo, by contrapositive the theorem holds.\\
The case for $\sigma\left(\left[f\right]\right)=\sigma\left(\left[g\right]\right)=0$ is the same.\\
We see that if $\left[f\right]\in\left[0\right]$ then $\left[g\right]\in\left[0\right]$ as well, as if not, then $\exists k\in\omega_+,g\left(k\right)\neq0$. And thus, $\forall m\in\left[k+1,\infty\right]_\omega$
\begin{align*}
\Upsilon\left(\begin{array}{l@{}}\sum\limits_{n=1}^{m-1}\left(f\left(n\right)\prod\limits_{j=n}^{m-1}\left(\vartheta\left(j\right)\right)\right)\\\sum\limits_{n=1}^{m-1}\left(g\left(n\right)\prod\limits_{j=n}^{m-1}\left(\vartheta\left(j\right)\right)\right)\end{array}\right)=\sum\limits_{n=1}^{m-1}\left(g\left(n\right)\prod\limits_{j=n}^{m-1}\left(\vartheta\left(j\right)\right)\right)\geq g\left(k\right)\prod\limits_{j=k}^{m-1}\left(\vartheta\left(j\right)\right)\geq\prod\limits_{j=k}^{m-1}\left(\vartheta\left(j\right)\right)
\end{align*}
contradicting our presumption. Hence $\left[f\right]=\left[g\right]=\left[0\right]$.
\end{proof}

\begin{theorem}[Extended Extension Theorem]\label{Extended Extension Theorem}\noindent\\
Suppose that $f,g\in \mathcal{N}_{\mathcal{F}in}^*$.\\
If $\left[f\right]\leq\left[g\right]$ and $\forall N\in\omega_+,\forall\Xi\in\left[N+1,\infty\right]_\omega,\exists m\in\left[\Xi+1,\infty\right]_\omega$ such that
\begin{align*}
\prod\limits_{j=\Xi}^{m-1}\left(\vartheta\left(j\right)\right)\left(\sum\limits_{k=1}^\Xi\left(g\left(k\right)\prod\limits_{j=k}^{\Xi-1}\left(\vartheta\left(j\right)\right)\right)\right)\leq\sum\limits_{k=1}^m\left(f\left(k\right)\prod\limits_{j=k}^{m-1}\left(\vartheta\left(j\right)\right)\right)+\prod\limits_{j=N}^{m-1}\left(\vartheta\left(j\right)\right)
\end{align*}
and if $\sigma\left(f\right)=\sigma\left(g\right)=1$ then $\left[f\right]=\left[g\right]$.\\\\
If $\left[f\right]\geq\left[g\right]$ and $\forall N\in\omega_+,\forall\Xi\in\left[N+1,\infty\right]_\omega,\exists m\in\left[\Xi+1,\infty\right]_\omega$ such that
\begin{align*}
\prod\limits_{j=\Xi}^{m-1}\left(\vartheta\left(j\right)\right)\left(\sum\limits_{k=1}^\Xi\left(f\left(n\right)\prod\limits_{j=k}^{\Xi-1}\left(\vartheta\left(j\right)\right)\right)\right)\leq\sum\limits_{k=1}^m\left(g\left(k\right)\prod\limits_{j=k}^{m-1}\left(\vartheta\left(j\right)\right)\right)+\prod\limits_{j=N}^{m-1}\left(\vartheta\left(j\right)\right)
\end{align*}
and if $\sigma\left(f\right)=\sigma\left(g\right)=0$ then $\left[f\right]=\left[g\right]$.
\end{theorem}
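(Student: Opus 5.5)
We argue by contradiction, reusing the quantitative core of the proof of the \nameref{Boundedness Theorem} \ref{Boundedness Theorem}, exactly as was done for the \nameref{Extension Theorem} \ref{Extension Theorem}. Treat the case $\sigma\left(f\right)=\sigma\left(g\right)=1$ with $\left[f\right]\leq\left[g\right]$. The case $\sigma=0$ is the same after relabelling $f$ and $g$.

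Suppose $\left[f\right]\neq\left[g\right]$. Then $\left[f\right]<\left[g\right]$. By the \nameref{Equivalent to the Definition of Strict Order} \ref{Equivalent to the Definition of Strict Order} we get $f_p\left(\Psi\left(f_p,g_p\right)\right)<g_p\left(\Psi\left(f_p,g_p\right)\right)$. We then repeat the construction from the proof of the \nameref{Boundedness Theorem} \ref{Boundedness Theorem}.

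\begin{itemize}
\item Let $R$ be the least position beyond $\Psi\left(f_p,g_p\right)$ at which $f_p$ is not maximal.
\item This gives, for every $m\geq R+2$, the gap
\begin{align*}
\prod\limits_{j=R+1}^{m-1}\left(\vartheta\left(j\right)\right)\sum\limits_{k=1}^{R+1}\left(h_g\left(k\right)\prod\limits_{j=k}^{R}\left(\vartheta\left(j\right)\right)\right)>\sum\limits_{n=1}^{m}\left(f_p\left(n\right)\prod\limits_{j=n}^{m-1}\left(\vartheta\left(j\right)\right)\right)+\prod\limits_{j=R+1}^{m-1}\left(\vartheta\left(j\right)\right).
\end{align*}
\item Choose a sequence of contractions taking $g$ to $h_g$ on $\left[0,R+1\right]_\omega$, and let $\Xi_0$ be the maximum of its conditional and $R+1$.
\end{itemize}

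Now fix $N:=R+1$ and $\Xi:=\max\left(\left\{\Xi_0,\Psi\left(h_g,g_p\right),R+2\right\}\right)$. Both lie in the admissible range, since $\Xi\geq N+1$.

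We lower-bound the left side of the hypothesis. By \eqref{eq:contraction-invariance}, the weighted truncation of $g$ at level $\Xi$ equals that of the contracted function. Dropping the positions above $R+1$ and then rescaling by $\prod_{j=\Xi}^{m-1}\vartheta\left(j\right)$ gives, for every $m>\Xi$,
\begin{align*}
\prod\limits_{j=\Xi}^{m-1}\left(\vartheta\left(j\right)\right)\sum\limits_{k=1}^{\Xi}\left(g\left(k\right)\prod\limits_{j=k}^{\Xi-1}\left(\vartheta\left(j\right)\right)\right)\geq\prod\limits_{j=R+1}^{m-1}\left(\vartheta\left(j\right)\right)\sum\limits_{k=1}^{R+1}\left(h_g\left(k\right)\prod\limits_{j=k}^{R}\left(\vartheta\left(j\right)\right)\right).
\end{align*}
The displayed gap makes this strictly exceed $\sum_{n\leq m}f_p\left(n\right)\prod\vartheta+\prod_{j=N}^{m-1}\vartheta\left(j\right)$. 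By the \nameref{Paradise City Lemma} \ref{Paradise City Lemma}, the truncations of $f$ are bounded by those of $f_p$. Hence, for every $m>\Xi$, the left side of the hypothesis strictly exceeds $\sum_{k\leq m}f\left(k\right)\prod\vartheta+\prod_{j=N}^{m-1}\vartheta\left(j\right)$. This contradicts the existence of the promised $m$, so $\left[f\right]=\left[g\right]$.

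The main obstacle is matching scales. The hypothesis allows slack $\prod_{j=N}^{m-1}\vartheta\left(j\right)$, while the Boundedness argument produces a gap of size $\prod_{j=R+1}^{m-1}\vartheta\left(j\right)$. Choosing $N=R+1$ makes the two coincide exactly, so the strict inequality from the gap is just enough. One must also check that the gap estimate holds uniformly for all $m>\Xi$ and not merely for some $m$. This is the case because the estimate derived in the Boundedness proof is valid for every sufficiently large $m$. If the equality of scales turns out to be off by one, I would take $N=R$ instead, since $\prod_{j=R}^{m-1}\vartheta\geq2\prod_{j=R+1}^{m-1}\vartheta$ and the gap above was in fact at least $2\prod_{j=R+1}^{m-1}\vartheta+1$.
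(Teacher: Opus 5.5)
For $\sigma(f)=\sigma(g)=1$ your argument is the paper's own. You assume $[f]<[g]$, import the gap from the proof of the Boundedness Theorem with $N=R+1$, push it onto $g$ through \eqref{eq:contraction-invariance} and onto $f$ through the Paradise City Lemma, and contradict the hypothesis for every $m>\Xi$. Your choice $\Xi\geq R+2$ is slightly more careful than the paper's $\Xi=\max(\dots,R+1)$, which need not lie in $[N+1,\infty]_\omega$. Your fallback $N=R$ would not work, though. The surplus $2\prod_{j=R+1}^{m-1}\vartheta(j)+1$ is the one for $g_p$, and passing to a possibly secondary $h_g$ costs a full $\prod_{j=R+1}^{m-1}\vartheta(j)$. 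The main line does not need the fallback.

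The genuine gap is the case $\sigma(f)=\sigma(g)=0$. Relabelling $f\leftrightarrow g$ turns the first statement into the shape of the second, but the argument does not come along with it. For negative numbers the order reverses magnitudes: $[f]\geq[g]$ with $\sigma=0$ means that $g$ dominates $f$. So from $[f]\neq[g]$, the Equivalent to the Definition of Strict Order gives $g_p(\Psi(f_p,g_p))>f_p(\Psi(f_p,g_p))$. The function on the \emph{left} of the displayed inequality is therefore the smaller one, and there is no gap to play against the slack.

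In fact the second statement is false as written. In base $\vartheta\equiv10$ take $f=(1;0,5,0,\dots)$ and $g=(1;1,0,0,\dots)$, the names of $-\tfrac12$ and $-1$, so $[f]\geq[g]$. For all admissible $N$, $\Xi$, $m$ the left side equals $5\cdot10^{m-2}$ and the right side equals $10^{m-1}+10^{m-N}$. The hypothesis therefore holds, yet $[f]\neq[g]$. The paper's ``the proof for $\sigma(f)=\sigma(g)=0$ is the same'' has exactly this defect.

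What your argument does prove for $\sigma=0$ is a corrected version, which you can state in either of two equivalent ways:
\begin{itemize}
\item keep $[f]\geq[g]$ but use the displayed inequality from the first statement, so that the larger magnitude $g$ is bounded by $f$;
\item keep the stated inequality but assume $[f]\leq[g]$ instead of $[f]\geq[g]$.
\end{itemize}
That corrected form is all the Extended Extension Theorem Corollary needs, since it assumes both inequalities.
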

\begin{proof} Suppose that $\sigma\left(f\right)=\sigma\left(g\right)=1$ and $\neg\left(\left[g\right]\leq\left[f\right]\right)$. By the proof of the \nameref{Boundedness Theorem} \ref{Boundedness Theorem} we have $\forall m\in\left[\max\left(\left\{\Psi\left(h_g,g_p\right),R+2\right\}\right),\infty\right]_\omega$
\begin{align*}
&\prod\limits_{j=R+1}^{m-1}\left(\vartheta\left(j\right)\right)\left(\sum\limits_{k=1}^{R+1}\left(h_g\left(k\right)\prod\limits_{j=k}^{R}\left(\vartheta\left(j\right)\right)\right)\right)>\sum\limits_{k=1}^{m}\left(f_p\left(k\right)\prod\limits_{j=k}^{m-1}\left(\vartheta\left(j\right)\right)\right)+\prod\limits_{j=R+1}^{m-1}\left(\vartheta\left(j\right)\right)
\end{align*}
Now take a sequence of contractions $\mathcal{C}_{k=q}^r\in\mathcal{FS}$ such that for all $n\in\left[0,R+1\right]_\omega$ we have $\mathcal{C}_{k=q}^r\left(g\right)\left(n\right)=h_g\left(n\right)$. Take $\Xi:=\max\left(\text{con}\left(\mathcal{C}_{k=q}^r\right)\cup\left\{\Psi\left(h_g,g_p\right),R+1\right\}\right)$. Then for all $m\in\left[\Xi+1,\infty\right]_\omega$ we have by \eqref{eq:contraction-invariance} and the \nameref{Paradise City Lemma} \ref{Paradise City Lemma}
\begin{align*}
&\prod\limits_{j=\Xi}^{m-1}\left(\vartheta\left(j\right)\right)\left(\sum\limits_{k=1}^{\Xi}\left(g\left(k\right)\prod\limits_{j=k}^{\Xi-1}\left(\vartheta\left(j\right)\right)\right)\right)=\prod\limits_{j=\Xi}^{m-1}\left(\vartheta\left(j\right)\right)\left(\sum\limits_{k=1}^{\Xi}\left(\mathcal{C}_{k=q}^r\left(g\right)\left(k\right)\prod\limits_{j=k}^{\Xi-1}\left(\vartheta\left(j\right)\right)\right)\right)\\
\geq&\prod\limits_{j=R+1}^{m-1}\left(\vartheta\left(j\right)\right)\left(\sum\limits_{k=1}^{R+1}\left(\mathcal{C}_{k=q}^r\left(g\right)\left(k\right)\prod\limits_{j=k}^{R}\left(\vartheta\left(j\right)\right)\right)\right)=\prod\limits_{j=R+1}^{m-1}\left(\vartheta\left(j\right)\right)\left(\sum\limits_{k=1}^{R+1}\left(h_g\left(k\right)\prod\limits_{j=k}^{R}\left(\vartheta\left(j\right)\right)\right)\right)\\
>&\sum\limits_{k=1}^{m}\left(f_p\left(k\right)\prod\limits_{j=k}^{m-1}\left(\vartheta\left(j\right)\right)\right)+\prod\limits_{j=R+1}^{m-1}\left(\vartheta\left(j\right)\right)\geq\sum\limits_{k=1}^{m}\left(f\left(k\right)\prod\limits_{j=k}^{m-1}\left(\vartheta\left(j\right)\right)\right)+\prod\limits_{j=R+1}^{m-1}\left(\vartheta\left(j\right)\right)
\end{align*}
Therefore, $\sigma\left(f\right)=\sigma\left(g\right)=1$ and $\neg\left(\left[g\right]\leq\left[f\right]\right)$ implies that there exists $N\in\omega_+$ ($N=R+1$) such that there exists $\Xi\in\left[N+1,\infty\right]_\omega$ such that for all $m\in\left[\Xi+1,\infty\right]_\omega$ we have
\begin{align*}
&\prod\limits_{j=\Xi}^{m-1}\left(\vartheta\left(j\right)\right)\left(\sum\limits_{k=1}^{\Xi}\left(g\left(k\right)\prod\limits_{j=k}^{\Xi-1}\left(\vartheta\left(j\right)\right)\right)\right)>\sum\limits_{k=1}^{m}\left(f\left(k\right)\prod\limits_{j=k}^{m-1}\left(\vartheta\left(j\right)\right)\right)+\prod\limits_{j=N}^{m-1}\left(\vartheta\left(j\right)\right)
\end{align*}
Hence, by contrapositive, we have that $\sigma\left(f\right)=\sigma\left(g\right)=1$ and the presumption of the theorem implies $\left[g\right]\leq\left[f\right]$. Thus as we also suppose $\left[f\right]\leq\left[g\right]$ then by the \nameref{Anti-Symmetry of Order} \ref{Anti-Symmetry of Order} we have $\left[g\right]=\left[f\right]$.
\\
The proof for $\sigma\left(f\right)=\sigma\left(g\right)=0$ is the same.
\end{proof}
\begin{corollary}[Extended Extension Theorem Corollary]\label{Extended Extension Theorem Corollary} Suppose that $f,g\in \mathcal{N}_{\mathcal{F}in}$, $\sigma\left(f\right)=\sigma\left(g\right)$. Then if $\forall N\in\omega_+,\forall\Xi\in\left[N+1,\infty\right]_\omega,\exists m\in\left[\Xi+1,\infty\right]_\omega$ such that
\begin{align*}
\prod\limits_{j=\Xi}^{m-1}\left(\vartheta\left(j\right)\right)\left(\sum\limits_{n=1}^\Xi\left(f\left(n\right)\prod\limits_{j=n}^{\Xi-1}\left(\vartheta\left(j\right)\right)\right)\right)\leq\sum\limits_{n=1}^m\left(g\left(n\right)\prod\limits_{j=n}^{m-1}\left(\vartheta\left(j\right)\right)\right)+\prod\limits_{j=N}^{m-1}\left(\vartheta\left(j\right)\right)
\end{align*}
and $\forall N\in\omega_+,\forall\Xi\in\left[N+1,\infty\right]_\omega,\exists m\in\left[\Xi+1,\infty\right]_\omega$ such that
\begin{align*}
\prod\limits_{j=\Xi}^{m-1}\left(\vartheta\left(j\right)\right)\left(\sum\limits_{n=1}^\Xi\left(g\left(n\right)\prod\limits_{j=n}^{\Xi-1}\left(\vartheta\left(j\right)\right)\right)\right)\leq\sum\limits_{n=1}^m\left(f\left(n\right)\prod\limits_{j=n}^{m-1}\left(\vartheta\left(j\right)\right)\right)+\prod\limits_{j=N}^{m-1}\left(\vartheta\left(j\right)\right)
\end{align*}
then $\left[f\right]=\left[g\right]$.
\end{corollary}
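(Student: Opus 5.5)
The plan is to reduce the corollary to the \nameref{Extended Extension Theorem} \ref{Extended Extension Theorem} by first deciding the order between $\left[f\right]$ and $\left[g\right]$ with the \nameref{Totality of Order} \ref{Totality of Order}, and then feeding in whichever of the two displayed hypotheses has the right orientation. That theorem requires $f,g\in\mathcal{N}_{\mathcal{F}in}^*$, so the zero case must be handled separately. I would do the zero case first.

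\textbf{Zero case.} Suppose without loss of generality that $f\in\left[0\right]$. By the \nameref{Null Lemma} \ref{Null Lemma}, $f\left(n\right)=0$ for all $n\in\omega_+$, so every truncation of $f$ vanishes and the first displayed hypothesis holds trivially. The second hypothesis now reads
\begin{align*}
\prod\limits_{j=\Xi}^{m-1}\left(\vartheta\left(j\right)\right)\sum\limits_{n=1}^{\Xi}\left(g\left(n\right)\prod\limits_{j=n}^{\Xi-1}\left(\vartheta\left(j\right)\right)\right)\leq\prod\limits_{j=N}^{m-1}\left(\vartheta\left(j\right)\right).
\end{align*}
Suppose some $k\in\omega_+$ has $g\left(k\right)\geq1$. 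Choose $N=k+1$ and $\Xi=N+1$. Then for every admissible $m$ the left side is at least $\prod_{j=k}^{m-1}\vartheta\left(j\right)=\vartheta\left(k\right)\prod_{j=N}^{m-1}\vartheta\left(j\right)$, which is at least $2\prod_{j=N}^{m-1}\vartheta\left(j\right)$. This contradicts the displayed inequality. Hence $g$ vanishes on $\omega_+$, and the \nameref{Null Lemma} \ref{Null Lemma} gives $\left[g\right]=\left[0\right]=\left[f\right]$.

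\textbf{Non-zero case.} Now let both numbers be non-zero with $\sigma\left(f\right)=\sigma\left(g\right)$. By totality, either $\left[f\right]\leq\left[g\right]$ or $\left[g\right]\leq\left[f\right]$; the two situations are symmetric under swapping $f$ and $g$ (which also swaps the two displays), so assume $\left[f\right]\leq\left[g\right]$.
\begin{itemize}
\item If the common sign is $1$, apply the first part of the \nameref{Extended Extension Theorem} \ref{Extended Extension Theorem} directly. Its hypothesis is that the rescaled coarse truncation of $g$ is bounded by the fine truncation of $f$ plus one unit of scale. That is exactly the second display of the corollary, so we obtain $\left[f\right]=\left[g\right]$.
\item If the common sign is $0$, apply the second part of that theorem with the roles relabelled: take its "$f$" to be our $g$ and its "$g$" to be our $f$. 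Then its premise "$\left[f\right]\geq\left[g\right]$" becomes our $\left[g\right]\geq\left[f\right]$. Its inequality becomes the rescaled truncation of $g$ bounded by that of $f$, which is again the second display. This yields equality.
\end{itemize}

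The only real obstacle is bookkeeping: checking that in each of the four sign/order configurations the hypothesis demanded by the \nameref{Extended Extension Theorem} \ref{Extended Extension Theorem} is literally one of the two displays of the corollary, with the quantifier pattern $\forall N,\forall\Xi,\exists m$ preserved. That is why both displays are assumed. The zero case is the only place where an actual estimate is needed, and there the factor $\vartheta\left(k\right)\geq2$ supplies the required strict gap.
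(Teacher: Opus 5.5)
Your proof is correct and follows the paper's own argument, which is Totality of Order followed by the Extended Extension Theorem; your explicit matching of each sign/order configuration to the second display (or, after swapping $f$ and $g$, to the first) is bookkeeping the paper leaves implicit. Your separate zero case fills a real gap: the Extended Extension Theorem is stated only for $f,g\in\mathcal{N}_{\mathcal{F}in}^*$ while the corollary allows zero functions, the paper's one-line proof does not address this, and your estimate using $\vartheta(k)\geq 2$ handles it correctly.
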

\begin{proof} By the \nameref{Totality of Order} \ref{Totality of Order} either $\left[f\right]\leq\left[g\right]$ or $\left[f\right]\geq\left[g\right]$. Then by the \nameref{Extended Extension Theorem} \ref{Extended Extension Theorem} $\left[f\right]=\left[g\right]$.
\end{proof}
\section{Completeness}\label{sec:completeness}
Completeness is what separates the reals from the rationals, and it is the one axiom that cannot be verified by a computation on representatives: the supremum of a bounded set is not assembled from the names of its members in any direct way, and must instead be constructed position by position and then shown to be a legitimate name in its own right. That second step is the delicate one, since the obvious construction can run into a maximal tail and so fail to be a primary auxiliary function. We prove the axiom from above and record its mirror image below.
\begin{axiom}[Completeness Axiom from above]\label{Completeness Axiom from above}\noindent\\
If $S$ is a non-empty set of real numbers and if $S$ has an upper bound, then $S$ has the least upper bound.
\end{axiom}
\begin{proofidea} The supremum is produced as the least element of the set $\Xi_S$ of upper bounds of $S$, and the argument splits according to whether $S$ contains a positive number, since that fixes the sign of every upper bound. The candidate is assembled position by position as a maximum over the digits of primary auxiliary functions, at the first position, the largest digit any member attains there, and that it is a legitimate primary auxiliary function. The second is the delicate one, the construction could in principle run into an all-$\left(\vartheta-1\right)$ tail, and excluding that requires a witness drawn from a later stage of the construction rather than from $S$ itself. That exclusion is why the proof is long.
\end{proofidea}
\begin{proof} Let a non-empty set of real numbers $S$ be given. Furthermore, suppose $S$ has an upper bound, i.e. $\exists\left[x\right]\in\mathcal{S}_{\mathbb{R}},\forall\left[a\right]\in S,\left[x\right]\geq\left[a\right]$. Define the set of all upper bounds of $S$ as $\Xi_S:=\left\{\left[x\right]\in\mathcal{S}_{\mathbb{R}}\middle|\forall\left[a\right]\in S,\left[x\right]\geq\left[a\right]\right\}$. We claim that for all such $S$ there exists $\left[\xi\right]\in\Xi_S$ such that $\forall\left[x\right]\in\Xi_S, \left[\xi\right]\leq\left[x\right]$.
\paragraph{Case 1: $\exists \left[a\right]\in S, \sigma\left(\left[a\right]\right)=1$:} We must have $\forall\left[x\right]\in\Xi_S,\sigma\left(\left[x\right]\right)=1$. Define \footnote{We will be using abusive notation where we combine the replacement and specification axioms at the same time.}
\begin{align*}
\Lambda_1:=\max\left(\left\{a_p\left(1\right)\middle|\begin{array}{rl@{}}&\left[a\right]\in S\\\wedge&\sigma\left(\left[a\right]\right)=1\end{array}\right\}\right)
\end{align*}
As $S$ is bounded from above we must have that $\Lambda_1$ exists by the \nameref{Equivalent to the Definition of Strict Order} \ref{Equivalent to the Definition of Strict Order}. This is seen as if it were not true, then for all $n\in\omega$ there exists $\left[a\right]\in S$ such that $\sigma\left(\left[a\right]\right)=1$ and $n<a_p\left(1\right)$. Therefore, $\forall\left[x\right]\in\mathcal{S}_{\mathbb{R}},\exists\left[a\right]\in S$ such that
\begin{align*}
\begin{array}{rl@{}}&\sigma\left(\left[a\right]\right)=1\\\wedge&\Psi\left(x_p,a_p\right)=1\\\wedge&x_p\left(\Psi\left(x_p,a_p\right)\right)<a_p\left(\Psi\left(x_p,a_p\right)\right)\end{array}
\end{align*}
and thus 
\begin{align*}
\forall\left[x\right]\in\mathcal{S}_{\mathbb{R}},\exists\left[a\right]\in S,\left[x\right]<\left[a\right]
\end{align*} contradicting $S$ being bounded from above. Therefore, $\Lambda_1$ is well-defined.\\
Now, define for all $n\in\left[2,\infty\right]_\omega$
\begin{align*}
\Lambda_n:=\max\left(\left\{a_p\left(n\right)\middle|\begin{array}{rl@{}}&\left[a\right]\in S\\\wedge&\sigma\left(\left[a\right]\right)=1\\\wedge&\forall k\in\left[1,n-1\right]_\omega,a_p\left(k\right)=\Lambda_k\end{array}\right\}\right)
\end{align*}
Notice that for all $n\in\left[2,\infty\right]_\omega$ we have $\Lambda_n\leq\vartheta\left(n-1\right)-1$, as 
\begin{align*}
\forall n\in\left[2,\infty\right]_\omega,\forall\left[a\right]\in S,0\leq a_p\left(n\right)\leq\vartheta\left(n-1\right)-1
\end{align*}
Take the function $\xi:\omega\rightarrow\omega$ defined as 
\begin{align*}
\xi\left(k\right)=\begin{cases}0&k=0\\\Lambda_n&n\in\omega_+
\end{cases}
\end{align*} As $\xi$ is an auxiliary function, it is finite. Now, we have two cases. Define 
\begin{align*}
\Phi:=\min\left(\left\{T\in\omega_+\middle|\forall t\in\left[T+1,\infty\right]_\omega,\xi\left(t\right)=\vartheta\left(t-1\right)-1\right\}\right)
\end{align*}
The first case is when $\Phi=\emptyset$, and the second case is when not.
\subparagraph{$\Phi=\emptyset$:} We see that $\xi=\xi_p$ as $\xi$ itself is an auxiliary function and it does not have a ``tail" of $\vartheta\left(t-1\right)-1$.\\
We have $\left[\xi\right]\in\Xi_S$ as $\forall\left[a\right]\in S,\left[a\right]\leq\left[\xi\right]$. Suppose not. Then 
\begin{align*}
\exists\left[a\right]\in S,\left[\xi\right]<\left[a\right]\text{ implies }\exists\left[a\right]\in S,\begin{array}{rl@{}}&\sigma\left(\left[a\right]\right)=1\\\wedge&\xi\left(\Psi\left(\xi,a_p\right)\right)<a_p\left(\Psi\left(\xi,a_p\right)\right)\end{array}
\end{align*} 
Now, we see that this is a contradiction, as we have by the definition of $\Psi$ that 
\begin{align*}
\begin{array}{ll@{}}
\forall k\in\left[1,\Psi\left(\xi,a_p\right)-1\right]_\omega,\Lambda_k=\xi\left(k\right)=a_p\left(k\right)&\text{and}\\
\Lambda_{\Psi\left(\xi,a_p\right)}=\xi\left(\Psi\left(\xi,a_p\right)\right)<a_p\left(\Psi\left(\xi,a_p\right)\right)
\end{array}
\end{align*} 
which is a contradiction against the definition of $\Lambda_{\Psi\left(\xi,a_p\right)}$.\\
Furthermore, $\forall\left[x\right]\in\Xi_S,\left[\xi\right]\leq\left[x\right]$. Suppose not. The presumption $\left[x\right]<\left[\xi\right]$ leads to $x_p\left(\Psi\left(\xi,x_p\right)\right)<\xi\left(\Psi\left(\xi,x_p\right)\right)$. If that were the case, 
\begin{align*}
\exists \left[a\right]\in S,\begin{array}{rl@{}}&\sigma\left(\left[a\right]\right)=1\\\wedge&\forall k\in\left[1,\Psi\left(\xi,x_p\right)-1\right]_\omega,a_p\left(k\right)=\xi\left(k\right)=x_p\left(k\right)\\\wedge&x_p\left(\Psi\left(\xi,x_p\right)\right)<\xi\left(\Psi\left(\xi,x_p\right)\right)=a_p\left(\Psi\left(\xi,x_p\right)\right)\end{array}
\end{align*} This implies $x_p\left(\Psi\left(a_p,x_p\right)\right)<a_p\left(\Psi\left(a_p,x_p\right)\right)\implies \left[x\right]<\left[a\right]$ and thus $\left[x\right]\not\in\Xi_S$.
\subparagraph{$\Phi\neq\emptyset$:} We see that $\xi$ is a secondary auxiliary function and by definition the primary auxiliary function $\xi_p$ is
\begin{align*}
\xi_p\left(k\right)=
\begin{cases}
\xi\left(k\right)&k\in\left[0,\Phi-1\right]_\omega\\
\xi\left(\Phi\right)+1&k=\Phi\\
0&k\in\left[\Phi+1,\infty\right]_\omega
\end{cases}
\end{align*}
We have that $\left[\xi\right]\in\Xi_S$ as $\forall\left[a\right]\in S,\left[a\right]\leq\left[\xi\right]$. Suppose not. Then 
\begin{align*}
\exists\left[a\right]\in S,\left[\xi\right]<\left[a\right]\text{ implies }\exists\left[a\right]\in S,\begin{array}{rl@{}}&\sigma\left(\left[a\right]\right)=1\\\wedge&\xi_p\left(\Psi\left(\xi_p,a_p\right)\right)<a_p\left(\Psi\left(\xi_p,a_p\right)\right)\end{array}
\end{align*}
Now, we see that this is a contradiction, which we will prove by cases.\\
If $\Psi\left(\xi_p,a_p\right)\leq\Phi$ then 
\begin{align*}
\begin{array}{ll@{}}
\forall k\in\left[1,\Psi\left(\xi_p,a_p\right)-1\right]_\omega,\Lambda_k=\xi\left(k\right)=\xi_p\left(k\right)=a_p\left(k\right)&\text{and}\\
\Lambda_{\Psi\left(\xi_p,a_p\right)}=\xi\left(\Psi\left(\xi_p,a_p\right)\right)\leq\xi_p\left(\Psi\left(\xi_p,a_p\right)\right)<a_p\left(\Psi\left(\xi_p,a_p\right)\right)
\end{array}
\end{align*}
which is a contradiction against the definition of $\Lambda_{\Psi\left(\xi,a_p\right)}$.\\
If $\Psi\left(\xi_p,a_p\right)>\Phi$ then
\begin{align*}
\begin{array}{ll@{}}
\forall k\in\left[1,\Phi-1\right]_\omega,\Lambda_k=\xi\left(k\right)=\xi_p\left(k\right)=a_p\left(k\right)&\text{and}\\
a_p\left(\Phi\right)=\xi_p\left(\Phi\right)=\xi\left(\Phi\right)+1=\Lambda_{\Phi}+1
\end{array}
\end{align*}
which is a contradiction to the definition of $\Lambda_{\Phi}$.\\
Furthermore, $\forall\left[x\right]\in\Xi_S,\left[\xi\right]\leq\left[x\right]$. Suppose not. The presumption $\left[x\right]<\left[\xi\right]$ leads to $x_p\left(\Psi\left(\xi_p,x_p\right)\right)<\xi_p\left(\Psi\left(\xi_p,x_p\right)\right)$. That is a contradiction, which we will prove by cases.\\
If $\Psi\left(\xi_p,x_p\right)<\Phi$, then 
\begin{align*}
\exists \left[a\right]\in S,
\begin{array}{rl@{}}
&\sigma\left(\left[a\right]\right)=1\\
\wedge&\forall k\in\left[1,\Psi\left(\xi_p,x_p\right)-1\right]_\omega,a_p\left(k\right)=\xi\left(k\right)=\xi_p\left(k\right)=x_p\left(k\right)\\
\wedge&x_p\left(\Psi\left(\xi_p,x_p\right)\right)<\xi_p\left(\Psi\left(\xi_p,x_p\right)\right)=\xi\left(\Psi\left(\xi_p,x_p\right)\right)=a_p\left(\Psi\left(\xi_p,x_p\right)\right)
\end{array}
\end{align*}
This implies $x_p\left(\Psi\left(x_p,a_p\right)\right)<a_p\left(\Psi\left(x_p,a_p\right)\right)\implies\left[x\right]<\left[a\right]$ and thus $\left[x\right]\not\in\Xi_S$.\\
If $\Psi\left(\xi_p,x_p\right)=\Phi$, then 
\begin{align*}
\begin{array}{rl@{}}
&\forall k\in\left[1,\Phi-1\right]_\omega,x_p\left(k\right)=\xi_p\left(k\right)\\
\wedge&x_p\left(\Phi\right)<\xi_p\left(\Phi\right)\\
\wedge&\exists T\in\left[\Phi+1,\infty\right]_\omega,\forall t\in\left[\Phi+1,T-1\right]_\omega\begin{array}{rl@{}}
&x_p\left(t\right)=\vartheta\left(t-1\right)-1\\\wedge&x_p\left(T\right)<\vartheta\left(T-1\right)-1\end{array}
\end{array}
\end{align*}
The last stipulation is added as were it not true, $x_p$ would not be a primary auxiliary function.\\
If $x_p\left(\Phi\right)<\xi_p\left(\Phi\right)-1=\xi\left(\Phi\right)$ then 
\begin{align*}
\exists \left[a\right]\in S,\begin{array}{ll@{}}&\sigma\left(\left[a\right]\right)=1\\\wedge&\forall k\in\left[1,\Phi-1\right]_\omega,a_p\left(k\right)=\xi\left(k\right)=\xi_p\left(k\right)=x_p\left(k\right)\\\wedge&x_p\left(\Phi\right)<\xi\left(\Phi\right)=a_p\left(\Phi\right)\end{array}
\end{align*}
This implies $x_p\left(\Psi\left(x_p,a_p\right)\right)<a_p\left(\Psi\left(x_p,a_p\right)\right)\implies \left[x\right]<\left[a\right]$ and thus $\left[x\right]\not\in\Xi_S$.\\
And if $x_p\left(\Phi\right)=\xi_p\left(\Phi\right)-1=\xi\left(\Phi\right)$ then 
\begin{align*}
\exists \left[a\right]\in S\begin{array}{ll@{}}&\sigma\left(\left[a\right]\right)=1\\\wedge&\forall k\in\left[1,T-1\right]_\omega,a_p\left(k\right)=\xi\left(k\right)=x_p\left(k\right)\\\wedge&x_p\left(T\right)<\xi\left(T\right)=a_p\left(T\right)=\vartheta\left(T-1\right)-1\end{array}
\end{align*}
This implies $x_p\left(\Psi\left(x_p,a_p\right)\right)<a_p\left(\Psi\left(x_p,a_p\right)\right)\implies \left[x\right]<\left[a\right]$ and thus $\left[x\right]\not\in\Xi_S$.\\
Finally, we see that $\Psi\left(\xi_p,x_p\right)$ cannot be possibly larger than $\Phi$ as $\forall k\in\left[\Phi+1,\infty\right]_\omega,\xi_p\left(k\right)=0$.
\paragraph{Case 2: $\forall\left[a\right]\in S,\left[a\right]\leq\left[0\right]$} If we have that $\forall\left[x\right]\in\Xi_S,\left[x\right]\geq\left[0\right]$ we are done. Suppose otherwise, thus, $\exists\left[x\right]\in\Xi_S,\left[x\right]<\left[0\right]$. Define 
\begin{align*}
\Lambda_1:=\min\left(\left\{a_p\left(1\right)\middle|\left[a\right]\in S\right\}\right)
\end{align*}
Now, define for all $n\in\left[2,\infty\right]_\omega$
\begin{align*}
\Lambda_n:=\min\left(\left\{a_p\left(n\right)\middle|\begin{array}{ll@{}}&\left[a\right]\in S\\\wedge&\forall k\in\left[1,n-1\right]_\omega,a_p\left(k\right)=\Lambda_k\end{array}\right\}\right)
\end{align*}
Notice that for all $n\in\left[2,\infty\right]_\omega$ we have that $\Lambda_n\leq\vartheta\left(n-1\right)-1$ as 
\begin{align*}
\forall n\in\left[2,\infty\right]_\omega,\forall\left[a\right]\in S,0\leq a_p\left(n\right)\leq\vartheta\left(n-1\right)-1
\end{align*}
Take the function $\xi:\omega\rightarrow\omega$ defined as 
\begin{align*}
\xi\left(k\right)=\begin{cases}1&k=0\\\Lambda_n&n\in\omega_+
\end{cases}
\end{align*} As $\xi$ is an auxiliary function, it is finite. Now, we have two cases. Define 
\begin{align*}
\Phi:=\min\left(\left\{T\in\omega_+\middle|\forall t\in\left[T+1,\infty\right]_\omega,\xi\left(t\right)=\vartheta\left(t-1\right)-1\right\}\right)
\end{align*}
The first case is when $\Phi=\emptyset$, and the second case is when not.
\subparagraph{$\Phi=\emptyset$:} We see that $\xi=\xi_p$. Therefore we have $\left[\xi\right]\in\Xi_S$ as $\forall\left[a\right]\in S,\left[a\right]\leq\left[\xi\right]$. Suppose not. Then 
\begin{align*}\exists\left[a\right]\in S,\left[\xi\right]<\left[a\right]\text{ implies }\exists\left[a\right]\in S,\xi\left(\Psi\left(\xi,a_p\right)\right)>a_p\left(\Psi\left(\xi,a_p\right)\right)
\end{align*}
Now, we see that this is a contradiction, as we have 
\begin{align*}
\begin{array}{ll@{}}
\forall k\in\left[1,\Psi\left(\xi,a_p\right)-1\right]_\omega,\xi\left(k\right)=a_p\left(k\right)&\text{and}\\
\Lambda_{\Psi\left(\xi,a_p\right)}=\xi\left(\Psi\left(\xi,a_p\right)\right)>a_p\left(\Psi\left(\xi,a_p\right)\right)
\end{array}
\end{align*} which is a contradiction against the definition of $\Lambda_{\Psi\left(\xi,a_p\right)}$.\\
Furthermore, $\forall\left[x\right]\in\Xi_S,\left[\xi\right]\leq\left[x\right]$. Suppose not. The presumption $\left[x\right]<\left[\xi\right]$ leads to $\sigma\left(\left[x\right]\right)=0$ and $x_p\left(\Psi\left(x_p,\xi\right)\right)>\xi\left(\Psi\left(x_p,\xi\right)\right)$. If that were the case, 
\begin{align*}
\exists \left[a\right]\in S,\begin{array}{ll@{}}&\forall k\in\left[1,\Psi\left(x_p,\xi\right)-1\right]_\omega,a_p\left(k\right)=\xi\left(k\right)=x_p\left(k\right)\\\wedge&x_p\left(\Psi\left(x_p,\xi\right)\right)>\xi\left(\Psi\left(x_p,\xi\right)\right)=a_p\left(\Psi\left(x_p,\xi\right)\right)\end{array}
\end{align*}
This implies $x_p\left(\Psi\left(x_p,a_p\right)\right)>a_p\left(\Psi\left(x_p,a_p\right)\right)\implies \left[x\right]<\left[a\right]$ and thus $\left[x\right]\not\in\Xi_S$.
\subparagraph{$\Phi\neq\emptyset$:} We see that this is impossible. If it were the case, then there exists $T\in\omega_+$ such that for all $t\in\left[T+1,\infty\right]_\omega$ we have $\Lambda_t=\vartheta\left(t-1\right)-1$. Therefore, for all $t\in\left[T+1,\infty\right]_\omega$ we have that for every $\left[a\right]\in S$ such that for all $n\in\left[1,t-1\right]_\omega,a_p\left(n\right)=\Lambda_n$ we have that $\vartheta(t-1)-1\leq a_p\left(t\right)$ and thus $a_p\left(t\right)=\vartheta\left(t-1\right)-1$. This however shows, that for any $a_p$ such that $\forall n\in\left[1,T\right]_\omega,a_p\left(n\right)=\Lambda_n$ we have $\forall t\in\left[T+1,\infty\right]_\omega,a_p\left(t\right)=\vartheta\left(t-1\right)-1$, which is a contradiction to being the primary auxiliary function. Therefore, 
\begin{align*}
\forall T\in\omega_+,\exists K\in\left[T+1,\infty\right]_\omega,\xi\left(K\right)=\Lambda_K<\vartheta\left(K-1\right)-1
\end{align*}
Ergo, $\Phi=\emptyset$.\\
Hence, we have shown that for any $S$ bounded from above there exists $\left[\xi\right]$, the lowest upper bound of $S$.
\end{proof}
\begin{axiom}[Completeness Axiom from below]\noindent\\
If $S$ is a non-empty set of real numbers and if $S$ has a lower bound, then $S$ has the greatest lower bound.
\end{axiom}
\begin{proof} Let a set of real numbers $S$ be given. Furthermore, suppose $S$ that has a lower bound. Define the set of lower bounds $\Xi_S:=\left\{\left[x\right]\in\mathbb{R}\middle|\forall\left[a\right]\in S,\left[x\right]\leq\left[a\right]\right\}$. We see that $\Xi_S$ is bounded from above (by any element of $S$). By the \nameref{Completeness Axiom from above} \ref{Completeness Axiom from above} there exists $\left[\xi\right]\in\mathcal{S}_{\mathbb{R}}$ such that
\begin{align*}
\begin{array}{ll@{}}
\forall\left[x\right]\in\Xi_S,\left[x\right]\leq\left[\xi\right]&\text{and}\\
\forall\left[y\right]\in\mathcal{S}_{\mathbb{R}},\left(\forall\left[x\right]\in\Xi_S,\left[x\right]\leq\left[y\right]\right)\implies\left[\xi\right]\leq\left[y\right]
\end{array}
\end{align*}
Thus as $\forall\left[y\right]\in S,\left(\forall\left[x\right]\in\Xi_S,\left[x\right]\leq\left[y\right]\right)$ we have $\forall\left[y\right]\in S,\left[\xi\right]\leq\left[y\right]$. Hence $\xi$ is a lower bound. And as $\forall\left[x\right]\in\Xi_S,\left[x\right]\leq\left[\xi\right]$ we have that $\xi$ is the greatest lower bound.
\end{proof}

\begin{remark} As always, we will denote the least upper bound as the $\sup\left(S\right)$ and the greatest lower bound as $\inf\left(S\right)$.
\end{remark}
\section{Addition}\label{sec:addition}
Unreduced digits earn their keep here. Because we never required a digit to stay below its base, two numbers of the same sign may be added position by position with no carrying at all, and the tidying up is deferred to a contraction afterwards, so addition is genuinely a digitwise operation, which it is not in any system that insists on reduced digits. The price is paid twice over elsewhere. Numbers of opposite sign cannot be combined this way, and we must first rewrite the larger so that it dominates the smaller everywhere, which is the role of the tracking functions; and having defined everything on representatives we must show that the result depends only on the classes, which is what the consistency theorems do and why they are the longest arguments in the section.
\subsection{Sub-Addition of Functions of the Same Sign}
In this part we are to define addition and multiplication of the real numbers. To do this we first define addition and multiplication of finite functions, which we shall call sub-addition and sub-multiplication. We will then show that the resulting sub-sums and sub-products do not depend on the choice of the functions of the summed or multiplied real numbers.
\begin{definition}[Sub-Addition of Functions of the same Sign]\noindent\\
Let $f,g\in \mathcal{N}_{\mathcal{F}in}^*$ and $\sigma\left(f\right)=\sigma\left(g\right)$. Define sub-addition as:
\begin{align*}\left(f+g\right)\left(k\right)=
\begin{cases}
f\left(0\right)g\left(0\right)&k=0\\
f\left(k\right)+g\left(k\right)&k\in\omega_+
\end{cases}
\end{align*}
\end{definition}

\begin{remark} Notice that by the commutativity of addition and multiplication of finite ordinals we have that sub-addition of functions is commutative.
\end{remark}
\begin{example} Continue in base ten and let $f=g=(0;0,5,0,0,\dots)$, the primary auxiliary function of $\nicefrac{1}{2}$ from the previous example. Both are positive, so sub-addition applies in its same-sign form. At position $0$ we multiply, giving $(f+g)(0)=0*0=0$, which is even and so records a positive result; at every other position we simply add, giving $(f+g)(2)=5+5=10$ and $(f+g)(n)=0$ elsewhere. Hence
\begin{align*}
f+g=(0;0,10,0,0,\dots).
\end{align*}
Note that this is a perfectly legitimate function even though the digit $10$ has reached its base: nothing in our definitions forbids it, and this is precisely what makes addition a digitwise operation.\\
To recover the primary auxiliary function we contract. Since $(f+g)(2)=10\geq\vartheta(1)$, the contraction $c_1$ applies, and it gives $c_1(f+g)=(0;1,0,0,\dots)$, a primary auxiliary function of $1$. So $\left[\nicefrac{1}{2}\right]+\left[\nicefrac{1}{2}\right]=\left[1\right]$, with the addition done digit by digit and the carrying done afterwards, separately.
\end{example}
\begin{lemma}[First Sub-Addition Finiteness Lemma]\label{Sub-Addition Finiteness Lemma}\noindent\\
For all non-zero finite functions $f$ and $g$ such that $\sigma\left(f\right)=\sigma\left(g\right)$ we have that $f+g$ is finite.
\end{lemma}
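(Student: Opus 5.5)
The natural route is through the \nameref{Tying Theorem} \ref{Tying Theorem}, which characterises finiteness purely in terms of weighted truncations. The key point is that this criterion only involves the values at positions in $\omega_+$, while sub-addition acts digitwise there. So the plan is to bound the truncations of $f+g$ by the sum of the truncations of $f$ and $g$, and then invoke the reverse direction of the Tying Theorem.

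First, since $f$ and $g$ are finite, the forward direction of the Tying Theorem supplies constants $C_f, C_g\in\omega_+$ such that, for every $K\in\omega_+$,
\begin{align*}
\sum\limits_{n=1}^K\left(f\left(n\right)\prod\limits_{p=n}^{K-1}\left(\vartheta\left(p\right)\right)\right)<C_f\prod\limits_{p=1}^{K-1}\left(\vartheta\left(p\right)\right),
\end{align*}
and similarly for $g$ with $C_g$. Concretely, one may take $C_f=h_f(1)+1$ and $C_g=h_g(1)+1$.

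Second, for $k\in\omega_+$ we have $(f+g)(k)=f(k)+g(k)$. Distributing the finite sum therefore gives the identity
\begin{align*}
\sum\limits_{n=1}^K\left(\left(f+g\right)\left(n\right)\prod\limits_{p=n}^{K-1}\left(\vartheta\left(p\right)\right)\right)=\sum\limits_{n=1}^K\left(f\left(n\right)\prod\limits_{p=n}^{K-1}\left(\vartheta\left(p\right)\right)\right)+\sum\limits_{n=1}^K\left(g\left(n\right)\prod\limits_{p=n}^{K-1}\left(\vartheta\left(p\right)\right)\right).
\end{align*}
Consequently, for all $K$ the truncation of $f+g$ is strictly less than $(C_f+C_g)\prod_{p=1}^{K-1}\vartheta(p)$. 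The constant $C:=C_f+C_g$ lies in $\omega_+$, so the reverse direction of the Tying Theorem yields $f+g\in\mathcal{N}_{\mathcal{F}in}$.

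The only point needing care is that the Tying Theorem is stated for arbitrary $f:\omega\rightarrow\omega$, with the value at $0$ irrelevant to the criterion. Its reverse direction builds the auxiliary function with $h(0)=f(0)$, so the value $(f+g)(0)=f(0)g(0)$ causes no difficulty. I expect no real obstacle beyond checking this. It may also be worth remarking that $f+g$ is non-zero: some $f(n)\neq0$ by the \nameref{Null Lemma} \ref{Null Lemma}, hence $(f+g)(n)\neq0$, and the same lemma places $f+g$ outside $[0]$. This is not required for finiteness, but it is useful for what follows. Note finally that $f(0)g(0)$ is even exactly when one factor is even, so equal signs give $\sigma(f+g)=\sigma(f)$ only when both are positive; this sign bookkeeping is a separate matter and is not needed here.
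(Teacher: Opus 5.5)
Your proof is correct and is exactly the paper's argument: constants from the forward direction of the Tying Theorem, additivity of the weighted truncations on $\omega_+$, and the reverse direction of the Tying Theorem with constant $C_f+C_g$. One correction to your closing aside, which you rightly flag as unnecessary here: if $f(0)$ and $g(0)$ are both odd then $f(0)g(0)$ is odd, so equal signs give $\sigma(f+g)=\sigma(f)$ for negative summands as well as for positive ones, not only when both are positive.
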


\begin{intuition}[Finiteness and Consistency of Sub-Addition] These lemmas discharge the two obligations any operation on real numbers must meet. Finiteness: the sum of two genuine numbers is again a genuine number, it does not run off to something with infinite material. Consistency: the sum depends only on the two real numbers, not on the particular functions we picked to represent them. The auxiliary lemmas are the bookkeeping steps that get us there, tracking how the primary and secondary auxiliary functions of the summands feed into the primary and secondary auxiliary functions of the sum.
\end{intuition}

\begin{proof} Suppose that $f,g\in \mathcal{N}_{\mathcal{F}in}^*$. Then by the \nameref{Tying Theorem} \ref{Tying Theorem} there exist constants $C_1,C_2\in\omega_+$ such that for all $K\in\omega_+$ we have 
\begin{align*}
\begin{array}{ll@{}}
\sum\limits_{n=1}^K\left(f\left(n\right)\prod\limits_{p=n}^{K-1}\left(\vartheta\left(p\right)\right)\right)<C_1\prod\limits_{p=1}^{K-1}\left(\vartheta\left(p\right)\right)&\text{and}\\
\sum\limits_{n=1}^K\left(g\left(n\right)\prod\limits_{p=n}^{K-1}\left(\vartheta\left(p\right)\right)\right)<C_2\prod\limits_{p=1}^{K-1}\left(\vartheta\left(p\right)\right)
\end{array}
\end{align*}
We see that for all $K\in\omega_+$
\begin{align*}
&\sum\limits_{n=1}^K\left(\left(f+g\right)\left(n\right)\right)\prod\limits_{p=n}^{K-1}\left(\vartheta\left(p\right)\right)=\sum\limits_{n=1}^K\left(f\left(n\right)+g\left(n\right)\right)\prod\limits_{p=n}^{K-1}\left(\vartheta\left(p\right)\right)\\
=&\sum\limits_{n=1}^K\left(f\left(n\right)\prod\limits_{p=n}^{K-1}\left(\vartheta\left(p\right)\right)\right)+\sum\limits_{n=1}^K\left(g\left(n\right)\prod\limits_{p=n}^{K-1}\left(\vartheta\left(p\right)\right)\right)\\
<&C_1\prod\limits_{p=1}^{K-1}\left(\vartheta\left(p\right)\right)+C_2\prod\limits_{p=1}^{K-1}\left(\vartheta\left(p\right)\right)=\left(C_1+C_2\right)\prod\limits_{p=1}^{K-1}\left(\vartheta\left(p\right)\right)
\end{align*}
Therefore, by the \nameref{Tying Theorem} \ref{Tying Theorem}, the sub-addition of two non-zero finite functions of the same sign is a finite function.
\end{proof}
\begin{lemma}[Sub-Addition Contraction Lemma]\label{Sub-Addition Contraction Lemma}\noindent\\
For all non-zero finite functions $f$ and $g$ such that $f$ is contractable about some $m$ and $\sigma\left(f\right)=\sigma\left(g\right)$ we have that 
\begin{align}\label{eq:subadd-carry}
    \left[f+g\right]=\left[c_m\left(f\right)+g\right]
\end{align}
\end{lemma}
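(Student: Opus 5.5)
The plan is to show that $c_m\left(f+g\right)$ exists and equals $c_m\left(f\right)+g$, and then invoke the \nameref{Shifting Theorem} \ref{Shifting Theorem}.

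First, $f+g$ is finite by the \nameref{Sub-Addition Finiteness Lemma} \ref{Sub-Addition Finiteness Lemma}, so $\left[f+g\right]$ is a well-defined real number.

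Next, $c_m\left(f\right)$ is finite. This holds because $f\in\left[f\right]$ and $c_m$ is applicable to $f$, so the \nameref{Shifting Theorem} \ref{Shifting Theorem} places $c_m\left(f\right)$ in $\left[f\right]$. Since contractions do not touch position $0$, $\sigma\left(c_m\left(f\right)\right)=\sigma\left(f\right)=\sigma\left(g\right)$. The function $c_m\left(f\right)$ is also non-zero: $c_m\left(f\right)\left(m\right)=f\left(m\right)+1\geq1$, so the \nameref{Null Lemma} \ref{Null Lemma} excludes it from $\left[0\right]$. Hence $c_m\left(f\right)+g$ is defined, and it is finite by the same finiteness lemma.

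Second, $c_m$ is applicable to $f+g$. At position $m+1$ we have $\left(f+g\right)\left(m+1\right)=f\left(m+1\right)+g\left(m+1\right)\geq f\left(m+1\right)\geq\vartheta\left(m\right)$. This is a direct check, or equivalently an instance of the \nameref{Submission Theorem} \ref{Submission Theorem} with $S=\left\{m+1\right\}$, since $f+g$ dominates $f$.

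We then compare $c_m\left(f+g\right)$ and $c_m\left(f\right)+g$ position by position:
\begin{itemize}
\item at $0$ both equal $f\left(0\right)g\left(0\right)$;
\item at $m$ both equal $f\left(m\right)+1+g\left(m\right)$;
\item at $m+1$ both equal $f\left(m+1\right)-\vartheta\left(m\right)+g\left(m+1\right)$;
\item elsewhere both equal $f\left(k\right)+g\left(k\right)$.
\end{itemize}
Alternatively, the identity $\mathcal{D}\left(f\right)\left(n\right)+\left(f+g\right)\left(n\right)=f\left(n\right)+\mathcal{D}\left(f+g\right)\left(n\right)$ from the \nameref{Submission Theorem} \ref{Submission Theorem}, with $\mathcal{D}=c_m$, gives the same equality on $\omega_+$ directly. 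Either way, $c_m\left(f+g\right)=c_m\left(f\right)+g$ as functions.

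Finally, since $f+g\in\left[f+g\right]$ and $c_m$ is applicable to it, the \nameref{Shifting Theorem} \ref{Shifting Theorem} gives $c_m\left(f+g\right)\in\left[f+g\right]$. Therefore $c_m\left(f\right)+g\in\left[f+g\right]$, and by the \nameref{Unicity of Real Numbers} \ref{Unicity of Real Numbers} we get $\left[c_m\left(f\right)+g\right]=\left[f+g\right]$.

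There is no real obstacle here. The only point needing care is confirming that the right-hand side of \eqref{eq:subadd-carry} is legitimately defined, namely that $c_m\left(f\right)$ is non-zero, finite and of the same sign. That is handled above by the \nameref{Shifting Theorem} \ref{Shifting Theorem} and the \nameref{Null Lemma} \ref{Null Lemma}.
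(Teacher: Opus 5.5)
Your proof is correct and follows the paper's route exactly: check that $(f+g)$ is contractable about $m$, verify $c_m(f)+g=c_m(f+g)$ position by position, and conclude with the Shifting Theorem. You also check that $c_m(f)$ is finite, non-zero and of the same sign, so that the same-sign sub-addition $c_m(f)+g$ is actually defined; the paper passes over this point silently, and you handle it correctly.
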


\begin{intuition}[Sub-Addition Contraction Lemma] This lemma is the first and key step: carrying in one summand and then adding gives the same class as adding first and then carrying. Put differently, a carry commutes with addition, so nudging a representative to an equivalent one leaves the sum in the same real number.
\end{intuition}

\begin{proof} We shall prove the lemma by showing that if $f$ is contractable about some $m$ then $\left(f+g\right)$ is contractable about $m$ and $\left(c_m\left(f\right)+g\right)=c_m\left(f+g\right)$.\\
We see that if $f\left(m+1\right)\geq\vartheta\left(m\right)$ then $\left(f+g\right)\left(m+1\right)=f\left(m+1\right)+g\left(m+1\right)\geq\vartheta\left(m\right)$. Thus $\left(f+g\right)$ is contractable about $m$. Moreover,
\begin{align*}
\hspace{-0.5cm}\left(c_m\left(f\right)+g\right)\left(k\right)=
\begin{cases}
c_m\left(f\right)\left(0\right)g\left(0\right)&=f\left(0\right)g\left(0\right)\\&=\left(f+g\right)\left(0\right)\\&=c_m\left(f+g\right)\left(0\right)\\
c_m\left(f\right)\left(m\right)+g\left(m\right)&=f\left(m\right)+1+g\left(m\right)\\&=\left(f+g\right)\left(m\right)+1\\&=c_m\left(f+g\right)\left(m\right)\\
c_m\left(f\right)\left(m+1\right)+g\left(m+1\right)&=f\left(m+1\right)-\vartheta\left(m\right)+g\left(m+1\right)\\&=\left(f+g\right)\left(m+1\right)-\vartheta\left(m\right)\\&=c_m\left(f+g\right)\left(m+1\right)\\
c_m\left(f\right)\left(k\right)+g\left(k\right)&=f\left(k\right)+g\left(k\right)\\&=\left(f+g\right)\left(k\right)\\&=c_m\left(f+g\right)\left(k\right)
\end{cases}
\def\arraystretch{1.2}\begin{array}{@{}l}
\text{\indent if $k=0$}\\\\\\
\text{\indent if $k=m$}\\\\\\
\text{\indent if $k=m+1$}\\\\\\
\text{\indent otherwise}
\end{array}
\end{align*}
Therefore, $\left(c_m\left(f\right)+g\right)=c_m\left(f+g\right)$. And hence, by the \nameref{Shifting Theorem} \ref{Shifting Theorem} $\left[f+g\right]=\left[c_m\left(f+g\right)\right]=\left[c_m\left(f\right)+g\right]$.
\end{proof}

\begin{remark} Furthermore, by the commutativity of sub addition we have $\left[f+c_m\left(g\right)\right]=\left[c_m\left(g\right)+f\right]=\left[g+f\right]=\left[f+g\right]$.
\end{remark}

\begin{corollary} Suppose that $f$ and $g$ are non-zero finite functions of the same sign where $f$ is broadenable about some $m\in\omega_+$. Then by the \nameref{Cancellation Theorems} \ref{Cancellation Theorems} we have that $\left[b_m\left(f\right)+g\right]=\left[c_m\left(b_m\left(f\right)\right)+g\right]=\left[f+g\right]$.\\
Furthermore, suppose that $g$ is broadenable about $m$, then by the commutativity of sub-addition we have $\left[f+b_n\left(g\right)\right]=\left[b_n\left(g\right)+f\right]=\left[g+f\right]=\left[f+g\right]$.
\end{corollary}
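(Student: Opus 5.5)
The plan is to reduce the statement directly to the \nameref{Sub-Addition Contraction Lemma} \ref{Sub-Addition Contraction Lemma}, applied not to $f$ but to the broadened function $b_m\left(f\right)$. The link between the two is Part 2 of the \nameref{Cancellation Theorems} \ref{Cancellation Theorems}: since $f$ is broadenable about $m$, the function $b_m\left(f\right)$ is contractable about $m$, and $c_m\left(b_m\left(f\right)\right)=f$.

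The first step is to check that $b_m\left(f\right)$ satisfies the hypotheses of the Sub-Addition Contraction Lemma, namely that it is a non-zero finite function with the same sign as $g$.
\begin{enumerate}
\item Finiteness and non-zero-ness: a single broadening is a sequence of contractions and broadenings applicable to $f$, so the \nameref{Shifting Theorem} \ref{Shifting Theorem} gives $b_m\left(f\right)\in\left[f\right]$. Hence $b_m\left(f\right)\in\mathcal{N}_{\mathcal{F}in}$, and since $\left[f\right]\neq\left[0\right]$ it lies in $\mathcal{N}_{\mathcal{F}in}^*$.
\item Sign: broadenings do not touch position $0$, so $b_m\left(f\right)\left(0\right)=f\left(0\right)$. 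Therefore $\sigma\left(b_m\left(f\right)\right)=\sigma\left(f\right)=\sigma\left(g\right)$.
\end{enumerate}
By the \nameref{Sub-Addition Finiteness Lemma} \ref{Sub-Addition Finiteness Lemma}, both $b_m\left(f\right)+g$ and $f+g$ are then finite, so the classes $\left[b_m\left(f\right)+g\right]$ and $\left[f+g\right]$ are defined.

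With the hypotheses verified, apply \eqref{eq:subadd-carry} with $b_m\left(f\right)$ in the role of $f$ and contraction about $m$. This gives $\left[b_m\left(f\right)+g\right]=\left[c_m\left(b_m\left(f\right)\right)+g\right]$. Substituting $c_m\left(b_m\left(f\right)\right)=f$ turns the right-hand side into $\left[f+g\right]$, which is the first claim.

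For the second claim, run the same argument with the roles of $f$ and $g$ exchanged. Using commutativity of sub-addition (the remark after the definition) on both sides gives $\left[f+b_m\left(g\right)\right]=\left[b_m\left(g\right)+f\right]=\left[g+f\right]=\left[f+g\right]$. The index is written $n$ in the statement, but it should read $m$, the position about which $g$ is broadenable.

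I do not expect a real obstacle here. The only point needing care is the hypothesis check: the Sub-Addition Contraction Lemma is stated only for non-zero finite functions of equal sign, so $b_m\left(f\right)\notin\left[0\right]$ must come from the Shifting Theorem and $\left[f\right]\neq\left[0\right]$ rather than be assumed. Once that is in place, the rest is a direct substitution.
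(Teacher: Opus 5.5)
Your proposal is correct and is the paper's argument: apply the Sub-Addition Contraction Lemma to $b_m\left(f\right)$, which is contractable about $m$ by Part 2 of the Cancellation Theorems, substitute $c_m\left(b_m\left(f\right)\right)=f$, and get the second claim from commutativity of sub-addition. You also supply two details the paper leaves implicit: the check that $b_m\left(f\right)$ is a non-zero finite function with the same sign as $g$ (via the Shifting Theorem and the fact that broadenings fix position $0$), and the observation that the index $n$ in the statement should read $m$.
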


\begin{lemma}[First Sub-Addition Auxiliary Lemma]\label{First Sub-Addition Auxiliary Lemma}\noindent\\
For all non-zero finite functions $f$ and $g$ such that $\sigma\left(f\right)=\sigma\left(g\right)$ we have that $\left[f+g\right]=\left[h_f+h_g\right]$.
\end{lemma}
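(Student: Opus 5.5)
The plan is to prove the one-sided statement $\left[f+g\right]=\left[h_f+g\right]$ and then apply it a second time with the roles of the summands exchanged. By commutativity of sub-addition this gives $\left[h_f+g\right]=\left[g+h_f\right]=\left[h_g+h_f\right]=\left[h_f+h_g\right]$. For the one-sided statement I would not try to carry $f$ into $h_f$ inside the sum: by the definition of a real number, contractions only match $h_f$ on a finite range of inspection, never everywhere. Instead I would compare weighted truncations and conclude with the \nameref{Extension Theorem} \ref{Extension Theorem}.

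\emph{Preliminary checks.} Contractions never alter position $0$, so $f\left(0\right)=h_f\left(0\right)$. Hence $\left(f+g\right)\left(0\right)=\left(h_f+g\right)\left(0\right)$, and the two sums have the same sign. The auxiliary function $h_f$ is itself finite, and it is non-zero by the \nameref{Null Lemma} \ref{Null Lemma}. So the \nameref{Sub-Addition Finiteness Lemma} \ref{Sub-Addition Finiteness Lemma} makes $h_f+g$ finite, just as it does $f+g$. Write $T_m\left(u\right):=\sum_{n=1}^{m}u\left(n\right)\prod_{j=n}^{m-1}\vartheta\left(j\right)$. This is additive under sub-addition, so
\begin{align*}
\Upsilon\left(\begin{array}{l@{}}T_m\left(f+g\right)\\T_m\left(h_f+g\right)\end{array}\right)=\Upsilon\left(\begin{array}{l@{}}T_m\left(f\right)\\T_m\left(h_f\right)\end{array}\right).
\end{align*}
It therefore suffices to show the following: for every $N$ there is an $M\geq N$ such that for all $m\geq M$ we have $\Upsilon\left(T_m\left(f\right),T_m\left(h_f\right)\right)\leq\prod_{j=N}^{m-1}\vartheta\left(j\right)$.

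\emph{Lower side.} Fix $N$ and choose a sequence of contractions $\mathcal{C}_{k=q}^r$ with $\mathcal{C}_{k=q}^r\left(f\right)=h_f$ on $\left[0,N\right]_\omega$. Put $M:=\max\left(\text{con}\left(\mathcal{C}_{k=q}^r\right)\cup\left\{N\right\}\right)$. For $m\geq M$, \eqref{eq:contraction-invariance} gives $T_m\left(f\right)=T_m\left(\mathcal{C}_{k=q}^r\left(f\right)\right)$. Dropping the non-negative tail beyond $N$, this is at least $\prod_{j=N}^{m-1}\vartheta\left(j\right)\,T_N\left(h_f\right)$. Since $h_f\left(n\right)\leq\vartheta\left(n-1\right)-1$ for $n\geq2$, the same telescoping as in the \nameref{Tying Theorem} \ref{Tying Theorem} gives
\begin{align*}
T_m\left(h_f\right)\leq\prod_{j=N}^{m-1}\vartheta\left(j\right)\,T_N\left(h_f\right)+\prod_{j=N}^{m-1}\vartheta\left(j\right)-1.
\end{align*}
So $T_m\left(h_f\right)-T_m\left(f\right)<\prod_{j=N}^{m-1}\vartheta\left(j\right)$.

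\emph{Upper side.} The \nameref{Paradise City Lemma} \ref{Paradise City Lemma} bounds $T_m\left(f\right)$ by the truncation of the primary auxiliary function. If $h_f$ is primary, this gives $T_m\left(f\right)\leq T_m\left(h_f\right)$ outright. If $h_f$ is secondary, \nameref{PC Cor I} \ref{PC Cor I} says the primary's truncation exceeds $T_m\left(h_f\right)$ by at most $1$, so $T_m\left(f\right)-T_m\left(h_f\right)\leq1\leq\prod_{j=N}^{m-1}\vartheta\left(j\right)$.

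Combining the two sides, the hypothesis of the \nameref{Extension Theorem} \ref{Extension Theorem} holds for $f+g$ and $h_f+g$, which yields $\left[f+g\right]=\left[h_f+g\right]$. I expect the main obstacle to be the lower side: it is where the dependence of $M$ on the chosen contraction sequence must be handled. This is exactly why the Extension Theorem's quantifier order (for every $N$ there exists $M$) is the right tool, rather than an attempt to build a single contraction sequence taking $f+g$ to $h_f+g$.
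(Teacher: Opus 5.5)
Your proof is correct, but it takes a different route from the paper's. The paper stays entirely inside the calculus of moves. It fixes the auxiliary function $\pi$ of $h_f+h_g$ together with a contraction sequence ${}^\pi\mathcal{C}_{k=q_3}^{r_3}$ matching it on $\left[0,N\right]_\omega$, and sets $\Xi:=\max\left(\text{con}\left({}^\pi\mathcal{C}_{k=q_3}^{r_3}\right)\cup\left\{N\right\}\right)$. It then contracts $f$ and $g$ until they agree with $h_f$ and $h_g$ on $\left[0,\Xi\right]_\omega$, using the iterated form of \eqref{eq:subadd-carry}: a contraction of a summand is a contraction of the sum. Finally it lets the \nameref{Submission Theorem} \ref{Submission Theorem} carry ${}^\pi\mathcal{C}_{k=q_3}^{r_3}$ over to the contracted sum. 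This yields, for each $N$, an explicit contraction sequence taking $f+g$ to $\pi$ on $\left[0,N\right]_\omega$. So the obstacle you cite for not carrying $f$ into $h_f$ inside the sum is real but not fatal. Agreement on a finite window suffices, provided the window is chosen after the normalising sequence of $h_f+h_g$ and is large enough to contain its conditional.

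Your route instead replaces one summand at a time and works with weighted truncations. Additivity reduces everything to $\Upsilon\left(T_m\left(f\right),T_m\left(h_f\right)\right)$. The lower estimate comes from \eqref{eq:contraction-invariance} and the $\left(\vartheta-1\right)$-telescoping of the tail of $h_f$. The upper estimate comes from the \nameref{Paradise City Lemma} \ref{Paradise City Lemma}; your case split there is harmless but unnecessary, since the lemma already bounds $T_m\left(f\right)$ by the truncation of whichever auxiliary function witnesses $f$. The \nameref{Extension Theorem} \ref{Extension Theorem} then closes the argument.

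The paper's proof buys economy of prerequisites and constructiveness. It needs only the \nameref{Sub-Addition Contraction Lemma} \ref{Sub-Addition Contraction Lemma}, the Submission Theorem and unicity, and no order theory at all. Yours buys a short, transparent estimate that is modular in the summands. It leans on the analytic-tools section, which precedes the addition section, so there is no circularity. It also has exactly the shape of the paper's own proof of the \nameref{Second Sub-Addition Auxiliary Lemma} \ref{Second Sub-Addition Auxiliary Lemma}.
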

\begin{proof} By induction on \eqref{eq:subadd-carry} (and the identical identity for broadenings which we have not shown outright) we see that for any two sequences of contractions and broadenings ${}^f\mathcal{D}_{k=q_1}^{r_1}$ and ${}^g\mathcal{D}_{k=q_2}^{r_2}$ applicable to $f$ and $g$ respectively we have:\footnote{Notice that the inverse does not necessarily hold.}
\begin{align*}
\left({}^f\mathcal{D}_{k=q_1}^{r_1}\left(f\right)+{}^g\mathcal{D}_{k=q_2}^{r_2}\left(g\right)\right)={}^f\mathcal{D}_{k=q_1}^{r_1}\left({}^g\mathcal{D}_{k=q_2}^{r_2}\left(f+g\right)\right)
\end{align*}
Moreover, we know, by the definition of a real numbers that for all $N\in\omega_+$ we have
\begin{align*}
\begin{array}{lll@{}}
\exists{}^f\mathcal{C}_{k=q_1}^{r_1}\in\mathcal{FS},\forall n\in\left[0,N\right]_\omega,&{}^f\mathcal{C}_{k=q_1}^{r_1}\left(f\right)\left(n\right)=h_f\left(n\right)&\text{and}\\
\exists{}^g\mathcal{C}_{k=q_2}^{r_2}\in\mathcal{FS},\forall n\in\left[0,N\right]_\omega,&{}^g\mathcal{C}_{k=q_2}^{r_2}\left(g\right)\left(n\right)=h_g\left(n\right)
\end{array}
\end{align*}
 We also know that 
\begin{align*}
\begin{array}{ll@{}}
\exists{}^\pi\mathcal{C}_{k=q_3}^{r_3}\in\mathcal{FS},\forall n\in\left[0,N\right]_\omega,&{}^\pi\mathcal{C}_{k=q_3}^{r_3}\left(h_f+h_g\right)\left(n\right)=\pi\left(n\right)
\end{array}
\end{align*}
where $\pi$ is the auxiliary function of $\left(h_f+h_g\right)$.\\
Thus, let $N\in\omega$ be given and take $\Xi:=\max\left(\text{con}\left({}^\pi\mathcal{C}_{k=q_3}^{r_3}\right)\cup\left\{N\right\}\right)$. Now take the number of inspection for $f$ and $g$ to be $\Xi$ and take their sequences of contractions as mentioned above. From this we see that $\forall n\in\left[0,\Xi\right]_\omega$:
\begin{align*}
&{}^f\mathcal{C}_{k=q_1}^{r_1}\left({}^g\mathcal{C}_{k=q_2}^{r_2}\left(f+g\right)\right)=\left({}^f\mathcal{C}_{k=q_1}^{r_1}\left(f\right)+{}^g\mathcal{C}_{k=q_2}^{r_2}\left(g\right)\right)=\left(h_f+h_g\right)\left(n\right)
\end{align*}
Furthermore, as they agree up to $\Xi$ by the \nameref{Submission Theorem} \ref{Submission Theorem} ${}^\pi\mathcal{C}_{k=q_3}^{r_3}$ is applicable to ${}^f\mathcal{C}_{k=q_1}^{r_1}\left({}^g\mathcal{C}_{k=q_2}^{r_2}\left(f+g\right)\right)$ and we have that for all $ n\in\left[0,N\right]_\omega$
\begin{align*}
&{}^\pi\mathcal{C}_{k=q_3}^{r_3}\left({}^f\mathcal{C}_{k=q_1}^{r_1}\left({}^g\mathcal{C}_{k=q_2}^{r_2}\left(f+g\right)\right)\right)\left(n\right)+\left(h_f+h_g\right)\left(n\right)\\
=&{}^f\mathcal{C}_{k=q_1}^{r_1}\left({}^g\mathcal{C}_{k=q_2}^{r_2}\left(f+g\right)\right)\left(n\right)+{}^\pi\mathcal{C}_{k=q_3}^{r_3}\left(h_f+h_g\right)\left(n\right)\\
=&\left(h_f+h_g\right)\left(n\right)+\pi\left(n\right)
\end{align*}
And therefore 
\begin{align*}
&{}^\pi\mathcal{C}_{k=q_3}^{r_3}\left({}^f\mathcal{C}_{k=q_1}^{r_1}\left({}^g\mathcal{C}_{k=q_2}^{r_2}\left(f+g\right)\right)\right)\left(n\right)=\pi\left(n\right)
\end{align*}
Therefore, by taking ${}^{f+g}\mathcal{C}\in\mathcal{CB}$ such that
\begin{align*}
\hspace{-0.5cm}
{}^{f+g}\mathcal{C}\left(k\right)=\begin{cases}
{}^g\mathcal{C}\left(k\right)&k\in\left[q_2,r_2\right]_\omega\\
{}^f\mathcal{C}\left(k+q_1-r_2-1\right)&k\in\left[r_2+1,r_1+r_2+1-q_1\right]_\omega\\
{}^\pi\mathcal{C}\left(k+q_3-r_1-r_2-2+q_1\right)&k\in\left[r_1+r_2+2-q_1,r_3+r_2+r_1+2-q_1-q_3\right]_\omega\\
c_1&otherwise
\end{cases}
\end{align*}
we have that for all $n\in\left[0,N\right]_\omega$ we have
\begin{align*}
{}^{f+g}\mathcal{C}_{k=q_2}^{r_3+r_2+r_1+2-q_1-q_3}\left(f+g\right)\left(n\right)=\pi\left(n\right)
\end{align*}
This shows by \nameref{Unicity of Real Numbers} \ref{Unicity of Real Numbers} that $\left[f+g\right]=\left[h_f+h_g\right]$.
\end{proof}

\begin{lemma}[Second Sub-Addition Auxiliary Lemma]\label{Second Sub-Addition Auxiliary Lemma}\noindent\\
For all non-zero finite functions $f$ and $g$ such that $\sigma\left(f\right)=\sigma\left(g\right)$ we have that $\left[f_p'+h_g\right]=\left[f_s'+h_g\right]$ where $f_p'$ and $f_s'$ are the primary and secondary auxiliary functions of $\left[f\right]$ respectively such that $f_p'\left(0\right)=f_s'\left(0\right)=f\left(0\right)$ (if there are any secondary auxiliary functions).
\end{lemma}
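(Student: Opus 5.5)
The plan is to avoid moves altogether and compare the two sums through their weighted truncations, concluding with the \nameref{Extension Theorem} \ref{Extension Theorem}. First I check that the theorem applies. Both $f_p'$ and $f_s'$ lie in $\left[f\right]$ and are non-zero. They have the same value at $0$, namely $f\left(0\right)$, so $\left(f_p'+h_g\right)\left(0\right)=f\left(0\right)h_g\left(0\right)=\left(f_s'+h_g\right)\left(0\right)$, and hence $\sigma\left(f_p'+h_g\right)=\sigma\left(f_s'+h_g\right)$. Since $\sigma\left(h_g\right)=\sigma\left(g\right)=\sigma\left(f\right)$, the \nameref{Sub-Addition Finiteness Lemma} \ref{Sub-Addition Finiteness Lemma} shows that both sub-sums are finite.

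Next I compute the truncations. Write $P:=\Psi\left(f_p',f_s'\right)=N_{\min}^{f_s'}$ and, for any $u:\omega\rightarrow\omega$, $T_m\left(u\right):=\sum_{n=1}^{m}u\left(n\right)\prod_{j=n}^{m-1}\vartheta\left(j\right)$. Then $T_m$ is additive on sub-sums: $T_m\left(u+v\right)=T_m\left(u\right)+T_m\left(v\right)$ for every $m\in\omega_+$. The chain of equalities in \nameref{PC Cor I} \ref{PC Cor I} uses only the explicit shape of $f_p'$ and $f_s'$ from the definition of $W_1$, together with the telescoping identity $\sum_{n=P+1}^{m}\left(\vartheta\left(n-1\right)-1\right)\prod_{j=n}^{m-1}\vartheta\left(j\right)=\prod_{j=P}^{m-1}\vartheta\left(j\right)-1$. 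It therefore gives, as pure arithmetic,
\begin{align*}
T_m\left(f_s'\right)=T_m\left(f_p'\right)\text{ for }m<P,\qquad T_m\left(f_s'\right)=T_m\left(f_p'\right)-1\text{ for }m\geq P.
\end{align*}
Adding $T_m\left(h_g\right)$ to both sides shows that the $\Upsilon$-difference of the truncations of $f_p'+h_g$ and $f_s'+h_g$ is $0$ or $1$ for every $m$.

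To verify the hypothesis of the \nameref{Extension Theorem} \ref{Extension Theorem}, let $N\in\omega_+$ be given and take $M:=N$. For every $m\geq M$ the product $\prod_{j=N}^{m-1}\vartheta\left(j\right)$ is at least $1$ (it is the empty product when $m=N$). It therefore dominates the difference, which is at most $1$, and the theorem yields $\left[f_p'+h_g\right]=\left[f_s'+h_g\right]$.

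I expect the main obstacle to be bookkeeping rather than substance. First, I must justify that the arithmetic of \nameref{PC Cor I} \ref{PC Cor I} may be used directly for $f_p'$ and $f_s'$, which are paired exactly as in $W_1$, without any appeal to membership of $\left[f\right]$. Second, I must make sure the sign-slot convention is handled. If that route gave trouble, the fallback would be move-based. Broadening $f_p'$ successively about $P,P+1,\dots,M$ produces a function that agrees with $f_s'$ on $\left[0,M\right]_\omega$. The corollary to the \nameref{Sub-Addition Contraction Lemma} \ref{Sub-Addition Contraction Lemma} keeps the sum in $\left[f_p'+h_g\right]$. One would then compare with $f_s'+h_g$ on every range of inspection, using the \nameref{Submission Theorem} \ref{Submission Theorem}. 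The truncation argument above is shorter, so I would commit to it.
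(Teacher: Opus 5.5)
Your proposal is correct and follows the paper's proof. Both sums are finite by the First Sub-Addition Finiteness Lemma, they agree at position $0$, Paradise City Corollary I shows that their level-$m$ truncations differ by at most one unit, and the Extension Theorem then identifies the two classes. The only difference is the choice of $M$, and it is cosmetic. You take $M=N$ and rely on the empty product together with the non-strict inequality in the Extension Theorem. The paper takes $M=\max\left(\left\{N+1,\Psi\left(f_p',f_s'\right)\right\}\right)$, so the difference is exactly $1<\vartheta\left(N\right)\leq\prod_{j=N}^{m-1}\vartheta\left(j\right)$. Both choices are valid.
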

\begin{proof} By the \nameref{Sub-Addition Finiteness Lemma} \ref{Sub-Addition Finiteness Lemma}, we know that $\left(f_p'+h_g\right),\left(f_s'+h_g\right)$ are both finite. Furthermore, $\left(f_p'+h_g\right)\left(0\right)=\left(f_s'+h_g\right)\left(0\right)$ as $f_p'\left(0\right)=f_s'\left(0\right)$. Let $N\in\omega_+$ be given. Take $M=\max\left(\left\{N+1,\Psi\left(f_p',f_s'\right)\right\}\right)$. Now, we see that by \nameref{PC Cor I} \ref{PC Cor I} we have that for all $m\in\left[M,\infty\right]_\omega$:
\begin{align*}
&\sum\limits_{n=1}^{m}\left(\left(f_p'+h_g\right)\left(n\right)\prod\limits_{j=n}^{m-1}\left(\vartheta\left(j\right)\right)\right)=\sum\limits_{n=1}^{m}\left(f_p'\left(n\right)\prod\limits_{j=n}^{m-1}\left(\vartheta\left(j\right)\right)\right)+\sum\limits_{n=1}^{m}\left(h_g\left(n\right)\prod\limits_{j=n}^{m-1}\left(\vartheta\left(j\right)\right)\right)\\
=&\sum\limits_{n=1}^{m}\left(f_s'\left(n\right)\prod\limits_{j=n}^{m-1}\left(\vartheta\left(j\right)\right)\right)+1+\sum\limits_{n=1}^{m}\left(h_g\left(n\right)\prod\limits_{j=n}^{m-1}\left(\vartheta\left(j\right)\right)\right)=\sum\limits_{n=1}^{m}\left(\left(f_s'+h_g\right)\left(n\right)\prod\limits_{j=n}^{m-1}\left(\vartheta\left(j\right)\right)\right)+1
\end{align*}
Therefore,
\begin{align*}
\Upsilon\left(\begin{array}{l@{}}\sum\limits_{n=1}^{m}\left(\left(f_p'+h_g\right)\left(n\right)\prod\limits_{j=n}^{m-1}\left(\vartheta\left(j\right)\right)\right)\\\sum\limits_{n=1}^{m}\left(\left(f_s'+h_g\right)\left(n\right)\prod\limits_{j=n}^{m-1}\left(\vartheta\left(j\right)\right)\right)\end{array}\right)=1<\vartheta\left(N\right)\leq\prod\limits_{j=N}^{m-1}\left(\vartheta\left(j\right)\right)
\end{align*}
Ergo, by the \nameref{Extension Theorem} \ref{Extension Theorem}, $\left[f_p'+h_g\right]=\left[f_s'+h_g\right]$.
\end{proof}

\begin{lemma}[Third Sub-Addition Auxiliary Lemma]\label{Third Sub-Addition Auxiliary Lemma}\noindent\\
For all auxiliary functions $f,g$ of the same non-zero-real number and for all non-zero auxiliary functions $h$ of the same sign as $f$ and $g$ we have $\left[f+h\right]=\left[g+h\right]$.
\end{lemma}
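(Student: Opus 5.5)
The plan is a case split on whether $f$ and $g$ agree on $\omega_+$. By \nameref{PC Cor II} \ref{PC Cor II}, two auxiliary functions of the same real number agree on $\omega_+$ whenever they are both primary or both secondary. So the only alternatives are that $f,g$ agree on $\omega_+$, or that one of them is the primary and the other the secondary auxiliary function of the number.

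\textbf{Case 1: $f$ and $g$ agree on $\omega_+$.} Then $(f+h)(n)=f(n)+h(n)=g(n)+h(n)=(g+h)(n)$ for every $n\in\omega_+$.

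At position $0$ we have $(f+h)(0)=f(0)h(0)$ and $(g+h)(0)=g(0)h(0)$. Since $\sigma(f)=\sigma(g)=\sigma(h)$, two sub-cases arise.
\begin{itemize}
\item If all three values at $0$ are even, both products are even.
\item If all three are odd, both products are odd.
\end{itemize}
In either sub-case $(f+h)(0)+(g+h)(0)$ is even. Both sums are finite by the \nameref{Sub-Addition Finiteness Lemma} \ref{Sub-Addition Finiteness Lemma}, so the \nameref{Equivalence Lemma} \ref{Equivalence Lemma} gives $g+h\in[f+h]$, that is $[f+h]=[g+h]$.

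\textbf{Case 2: one is primary, the other secondary.} Without loss of generality $f=f_p$ and $g=g_s$. I would rerun the argument of the \nameref{Second Sub-Addition Auxiliary Lemma} \ref{Second Sub-Addition Auxiliary Lemma}, but directly at the level of the Extension Theorem; this matters because here $f(0)$ and $g(0)$ need not be equal.
\begin{itemize}
\item By the parity observation of Case 1, $\sigma(f+h)=\sigma(g+h)$.
\item Both $f+h$ and $g+h$ are finite by the \nameref{Sub-Addition Finiteness Lemma} \ref{Sub-Addition Finiteness Lemma}.
\item By \nameref{PC Cor I} \ref{PC Cor I}, for every $m\geq\Psi(f,g)$ the weighted truncation of $f$ at level $m$ exceeds that of $g$ by exactly $1$. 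The truncation of the sum splits additively, so for the same $m$ the truncations of $f+h$ and $g+h$ differ by exactly $1$.
\item Given $N\in\omega_+$, take $M:=\max\left(\left\{N+1,\Psi(f,g)\right\}\right)$. For all $m\geq M$ the difference $\Upsilon$ of the two truncations equals $1<\vartheta(N)\leq\prod_{j=N}^{m-1}\vartheta(j)$.
\end{itemize}
The \nameref{Extension Theorem} \ref{Extension Theorem} then yields $[f+h]=[g+h]$.

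\textbf{Expected obstacle.} The only delicate point is the bookkeeping at position $0$. The Extension Theorem needs equal signs but not equal values at $0$, and the parity computation for the product $f(0)h(0)$ supplies exactly that.
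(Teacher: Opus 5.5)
Your proof is correct and follows the paper's structure. You use the same case split, and in Case 1 the same Equivalence Lemma parity argument; the paper factors the sum as $(f(0)+g(0))h(0)$, so it does not even need $\sigma(h)$ there.

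In Case 2 you take a slightly different route. The paper cites the Second Sub-Addition Auxiliary Lemma to pass from $g$ to the primary $g_p'$ with $g_p'(0)=g(0)$, and then closes with Case 1. You instead inline that lemma's own argument (Paradise City Corollary I plus the Extension Theorem) directly on $f+h$ and $g+h$. This is legitimate because the Extension Theorem only requires equal signs, not equal values at position $0$.
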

\begin{proof} Suppose that $\forall n\in\omega_+,f\left(n\right)=g\left(n\right)$. Then $\sigma\left(f\right)=\sigma\left(g\right)$ and thus $f\left(0\right)+g\left(0\right)=2m$ for some $m\in\omega$. Furthermore, $\forall n\in\omega_+,\left(f+h\right)\left(n\right)=\left(g+h\right)\left(n\right)$ and thus both of the functions have the same auxiliary function up to a difference for $0$. We shall see what this difference actually is. 
\begin{align*}
\left(f+h\right)\left(0\right)+\left(g+h\right)\left(0\right)=f\left(0\right)h\left(0\right)+g\left(0\right)h\left(0\right)=\left(f\left(0\right)+g\left(0\right)\right)h\left(0\right)=2mh\left(0\right)
\end{align*} 
Therefore, by the \nameref{Equivalence Lemma} \ref{Equivalence Lemma} both the auxiliary functions are in the same real number and therefore $\left[f+h\right]=\left[g+h\right]$.\\
Suppose that $\exists n\in\omega_+,f\left(n\right)\neq g\left(n\right)$. This means that one of the functions is a primary auxiliary function and the other one is a secondary auxiliary function. Take without loss of generality $f$ to be the primary auxiliary function and $g$ to be the secondary auxiliary function. By the \nameref{Second Sub-Addition Auxiliary Lemma} \ref{Second Sub-Addition Auxiliary Lemma} $\left[g+h\right]=\left[g_p'+h\right]$ where $g_p'$ is the primary auxiliary function of $\left[g\right]=\left[f\right]$ with $g_p'\left(0\right)=g\left(0\right)$. By the case above $\left[g_p+h\right]=\left[f+h\right]$ and thus $\left[f+h\right]=\left[g+h\right]$.
\end{proof}
\begin{theorem}[Consistency Theorem]\label{Consistency Theorem}\noindent\\
\begin{subtheorems}
\subtheorem\label{1.I} For all non-zero finite functions $f$, $g$ and $w$ such that $\sigma\left(f\right)=\sigma\left(g\right)=\sigma\left(w\right)$ we have that
\begin{align*}
\left[f+g\right]=\left[f+w\right]\iff\left[g\right]=\left[w\right]
\end{align*}
\subtheorem\label{1.II} For all non-zero finite functions $f$, $g$, $w$ and $t$ such that $\sigma\left(f\right)=\sigma\left(w\right)$ we have that 
\begin{align*}
\left[f\right]=\left[g\right]\wedge\left[w\right]=\left[t\right]\implies\left[f+w\right]=\left[g+t\right]
\end{align*}
\subtheorem\label{1.III} For all non-zero finite functions $f$, $g$, $w$ and $t$ such that $\sigma\left(f\right)=\sigma\left(w\right)$ and $\sigma\left(g\right)=\sigma\left(t\right)$ we have that 
\begin{align*}
\left[f+w\right]=\left[g+t\right]\wedge\left[w\right]=\left[t\right]\implies\left[f\right]=\left[g\right]
\end{align*}
\end{subtheorems}
\end{theorem}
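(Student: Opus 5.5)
The plan is to reduce everything to auxiliary functions and to the weighted truncations $T_m(x):=\sum_{n=1}^{m}x(n)\prod_{j=n}^{m-1}\vartheta(j)$. These are additive under sub-addition: $T_m(f+g)=T_m(f)+T_m(g)$. Part \ref{1.II} should come first, because the other two parts use it.

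\textbf{Part \ref{1.II}.} First record the sign bookkeeping. Since $[f]=[g]$ is non-zero, $\sigma(g)=\sigma(f)$ by the \nameref{Sign of a Real Number} \ref{Sign of a Real Number}. Likewise $\sigma(t)=\sigma(w)=\sigma(f)$, so all the sub-additions below are of same-sign functions. By the \nameref{First Sub-Addition Auxiliary Lemma} \ref{First Sub-Addition Auxiliary Lemma}, $[f+w]=[h_f+h_w]$ and $[g+t]=[h_g+h_t]$. Here $h_f,h_g$ are auxiliary functions of the same real number, and so are $h_w,h_t$. Two applications of the \nameref{Third Sub-Addition Auxiliary Lemma} \ref{Third Sub-Addition Auxiliary Lemma}, using commutativity of sub-addition for the second slot, give
\[
[h_f+h_w]=[h_g+h_w]=[h_g+h_t],
\]
which proves \ref{1.II}.

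\textbf{Part \ref{1.I}.} The implication ``$\Leftarrow$'' is \ref{1.II} with $f=g$. For ``$\Rightarrow$'', argue by contradiction: suppose $[g]\neq[w]$ and treat the positive case, say $[g]<[w]$ by totality; the negative case is the same. Two ingredients are needed.

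\emph{(a) A fixed-scale gap.} The proof of the \nameref{Boundedness Theorem} \ref{Boundedness Theorem} supplies an $R$ such that, for all sufficiently large $m$,
\[
T_m(w)>T_m(g)+\prod_{j=R+1}^{m-1}\vartheta(j).
\]
Adding $T_m(f)$ to both sides transfers this to $T_m(f+w)>T_m(f+g)+\prod_{j=R+1}^{m-1}\vartheta(j)$.

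\emph{(b) A converse to the \nameref{Extension Theorem} \ref{Extension Theorem}.} I need that two members $x,y$ of one real number have truncations eventually closer than every scale. Concretely: for each $N$, eventually $\Upsilon\bigl(T_m(x),T_m(y)\bigr)<\prod_{j=N}^{m-1}\vartheta(j)$.
\begin{itemize}
\item The upper bound comes from the \nameref{Paradise City Lemma} \ref{Paradise City Lemma}: $T_m(x)\le T_m(\pi)$, where $\pi$ is the primary auxiliary function.
\item For the lower bound, choose a sequence of contractions bringing $x$ into agreement with its auxiliary function $h$ on $[0,N]_\omega$. Then \eqref{eq:contraction-invariance} gives $T_m(x)\ge\prod_{j=N}^{m-1}\vartheta(j)\,T_N(h)$ for large $m$.
\item Bounding the tail of $\pi$ beyond $N$ by the telescoping estimate used in the \nameref{Tying Theorem} \ref{Tying Theorem}, and using \nameref{PC Cor I} \ref{PC Cor I} to handle $h$ being secondary (which costs at most one unit), yields $T_m(\pi)-T_m(x)<\prod_{j=N}^{m-1}\vartheta(j)+1$ eventually.
\end{itemize}
Replacing $N$ by $N+1$ absorbs the stray unit, since $\vartheta\ge2$.

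With (b) in hand, take $N=R+1$. The assumption $[f+g]=[f+w]$ then contradicts (a). I expect (b) to be the main obstacle, since it is the one genuinely new estimate. The rest is bookkeeping already done in the Boundedness and Tying proofs.

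\textbf{Part \ref{1.III}.} All four functions share a sign: $\sigma(w)=\sigma(t)$ because $[w]=[t]$, and the hypotheses give $\sigma(f)=\sigma(w)$ and $\sigma(g)=\sigma(t)$. By \ref{1.II}, applied with $[g]=[g]$ and $[t]=[w]$, we get $[g+t]=[g+w]$. Combined with the hypothesis and commutativity, this gives $[w+f]=[f+w]=[g+w]=[w+g]$. Part \ref{1.I}, with $w$ in the first slot, then yields $[f]=[g]$.
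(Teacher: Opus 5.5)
Your proof is correct. Parts II and III use the same ingredients as the paper: the First and Third Sub-Addition Auxiliary Lemmas plus commutativity. The only reordering is that you prove II directly and read off I($\Leftarrow$) from it, whereas the paper proves I($\Leftarrow$) first and obtains II by applying it twice.

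The genuine difference is the direction I($\Rightarrow$).

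\textbf{The paper's route.} It argues combinatorially. From $[w]\le[g]$ and the definition of order it picks $a\in[w]$ dominating $g$ at every position of $\omega_+$. The domination must be strict somewhere, since otherwise the Equivalence Lemma gives $[a]=[g]$. Hence $f+a$ strictly dominates $f+g$. Since $[f+a]=[f+w]$ by the direction already proved, the Domination Theorem forbids $[f+w]=[f+g]$. That takes a few lines, uses only stated theorems, and exploits the fact that same-sign sub-addition acts position by position.

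\textbf{Your route.} It is quantitative. You take the fixed-scale gap from the interior of the Boundedness proof and carry it through the additivity $T_m(f+g)=T_m(f)+T_m(g)$. You then play it against estimate (b), which is a converse to the Extension Theorem. This costs more: it leans on the internals of the Boundedness proof rather than on its statement, and it requires proving (b). In return it gives a reusable characterization. Together with the Extension Theorem, (b) says that two same-sign finite functions lie in the same real number if and only if their truncations eventually differ by at most one unit of every scale. That makes cancellation transparent for any operation that is additive on truncations.

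\textbf{A small accounting fix in (b).} Suppose you literally bound the tail of $\pi$ beyond $N$ and compare $T_N(h)$ with $T_N(\pi)$. In the secondary case you then pay one unit at level $N$, which is $\prod_{j=N}^{m-1}\vartheta(j)$ at level $m$. That yields only $T_m(\pi)-T_m(x)\le 2\prod_{j=N}^{m-1}\vartheta(j)-1$. To get your stated bound, bound the tail of $h$ itself instead, and pay the single unit at level $m$ through $T_m(\pi)\le T_m(h)+1$. Either bound is absorbed by the shift $N\to N+1$, because $2\prod_{j=N+1}^{m-1}\vartheta(j)\le\prod_{j=N}^{m-1}\vartheta(j)$, so your conclusion stands.
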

\begin{proof}[Proof of I]\noindent\\
$\left(\impliedby\right):$\\
Let $f,g,w\in\mathcal{N}_{\mathcal{F}in}^*$ such that $\sigma\left(f\right)=\sigma\left(g\right)=\sigma\left(w\right)$ be given. By the \nameref{First Sub-Addition Auxiliary Lemma} \ref{First Sub-Addition Auxiliary Lemma}, we have that $\left[f+g\right]=\left[h_f+h_g\right]$ and $\left[f+w\right]=\left[h_f+h_w\right]$. By commutativity of sub-addition and the \nameref{Third Sub-Addition Auxiliary Lemma} \ref{Third Sub-Addition Auxiliary Lemma} we have 
\begin{align*}
\left[f+g\right]=\left[h_f+h_g\right]=\left[h_g+h_f\right]=\left[h_w+h_f\right]=\left[h_f+h_w\right]=\left[f+w\right]
\end{align*}
$\left(\implies\right):$\\
Suppose $\left[w\right]\neq\left[g\right]$. By totality $\left[w\right]\leq\left[g\right]$ or $\left[w\right]\geq\left[g\right]$. Suppose without loss of generality that $\left[w\right]\leq\left[g\right]$.\\
If $\sigma\left(\left[w\right]\right)=\sigma\left(\left[g\right]\right)=0$ then by the definition of order 
\begin{align*}
\forall g\in\left[g\right],\exists a\in\left[w\right],\forall n\in\omega_+,a\left(n\right)\geq g\left(n\right)
\end{align*}
With the extra caveat, as $\sigma\left(\left[w\right]\right)=\sigma\left(\left[g\right]\right)$ if for all $n\in\omega_+$ we have $a\left(n\right)=g\left(n\right)$ then by the \nameref{Equivalence Lemma} \ref{Equivalence Lemma} that as there xists $m\in\omega_+$ such that $a\left(0\right)+g\left(0\right)=2m$ we would obtain $\left[g\right]=\left[a\right]=\left[w\right]$ which is a contradiction. Therefore, there has to exist $M\in\omega_+$ such that $a\left(M\right)>g\left(M\right)$.\\
This implies that for all $n\in\omega_+$ we have
\begin{align*}
\left(f+a\right)\left(n\right)=f\left(n\right)+a\left(n\right)\geq f\left(n\right)+g\left(n\right)=\left(f+g\right)\left(n\right)
\end{align*}
On top of that we have
\begin{align*}
\left(f+a\right)\left(M\right)=f\left(M\right)+a\left(M\right)<f\left(M\right)+g\left(M\right)=\left(f+g\right)\left(M\right)
\end{align*}
By the \nameref{Domination Theorem} \ref{Domination Theorem} we see that $\left[f+w\right]=\left[f+a\right]\neq\left[f+g\right]$.\\
The proof of $\sigma\left(\left[w\right]\right)=\sigma\left(\left[g\right]\right)=1$ is the same as the first case.\\
Ergo $\left[w\right]\neq\left[g\right]\implies\left[f+g\right]\neq\left[f+w\right]$. By contrapositive we have $\left[f+g\right]=\left[f+w\right]\implies\left[g\right]=\left[w\right]$
\end{proof}
\begin{proof}[Proof of II]\noindent\\
Let $f,g,w,t\in\mathcal{N}_{\mathcal{F}in}^*$ such that $\sigma\left(f\right)=\sigma\left(w\right)$ and $\left[f\right]=\left[g\right]$ and $\left[w\right]=\left[t\right]$ be given. By using the \nameref{Consistency Theorem} \ref{1.I} \ref{Consistency Theorem} twice we obtain 
\begin{align*}
\begin{array}{ll@{}}
\left[w\right]=\left[t\right]\implies\left[f+w\right]=\left[f+t\right]&\text{and}\\
\left[f\right]=\left[g\right]\implies\left[t+f\right]=\left[t+g\right]
\end{array}
\end{align*} As sub-addition is commutative $\left[t+f\right]=\left[f+t\right]$. Thus using transitivity of equality we obtain $\left[f+w\right]=\left[f+t\right]=\left[t+f\right]=\left[t+g\right]=\left[g+t\right]\implies\left[f+w\right]=\left[g+t\right]$.
\end{proof}
\begin{proof}[Proof of III]\noindent\\
Let $f,g,w,t\in\mathcal{N}_{\mathcal{F}in}^*$ such that $\sigma\left(f\right)=\sigma\left(w\right)$ and $\sigma\left(g\right)=\sigma\left(t\right)$ and $\left[f+w\right]=\left[g+t\right]$ and $\left[w\right]=\left[t\right]$ be given. By the \nameref{Consistency Theorem} \ref{1.I} \ref{Consistency Theorem} 
\begin{align*}
\left[w\right]=\left[t\right]\implies\left[f+w\right]=\left[f+t\right]
\end{align*} by the hypothesis $\left[g+t\right]=\left[f+w\right]$.\\
Therefore, we have $\left[g+t\right]=\left[f+t\right]$. By the commutativity of sub-addition and the \nameref{Consistency Theorem} \ref{1.I} \ref{Consistency Theorem} we have $\left[t+g\right]=\left[t+f\right]\implies\left[f\right]=\left[g\right]$.
\end{proof}
\subsection{Absolute Value, Opposites and Tracking Functions}
\begin{definition}[Absolute Value of a Function]\label{Absolute Value of Functions}\noindent\\
For any $f:\omega\rightarrow\omega$ we define the absolute value of $f$, written as $\left|f\right|$, as 
\begin{align*}
\left|f\right|\left(k\right):=\begin{cases}
0&k=0\\
f\left(k\right)&otherwise
\end{cases}
\end{align*}
\end{definition}
\begin{lemma}[Absolute Value Lemma]\label{Absolute Value Lemma}\noindent\\
For all finite functions $f$ and for all functions $g\in\left[f\right]$ we have $\left|g\right|\in\left[\left|f\right|\right]$.
\end{lemma}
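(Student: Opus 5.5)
The plan is to reduce the statement to the Equivalence Lemma together with the fact that contractions never touch position $0$. First observe that $\left|g\right|$ and $g$ agree on all of $\omega_+$, and that $\left|g\right|\left(0\right)=0=\left|f\right|\left(0\right)$. So the only thing that changes in passing from $g$ to $\left|g\right|$ is the sign slot.

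Since $g\in\left[f\right]$, there is an auxiliary function $h\in b$ with $\left[f\right]=r_b$ (or $\left[f\right]=\left[0\right]$) such that for every number of inspection $N$ some sequence of contractions $\mathcal{C}_{k=q}^r$ brings $g$ into agreement with $h$ on $\left[0,N\right]_\omega$.

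Define $h'$ by $h'\left(0\right)=0$ and $h'\left(n\right)=h\left(n\right)$ for $n\in\omega_+$. As in the proof of the Equivalence Lemma, $h'$ is again an auxiliary function of the same kind (primary or secondary) as $h$, because the defining conditions only involve positions in $\left[2,\infty\right]_\omega$. The same sequence $\mathcal{C}_{k=q}^r$ is applicable to $\left|g\right|$: the Submission Theorem applies since $\text{con}\left(\mathcal{C}_{k=q}^r\right)\subset\omega_+$, where $\left|g\right|$ and $g$ coincide. It yields $\mathcal{C}_{k=q}^r\left(\left|g\right|\right)\left(n\right)=h'\left(n\right)$ on $\left[0,N\right]_\omega$, the value at $0$ being untouched by contractions. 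Hence $\left|g\right|$ lies in the real number containing $h'$.

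The same argument with $g$ replaced by $f$ shows that $\left|f\right|$ contracts toward the very same $h'$, because $f$ and $g$ share the auxiliary function $h$ up to position $0$. Here I would invoke the Shifting Theorem and PC Cor II to see that the auxiliary functions of $f$ and $g$ coincide on $\omega_+$. So both $\left|f\right|$ and $\left|g\right|$ belong to $r_{b'}$, where $b'$ is the class of $h'$. By the Unicity of Real Numbers, $\left|g\right|\in\left[\left|f\right|\right]$.

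The main obstacle I expect is bookkeeping for the sign change. When $\sigma\left(\left[f\right]\right)=0$, the function $h'$ has even value at $0$, so it lies in a \emph{different} base-set class than $h$. One must check that $h'$ genuinely belongs to some $b'\in B$: a primary $h'$ lies in $W_1$ or $W_2$ according as $h$ does, and its partner secondary is obtained by the same modification at $0$. The zero case $\left[f\right]=\left[0\right]$ also needs handling, and there I would instead use the Null Lemma directly: $g$ vanishes on $\omega_+$, hence so does $\left|g\right|$, hence $\left|g\right|\in\left[0\right]=\left[\left|f\right|\right]$.
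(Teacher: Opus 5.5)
\textbf{There is a genuine gap.} Your first step is sound and is exactly the paper's. The contraction sequence that carries $g$ to its witness $h$ on $[0,N]_\omega$ transfers to $|g|$ by the Submission Theorem, and position $0$ is untouched, so $|g|$ lands in the class of $h'=|h|$. The gap is the next step, where you claim that $|f|$ contracts toward the very same $h'$ because the auxiliary functions of $f$ and $g$ coincide on $\omega_+$. That claim is false.

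A nonzero class that has a secondary auxiliary function also has a primary one, and the two differ on $\omega_+$ (first at $\Psi(f_p,f_s)$). Paradise City Corollary II only identifies primaries with primaries and secondaries with secondaries. Concretely, in base ten take $f=(0;0,5,0,0,\dots)$ and $g=(0;0,4,9,9,\dots)$, so $g\in[f]$. Both are auxiliary, so no contraction applies to either, and each is matched only by itself. Your $h'$ is then $|g|=g$. But $|f|=f$ can only be contracted (trivially) to itself, which disagrees with $h'$ at position $2$. So ``$|f|$ contracts toward the very same $h'$'' fails, and neither the Shifting Theorem nor Corollary II rescues it.

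The repair works at the level of base-set classes rather than witnesses, and it is what the paper's second case does. Membership in $r_{b'}$ only asks for \emph{some} witness in $b'$, not a common one. So it suffices to show that $|h_f|$ and $|h_g|$ lie in one class $b'$.
\begin{itemize}
\item If $h_f$ and $h_g$ agree on $\omega_+$, then $|h_f|$ and $|h_g|$ are literally equal; the paper routes this case through the Equivalence Lemma.
\item If one is primary and the other secondary, then by Corollary II they agree on $\omega_+$ with the two components of a pair in $W_1$ lying in the class of $[f]$. The defining conditions of $W_1$ involve position $0$ only through equality of the two values there, so $(|h_f|,|h_g|)\in W_1$. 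Hence both lie in one class of $F$.
\end{itemize}
Your final paragraph already contains this observation: the partner secondary is obtained by the same modification at $0$. However, you use it only to place $h'$ in some class, not to connect the witnesses of $f$ and $g$. Applied at that step, it closes the argument. Your zero case via the Null Lemma is fine as written.
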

\begin{proof} First we shall prove that $\left|g\right|\in\left[\left|h_g\right|\right]$. By the definition of real numbers, $\left|g\right|\left(0\right)=0=\left|h_g\right|\left(0\right)$. Furthermore, let a number of inspection $N\in\omega_+$ be given. We know that there exists a sequence of contractions $\mathcal{C}_{k=q}^r\in\mathcal{FS}$ such that for all $n\in\left[0,N\right]_\omega$ we have
\begin{align*}
\mathcal{C}_{k=q}^r\left(g\right)\left(n\right)=h_g\left(n\right)
\end{align*}
Therefore, as for all $n\in\omega_+$ we have $\left|g\right|\left(n\right)=g\left(n\right)$ and thus by the \nameref{Submission Theorem} \ref{Submission Theorem} we have that $\mathcal{C}_{k=q}^r$ is applicable to $\left|g\right|$ and moreover for all $n\in\left[1,N\right]_\omega$ we have
\begin{align*}
\mathcal{C}_{k=q}^r\left(\left|g\right|\right)\left(n\right)+g\left(n\right)=\left|g\right|\left(n\right)+\mathcal{C}_{k=q}^r\left(g\right)\left(n\right)=\left|g\right|\left(n\right)+h_g\left(n\right)=g\left(n\right)+\left|h_g\right|\left(n\right)
\end{align*}
Hence, for all $n\in\left[0,N\right]_\omega$ we have $\mathcal{C}_{k=q}^r\left(\left|g\right|\right)\left(n\right)=\left|h_g\right|\left(n\right)$ and thus $\left|g\right|\in\left[\left|h_g\right|\right]$.\\
Now we shall prove that for all auxiliary functions $h_1,h_2\in\left[f\right]$ where $\left[f\right]\neq\left[0\right]$ we have $\left[\left|h_1\right|\right]=\left[\left|h_2\right|\right]$. We have two cases.\\
\textbf{For all $n\in\omega_+,h_1\left(n\right)=h_2\left(n\right)$.} We have $\left|h_1\right|\left(0\right)+\left|h_2\right|\left(0\right)=0=2*0$. Furthermore, as for all $n\in\omega_+,h_1\left(n\right)=h_2\left(n\right)$, by the \nameref{Equivalence Lemma} \ref{Equivalence Lemma} we have that $\left[\left|h_1\right|\right]=\left[\left|h_2\right|\right]$.\\
\textbf{Exists $n\in\omega_+,h_1\left(n\right)\neq h_2\left(n\right)$}. This means that one of the functions is a primary auxiliary function and the second one is a secondary auxiliary function. Take without loss of generality that $h_1$ is the primary auxiliary function and $h_2$ is the secondary auxiliary function. We take the secondary auxiliary function $h'$ such that $\left(h_1,h'\right)\in W_1$ (as described in the \nameref{Base Set} \ref{Base Set}). Therefore, $\left(\left|h_1\right|,\left|h'\right|\right)\in W_1$. By the definition of real numbers we have that $\left[\left|h_1\right|\right]=\left[\left|h'\right|\right]$. Furthermore, by the same case as above $\left[\left|h'\right|\right]=\left[\left|h_2\right|\right]$. Therefore, $\left[\left|h_1\right|\right]=\left[\left|h_2\right|\right]$.\\
For all auxiliary functions $h_1,h_2\in\left[f\right]$ where $\left[f\right]=\left[0\right]$ we have $\left[\left|h_1\right|\right]=\left[\left|h_2\right|\right]$. This is because, by the \nameref{Null Lemma} \ref{Null Lemma} we have that for all $n\in\omega_+$ we have $h_1\left(n\right)=h_2\left(n\right)=0$. Therefore, we have that for all $n\in\omega_+$ we have $\left|h_1\right|\left(n\right)=\left|h_2\right|\left(n\right)=0$. Hence, by the \nameref{Null Lemma} \ref{Null Lemma} we have $\left[\left|h_1\right|\right]=\left[\left|h_2\right|\right]=\left[0\right]$.\\
These two parts together finish the proof.
\end{proof}

\begin{definition}[Absolute Value of a Real Number]\label{Absolute Value of a Real Number}\noindent\\
We define the absolute value of a real number $\left[x\right]$ as $\left|\left[x\right]\right|:=\left[\left|f\right|\right]$ for any function $f\in\left[x\right]$.
\end{definition}

\begin{remark} For all non-zero real numbers $\left[f\right]$ we have that $\sigma\left(\left|\left[f\right]\right|\right)=1$. Firstly suppose $\left[f\right]\neq\left[0\right]$. Then by the \nameref{Null Lemma} \ref{Null Lemma} there exists a function $f\in\left[f\right]$ such that there exists a value $n\in\omega_+$ such that $f\left(n\right)\neq0$. Then by the definition of the absolute value of a function $\left|f\right|\left(n\right)\neq0$ and thus by the \nameref{Null Lemma} \ref{Null Lemma} we have $\left|f\right|\not\in\left[0\right]$. Therefore, $\left|\left[f\right]\right|\neq\left[0\right]$. Furthermore, as $\sigma\left(\left|f\right|\right)=1$ we obtain the result.
\end{remark}

\begin{definition}[Opposite Function]\label{Opposite Function}
For all functions $g:\omega\rightarrow\omega$ define the opposite function ${}^og:\omega\rightarrow\omega$ as
\begin{align*}
{}^og\left(k\right):=\begin{cases}
g\left(0\right)+1&k=0\\
g\left(k\right)&otherwise
\end{cases}
\end{align*}
\end{definition}
\begin{remark} For all $g\in \mathcal{N}_{\mathcal{F}in}$ we have 
\begin{align*}
\begin{array}{ll@{}}
{}^o{}^og\left(0\right)=g\left(0\right)+2&\text{and}\\
\forall n\in\omega_+,{}^o{}^og\left(n\right)=g\left(n\right)
\end{array}
\end{align*}.
Thus as ${}^o{}^og\left(0\right)+g\left(0\right)=2\left(g\left(0\right)+1\right)$ we obtain by the \nameref{Equivalence Lemma} \ref{Equivalence Lemma} that $\left[g\right]=\left[{}^o{}^og\right]$.
\end{remark}
\begin{lemma}[Opposite Function Lemma]\label{Opposite Function Lemma}\noindent\\
For all real numbers $\left[x\right]\in\mathcal{S}_{\mathbb{R}}$ we have that for all functions $g,w\in\left[x\right]$ we have $\left[{}^og\right]=\left[{}^ow\right]$.
\end{lemma}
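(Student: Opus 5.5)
The plan is to imitate the proof of the \nameref{Absolute Value Lemma} \ref{Absolute Value Lemma}. Opposition touches only position $0$, and neither contractions nor broadenings ever touch position $0$. So the work is to show that ${}^og$ lands in a class determined by the auxiliary function of $g$, and that this class does not depend on which auxiliary function of $[x]$ we pick.

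\textbf{Step 1.} For any $g\in[x]$ we show ${}^og\in\left[{}^oh_g\right]$, where $h_g$ is the auxiliary function of $g$. Given a number of inspection $N$, take the sequence of contractions $\mathcal{C}_{k=q}^r$ with $\mathcal{C}_{k=q}^r(g)(n)=h_g(n)$ on $[0,N]_\omega$. Since ${}^og$ agrees with $g$ on $\omega_+$, the \nameref{Submission Theorem} \ref{Submission Theorem} (with $S=\omega_+$) makes $\mathcal{C}_{k=q}^r$ applicable to ${}^og$. It also gives $\mathcal{C}_{k=q}^r({}^og)(n)=h_g(n)={}^oh_g(n)$ for $n\in[1,N]_\omega$. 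At $n=0$, contractions fix the value, so $\mathcal{C}_{k=q}^r({}^og)(0)=g(0)+1=h_g(0)+1={}^oh_g(0)$; here $g(0)=h_g(0)$ as in the \nameref{Equivalence Lemma} \ref{Equivalence Lemma}. The function ${}^oh_g$ is again an auxiliary function, primary or secondary exactly as $h_g$ is, because the defining conditions only involve positions $k\geq2$. Hence ${}^og\in\left[{}^oh_g\right]$.

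\textbf{Step 2.} We show $\left[{}^oh_1\right]=\left[{}^oh_2\right]$ for any two auxiliary functions $h_1,h_2$ of $[x]$. We split into the same cases as in the \nameref{Absolute Value Lemma} \ref{Absolute Value Lemma}.
\begin{itemize}
\item If $h_1$ and $h_2$ agree on $\omega_+$ and $[x]\neq[0]$, they have the same sign, so $h_1(0)+h_2(0)$ is even. Then ${}^oh_1(0)+{}^oh_2(0)=h_1(0)+h_2(0)+2$ is even too, and the \nameref{Equivalence Lemma} \ref{Equivalence Lemma} gives equality of the classes.
\item If one is primary and the other secondary, take the secondary $h'$ with $(h_1,h')\in W_1$ and $h'(0)=h_1(0)$. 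Then $({}^oh_1,{}^oh')\in W_1$, since the pairing conditions in the \nameref{Base Set} \ref{Base Set} involve only positive positions plus equality at $0$. So both lie in the same element of $B$, and $\left[{}^oh_1\right]=\left[{}^oh'\right]$. The first case then links ${}^oh'$ to ${}^oh_2$.
\item If $[x]=[0]$, then by the \nameref{Null Lemma} \ref{Null Lemma} all positive values vanish for $h_1$, $h_2$, ${}^oh_1$ and ${}^oh_2$. So both opposites lie in $[0]$.
\end{itemize}

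\textbf{Main obstacle.} The delicate point is the first case when $[x]\neq[0]$ but the two representatives $g,w$ come with auxiliary functions of different parity at $0$. This cannot happen inside a non-zero class, because $\sigma$ is constant on it (\nameref{Sign of a Real Number} \ref{Sign of a Real Number}). The zero class, which merges both parities, is exactly why the separate Null Lemma case is needed. I would check carefully that membership of $h_g$ in the base-set element of $[x]$, rather than merely agreement on $\omega_+$, is what secures the parity claim. Combining Steps 1 and 2 then gives $\left[{}^og\right]=\left[{}^oh_g\right]=\left[{}^oh_w\right]=\left[{}^ow\right]$, using the \nameref{Unicity of Real Numbers} \ref{Unicity of Real Numbers}.
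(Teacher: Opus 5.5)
Your proposal is correct and follows the paper's proof essentially step for step. You first show ${}^og\in\left[{}^oh_g\right]$ via the Submission Theorem, using that contractions fix position $0$. You then show $\left[{}^oh_1\right]=\left[{}^oh_2\right]$ by the same three cases: agreement on $\omega_+$ via the Equivalence Lemma, the primary/secondary case via the $W_1$ partner, and the zero class via the Null Lemma.
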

\begin{proof} First we shall prove that ${}^og\in\left[{}^oh_g\right]$\footnote{Notice that by the definition of auxiliary functions ${}^oh_g$ is an auxiliary function and thus finite.}. By the definition of real numbers, ${}^og\left(0\right)=g\left(0\right)+1=h_g\left(0\right)+1={}^oh_g\left(0\right)$. Furthermore, let a number of inspection $N\in\omega_+$ be given. We know that there exists a sequence of contractions $\mathcal{C}_{k=q}^r\in\mathcal{FS}$ such that for all $n\in\left[0,N\right]_\omega$ we have
\begin{align*}
\mathcal{C}_{k=q}^r\left(g\right)\left(n\right)=h_g\left(n\right)
\end{align*}
Therefore, as for all $n\in\omega_+$ we have ${}^og\left(n\right)=g\left(n\right)$ and thus by the \nameref{Submission Theorem} \ref{Submission Theorem} we have that $\mathcal{C}_{k=q}^r$ is applicable to ${}^og$ and moreover for all $n\in\left[1,N\right]_\omega$ we have
\begin{align*}
\mathcal{C}_{k=q}^r\left({}^og\right)\left(n\right)+g\left(n\right)={}^og\left(n\right)+\mathcal{C}_{k=q}^r\left(g\right)\left(n\right)={}^og\left(n\right)+h_g\left(n\right)=g\left(n\right)+{}^oh_g\left(n\right)
\end{align*}
Hence, for all $n\in\left[0,N\right]_\omega$ we have $\mathcal{C}_{k=q}^r\left({}^og\right)\left(n\right)={}^oh_g\left(n\right)$ and thus ${}^og\in\left[{}^oh_g\right]$.\\
Now we shall prove that for all auxiliary functions $h_1,h_2\in\left[x\right]$ where $\left[x\right]\neq\left[0\right]$ we have $\left[{}^oh_1\right]=\left[{}^oh_2\right]$. We have two cases.\\
\textbf{For all $n\in\omega_+,h_1\left(n\right)=h_2\left(n\right)$.} As $\sigma\left(h_1\right)=\sigma\left(h_2\right)$ we have $h_1\left(0\right)+h_2\left(0\right)=2m$ for some $m\in\omega$. Hence ${}^oh_1\left(0\right)+{}^oh_2\left(0\right)=h_1\left(0\right)+h_2\left(0\right)+2=2\left(m+1\right)$. Furthermore, as for all $n\in\omega_+,h_1\left(n\right)=h_2\left(n\right)$, by the \nameref{Equivalence Lemma} \ref{Equivalence Lemma} we have that $\left[{}^oh_1\right]=\left[{}^oh_2\right]$.\\
\textbf{Exists $n\in\omega_+,h_1\left(n\right)\neq h_2\left(n\right)$}. This means that one of the functions is a primary auxiliary function and the second one is a secondary auxiliary function. Take without loss of generality that $h_1$ is the primary auxiliary function and $h_2$ is the secondary auxiliary function. We take the secondary auxiliary function $h'$ such that $\left(h_1,h'\right)\in W_1$ (as described in the \nameref{Base Set} \ref{Base Set}). Therefore, $\left({}^oh_1,{}^oh'\right)\in W_1$. By the definition of real numbers we have that $\left[{}^oh_1\right]=\left[{}^oh'\right]$. Furthermore, by the same case as above $\left[{}^oh'\right]=\left[{}^oh_2\right]$. Therefore, $\left[{}^oh_1\right]=\left[{}^oh_2\right]$.\\
For all auxiliary functions $h_1,h_2\in\left[f\right]$ where $\left[f\right]=\left[0\right]$ we have $\left[{}^oh_1\right]=\left[{}^oh_2\right]$. This is because, by the \nameref{Null Lemma} \ref{Null Lemma} we have that for all $n\in\omega_+$ we have $h_1\left(n\right)=h_2\left(n\right)=0$. Therefore, we have that for all $n\in\omega_+$ we have ${}^oh_1\left(n\right)={}^oh_2\left(n\right)=0$. Hence, by the \nameref{Null Lemma} \ref{Null Lemma} we have $\left[{}^oh_1\right]=\left[{}^oh_2\right]=\left[0\right]$.\\
These two parts together finish the proof.
\end{proof}

\begin{definition}[Opposite of a Real Number]\label{Opposite of a Real Number}\noindent\\
We define the opposite of a real number $\left[x\right]$ as ${}^o\left[x\right]:=\left[{}^of\right]$ for any function $f\in\left[x\right]$.
\end{definition}

\begin{remark} We see that for all non-zero real numbers $\left[x\right]$ we have $\sigma\left(\left[x\right]\right)\neq\sigma\left({}^o\left[x\right]\right)$. This is seen as for any $f\in\left[x\right]$ we have by the \nameref{Null Lemma} \ref{Null Lemma} that there exists a value $n\in\omega_+$ such that $f\left(n\right)\neq0$ and thus by the definition of the opposite function ${}^of\left(n\right)\neq0$ and hence by the \nameref{Null Lemma} \ref{Null Lemma} we have that ${}^of$ is a non-zero finite function (the finiteness follows from the \nameref{Opposite Function Lemma} \ref{Opposite Function Lemma}). Ergo, as $\sigma\left(f\right)\neq\sigma\left({}^of\right)$ we obtain the result.
\end{remark}

\begin{lemma}[Absolute-Opposite Lemma]\label{Absolute-Opposite Lemma}\noindent\\
For all real numbers $\left[a\right]$ and $\left[b\right]$ such that $\left|\left[a\right]\right|=\left|\left[b\right]\right|$ either $\left[a\right]=\left[b\right]$ or $\left[a\right]={}^o\left[b\right]$.
\end{lemma}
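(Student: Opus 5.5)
The plan is to pick representatives, compare them position by position, and then decide between $\left[a\right]=\left[b\right]$ and $\left[a\right]={}^o\left[b\right]$ by the parity of the sign slot.

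\textbf{Zero case.} If $\left|\left[a\right]\right|=\left[0\right]$, take any $f\in\left[a\right]$. By the Absolute Value Lemma \ref{Absolute Value Lemma} we have $\left|f\right|\in\left[0\right]$, so by the Null Lemma \ref{Null Lemma} $f$ vanishes on $\omega_+$, and hence $f\in\left[0\right]$. The same argument gives $\left[b\right]=\left[0\right]$, and therefore $\left[a\right]=\left[b\right]$.

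\textbf{Non-zero case: matching the digits.} Now assume both numbers are non-zero. Fix $a\in\left[a\right]$ and $b\in\left[b\right]$, and pass to their auxiliary functions $h_a,h_b$. I would choose these to be the primary ones, $a_p$ and $b_p$. By the Absolute Value Lemma \ref{Absolute Value Lemma}, $\left|a_p\right|\in\left|\left[a\right]\right|$ and $\left|b_p\right|\in\left|\left[b\right]\right|$, and these two classes coincide by hypothesis.

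Each of $\left|a_p\right|$ and $\left|b_p\right|$ is a primary auxiliary function with value $0$ at $0$. By Paradise City Corollary II \ref{PC Cor II}, two primary auxiliary functions of the same real number agree on $\omega_+$. Hence $a_p\left(n\right)=b_p\left(n\right)$ for all $n\in\omega_+$.

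The step I expect to need care is exactly this one. Corollary II applies only once we know both functions are auxiliary functions of the \emph{same} class. That is the case here, because $\left|a_p\right|$ is still primary: taking the absolute value alters only position $0$, and the defining conditions of a primary auxiliary function concern only positions $k\geq 2$.

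\textbf{Non-zero case: deciding by the sign slot.} Split on the parity of $a_p\left(0\right)+b_p\left(0\right)$.

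If the sum is even, the Equivalence Lemma \ref{Equivalence Lemma} applies directly: since $a_p\in\left[a\right]$, we get $b_p\in\left[a\right]$. As also $b_p\in\left[b\right]$, the Unicity of Real Numbers \ref{Unicity of Real Numbers} gives $\left[a\right]=\left[b\right]$.

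If the sum is odd, consider ${}^ob_p$. It agrees with $a_p$ on $\omega_+$, and ${}^ob_p\left(0\right)+a_p\left(0\right)=a_p\left(0\right)+b_p\left(0\right)+1$ is even. So the Equivalence Lemma \ref{Equivalence Lemma} gives ${}^ob_p\in\left[a\right]$. By the Opposite Function Lemma \ref{Opposite Function Lemma} and the definition of ${}^o\left[b\right]$, we have ${}^ob_p\in{}^o\left[b\right]$. Unicity \ref{Unicity of Real Numbers} then yields $\left[a\right]={}^o\left[b\right]$.

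For the zero-sign case no extra work is needed, since $\left[0\right]$ was already handled and the merging of the two zero classes plays no role for non-zero numbers.
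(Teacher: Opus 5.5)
Your proof is correct, and it takes a genuinely different route from the paper's.

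The paper argues by contrapositive, in three cases: one number zero, equal signs, opposite signs. Assuming $[a]\neq[b]$ and $[a]\neq{}^o[b]$, it proceeds as follows:
\begin{enumerate}
\item It uses totality of order to obtain, say, $[a]\le[b]$ (or $[a]\le{}^o[b]$ when the signs differ).
\item It picks a representative of the larger class that strictly dominates a given representative of $[a]$, and transfers this strict domination to absolute values.
\item It concludes from the Domination Theorem and reflexivity that $|[a]|\neq|[b]|$.
\item In the opposite-sign case it closes with the identity $|[f]|=|{}^o[f]|$.
\end{enumerate}

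You bypass the order entirely. Because $|a_p|$ and $|b_p|$ are primary auxiliary functions in the single class $|[a]|=|[b]|$, Paradise City Corollary II forces $a_p=b_p$ on $\omega_+$. The Equivalence Lemma then decides between the two alternatives by the parity of $a_p(0)+b_p(0)$.

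Your argument is shorter and needs neither totality nor the Domination Theorem. It also tells you which alternative holds: the first exactly when the signs agree. What it rests on instead is uniqueness of the primary normal form, which the paper records as Corollary II without a separate argument. That is easy to supply if you want the proof self-contained. An auxiliary function admits no contraction, so it lies in $r_b$ only if it is itself an element of $b$. The primary elements of $b$ agree on $\omega_+$ by the construction of $R_1$ and $R_2$. The paper's route, by contrast, works with arbitrary representatives and uses only the order-theoretic tools proved just before it.

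One small point of exposition: say why your two cases are exhaustive. The reason is that $|[a]|\neq[0]$ forces both $[a]$ and $[b]$ to be non-zero, since $|[0]|=[|0_p|]=[0]$.
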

\begin{proof} We prove this by contrapositive. Suppose that both $\left[a\right]=\left[b\right]$ and $\left[a\right]=\left[{}^ob\right]$ are false. Then we work with cases.\\
\textbf{Case 1. $\left[a\right]=\left[0\right]$ or $\left[b\right]=\left[0\right]$:} Suppose without loss of generality the former. Then $0_p\in\left[a\right]$ and thus $\left|0_p\right|\in\left|\left[a\right]\right|$ by the \nameref{Absolute Value Lemma} \ref{Absolute Value Lemma}. However, by the definition of an absolute value of a function $\left|0_p\right|=0_p$. Hence, by the \nameref{Unicity of Real Numbers} \ref{Unicity of Real Numbers} we have $\left|\left[a\right]\right|=\left[0\right]$.\\
Suppose $\left[b\right]\neq\left[a\right]=\left[0\right]$. Then by the \nameref{Null Lemma} \ref{Null Lemma} there exists a function $b\in\left[b\right]$ such that there exists a value $n\in\omega_+$ such that $b\left(n\right)\neq0$. Then by the definition of the absolute value of a function $\left|b\right|\left(n\right)\neq0$ and thus by the \nameref{Null Lemma} \ref{Null Lemma} we have $\left|b\right|\not\in\left[0\right]$. Therefore, $\left|\left[b\right]\right|\neq\left[0\right]=\left|\left[a\right]\right|$.\\
\textbf{Case 2. $\sigma\left(\left[a\right]\right)=\sigma\left(\left[b\right]\right)$:} Suppose $\left[a\right]\neq\left[b\right]$. Suppose $\sigma\left(\left[a\right]\right)=1$ Then $\left[a\right]\leq\left[b\right]$ or $\left[a\right]\geq\left[b\right]$. Suppose without loss of generality the former. Take any function $a\in\left[a\right]$. Then there exists a function $b\in\left[b\right]$ such that for all $n\in\omega_+$ we have $a\left(n\right)\leq b\left(n\right)$ and as $\left[a\right]\neq\left[b\right]$ there exists a value $m\in\omega_+$ such that $a\left(m\right)<b\left(m\right)$.\\
This means that for all $n\in\omega_+$ we have $\left|a\right|\left(n\right)\leq\left|b\right|\left(n\right)$ and $\left|a\right|\left(m\right)<\left|b\right|\left(m\right)$. By the \nameref{Domination Theorem} \ref{Domination Theorem} there does not exist any function in $\left|\left[a\right]\right|$ which would dominate $\left|b\right|$ as then it would strictly dominate $\left|a\right|$. Hence, as $\sigma\left(\left|\left[a\right]\right|\right)=\sigma\left(\left|\left[b\right]\right|\right)=1$ we have that $\neg\left(\left|\left[a\right]\right|\geq\left|\left[b\right]\right|\right)$. Suppose that $\left|\left[a\right]\right|=\left|\left[b\right]\right|$ then by the \nameref{Reflexivity of Order} \ref{Reflexivity of Order} $\left|\left[a\right]\right|\geq\left|\left[b\right]\right|$, a contradiction.\\
The proof for the case $\sigma\left(\left[a\right]\right)=0$ is the same.\\
\textbf{Case 3. $\sigma\left(\left[a\right]\right)\neq\sigma\left(\left[b\right]\right)$:} Suppose $\left[a\right]\neq{}^o\left[b\right]$. Suppose $\sigma\left(\left[a\right]\right)=1$. Then $\left[a\right]\leq{}^o\left[b\right]$ or $\left[a\right]\geq{}^o\left[b\right]$. Suppose firstly the former. Take any function $a\in\left[a\right]$. Then there exists a function $b'\in{}^o\left[b\right]$ such that for all $n\in\omega_+$ we have $a\left(n\right)\leq b'\left(n\right)$ and as $\left[a\right]\neq{}^o\left[b\right]$ there exists a value $m\in\omega_+$ such that $a\left(m\right)<b'\left(m\right)$.\\
This means that for all $n\in\omega_+$ we have $\left|a\right|\left(n\right)\leq\left|b'\right|\left(n\right)$ and $\left|a\right|\left(m\right)<\left|b'\right|\left(m\right)$. By the \nameref{Domination Theorem} \ref{Domination Theorem} there does not exist any function in $\left|\left[a\right]\right|$ which would dominate $\left|b'\right|$ as then it would strictly dominate $\left|a\right|$. Hence, as $\sigma\left(\left|\left[a\right]\right|\right)=\sigma\left(\left|\left[b'\right]\right|\right)=1$ we have that $\neg\left(\left|\left[a\right]\right|\geq\left|\left[b'\right]\right|\right)$. Suppose that $\left|\left[a\right]\right|=\left|\left[b'\right]\right|$ then by the \nameref{Reflexivity of Order} \ref{Reflexivity of Order} $\left|\left[a\right]\right|\geq\left|\left[b'\right]\right|$, a contradiction. Thus, $\left|\left[a\right]\right|\neq\left|\left[b'\right]\right|=\left|{}^o\left[b\right]\right|$.\\
Take any $b\in\left[b\right]$. Then for all $n\in\omega_+$ we have $b\left(n\right)={}^ob\left(n\right)$. Thus, for all $n\in\omega_+$ we have $\left|b\right|\left(n\right)=\left|{}^ob\right|\left(n\right)$. Hence, as $\left|b\right|\left(0\right)+\left|{}^ob\right|\left(0\right)=0=2*0$, we have by the \nameref{Equivalence Lemma} \ref{Equivalence Lemma} that \begin{align}\label{eq:abs-opposite}
\left|\left[f\right]\right|=\left|{}^o\left[f\right]\right|
\end{align}.
Therefore, $\left|\left[a\right]\right|\neq\left|\left[b\right]\right|$. \\
The case for $\left[a\right]\geq{}^o\left[b\right]$ is proved in the same way.
\end{proof}

\begin{definition}[Set of Tracking Functions]\label{Set of Tracking Functions}\noindent\\
Define for finite functions $f$ and $g$ where $\left|\left[f\right]\right|>\left|\left[g\right]\right|$ the set of tracking functions as
\begin{align*}
\mathcal{T}_{g}^{f}:=\left\{\varrho:\omega\rightarrow\omega\middle|\exists w\in\left[f\right]\begin{array}{ll@{}}&\forall n\in\omega_+,w\left(n\right)\geq g\left(n\right)\\\wedge&\varrho\left(0\right)=w\left(0\right)\\\wedge&\forall n\in\omega_+,\varrho\left(n\right)=w\left(n\right)-g\left(n\right)\end{array}\right\}
\end{align*}
\end{definition}

\begin{remark} For all finite functions $f$ and $g$ where $\left|\left[f\right]\right|>\left|\left[g\right]\right|$ we have $\mathcal{T}_{g}^{f}\neq\emptyset$. This is because, there exists $a\in\left|\left[f\right]\right|$ such that for all $n\in\omega_+$ we have $a\left(n\right)\geq\left|g\right|\left(n\right)=g\left(n\right)$. By the \nameref{Absolute-Opposite Lemma} \ref{Absolute-Opposite Lemma} we have $a\in\left[f\right]$ or $a\in{}^o\left[f\right]$. In the first case by taking $w=a$ we have shown that the set is non-empty. In the second case, we see that ${}^oa\in\left[f\right]$ and for all $n\in\omega_+$ we have ${}^oa\left(n\right)=a\left(n\right)\geq g\left(n\right)$ and thus by taking $w=a$ we have shown that the set is non-empty.
\end{remark}

\begin{intuition}[Tracking Functions] Subtraction is harder than addition because we cannot simply take differences position by position, a digit of the smaller number might exceed the corresponding digit of the larger. The remedy is to first rewrite the larger number, $\left[f\right]$, in a form $w$ whose digits sit above $g$ everywhere, and then read off the surplus $w-g$ position by position. That surplus is a "tracking function": what is left of $\left[f\right]$ once a copy of $g$ has been removed. There are many admissible rewritings $w$, hence many tracking functions, and the point of the next lemma is that they all describe the same real number.
\end{intuition}
\begin{example} In base ten let $f=(0;0,7,5,0,\dots)$, of value $\nicefrac{3}{4}$, and let $g=(1;0,5,0,\dots)$, of value $-\nicefrac{1}{2}$; the signs differ and $\left|\left[f\right]\right|>\left|\left[g\right]\right|$, so the tracking set $\mathcal{T}^{f}_{g}$ is defined.\\
Taking $w=f$ itself, we have $w(n)\geq g(n)$ at every position in $\omega_+$, so $\varrho(0)=0$ and $\varrho(n)=w(n)-g(n)$ give
\begin{align*}
\varrho=(0;0,2,5,0,\dots),
\end{align*}
of value $\nicefrac{1}{4}$. But $w$ was not forced on us. Broadening $f$ about position $2$ gives $w'=(0;0,6,15,0,\dots)$, still in $\left[f\right]$ and still dominating $g$ everywhere, and it yields the different function
\begin{align*}
\varrho'=(0;0,1,15,0,\dots),
\end{align*}
whose value is $\nicefrac{1}{10}+\nicefrac{15}{100}=\nicefrac{1}{4}$ as well. Indeed $c_2(\varrho')=\varrho$, so the two lie in the same real number, as the following lemma asserts in general. This is why we may speak of \emph{the} difference: the witnesses differ, but what they compute does not.
\end{example}
\begin{lemma}[Tracking Functions Lemma]\label{Tracking Functions Lemma}\noindent\\
For all finite functions $f$ and $g$ such that $\sigma\left(f\right)\neq\sigma\left(g\right)$ or $\left[g\right]=\left[0\right]$ and $\left|\left[f\right]\right|>\left|\left[g\right]\right|$ and all tracking functions $\varrho_1,\varrho_2\in\mathcal{T}_g^f$ we have that the tracking functions are finite and furthermore $\left[\varrho_1\right]=\left[\varrho_2\right]$.
\end{lemma}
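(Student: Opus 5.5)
The plan is to treat a tracking function as a truncation-level difference. Every statement about it will then be transported through weighted truncations, and the Extension Theorem will close the argument. Fix $\varrho\in\mathcal{T}_g^f$ with witness $w\in\left[f\right]$, so that $w(n)=\varrho(n)+g(n)$ for all $n\in\omega_+$. For every $m\in\omega_+$ write
\begin{align*}
T_m(u):=\sum\limits_{n=1}^{m}\left(u\left(n\right)\prod\limits_{j=n}^{m-1}\left(\vartheta\left(j\right)\right)\right),
\end{align*}
so that $T_m(\varrho)=T_m(w)-T_m(g)$ holds exactly in $\omega$.

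\textbf{Finiteness.} Since $T_m(\varrho)\leq T_m(w)$ and $w$ is finite, the forward direction of the \nameref{Tying Theorem} \ref{Tying Theorem} gives a constant $C$ bounding $T_m(w)$ by $C\prod_{p=1}^{m-1}\vartheta(p)$. The same constant then bounds $T_m(\varrho)$. The backward direction of the \nameref{Tying Theorem} \ref{Tying Theorem} shows $\varrho\in\mathcal{N}_{\mathcal{F}in}$.

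\textbf{Equality of the classes.} Take two tracking functions $\varrho_1,\varrho_2$ with witnesses $w_1,w_2\in\left[f\right]$. Both have zeroth value $w_i(0)$, and these have the same parity $\sigma(f)$, so $\sigma(\varrho_1)=\sigma(\varrho_2)$. We also have
\begin{align*}
T_m(\varrho_1)-T_m(\varrho_2)=T_m(w_1)-T_m(w_2)
\end{align*}
(read via $\Upsilon$ to stay in $\omega$). So it suffices to show that two representatives $w_1,w_2$ of the same real number have weighted truncations that eventually stay within $\prod_{j=N}^{m-1}\vartheta(j)$ of each other, for every $N$; the \nameref{Extension Theorem} \ref{Extension Theorem} then gives $\left[\varrho_1\right]=\left[\varrho_2\right]$.

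The upper half of this comparison comes from the \nameref{Paradise City Lemma} \ref{Paradise City Lemma}: $T_m(w_i)\leq T_m(f_p)$. For the lower half, fix $N$. Choose a sequence of contractions $\mathcal{C}$ bringing $w_i$ into agreement with $h_{w_i}$ on $\left[0,N\right]_\omega$, and let $\Xi:=\max(\text{con}(\mathcal{C})\cup\{N\})$. For $m\geq\Xi$, \eqref{eq:contraction-invariance} gives
\begin{align*}
T_m(w_i)=T_m(\mathcal{C}(w_i))\geq\prod_{j=N}^{m-1}\vartheta(j)\,T_N(h_{w_i}).
\end{align*}
By \nameref{PC Cor I} \ref{PC Cor I}, $T_N(h_{w_i})\geq T_N(f_p)-1$. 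Combining this with the tail bound used in the \nameref{Tying Theorem} \ref{Tying Theorem}, namely $T_m(f_p)\leq\prod_{j=N}^{m-1}\vartheta(j)\,(T_N(f_p)+1)$, yields
\begin{align*}
T_m(f_p)-T_m(w_i)\leq 2\prod_{j=N}^{m-1}\vartheta(j).
\end{align*}
Replacing $N$ by $N+1$ and using $\vartheta\geq2$ brings this gap down to at most $\prod_{j=N}^{m-1}\vartheta(j)$. Hence $\Upsilon(T_m(w_1),T_m(w_2))$ satisfies the hypothesis of the \nameref{Extension Theorem} \ref{Extension Theorem}, which completes the argument.

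The main obstacle is this lower estimate, $T_m(w)\gtrsim T_m(f_p)$ up to one unit of the scale $N$. It depends on choosing $\Xi$ beyond the condition set of the contraction sequence, so that the invariance \eqref{eq:contraction-invariance} applies. It also depends on handling the case where the auxiliary function of $w$ is the secondary one, which is what the $-1$ from \nameref{PC Cor I} \ref{PC Cor I} absorbs. This is essentially the estimate already performed inside the proof of the \nameref{Boundedness Theorem} \ref{Boundedness Theorem}, and I would reuse it. The hypothesis $\sigma(f)\neq\sigma(g)$ or $\left[g\right]=\left[0\right]$ plays no role beyond guaranteeing that $\mathcal{T}_g^f$ is nonempty, which the preceding remark already shows.
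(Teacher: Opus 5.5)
Your proof is correct, but it takes a different route from the paper.

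\textbf{The paper's route.} The paper never passes to truncations here. For $[g]\neq[0]$ it observes that ${}^og+\varrho_i$ agrees with the witness $w_i$ on $\omega_+$ and has the same sign. By the Equivalence Lemma this gives $[{}^og+\varrho_1]=[{}^og+\varrho_2]=[f]$. It then cancels ${}^og$ using part I of the first Consistency Theorem, the one for same-sign sub-addition. The case $[g]=[0]$ is handled separately and is trivial, since then $\varrho_i=w_i$.

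Making that cancellation legal costs two things. First, the paper must check that every tracking function is non-zero. Second, it genuinely needs the hypothesis $\sigma(f)\neq\sigma(g)$, because ${}^og$ must share the sign of $\varrho_i$ for the same-sign sub-addition to apply at all.

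\textbf{Your route.} You reduce everything to one estimate: two representatives of a single real number have weighted truncations that eventually lie within $2\prod_{j=N}^{m-1}\vartheta(j)$ of each other. You obtain this from the Paradise City Lemma, the contraction invariance \eqref{eq:contraction-invariance}, and Corollary I of the Paradise City Lemma, and you close with the Extension Theorem.

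\textbf{Comparison.} The paper's proof is shorter because the heavy lifting was already done in the additive consistency theorems. Your proof has three advantages.
\begin{itemize}
\item It is uniform: there is no case split on $[g]$ and no non-zeroness check, since the Extension Theorem also covers the zero class.
\item It uses only the structure theory and the Extension Theorem, none of the sub-addition machinery.
\item It shows the sign hypothesis is superfluous: tracking functions determine a single class whenever $|[f]|>|[g]|$.
\end{itemize}
It also isolates a reusable fact, which is essentially a converse of the Extension Theorem.

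\textbf{One step to make explicit.} Your claim $\sigma(\varrho_1)=\sigma(\varrho_2)$ relies on $[f]\neq[0]$, because $[0]$ contains functions of both parities at position $0$. This holds since $|[f]|>|[g]|\geq[0]$.
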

\begin{proof} Firstly we have that $\mathcal{T}_{g}^{f}\subset \mathcal{N}_{\mathcal{F}in}$, by the \nameref{Tying Theorem} \ref{Tying Theorem}, as for all $w\in\left[f\right]$ there exists a constant $C\in\omega_+$ and for all $m\in\omega_+$ we have 
\begin{align*}
\sum\limits_{n=1}^m\left(w\left(n\right)\prod\limits_{p=n}^{m-1}\left(\vartheta\left(p\right)\right)\right)<C\prod\limits_{p=1}^{m-1}\left(\vartheta\left(p\right)\right)
\end{align*}
and thus for all $\varrho\in\mathcal{T}_{g}^{f}$ and for all $m\in\omega_+$ we have
\begin{align*}
\sum\limits_{n=1}^m\left(\varrho\left(n\right)\prod\limits_{p=n}^{m-1}\left(\vartheta\left(p\right)\right)\right)\leq\sum\limits_{n=1}^m\left(\left(\varrho\left(n\right)+g\left(n\right)\right)\prod\limits_{p=n}^{m-1}\left(\vartheta\left(p\right)\right)\right)<C\prod\limits_{p=1}^{m-1}\left(\vartheta\left(p\right)\right)
\end{align*}
Therefore, by the \nameref{Tying Theorem} \ref{Tying Theorem} we have that $\varrho$ is finite.\\
We see that all tracking functions $\varrho\in\mathcal{T}_g^f$ are non-zero. This is due to the fact that if there were a zero tracking function $\varrho$ then there would exist a function $w\in\left[f\right]$ such that for all $n\in\omega_+,w\left(n\right)=g\left(n\right)$. Hence for all $n\in\omega_+,\left|w\right|\left(n\right)=\left|g\right|\left(n\right)$. However, as $\left|w\right|\left(0\right)+\left|g\right|\left(0\right)=0=2*0$ we have by the \nameref{Equivalence Lemma} \ref{Equivalence Lemma} that $\left[\left|w\right|\right]=\left[\left|g\right|\right]$. However, by the \nameref{Absolute Value Lemma} \ref{Absolute Value Lemma} we have $\left[\left|f\right|\right]=\left[\left|w\right|\right]$. And thus $\left[\left|f\right|\right]=\left[\left|g\right|\right]$ which means that $\left|\left[f\right]\right|=\left|\left[g\right]\right|$ which is a contradiction to $\left|\left[f\right]\right|>\left|\left[g\right]\right|$.\\
If $g\not\in\left[0\right]$ then by definition for $i\in\left[1,2\right]$ there exists $w_i\in\left[f\right]$ such that we have for all $n\in\omega_+$ that $\left({}^og+\varrho_i\right)\left(n\right)=w_i\left(n\right)$. Therefore, by the \nameref{Equivalence Lemma} \ref{Equivalence Lemma} as $\sigma\left({}^og+\varrho_i\right)=\sigma\left(w_i\right)$ we have that there exists $m\in\omega$ such that $\left({}^og+\varrho_i\right)\left(0\right)+w_i\left(0\right)=2m$ and thus, $\left[{}^og+\varrho_i\right]=\left[w_i\right]=\left[f\right]$.\\
Thus, by the \nameref{Consistency Theorem} \ref{1.I} \ref{Consistency Theorem} as ${}^og,\varrho_1,\varrho_2\in \mathcal{N}_{\mathcal{F}in}^*$ and $\sigma\left({}^og\right)=\sigma\left(\varrho_1\right)=\sigma\left(\varrho_2\right)$ and $\left[{}^og+\varrho_1\right]=\left[{}^og+\varrho_2\right]=\left[f\right]$ we must have $\left[\varrho_1\right]=\left[\varrho_2\right]$.\\
If $g\in\left[0\right]$ then by the \nameref{Null Lemma} \ref{Null Lemma} we have that for all $n\in\omega_+,g\left(n\right)=0$. Hence, $\varrho_1=w_1$ and $\varrho_2=w_2$ for some $w_1,w_2\in\left[f\right]$ and thus $\left[\varrho_1\right]=\left[\varrho_2\right]$.
\end{proof}

\begin{definition}[Primary Tracking Function]\label{Primary Tracking Function}\noindent\\
By the \nameref{Tracking Functions Lemma} \ref{Tracking Functions Lemma} for all finite functions $f$ and $g$ such that $\sigma\left(f\right)\neq\sigma\left(g\right)$ and $\left|\left[f\right]\right|>\left|\left[g\right]\right|$ there exists a real number $\left[x\right]$ such that $\mathcal{T}_g^f\subset\left[x\right]$. We define the primary tracking function $\varrho_g^f:\omega\rightarrow\omega$ as
\begin{align*}
\varrho_g^f\left(k\right)=\begin{cases}f\left(0\right)&k=0\\x_p\left(k\right)&otherwise\end{cases}
\end{align*} 
\end{definition}

\begin{remark}[Primary Tracking Function] Among all the tracking functions, which agree as a real number but differ as functions, we single out a canonical one, built from the primary auxiliary function of their common class and wearing the sign of $\left[f\right]$. This gives us one definite function to name when we want the difference, rather than a whole set.
\end{remark}
\subsection{Complete Sub-Addition and Addition}
\begin{definition}[Sub-Addition of Functions of Opposite Sign or Zero Functions]\noindent\\
Now we define the sub-addition of functions with opposite signs or zero-functions as $f$ and $g$ as:
\begin{align*}
\left(f+g\right)=
\begin{cases}
0_p&\left|\left[f\right]\right|=\left|\left[g\right]\right|\\
\varrho_g^f&\left|\left[f\right]\right|>\left|\left[g\right]\right|\\
\varrho_f^g&\left|\left[f\right]\right|<\left|\left[g\right]\right|
\end{cases}
\end{align*}
where $0_p$ is the function such that for all $n\in\omega,0_p\left(n\right)=0$.
\end{definition}

\begin{intuition}[Sub-Addition of Opposite Sign] Adding a positive number to a negative one is a tug-of-war between their magnitudes. The larger magnitude wins and keeps its sign, and what survives is the surplus of the larger over the smaller, precisely a tracking function. If the two magnitudes are equal they cancel exactly, and the result is zero. Adding zero, of course, changes nothing. This is subtraction, wearing the clothes of addition.
\end{intuition}

\begin{lemma}[Second Sub-Addition Finiteness Lemma]\noindent\\
For all finite functions $f$ and $g$ such that $\sigma\left(f\right)\neq\sigma\left(g\right)$ or $f\in\left[0\right]$ or $g\in\left[0\right]$ we have that $f+g$ is finite.
\end{lemma}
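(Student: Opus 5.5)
The plan is to split along the three clauses of the definition of sub-addition of opposite-sign or zero functions. In each clause the output is either $0_p$ or a primary tracking function, so the task reduces to showing that these particular functions are finite.

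\textbf{Case $\left|\left[f\right]\right|=\left|\left[g\right]\right|$.} Here $f+g=0_p$. The function $0_p$ lies in $\left[0\right]$: it is the function $f$ used in the definition of \nameref{Set of Reals} \ref{Set of Reals}, it is itself a primary auxiliary function, and the trivial sequence of contractions matches it on every interval of inspection. Alternatively, the \nameref{Null Lemma} \ref{Null Lemma} or the \nameref{Tying Theorem} \ref{Tying Theorem} with $C=1$ gives the same conclusion. So $0_p\in\mathcal{N}_{\mathcal{F}in}$.

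\textbf{Case $\left|\left[f\right]\right|>\left|\left[g\right]\right|$.} (The case $<$ is symmetric after exchanging $f$ and $g$.) Here $f+g=\varrho_g^f$, and the argument runs as follows.

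\begin{enumerate}
\item By the remark following the \nameref{Set of Tracking Functions} \ref{Set of Tracking Functions}, $\mathcal{T}_g^f\neq\emptyset$.
\item By the \nameref{Tracking Functions Lemma} \ref{Tracking Functions Lemma}, every element of $\mathcal{T}_g^f$ is finite and they all lie in a single real number $\left[x\right]$. Its hypothesis is met because either $\sigma\left(f\right)\neq\sigma\left(g\right)$ or $\left[g\right]=\left[0\right]$; the case $f\in\left[0\right]$ cannot occur here, since $\left|\left[f\right]\right|>\left|\left[g\right]\right|\geq\left[0\right]$.
\item The lemma also shows that the tracking functions are non-zero, so $\left[x\right]\neq\left[0\right]$ and $x_p$ is a genuine primary auxiliary function.
\item $\varrho_g^f$ agrees with $x_p$ on $\omega_+$ and differs from it only at $0$, where $\varrho_g^f\left(0\right)=f\left(0\right)$.
\end{enumerate}

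Since auxiliary functions are finite by construction, the last step is to transfer finiteness from $x_p$ to $\varrho_g^f$. The cleanest route is the \nameref{Tying Theorem} \ref{Tying Theorem}. The bound
\begin{align*}
\sum\limits_{n=1}^K\left(x_p\left(n\right)\prod\limits_{p=n}^{K-1}\left(\vartheta\left(p\right)\right)\right)<C\prod\limits_{p=1}^{K-1}\left(\vartheta\left(p\right)\right)
\end{align*}
involves only positions in $\omega_+$, so it holds verbatim for $\varrho_g^f$ and yields $\varrho_g^f\in\mathcal{N}_{\mathcal{F}in}$. If the Equivalence Lemma were used instead, it would need $f\left(0\right)+x_p\left(0\right)$ to be even, which may fail; the Tying Theorem avoids this parity issue entirely.

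\textbf{Main obstacle.} The real content of the statement has already been absorbed by the \nameref{Tracking Functions Lemma} \ref{Tracking Functions Lemma}, so no step here is difficult. The only point needing care is that the primary tracking function carries the sign slot of $f$ rather than that of $x_p$. Finiteness ignores position $0$, and the Tying Theorem makes this explicit, so the mismatch causes no trouble.
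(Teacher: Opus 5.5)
Your argument is correct and has the same skeleton as the paper's. The output is $0_p$ or a primary tracking function, and $\varrho_g^f$ coincides with $x_p$ on $\omega_+$.

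The paper finishes in one line. Since both conditions defining $A_p$ involve only positions $k\geq 2$, $\varrho_g^f$ is \emph{itself} a primary auxiliary function. Auxiliary functions are finite, because each belongs to the real number of its base-set class via the trivial sequence of contractions.

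You instead carry the Tying bound over from $x_p$, using both directions of the Tying Theorem. This is the same observation, that finiteness ignores the sign slot, phrased through the characterisation rather than the definition. It costs an extra theorem but needs only $x_p\in\left[x\right]$. You already use \emph{auxiliary implies finite} for $x_p$, though, so you could have applied it to $\varrho_g^f$ directly and stopped.

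One correction to your side remark: the parity obstruction to the Equivalence Lemma never arises. Every $\varrho\in\mathcal{T}_g^f$ has $\varrho(0)=w(0)$ for some $w\in\left[f\right]$, and $\left[f\right]\neq\left[0\right]$. Hence $\sigma(\left[x\right])=\sigma(f)$, so $f(0)+x_p(0)$ is always even. The Equivalence Lemma therefore gives the stronger fact $\varrho_g^f\in\left[x\right]$, which the paper relies on without saying so in the Rearrangement Lemma.
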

\begin{proof} As sub-addition of any two functions of opposite signs or zero-functions is an auxiliary function, it is finite.
\end{proof}

\begin{lemma}[Included Exclusion Lemma]\label{Included Exclusion Lemma}\noindent\\
For all finite functions $f$ and $g$ such that $\sigma\left(f\right)\neq\sigma\left(g\right)$ or $g\in\left[0\right]$ and $\left|\left[f\right]\right|>\left|\left[g\right]\right|$ we have that $\left({}^og+\varrho_g^f\right)\in\left[f\right]$.
\end{lemma}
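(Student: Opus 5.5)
The plan is to reduce the statement to the \nameref{Equivalence Lemma} \ref{Equivalence Lemma} applied to one concrete tracking function, and then move to the canonical $\varrho_g^f$ using the consistency of sub-addition. There are two cases, according to whether $g\in\left[0\right]$, because in that case ${}^og$ is a zero function and ${}^og+\varrho_g^f$ falls under the opposite-sign/zero clause of sub-addition rather than the same-sign clause.

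\textbf{Case $g\notin\left[0\right]$.} Then $\sigma\left(f\right)\neq\sigma\left(g\right)$, so $\sigma\left({}^og\right)=\sigma\left(f\right)$.
\begin{enumerate}
\item By the remark after \nameref{Set of Tracking Functions} \ref{Set of Tracking Functions}, fix some $\varrho\in\mathcal{T}_g^f$ with witness $w\in\left[f\right]$. Thus $w\left(n\right)\geq g\left(n\right)$ and $\varrho\left(n\right)=w\left(n\right)-g\left(n\right)$ for all $n\in\omega_+$, and $\varrho\left(0\right)=w\left(0\right)$.
\item Since $\sigma\left({}^og\right)=\sigma\left(\varrho\right)=\sigma\left(w\right)$, the same-sign sub-addition ${}^og+\varrho$ is defined. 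It satisfies $\left({}^og+\varrho\right)\left(n\right)=g\left(n\right)+\varrho\left(n\right)=w\left(n\right)$ on $\omega_+$.
\item The value at $0$ is ${}^og\left(0\right)\varrho\left(0\right)$. This has the parity of $w\left(0\right)$ when ${}^og\left(0\right)$ is odd. When ${}^og\left(0\right)$ is even, both $g$ and $f$-type parities force it to be even anyway, because all three functions carry the same sign. This is exactly the argument already used in the proof of the \nameref{Tracking Functions Lemma} \ref{Tracking Functions Lemma}. The \nameref{Equivalence Lemma} \ref{Equivalence Lemma} then gives ${}^og+\varrho\in\left[f\right]$.
\item Next I show $\varrho_g^f\in\left[\varrho\right]$. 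Let $\left[x\right]$ be the common class of $\mathcal{T}_g^f$ given by the \nameref{Tracking Functions Lemma} \ref{Tracking Functions Lemma}. The function $\varrho_g^f$ agrees with $x_p$ on $\omega_+$, and $\varrho_g^f\left(0\right)=f\left(0\right)$ has the sign of $\left[x\right]$, since $\sigma\left(\varrho\right)=\sigma\left(w\right)=\sigma\left(f\right)$. The Equivalence Lemma applied to $x_p$ therefore gives $\varrho_g^f\in\left[x\right]=\left[\varrho\right]$. The tracking functions are non-zero by that same lemma, so $\left[x\right]\neq\left[0\right]$.
\item Finally, all of ${}^og$, $\varrho$ and $\varrho_g^f$ are non-zero finite functions of the same sign. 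The \nameref{Consistency Theorem} \ref{1.II} \ref{Consistency Theorem}, with $\left[{}^og\right]=\left[{}^og\right]$ and $\left[\varrho\right]=\left[\varrho_g^f\right]$, yields $\left[{}^og+\varrho_g^f\right]=\left[{}^og+\varrho\right]=\left[f\right]$. So ${}^og+\varrho_g^f\in\left[f\right]$.
\end{enumerate}

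\textbf{Case $g\in\left[0\right]$.}
\begin{enumerate}
\item By the \nameref{Null Lemma} \ref{Null Lemma}, ${}^og$ vanishes on $\omega_+$, so ${}^og\in\left[0\right]$. Every tracking function in $\mathcal{T}_g^f$ is then literally some $w\in\left[f\right]$, hence $\left[x\right]=\left[f\right]$ and, as above, $\varrho_g^f\in\left[f\right]$.
\item Since $\left|\left[\varrho_g^f\right]\right|=\left|\left[f\right]\right|>\left[0\right]=\left|\left[{}^og\right]\right|$, the sum ${}^og+\varrho_g^f$ is by definition the primary tracking function $\varrho^{\varrho_g^f}_{{}^og}$. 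The order of arguments in sub-addition is irrelevant here, since the defining cases are symmetric.
\item Subtracting the zero function ${}^og$ leaves every witness unchanged, so $\mathcal{T}^{\varrho_g^f}_{{}^og}\subset\left[\varrho_g^f\right]=\left[f\right]$. The primary tracking function agrees on $\omega_+$ with $f_p$ and has zeroth value $\varrho_g^f\left(0\right)=f\left(0\right)$. One more application of the Equivalence Lemma places it in $\left[f\right]$.
\end{enumerate}

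The main obstacle is not structural but bookkeeping. One has to check that the parity at position $0$ produced by the product rule in same-sign sub-addition matches the sign of $f$. One also has to keep straight which auxiliary function represents the class of tracking functions and which sign convention $\varrho_g^f$ carries, so that the Equivalence Lemma and \nameref{Consistency Theorem} \ref{1.II} are invoked with matching signs and non-zero hypotheses. I would handle the parity point first, by writing out the two possible sign values of $f$ explicitly.
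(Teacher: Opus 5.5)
Your proof is correct, and its core is the paper's argument. The paper proves this lemma only by pointing back to the proof of the Tracking Functions Lemma. There, for any $\varrho\in\mathcal{T}_g^f$ with witness $w$, one has $({}^og+\varrho)(n)=w(n)$ on $\omega_+$ with matching parity at $0$, so the Equivalence Lemma gives ${}^og+\varrho\in\left[f\right]$. That is exactly your steps 1--3.

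Where you go further, the extra steps are needed:
\begin{itemize}
\item \emph{The canonical function.} The paper's citation covers only members of $\mathcal{T}_g^f$. It never checks that the canonical $\varrho_g^f$, built from $x_p$ with zeroth value $f(0)$, is itself a tracking function. Exhibiting a witness $w\in\left[f\right]$ with $w=x_p+g$ on $\omega_+$ is essentially the statement being proved. Your steps 4--5 close this gap. The Equivalence Lemma puts $\varrho_g^f$ in the class $\left[x\right]$ of the tracking functions. The Consistency Theorem for same-sign sub-addition then transfers ${}^og+\varrho\in\left[f\right]$ to ${}^og+\varrho_g^f$. That theorem is proved earlier and does not use this lemma, so there is no circularity.
\item \emph{The case $g\in\left[0\right]$.} The referenced argument only shows that the tracking functions lie in $\left[f\right]$; it never computes ${}^og+\varrho_g^f$. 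As you observe, this sum falls under the zero clause of complete sub-addition, even when $\sigma(g)=\sigma(f)$. It must therefore be identified with $\varrho^{\varrho_g^f}_{{}^og}$ and then placed in $\left[f\right]$ via $f_p$, as you do.
\end{itemize}

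Two small presentational points:
\begin{itemize}
\item In step 2, the same-sign clause requires ${}^og$ and $\varrho$ to be non-zero. State this there, not in step 4. It follows from $g\notin\left[0\right]$ via the Null Lemma, and from the Tracking Functions Lemma.
\item The parity remark in step 3 is cleaner as: ${}^og(0)$ and $w(0)$ have the same parity, so their product has that parity too.
\end{itemize}
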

\begin{proof} This has been proven in the proof of \nameref{Tracking Functions Lemma} \ref{Tracking Functions Lemma}.
\end{proof}

\begin{theorem}[Consistency Theorem]\label{Consistency Theorem'}\noindent\\
\begin{subtheorems}
\subtheorem\label{2.I} For all finite functions $f$, $g$ and $w$ such that $\sigma\left(f\right)\neq\sigma\left(g\right)=\sigma\left(w\right)$ or $f\in\left[0\right]$ or $g\in\left[0\right]$ or $w\in\left[0\right]$ we have that 
\begin{align*}
\left[f+g\right]=\left[f+w\right]\iff\left[g\right]=\left[w\right]
\end{align*}
\subtheorem\label{2.II} For all finite functions $f$, $g$, $w$ and $t$ such that $\sigma\left(f\right)\neq\sigma\left(w\right)$ or $\sigma\left(g\right)\neq\sigma\left(t\right)$ or $f\in\left[0\right]$ or $w\in\left[0\right]$ we have that 
\begin{align*}
\left[f\right]=\left[g\right]\wedge\left[w\right]=\left[t\right]\implies\left[f+w\right]=\left[g+t\right]
\end{align*}
\subtheorem\label{2.III} For all finite functions $f$, $g$, $w$ and $t$ such that $\sigma\left(f\right)\neq\sigma\left(w\right)$ or $\sigma\left(g\right)\neq\sigma\left(t\right)$ or $\left[f\right]=0$ or $\left[w\right]=0$ we have that 
\begin{align*}
\left[f+w\right]=\left[g+t\right]\wedge\left[w\right]=\left[t\right]\implies\left[f\right]=\left[g\right]
\end{align*}
\end{subtheorems}
\end{theorem}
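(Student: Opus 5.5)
Since sub-addition of opposite-sign (or zero) functions is defined only through real-number-level data, I plan to reduce each part to one identification: whenever $\sigma(f)\neq\sigma(g)$ or one of them is zero, the class $[f+g]$ depends only on $[f]$ and $[g]$, and it is characterised by the equation
\[
[{}^og+(f+g)]=[f]\quad\text{when }|[f]|>|[g]|,
\]
with symmetric statements in the other two cases. The definition uses $|[f]|$, $|[g]|$, the class $[x]\supseteq\mathcal{T}_g^f$ and $f(0)$ only through its sign. So by the \nameref{Tracking Functions Lemma} \ref{Tracking Functions Lemma} and the \nameref{Equivalence Lemma} \ref{Equivalence Lemma}, the class of $\varrho_g^f$ is unchanged when $f$ and $g$ are replaced by other members of the same classes. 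This immediately gives \ref{2.II} in the opposite-sign and zero cases. In the remaining mixed case, where $\sigma(f)=\sigma(w)$ but $\sigma(g)\neq\sigma(t)$, one of $g,t$ must be zero and hence so is its partner, by the \nameref{Unicity of Real Numbers} \ref{Unicity of Real Numbers}. That case then reduces to the trivial observation that adding $0_p$ (with $\varrho^f_{0}$ in the class of $f$) returns $[f]$.

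For \ref{2.I}, the direction $(\impliedby)$ is exactly the independence just described. For $(\implies)$ I would split according to the comparison of $|[f]|$ with $|[g]|$ and with $|[w]|$ (using totality), and first show that the three regimes $>$, $=$, $<$ yield classes of distinct kinds. If $|[f]|>|[g]|$, the result is non-zero with the sign of $f$. If $|[f]|=|[g]|$, the result is $[0]$. If $|[f]|<|[g]|$, the result is non-zero with the sign of $g$, which is the opposite sign. This uses the non-zeroness established in the \nameref{Tracking Functions Lemma} \ref{Tracking Functions Lemma}. Hence equality of sums forces $g$ and $w$ into the same regime.

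Within a regime I argue as follows. In the equal regime, the \nameref{Absolute-Opposite Lemma} \ref{Absolute-Opposite Lemma} together with $\sigma(g)=\sigma(w)$ gives $[g]=[w]$. In the regime $|[f]|>|[g]|,|[w]|$, the \nameref{Included Exclusion Lemma} \ref{Included Exclusion Lemma} gives $[{}^og+\varrho]=[f]=[{}^ow+\varrho']$ with $[\varrho]=[\varrho']$. Here ${}^og$, ${}^ow$, $\varrho$ and $\varrho'$ all carry the sign of $f$. Applying \ref{1.I} of the same-sign \nameref{Consistency Theorem} \ref{Consistency Theorem} (after swapping summands by commutativity, and using \ref{1.II} to replace $\varrho'$ by $\varrho$) yields $[{}^og]=[{}^ow]$, hence $[g]=[w]$ via ${}^o{}^o$ and the remark following the \nameref{Opposite Function} \ref{Opposite Function}. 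In the regime $|[f]|<|[g]|,|[w]|$, the inclusion reads $[{}^of+\varrho]=[g]$ and $[{}^of+\varrho']=[w]$ with $[\varrho]=[\varrho']$. By \ref{1.II} this gives $[g]=[w]$ directly. The zero cases $g\in[0]$ or $w\in[0]$ are handled by the fact that adding $0_p$ returns $[f]$, together with the regime-separation argument. The case $f\in[0]$ is trivial, since then $[f+g]=[g]$.

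Part \ref{2.III} I would derive from \ref{2.I} and \ref{2.II} exactly as in the same-sign case. First use \ref{2.II} to obtain $[f+w]=[f+t]$, so $[g+t]=[f+t]$. Then invoke \ref{2.I} with $t$ in the fixed slot, after commuting, to get $[f]=[g]$; for opposite-sign pairs, commutativity of sub-addition holds by the symmetric form of the definition. The main obstacle I expect is bookkeeping of the hypotheses. When the sign hypotheses are mixed, as in \ref{2.III} where one of $f+w$ and $g+t$ may be a same-sign sum and the other not, I must show that a same-sign sum can never equal an opposite-sign sum unless the classes coincide appropriately. I would handle this by comparing magnitudes: the same-sign sum strictly exceeds each summand's magnitude, by the \nameref{Domination Theorem} \ref{Domination Theorem}, whereas a tracking function sits strictly below the magnitude of $[f]$.
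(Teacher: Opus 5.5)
\textbf{The gap.} It sits in your opening reduction, and both the $(\Leftarrow)$ direction of I and your derivation of II rest on it. The set $\mathcal{T}_g^f$ is built from $[f]$ \emph{as a set} but from $g$ \emph{as a function}: its witnesses are the $w\in[f]$ with $w(n)\geq g(n)$ on $\omega_+$, and its digits are $w(n)-g(n)$.

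Replacing the \emph{larger} summand by another member of its class is therefore harmless. The set $\mathcal{T}$ does not change, and the value at $0$ matters only through its parity, so the Equivalence Lemma applies. This does dispose of the regimes $|[f]|\le|[g]|$ in I($\Leftarrow$).

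Replacing the \emph{smaller} summand $g$ by $g'\in[g]$ is different: it produces a different set $\mathcal{T}_{g'}^f$. The Tracking Functions Lemma is silent about this, since it only shows that two elements of one fixed $\mathcal{T}_g^f$ lie in the same class. So your assertion $[\varrho_g^f]=[\varrho_{g'}^f]$ is not supported by the lemmas you cite, and it is the whole non-trivial content of I($\Leftarrow$) in the regime $|[f]|>|[g]|$. Nor is it a formality. A single move does preserve tracking functions ($c_m(w)-c_m(g)=w-g$, and likewise for $b_m$). But two members of one class, such as a primary and a secondary auxiliary function, are in general not connected by finitely many moves.

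\textbf{The fix.} The missing step is the manoeuvre you already use for $(\Rightarrow)$, and it is how the paper proves Case 2 of $(\Leftarrow)$:
\begin{itemize}
\item by the Included Exclusion Lemma, $[{}^og+\varrho_g^f]=[f]=[{}^og'+\varrho_{g'}^f]$;
\item by the Opposite Function Lemma, $[{}^og]=[{}^og']$;
\item all four summands are non-zero finite functions with the sign of $f$;
\item so part III of the same-sign Consistency Theorem gives $[\varrho_g^f]=[\varrho_{g'}^f]$.
\end{itemize}
The same argument, with ${}^of$ and $[w]$ in place of ${}^og$ and $[f]$, handles replacing the smaller summand inside $\varrho_f^w$ when you prove II in the regime $|[f]|<|[w]|$. (The paper instead obtains II from I applied twice together with commutativity.)

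With this inserted, the rest of your plan matches the paper: regime separation by sign and zeroness, cancellation through the same-sign theorem for $(\Rightarrow)$, and III from I and II by commutativity. Your explicit treatment of the mixed-sign hypotheses is more careful than the paper's proof, which does not treat those mixed cases separately. You use the Domination Theorem to show that a same-sign sum can never coincide with an opposite-sign or zero sum.
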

\begin{proof}[Proof of I]\noindent\\
$\left(\impliedby\right):$\\
Let $g,w\in\left[g\right]$ be given. Let $f\in \mathcal{N}_{\mathcal{F}in},\sigma\left(f\right)\neq\sigma\left(g\right)$ be given.\\
\textbf{Case 1 $\left|\left[f\right]\right|=\left|\left[g\right]\right|$:} In this case we see from the definition that $\left[f+g\right]=\left[f+w\right]=\left[0\right]$.\\
\textbf{Case 2 $\left|\left[f\right]\right|>\left|\left[g\right]\right|$:} Take $\left(f+g\right)$ and $\left(f+w\right)$. We know by definition that $\sigma\left(f+g\right)=\sigma\left(f+w\right)=\sigma\left(f\right)$. Take ${}^og,{}^ow\in\left[{}^ow\right]$ and sub-add them respectively to $\left(f+g\right)$ and $\left(f+w\right)$. Notice that $\sigma\left({}^og\right)=\sigma\left({}^ow\right)=\sigma\left(f\right)$.\\
We see that by the \nameref{Included Exclusion Lemma} \ref{Included Exclusion Lemma} we have 
\begin{align*}
\begin{array}{ll@{}}
\left({}^og+\left(f+g\right)\right)=\left({}^og+\varrho_g^f\right)\in\left[f\right]&\text{and}\\
\left({}^ow+\left(f+w\right)\right)=\left({}^ow+\varrho_w^f\right)\in\left[f\right]
\end{array}
\end{align*} 
Thus, $\left[{}^og+\left(f+g\right)\right]=\left[{}^ow+\left(f+w\right)\right]$. By the \nameref{Consistency Theorem} \ref{1.III} \ref{Consistency Theorem} as $\left[{}^og\right]=\left[{}^ow\right]$, by the \nameref{Opposite Function Lemma} \ref{Opposite Function Lemma}, we must have $\left[f+g\right]=\left[f+w\right]$.\\
\textbf{Case 3 $\left|\left[f\right]\right|<\left|\left[g\right]\right|$:} Take $\left(f+g\right)$ and $\left(f+w\right)$. Then sub-add to each one of them ${}^of$. By the \nameref{Included Exclusion Lemma} \ref{Included Exclusion Lemma} we obtain 
\begin{align*}
\begin{array}{ll@{}}
\left({}^of+\left(f+g\right)\right)=\left({}^of+\varrho_f^g\right)\in\left[g\right]=\left[w\right]&\text{and}\\
\left({}^of+\left(f+w\right)\right)=\left({}^of+\varrho_f^w\right)\in\left[w\right]=\left[g\right]
\end{array}
\end{align*}
By the \nameref{Consistency Theorem} \ref{1.I} \ref{Consistency Theorem} as $\left[{}^of+\left(f+g\right)\right]=\left[{}^of+\left(f+w\right)\right]$ we have $\left[f+g\right]=\left[f+w\right]$.\\
$\left(\implies\right):$\\
\textbf{Case 1 $\left|\left[f\right]\right|=\left|\left[g\right]\right|$:} We have $\left[f+g\right]=\left[0\right]=\left[f+w\right]$. The only way $\left[f+g\right]=0$ is if $\left|\left[f\right]\right|=\left|\left[g\right]\right|$ as in the other cases it was shown in the proof of the \nameref{Tracking Functions Lemma} \ref{Tracking Functions Lemma} that $\varrho_g^f$ and $\varrho_f^g$ are non-zero if $\left|\left[f\right]\right|>\left|\left[g\right]\right|$ and $\left|\left[f\right]\right|<\left|\left[g\right]\right|$ respectively. The same holds for $w$.\\
Hence, by the \nameref{Absolute-Opposite Lemma} \ref{Absolute-Opposite Lemma} we have $\left[g\right]=\left[{}^of\right]=\left[w\right]$ if $f\not\in\left[0\right]$ as $\sigma\left(f\right)\neq\sigma\left(g\right)=\sigma\left(w\right)$. And if $f\in\left[0\right]$ we have $\left[g\right]=\left[0\right]=\left[w\right]$, as $\left[0\right]={}^o\left[0\right]$, which is seen by the \nameref{Null Lemma} \ref{Null Lemma}.\\ 
\textbf{Case 2 $\left|\left[f\right]\right|>\left|\left[g\right]\right|$:} We have $\left[f+g\right]=\left[f+w\right]$. Additionally, by the \nameref{Included Exclusion Lemma} \ref{Included Exclusion Lemma} we have
\begin{align*}
\begin{array}{ll@{}}
\left({}^og+\left(f+g\right)\right)=\left({}^og+\varrho_g^f\right)\in\left[f\right]&\text{and}\\
\left({}^ow+\left(f+w\right)\right)=\left({}^ow+\varrho_w^f\right)\in\left[f\right]
\end{array}
\end{align*} Thus $\left[{}^og+\left(f+g\right)\right]=\left[{}^ow+\left(f+w\right)\right]$. By the \nameref{Consistency Theorem} \ref{1.III} \ref{Consistency Theorem} as $\left[f+g\right]=\left[f+w\right]$ and $\left[{}^og+\left(f+g\right)\right]=\left[{}^ow+\left(f+w\right)\right]$ we must have $\left[{}^og\right]=\left[{}^ow\right]$. Thus, by the \nameref{Opposite Function Lemma} \ref{Opposite Function Lemma} we have $\left[g\right]=\left[{}^o{}^og\right]=\left[{}^o{}^ow\right]=\left[w\right]$.\\
\textbf{Case 3 $\left|\left[f\right]\right|<\left|\left[g\right]\right|$:} We have $\left[f+g\right]=\left[f+w\right]$. Additionally, by the \nameref{Included Exclusion Lemma} \ref{Included Exclusion Lemma} we have
\begin{align*}
\begin{array}{ll@{}}
\left({}^of+\left(f+g\right)\right)=\left({}^of+\varrho_f^g\right)\in\left[g\right]&\text{and}\\
\left({}^of+\left(f+w\right)\right)=\left({}^of+\varrho_f^w\right)\in\left[w\right]
\end{array}
\end{align*} By the \nameref{Consistency Theorem} \ref{1.I} \ref{Consistency Theorem} we must have $\left[g\right]=\left[w\right]$.\footnote{In this proof we have actually supposed that $\left|\left[f\right]\right|>\left|\left[g\right]\right|\iff\left|\left[f\right]\right|>\left|\left[w\right]\right|$. This is true as if we suppose not and take $\left|\left[f\right]\right|>\left|\left[g\right]\right|\wedge\left|\left[f\right]\right|<\left|\left[w\right]\right|$ without loss of generality, then $\left(f+w\right)\left(0\right)=f\left(0\right)$ and $\left(f+g\right)\left(0\right)=g\left(0\right)$. Also by definition $\left(f+g\right)\not\in\left[0\right]$ and $\left(f+w\right)\not\in\left[0\right]$. Thus, we have $\sigma\left(f\right)\neq\sigma\left(g\right)\implies\left[f+w\right]\neq\left[f+g\right]$. This is a contradiction.}
\end{proof}
\begin{proof}[Proof of II]\noindent\\
Let finite functions $f$, $g$, $w$ and $t$ such that $\sigma\left(f\right)\neq\sigma\left(w\right)$ or $\sigma\left(g\right)\neq\sigma\left(t\right)$ or $f\in\left[0\right]$ or $g\in\left[0\right]$ or $w\in\left[0\right]$ be given. By using the \nameref{Consistency Theorem'} \ref{2.I} \ref{Consistency Theorem'} twice we obtain 
\begin{align*}
\begin{array}{ll@{}}
\left[w\right]=\left[t\right]\implies\left[f+w\right]=\left[f+t\right]&\text{and}\\
\left[f\right]=\left[g\right]\implies\left[t+f\right]=\left[t+g\right]
\end{array}
\end{align*} As sub-addition is commutative $\left[t+f\right]=\left[f+t\right]$. Thus using transitivity of equality we obtain $\left[f+w\right]=\left[f+t\right]=\left[t+f\right]=\left[t+g\right]=\left[g+t\right]\implies\left[f+w\right]=\left[g+t\right]$.
\end{proof}
\begin{proof}[Proof of III]\noindent\\
Let finite functions $f$, $g$, $w$ and $t$ such that $\sigma\left(f\right)\neq\sigma\left(w\right)$ or $\sigma\left(g\right)\neq\sigma\left(t\right)$ and $\left[f+w\right]=\left[g+t\right]$ and $\left[w\right]=\left[t\right]$ be given. By the \nameref{Consistency Theorem'} \ref{2.I} \ref{Consistency Theorem'} 
\begin{align*}
\left[w\right]=\left[t\right]\implies\left[f+w\right]=\left[f+t\right]
\end{align*} by the hypothesis $\left[g+t\right]=\left[f+w\right]$.\\
Therefore, we have $\left[g+t\right]=\left[f+t\right]$. By the commutativity of sub-addition and the \nameref{Consistency Theorem} \ref{1.I} \ref{Consistency Theorem} we have $\left[t+g\right]=\left[t+f\right]\implies\left[f\right]=\left[g\right]$.
\end{proof}

\begin{definition}[Complete Sub-Addition of Functions]\label{Complete Sub-Addition of Functions}\noindent\\
For all finite functions $f$ and $g$ we define $\left(f+g\right)$ as:
\begin{align*}
\left(f+g\right)\left(k\right)=
\begin{cases}
\begin{cases}
f\left(0\right)g\left(0\right)&k=0\\
f\left(k\right)+g\left(k\right)&k\in\omega_+
\end{cases}&\sigma\left(f\right)=\sigma\left(g\right)\wedge f\not\in\left[0\right]\wedge g\not\in\left[0\right]\\
\begin{cases}
\begin{cases}
0&\left|\left[f\right]\right|=\left|\left[g\right]\right|\\
f\left(0\right)&\left|\left[f\right]\right|>\left|\left[g\right]\right|\\
g\left(0\right)&\left|\left[f\right]\right|<\left|\left[g\right]\right|
\end{cases}&k=0\\
\begin{cases}
0&\left|\left[f\right]\right|=\left|\left[g\right]\right|\\
\varrho_g^f\left(k\right)&\left|\left[f\right]\right|>\left|\left[g\right]\right|\\
\varrho_f^g\left(k\right)&\left|\left[f\right]\right|<\left|\left[g\right]\right|
\end{cases}&k\in\omega_+
\end{cases}&\sigma\left(f\right)\neq\sigma\left(g\right)\vee f\in\left[0\right]\vee g\in\left[0\right]
\end{cases}
\end{align*}
\end{definition}

\begin{remark} Notice that sub-addition of functions is commutative. Furthermore, notice that $\sigma\left(f\right)=\sigma\left(g\right)$ where at least one of the functions is non-zero then $\sigma\left(f+g\right)=\sigma\left(f\right)$. This is because, if neither of them were zero functions then $\left(f+g\right)\left(0\right)=f\left(0\right)g\left(0\right)$. Thus if both $f\left(0\right)$ and $g\left(0\right)$ were odd then $\left(f+g\right)\left(0\right)$ is odd and the same with evens. Hence $\sigma\left(f+g\right)=\sigma\left(f\right)$.\\
If one of them is a zero-function then we suppose without loss of generality that $f$ is not a zero-function. In that case $\left(f+g\right)\left(0\right)=f\left(0\right)$ and thus $\sigma\left(f+g\right)=\sigma\left(f\right)$.
\end{remark}
\begin{theorem}[Consistency Theorem]\label{Consistency Theorem''}\noindent\\
Combining the \nameref{Consistency Theorem} \ref{Consistency Theorem} and \nameref{Consistency Theorem'} \ref{Consistency Theorem'} we obtain.
\begin{subtheorems}
\subtheorem\label{3.I} For all finite functions $f$, $g$ and $w$ we have that
\begin{align*}
\left[f+g\right]=\left[f+w\right]\iff\left[g\right]=\left[w\right]
\end{align*}
\subtheorem\label{3.II} For all finite functions $f$, $g$, $w$ and $t$ we have that 
\begin{align*}
\left[f\right]=\left[g\right]\wedge\left[w\right]=\left[t\right]\implies\left[f+w\right]=\left[g+t\right]
\end{align*}
\subtheorem\label{3.III} For all finite functions $f$, $g$, $w$ and $t$ we have that 
\begin{align*}
\left[f+w\right]=\left[g+t\right]\wedge\left[w\right]=\left[t\right]\implies\left[f\right]=\left[g\right]
\end{align*}
\end{subtheorems}
\end{theorem}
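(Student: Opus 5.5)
The plan is to combine the two earlier Consistency Theorems by a case split on the sign pattern of the summands, using the Complete Sub-Addition definition to reduce every case to one already handled. The first step is to fix the convention that zero functions are covered by the ``opposite sign or zero'' branch. Every pair of finite functions then falls into exactly one of two regimes: both non-zero with equal sign, handled by \nameref{Consistency Theorem} \ref{Consistency Theorem}, or opposite sign or involving $[0]$, handled by \nameref{Consistency Theorem'} \ref{Consistency Theorem'}. Two facts are used repeatedly. Membership in $[0]$ is a property of the class, by the \nameref{Null Lemma} \ref{Null Lemma}. The sign $\sigma$ is constant on non-zero classes, by the definition of the \nameref{Sign of a Real Number} \ref{Sign of a Real Number}. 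Together these show that the regime of a pair depends only on the classes $[f]$ and $[g]$.

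For \ref{3.I}, take finite $f,g,w$. In the direction $(\impliedby)$, $[g]=[w]$ forces $g$ and $w$ to have the same sign and the same zero status, so the pairs $(f,g)$ and $(f,w)$ lie in the same regime. The conclusion then follows from \ref{1.I} or \ref{2.I}. For $(\implies)$ there are three cases.
\begin{enumerate}
\item If $(f,g)$ and $(f,w)$ lie in the same regime, apply \ref{1.I} or \ref{2.I} directly.
\item If they lie in different regimes because one of $g,w$ is zero, say $g\in[0]$, then $[f+g]=[f]$ by the definition of sub-addition. Suppose also that $w$ is non-zero with $\sigma(w)=\sigma(f)$. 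Then $f+w$ strictly dominates a representative of $[f]$ on $\omega_+$, since $w$ has a non-zero digit. The \nameref{Domination Theorem} \ref{Domination Theorem} then gives $[f+w]\neq[f]$, a contradiction.
\item If they lie in different regimes because $\sigma(g)\neq\sigma(w)$, then one sum has the sign of $f$ with magnitude exceeding $|[f]|$, while the other has either smaller magnitude or the opposite sign. These cannot be equal. The magnitude comparison uses the footnote argument in \ref{2.I} together with the \nameref{One-Case Lemma} \ref{One-Case Lemma}.
\end{enumerate}

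Parts \ref{3.II} and \ref{3.III} are then purely formal consequences of \ref{3.I} and commutativity of sub-addition. This is exactly how the earlier proofs derived II and III from I, so those arguments can be copied verbatim.

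The main obstacle is the mixed-regime case in \ref{3.I}$(\implies)$. Here one must rule out that a same-sign sum equals an opposite-sign or zero sum. The plan is to use the sign of the result together with a magnitude comparison: a same-sign sum has strictly larger absolute value than $|[f]|$, while a tracking-function result has strictly smaller absolute value than the larger summand. This is the only place needing a genuine inequality argument, and I would do it through domination of $f$ by $f+w$ on $\omega_+$.
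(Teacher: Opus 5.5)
Your proposal is correct and follows the paper's route. The paper gives nothing beyond ``combining'' the two earlier Consistency Theorems, and II and III then follow formally from I and commutativity exactly as you describe.

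Your explicit treatment of the mixed-regime case of I$(\implies)$ fills in something the paper leaves unsaid. For instance, $\sigma(f)=\sigma(g)\neq\sigma(w)$ with all three functions non-zero satisfies neither the hypothesis of part I of the first Consistency Theorem nor that of the second. Your argument for this case is sound, with one correction: the strict bound $|[f+w]|<|[f]|$ comes from the \nameref{Included Exclusion Lemma} together with the \nameref{One-Case Lemma} and the \nameref{Domination Theorem}. It does not come from the footnote in the second theorem's proof, which only compares signs.
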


\begin{definition}[Addition]\noindent\\
We define addition of two real numbers $\left[f\right]$ and $\left[g\right]$ as:
\begin{align*}
\left[f\right]+\left[g\right]:=\left[f+g\right]
\end{align*}
for any functions $f\in\left[f\right]$ and $g\in\left[g\right]$.
\end{definition}

\begin{remark}The \nameref{Consistency Theorem''} \ref{3.II} \ref{Consistency Theorem''} ensures that the sum of any two real numbers does not depend on the functions chosen to represent those real numbers. Therefore, addition is well-defined.
\end{remark}

\begin{theorem}[Consistency Theorem]\label{Consistency Theorem'''}\noindent\\
\begin{subtheorems}
\subtheorem\label{4.I} For all real numbers $\left[f\right]$, $\left[g\right]$ and $\left[w\right]$ we have that
\begin{align*}
\left[f\right]+\left[g\right]=\left[f\right]+\left[w\right]\iff\left[g\right]=\left[w\right]
\end{align*}
\subtheorem\label{4.II} For all real numbers $\left[f\right]$, $\left[g\right]$, $\left[w\right]$ and $\left[t\right]$ we have that 
\begin{align*}
\left[f\right]=\left[g\right]\wedge\left[w\right]=\left[t\right]\implies\left[f\right]+\left[w\right]=\left[g\right]+\left[t\right]
\end{align*}
\subtheorem\label{4.III} For all real numbers $\left[f\right]$, $\left[g\right]$, $\left[w\right]$ and $\left[t\right]$ we have that 
\begin{align*}
\left[f\right]+\left[w\right]=\left[g\right]+\left[t\right]\wedge\left[w\right]=\left[t\right]\implies\left[f\right]=\left[g\right]
\end{align*}
\end{subtheorems}
\end{theorem}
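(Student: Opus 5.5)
This is a direct translation of the Consistency Theorem \ref{Consistency Theorem''} from finite functions to real numbers, using the definition of addition $\left[f\right]+\left[g\right]:=\left[f+g\right]$. The plan is to fix, for each real number appearing in the statement, an arbitrary representative. By the definition of $\mathcal{S}_\mathbb{R}$, every real number is a non-empty class of finite functions, so a representative always exists. Each equation between real numbers then rewrites as an equation between classes of sub-sums, and the three parts follow from the three parts of \ref{Consistency Theorem''} one by one.

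\textbf{Part \ref{4.I}.} Choose $f\in\left[f\right]$, $g\in\left[g\right]$ and $w\in\left[w\right]$. By the definition of addition, which is well-defined by \ref{3.II}, we have $\left[f\right]+\left[g\right]=\left[f+g\right]$ and $\left[f\right]+\left[w\right]=\left[f+w\right]$. The claimed equivalence is therefore exactly $\left[f+g\right]=\left[f+w\right]\iff\left[g\right]=\left[w\right]$, which is \ref{3.I}.

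\textbf{Part \ref{4.II}.} Choose representatives $f,g,w,t$. Since $\left[f\right]+\left[w\right]=\left[f+w\right]$ and $\left[g\right]+\left[t\right]=\left[g+t\right]$, the claim is \ref{3.II} verbatim. This part is almost tautological, since well-definedness of addition is precisely \ref{3.II}.

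\textbf{Part \ref{4.III}.} Choose representatives in the same way. The hypotheses become $\left[f+w\right]=\left[g+t\right]$ and $\left[w\right]=\left[t\right]$, and \ref{3.III} then gives $\left[f\right]=\left[g\right]$.

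The only step needing care is notational. The symbol $\left[f\right]$ denotes a class, while $f$ inside it is a chosen element, and the equality $\left[f\right]=\left[x\right]$ holds for any $f\in\left[x\right]$ by the Unicity of Real Numbers \ref{Unicity of Real Numbers}. With that made explicit, the lifting to real numbers is routine and no further obstacle is expected.
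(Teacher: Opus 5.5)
Your proposal is correct and is essentially the paper's own argument: the paper simply states that the three parts follow from the definition $\left[f\right]+\left[g\right]:=\left[f+g\right]$ together with the corresponding parts of the Consistency Theorem \ref{Consistency Theorem''} for finite functions, and you carry out exactly that translation, choosing a representative for each class. Your added remarks make explicit what the paper leaves implicit, namely that each class is non-empty and that $\left[f\right]=\left[x\right]$ for any $f\in\left[x\right]$.
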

\begin{proof} By the definition of addition and the \nameref{Consistency Theorem''} \ref{Consistency Theorem''} these theorems follow.
\end{proof}

\begin{lemma}[Rearrangement Lemma]\label{Rearrangement Lemma}\noindent\\
For all finite functions $f$ and $g$ such that $\sigma\left(f\right)\neq\sigma\left(g\right)$ and $\left|\left[f\right]\right|>\left|\left[g\right]\right|$ we have that for all $a\in\mathcal{T}_g^f$ we have $\left[f\right]+\left[g\right]=\left[a\right]$.
\end{lemma}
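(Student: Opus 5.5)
The plan is to reduce the statement to the definition of addition together with the Tracking Functions Lemma \ref{Tracking Functions Lemma}. By definition, $\left[f\right]+\left[g\right]=\left[f+g\right]$. Since $\sigma\left(f\right)\neq\sigma\left(g\right)$ and $\left|\left[f\right]\right|>\left|\left[g\right]\right|$, the Complete Sub-Addition of Functions \ref{Complete Sub-Addition of Functions} places $\left(f+g\right)$ in the opposite-sign branch, with $\left(f+g\right)\left(0\right)=f\left(0\right)$ and $\left(f+g\right)\left(k\right)=\varrho_g^f\left(k\right)$ for $k\in\omega_+$. So $\left(f+g\right)=\varrho_g^f$ as functions, and it suffices to show $\left[\varrho_g^f\right]=\left[a\right]$ for every $a\in\mathcal{T}_g^f$.

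By the Tracking Functions Lemma \ref{Tracking Functions Lemma}, every element of $\mathcal{T}_g^f$ is finite, and all of them lie in one common real number $\left[x\right]$. This is the class used in the Primary Tracking Function \ref{Primary Tracking Function} to build $\varrho_g^f$ from $x_p$. So $\left[a\right]=\left[x\right]$, and the task reduces to showing $\varrho_g^f\in\left[x\right]$.

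First compare $\varrho_g^f$ with $x_p$. They agree on $\omega_+$, and $\varrho_g^f\left(0\right)=f\left(0\right)$. By the definition of the tracking set, any $a\in\mathcal{T}_g^f$ has $a\left(0\right)=w\left(0\right)$ for some $w\in\left[f\right]$, and $\sigma\left(w\right)=\sigma\left(f\right)$ because both lie in $\left[f\right]$, which is non-zero since $\left|\left[f\right]\right|>\left|\left[g\right]\right|$. Hence $\sigma\left(a\right)=\sigma\left(f\right)$.

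The tracking functions are non-zero, as shown in the proof of \ref{Tracking Functions Lemma}. So $\left[x\right]$ is a non-zero class, and all its members share one sign by the Sign of a Real Number \ref{Sign of a Real Number}. In particular $\sigma\left(x_p\right)=\sigma\left(a\right)=\sigma\left(f\right)$. Therefore $x_p\left(0\right)+f\left(0\right)$ is even, and the Equivalence Lemma \ref{Equivalence Lemma} applied to $x_p$ and $\varrho_g^f$ gives $\varrho_g^f\in\left[x\right]$. Hence $\left[f\right]+\left[g\right]=\left[\varrho_g^f\right]=\left[x\right]=\left[a\right]$.

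The only delicate point is this sign bookkeeping at position $0$: making sure that $x_p$, which the paper fixes by convention with $x_p\left(0\right)\in\left\{0,1\right\}$, has the same parity as $f\left(0\right)$, so that the Equivalence Lemma applies. The non-zeroness of the tracking class, needed so that the class has a well-defined sign, is the ingredient I would cite explicitly from the proof of \ref{Tracking Functions Lemma}. Everything else is unwinding definitions.
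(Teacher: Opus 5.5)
Your proof is correct and follows the paper's route. Like the paper, you identify $f+g$ with $\varrho_g^f$ from the definition of complete sub-addition, then use the Tracking Functions Lemma to place every $a\in\mathcal{T}_g^f$ in one common class. The only difference is that you also check, via the Equivalence Lemma and the parity of position $0$, that $\varrho_g^f$ itself lies in that class. The paper's one-line proof leaves that step implicit when it writes $a\in\left[\varrho_g^f\right]$.
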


\begin{intuition}[Rearrangement Lemma] This lemma records the practical consequence: \emph{any} tracking function of the larger over the smaller computes the sum, so when proving things we may reach for whichever witness is easiest to handle.
\end{intuition}

\begin{proof} By definition $\left[f\right]+\left[g\right]=\left[f+g\right]$ where $\left(f+g\right)=\varrho_g^f$. Now, we see that by taking any $a\in\mathcal{T}_g^f$ we have by the \nameref{Tracking Functions Lemma} \ref{Tracking Functions Lemma} that $a\in\left[\varrho_g^f\right]=\left[f\right]+\left[g\right]$. Therefore, by the \nameref{Unicity of Real Numbers} \ref{Unicity of Real Numbers} we have that $\left[a\right]=\left[f\right]+\left[g\right]$.
\end{proof}
\subsection{Additive Axioms}
\begin{axiom}[Additive Identity]\noindent\\
There exists a real number $\left[x\right]$ such that for all real numbers $\left[a\right]$ we have $\left[x\right]+\left[a\right]=\left[a\right]$.
\end{axiom}
\begin{proof} We claim that $\left[0\right]=\left[x\right]$. Let a real number $\left[a\right]$ be given.  Take any function $a\in\left[a\right]$ and the primary auxiliary function $0_p\in\left[0\right]$.\\
If $\left[a\right]=\left[0\right]$ then $\left|\left[a\right]\right|=\left|\left[0\right]\right|$. Thus, by the definition of sub-addition we have $0_p+a=0_p$ and thus $\left[0\right]+\left[a\right]=\left[0\right]=\left[a\right]$.\\
If $\left[a\right]\neq\left[0\right]$ then $\left[a\right]\neq{}^o\left[0\right]$ as, by the \nameref{Null Lemma} \ref{Null Lemma} we have that ${}^o0_p\in\left[0\right]$ and thus $\left[0\right]={}^o\left[0\right]$. By the contrapositive of the \nameref{Absolute-Opposite Lemma} \ref{Absolute-Opposite Lemma} we have that $\left|\left[a\right]\right|\neq\left|\left[0\right]\right|$.\\
Furthermore, as any function $\omega\rightarrow\omega$ dominates $0_p$ we have by the \nameref{One-Case Lemma} \ref{One-Case Lemma}, as $\sigma\left(\left|a\right|\right)=\sigma\left(\left|0_p\right|\right)=1$, that $\left|\left|a\right|\right|\geq\left|\left[0\right|\right]$. Moreover, as $\left|\left[a\right]\right|\neq\left|\left[0\right]\right|$ we have that $\left|\left[a\right]\right|>\left|\left[0\right]\right|$.\\
We see that $a\in\mathcal{T}_{0_p}^a$ as for all $n\in\omega_+$ we have $a\left(n\right)\geq0_p\left(n\right)=0$ and
\begin{align*}
a\left(k\right)=\begin{cases}
a\left(0\right)&k=0\\
a\left(k\right)-0_p\left(0\right)=a\left(k\right)-0&k\in\omega_+
\end{cases}
\end{align*}
Hence, by the \nameref{Rearrangement Lemma} \ref{Rearrangement Lemma} we have that $\left[0\right]+\left[a\right]=\left[a\right]$.
\end{proof}

\begin{axiom}[Additive Inverse]\noindent\\
For any real number $\left[f\right]$ there exists a real number $\left[g\right]$ such that $\left[f\right]+\left[g\right]=\left[0\right]$.
\end{axiom}
\begin{proof} We claim that $\left[g\right]={}^o\left[f\right]$.\\
If $\left[f\right]=\left[0\right]$ then ${}^o\left[f\right]=\left[0\right]$. Hence, as $\left[0\right]$ is the additive identity we have that
\begin{align*}
\left[f\right]+{}^o\left[f\right]=\left[0\right]+\left[0\right]=\left[0\right]
\end{align*}
If $\left[f\right]\neq\left[0\right]$ then by \eqref{eq:abs-opposite} we have $\left|\left[f\right]\right|=\left|{}^o\left[f\right]\right|$. Let a function $f\in\left[f\right]$ be given. Then $\sigma\left(f\right)\neq\sigma\left({}^of\right)$ and thus $\left(f+{}^of\right)=0_p$. Hence, $\left[f\right]+{}^o\left[f\right]=\left[0\right]$.
\end{proof}

\begin{axiom}[Commutativity of Addition]\noindent\\
For all real numbers $\left[f\right]$ and $\left[g\right]$ we have $\left[f\right]+\left[g\right]=\left[g\right]+\left[f\right]$.
\end{axiom}
\begin{proof} As sub-addition of any two finite functions $f$ and $g$ is commutative so is addition of any two real numbers.
\end{proof}

\begin{lemma}[Middle Lemma]\label{Middle Lemma}\noindent\\
For all non-zero finite functions $f$ and $g$ and all finite functions $w$ such that $\sigma\left(g\right)=\sigma\left(w\right)$ we have that $\left[\left(f+{}^og\right)+\left(g+w\right)\right]=\left[f+w\right]$.
\end{lemma}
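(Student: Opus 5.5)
The plan is to reduce everything to digitwise identities between suitably chosen representatives, and then to transfer those identities to classes. The transfer tools are the \nameref{Equivalence Lemma} \ref{Equivalence Lemma} (for the sign slot), the \nameref{Consistency Theorem''} \ref{Consistency Theorem''} (to swap representatives inside sums), and the \nameref{Rearrangement Lemma} \ref{Rearrangement Lemma} together with the \nameref{Included Exclusion Lemma} \ref{Included Exclusion Lemma} (to replace a primary tracking function by any convenient tracking function). Since $\sigma\left(g\right)=\sigma\left(w\right)$, the sum $g+w$ is the digitwise sum $g\left(n\right)+w\left(n\right)$, or simply $[g]$ when $w\in[0]$. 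I will split first on whether $\sigma\left(f\right)=\sigma\left(g\right)$, and then on the comparison of $\left|[f]\right|$ with $\left|[g]\right|$ or with $\left|[w]\right|$.

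\textbf{Case A: $\sigma\left(f\right)=\sigma\left(g\right)$.} Here ${}^og$ has the opposite sign to $f$.
\begin{enumerate}
\item If $\left|[f]\right|=\left|[g]\right|$, then $f+{}^og=0_p$, so the left side is $[g+w]$. The \nameref{Absolute-Opposite Lemma} \ref{Absolute-Opposite Lemma} with equal signs gives $[f]=[g]$, and \ref{3.II} finishes this subcase.
\item If $\left|[f]\right|>\left|[g]\right|$, I pick $u\in[f]$ with $u\left(n\right)\geq g\left(n\right)$ on $\omega_+$, which exists by the remark after the tracking-function definition. Then $\varrho=u-g$ is a tracking function, and by the Rearrangement Lemma $[f+{}^og]=[\varrho]$. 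Now $\varrho$ has the sign of $f$, which is also the sign of $g+w$, so $\varrho+(g+w)$ is digitwise. Its digits are $u\left(n\right)-g\left(n\right)+g\left(n\right)+w\left(n\right)=u\left(n\right)+w\left(n\right)$, exactly the digits of $u+w$. The sign slots have equal parity, so the Equivalence Lemma gives $[\varrho+(g+w)]=[u+w]$, and $[u+w]=[f+w]$ by \ref{3.II}.
\item If $\left|[f]\right|<\left|[g]\right|$, the tracking function $\varrho=u-f$ (with $u\in[g]$ dominating $f$) carries the sign of ${}^og$. This is opposite to the sign of $g+w$, so the outer sum is not digitwise. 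Here I use cancellation: by \ref{3.I} it suffices to show that adding the same class to both sides gives equal results. I plan to add $f$: the digitwise sum $f+\varrho$ is $u$ with the sign of ${}^og$, so it lies in $[{}^og]$ by the Included Exclusion Lemma. This reduces the claim to an instance in which the term of smaller magnitude has already been absorbed.
\end{enumerate}

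\textbf{Case B: $\sigma\left(f\right)\neq\sigma\left(g\right)$.} Now ${}^og$ has the sign of $f$, so $f+{}^og$ is the digitwise sum $f\left(n\right)+g\left(n\right)$. It is then added to $g+w$, which has the opposite sign. I compare $\left|[f]\right|$ with $\left|[w]\right|$.
\begin{enumerate}
\item If $\left|[f]\right|>\left|[w]\right|$, I take $f'\in[f]$ dominating $w$ on $\omega_+$. The function $f'+{}^og$ lies in $[f+{}^og]$ by \ref{3.II}, and it dominates $g+w$ digitwise. The resulting tracking function has digits $f'\left(n\right)-w\left(n\right)$, which is a tracking function in $\mathcal{T}_w^f$. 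Two applications of the Rearrangement Lemma then give equality with $[f+w]$.
\item If $\left|[f]\right|<\left|[w]\right|$, the argument is symmetric: take $w'\in[w]$ dominating $f$, so that $g+w'$ dominates $f+{}^og$, and obtain the tracking function $w'-f$.
\item If $\left|[f]\right|=\left|[w]\right|$, the same construction produces a tracking function with all digits zero. By the nonzeroness argument in the proof of the \nameref{Tracking Functions Lemma} \ref{Tracking Functions Lemma}, this forces equal magnitudes, so both sides are $[0]$.
\end{enumerate}

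I expect the main obstacle to be the subcases where the outer sum is of opposite sign and the magnitudes of the two outer summands are not known in advance, that is, Case A(3) and the magnitude comparison inside Case B. The difficulty is showing that the chosen dominating representative $f'+{}^og$ of the class $[f+{}^og]$ really dominates $g+w$ digitwise, and that the magnitude ordering between $[f+{}^og]$ and $[g+w]$ matches the ordering between $[f]$ and $[w]$. I would try first to derive that ordering from the \nameref{One-Case Lemma} \ref{One-Case Lemma} applied to the dominating representatives. I would also check the degenerate cases $w\in[0]$ separately using the \nameref{Null Lemma} \ref{Null Lemma}.
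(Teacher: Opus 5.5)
Cases A(1), A(2) and B are sound, but Case A(3) does not go through as written, for two reasons. The first is a sign slip. In that case $\sigma(f)=\sigma(g)$, while $\varrho$ carries the sign of ${}^og$. So $f+\varrho$ is an opposite-sign sub-addition, defined through a primary tracking function, and not a digitwise sum. The Included Exclusion Lemma is a statement about ${}^of+\varrho_f^{{}^og}$, not about $f+\varrho$.

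The second reason is more serious: the cancellation is circular even with ${}^of$ in place of $f$. Part I of the combined Consistency Theorem lets you cancel a common summand. But to exploit ${}^of+\varrho\in[{}^og]$ you must rewrite ${}^of+\bigl(\varrho+(g+w)\bigr)$ as $\bigl({}^of+\varrho\bigr)+(g+w)$. To reduce ${}^of+(f+w)$ to $[w]$ on the other side you need the same regrouping. That regrouping is associativity of addition, which is not available here, because the Middle Lemma (through its corollary) is exactly what the paper uses to prove associativity. The ``instance in which the term of smaller magnitude has already been absorbed'' is therefore never reached.

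No cancellation is needed. Mirror your own A(2) argument with the outer summands' roles exchanged, using your representative $u$ to build a representative of $[g+w]$ rather than to cancel.
\begin{itemize}
\item With $u\in[g]$ dominating $f$ and $\varrho(n)=u(n)-f(n)$, the digitwise sum $u+w$ lies in $[g+w]$ by part II of the combined Consistency Theorem, and its digits are $\varrho(n)+f(n)+w(n)$.
\item So $u+w$ dominates $\varrho$, strictly because $f\notin[0]$ (Null Lemma). The One-Case Lemma and the Domination Theorem then give $|[g+w]|>|[\varrho]|$.
\item The tracking function of $u+w$ over $\varrho$ has digits $f(n)+w(n)$ and sign slot $u(0)w(0)$. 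Its parity matches that of $(f+w)(0)=f(0)w(0)$, since $u$, $f$ and $w$ all share the sign of $g$.
\item The Equivalence Lemma, commutativity of sub-addition and the Rearrangement Lemma then give $[f+w]$.
\end{itemize}
This is the paper's argument, with ${}^o\bigl({}^of+\varrho_f^{{}^og}\bigr)+w$ playing the role of $u+w$.

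Your Case B is organised differently from the paper. The paper splits on $|[f+{}^og]|$ against $|[g+w]|$, extracts $b=a-{}^og$ from a representative $a$ of $[f+{}^og]$ dominating $g+w$, and must then prove $[b]=[f]$ by consistency. Splitting on $|[f]|$ against $|[w]|$ and building $f'+{}^og$ directly avoids that extra step, so your route is the cleaner one. A few details still need stating:
\begin{itemize}
\item Record that $f'$ strictly dominates $w$, since otherwise $|[f]|=|[w]|$; the Rearrangement Lemma needs the resulting strict inequality of magnitudes.
\item The two tracking functions agree only away from the sign slot, so close the gap with a parity check and the Equivalence Lemma.
\item In B(3), argue via $[f]={}^o[w]$ from the Absolute-Opposite Lemma, since tracking functions are undefined at equal magnitudes.
\item When $w\in[0]$, $g+w$ is the primary tracking function $\varrho^g_w$ rather than a digitwise sum, so replace it by $g$ using consistency before reading off digits.
\end{itemize}
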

\begin{proofidea} The statement is that a number and its opposite, placed in the middle of a three-fold sum, annihilate and leave the outer two combined. Its proof is not a manipulation but an exhaustion, organised first by whether $f$ and $g$ share a sign and then by which magnitude dominates. Two tools recur in every branch: the \nameref{Absolute-Opposite Lemma} \ref{Absolute-Opposite Lemma}, which identifies the class once magnitudes are known to agree, and the \nameref{Rearrangement Lemma} \ref{Rearrangement Lemma}, which lets any convenient tracking function stand in for the difference. Only the branches in which a positive and a negative meet in the middle carry real content; the others reduce to same-sign sub-addition, where the material simply adds.
\end{proofidea}
\begin{proof}\noindent
\paragraph{Firstly,} we suppose $f$, $g$ and $w$ are all non-zero finite functions. There are two cases $\sigma\left(f\right)=\sigma\left(g\right)$ and $\sigma\left(f\right)=\sigma\left({}^og\right)$:
\subparagraph{$\sigma\left(f\right)=\sigma\left(g\right)$:}\noindent\\
\textbf{If $\left|\left[f\right]\right|=\left|\left[{}^og\right]\right|$:} we have by the \nameref{Absolute-Opposite Lemma} \ref{Absolute-Opposite Lemma} we have $\left[f\right]=\left[g\right]$ or $\left[f\right]={}^o\left[g\right]$. However, as $\sigma\left(f\right)\neq\sigma\left({}^og\right)$ we must have $\sigma\left(\left[f\right]\right)\neq\sigma\left({}^o\left[g\right]\right)$ and thus $\left[f\right]\neq{}^o\left[g\right]$. Therefore, $\left[f\right]=\left[g\right]$. Thus 
\begin{align*}
\left[\left(f+{}^og\right)+\left(g+w\right)\right]=\left(\left[f\right]+{}^o\left[g\right]\right)+\left(\left[g\right]+\left[w\right]\right)=\left[0\right]+\left(\left[f\right]+\left[w\right]\right)=\left[f+w\right]
\end{align*}
\textbf{If $\left|\left[f\right]\right|>\left|\left[{}^og\right]\right|$:} then $\left(\left(f+{}^og\right)+\left(g+w\right)\right)\left(0\right)=f\left(0\right)w\left(0\right)g\left(0\right)$. As $\sigma\left(f\right)=\sigma\left(g\right)$ we have that 
\begin{align*}
\text{$f\left(0\right)w\left(0\right)g\left(0\right)$ is odd}\iff\text{$f\left(0\right)$ and $w\left(0\right)$ are odd}\iff\text{$f\left(0\right)w\left(0\right)$ is odd}
\end{align*}
Hence, $\sigma\left(\left(f+{}^og\right)+\left(g+w\right)\right)=\sigma\left(f+w\right)$.\\
Also, by the the definition of $\mathcal{T}_{{}^og}^f$ we have for all $a\in\mathcal{T}_{{}^og}^f$ there exists $b\in\left[f\right]$ such that $\forall n\in\omega_+$
\begin{align*}
\left(a+\left(g+w\right)\right)\left(n\right)&=a\left(n\right)+g\left(n\right)+w\left(n\right)=b\left(n\right)-{}^og\left(n\right)+g\left(n\right)+w\left(n\right)\\
&=b\left(n\right)+w\left(n\right)=\left(b+w\right)\left(n\right)
\end{align*}
Furthermore, as $f\not\in\left[0\right]$ we have that $\sigma\left(f\right)=\sigma\left(b\right)$. Moreover, $\sigma\left(g\right)=\sigma\left(w\right)$ which means that $\sigma\left(g+w\right)=\sigma\left(g\right)=\sigma\left(f\right)$. Thus, as $\sigma\left(a\right)=\sigma\left(f\right)=\sigma\left(g+w\right)$ we have that $\sigma\left(a+\left(g+w\right)\right)=\sigma\left(f\right)$. Also, as $\sigma\left(b\right)=\sigma\left(f\right)=\sigma\left(w\right)$ we have that $\sigma\left(b+w\right)=\sigma\left(f\right)$. Therefore, $\sigma\left(a+\left(g+w\right)\right)=\sigma\left(b+w\right)$.\\
This means that $\left(a+\left(g+w\right)\right)\left(0\right)+\left(b+w\right)\left(0\right)=2m$ for some $m\in\omega$. Therefore, by the \nameref{Equivalence Lemma} \ref{Equivalence Lemma} and the \nameref{Rearrangement Lemma} \ref{Rearrangement Lemma} we have that  
\begin{align*}
\left[\left(f+{}^og\right)+\left(g+w\right)\right]=\left[a\right]+\left[g+w\right]=\left[a+\left(g+w\right)\right]=\left[b+w\right]=\left[f+w\right]
\end{align*}
\textbf{If $\left|\left[f\right]\right|<\left|\left[{}^og\right]\right|$:} then $\left(f+{}^og\right)=\varrho_f^{{}^og}$. Notice that $\left|\left[f+{}^og\right]\right|\leq\left|\left[g+w\right]\right|$. Suppose not, then $\left|\left[f+{}^og\right]\right|\geq\left|\left[g+w\right]\right|\wedge\left|\left[f+{}^og\right]\right|\neq\left|\left[g+w\right]\right|$. By the \nameref{Included Exclusion Lemma} \ref{Included Exclusion Lemma} we have that $\left({}^of+\varrho_f^{{}^og}\right)\in\left[{}^og\right]$, hence ${}^o\left({}^of+\varrho_f^{{}^og}\right)\in\left[{}^o{}^og\right]=\left[g\right]$.\\
Take $\left({}^o\left({}^of+\varrho_f^{{}^og}\right)+w\right)\in\left[g+w\right]$. We see that if there was a function in $\left[f+{}^og\right]$ strictly dominating $\left({}^o\left({}^of+\varrho_f^{{}^og}\right)+w\right)$ then it would be strictly dominating $\left(\varrho_f^{{}^og}\right)$ as for all $n\in\omega_+$ we have 
\begin{align*}
\left({}^o\left({}^of+\varrho_f^{{}^og}\right)+w\right)\left(n\right)&={}^o\left({}^of+\varrho_f^{{}^og}\right)\left(n\right)+w\left(n\right)=\left({}^of+\varrho_f^{{}^og}\right)\left(n\right)+w\left(n\right)\\
&={}^of\left(n\right)+\varrho_f^{{}^og}\left(n\right)+w\left(n\right)=f\left(n\right)+w\left(n\right)+\varrho_f^{{}^og}\left(n\right)\geq\varrho_f^{{}^og}\left(n\right)
\end{align*}
We see that the function in question cannot be in the same equivalence class as $\left(f+{}^og\right)$ as, by the \nameref{Domination Theorem} \ref{Domination Theorem} no function is strictly dominated by any other function in the same real number. A contradiction, therefore $\left|\left[f+{}^og\right]\right|\leq\left|\left[g+w\right]\right|$.\\
Notice that by what we have shown above $\left|\left[f+{}^og\right]\right|=\left|\left[g+w\right]\right|\iff\forall n\in\omega_+,f\left(n\right)=w\left(n\right)=0$, as otherwise there exists $m\in\omega_+$ such that $f\left(m\right)+w\left(m\right)+\varrho_f^{{}^og}\left(m\right)>\varrho_f^{{}^og}\left(m\right)$ and thus $\left|\left({}^o\left({}^of+\varrho_f^{{}^og}\right)+w\right)\right|\in\left|\left[g+w\right]\right|$ would strictly dominate $\left|\varrho_f^{{}^og}\right|\in\left|\left[f+{}^og\right]\right|$.\\
$\forall n\in\omega_+,f\left(n\right)=w\left(n\right)=0$ is in turn equivalent, by the \nameref{Null Lemma} \ref{Null Lemma} to $f,w\in\left[0\right]$. Therefore, $\left|\left[f+{}^og\right]\right|\neq\left|\left[g+w\right]\right|$.\\
Thus $\left|\left[f+{}^og\right]\right|<\left|\left[g+w\right]\right|$ then as addition is commutative we have $\left[\left(f+{}^og\right)+\left(g+w\right)\right]=\left[\left(g+w\right)+\left(f+{}^og\right)\right]$. As seen above, $\left({}^o\left({}^of+\varrho_f^{{}^og}\right)+w\right)\in\left[g+w\right]$ and for all $n\in\omega_+$ we have 
\begin{align*}
\left({}^o\left({}^of+\varrho_f^{{}^og}\right)+w\right)\left(n\right)-\varrho_f^{{}^og}\left(n\right)=f\left(n\right)+w\left(n\right)+\varrho_f^{{}^og}\left(n\right)-\varrho_f^{{}^og}\left(n\right)=\left(f+w\right)\left(n\right)
\end{align*}
Thus define the function $\left(f+w\right)':\omega\rightarrow\omega$ as
\begin{align*}
\left(f+w\right)'\left(k\right)=\begin{cases}
\left({}^o\left({}^of+\varrho_f^{{}^og}\right)+w\right)\left(0\right)&k=0\\
\left({}^o\left({}^of+\varrho_f^{{}^og}\right)+w\right)\left(k\right)-\varrho_f^{{}^og}\left(k\right)=\left(f+w\right)\left(k\right)&k\in\omega_+
\end{cases}
\end{align*}
We see that $\left(f+w\right)'\in\mathcal{T}_{\varrho_f^{{}^og}}^{g+w}$.
\begin{align*}
&\left({}^o\left({}^of+\varrho_f^{{}^og}\right)+w\right)\left(0\right)={}^o\left({}^of+\varrho_f^{{}^og}\right)\left(0\right)w\left(0\right)=\left(\left({}^of+\varrho_f^{{}^og}\right)\left(0\right)+1\right)w\left(0\right)\\
=&\left({}^of\left(0\right)\varrho_f^{{}^og}\left(0\right)+1\right)w\left(0\right)=\left(\left(f\left(0\right)+1\right){}^og\left(0\right)+1\right)w\left(0\right)=\left(\left(f\left(0\right)+1\right)\left(g\left(0\right)+1\right)+1\right)w\left(0\right)
\end{align*}
We see that as $\sigma\left(f\right)=\sigma\left(g\right)=\sigma\left(w\right)$
\begin{align*}
\text{$\left(\left(f\left(0\right)+1\right)\left(g\left(0\right)+1\right)+1\right)w\left(0\right)$ is odd}\iff\text{$w\left(0\right)$ is odd}\iff\text{$f\left(0\right)w\left(0\right)$ is odd}
\end{align*}
Therefore, $\sigma\left(\left(f+w\right)'\right)=\sigma\left(\left(f+w\right)\right)$. Hence, $\left(f+w\right)'\left(0\right)+\left(f+w\right)\left(0\right)=2m$ for some $m\in\omega$ and thus by the \nameref{Equivalence Lemma} \ref{Equivalence Lemma} we have $\left[\left(f+w\right)'\right]=\left[\left(f+w\right)\right]$.\\
Therefore, by the \nameref{Rearrangement Lemma} \ref{Rearrangement Lemma} 
\begin{align*}
\left[f+{}^og\right]+\left[g+w\right]=\left[\varrho_f^{{}^og}\right]+\left[g+w\right]=\left[\left(f+w\right)'\right]=\left[f+w\right]
\end{align*}
\subparagraph{$\sigma\left(f\right)=\sigma\left({}^og\right)$:}\noindent\\
\textbf{If $\left|\left[f+{}^og\right]\right|\geq\left|\left[g+w\right]\right|$:} then there exists a function $a\in\left[f+{}^og\right]$ such that for all $n\in\omega_+$ we have $a\left(n\right)\geq g\left(n\right)+w\left(n\right)$. Thus, for all $n\in\omega_+$ we have $a\left(n\right)\geq g\left(n\right)={}^og\left(n\right)$. Define $b:\omega\rightarrow\omega$ as 
\begin{align*}
b\left(k\right)=\begin{cases}
f\left(0\right)&k=0\\
a\left(k\right)-{}^og\left(k\right)&k\in\omega_+
\end{cases}
\end{align*}
By the \nameref{Tying Theorem} \ref{Tying Theorem} there exists a constant $C\in\omega_+$ such that for all $n\in\omega_+$ we have that 
\begin{align*}
\sum\limits_{k=1}^{n}b\left(n\right)\prod\limits_{j=k}^{n-1}\left(\vartheta\left(j\right)\right)\leq\sum\limits_{k=1}^{n}a\left(n\right)\prod\limits_{j=k}^{n-1}\left(\vartheta\left(j\right)\right)<C\prod\limits_{j=1}^{n-1}\left(\vartheta\left(j\right)\right)
\end{align*}
Therefore, $b\in \mathcal{N}_{\mathcal{F}in}$. Moreover, $b\not\in\left[0\right]$. This is because if it were, then for all $n\in\omega_+$ we would have $a\left(n\right)={}^og\left(n\right)$. However, then as $f\not\in\left[0\right]$ we have by the \nameref{Null Lemma} \ref{Null Lemma} that there exists a value $m\in\omega_+$ such that $f\left(m\right)>0$. Therefore, for all  we have 
\begin{align*}
\begin{array}{ll@{}}
\left(f+{}^og\right)\left(n\right)=f\left(n\right)+{}^og\left(n\right)=f\left(n\right)+a\left(n\right)\geq a\left(n\right)&\forall n\in\omega_+\text{ and}\\
\left(f+{}^og\right)\left(m\right)=f\left(m\right)+{}^og\left(m\right)=f\left(m\right)+a\left(m\right)> a\left(n\right)
\end{array}
\end{align*}
Therefore, $\left(f+{}^og\right)$ would strictly dominate $a$ which is a contradiction to $a\in\left[f+{}^og\right]$ by the \nameref{Domination Theorem} \ref{Domination Theorem}. Ergo, $b\not\in\left[0\right]$.\\
Furthermore, as $\sigma\left(b\right)=\sigma\left({}^og\right)$ we have that $\left[b+{}^og\right]=\left[a\right]=\left[f+{}^og\right]$ and thus, by the first consistency theorem, $\left[b\right]=\left[f\right]$. We have that for all $n\in\omega_+$ 
\begin{align*}
a\left(n\right)-\left(g+w\right)\left(n\right)=b\left(n\right)+g\left(n\right)-g\left(n\right)-w\left(n\right)=b\left(n\right)-w\left(n\right)
\end{align*}
This shows us that for all $n\in\omega_+$ we have $b\left(n\right)\geq w\left(n\right)$ and thus for all $n\in\omega$ $\left|b\right|\left(n\right)\geq\left|w\right|\left(n\right)$. Hence, by the \nameref{One-Case Lemma} \ref{One-Case Lemma}, as $\sigma\left(\left|b\right|\right)=\sigma\left(\left|w\right|\right)=1$ we have that $\left|\left[f\right]\right|=\left|\left[b\right]\right|\geq\left|\left[w\right]\right|$.\\
If $\left|\left[f\right]\right|=\left|\left[w\right]\right|$ then by the \nameref{Absolute-Opposite Lemma} \ref{Absolute-Opposite Lemma} we have that $\left[f\right]=\left[w\right]$ or $\left[f\right]={}^o\left[w\right]$. However, as $f$ and $w$ are non-zero, $\sigma\left(\left[f\right]\right)=\sigma\left(f\right)\neq\sigma\left(w\right)=\sigma\left(\left[w\right]\right)$ and therefore, the former option is not possible. Thus, $\left[f\right]={}^o\left[w\right]$.\\
Hence, $\left[\left(f+{}^og\right)+\left(g+w\right)\right]=\left[{}^ow+{}^og\right]+\left[w+g\right]$. We see that 
\begin{equation*}\label{eq:middle}
    \left[{}^ow+{}^og\right]=\left[{}^o\left(w+g\right)\right]
\end{equation*}
as $\sigma\left(w\right)=\sigma\left(g\right)$ and thus $w\left(0\right)+g\left(0\right)=2m$ for some $m\in\omega$ and hence
\begin{align*}
&\left({}^ow+{}^og\right)\left(0\right)+{}^o\left(w+g\right)\left(0\right)=\left(w\left(0\right)+1\right)\left(g\left(0\right)+1\right)+w\left(0\right)g\left(0\right)+1=2\left(w\left(0\right)g\left(0\right)+m+1\right)\\
&\left({}^ow+{}^og\right)\left(n\right)={}^ow\left(n\right)+{}^og\left(n\right)=w\left(n\right)+g\left(n\right)=\left(w+g\right)\left(n\right)={}^o\left(w+g\right)\left(n\right)\hspace{0.5cm}\text{for all $n\in\omega_+$}
\end{align*}
Therefore, by the \nameref{Equivalence Lemma} \ref{Equivalence Lemma} we have that $\left[{}^ow+{}^og\right]=\left[{}^o\left(w+g\right)\right]$. This shows that 
\begin{align*}
\left[\left(f+{}^og\right)+\left(g+w\right)\right]={}^o\left[\left(w+g\right)\right]+\left[w+g\right]=\left[0\right]={}^o\left[w\right]+\left[w\right]=\left[f+w\right]
\end{align*}
 Also, if $\left|\left[f\right]\right|>\left|\left[w\right]\right|$ then $b$ strictly dominates $w$. This is because if it did not then for all $n\in\omega_+$ we have $b\left(n\right)=w\left(n\right)$ as $b$ dominates $w$. Therefore, for all $n\in\omega_+$ we have $\left|b\right|\left(n\right)=\left|w\right|\left(n\right)$ and $\left|b\right|\left(0\right)+\left|w\right|\left(0\right)=0=2*0$ and thus by the \nameref{Equivalence Lemma} \ref{Equivalence Lemma} we have $\left|\left[f\right]\right|=\left|\left[b\right]\right|=\left|\left[w\right]\right|$, which is a contradiction to $\left|\left[f\right]\right|>\left|\left[w\right]\right|$. Hence, $b$ strictly dominates $w$.\\
Therefore $\left(b+{}^og\right)$ strictly dominates $\left(g+w\right)$ as we have
\begin{align*}
\begin{array}{l@{}}
\forall n\in\omega_+,b\left(n\right)\geq w\left(n\right)\\
\exists m\in\omega_+,b\left(m\right)>w\left(m\right)
\end{array}
&\implies
\begin{array}{l@{}}
\forall n\in\omega_+,b\left(n\right)+{}^og\left(n\right)\geq w\left(n\right)+g\left(n\right)\\
\exists m\in\omega_+,b\left(m\right)+{}^og\left(m\right)>w\left(m\right)+g\left(m\right)
\end{array}\\
&\implies
\begin{array}{l@{}}
\forall n\in\omega_+,\left(b+{}^og\right)\left(n\right)\geq\left(g+w\right)\left(n\right)\\
\exists m\in\omega_+,\left(b+{}^og\right)\left(m\right)>\left(g+w\right)\left(m\right)
\end{array}
\end{align*}
Hence, we have by the \nameref{One-Case Lemma} \ref{One-Case Lemma} and the \nameref{Domination Theorem} \ref{Domination Theorem} that $\left|\left[f+{}^og\right]\right|=\left|\left[b+{}^og\right]\right|>\left|\left[g+w\right]\right|$. This means that by the \nameref{Rearrangement Lemma} \ref{Rearrangement Lemma} we have $\left[f+{}^og\right]+\left[g+w\right]=\left[\mu\right]$ where 
\begin{align*}
\mu\left(k\right)=\begin{cases}
f\left(0\right){}^og\left(0\right)&k=0\\
a\left(k\right)-\left(g+w\right)\left(k\right)=b\left(k\right)-w\left(k\right)&k\in\omega_+
\end{cases}
\end{align*} 
as $\mu\in\mathcal{T}_{g+w}^{f+{}^og}$.\\
Also, as $b\in\left[f\right]$ and $w\in\left[w\right]$ and $\left|\left[b\right]\right|>\left|\left[w\right]\right|$ we take $\mu'\in\mathcal{T}_{w}^{b}$, where
\begin{align*}
\mu'\left(k\right)=\begin{cases}
f\left(0\right)&k=0\\
b\left(k\right)-w\left(k\right)&k\in\omega_+
\end{cases}
\end{align*} 
Thus, for all $n\in\omega_+$ we have $\mu\left(n\right)=\mu'\left(n\right)$ and furthermore $\mu\left(0\right)+\mu'\left(0\right)=f\left(0\right)\left({}^og\left(0\right)+1\right)$. This is odd if and only if both $f\left(0\right)$ is odd and ${}^og\left(0\right)$ is even. However, that is impossible, as $\sigma\left(f\right)=\sigma\left({}^og\right)$. Hence, $\mu\left(0\right)+\mu'\left(0\right)=2m$ for some $m\in\omega$. Therefore, by the \nameref{Equivalence Lemma} \ref{Equivalence Lemma}, $\left[\mu\right]=\left[\mu'\right]$. Hence, 
\begin{align*}
\left[f+{}^og\right]+\left[g+w\right]=\left[\mu\right]=\left[\mu'\right]=\left[b+w\right]=\left[f+w\right]
\end{align*}
\textbf{If $\left|\left[f+{}^og\right]\right|<\left|\left[g+w\right]\right|$:} then $\left|\left[g+w\right]\right|\geq\left|\left[f+{}^og\right]\right|$ and as 
\begin{align*}
\left[\left(f+{}^og\right)+\left(g+w\right)\right]&=\left(\left[w\right]+\left[g\right]\right)+\left[{}^og+f\right]\\
&=\left(\left[w\right]+\left[{}^o{}^og\right]\right)+\left[{}^og+f\right]=\left[\left(w+{}^o{}^og\right)+\left({}^og+f\right)\right]
\end{align*}
and $\sigma\left({}^og\right)=\sigma\left(f\right)$ we have met the conditions for which we have proven above that 
\begin{align*}
\left[\left(w+{}^o{}^og\right)+\left({}^og+f\right)\right]=\left[w+f\right]=\left[f+w\right]
\end{align*} and thus $\left[\left(f+{}^og\right)+\left(g+w\right)\right]=\left[f+w\right]$.\footnote{This case was just commuting and relabeling to obtain the same result as was proven before.}
\paragraph{Secondly,} we suppose that $w$ is a zero function. Then
\begin{align*}
\left[\left(f+{}^og\right)+\left(g+w\right)\right]&=\left[f+{}^og\right]+\left(\left[g\right]+\left[w\right]\right)=\left[f+{}^og\right]+\left(\left[g\right]+\left[0\right]\right)\\
&=\left[f+{}^og\right]+\left[g\right]=\left[\left(f+{}^og\right)+g\right]
\end{align*}
We see that if $f\in\left[0\right]$ we obtain that
\begin{align*}
\left[\left(f+{}^og\right)+g\right]&=\left(\left[f\right]+\left[{}^og\right]\right)+\left[g\right]=\left(\left[0\right]+\left[{}^og\right]\right)+\left[g\right]=\left[{}^og\right]+\left[g\right]
\\&=\left[0\right]=\left[0\right]+\left[0\right]=\left[f\right]+\left[w\right]=\left[f+w\right]
\end{align*}
Going forward, we shall suppose that $f$ is not a zero function.\\
\textbf{If $\sigma\left(f\right)=\sigma\left({}^og\right)$:} then we have that $\left|\left[f+{}^og\right]\right|>\left|\left[g\right]\right|$. This is by the \nameref{One-Case Lemma} \ref{One-Case Lemma} as $\sigma\left(\left|f+{}^og\right|\right)=\sigma\left(\left|g\right|\right)=1$ and for all $n\in\omega_+$ we have that
\begin{align*}
\left|f+{}^og\right|\left(n\right)=\left(f+{}^og\right)\left(n\right)=f\left(n\right)+{}^og\left(n\right)=f\left(n\right)+g\left(n\right)\geq g\left(n\right)=\left|g\right|\left(n\right)
\end{align*}
Moreover, by the \nameref{Null Lemma} \ref{Null Lemma} if $f$ is a non-zero function, then there exists $m\in\omega_+$ such that 
\begin{align*}
\left|f+{}^og\right|\left(m\right)=\left(f+{}^og\right)\left(m\right)=f\left(m\right)+{}^og\left(m\right)=f\left(m\right)+g\left(m\right)>g\left(m\right)=\left|g\right|\left(m\right)
\end{align*}
Therefore, by the \nameref{Domination Theorem} \ref{Domination Theorem} we have that $\left|\left[f+{}^og\right]\right|\neq\left|\left[g\right]\right|$. Thus, $\left|\left[f+{}^og\right]\right|>\left|\left[g\right]\right|$.\\
As $\sigma\left(f\right)=\sigma\left(f+{}^og\right)\neq\sigma\left(g\right)$ we have that by the \nameref{Rearrangement Lemma} \ref{Rearrangement Lemma} that $\left[\left(f+{}^og\right)+g\right]=\left[\mu\right]$ where
\begin{align*}
\mu\left(k\right)=\begin{cases}\left(f+{}^og\right)\left(0\right)=f\left(0\right)\left(g\left(0\right)+1\right)&k=0\\
\left(f+{}^og\right)\left(k\right)-g\left(k\right)=f\left(k\right)+{}^og\left(k\right)-g\left(k\right)=f\left(k\right)&k\in\omega_+
\end{cases}
\end{align*}
as $\mu\in\mathcal{T}_g^{f+{}^og}$. Furthermore, as $\sigma\left(f\right)\neq\sigma\left(g\right)$ we have that $f\left(0\right)+\mu\left(0\right)=f\left(0\right)\left(g\left(0\right)+2\right)=2m$ for some $m\in\omega$ as either $f\left(0\right)$ or $g\left(0\right)$ is even. Then we have by the \nameref{Equivalence Lemma} \ref{Equivalence Lemma} that $\left[\mu\right]=\left[f\right]$. Hence,
\begin{align*}
\left[\left(f+{}^og\right)+\left(g+w\right)\right]=\left[\left(f+{}^og\right)+g\right]=\left[\mu\right]=\left[f\right]=\left[f\right]+\left[0\right]=\left[f\right]+\left[w\right]=\left[f+w\right]
\end{align*}
\textbf{If $\sigma\left(f\right)=\sigma\left(g\right)$:} we have three cases. If $\left|\left[f\right]\right|=\left|\left[{}^og\right]\right|$ then by the \nameref{Absolute-Opposite Lemma} \ref{Absolute-Opposite Lemma} we have that $\left[f\right]={}^o\left[{} ^og\right]=\left[g\right]$. Furthermore,
\begin{align*}
\left[\left(f+{}^og\right)+\left(g+w\right)\right]&=\left[\left(f+{}^og\right)+g\right]=\left[f+{}^og\right]+\left[g\right]=\left[0\right]+\left[g\right]\\
&=\left[g\right]=\left[f\right]=\left[f\right]+\left[0\right]=\left[f\right]+\left[w\right]=\left[f+w\right]
\end{align*} 
If $\left|\left[f\right]\right|>\left|\left[{}^og\right]\right|$ then $\left(\left(f+{}^og\right)+g\right)\left(0\right)=f\left(0\right)g\left(0\right)$. As $\sigma\left(f\right)=\sigma\left(g\right)$ we have that 
\begin{align*}
\text{$f\left(0\right)g\left(0\right)$ is odd}\iff\text{$f\left(0\right)$ is odd}
\end{align*}
Hence, $\sigma\left(\left(f+{}^og\right)+g\right)=\sigma\left(f\right)$.\\
Also, by the the definition of $\mathcal{T}_{{}^og}^f$ we have for all $a\in\mathcal{T}_{{}^og}^f$ there exists $b\in\left[f\right]$ such that $\forall n\in\omega_+$
\begin{align*}
\left(a+g\right)\left(n\right)&=a\left(n\right)+g\left(n\right)=b\left(n\right)-{}^og\left(n\right)+g\left(n\right)=b\left(n\right)
\end{align*}
Furthermore, as $f\not\in\left[0\right]$ we have that $\sigma\left(f\right)=\sigma\left(b\right)$. Thus, as $\sigma\left(a\right)=\sigma\left(f\right)=\sigma\left(g\right)$ we have that $\sigma\left(a+g\right)=\sigma\left(f\right)=\sigma\left(b\right)$.\\
This means that $\left(a+g\right)\left(0\right)+\left(b\right)\left(0\right)=2m$ for some $m\in\omega$. Therefore, by the \nameref{Equivalence Lemma} \ref{Equivalence Lemma} and the \nameref{Rearrangement Lemma} \ref{Rearrangement Lemma} we have that  
\begin{align*}
\left[\left(f+{}^og\right)+\left(g+w\right)\right]=\left[f+{}^og\right]+\left[g\right]=\left[a\right]+\left[g\right]=\left[a+g\right]=\left[b\right]=\left[f+w\right]
\end{align*}
If $\left|\left[f\right]\right|<\left|\left[{}^og\right]\right|$ then $\left(f+{}^og\right)=\varrho_f^{{}^og}$. Notice that $\left|\left[f+{}^og\right]\right|\leq\left|\left[g\right]\right|$. Suppose not, then $\left|\left[f+{}^og\right]\right|\geq\left|\left[g\right]\right|\wedge\left|\left[f+{}^og\right]\right|\neq\left|\left[g\right]\right|$. By the \nameref{Included Exclusion Lemma} \ref{Included Exclusion Lemma} we have that $\left({}^of+\varrho_f^{{}^og}\right)\in\left[{}^og\right]$, hence ${}^o\left({}^of+\varrho_f^{{}^og}\right)\in\left[{}^o{}^og\right]=\left[g\right]$.\\
We see that if there was a function in $\left[f+{}^og\right]$ strictly dominating ${}^o\left({}^of+\varrho_f^{{}^og}\right)$ then it would be strictly dominating $\left(\varrho_f^{{}^og}\right)$ as for all $n\in\omega_+$ we have 
\begin{align*}
{}^o\left({}^of+\varrho_f^{{}^og}\right)\left(n\right)&={}^o\left({}^of+\varrho_f^{{}^og}\right)\left(n\right)=\left({}^of+\varrho_f^{{}^og}\right)\left(n\right)\\
&={}^of\left(n\right)+\varrho_f^{{}^og}\left(n\right)=f\left(n\right)+\varrho_f^{{}^og}\left(n\right)\geq\varrho_f^{{}^og}\left(n\right)
\end{align*}
We see that the function in question cannot be in the same equivalence class as $\left(f+{}^og\right)$ as, by the \nameref{Domination Theorem} \ref{Domination Theorem} no function is strictly dominated by any other function in the same real number. A contradiction, therefore $\left|\left[f+{}^og\right]\right|\leq\left|\left[g+w\right]\right|$.\\
Notice that by what we have shown above $\left|\left[f+{}^og\right]\right|=\left|\left[g\right]\right|\iff\forall n\in\omega_+,f\left(n\right)=0$ which is in turn equivalent, by the \nameref{Null Lemma} \ref{Null Lemma} to $f\in\left[0\right]$. Therefore, $\left|\left[f+{}^og\right]\right|\neq\left|\left[g\right]\right|$.\\
Thus $\left|\left[f+{}^og\right]\right|<\left|\left[g\right]\right|$ then as addition is commutative we have $\left[\left(f+{}^og\right)+g\right]=\left[g+\left(f+{}^og\right)\right]$. As seen above, ${}^o\left({}^of+\varrho_f^{{}^og}\right)\in\left[g\right]$ and for all $n\in\omega_+$ we have 
\begin{align*}
{}^o\left({}^of+\varrho_f^{{}^og}\right)\left(n\right)-\varrho_f^{{}^og}\left(n\right)=f\left(n\right)+\varrho_f^{{}^og}\left(n\right)-\varrho_f^{{}^og}\left(n\right)=f\left(n\right)
\end{align*}
Thus define the function $f':\omega\rightarrow\omega$ as
\begin{align*}
f'\left(k\right)=\begin{cases}
{}^o\left({}^of+\varrho_f^{{}^og}\right)\left(0\right)&k=0\\
{}^o\left({}^of+\varrho_f^{{}^og}\right)\left(k\right)-\varrho_f^{{}^og}\left(k\right)=f\left(k\right)&k\in\omega_+
\end{cases}
\end{align*}
We see that $f'\in\mathcal{T}_{\varrho_f^{{}^og}}^{g}$.
\begin{align*}
{}^o\left({}^of+\varrho_f^{{}^og}\right)\left(0\right)&=\left({}^of+\varrho_f^{{}^og}\right)\left(0\right)+1={}^of\left(0\right)\varrho_f^{{}^og}\left(0\right)+1\\
&=\left(f\left(0\right)+1\right){}^og\left(0\right)+1=\left(f\left(0\right)+1\right)\left(g\left(0\right)+1\right)+1
\end{align*}
We see that as $\sigma\left(f\right)=\sigma\left(g\right)$
\begin{align*}
\text{$\left(f\left(0\right)+1\right)\left(g\left(0\right)+1\right)+1$ is odd}\iff\text{$f\left(0\right)$ is odd}
\end{align*}
Therefore, $\sigma\left(f'\right)=\sigma\left(f\right)$. Hence, $f'\left(0\right)+f\left(0\right)=2m$ for some $m\in\omega$ and thus by the \nameref{Equivalence Lemma} \ref{Equivalence Lemma} we have $\left[f'\right]=\left[f\right]$.\\
Therefore, by the \nameref{Rearrangement Lemma} \ref{Rearrangement Lemma} 
\begin{align*}
\left[f+{}^og\right]+\left[g+w\right]&=\left[\left(f+{}^og\right)+g\right]=\left[\varrho_f^{{}^og}\right]+\left[g\right]=\left[f'\right]\\
&=\left[f\right]=\left[f\right]+\left[0\right]=\left[f\right]+\left[w\right]=\left[f+w\right]
\end{align*}
\paragraph{Finally} if $f\in\left[0\right]$ and $w\not\in\left[0\right]$ then we have that
\begin{align*}
\left[\left(f+{}^og\right)+\left(g+w\right)\right]&=\left(\left[f\right]+\left[{}^og\right]\right)+\left[g+w\right]=\left(\left[0\right]+\left[{}^og\right]\right)+\left[g+w\right]\\
&=\left[{}^og\right]+\left[g+w\right]=\left[{}^og+\left(g+w\right)\right]
\end{align*}
However, by relabeling we obtain the same expression as above (only one of the cases however, as we demand $\sigma\left(g\right)=\sigma\left(w\right)$) we obtain that 
\begin{align*}
\left[\left(f+{}^og\right)+\left(g+w\right)\right]=\left[{}^og+\left(g+w\right)\right]=\left[w\right]=\left[0\right]+\left[w\right]=\left[f\right]+\left[w\right]=\left[f+w\right]
\end{align*}
\end{proof}

\begin{corollary}[Middle Lemma Corollary]\label{Middle Lemma Corollary}\noindent\\
 For all non-zero finite real numbers $\left[g\right]$ and all real numbers $\left[f\right]$ and $\left[w\right]$ such that $\sigma\left(\left[g\right]\right)=\sigma\left(\left[w\right]\right)$ or $\left[w\right]=\left[0\right]$ we have that $\left(\left[f\right]+{}^o\left[g\right]\right)+\left(\left[g\right]+\left[w\right]\right)=\left[f\right]+\left[w\right]$.
\end{corollary}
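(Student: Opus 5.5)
The plan is to reduce the Corollary to the \nameref{Middle Lemma} \ref{Middle Lemma} by choosing representatives and translating between sub-addition of functions and addition of real numbers. Fix $g\in\left[g\right]$, which is non-zero by hypothesis, together with arbitrary $f\in\left[f\right]$ and $w\in\left[w\right]$. By the definition of addition and the \nameref{Consistency Theorem''} \ref{3.II} \ref{Consistency Theorem''}, the left-hand side equals $\left[\left(f+{}^og\right)+\left(g+w\right)\right]$ for these representatives. Here I use that ${}^og\in{}^o\left[g\right]$ by the definition of the opposite of a real number, that $\left(f+{}^og\right)\in\left[f\right]+{}^o\left[g\right]$, and that $\left(g+w\right)\in\left[g\right]+\left[w\right]$. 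The right-hand side is $\left[f+w\right]$. So it suffices to show $\left[\left(f+{}^og\right)+\left(g+w\right)\right]=\left[f+w\right]$ for representatives that may be chosen freely.

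I would split into cases according to whether $\left[f\right]=\left[0\right]$ and whether $\left[w\right]=\left[0\right]$.

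\textbf{Case 1: $f$ non-zero and $\sigma\left(\left[g\right]\right)=\sigma\left(\left[w\right]\right)$.} Here $w$ is non-zero and $\sigma\left(g\right)=\sigma\left(w\right)$, so the hypotheses of the \nameref{Middle Lemma} \ref{Middle Lemma} are met and it gives the identity directly.

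\textbf{Case 2: $\left[w\right]=\left[0\right]$.} I would choose $w=0_p$. The Middle Lemma's hypothesis $\sigma\left(g\right)=\sigma\left(w\right)$ may then fail, since $\sigma\left(0_p\right)=1$ while $g$ may be negative. The remedy is to pick instead the zero representative whose sign matches $g$: take $0_p$ if $\sigma\left(g\right)=1$, and ${}^o0_p$ if $\sigma\left(g\right)=0$. By the \nameref{Null Lemma} \ref{Null Lemma} both lie in $\left[0\right]$. The ``Secondly'' paragraph of the Middle Lemma's proof explicitly treats $w$ a zero function, so with this choice the lemma covers the case.

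\textbf{Case 3: $\left[f\right]=\left[0\right]$.} The ``Finally'' paragraph of the Middle Lemma's proof handles this, again only requiring $\sigma\left(g\right)=\sigma\left(w\right)$.

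The main obstacle is checking that the Middle Lemma's statement, which literally demands $f$ non-zero, really covers the zero cases. Its proof does treat them, but the statement's hypotheses should be reconciled with its proof before it is invoked. If that reading is doubtful, I would handle the zero cases directly using the additive identity and inverse axioms. For $\left[f\right]=\left[0\right]$, commutativity of addition reduces the claim to ${}^o\left[g\right]+\left(\left[g\right]+\left[w\right]\right)=\left[w\right]$; in the same-sign subcase this is the Middle Lemma applied with $f$ replaced by a representative of $\left[w\right]$ after relabelling. For $\left[w\right]=\left[0\right]$, the claim reduces to $\left(\left[f\right]+{}^o\left[g\right]\right)+\left[g\right]=\left[f\right]$.
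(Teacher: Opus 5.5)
Your proposal is correct and follows the paper, which treats this corollary as immediate: it is the Middle Lemma transported to classes through the definition of addition (well defined by the Consistency Theorem), with the zero cases supplied by the \emph{Secondly} and \emph{Finally} paragraphs of that lemma's proof, exactly as you describe. The lemma's literal hypothesis can only fail when $\left[f\right]=\left[0\right]$, and there your fallback does work from the statement alone: $\left[f\right]=\left[w\right]=\left[0\right]$ reduces to ${}^o\left[g\right]+\left[g\right]=\left[0\right]$, and $\left[f\right]=\left[0\right]\neq\left[w\right]$ is the lemma applied to a representative of $\left[w\right]$, to ${}^og$ and to a zero function with the sign of ${}^og$, followed by commutativity; by contrast $\left(\left[f\right]+{}^o\left[g\right]\right)+\left[g\right]=\left[f\right]$ with $\left[f\right]\neq\left[0\right]$ must stay with the lemma as in your Case 2, because the identity and inverse axioms alone cannot regroup it without associativity, which the paper proves from this very corollary.
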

\begin{proofidea} If any one of the three numbers is zero the additive identity settles the matter immediately, and those three cases are disposed of first. Everything else is inherited: the \nameref{Middle Lemma} \ref{Middle Lemma} and its corollary already say that a three-fold combination may be regrouped without disturbing the total, so what remains is to see the two bracketings of $\left[f\right]+\left[g\right]+\left[w\right]$ as instances of that statement and to invoke consistency of addition to pass between representatives.
\end{proofidea}
\begin{axiom}[Associativity of Addition]\noindent\\
For all real numbers $\left[f\right]$, $\left[g\right]$ and $\left[w\right]$ we have that $\left(\left[f\right]+\left[g\right]\right)+\left[w\right]=\left[f\right]+\left(\left[g\right]+\left[w\right]\right)$.
\end{axiom}
\begin{proof}\noindent\\
\textbf{If $\left[f\right]=\left[0\right]$:} then as $\left[0\right]$ is the additive identity we have that
\begin{align*}
\left(\left[f\right]+\left[g\right]\right)+\left[w\right]=\left(\left[0\right]+\left[g\right]\right)+\left[w\right]=\left[g\right]+\left[w\right]=\left[0\right]+\left(\left[g\right]+\left[w\right]\right)=\left[f\right]+\left(\left[g\right]+\left[w\right]\right)
\end{align*}
\textbf{If $\left[g\right]=\left[0\right]$:} then as $\left[0\right]$ is the additive identity we have that
\begin{align*}
\left(\left[f\right]+\left[g\right]\right)+\left[w\right]=\left(\left[f\right]+\left[0\right]\right)+\left[w\right]=\left[f\right]+\left[w\right]=\left[f\right]+\left(\left[0\right]+\left[w\right]\right)=\left[f\right]+\left(\left[g\right]+\left[w\right]\right)
\end{align*}
\textbf{If $\left[w\right]=\left[0\right]$:} then as $\left[0\right]$ is the additive identity we have that
\begin{align*}
\left(\left[f\right]+\left[g\right]\right)+\left[w\right]=\left(\left[f\right]+\left[g\right]\right)+\left[0\right]=\left[f\right]+\left[g\right]=\left[f\right]+\left(\left[g\right]+\left[0\right]\right)=\left[f\right]+\left(\left[g\right]+\left[w\right]\right)
\end{align*}
\textbf{If all $\left[f\right]$, $\left[g\right]$ and $\left[w\right]$ are not zero} then there are four cases, 
\begin{align*}
\begin{array}{rll@{}}
\text{either}&\sigma\left(\left[f\right]\right)=\sigma\left(\left[g\right]\right)=\sigma\left(\left[w\right]\right)&\text{or}\\
&\sigma\left(\left[g\right]\right)=\sigma\left(\left[w\right]\right)\neq\sigma\left(\left[f\right]\right)&\text{or}\\
&\sigma\left(\left[f\right]\right)=\sigma\left(\left[w\right]\right)\neq\sigma\left(\left[g\right]\right)&\text{or}\\
&\sigma\left(\left[f\right]\right)=\sigma\left(\left[g\right]\right)\neq\sigma\left(\left[w\right]\right)
\end{array}
\end{align*}
$\sigma\left(f\right)=\sigma\left(g\right)=\sigma\left(w\right)$: We see straight by applying the definition of addition twice that 
\begin{align*}
\begin{array}{rll@{}}
&\left(\left(f+g\right)+w\right)\left(0\right)=\left(f+\left(g+w\right)\right)\left(0\right)=f\left(0\right)g\left(0\right)w\left(0\right)&\text{and}\\
\forall n\in\omega_+,&\left(\left(f+g\right)+w\right)\left(n\right)=\left(f+\left(g+w\right)\right)\left(n\right)=f\left(n\right)+g\left(n\right)+w\left(n\right)
\end{array}
\end{align*} And therefore $\left(\left[f\right]+\left[g\right]\right)+\left[w\right]=\left[f\right]+\left(\left[g\right]+\left[w\right]\right)$.\\
$\sigma\left(g\right)=\sigma\left(w\right)\neq\sigma\left(f\right)$: By the \nameref{Middle Lemma Corollary} \ref{Middle Lemma Corollary} and \ref{eq:middle} we have 
\begin{align*}
\begin{array}{lll@{}}
\left(\left(\left[f\right]+\left[g\right]\right)+\left[w\right]\right)+\left(\left[{}^ow\right]+\left[{}^og\right]\right)&=\left(\left[f\right]+\left[g\right]\right)+\left[{}^og\right]=\left(\left[f\right]+\left[g\right]\right)+\left(\left[{}^og\right]+\left[0\right]\right)\\
&=\left[f\right]+\left[0\right]=\left[f\right]&\text{and}\\
\left(\left[f\right]+\left(\left[g\right]+\left[w\right]\right)\right)+\left(\left[{}^ow\right]+\left[{}^og\right]\right)&=\left(\left[f\right]+\left(\left[g+w\right]\right)\right)+\left[{}^ow+{}^og\right]\\
&=\left(\left[f\right]+\left(\left[g+w\right]\right)\right)+\left({}^o\left[w+g\right]+\left[0\right]\right)\\
&=\left[f\right]+\left[0\right]=\left[f\right]
\end{array}
\end{align*} Thus, by the \nameref{Consistency Theorem'''} \ref{4.I} $\left(\left[f\right]+\left[g\right]\right)+\left[w\right]=\left[f\right]+\left(\left[g\right]+\left[w\right]\right)$.\\
$\sigma\left(f\right)=\sigma\left(w\right)\neq\sigma\left(g\right)$: By the \nameref{Middle Lemma Corollary} \ref{Middle Lemma Corollary} applied twice
\begin{align*}
\begin{array}{lll@{}}
\left(\left(\left[f\right]+\left[g\right]\right)+\left[w\right]\right)+\left(\left[{}^ow\right]+\left[{}^of\right]\right)&=\left(\left[f\right]+\left[g\right]\right)+\left[{}^of\right]=\left(\left[g\right]+\left[f\right]\right)+\left(\left[{}^of\right]+\left[0\right]\right)\\
&=\left[g\right]+\left[0\right]=\left[g\right]&\text{and}\\
\left(\left[f\right]+\left(\left[g\right]+\left[w\right]\right)\right)+\left(\left[{}^ow\right]+\left[{}^of\right]\right)&=\left(\left(\left[g\right]+\left[w\right]\right)+\left[f\right]\right)+\left(\left[{}^of\right]+\left[{}^ow\right]\right)\\
&=\left(\left[g\right]+\left[w\right]\right)+\left[{}^ow\right]=\left(\left[g\right]+\left[w\right]\right)+\left(\left[{}^ow\right]+\left[0\right]\right)\\
&=\left[g\right]+\left[0\right]=\left[g\right]
\end{array}
\end{align*} Thus, by the \nameref{Consistency Theorem'''} \ref{4.I} $\left(\left[f\right]+\left[g\right]\right)+\left[w\right]=\left[f\right]+\left(\left[g\right]+\left[w\right]\right)$.\\
$\sigma\left(f\right)=\sigma\left(g\right)\neq\sigma\left(w\right)$: By the \nameref{Middle Lemma Corollary} \ref{Middle Lemma Corollary} and \ref{eq:middle} we have 
\begin{align*}
\begin{array}{lll@{}}
\left(\left(\left[f\right]+\left[g\right]\right)+\left[w\right]\right)+\left(\left[{}^of\right]+\left[{}^og\right]\right)&=\left(\left[f+g\right]+\left[w\right]\right)+\left[{}^ow+{}^og\right]\\
&=\left(\left[w\right]+\left[f+g\right]\right)+\left({}^o\left[w+g\right]+\left[0\right]\right)\\
&=\left[w\right]+\left[0\right]=\left[w\right]&\text{and}\\
\left(\left[f\right]+\left(\left[g\right]+\left[w\right]\right)\right)+\left(\left[{}^of\right]+\left[{}^og\right]\right)&=\left(\left(\left[g\right]+\left[w\right]\right)+\left[f\right]\right)+\left(\left[{}^of\right]+\left[{}^og\right]\right)\\
&=\left(\left[g\right]+\left[w\right]\right)+\left[{}^og\right]=\left(\left[w\right]+\left[g\right]\right)+\left(\left[{}^og\right]+\left[0\right]\right)\\
&=\left[w\right]+\left[0\right]=\left[w\right]
\end{array}
\end{align*} Thus, by the \nameref{Consistency Theorem'''} \ref{4.I} $\left(\left[f\right]+\left[g\right]\right)+\left[w\right]=\left[f\right]+\left(\left[g\right]+\left[w\right]\right)$.\\
\end{proof}
\subsection{Compatibility with Order}
\begin{lemma}[Strict Additive Relation of Order Lemma]\label{Strict Additive Relation of Order Lemma}\noindent\\
For all real numbers $\left[a\right]$, $\left[b\right]$, and $\left[c\right]$ such that $\left[a\right]<\left[b\right]$ we have $\left[a\right]+\left[c\right]<\left[b\right]+\left[c\right]$.
\end{lemma}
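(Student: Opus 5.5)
The natural route is to avoid representatives altogether and work only with the algebraic facts about addition already established at the level of real numbers. These are associativity, commutativity, the additive identity $\left[0\right]$, the inverse ${}^o\left[c\right]$, and cancellation, i.e. the \nameref{Consistency Theorem'''} \ref{4.I}. Totality and the \nameref{Corollary of the Anti-Symmetry of Order} \ref{Corollary of the Anti-Symmetry of Order} then turn a weak inequality together with a non-equality into a strict one. The plan is to prove two facts separately: $\left[a\right]+\left[c\right]\neq\left[b\right]+\left[c\right]$, and $\left[a\right]+\left[c\right]\leq\left[b\right]+\left[c\right]$.

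\textbf{Step 1 (non-equality).} Since $\left[a\right]<\left[b\right]$ means $\neg\left(\left[b\right]\leq\left[a\right]\right)$, reflexivity gives $\left[a\right]\neq\left[b\right]$. If $\left[a\right]+\left[c\right]=\left[b\right]+\left[c\right]$, commutativity and \ref{4.I} would force $\left[a\right]=\left[b\right]$, which is a contradiction.

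\textbf{Step 2 (weak inequality).} I would first reduce to the case $\left[a\right]=\left[0\right]$. Set $\left[d\right]:=\left[b\right]+{}^o\left[a\right]$. Associativity, the inverse and the identity give $\left[b\right]=\left[a\right]+\left[d\right]$. I claim $\left[d\right]>\left[0\right]$. Otherwise $\left[d\right]\leq\left[0\right]$, and we would need a monotonicity statement for adding a non-positive number, which is what we are proving. To break the circle I would prove directly the one-sided lemma: \emph{if $\left[x\right]\geq\left[0\right]$ then $\left[y\right]+\left[x\right]\geq\left[y\right]$ for every $\left[y\right]$.} This is done on representatives, by cases on the sign of $\left[y\right]$.

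If $\sigma\left(\left[y\right]\right)=1$ (or $\left[y\right]=\left[0\right]$), take any $y\in\left[y\right]$ and a positive $x\in\left[x\right]$. The sub-sum $y+x$ dominates $y$ positionwise and has the same sign, so the \nameref{One-Case Lemma} \ref{One-Case Lemma} gives the claim.

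If $\sigma\left(\left[y\right]\right)=0$, split on $\left|\left[y\right]\right|$ versus $\left|\left[x\right]\right|$. When $\left|\left[y\right]\right|>\left|\left[x\right]\right|$, the sum is a tracking function $\varrho=w-x$ with $w\in\left[y\right]$ negative. Then $w$ dominates $\varrho$ positionwise with the same (negative) sign, so the \nameref{One-Case Lemma} \ref{One-Case Lemma} gives $\left[\varrho\right]\geq\left[w\right]=\left[y\right]$. When the magnitudes are equal the sum is $\left[0\right]\geq\left[y\right]$. When $\left|\left[y\right]\right|<\left|\left[x\right]\right|$ the sum is positive, hence $\geq\left[y\right]$ by the sign clause of the order.

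With this lemma, $\left[d\right]\leq\left[0\right]$, $\left[d\right]\neq\left[0\right]$ would give ${}^o\left[d\right]\geq\left[0\right]$ (sign flip). Then $\left[a\right]=\left[b\right]+{}^o\left[d\right]\geq\left[b\right]$, contradicting $\left[a\right]<\left[b\right]$. So $\left[d\right]\geq\left[0\right]$. Finally
\[\left[b\right]+\left[c\right]=\left(\left[a\right]+\left[c\right]\right)+\left[d\right]\geq\left[a\right]+\left[c\right]\]
by associativity, commutativity and the lemma applied with $\left[y\right]=\left[a\right]+\left[c\right]$ and $\left[x\right]=\left[d\right]$.

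\textbf{Conclusion.} Combining Steps 1 and 2 with the \nameref{Corollary of the Anti-Symmetry of Order} \ref{Corollary of the Anti-Symmetry of Order} yields $\neg\left(\left[b\right]+\left[c\right]\leq\left[a\right]+\left[c\right]\right)$, which is the claim.

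The main obstacle is the negative-$\left[y\right]$ branch of the one-sided lemma. One must check that the tracking representative $w$ really has the sign of $\left[y\right]$, and that the degenerate cases where $\left[x\right]=\left[0\right]$ or the sum is $\left[0\right]$ are handled by the additive identity rather than the \nameref{One-Case Lemma} \ref{One-Case Lemma}, which requires matching signs. I would treat $\left[x\right]=\left[0\right]$ separately at the start.
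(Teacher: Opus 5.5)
Your proof is correct, and it is organised quite differently from the paper's.

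The paper never uses the group structure. It splits into the cases $[a]=[0]$, $[b]=[0]$, both positive, both negative, and mixed signs. It subdivides the positive case by $\sigma([c])$ and, when $[c]$ is negative, by the five possible positions of $|[c]|$ relative to $|[a]|<|[b]|$. In each branch it exhibits same-sign representatives of the two sums, one strictly dominating the other, so that the One-Case Lemma and the Domination Theorem give the strict inequality directly. The mixed-sign case is then chained through $[0]$ using the first two cases.

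Your one-sided lemma is in substance the paper's case $[a]=[0]$, in non-strict form. Your branches on $\sigma([y])$ and on $|[y]|$ versus $|[x]|$ match its sub-cases, with your $w\in[y]$ dominating $w-x$ playing the role of the paper's ${}^ob+\varrho^c_b\in[c]$ dominating $\varrho^c_b$. What you add is the translation $[d]=[b]+{}^o[a]$: together with associativity, it reduces every other configuration to that single case. You also obtain strictness algebraically, from cancellation and the corollary of anti-symmetry, rather than from the Domination Theorem. This reverses the paper's order, in which the non-strict axiom is deduced from the strict lemma.

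Your route buys a much smaller case analysis and a clean separation between the one representation-level fact actually needed and formal ordered-group reasoning. The cost is a dependence on associativity of addition, and through it on the Middle Lemma. The paper's direct argument needs only sub-addition, tracking functions, the One-Case Lemma and the Domination Theorem, and could be run before associativity is established.

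One small point: for $[y]=[0]$ in your first branch the additive identity, as you suggest, is the right tool rather than the One-Case Lemma. There the sub-sum is not digitwise, and a representative of $[0]$ may have an odd sign slot.
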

\begin{proofidea} One mechanism is used over and over. To compare two sums we do not compare the classes directly: we choose representatives, observe that adding a common number leaves one pointwise above the other at every position, and note that the \nameref{Null Lemma} \ref{Null Lemma} supplies a position at which the inequality is strict. The \nameref{One-Case Lemma} \ref{One-Case Lemma} then converts that single pointwise comparison into a comparison of real numbers, and the \nameref{Domination Theorem} \ref{Domination Theorem} upgrades it to a strict one. The length of the proof is the case matrix, sub-addition branches on the signs of the three numbers, and on which magnitude is larger when signs differ, but every branch runs on that same pairing.
\end{proofidea}
\begin{proof} Suppose $\left[a\right]<\left[b\right]$. We see by the axiom of additive identity that if $\left[c\right]=\left[0\right]$ then the implication holds. Moving forward, we suppose $\left[c\right]\neq\left[0\right]$. We shall prove this by cases:\\
\textbf{Case 1 $\left[a\right]=\left[0\right]$:} Hence, $\sigma\left(\left[b\right]\right)=1$. If $\sigma\left(\left[c\right]\right)=1$ then $\sigma\left(\left[c\right]\right)=\sigma\left(\left[b\right]+\left[c\right]\right)=1$ and by taking any $b\in\left[b\right]$ and $f\in\left[c\right]$ we see that $\left(b+f\right)\in\left(\left[b\right]+\left[c\right]\right)$ where for all $n\in\omega_+$ we have that $\left(b+f\right)\left(n\right)=b\left(n\right)+f\left(n\right)\geq f\left(n\right)$. thus, as, by the \nameref{Null Lemma} \ref{Null Lemma} we have that there exists $m\in\omega_+$ such that $b\left(m\right)\geq1$ and thus $\left(b+f\right)$ strictly dominates $f$. Hence by the \nameref{One-Case Lemma} \ref{One-Case Lemma} and the \nameref{Domination Theorem} \ref{Domination Theorem} we have that 
\begin{align*}
\left[a\right]+\left[c\right]=\left[0\right]+\left[c\right]=\left[c\right]<\left[b\right]+\left[c\right]
\end{align*}
If $\sigma\left(\left[c\right]\right)=0$ then either, $\left|\left[c\right]\right|=\left|\left[b\right]\right|$ or $\left|\left[c\right]\right|<\left|\left[b\right]\right|$ or $\left|\left[c\right]\right|>\left|\left[b\right]\right|$. In the first case $\left[b\right]+\left[c\right]=\left[0\right]$ and $\sigma\left(\left[c\right]\right)=0$ and thus 
\begin{align*}
\left[a\right]+\left[c\right]=\left[0\right]+\left[c\right]=\left[c\right]<\left[b\right]+\left[c\right]
\end{align*}
In the second case, $\sigma\left(\left[c\right]\right)=0$ and $\sigma\left(\left[b\right]+\left[c\right]\right)=\sigma\left(\left[b\right]\right)=1$ and thus 
\begin{align*}
\left[a\right]+\left[c\right]=\left[0\right]+\left[c\right]=\left[c\right]<\left[b\right]+\left[c\right]
\end{align*}
In the last case, $\sigma\left(\left[b\right]+\left[c\right]\right)=\sigma\left(\left[c\right]\right)=0$. Take any $b\in\left[b\right]$. By the \nameref{Included Exclusion Lemma} \ref{Included Exclusion Lemma} we have that $\left({}^ob+\varrho_b^c\right)\in\left[c\right]$ and thus for all $n\in\omega_+$ we have that $\left({}^ob+\varrho_b^c\right)\left(n\right)={}^ob\left(n\right)+\varrho_b^c\left(n\right)\geq\varrho_b^c\left(n\right)$. By the \nameref{Null Lemma} \ref{Null Lemma} there exists $m\in\omega_+$ such that ${}^ob\left(m\right)=b\left(m\right)\geq1$ and thus $\left({}^ob+\varrho_b^c\right)$ strictly dominates $\varrho_b^c\left(n\right)$. Hence by the \nameref{One-Case Lemma} \ref{One-Case Lemma} and the \nameref{Domination Theorem} \ref{Domination Theorem} we have
\begin{align*}
\left[a\right]+\left[c\right]=\left[0\right]+\left[c\right]=\left[c\right]<\left[b\right]+\left[c\right]
\end{align*}
\textbf{Case 2 $\left[a\right]\neq\left[0\right]\wedge\left[b\right]=\left[0\right]$:} We must have $\sigma\left(\left[a\right]\right)=0$. The proof is the same as above with inverted signs.\\\\
\textbf{Case 3 $\sigma\left(\left[a\right]\right)=\sigma\left(\left[b\right]\right)=1$:} If $\sigma\left(\left[c\right]\right)=1$ then we take any $a\in\left[a\right]$ and we know that there exists $b\in\left[b\right]$ such that for all $n\in\omega_+$ we have $a\left(n\right)\leq b\left(n\right)$ and there exists $m\in\omega_+$ such that $a\left(m\right)<b\left(m\right)$. Let any $c\in\left[c\right]$ be given. We see that for all $n\in\omega_+$ we have $\left(a+c\right)\left(n\right)=a\left(n\right)+c\left(n\right)\leq b\left(n\right)+c\left(n\right)=\left(b+c\right)\left(n\right)$ and there exists $m\in\omega_+$ such that $\left(a+c\right)\left(m\right)=a\left(m\right)+c\left(m\right)<b\left(m\right)+c\left(m\right)=\left(b+c\right)\left(m\right)$. Hence by the \nameref{One-Case Lemma} \ref{One-Case Lemma} and the \nameref{Domination Theorem} \ref{Domination Theorem} we have $\left[a\right]+\left[c\right]<\left[b\right]+\left[c\right]$.\\
If $\sigma\left(\left[c\right]\right)=0$ then we have five cases 
\begin{align*}
\begin{array}{l@{}}
\left|\left[c\right]\right|=\left|\left[a\right]\right|<\left|\left[b\right]\right|\\
\left|\left[a\right]\right|<\left|\left[c\right]\right|=\left|\left[b\right]\right|\\
\left|\left[c\right]\right|<\left|\left[a\right]\right|<\left|\left[b\right]\right|\\
\left|\left[a\right]\right|<\left|\left[b\right]\right|<\left|\left[c\right]\right|\\
\left|\left[a\right]\right|<\left|\left[c\right]\right|<\left|\left[b\right]\right|
\end{array}
\end{align*}
If $\left|\left[c\right]\right|=\left|\left[a\right]\right|<\left|\left[b\right]\right|$ then $\left[c\right]=\left[{}^oa\right]$ and $\sigma\left(\left[b\right]+\left[c\right]\right)=\sigma\left(\left[b\right]\right)=1$ and thus 
\begin{align*}
\left[a\right]+\left[c\right]=\left[0\right]<\left[b\right]+\left[c\right]
\end{align*}
If $\left|\left[a\right]\right|<\left|\left[c\right]\right|=\left|\left[b\right]\right|$ then $\left[c\right]=\left[{}^ob\right]$ and $\sigma\left(\left[a\right]+\left[c\right]\right)=\sigma\left(\left[c\right]\right)=0$ and thus 
\begin{align*}
\left[a\right]+\left[c\right]<\left[0\right]=\left[b\right]+\left[c\right]
\end{align*}
If $\left|\left[c\right]\right|<\left|\left[a\right]\right|<\left|\left[b\right]\right|$ then $\sigma\left(\left[a\right]+\left[c\right]\right)=\sigma\left(\left[b\right]+\left[c\right]\right)=1$. Furthermore, by the \nameref{Included Exclusion Lemma} \ref{Included Exclusion Lemma} we have $\left({}^oc+\varrho_c^a\right)\in\left[a\right]$. Moreover, we know that there exists $b\in\left[b\right]$ such that for all $n\in\omega_+$ we have $b\left(n\right)\geq\left({}^oc+\varrho_c^a\right)\left(n\right)={}^oc\left(n\right)+\varrho_c^a\left(n\right)=\varrho_c^a\left(n\right)+c\left(n\right)$ and there exists $m\in\omega_+$ such that we have $b\left(m\right)>\left({}^oc+\varrho_c^a\right)\left(m\right)={}^oc\left(m\right)+\varrho_c^a\left(m\right)=\varrho_c^a\left(m\right)+c\left(m\right)$.\\
Take $\mu\in\mathcal{T}_c^b$ where 
\begin{align*}
\mu\left(k\right)=\begin{cases}
b\left(0\right)&k=0\\
b\left(k\right)-c\left(k\right)&k\in\omega_+
\end{cases}
\end{align*}
Hence, by the \nameref{One-Case Lemma} \ref{One-Case Lemma} and the \nameref{Domination Theorem} \ref{Domination Theorem} as for all $n\in\omega_+$ we have that $\mu\left(n\right)\geq\varrho_c^a\left(n\right)$ and there exists $m\in\omega_+$ such that we have that $\mu\left(m\right)>\varrho_c^a\left(m\right)$ we have 
\begin{align*}
\left[a\right]+\left[c\right]=\left[\varrho_c^a\right]<\left[\mu\right]=\left[b\right]+\left[c\right]
\end{align*}
If $\left|\left[a\right]\right|<\left|\left[b\right]\right|<\left|\left[c\right]\right|$ then $\sigma\left(\left[a\right]+\left[c\right]\right)=\sigma\left(\left[b\right]+\left[c\right]\right)=0$. Furthermore, take any $a\in\left[a\right]$. By the definition of order there exists $b\in\left[b\right]$ such that for all $n\in\omega_+$ we have $a\left(n\right)\leq b\left(n\right)$ and there exists $m\in\omega_+$ we have $a\left(m\right)>b\left(m\right)$. By the definition of order and absolute value we have that $\exists g\in\left|\left[c\right]\right|,\forall n\in\omega_+,a\left(n\right)\leq b\left(n\right)=\left|b\right|\left(n\right)\leq g\left(n\right)$. However, $\left|\left[{}^og\right]\right|=\left|\left[c\right]\right|$ and $\sigma\left({}^og\right)=\sigma\left(c\right)=0$ and thus by the \nameref{Absolute-Opposite Lemma} \ref{Absolute-Opposite Lemma} $\left[{}^og\right]=\left[c\right]$ and thus ${}^og\in\left[c\right]$.\\
Hence, we have $\mu_1\in\mathcal{T}_a^{{}^og}$ and $\mu_2\in\mathcal{T}_b^{{}^og}$ where 
\begin{align*}
\begin{array}{ll@{}}
\mu_1\left(k\right)=
\begin{cases}
{}^og\left(0\right)&k=0\\
{}^og\left(k\right)-a\left(k\right)&k\in\omega_+
\end{cases}&
\mu_2\left(k\right)=
\begin{cases}
{}^og\left(0\right)&k=0\\
{}^og\left(k\right)-b\left(k\right)&k\in\omega_+
\end{cases}
\end{array}
\end{align*} 
Therefore, for all $n\in\omega_+$ we have $\mu_1\left(n\right)\geq\mu_2\left(n\right)$ and there exists $m\in\omega_+$ such that we have $\mu_1\left(m\right)>\mu_2\left(m\right)$ and thus by the \nameref{One-Case Lemma} \ref{One-Case Lemma}, the \nameref{Domination Theorem} \ref{Domination Theorem} and the \nameref{Rearrangement Lemma} \ref{Rearrangement Lemma} we have that 
\begin{align*}
\left[a\right]+\left[c\right]=\left[\mu_1\right]<\left[\mu_2\right]=\left[b\right]+\left[c\right]
\end{align*}
If $\left|\left[a\right]\right|<\left|\left[c\right]\right|<\left|\left[b\right]\right|$ then $\sigma\left(\left[a\right]+\left[c\right]\right)=\sigma\left(\left[c\right]\right)=0$ and $\sigma\left(\left[b\right]+\left[c\right]\right)=\sigma\left(\left[b\right]\right)=1$ and thus
\begin{align*}
\left[a\right]+\left[c\right]<\left[0\right]<\left[b\right]+\left[c\right]
\end{align*}
\textbf{Case 4 $\sigma\left(\left[a\right]\right)=\sigma\left(\left[b\right]\right)=0$:} The proof is the same as above with inverted signs.\\\\
\textbf{Case 5 $\sigma\left(\left[a\right]\right)=0\wedge\sigma\left(\left[b\right]\right)=1$:} We have that
\begin{align*}
\left[a\right]<\left[0\right]<\left[b\right]
\end{align*}
And thus by the cases 1 and 2 we have that for any real number $\left[c\right]$
\begin{align*}
\begin{array}{ll@{}}
\left[a\right]+\left[c\right]<\left[0\right]+\left[c\right]=\left[c\right]&\text{and}\\
\left[c\right]=\left[0\right]+\left[c\right]<\left[b\right]+\left[c\right]
\end{array}
\end{align*}
Hence,
\begin{align*}
\left[a\right]+\left[c\right]<\left[c\right]<\left[b\right]+\left[c\right]
\end{align*}
\end{proof}

\begin{axiom}[Additive Relation of Order]\noindent\\
For all real numbers $\left[a\right]$, $\left[b\right]$, and $\left[c\right]$ such that $\left[a\right]\leq\left[b\right]$ then we have $\left[a\right]+\left[c\right]\leq\left[b\right]+\left[c\right]$.
\end{axiom}
\begin{proof} $\left[a\right]\leq\left[b\right]$ implies $\left[a\right]<\left[b\right]$ or $\left[a\right]=\left[b\right]$. By the \nameref{Strict Additive Relation of Order Lemma} \ref{Strict Additive Relation of Order Lemma} the former case implies  $\left[a\right]+\left[c\right]<\left[b\right]+\left[c\right]$ which implies $\left[a\right]+\left[c\right]\leq\left[b\right]+\left[c\right]$. The latter case implies that $\left[a\right]+\left[c\right]=\left[b\right]+\left[c\right]$ by the \nameref{Consistency Theorem'''} \ref{4.I} \ref{Consistency Theorem'''}, which implies $\left[a\right]+\left[c\right]\leq\left[b\right]+\left[c\right]$.
\end{proof}
\section{Multiplication}\label{sec:mult}
Multiplication is where a base that flows makes itself felt. In a constant base the product of a digit at position $n$ with a digit at position $k$ lands squarely at position $n+k-1$, which is the rule of long multiplication; when the base varies, the combined scale of the two positions need not be the scale of any position at all, and the contribution has to be distributed across several. The rest function is the bookkeeping for that distribution, and it is the reason this section is longer than the last. The architecture is otherwise the same as for addition: define the operation on representatives, show a carry inside a factor is invisible to the product, prove that the definition descends to the classes, and then verify the axioms.
\subsection{The Rest Function and Sub-Multiplication}
\begin{definition}[Rest Function]\label{Rest Function}\noindent\\
For all $k$, $n$ and $M$ in $\omega_+$ we define the rest function as:
\begin{align*}
    r\left(n,k,M\right)=\max\left(\left\{t\in\omega\middle|t\prod\limits_{j=1}^{n-1}\left(\vartheta\left(j\right)\right)\prod\limits_{j=1}^{k-1}\left(\vartheta\left(j\right)\right)\leq\prod\limits_{j=1}^{M-1}\left(\vartheta\left(j\right)\right)\right\}\right)
\end{align*}
if $M=1$ and
\begin{align*}
r\left(n,k,M\right):=\Upsilon\left(\begin{array}{r@{}}
\max\left(\left\{t\in\omega\middle|t\prod\limits_{j=1}^{n-1}\left(\vartheta\left(j\right)\right)\prod\limits_{j=1}^{k-1}\left(\vartheta\left(j\right)\right)\leq\prod\limits_{j=1}^{M-1}\left(\vartheta\left(j\right)\right)\right\}\right)\\
\vartheta\left(M-1\right)\max\left(\left\{t\in\omega\middle|t\prod\limits_{j=1}^{n-1}\left(\vartheta\left(j\right)\right)\prod\limits_{j=1}^{k-1}\left(\vartheta\left(j\right)\right)\leq\prod\limits_{j=1}^{M-2}\left(\vartheta\left(j\right)\right)\right\}\right)
\end{array}\right)
\end{align*}
if $M\geq2$
\end{definition}

\begin{remark} Notice that the second argument in the $\Upsilon$ function is always smaller, as for all $a$, $b$ and $c$ in $\omega_+$ we have that
\begin{align*}
a\max\left(\left\{t\in\omega|tb\leq c\right\}\right)\leq\max\left(\left\{t\in\omega|tb\leq ac\right\}\right)
\end{align*}
Also notice that for all $k$, $n$, $M$ in $\omega_+$ we have $r\left(n,k,M\right)=r\left(k,n,M\right)$.
\end{remark}

\begin{intuition}[Rest Function] Multiplication is where a mixed-radix system earns its keep. When we multiply a digit at position $n$ by a digit at position $k$, their product lives at a combined, finer scale; to write the answer as a proper number we must redistribute that value into position-$M$ digits. The rest function $r\left(n,k,M\right)$ is exactly that redistribution rule, it counts how many whole units of the scale at position $M$ fit inside the combined scale of $n$ and $k$. It is the grade-school rule for carrying a partial product into its column, written out honestly for a base that changes from position to position. (For the first position this is a single count; past it, it is the difference of two counts, which is what isolates the $M$-th digit.)
\end{intuition}

\begin{definition}[Sub-Multiplication]\label{Sub-Multiplication}\noindent\\
We define the sub-multiplication of two finite functions $f$ and $g$ as a function $\left(f*g\right):\omega\rightarrow\omega$ where
\begin{align*}
\left(f*g\right)\left(M\right)=
\begin{cases}
f\left(0\right)+g\left(0\right)&M=0\\
\sum\limits_{n=1}^{M}\left(\sum\limits_{k=1}^{M}f\left(n\right)g\left(k\right)\left(r\left(n,k,M\right)\right)\right)&M\in\omega_+
\end{cases}
\end{align*}
\end{definition}

\begin{remark} Notice that as $r\left(n,k,M\right)=r\left(k,n,M\right)$ we have that sub-multiplication is commutative.
\end{remark}

\begin{intuition}[Sub-Multiplication] To multiply two numbers we convolve their digit strings: every pair of positions $\left(n,k\right)$ contributes the product of its digits, weighted by the rest function that says how much of that contribution lands at the output position $M$. The sign slot at $0$ is combined by \emph{adding} the two values there, which, again reading positive as even and negative as odd, multiplies the signs correctly, so like signs give a positive product and unlike signs a negative one.
\end{intuition}
\begin{example}\label{ex:submult} Take the base of Example 2, $\vartheta(n)=n+1$, so that position $n$ carries weight $\nicefrac{1}{n!}$: the weights are $1,\nicefrac{1}{2},\nicefrac{1}{6},\nicefrac{1}{24},\dots$ In this system let $f=g=(0;0,1,0,\dots)$, both of value $\nicefrac{1}{2}$.\\
The only non-zero digits are $f(2)=g(2)=1$, so the convolution collapses to the single term $r(2,2,M)$ at each output position, and everything depends on the rest function. Since $\prod_{j=1}^{1}(\vartheta(j))=2$, the two denominators multiply to $4$, and the running products $\prod_{j=1}^{M-1}(\vartheta(j))$ are $1,2,6,24,120,\dots$ Reading off the maxima in the definition,
\begin{align*}
r(2,2,3)=\Upsilon(1,\vartheta(2)\cdot0)=1,\qquad
r(2,2,4)=\Upsilon(6,\vartheta(3)\cdot1)=\Upsilon(6,4)=2,
\end{align*} 
while $r(2,2,1)=r(2,2,2)=0$, and from $M=5$ onwards the two arguments coincide, at $M=5$ they are $30$ and $\vartheta(4)\cdot6=30$, so every later value is $0$ as well. Hence
\begin{align*}
f*g=(0;0,0,1,2,0,\dots),
\end{align*} 
of value $\nicefrac{1}{6}+\nicefrac{2}{24}=\nicefrac{1}{4}$, and no digit reaches its base, since $1<\vartheta(2)=3$ and $2<\vartheta(3)=4$, so this is already the primary auxiliary function of $\nicefrac{1}{4}$.\\
This is the behaviour the rest function exists to describe. A single pair of positions here contributes to \emph{two} output positions at once, because the combined weight $\nicefrac{1}{2}\cdot\nicefrac{1}{2}=\nicefrac{1}{4}$ is not a weight of any position in this system: it must be expressed as $1\cdot\nicefrac{1}{6}+2\cdot\nicefrac{1}{24}$. In a constant base this never happens, there $r(n,k,M)$ is $1$ when $M=n+k-1$ and $0$ otherwise, which is just the rule of long multiplication, and it is only when the base flows that the general definition earns its complexity. Even though some strings may be infinite, by considering at each point only strings from lower positions we keep the calculations at every point finite. Figure~\ref{fig:convolution} shows which pairs $(n,k)$ contribute to which output position, and how a single partial product can straddle several when the base flows.
\end{example}

\begin{figure}[tbp]
  \centering
\begin{tikzpicture}[x=1.05cm, y=0.72cm,
  cell/.style={draw=black!35, minimum width=1.02cm, minimum height=0.7cm,
               font=\scriptsize, inner sep=0pt},
  d1/.style={cell, fill=blue!13}, d2/.style={cell, fill=orange!20},
  d3/.style={cell, fill=green!16!black!8}, d4/.style={cell, fill=black!7},
  ax/.style={font=\scriptsize, text=black!70},
]
 \node[ax] at (-1.15,1) {$k=$};
 \foreach \k in {1,2,3,4}{\node[ax] at (\k,1) {\k};}
 \node[ax] at (-1.15,0) {$n=$};
 \foreach \n in {1,2,3,4}{\node[ax] at (-1.15,-\n+0) {\n};}
 
 \foreach \n in {1,2,3,4}{
  \foreach \k in {1,2,3,4}{
    \pgfmathtruncatemacro{\M}{\n+\k-1}
    \pgfmathtruncatemacro{\C}{min(\M,4)}
    \node[d\C] at (\k,-\n) {$M=\M$};
  }}
 
 \draw[decorate,decoration={brace,amplitude=5pt},black!55]
   (4.55,-0.65) -- (4.55,-4.35)
   node[midway,right=6pt,ax,align=left,text width=4.4cm]
   {cell $(n,k)$ holds $f(n)\,g(k)$, and
    $r(n,k,M)$ says how much of it lands at output position $M$};
 
 \node[ax,align=left,text width=9.2cm] at (2.4,-5.9)
  {\textbf{Constant base.} Each partial product lands in exactly one column:
   $r(n,k,M)=1$ when $M=n+k-1$ and $0$ otherwise, so the shaded
   anti-diagonals are the columns of ordinary long multiplication.\\[2pt]
   \textbf{Flowing base.} The diagonals blur. With
   $\vartheta=(2,3,5,\dots)$ the single cell $n=k=2$ contributes
   $r(2,2,3)=1$ at position $3$ \emph{and} $r(2,2,4)=2$ at position $4$:
   one partial product spread across two columns. The rest function is
   exactly the bookkeeping for that spreading.};
\end{tikzpicture}
  \caption{Sub-multiplication as a convolution over pairs of
positions. In a constant base the contribution of the pair $(n,k)$ falls
entirely at position $n+k-1$, recovering schoolbook long multiplication. When
the base flows, a single partial product may straddle several output
positions, and $r(n,k,M)$ records how much of it lands at each.}
  \label{fig:convolution}
\end{figure}
\begin{remark} For the sake of clarity and not cluttering the pages we will introduce new notation for the identity showed above. For all $n$, $k$ and $K$ in $\omega_+$ we denote
\begin{align*}
R\left(n,k,K\right):=\max\left(\left\{t\in\omega\middle|t\prod\limits_{j=1}^{n-1}\left(\vartheta\left(j\right)\right)\prod\limits_{j=1}^{k-1}\left(\vartheta\left(j\right)\right)\leq\prod\limits_{j=1}^{K-1}\left(\vartheta\left(j\right)\right)\right\}\right)
\end{align*}
\begin{lemma}[Rest Identity]\label{Rest Identity}\noindent\\
For all $n$, $k$, $K$ in $\omega_+$ we have
\begin{equation}\label{eq:rest-identity}
\sum\limits_{M=\max\left(\left\{n,k\right\}\right)}^K\left(r\left(n,k,M\right)\prod\limits_{p=M}^{K-1}\left(\vartheta\left(p\right)\right)\right)=R\left(n,k,K\right)
\end{equation}
\end{lemma}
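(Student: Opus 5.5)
The plan is a telescoping argument, once the rest function has been rewritten in terms of $R$. Abbreviate $P\left(m\right):=\prod_{j=1}^{m-1}\left(\vartheta\left(j\right)\right)$ and $Q\left(M\right):=\prod_{p=M}^{K-1}\left(\vartheta\left(p\right)\right)$. Then $R\left(n,k,m\right)$ is the rational floor of $P\left(m\right)$ by $P\left(n\right)P\left(k\right)$, and the definition of the \nameref{Rest Function} \ref{Rest Function} reads $r\left(n,k,1\right)=R\left(n,k,1\right)$ and, for $M\geq2$,
\begin{align*}
r\left(n,k,M\right)=R\left(n,k,M\right)-\vartheta\left(M-1\right)R\left(n,k,M-1\right).
\end{align*}
This is a genuine subtraction in $\omega$: since $P\left(M\right)=\vartheta\left(M-1\right)P\left(M-1\right)$, the first rational floor inequality of Subsection \ref{Methods} (equivalently, the remark after the definition of the rest function) gives $\vartheta\left(M-1\right)R\left(n,k,M-1\right)\leq R\left(n,k,M\right)$. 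So $\Upsilon$ is exactly this difference.

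First I dispose of the degenerate range. If $K<\max\left(\left\{n,k\right\}\right)$, the left side of \eqref{eq:rest-identity} is an empty sum, equal to $0$. On the right, assume without loss of generality $n\geq k$. Then $P\left(K\right)<P\left(n\right)\leq P\left(n\right)P\left(k\right)$, because every $\vartheta\geq2$, so the rational floor is $0$ and $R\left(n,k,K\right)=0$.

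Now let $m_0:=\max\left(\left\{n,k\right\}\right)\leq K$. For each $M\geq2$ in the range, multiply the difference formula by $Q\left(M\right)$ and use $\vartheta\left(M-1\right)Q\left(M\right)=Q\left(M-1\right)$:
\begin{align*}
r\left(n,k,M\right)Q\left(M\right)=R\left(n,k,M\right)Q\left(M\right)-R\left(n,k,M-1\right)Q\left(M-1\right).
\end{align*}
Summing from $M=m_0$ to $K$ telescopes. To keep everything in $\omega$, I will phrase this as an induction on $K$ starting at $m_0$, adding the terms rather than subtracting. The sum collapses to $R\left(n,k,K\right)Q\left(K\right)-R\left(n,k,m_0-1\right)Q\left(m_0-1\right)$. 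Here $Q\left(K\right)=1$, being an empty product.

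The boundary term vanishes. If $m_0\geq2$, then $R\left(n,k,m_0-1\right)=0$, since $P\left(m_0-1\right)<P\left(m_0\right)\leq P\left(n\right)P\left(k\right)$ by the same observation as in the degenerate case. If $m_0=1$, then $n=k=1$, and the first term of the sum is $r\left(1,1,1\right)Q\left(1\right)=R\left(1,1,1\right)Q\left(1\right)$, which starts the telescope with no boundary term at all. Either way the sum equals $R\left(n,k,K\right)$, which is \eqref{eq:rest-identity}.

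The only delicate point is bookkeeping in $\omega$, where no negative numbers are available. I handle it by the induction on $K$: the inductive step adds $r\left(n,k,K+1\right)$ and multiplies the earlier sum by $\vartheta\left(K\right)$. The step then becomes $\vartheta\left(K\right)R\left(n,k,K\right)+r\left(n,k,K+1\right)=R\left(n,k,K+1\right)$, which is precisely the definition of $r$ together with the nonnegativity noted above.
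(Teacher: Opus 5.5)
Your proposal is correct and follows the paper's proof: induction on $K$ starting at $\max\{n,k\}$, with the step $\vartheta(K)R(n,k,K)+r(n,k,K+1)=R(n,k,K+1)$ read directly off the definition of the rest function, where the rational floor inequality makes the $\Upsilon$ a genuine difference. You are in fact slightly more careful than the paper: it calls the base case ``simply the definition'' without noting that this requires $R(n,k,\max\{n,k\}-1)=0$, and it never treats the range $K<\max\{n,k\}$, where you correctly observe that both sides vanish.
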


\begin{intuition}[Rest Identity] This identity is the guarantee that sub-multiplication computes the true product and not merely something that looks like it. The rest coefficients are, position by position, the mixed-radix digits of the combined scale; summing them back with their scales must reconstruct the number of whole units of the scale at level $K$ that fit inside the combined scale of $n$ and $k$.
\end{intuition}

\begin{proof} We shall prove this by induction. We see directly that for $K=\max\left(\left\{n,k\right\}\right)$ that is simply the definition of the ``rest" function. Now suppose that the identity holds for $K-1$. We see that:
\begin{align*}
&\sum\limits_{M=\max\left(\left\{n,k\right\}\right)}^K\left(r\left(n,k,M\right)\prod\limits_{p=M}^{K-1}\left(\vartheta\left(p\right)\right)\right)\\
=&\vartheta\left(K-1\right)\sum\limits_{M=\max\left(\left\{n,k\right\}\right)}^{K-1}\left(r\left(n,k,M\right)\prod\limits_{p=M}^{K-2}\left(\vartheta\left(p\right)\right)\right)+r\left(n,k,K\right)\\
=&\vartheta\left(K-1\right)\max\left(\left\{t\in\omega\middle|t\prod\limits_{j=1}^{n-1}\left(\vartheta\left(j\right)\right)\prod\limits_{j=1}^{k-1}\left(\vartheta\left(j\right)\right)\leq\prod\limits_{j=1}^{K-2}\left(\vartheta\left(j\right)\right)\right\}\right)\\
&+\Upsilon\left(\begin{array}{r@{}}
\max\left(\left\{t\in\omega\middle|t\prod\limits_{j=1}^{n-1}\left(\vartheta\left(j\right)\right)\prod\limits_{j=1}^{k-1}\left(\vartheta\left(j\right)\right)\leq\prod\limits_{j=1}^{K-1}\left(\vartheta\left(j\right)\right)\right\}\right)\\
\vartheta\left(K-1\right)\max\left(\left\{t\in\omega\middle|t\prod\limits_{j=1}^{n-1}\left(\vartheta\left(j\right)\right)\prod\limits_{j=1}^{k-1}\left(\vartheta\left(j\right)\right)\leq\prod\limits_{j=1}^{K-2}\left(\vartheta\left(j\right)\right)\right\}\right)
\end{array}\right)\\
=&\max\left(\left\{t\in\omega\middle|t\prod\limits_{j=1}^{n-1}\left(\vartheta\left(j\right)\right)\prod\limits_{j=1}^{k-1}\left(\vartheta\left(j\right)\right)\leq\prod\limits_{j=1}^{K-1}\left(\vartheta\left(j\right)\right)\right\}\right)
\end{align*}
\end{proof}
\end{remark}
\subsection{Consistency}
\begin{lemma}[Sub-Multiplication Finiteness Lemma]\label{Sub-Multiplication Finiteness Lemma}\noindent\\
For all finite functions $f$ and $g$ we have that $\left(f*g\right)$ is finite.
\end{lemma}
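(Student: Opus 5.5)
The plan is to go through the \nameref{Tying Theorem} \ref{Tying Theorem}. It suffices to produce a constant $C\in\omega_+$ such that for every $K\in\omega_+$
\begin{align*}
\sum\limits_{M=1}^K\left(\left(f*g\right)\left(M\right)\prod\limits_{p=M}^{K-1}\left(\vartheta\left(p\right)\right)\right)<C\prod\limits_{p=1}^{K-1}\left(\vartheta\left(p\right)\right).
\end{align*}
Applying the forward direction of the same theorem to $f$ and $g$ gives constants $C_f,C_g$ bounding their weighted truncations.

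First I expand $(f*g)(M)$ by its definition. Then I use the permutation of bound-dependent sums recorded in Subsection \ref{Methods} to move the sum over $M$ innermost. This rewrites the left-hand side as
\begin{align*}
\sum\limits_{n=1}^K\sum\limits_{k=1}^K f\left(n\right)g\left(k\right)\sum\limits_{M=\max\left(\left\{n,k\right\}\right)}^K\left(r\left(n,k,M\right)\prod\limits_{p=M}^{K-1}\left(\vartheta\left(p\right)\right)\right).
\end{align*}
Before that I must check that $r(n,k,M)=0$ whenever $M<\max\{n,k\}$, so that no terms are lost when the inner range starts at $\max\{n,k\}$. This holds because the product of the two scales then exceeds $\prod_{j=1}^{M-1}\vartheta(j)$, so both rational floors vanish. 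By the \nameref{Rest Identity} \ref{Rest Identity}, the innermost sum equals $R(n,k,K)$.

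Next I bound $R(n,k,K)$ from above. Writing $P_m:=\prod_{j=1}^{m-1}\vartheta(j)$, the definition of the floor gives $R(n,k,K)\le P_K/(P_nP_k)$. The goal is to factor the double sum into the product of the truncations of $f$ and $g$. This is not exact, because $P_K/(P_nP_k)$ need not be an integer. Instead I would use the paired rational-floor inequality from Subsection \ref{Methods} to bound
\begin{align*}
P_K\sum_{n,k}f(n)g(k)R(n,k,K)\le\sum_n f(n)\frac{P_K}{P_n}\cdot\sum_k g(k)\frac{P_K}{P_k}.
\end{align*}
The right-hand side equals $\bigl(\sum_n f(n)\prod_{p=n}^{K-1}\vartheta(p)\bigr)\bigl(\sum_k g(k)\prod_{p=k}^{K-1}\vartheta(p)\bigr)<C_fC_g P_K^2$. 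Dividing by $P_K$ gives the bound with $C=C_fC_g$, and the Tying Theorem then yields finiteness.

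I expect the main obstacle to be carrying out this last step within $\omega$, since the paper has no division and no rationals. The inequality $t\,P_nP_k\le P_K$ must be multiplied through by $P_K$ and regrouped as the product of the two integer truncations, keeping every manipulation a comparison of natural numbers. If that becomes awkward, a fallback is cruder: bound $R(n,k,K)\le P_K/P_n$ using $P_k\ge1$. Then apply $C_f$ to the $n$-sum, and bound the $k$-sum for each fixed $n$ by $C_g P_K/P_k$ summed geometrically, using $\vartheta\geq2$.
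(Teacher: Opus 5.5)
Your main line is correct and is essentially the paper's proof. Both use the Tying Theorem for $f$ and $g$, the permutation of sums and the Rest Identity to reach $\sum_{n,k}f(n)g(k)R(n,k,K)$, and then a rational-floor estimate giving the constant $C_fC_g$. The only difference is in that last estimate. The paper nests the floors: composition, then the paired-sum inequality over $k$, then the bound on $g$'s truncation. You instead bound termwise by $R(n,k,K)P_K\le (P_K/P_n)(P_K/P_k)$ and factor the double sum into the product of the two truncations. That step is legitimate in $\omega$, since $P_K/P_n=\prod_{p=n}^{K-1}\vartheta(p)$ is an exact product for $n\le K$; strictly it needs only $R(n,k,K)P_nP_k\le P_K$, not the paired inequality you cite.

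Do not use your fallback, though. Discarding $P_k$ leaves the factor $\sum_{k=1}^{K}g(k)$, which is unbounded in general (for example, a $(\vartheta-1)$-tailed $g$), so that route only yields a bound of order $P_K^2$ and does not close.
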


\begin{intuition}[Finiteness and Carrying for Sub-Multiplication] As with addition, before multiplication can descend to real numbers we must know two things. Finiteness: the product of two finite numbers has finite material and so is again a finite number. Carrying: a carry inside a factor commutes with the product, so multiplying is insensitive to how each factor is written. These are the multiplicative echoes of the sub-addition lemmas, and they set up the consistency theorems the same way.
\end{intuition}

\begin{proof} Take two finite functions $f$ and $g$. By the \nameref{Tying Theorem} \ref{Tying Theorem} we know that there exist constants $C_1$ and $C_2$ in $\omega_+$ such that for all $K\in\omega_+$ we have 
\begin{align*}
\begin{array}{ll@{}}
\sum\limits_{k=1}^K\left(f\left(k\right)\prod\limits_{p=k}^{K-1}\left(\vartheta\left(p\right)\right)\right)<C_1\prod\limits_{p=1}^{K-1}\left(\vartheta\left(p\right)\right)&\text{and}\\
\sum\limits_{k=1}^K\left(g\left(k\right)\prod\limits_{p=k}^{K-1}\left(\vartheta\left(p\right)\right)\right)<C_2\prod\limits_{p=1}^{K-1}\left(\vartheta\left(p\right)\right)
\end{array}
\end{align*}
Hence, $\forall K\in\omega_+$ we have
\begin{align*}
&\sum\limits_{M=1}^K\left(\left(f*g\right)\left(M\right)\prod\limits_{p=M}^{K-1}\left(\vartheta\left(p\right)\right)\right)=\sum\limits_{M=1}^K\left(\prod\limits_{p=M}^{K-1}\left(\vartheta\left(p\right)\right)\left(\sum\limits_{n=1}^{M}\left(\sum\limits_{k=1}^{M}f\left(n\right)g\left(k\right)\left(r\left(n,k,M\right)\right)\right)\right)\right)\\
=&\sum\limits_{n=1}^K\left(f\left(n\right)\sum\limits_{k=1}^K\left(g\left(k\right)\sum\limits_{M=\max\left(\left\{n,k\right\}\right)}^K\left(r\left(n,k,M\right)\prod\limits_{p=M}^{K-1}\left(\vartheta\left(p\right)\right)\right)\right)\right)\\
=&\sum\limits_{n=1}^K\left(f\left(n\right)\sum\limits_{k=1}^K\left(g\left(k\right)\max\left(\left\{t\in\omega\middle|t\prod\limits_{j=1}^{k-1}\left(\vartheta\left(j\right)\right)\prod\limits_{j=1}^{n-1}\left(\vartheta\left(j\right)\right)\leq\prod\limits_{j=1}^{K-1}\left(\vartheta\left(j\right)\right)\right\}\right)\right)\right)\\
\leq&\sum\limits_{n=1}^K\left(f\left(n\right)\left(\max\left(\left\{t\in\omega\middle|t\prod\limits_{j=1}^{n-1}\left(\vartheta\left(j\right)\right)\leq\sum\limits_{k=1}^K\left(g\left(k\right)\prod\limits_{j=k}^{K-1}\left(\vartheta\left(j\right)\right)\right)\right\}\right)\right)\right)\\
\leq&\sum\limits_{n=1}^K\left(f\left(n\right)\max\left(\left\{t\in\omega\middle|t\prod\limits_{j=1}^{n-1}\left(\vartheta\left(j\right)\right)<C_2\prod\limits_{p=1}^{K-1}\left(\vartheta\left(p\right)\right)\right\}\right)\right)\\
\leq&\max\left(\left\{t\in\omega\middle|t<C_2\sum\limits_{n=1}^K\left(f\left(n\right)\prod\limits_{p=n}^{K-1}\left(\vartheta\left(p\right)\right)\right)\right\}\right)\\
\leq&\max\left(\left\{t\in\omega\middle|t<C_1C_2\prod\limits_{p=1}^{K-1}\left(\vartheta\left(p\right)\right)\right\}\right)<C_1C_2\prod\limits_{p=1}^{K-1}\left(\vartheta\left(p\right)\right)
\end{align*}
And therefore, by the \nameref{Tying Theorem} \ref{Tying Theorem}, $f*g$ is a finite function for all finite functions $f$ and $g$.
\end{proof}

\begin{lemma}[Sub-Multiplication Contraction Lemma]\label{Sub-Multiplication Contraction Lemma}\noindent\\
For all finite functions $f$ and $g$ where $f$ is contractable about some $m\in\omega_+$ we have $\left[f*g\right]=\left[c_m\left(f\right)*g\right]$.
\end{lemma}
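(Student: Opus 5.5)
The plan is to avoid a direct identity of the form $c_m\left(f\right)*g=\mathcal{D}\left(f*g\right)$. Such an identity held for sub-addition (\nameref{Sub-Addition Contraction Lemma} \ref{Sub-Addition Contraction Lemma}), but it should fail here: the rest function distributes a carried unit across several output positions with floor errors. Instead I will compare weighted truncations and close with the \nameref{Extended Extension Theorem Corollary} \ref{Extended Extension Theorem Corollary}. First, I will dispose of the trivial parts.
\begin{itemize}
\item The contraction does not touch position $0$, so $\left(f*g\right)\left(0\right)=\left(c_m\left(f\right)*g\right)\left(0\right)$ and hence $\sigma\left(f*g\right)=\sigma\left(c_m\left(f\right)*g\right)$.
\item Both products are finite by the \nameref{Sub-Multiplication Finiteness Lemma} \ref{Sub-Multiplication Finiteness Lemma}.
\item If $g\in\left[0\right]$, both products vanish on $\omega_+$, and the \nameref{Null Lemma} \ref{Null Lemma} gives equality.
\end{itemize}

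The main computation reuses the permutation of sums and the \nameref{Rest Identity} \ref{Rest Identity} exactly as in the finiteness proof. For every $K$ this gives the closed form
\begin{align*}
\sum\limits_{M=1}^K\left(\left(f*g\right)\left(M\right)\prod\limits_{p=M}^{K-1}\left(\vartheta\left(p\right)\right)\right)=\sum\limits_{n=1}^K\sum\limits_{k=1}^K f\left(n\right)g\left(k\right)R\left(n,k,K\right).
\end{align*}
Write $P_n:=\prod_{j=1}^{n-1}\vartheta\left(j\right)$. Then $R\left(n,k,K\right)$ is the rational floor of $P_K/\left(P_nP_k\right)$. The same formula holds for $c_m\left(f\right)*g$, and for $K\geq m+1$ the two truncations differ by
\begin{align*}
\sum\limits_{k=1}^K g\left(k\right)\left(R\left(m,k,K\right)-\vartheta\left(m\right)R\left(m+1,k,K\right)\right).
\end{align*}
By the floor inequalities of Subsection \ref{Methods}, each bracket lies in $\left[0,\vartheta\left(m\right)-1\right]_\omega$. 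So the contraction can only lower the truncation, which already gives the direction needed for one of the two one-sided bounds.

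The main obstacle is that the upper bound above, $\left(\vartheta\left(m\right)-1\right)\sum_{k\leq K}g\left(k\right)$, is not uniformly small compared with $P_K/P_N$, so the symmetric \nameref{Extension Theorem} \ref{Extension Theorem} is not directly available. This is why I will use the rescaled form instead. Fix $N$ and $\Xi\geq N+1$, and compare $\frac{P_{m'}}{P_\Xi}\cdot\left(\text{truncation at }\Xi\right)$ with the truncation at a deep level $m'$.
\begin{itemize}
\item At level $\Xi$, only the pairs $\left(n,k\right)$ with $n,k\leq\Xi$ enter. After rescaling to level $m'$, each such pair's floor $R\left(n,k,\Xi\right)\frac{P_{m'}}{P_\Xi}$ is at most $R\left(n,k,m'\right)$, by the composition rule for rational floors.
\item In particular, for $c_m\left(f\right)$, the terms coming from positions $m,m+1$ recombine at level $m'$ into exactly the $f$-terms up to an error bounded by a constant $D_\Xi$. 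This constant depends only on $\Xi$ and the finitely many digits $f\left(1\right),\dots,f\left(\Xi\right)$, $g\left(1\right),\dots,g\left(\Xi\right)$, and not on $m'$.
\item Pairs with $n$ or $k$ beyond $\Xi$ contribute non-negatively at level $m'$, so they only help.
\end{itemize}
Hence the rescaled coarse truncation of one product is at most the fine truncation of the other plus $D_\Xi$, in both directions. Since $P_{m'}/P_N\to\infty$, choosing $m'$ with $P_{m'}/P_N\geq D_\Xi$ gives both hypotheses of the \nameref{Extended Extension Theorem Corollary} \ref{Extended Extension Theorem Corollary}, and therefore $\left[f*g\right]=\left[c_m\left(f\right)*g\right]$.

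Of all these steps, the bookkeeping that shows $D_\Xi$ is independent of $m'$ is the one I expect to need the most care. If it proves awkward, I would first try bounding the product by the exact rational value $\sum_{n,k\leq\Xi}f\left(n\right)g\left(k\right)P_{m'}/\left(P_nP_k\right)$. Each floor then loses less than one unit per pair, so the total error is at most $\left(\sum_{n\leq\Xi}f\left(n\right)\right)\left(\sum_{k\leq\Xi}g\left(k\right)\right)$.
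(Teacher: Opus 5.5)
Your argument is correct, but it closes by a different route from the paper's. The paper computes the same level-$K$ discrepancy, $\sum_{k\le K}g(k)\bigl(R(m,k,K)-\vartheta(m)R(m+1,k,K)\bigr)<\vartheta(m)\sum_{k\le K}g(k)$, and then applies the symmetric Extension Theorem by contrapositive. If the two products were different, this bound would have to exceed $\prod_{p=N}^{K-1}\vartheta(p)$ at arbitrarily deep $K$. The paper then strings together blocks $[K_{(n,1)},K_{(n,2)}]$ of positions of $g$, each carrying enough material, until the Tying Theorem bound for $g$ fails. In effect it must prove, from the finiteness of $g$, that $\sum_{k\le K}g(k)$ is eventually negligible against $\prod_{p=N}^{K-1}\vartheta(p)$, which is exactly the obstacle you flag.

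Your use of the Extended Extension Theorem Corollary removes that obstacle. The coarse truncation at $\Xi$ only sees $k\le\Xi$, so after rescaling the error is the constant $(\vartheta(m)-1)\sum_{k\le\Xi}g(k)$. It is independent of the deep level $m'$, and all you need is $\prod_{j=1}^{m'-1}\vartheta(j)\to\infty$. This is shorter and needs no growth analysis of $g$; it is also the device the paper itself uses later for the Second Sub-Multiplication Auxiliary Lemma and for associativity. The paper's route, in exchange, stays with the symmetric Extension Theorem and proves a stronger fact: the truncations of the two products eventually agree to within $\prod_{p=N}^{K-1}\vartheta(p)$ for every $N$.

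There are two small corrections.

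First, the sign. Write $T_K(h):=\sum_{M=1}^{K}h(M)\prod_{p=M}^{K-1}\vartheta(p)$. Because the bracket is non-negative, the contraction raises rather than lowers the truncation: $T_K(c_m(f)*g)\ge T_K(f*g)$. The free direction is therefore $\prod_{j=\Xi}^{m'-1}\vartheta(j)\,T_\Xi(f*g)\le T_{m'}(f*g)\le T_{m'}(c_m(f)*g)$. The constant $D_\Xi$ is needed only for the other direction.

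Second, the case $\Xi=m$. Here the coarse truncation of $c_m(f)*g$ contains the extra unit at position $m$ but not the debit at $m+1$. The pairs $(m+1,k)$ in $T_{m'}(f*g)$ are then not merely harmless: you must use $f(m+1)\ge\vartheta(m)$ to absorb that unit. This gives the same bound $(\vartheta(m)-1)\sum_{k\le m}g(k)$.

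Your rational fallback is unnecessary, and rationals are not yet available in this framework anyway. The floor inequality $R(n,k,\Xi)\prod_{j=\Xi}^{m'-1}\vartheta(j)\le R(n,k,m')$, together with the remainder bound, already gives $D_\Xi$ explicitly.

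Finally, the corollary's proof runs through the non-zero version of the Extended Extension Theorem. You should therefore record that both products are non-zero once $g\notin[0]$; a contractable $f$ is automatically non-zero.
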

\begin{proofidea} We never match the two products digit by digit. Instead we show that their level-$K$ weighted truncations stay within one unit of the scale at $K$, for every $K$, and let the \nameref{Extension Theorem} \ref{Extension Theorem} conclude that the two products are the same real number, the sign at position $0$ being settled separately and at once, since a contraction does not touch it. Almost all of the work is that estimate, and what keeps it under control is the step identity for the rest function, which says precisely that replacing a digit at $m+1$ by a unit at $m$ redistributes the partial products up to a remainder smaller than $\vartheta\left(m\right)$.
\end{proofidea}
\begin{proof} We will want to use the extension theorem. Let a finite function $f$ contractable about some $m$ be given. First of all, we see that as $f\left(0\right)=c_m\left(f\right)\left(0\right)$ we have $\left(f*g\right)\left(0\right)=\left(c_m\left(f\right)*g\right)\left(0\right)=f\left(0\right)+g\left(0\right)$ and thus $\sigma\left(f*g\right)=\sigma\left(c_m\left(f\right)*g\right)$.\\
Now, take a $K\in\omega_+$ such that $K>m+1$, we see that 
\begin{align*}
&\Upsilon\left(\begin{array}{l@{}}\sum\limits_{M=1}^{K}\left(\left(f*g\right)\left(M\right)\prod\limits_{j=M}^{K-1}\left(\vartheta\left(j\right)\right)\right)\\\sum\limits_{M=1}^{K}\left(\left(c_m\left(f\right)*g\right)\left(M\right)\prod\limits_{j=M}^{K-1}\left(\vartheta\left(j\right)\right)\right)\end{array}\right)\\
=&\Upsilon\left(\begin{array}{l@{}}\sum\limits_{M=1}^{K}\left(\sum\limits_{n=1}^{M}\left(f\left(n\right)\sum\limits_{k=1}^{M}g\left(k\right)r\left(n,k,M\right)\prod\limits_{j=M}^{K-1}\left(\vartheta\left(j\right)\right)\right)\right)\\\sum\limits_{M=1}^{K}\left(\sum\limits_{n=1}^{M}\left(c_m\left(f\right)\left(n\right)\sum\limits_{k=1}^{M}g\left(k\right)r\left(n,k,M\right)\prod\limits_{j=M}^{K-1}\left(\vartheta\left(j\right)\right)\right)\right)\end{array}\right)\\
=&\Upsilon\left(\begin{array}{l@{}}\sum\limits_{n=1}^{K}\left(f\left(n\right)\sum\limits_{k=1}^{K}\left(g\left(k\right)\sum\limits_{M=\max\left(\left\{n,k\right\}\right)}^{K}\left(r\left(n,k,M\right)\prod\limits_{j=M}^{K-1}\left(\vartheta\left(j\right)\right)\right)\right)\right)\\\sum\limits_{n=1}^{K}\left(c_m\left(f\right)\left(n\right)\sum\limits_{k=1}^{K}\left(g\left(k\right)\sum\limits_{M=\max\left(\left\{n,k\right\}\right)}^{K}\left(r\left(n,k,M\right)\prod\limits_{j=M}^{K-1}\left(\vartheta\left(j\right)\right)\right)\right)\right)\end{array}\right)\\
=&\Upsilon\left(\begin{array}{l@{}}\sum\limits_{n=1}^{K}\left(f\left(n\right)\sum\limits_{k=1}^{K}\left(g\left(k\right)R\left(n,k,K\right)\right)\right)\\\sum\limits_{n=1}^{K}\left(c_m\left(f\right)\left(n\right)\sum\limits_{k=1}^{K}\left(g\left(k\right)R\left(n,k,K\right)\right)\right)\end{array}\right)\\
=&\Upsilon\left(\begin{array}{l@{}}\sum\limits_{n=m}^{m+1}\left(f\left(n\right)\sum\limits_{k=1}^{K}\left(g\left(k\right)R\left(n,k,K\right)\right)\right)\\\sum\limits_{n=m}^{m+1}\left(c_m\left(f\right)\left(n\right)\sum\limits_{k=1}^{K}\left(g\left(k\right)R\left(n,k,K\right)\right)\right)\end{array}\right)
\end{align*}
Where we used the facts that for all $n\in\omega-\left\{m,m+1\right\}$ we have that $f\left(n\right)=c_m\left(f\right)\left(n\right)$ and $\Upsilon\left(a+c,b+c\right)=\Upsilon\left(a,b\right)$. Now notice that:
\normalsize
\begin{align*}
&\sum\limits_{n=m}^{m+1}\left(c_m\left(f\right)\left(n\right)\sum\limits_{k=1}^{K}\left(g\left(k\right)R\left(n,k,K\right)\right)\right)\\
=&\sum\limits_{n=m}^{m+1}\left(f\left(n\right)\sum\limits_{k=1}^{K}\left(g\left(k\right)R\left(n,k,K\right)\right)\right)+\sum\limits_{k=1}^{K}\left(g\left(k\right)R\left(m,k,K\right)\right)-\vartheta\left(m\right)\sum\limits_{k=1}^{K}\left(g\left(k\right)R\left(m+1,k,K\right)\right)
\end{align*}
Observe that
\begin{align*}
0\leq R\left(m,k,K\right)-\vartheta\left(m\right)R\left(m+1,k,K\right)<\vartheta\left(m\right)
\end{align*}
Therefore, we see that
\begin{align*}
\sum\limits_{n=m}^{m+1}\left(c_m\left(f\right)\left(n\right)\sum\limits_{k=1}^{K}\left(g\left(k\right)R\left(n,k,K\right)\right)\right)\geq\sum\limits_{n=m}^{m+1}\left(f\left(n\right)\sum\limits_{k=1}^{K}\left(g\left(k\right)R\left(n,k,K\right)\right)\right)
\end{align*}
And furthermore
\begin{align*}
&\sum\limits_{n=m}^{m+1}\left(c_m\left(f\right)\left(n\right)\sum\limits_{k=1}^{K}\left(g\left(k\right)R\left(n,k,K\right)\right)\right)-\sum\limits_{n=m}^{m+1}\left(f\left(n\right)\sum\limits_{k=1}^{K}\left(g\left(k\right)R\left(n,k,K\right)\right)\right)<\vartheta\left(m\right)\sum\limits_{k=1}^K\left(g\left(k\right)\right)
\end{align*}
Now, by the contrapositive of the extension theorem, if these two functions are not in the same real number then there exists $N\in\omega_+$ such that for all $M\in\left[N,\infty\right]_\omega$ there exists $K\in\left[M,\infty\right]_\omega$ such that 
\begin{align*}
\Upsilon\left(\begin{array}{l@{}}\sum\limits_{M=1}^{K}\left(\left(f*g\right)\left(M\right)\prod\limits_{j=M}^{K-1}\left(\vartheta\left(j\right)\right)\right)\\\sum\limits_{M=1}^{K}\left(\left(c_m\left(f\right)*g\right)\left(M\right)\prod\limits_{j=M}^{K-1}\left(\vartheta\left(j\right)\right)\right)\end{array}\right)>\prod\limits_{j=N}^{K-1}\left(\vartheta\left(j\right)\right)
\end{align*}
This means that $\vartheta\left(m\right)\sum\limits_{k=1}^{K}\left(g\left(k\right)\right)>\prod\limits_{p=N}^{K-1}\vartheta\left(p\right)$. Denote 
\begin{align*}
L:=\min\left(\left\{t\in\left[N+2,\infty\right]_\omega\middle|\prod\limits_{p=N+1}^{t-1}\left(\vartheta\left(p\right)\right)\geq\vartheta\left(m\right)\right\}\right)
\end{align*}
Therefore, for any $M>L$ we know that there exist $K_3>K_2>K_1>M$ defined as
\begin{align*}
\begin{array}{ll@{}}
K_1:=\min\left(\left\{t\in\left[M+1,\infty\right]_\omega\middle|\vartheta\left(m\right)\sum\limits_{k=1}^{t}\left(g\left(k\right)\right)>\prod\limits_{p=N}^{t-1}\vartheta\left(p\right)\right\}\right)\\
K_2:=\min\left(\left\{t\in\left[K_1+1,\infty\right]_\omega\middle|\begin{array}{ll@{}}&\vartheta\left(m\right)\sum\limits_{k=1}^{t}\left(g\left(k\right)\right)>\prod\limits_{p=N}^{t-1}\vartheta\left(p\right)\\\wedge&\prod\limits_{p=N}^{t-1}\vartheta\left(p\right)\geq\vartheta\left(m\right)\sum\limits_{k=1}^{K_1}\left(g\left(k\right)\right)\end{array}\right\}\right)\\
K_3:=\min\left(\left\{t\in\left[K_2+1,\infty\right]_\omega\middle|\begin{array}{ll@{}}&\vartheta\left(m\right)\sum\limits_{k=1}^{t}\left(g\left(k\right)\right)>\prod\limits_{p=N}^{t-1}\vartheta\left(p\right)\\\wedge&\prod\limits_{p=N}^{t-1}\vartheta\left(p\right)\geq\vartheta\left(m\right)\sum\limits_{k=1}^{K_2}\left(g\left(k\right)\right)\end{array}\right\}\right)
\end{array}
\end{align*}
Therefore, we have that
\begin{align*}
\vartheta\left(m\right)\sum\limits_{k=1}^{K_3}\left(g\left(k\right)\right)\geq\prod\limits_{p=N}^{K_3-1}\vartheta\left(p\right)\geq\vartheta\left(m\right)\sum\limits_{k=1}^{K_2}\left(g\left(k\right)\right)\geq\prod\limits_{p=N}^{K_2-1}\vartheta\left(p\right)\geq\vartheta\left(m\right)\sum\limits_{k=1}^{K_1}\left(g\left(k\right)\right)
\end{align*}
Thus, 
\begin{align*}
\vartheta\left(m\right)\sum\limits_{k=K_1+1}^{K_3}\left(g\left(k\right)\right)\geq\left(\prod\limits_{p=K_2}^{K_3-1}\left(\vartheta\left(p\right)\right)-1\right)\prod\limits_{p=N}^{K_2-1}\left(\vartheta\left(p\right)\right)\geq\prod\limits_{p=N+1}^{K_3-1}\left(\vartheta\left(p\right)\right)
\end{align*}
Hence, $\sum\limits_{k=K_1+1}^{K_3}\left(g\left(k\right)\right)\geq\prod\limits_{p=L}^{K_3-1}\left(\vartheta\left(p\right)\right)$. Notice that we may take $K_1$ to be arbitrarily large and this construction would still hold. This means that $g$ is not finite, as there are arbitrarily large intervals with enough ``material" to contract into the interval of inspection $\left[1,L\right]_\omega$.\\
We may prove this rigorously. For any finite ordinal $H$ define a finite sequence of pairs of finite ordinals on $\left[1,H\right]_\omega$ where 
\begin{align*}
\begin{array}{ll@{}}
K_{\left(1,1\right)}:=K_1&\text{and}
K_{\left(1,2\right)}:=K_3
\end{array}
\end{align*}
and for any $n\in\left[2,H\right]$ we have
\begin{align*}
\begin{array}{ll@{}}
K_{\left(n,1\right)}&:=\min\left(\left\{t\in\left[K_{\left(n-1,2\right)}+1,\infty\right]_\omega\middle|\vartheta\left(m\right)\sum\limits_{k=1}^{t}\left(g\left(k\right)\right)>\prod\limits_{p=N}^{t-1}\vartheta\left(p\right)\right\}\right)\\
K_n^*&:=\min\left(\left\{t\in\left[K_{\left(n,1\right)}+1,\infty\right]_\omega\middle|\begin{array}{ll@{}}&\vartheta\left(m\right)\sum\limits_{k=1}^{t}\left(g\left(k\right)\right)>\prod\limits_{p=N}^{t-1}\vartheta\left(p\right)\\\wedge&\prod\limits_{p=N}^{t-1}\vartheta\left(p\right)\geq\vartheta\left(m\right)\sum\limits_{k=1}^{K_{\left(n,1\right)}}\left(g\left(k\right)\right)\end{array}\right\}\right)\\
K_{\left(n,2\right)}&:=\min\left(\left\{t\in\left[K_n^*+1,\infty\right]_\omega\middle|\begin{array}{ll@{}}&\vartheta\left(m\right)\sum\limits_{k=1}^{t}\left(g\left(k\right)\right)>\prod\limits_{p=N}^{t-1}\vartheta\left(p\right)\\\wedge&\prod\limits_{p=N}^{t-1}\vartheta\left(p\right)\geq\vartheta\left(m\right)\sum\limits_{k=1}^{K_n^*}\left(g\left(k\right)\right)\end{array}\right\}\right)
\end{array}
\end{align*}
Notice that for any $n\in\left[1,H\right]_\omega$ we have
\begin{align*}
T\left(n\right):=\sum\limits_{k=K_{\left(n,1\right)}}^{K_{\left(n,2\right)}}\left(g\left(k\right)\right)\geq\prod\limits_{p=L}^{K_{\left(n,2\right)}-1}\left(\vartheta\left(p\right)\right)
\end{align*} 
Thus, for any $C\in\omega_+$ take $H=C\prod\limits_{p=1}^{L-1}\left(\vartheta\left(p\right)\right)$. Then 
\begin{align*}
&\sum\limits_{k=1}^{K_{\left(H,2\right)}}g\left(k\right)\prod\limits_{p=k}^{K_{\left(H,2\right)}-1}\left(\vartheta\left(p\right)\right)\geq\sum\limits_{n=1}^{H}\left(T\left(n\right)\prod\limits_{p=K_{\left(n,2\right)}}^{K_{\left(H,2\right)}-1}\left(\vartheta\left(p\right)\right)\right)\\
\geq&\sum\limits_{n=1}^{H}\left(\prod\limits_{p=L}^{K_{\left(H,2\right)}-1}\left(\vartheta\left(p\right)\right)\right)=H\prod\limits_{p=L}^{K_{\left(H,2\right)}-1}\left(\vartheta\left(p\right)\right)\geq C\prod\limits_{p=1}^{K_{\left(H,2\right)}-1}\left(\vartheta\left(p\right)\right)
\end{align*}
That is a contradiction to $g$ being a finite function by the \nameref{Tying Theorem} \ref{Tying Theorem}, and thus $\left[f*g\right]=\left[c_m\left(f\right)*g\right]$.
\end{proof}

\begin{corollary}[Sub-Multiplication Contraction Lemma Corollary]\label{Sub-Multiplication Contraction Lemma Corollary} By commutativity of sub-multiplication we also have that if $f$ and $g$ are finite functions where $g$ is contractable about some $m\in\omega_+$ then $\left[f*c_m\left(g\right)\right]=\left[c_m\left(g\right)*f\right]=\left[g*f\right]=\left[f*g\right]$.\\
Suppose that $f$ and $g$ are finite functions where $f$ is broadenable about some $m\in\omega_+$. Then by the \nameref{Cancellation Theorems} \ref{Cancellation Theorems} we have that $\left[b_m\left(f\right)*g\right]=\left[c_m\left(b_m\left(f\right)\right)*g\right]=\left[f*g\right]$.\\
Furthermore, suppose that $g$ is broadenable about some $n\in\omega_+$, then by the commutativity of sub-multiplication we have $\left[f*b_n\left(g\right)\right]=\left[b_n\left(g\right)*f\right]=\left[g*f\right]=\left[f*g\right]$.
\end{corollary}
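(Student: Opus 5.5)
The plan is to reduce each of the three claims to the \nameref{Sub-Multiplication Contraction Lemma} \ref{Sub-Multiplication Contraction Lemma}, using only two further facts. The first is that sub-multiplication is commutative \emph{as functions}, not merely up to the equivalence of real numbers. The second is the \nameref{Cancellation Theorems} \ref{Cancellation Theorems}. Before anything else I check that every function written inside a bracket is finite, so that the bracket notation $\left[\cdot\right]$ from the \nameref{Set of Reals} \ref{Set of Reals} is meaningful. For $c_m\left(g\right)$ and $b_m\left(f\right)$ this follows from the \nameref{Shifting Theorem} \ref{Shifting Theorem}: both stay in the real number of the function they came from. For the products it follows from the \nameref{Sub-Multiplication Finiteness Lemma} \ref{Sub-Multiplication Finiteness Lemma}.

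For the first claim, suppose $g$ is contractable about $m$. Commutativity is the key input. At position $0$ we have $\left(f*g\right)\left(0\right)=f\left(0\right)+g\left(0\right)=\left(g*f\right)\left(0\right)$. At every $M\in\omega_+$ the defining double sum is symmetric, because $r\left(n,k,M\right)=r\left(k,n,M\right)$ (the remark following the \nameref{Rest Function} \ref{Rest Function}). Interchanging the roles of the summation indices $n$ and $k$ therefore gives $f*c_m\left(g\right)=c_m\left(g\right)*f$ as functions, and likewise $f*g=g*f$. The chain then reads
\[
\left[f*c_m\left(g\right)\right]=\left[c_m\left(g\right)*f\right]=\left[g*f\right]=\left[f*g\right].
\]
The middle equality is the \nameref{Sub-Multiplication Contraction Lemma} \ref{Sub-Multiplication Contraction Lemma} applied with $g$ as the contracted first factor and $f$ as the second factor.

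For the broadening claims, suppose $f$ is broadenable about $m$. Part 2 of the \nameref{Cancellation Theorems} \ref{Cancellation Theorems} says two things: $b_m\left(f\right)$ is contractable about $m$, and $c_m\left(b_m\left(f\right)\right)=f$. Applying the \nameref{Sub-Multiplication Contraction Lemma} \ref{Sub-Multiplication Contraction Lemma} to the finite function $b_m\left(f\right)$ gives $\left[b_m\left(f\right)*g\right]=\left[c_m\left(b_m\left(f\right)\right)*g\right]=\left[f*g\right]$. When $g$ is broadenable about $n$, the same argument with the roles exchanged gives $\left[b_n\left(g\right)*f\right]=\left[g*f\right]$. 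The functional commutativity established above then carries this over to $\left[f*b_n\left(g\right)\right]=\left[f*g\right]$.

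I do not expect a real obstacle here; everything substantive is in the lemma being cited. The only step that needs care is the finiteness bookkeeping. The \nameref{Sub-Multiplication Contraction Lemma} \ref{Sub-Multiplication Contraction Lemma} is stated for finite functions, so I must confirm that $b_m\left(f\right)$ and $c_m\left(g\right)$ are finite before applying it. The \nameref{Shifting Theorem} \ref{Shifting Theorem} supplies this, since it gives $b_m\left(f\right)\in\left[f\right]$ and $c_m\left(g\right)\in\left[g\right]$.
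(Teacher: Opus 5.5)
Your proposal is correct and follows the same route as the paper's inline justification: functional commutativity of sub-multiplication (from $r\left(n,k,M\right)=r\left(k,n,M\right)$), combined with the \nameref{Sub-Multiplication Contraction Lemma} \ref{Sub-Multiplication Contraction Lemma} and Part 2 of the \nameref{Cancellation Theorems} \ref{Cancellation Theorems}. The finiteness checks you add, via the \nameref{Shifting Theorem} \ref{Shifting Theorem} and the \nameref{Sub-Multiplication Finiteness Lemma} \ref{Sub-Multiplication Finiteness Lemma}, are left implicit in the paper, but they are exactly what the cited lemma requires before it can be applied to $c_m\left(g\right)$ and $b_m\left(f\right)$.
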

\begin{lemma}[First Sub-Multiplication Auxiliary Lemma]\label{First Sub-Multiplication Auxiliary Lemma}\noindent\\
For all finite functions $f$ and $g$ we have that $\left[f*g\right]=\left[h_f*h_g\right]$ where $h_f$ and $h_g$ are the auxiliary functions of $f$ and $g$ respectively.
\end{lemma}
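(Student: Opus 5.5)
The plan is to mirror the proof of the \nameref{First Sub-Addition Auxiliary Lemma} \ref{First Sub-Addition Auxiliary Lemma}, with one difference. Sub-multiplication does not commute with contractions on the nose, only up to membership in the same real number. So we cannot chase a single sequence of contractions through the product. Instead we pass from $f$ to $h_f$, and from $g$ to $h_g$, through finitely many moves at a time, and then close the remaining infinite gap with an analytic estimate.

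The first step handles finitely many moves. Iterating the \nameref{Sub-Multiplication Contraction Lemma} \ref{Sub-Multiplication Contraction Lemma} and its \nameref{Sub-Multiplication Contraction Lemma Corollary} \ref{Sub-Multiplication Contraction Lemma Corollary} (induction on the length of the sequence) gives, for any sequences $\mathcal{C}_{k=q_1}^{r_1}$ and $\mathcal{C'}_{k=q_2}^{r_2}$ applicable to $f$ and $g$, the identity $\left[f*g\right]=\left[\mathcal{C}_{k=q_1}^{r_1}\left(f\right)*\mathcal{C'}_{k=q_2}^{r_2}\left(g\right)\right]$. The sign slot causes no trouble: contractions fix position $0$ and $h_f\left(0\right)=f\left(0\right)$, so all products in play have the same value $f\left(0\right)+g\left(0\right)$ at $0$. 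The case where $f$ or $g$ is zero is disposed of first. By the \nameref{Null Lemma} \ref{Null Lemma}, every digit on $\omega_+$ of both $f*g$ and $h_f*h_g$ is then $0$, so both lie in $\left[0\right]$.

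The second step passes to the limit using the \nameref{Extension Theorem} \ref{Extension Theorem}. Fix $N$ and choose an inspection depth $\Xi\geq N$. Take contraction sequences making $f_\Xi:=\mathcal{C}\left(f\right)$ agree with $h_f$ on $\left[0,\Xi\right]_\omega$, and $g_\Xi:=\mathcal{C'}\left(g\right)$ agree with $h_g$ there. By step one, $\left[f*g\right]=\left[f_\Xi*g_\Xi\right]$. It then suffices to compare the level-$m$ truncations of $f_\Xi*g_\Xi$ and $h_f*h_g$ for large $m$. Using the permutation of sums and the \nameref{Rest Identity} \ref{Rest Identity} as in the finiteness lemma, each truncation equals $\sum_{n,k\leq m}u\left(n\right)v\left(k\right)R\left(n,k,m\right)$. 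The pairs with $n,k\leq\Xi$ cancel, so the difference comes only from pairs in which one index exceeds $\Xi$.

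These tail pairs are where the work lies. For $h_f,h_g$ the tail contribution is at most about $h_g$'s total times the tail weight of $h_f$ beyond $\Xi$, which is below $C\prod_{j=\Xi}^{m-1}\vartheta\left(j\right)$ by the \nameref{Tying Theorem} \ref{Tying Theorem} and the telescoping bound for auxiliary tails. For $f_\Xi,g_\Xi$ we need the analogous fact that the material they carry beyond $\Xi$, weighted to level $m$, is also $O\left(\prod_{j=\Xi}^{m-1}\vartheta\left(j\right)\right)$. For this we invoke the \nameref{Paradise City Lemma} \ref{Paradise City Lemma} together with agreement with $h_f$ on $\left[1,\Xi\right]_\omega$: the truncation of $f_\Xi$ at level $m$ is at most that of $h_f$, while its first $\Xi$ digits already reproduce $h_f$'s. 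The tail is therefore at most $h_f$'s own tail, which is below $\prod_{j=\Xi}^{m-1}\vartheta\left(j\right)$. Multiplying by the Tying constant of the other factor, and using the floor inequalities of Subsection \ref{Methods} to absorb the rounding of $R$, we obtain a bound $C_1C_2\prod_{j=\Xi}^{m-1}\vartheta\left(j\right)+O\left(\text{rounding}\right)$.

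The main obstacle is making this rounding error and the constant $C_1C_2$ small relative to $\prod_{j=N}^{m-1}\vartheta\left(j\right)$. The plan is to choose $\Xi$ so large that $\prod_{j=N}^{\Xi-1}\vartheta\left(j\right)$ exceeds $4C_1C_2$; this is possible since every $\vartheta\geq2$. The rounding losses of the floors are handled by the same one-unit-per-term accounting used in the finiteness lemma, now applied only to the tail pairs. Since $\Xi$ is chosen after $N$, the hypothesis of the \nameref{Extension Theorem} \ref{Extension Theorem} holds, giving $\left[f_\Xi*g_\Xi\right]=\left[h_f*h_g\right]$ and hence the lemma.
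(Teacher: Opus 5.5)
Your truncation estimate is sound, and easier than you fear. Write $P_n:=\prod_{j=1}^{n-1}\vartheta\left(j\right)$. The bound $R\left(n,k,m\right)\leq P_m/\left(P_nP_k\right)$ controls the tail pairs with no rounding bookkeeping, since you only need an upper bound on each of two non-negative quantities. The constant is uniform in $\Xi$ because the Paradise City Lemma bounds the truncations of every $f_\Xi$ by those of $h_f$.

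\textbf{The gap is your last sentence.} The Extension Theorem is a statement about one fixed pair of functions, and it needs its estimate for every $N$. Your first function $f_\Xi*g_\Xi$ is chosen after $N$, so it changes with $N$. Fix a single $\Xi$. Your bound, of the form $C\prod_{j=\Xi}^{m-1}\vartheta\left(j\right)$, verifies the hypothesis only for the finitely many $N$ with $\prod_{j=N}^{\Xi-1}\vartheta\left(j\right)\geq C$. So the theorem does not yield $\left[f_\Xi*g_\Xi\right]=\left[h_f*h_g\right]$ for any $\Xi$.

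That all the $f_\Xi*g_\Xi$ lie in the one class $\left[f*g\right]$ does not rescue this. Representatives of one class need not have close truncations: how quickly a representative's truncations catch up with those of the primary auxiliary function depends on the representative. Nor can you pass to the fixed pair $\left(f*g,h_f*h_g\right)$, because your estimate says nothing about the truncations of $f*g$ itself.

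\textbf{What is missing.} You need a representative-independent Extension Theorem: if $\left[F\right]\neq\left[G\right]$, the scale $N$ at which truncations separate depends only on the two classes. This is true, and it can be extracted from the proof of the Boundedness Theorem. There the gap is $\prod_{j=R+1}^{m-1}\vartheta\left(j\right)$, with $R$ fixed by the two primary auxiliary functions alone. Every representative of the smaller class is controlled uniformly by the Paradise City Lemma, and the chosen representative of the larger class only affects how large $m$ must be. But the Extension Theorem as stated does not give this, since its proof lets $N$ depend on a contraction sequence of a particular representative. You must therefore prove this version yourself. With it, your argument gives the equality in one stroke.

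\textbf{How the paper avoids the issue.} It fixes the auxiliary function $\pi$ of $h_f*h_g$ and a sequence of contractions matching it on $\left[0,N\right]_\omega$, then takes $\Xi$ beyond that sequence's conditional. It notes that $f_\Xi*g_\Xi$ agrees with $h_f*h_g$ exactly on $\left[0,\Xi\right]_\omega$, because the $M$-th digit of a sub-product uses only digits up to $M$. It then transports the sequence by the Submission Theorem. Exact agreement with $\pi$ on arbitrarily long prefixes gives $\left[h_f*h_g\right]\leq\left[f*g\right]$. The reverse inequality needs a second construction, broadening $h_f$ and $h_g$ back towards $f$ and $g$, and anti-symmetry of order closes the argument.
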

\begin{proofidea} Two things are needed, and the \nameref{Null Lemma} \ref{Null Lemma} is the tool for both ends. First, that a product of non-zero functions is itself non-zero: choose positions where each factor has a positive digit, and the level at which their combined scale first fits gives an output position whose rest coefficient is positive, so the product cannot vanish there. Second, and this is the substance, that replacing each factor by its auxiliary function does not move the product. For that we take the auxiliary function of $h_f*h_g$, available because the product of finite functions is finite, and transport the sequences that match $f$ and $g$ against their auxiliary functions through the multiplication, which is legitimate precisely because a carry inside a factor commutes with the product. The \nameref{Submission Theorem} \ref{Submission Theorem} certifies applicability at each step and the \nameref{Paradise City Lemma} \ref{Paradise City Lemma} controls the truncations. The reverse inequality runs the same comparison in the opposite direction; we broaden each auxiliary function back towards its own representative, well-definedness of the broadening coming from the \nameref{Paradise City Lemma} \ref{Paradise City Lemma}, and the two inequalities together identify the classes.
\end{proofidea}
\begin{proof} Notice, that if $f\in\left[0\right]$ or $g\in\left[0\right]$ then we are done as, by the \nameref{Null Lemma} \ref{Null Lemma} and the definition of sub-multiplication we have that both sides are equal in $\left[0\right]$ by the \nameref{Null Lemma} \ref{Null Lemma} once again. Furthermore, by the \nameref{Null Lemma} if $f$ and $g$ were both non-zero then there exist constants $m_1$ and $m_2$ in $\omega_+$ such that $f\left(m_1\right)>0$ and $g\left(m_2\right)>0$. Take
\begin{align*}
M:=\min\left(\left\{t\in\omega_+\middle|\prod\limits_{j=1}^{m_1-1}\left(\vartheta\left(j\right)\right)\prod\limits_{j=1}^{m_2-1}\left(\vartheta\left(j\right)\right)\leq\prod\limits_{j=1}^{t-1}\left(\vartheta\left(j\right)\right)\right\}\right)
\end{align*}
then $r\left(m_1,m_2,M\right)>0$ and thus $\left(f*g\right)\left(M\right)>0$ which means that by the \nameref{Null Lemma} \ref{Null Lemma} we have that $\left(f*g\right)\not\in\left[0\right]$. Therefore, $\left(f*g\right)\in\left[0\right]\iff\left(f\in\left[0\right]\vee g\in\left[0\right]\right)$, therefore, going forward we suppose both $f$ and $g$ non-zero.\\
Take the function $h_f*h_g$. By the \nameref{Sub-Multiplication Finiteness Lemma} \ref{Sub-Multiplication Finiteness Lemma} we know that this function is finite and thus there exists an auxiliary function $\pi$ such that for all $N\in\omega_+$ there is a sequence of contractions ${}^\pi\mathcal{C}_{k=q_1}^{r_1}$ such that for all $n\in\left[0,N\right]_\omega$ we have ${}^\pi\mathcal{C}_{k=q_1}^{r_1}\left(h_f*h_g\right)\left(n\right)=\pi\left(n\right)$. Take $\Xi:=\max\left(\text{con}\left({}^\pi\mathcal{C}_{k=q_1}^{r_1}\right)\cup\left\{N\right\}\right)$. Moreover, as $\left(h_f*h_g\right)\left(0\right)=h_f\left(0\right)+h_g\left(0\right)=f\left(0\right)+g\left(0\right)=\left(f*g\right)\left(0\right)$ we have $\sigma\left(\left[h_f*h_g\right]\right)=\sigma\left(\left[f*g\right]\right)$.\\
Suppose $\sigma\left(\left[h_f*h_g\right]\right)=\sigma\left(\left[f*g\right]\right)=1$. Take the sequences of contractions ${}^f\mathcal{C}_{k=q_2}^{r_2}$ and ${}^g\mathcal{C}_{k=q_3}^{r_3}$ where for all $n\in\left[0,\Xi\right]_\omega$ we have ${}^f\mathcal{C}_{k=q_2}^{r_2}\left(f\right)\left(n\right)=h_f\left(n\right)$ and ${}^g\mathcal{C}_{k=q_3}^{r_3}\left(g\right)\left(n\right)=h_g\left(n\right)$. Denote $w:={}^f\mathcal{C}_{k=q_2}^{r_2}\left(f\right)$ and $s:={}^g\mathcal{C}_{k=q_3}^{r_3}\left(g\right)$. By induction on the \nameref{Sub-Multiplication Contraction Lemma} \ref{Sub-Multiplication Contraction Lemma} we have that $\left(w*s\right)\in\left[f*g\right]$\footnote{We also used the \nameref{Sub-Multiplication Finiteness Lemma} \ref{Sub-Multiplication Finiteness Lemma} to ensure that each in-between step is well-defined.}.\\
We see that for all $n\in\left[0,\Xi\right]_\omega$ we have that $\left(w*s\right)\left(n\right)=\left(h_f*h_g\right)\left(n\right)$. Thus by the \nameref{Submission Theorem} \ref{Submission Theorem} ${}^\pi\mathcal{C}_{k=q_1}^{r_1}$ is applicable to $w*s$ and furthermore for all $n\in\left[0,N\right]_\omega$ we have 
\begin{align*}
{}^\pi\mathcal{C}_{k=q_1}^{r_1}\left(w*s\right)\left(n\right)+\left(h_f*h_g\right)\left(n\right)&=\left(w*s\right)\left(n\right)+{}^\pi\mathcal{C}_{k=q_1}^{r_1}\left(h_f*h_g\right)\left(n\right)\\
&=\left(h_f*h_g\right)\left(n\right)+\pi\left(n\right)
\end{align*}
Thus, for all $n\in\left[0,N\right]_\omega$ we have ${}^\pi\mathcal{C}_{k=q_1}^{r_1}\left(w*s\right)\left(n\right)=\pi\left(n\right)$.\\
Therefore, for all $N\in\omega_+$ there exists $\delta\in\left[f*g\right]$ such that for all $n\in\left[0,N\right]_\omega$ we have $\delta\left(n\right)=\pi\left(n\right)$.\\
This means that $\left[h_f*h_g\right]\leq\left[f*g\right]$, as if not then $\left[h_f*h_g\right]+{}^o\left[\left(f*g\right)\right]>\left[0\right]$. Hence by the \nameref{Null Lemma} \ref{Null Lemma}, there exists $m\in\omega_+$ such that $\varrho_{{}^o\left(f*g\right)}^{h_f*h_g}\left(m\right)\geq1$, denote the minimal such value as $M$. Thus, take $\delta\in\left[f*g\right]$ such that for all $n\in\left[0,M+1\right]_\omega$ we have $\delta\left(n\right)=\pi\left(n\right)$. Hence, $\left[\varrho_{{}^o\left(f*g\right)}^{h_f*h_g}+\delta\right]=\left[h_f*h_g\right]$. However, 
\begin{align*}
\left(\varrho_{{}^o\left(f*g\right)}^{h_f*h_g}+\delta\right)\left(\Psi\left(\varrho_{{}^o\left(f*g\right)}^{h_f*h_g}+\delta,\pi\right)\right)=\left(\varrho_{{}^o\left(f*g\right)}^{h_f*h_g}+\delta\right)\left(M\right)>\pi\left(M\right)=\pi\left(\Psi\left(\varrho_{{}^o\left(f*g\right)}^{h_f*h_g}+\delta,\pi\right)\right)
\end{align*}
 and 
\begin{align*}
\left(\varrho_{{}^o\left(f*g\right)}^{h_f*h_g}+\delta\right)\left(M+1\right)\geq\pi\left(M+1\right)
\end{align*}
Denote $a$ the auxiliary function of $\left(\varrho_{{}^o\left(f*g\right)}^{h_f*h_g}+\delta\right)$. Therefore, by the \nameref{Paradise City Lemma} \ref{Paradise City Lemma} we have
\begin{align*}
&\sum\limits_{n=1}^{M+1}\left(a\left(n\right)\prod\limits_{j=n}^M\left(\vartheta\left(j\right)\right)\right)\geq\sum\limits_{n=1}^{M+1}\left(\left(\varrho_{{}^o\left(f*g\right)}^{h_f*h_g}+\delta\right)\left(n\right)\prod\limits_{j=n}^M\left(\vartheta\left(j\right)\right)\right)\\
\geq&\sum\limits_{n=1}^{M+1}\left(\pi\left(n\right)\prod\limits_{j=n}^M\left(\vartheta\left(j\right)\right)\right)+\vartheta\left(M\right)\geq\sum\limits_{n=1}^{M+1}\left(\pi\left(n\right)\prod\limits_{j=n}^M\left(\vartheta\left(j\right)\right)\right)+2
\end{align*}
which contradicts $\pi$ being an auxiliary function of $\left[h_f*h_g\right]$ by the \nameref{PC Cor I} \ref{PC Cor I} and \nameref{PC Cor II} \ref{PC Cor II}. Therefore $\left[h_f*h_g\right]\leq\left[f*g\right]$.\\
We know that $f*g$ is finite and thus there exists an auxiliary function $\gamma$ such that $\forall N\in\omega_+$ there is a sequence of contractions ${}^\gamma\mathcal{C}_{k=q}^{r}$ such that for all $n\in\left[0,N\right]_\omega$ we have ${}^\gamma\mathcal{C}_{k=q}^{r}\left(f*g\right)\left(n\right)=\gamma\left(n\right)$. Take $\Xi:=\max\left(\text{con}\left({}^\gamma\mathcal{C}_{k=q_1}^{r_1}\right)\cup\left\{N\right\}\right)$.\\
For any $K\in\omega$ define inductively the functions $f_K$ as
\begin{align*}
\begin{array}{ll@{}}
f_0=h_f&\text{and}\\
f_{K+1}={}^f\mathcal{B}_{k=\nu\left(K\right)+1}^{\nu\left(K+1\right)}\left(f_K\right)
\end{array}
\end{align*}
where we define ${}^f\mathcal{B}\in\mathcal{CB}$
\begin{align*}
{}^f\mathcal{B}\left(k\right)=
\begin{cases}
b_{K+1}&k\in\left[\nu_{K}+1,\nu_{K+1}\right]_\omega\\
b_1&otherwise
\end{cases}
\end{align*}
where for all $K\in\omega$ we define
\begin{align*}
\nu\left(K\right)=\sum\limits_{n=1}^{K}\left(\kappa\left(n\right)\right)
\end{align*}
where we define for all $K\in\omega_+$ we define
\begin{align*}
\kappa\left(K+1\right)=f_{K}\left(K+1\right)-f\left(K+1\right)
\end{align*}
Firstly we prove that for all $K\in\omega$ $f_K$ is well-defined. Suppose that it is not for some $K$. Then there exist a minimal such $K$, which we shall denote as $L$ (notice that $L>1$), and this means that $f_{L-1}\left(L\right)<f\left(L\right)$. Moreover, we know that for all $K\in\left[0,L-1\right]_\omega$ we have $f_{K-1}\left(K\right)\geq f\left(K\right)$.\\
By our presumption $f_{L-1}$ is well-defined and furthermore, notice that for all $n\in\left[0,L-1\right]_\omega$ we have $f_{L-1}\left(n\right)=f\left(n\right)$. This is shown inductively as for all $K\in\left[1,L-1\right]_\omega$ we have that for all $n\in\left[0,K-1\right]$ we have $f_{K-1}\left(n\right)=f_{K}\left(n\right)$. This is because, ${}^f\mathcal{B}_{k=\nu\left(K\right)+1}^{\nu\left(K+1\right)}$ is a sequence of broadenings only about $K$ (or is the trivial sequence when we do not broaden anything in the case of $f_{K}\left(K+1\right)=f\left(K+1\right)$) and thus leaves all the values less than $K$ unchanged.\\
Remark that for all $K\in\left[0,L-1\right]_\omega$ we have $f_K\left(K\right)=f\left(K\right)$. The case $K=0$ follows from the definition of auxiliary functions and the other cases follow as, by definition 
\begin{align*}
f_{K}\left(K\right)&={}^f\mathcal{B}_{k=\nu\left(K-1\right)+1}^{\nu\left(K\right)}\left(f_{K-1}\right)\left(K\right)=f_{K-1}\left(K\right)-\left(\nu\left(K\right)-\nu\left(K-1\right)\right)\\
&=f_{K-1}\left(K\right)-\kappa\left(K\right)=f_{K-1}\left(K\right)-\left(f_{K-1}\left(K\right)-f\left(K\right)\right)=f\left(K\right)
\end{align*}
Therefore, all put together we have that for all $n\in\left[0,L-1\right]$ we have $f_{L-1}\left(n\right)=f\left(n\right)$. Thus,
\begin{align*}
\sum\limits_{n=1}^{L}\left(f_{L-1}\left(n\right)\prod\limits_{j=n}^{L-1}\left(\vartheta\left(j\right)\right)\right)<\sum\limits_{n=1}^{L}\left(f\left(n\right)\prod\limits_{j=n}^{L-1}\left(\vartheta\left(j\right)\right)\right)
\end{align*}
However, by induction we see that for all $K\in\left[0,L-1\right]_\omega$ we have 
\begin{align*}
\sum\limits_{n=1}^{L}\left(f_{K}\left(n\right)\prod\limits_{j=n}^{L-1}\left(\vartheta\left(j\right)\right)\right)=\sum\limits_{n=1}^{L}\left(h_f\left(n\right)\prod\limits_{j=n}^{L-1}\left(\vartheta\left(j\right)\right)\right)
\end{align*}
as this holds for $K=0$ and if it holds for some $K\in\left[0,L-2\right]_\omega$ then 
\begin{align*}
&\sum\limits_{n=1}^{L}\left(f_{K+1}\left(n\right)\prod\limits_{j=n}^{L-1}\left(\vartheta\left(j\right)\right)\right)\\
=&\sum\limits_{n=1}^{L}\left(f_{K}\left(n\right)\prod\limits_{j=n}^{L-1}\left(\vartheta\left(j\right)\right)\right)-\kappa\left(K+1\right)\prod\limits_{j=K+1}^{L-1}\left(\vartheta\left(j\right)\right)+\vartheta\left(K+1\right)\kappa\left(K+1\right)\prod\limits_{j=K+2}^{L-1}\left(\vartheta\left(j\right)\right)\\
=&\sum\limits_{n=1}^{L}\left(f_{K}\left(n\right)\prod\limits_{j=n}^{L-1}\left(\vartheta\left(j\right)\right)\right)=\sum\limits_{n=1}^{L}\left(h_f\left(n\right)\prod\limits_{j=n}^{L-1}\left(\vartheta\left(j\right)\right)\right)
\end{align*}
Therefore, we have that
\begin{align*}
\sum\limits_{n=1}^{L}\left(h_f\left(n\right)\prod\limits_{j=n}^{L-1}\left(\vartheta\left(j\right)\right)\right)<\sum\limits_{n=1}^{L}\left(f\left(n\right)\prod\limits_{j=n}^{L-1}\left(\vartheta\left(j\right)\right)\right)
\end{align*}
However, this is a contradiction to the \nameref{Paradise City Lemma} \ref{Paradise City Lemma}. Hence, $f_K$ is well-defined for any $K\in\omega$.\\
Define in the same matter the function $g_K$. Notice that by construction for any $K\in\omega$
\begin{align*}
\begin{array}{ll@{}}
f_K={}^f\mathcal{B}_{k=1}^{\nu\left(K+1\right)}\left(h_f\right)&\text{and}\\
g_K={}^g\mathcal{B}_{k=1}^{\nu\left(K+1\right)}\left(h_g\right)
\end{array}
\end{align*}
Thus, by induction on the \nameref{Sub-Multiplication Contraction Lemma Corollary} \ref{Sub-Multiplication Contraction Lemma Corollary} we have that $\left(f_K*g_K\right)\in\left[h_f*h_g\right]$ for any $K\in\omega$. Furthermore, as for all $n\in\left[0,\Xi\right]_\omega$ we have $\left(f_\Xi*g_\Xi\right)\left(n\right)=\left(f*g\right)\left(n\right)$. Thus by the \nameref{Submission Theorem} \ref{Submission Theorem} ${}^\gamma\mathcal{C}_{k=q}^r$ is applicable to $\left(f_\Xi*g_\Xi\right)$ and furthermore for all $n\in\left[0,N\right]_\omega$ we have 
\begin{align*}
{}^\gamma\mathcal{C}_{k=q}^r\left(f_\Xi*g_\Xi\right)\left(n\right)+\left(f*g\right)\left(n\right)&=\left(f_\Xi*g_\Xi\right)\left(n\right)+{}^\gamma\mathcal{C}_{k=q}^r\left(f*g\right)\left(n\right)\\
&=\left(f*g\right)\left(n\right)+\gamma\left(n\right)
\end{align*}
Hence, for all $n\in\left[0,N\right]_\omega$ we have ${}^\gamma\mathcal{C}_{k=q}^r\left(f_\Xi*g_\Xi\right)\left(n\right)=\gamma\left(n\right)$. Therefore, for all $N\in\omega_+$ there exists $\kappa\in\left[h_f*h_g\right]$ such that for all $n\in\left[0,N\right]_\omega$ we have $\kappa\left(n\right)=\gamma\left(n\right)$. This means that $\left[f*g\right]\leq\left[h_f*h_g\right]$ by the same argument as above.\\
Put together, we obtain $\left[f*g\right]=\left[h_f*h_g\right]$.\\
The proof for $\sigma\left(\left[h_f*h_g\right]\right)=\sigma\left(\left[f*g\right]\right)=0$ is the same as above, only with inverted inequalities.
\end{proof}

\begin{lemma}[Second Sub-Multiplication Auxiliary Lemma]\label{Second Sub-Multiplication Auxiliary Lemma}\noindent\\
For all finite functions $f$ and all auxiliary functions $h_g$ we have $\left[f_p'*h_g\right]=\left[f_s'*h_g\right]$ where $f_p'$ and $f_s'$ are the primary and secondary auxiliary functions of $\left[f\right]$ respectively such that $f_p'\left(0\right)=f_s'\left(0\right)=f\left(0\right)$ (if there are any).
\end{lemma}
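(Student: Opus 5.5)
The plan mirrors the \nameref{Second Sub-Addition Auxiliary Lemma} \ref{Second Sub-Addition Auxiliary Lemma}: compare the weighted truncations of $f_p'*h_g$ and $f_s'*h_g$ and conclude with the \nameref{Extension Theorem} \ref{Extension Theorem}. First, if $\left[f\right]=\left[0\right]$ or $h_g$ is zero there is nothing to show, since either there is no secondary function or both products lie in $\left[0\right]$ by the \nameref{Null Lemma} \ref{Null Lemma} and the argument at the start of the \nameref{First Sub-Multiplication Auxiliary Lemma} \ref{First Sub-Multiplication Auxiliary Lemma}. Otherwise both products are finite by the \nameref{Sub-Multiplication Finiteness Lemma} \ref{Sub-Multiplication Finiteness Lemma}. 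They also have the same value at $0$, namely $f\left(0\right)+h_g\left(0\right)$, hence the same sign.

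Next I would compute the level-$K$ truncation of each product. Permute the sums and apply the \nameref{Rest Identity} \ref{Rest Identity}, exactly as in the \nameref{Sub-Multiplication Contraction Lemma} \ref{Sub-Multiplication Contraction Lemma}:
\begin{align*}
\sum\limits_{M=1}^{K}\left(\left(f'*h_g\right)\left(M\right)\prod\limits_{j=M}^{K-1}\left(\vartheta\left(j\right)\right)\right)=\sum\limits_{n=1}^{K}\left(f'\left(n\right)\sum\limits_{k=1}^{K}\left(h_g\left(k\right)R\left(n,k,K\right)\right)\right).
\end{align*}
Writing $\Psi:=\Psi\left(f_p',f_s'\right)$, the two factors differ only on $\left[\Psi,\infty\right]_\omega$. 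There $f_p'$ has one extra unit at $\Psi$ and zeros afterwards, while $f_s'$ carries the tail $\vartheta\left(n-1\right)-1$. The difference of the truncations is therefore
\begin{align*}
\sum\limits_{k=1}^{K}h_g\left(k\right)\left(R\left(\Psi,k,K\right)-\sum\limits_{n=\Psi+1}^{K}\left(\vartheta\left(n-1\right)-1\right)R\left(n,k,K\right)\right).
\end{align*}

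The key estimate is to show that each bracket lies in $\left[0,K-\Psi\right]$, or at least is bounded by something sub-exponential in the scale. This is a telescoping argument using the floor inequality $0\leq R\left(n,k,K\right)-\vartheta\left(n\right)R\left(n+1,k,K\right)<\vartheta\left(n\right)$, which is already used in the contraction lemma. I would treat the tail as a chain of contractions: repeatedly applying the contraction step that produced the bound $\vartheta\left(m\right)\sum g\left(k\right)$ leaves an error at each of the positions $\Psi,\dots,K-1$. The cleanest approach is probably to write $f_p'$ as the limit of $c$-moves on truncated secondaries. For each $K$, let $f^{(K)}$ agree with $f_s'$ up to $K$ but have one extra unit at $K$. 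Then $f^{(K)}$ contracts to $f_p'$ on $\left[0,K\right]$ via $c_{K-1},\dots,c_{\Psi}$, and \eqref{eq:contraction-invariance} together with the contraction lemma's computation bound the difference.

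The main obstacle is the final step: turning a difference bounded by roughly $\sum_{k\leq K}h_g\left(k\right)$ times a constant into the hypothesis of the \nameref{Extension Theorem} \ref{Extension Theorem}, that is, at most $\prod_{j=N}^{K-1}\vartheta\left(j\right)$ eventually. Since $h_g$ is auxiliary, $h_g\left(k\right)<\vartheta\left(k-1\right)$, so the digit sum is only $O\left(K\max\vartheta\right)$ when $\vartheta$ is bounded. For unbounded $\vartheta$ I would instead reuse the contradiction scheme from the end of the proof of the \nameref{Sub-Multiplication Contraction Lemma} \ref{Sub-Multiplication Contraction Lemma}. If the products differ, the deficit must exceed $\prod_{j=N}^{K-1}\vartheta\left(j\right)$ infinitely often, and the $K_{(n,1)},K_{(n,2)}$ construction then contradicts the \nameref{Tying Theorem} \ref{Tying Theorem} for $h_g$. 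If that bookkeeping proves too delicate, a fallback is to sandwich the products with the \nameref{Extended Extension Theorem Corollary} \ref{Extended Extension Theorem Corollary}, comparing rescaled coarse truncations against fine ones.
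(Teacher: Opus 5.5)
You have the reduction right. The difference of the level-$K$ truncations of $f_p'*h_g$ and $f_s'*h_g$ is $\sum_{k=1}^{K}h_g(k)\,\beta_k(K)$, where $\beta_k(K)=R(\Psi,k,K)-\sum_{n=\Psi+1}^{K}(\vartheta(n-1)-1)R(n,k,K)$. This is exactly the gap between the truncations of the paper's tracking functions $w$ and $s$.

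The estimate you build on it is false. $\beta_k(K)$ is neither in $[0,K-\Psi]$ nor bounded by a constant. Take $\vartheta(1)=2$, $\vartheta(j)=5$ for $j\ge2$, $\Psi=2$ and $k=2$. Then $R(n,2,K)=\frac{5^{K-n}-1}{2}$ and $\beta_2(K)=2(K-2)$, which is already $2>K-\Psi$ at $K=3$. Telescoping $0\le R(m,k,K)-\vartheta(m)R(m+1,k,K)<\vartheta(m)$ along $c_{K-1},\dots,c_\Psi$ only gives $\beta_k(K)\le1+\sum_{m=\Psi}^{K-1}(\vartheta(m)-1)$. Your difference is therefore bounded only by $\big(1+\sum_{m=\Psi}^{K-1}(\vartheta(m)-1)\big)\sum_{k\le K}h_g(k)$, whose multiplier grows with $K$.

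That bound suffices for bounded $\vartheta$. For unbounded $\vartheta$, the contradiction scheme of the Sub-Multiplication Contraction Lemma does not carry over, because it rested on the \emph{fixed} multiplier $\vartheta(m)$. No bookkeeping rescues it. Let $\vartheta(j)=2$ except at sparse positions $s$ where $\vartheta(s)\ge\big(\prod_{j=1}^{s-1}\vartheta(j)\big)^2$. Let $h_g$ be the primary auxiliary function with $h_g(s+1)=\vartheta(s)-1$ at those positions and $0$ elsewhere. At $K=s+1$ your bound is at least $(\vartheta(s)-1)^2>\vartheta(s)\prod_{j=1}^{s-1}\vartheta(j)\ge\prod_{j=N}^{K-1}\vartheta(j)$. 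This happens infinitely often, yet $h_g$ is finite, so no contradiction with the Tying Theorem follows. The true difference is harmless there only because the big digit sits at $k=K$, where $\beta_K(K)\le R(\Psi,K,K)\le1$; your estimate discards exactly that information.

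So you are missing an estimate that keeps the coupling between $m$ and $k$, or a way to avoid needing one. There are two ways to supply it.

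\emph{Direct route.} The $(m,k)$ term of the telescoped bracket is $R(m,k,K)\bmod\vartheta(m)$, hence at most $\min\big(\vartheta(m)-1,R(m,k,K)\big)$. A dominated-convergence style split (finitely many small pairs plus a geometrically small remainder) then shows that the difference divided by $\prod_{j=1}^{K-1}\vartheta(j)$ tends to $0$. That is the Extension Theorem's hypothesis.

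\emph{The paper's route}, which your last sentence only gestures at:
\begin{enumerate}
\item Subtract the common part $\xi*h_g$, where $\xi$ is $f_s'$ cut off after $\Psi$.
\item Represent both differences by tracking functions $w,s$ via the Rearrangement Lemma.
\item Get $[w]\ge[s]$ from the Boundedness Theorem, since every $\beta_k\ge0$.
\item Close with the Extended Extension Theorem. There the coarse level $\Xi$ freezes the $h_g$-mass at $\sum_{k\le\Xi}h_g(k)$. One then needs only some $m>\Xi$ with $\big(\sum_{k\le\Xi}h_g(k)\big)\big(1+\sum_{n=\Psi+1}^{m}(\vartheta(n-1)-1)\big)<\prod_{j=N}^{m-1}\vartheta(j)$, which exists because the product eventually outruns the sum.
\end{enumerate}
The same estimate would also let you apply the Corollary directly to $f_p'*h_g$ and $f_s'*h_g$, where the reverse inequality is free. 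But that estimate is the substance of the proof, and it is not in your proposal.
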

\begin{proofidea} The two products are not compared directly. We truncate the secondary auxiliary function at the position where it parts from the primary, obtaining a function $\xi$ which is finite by the \nameref{Tying Theorem} \ref{Tying Theorem}, and check, through the \nameref{One-Case Lemma} \ref{One-Case Lemma} and the \nameref{Domination Theorem} \ref{Domination Theorem}, that $\xi*h_g$ is strictly smaller in magnitude than either product. Subtracting it from both is then legitimate, and the \nameref{Rearrangement Lemma} \ref{Rearrangement Lemma} lets us represent each difference by whichever tracking function is convenient. The digits of those two differences are explicit sums of rest coefficients, small enough to compare truncation by truncation: the \nameref{Boundedness Theorem} \ref{Boundedness Theorem} yields an inequality between them, and the \nameref{Extended Extension Theorem} \ref{Extended Extension Theorem} upgrades that to the equality of classes the lemma asserts. The degenerate cases where $h_g$ is zero, or where $\left[f\right]$ has no secondary auxiliary function, are cleared away at the start.
\end{proofidea}
\begin{proof} If $h_g\in\left[0\right]$ we are done as, by the \nameref{Null Lemma} \ref{Null Lemma} and the definition of sub-multiplication we have that both sides are equal in $\left[0\right]$ by the \nameref{Null Lemma} \ref{Null Lemma} once again. If $f_p'\in\left[0\right]$ then there is no $f_s'$. Hence, going forward we suppose that $h_g\not\in\left[0\right]$ and $\left[f\right]$ also has a secondary auxiliary function (and therefore is non-zero). Notice that in that case $\sigma\left(\left[f_p'*h_g\right]\right)=\sigma\left(\left[f_s'*h_g\right]\right)$.\\
Suppose $\sigma\left(\left[f_p'*h_g\right]\right)=\sigma\left(\left[f_s'*h_g\right]\right)=1$. Take the function $\xi:\omega\rightarrow\omega$ such that 
\begin{align*}
\xi\left(k\right)=\begin{cases}
f_s'\left(k\right)&k\in\left[0,\Psi\left(f_p',f_s'\right)\right]_\omega\\
0&otherwise
\end{cases}
\end{align*}
This function is finite by the \nameref{Tying Theorem} \ref{Tying Theorem}, as for all $n\in\omega_+$ we have $\xi\left(n\right)\leq f_s'\left(n\right)$. By the \nameref{Consistency Theorem'''} \ref{4.I} \ref{Consistency Theorem'''} we have 
\begin{align*}
\left[f_p'*h_g\right]=\left[f_s'*h_g\right]&\iff\left[f_p'*h_g\right]+\left[{}^o\left(\xi*h_g\right)\right]=\left[f_s'*h_g\right]+\left[{}^o\left(\xi*h_g\right)\right]\\
&\iff\left[f_p'*h_g+{}^o\left(\xi*h_g\right)\right]=\left[f_s'*h_g+{}^o\left(\xi*h_g\right)\right]
\end{align*}
We have $\sigma\left({}^o\left(\xi*h_g\right)\right)\neq\sigma\left(f_p'*h_g\right)=\sigma\left(f_s'*h_g\right)$. Furthermore, as for all $n\in\omega_+$ we have 
\begin{align*}
\begin{array}{ll@{}}
f_p'\left(n\right)\geq\xi\left(n\right)&\text{and}\\
f_s'\left(n\right)\geq\xi\left(n\right)
\end{array}
\end{align*} we have for all $n\in\omega_+$
\begin{align*}
\begin{array}{ll@{}}
\left|\left(f_p'*h_g\right)\right|\left(n\right)=\left(f_p'*h_g\right)\left(n\right)\geq\left(\xi*h_g\right)\left(n\right)={}^o\left(\xi*h_g\right)\left(n\right)=\left|{}^o\left(\xi*h_g\right)\right|\left(n\right)&\text{and}\\
\left|\left(f_s'*h_g\right)\right|\left(n\right)=\left(f_s'*h_g\right)\left(n\right)\geq\left(\xi*h_g\right)\left(n\right)={}^o\left(\xi*h_g\right)\left(n\right)=\left|{}^o\left(\xi*h_g\right)\right|\left(n\right)
\end{array}
\end{align*}
Thus, by the \nameref{One-Case Lemma} \ref{One-Case Lemma} as $\sigma\left(\left|{}^o\left(\xi*h_g\right)\right|\right)=\sigma\left(\left|\left(f_p'*h_g\right)\right|\right)=\sigma\left(\left|\left(f_s'*h_g\right)\right|\right)=1$\footnote{Notice that even if $\xi\in\left[0\right]$ and thus $\left|{}^o\left(\xi*h_g\right)\right|\in\left[0\right]$ we still can use the \nameref{One-Case Lemma} \ref{One-Case Lemma}} we have that $\left|\left[f_p'*h_g\right]\right|\geq\left|\left[{}^o\left(\xi*h_g\right)\right]\right|$ and $\left|\left[f_s'*h_g\right]\right|\geq\left|\left[{}^o\left(\xi*h_g\right)\right]\right|$. These inequalities are furthermore strict, as for the minimal $m_1\in\omega_+$ such that $h_g\left(m_1\right)\geq1$ we define
\begin{align*}
m_2&:=\min\left(\left\{M\in\omega_+\middle|\prod\limits_{j=1}^{\Psi\left(f_p',f_s'\right)-1}\left(\vartheta\left(j\right)\right)\prod\limits_{j=1}^{m_1-1}\left(\vartheta\left(j\right)\right)\leq\prod\limits_{j=1}^{M-1}\left(\vartheta\left(j\right)\right)\right\}\right)\\
m_3&:=\min\left(\left\{M\in\omega_+\middle|\prod\limits_{j=1}^{\Psi\left(f_p',f_s'\right)}\left(\vartheta\left(j\right)\right)\prod\limits_{j=1}^{m_1-1}\left(\vartheta\left(j\right)\right)\leq\prod\limits_{j=1}^{M-1}\left(\vartheta\left(j\right)\right)\right\}\right)
\end{align*}
Then, as
\begin{align*}
\begin{array}{ll@{}}
\left|\left(f_p'*h_g\right)\right|\left(m_2\right)=\left(f_p'*h_g\right)\left(m_2\right)>\left(\xi*h_g\right)\left(m_2\right)={}^o\left(\xi*h_g\right)\left(m_2\right)=\left|{}^o\left(\xi*h_g\right)\right|\left(m_2\right)&\text{and}\\
\left|\left(f_s'*h_g\right)\right|\left(m_3\right)=\left(f_s'*h_g\right)\left(m_3\right)>\left(\xi*h_g\right)\left(m_3\right)={}^o\left(\xi*h_g\right)\left(m_3\right)=\left|{}^o\left(\xi*h_g\right)\right|\left(m_3\right)
\end{array}
\end{align*}
we have by the \nameref{Domination Theorem} \ref{Domination Theorem} that $\left|\left[f_p'*h_g\right]\right|>\left|\left[{}^o\left(\xi*h_g\right)\right]\right|$ and $\left|\left[f_s'*h_g\right]\right|>\left|\left[{}^o\left(\xi*h_g\right)\right]\right|$.\\
Hence, by the \nameref{Rearrangement Lemma} \ref{Rearrangement Lemma} we have $\left[f_p'*h_g+{}^o\left(\xi*h_g\right)\right]=\left[w\right]$ where $w\in\mathcal{T}_{{}^o\left(\xi*h_g\right)}^{f_p'*h_g}$ such that 
\begin{align*}
w\left(M\right)=\begin{cases}
\left(f_p'*h_g\right)\left(0\right)&M=0\\
\left(f_p'*h_g\right)\left(M\right)-{}^o\left(\xi*h_g\right)\left(M\right)&M\in\omega_+
\end{cases}
\end{align*}
Notice that for all $M\in\omega_+$ we have
\begin{align*}
w\left(M\right)&=\left(f_p'*h_g\right)\left(M\right)-{}^o\left(\xi*h_g\right)\left(M\right)\\
&=\sum\limits_{n=1}^{M}\left(\sum\limits_{k=1}^{M}\left(f_p'\left(n\right)h_g\left(k\right)\left(r\left(n,k,M\right)\right)\right)\right)-\sum\limits_{n=1}^{M}\left(\sum\limits_{k=1}^{M}\left(\xi\left(n\right)h_g\left(k\right)\left(r\left(n,k,M\right)\right)\right)\right)\\
&=\sum\limits_{n=1}^{M}\left(\left(f_p'\left(n\right)-\xi\left(n\right)\right)\sum\limits_{k=1}^{M}\left(h_g\left(k\right)\left(r\left(n,k,M\right)\right)\right)\right)=\sum\limits_{k=1}^{M}\left(h_g\left(k\right)\left(r\left(\Psi\left(f_p',f_s'\right),k,M\right)\right)\right)
\end{align*}
Notice, that when $M<\Psi\left(f_p',f_s'\right)$ this should be explicitly $0$ as all the terms of $f_p'$ and $\xi$ are equal. We however write it this way as when $M<\Psi\left(f_p',f_s'\right)$ we have $r\left(\Psi\left(f_p',f_s'\right),k,M\right)=0$.\\
Similarly, by the \nameref{Rearrangement Lemma} \ref{Rearrangement Lemma} we have $\left[f_s'*h_g+{}^o\left(\xi*h_g\right)\right]=\left[s\right]$ where $s\in\mathcal{T}_{{}^o\left(\xi*h_g\right)}^{f_s'*h_g}$ such that 
\begin{align*}
s\left(M\right)=
\begin{cases}
\left(f_s'*h_g\right)\left(0\right)&M=0\\
\left(f_s'*h_g\right)\left(M\right)-{}^o\left(\xi*h_g\right)\left(M\right)&M\in\omega_+
\end{cases}
\end{align*}
Notice that for all $M\in\omega_+$ we have
\begin{align*}
s\left(M\right)&=\left(f_s'*h_g\right)\left(M\right)-{}^o\left(\xi*h_g\right)\left(M\right)\\
&=\sum\limits_{n=1}^{M}\left(\sum\limits_{k=1}^{M}\left(f_s'\left(n\right)h_g\left(k\right)\left(r\left(n,k,M\right)\right)\right)\right)-\sum\limits_{n=1}^{M}\left(\sum\limits_{k=1}^{M}\left(\xi\left(n\right)h_g\left(k\right)\left(r\left(n,k,M\right)\right)\right)\right)\\
&=\sum\limits_{n=1}^{M}\left(\left(f_s'\left(n\right)-\xi\left(n\right)\right)\sum\limits_{k=1}^{M}\left(h_g\left(k\right)\left(r\left(n,k,M\right)\right)\right)\right)\\
&=\sum\limits_{k=1}^{M}\left(h_g\left(k\right)\sum\limits_{n=\Psi\left(f_p',f_s'\right)+1}^{M}\left(\left(\vartheta\left(n-1\right)-1\right)\left(r\left(n,k,M\right)\right)\right)\right)
\end{align*}
Now, for any $K\in\omega_+$ we inspect the value:
\begin{align*}
&\sum\limits_{M=1}^{K}\left(s\left(M\right)\prod\limits_{j=M}^{K-1}\left(\vartheta\left(j\right)\right)\right)\\
=&\sum\limits_{M=1}^{K}\left(\sum\limits_{k=1}^{M}\left(h_g\left(k\right)\sum\limits_{n=\Psi\left(f_p',f_s'\right)+1}^{M}\left(\left(\vartheta\left(n-1\right)-1\right)r\left(n,k,M\right)\prod\limits_{j=M}^{K-1}\left(\vartheta\left(j\right)\right)\right)\right)\right)\\
=&\sum\limits_{k=1}^{K}\left(h_g\left(k\right)\sum\limits_{n=\Psi\left(f_p',f_s'\right)+1}^{K}\left(\left(\vartheta\left(n-1\right)-1\right)\sum\limits_{M=\max\left(\left\{n,k\right\}\right)}^{K}\left(r\left(n,k,M\right)\prod\limits_{j=M}^{K-1}\left(\vartheta\left(j\right)\right)\right)\right)\right)\\
=&\sum\limits_{k=1}^{K}\left(h_g\left(k\right)\sum\limits_{n=\Psi\left(f_p',f_s'\right)+1}^{K}\left(\left(\vartheta\left(n-1\right)-1\right)R\left(n,k,K\right)\right)\right)
\end{align*}
We see from the definition of $R\left(n,k,K\right)$ that this is less or equal to
\begin{align*}
&\sum\limits_{k=1}^{K}\left(h_g\left(k\right)\max\left(\left\{t\in\omega\middle|t\prod\limits_{j=1}^{k-1}\left(\vartheta\left(j\right)\right)\leq\sum\limits_{n=\Psi\left(f_p',f_s'\right)+1}^{K}\left(\left(\vartheta\left(n-1\right)-1\right)\prod\limits_{j=n}^{K-1}\left(\vartheta\left(j\right)\right)\right)\right\}\right)\right)\\
\leq&\sum\limits_{k=1}^{K}\left(h_g\left(k\right)\max\left(\left\{t\in\omega\middle|t\prod\limits_{j=1}^{k-1}\left(\vartheta\left(j\right)\right)\leq\prod\limits_{j=\Psi\left(f_p',f_s'\right)}^{K-1}\left(\vartheta\left(j\right)\right)\right\}\right)\right)
\end{align*}
We also inspect the value (for the same $K$):
\begin{align*}
&\sum\limits_{M=1}^{K}\left(w\left(M\right)\prod\limits_{j=M}^{K-1}\left(\vartheta\left(j\right)\right)\right)=\sum\limits_{M=1}^{K}\left(\sum\limits_{k=1}^{M}\left(h_g\left(k\right)r\left(\Psi\left(f_p',f_s'\right),k,M\right)\prod\limits_{j=M}^{K-1}\left(\vartheta\left(j\right)\right)\right)\right)\\
=&\sum\limits_{k=1}^{K}\left(h_g\left(k\right)\left(\sum\limits_{M=k}^{K}\left(r\left(\Psi\left(f_p',f_s'\right),k,M\right)\prod\limits_{j=M}^{K-1}\left(\vartheta\left(j\right)\right)\right)\right)\right)\\
=&\sum\limits_{k=1}^{K}\left(h_g\left(k\right)\left(\sum\limits_{M=\max\left(\left\{\Psi\left(f_p',f_s'\right),k\right\}\right)}^{K}\left(r\left(\Psi\left(f_p',f_s'\right),k,M\right)\prod\limits_{j=M}^{K-1}\left(\vartheta\left(j\right)\right)\right)\right)\right)\\
=&\sum\limits_{k=1}^{K}\left(h_g\left(k\right)\max\left(\left\{t\in\omega\middle|t\prod\limits_{j=1}^{k-1}\left(\vartheta\left(j\right)\right)\leq\prod\limits_{j=\Psi\left(f_p',f_s'\right)}^{K-1}\left(\vartheta\left(j\right)\right)\right\}\right)\right)
\end{align*}
Therefore, we have shown that for all $K\in\omega_+$ we have 
\begin{align*}
\sum\limits_{M=1}^{K}\left(s\left(M\right)\prod\limits_{j=M}^{K-1}\left(\vartheta\left(j\right)\right)\right)\leq\sum\limits_{M=1}^{K}\left(w\left(M\right)\prod\limits_{j=M}^{K-1}\left(\vartheta\left(j\right)\right)\right)
\end{align*}
and thus by the \nameref{Boundedness Theorem} \ref{Boundedness Theorem} we have $\left[w\right]\geq\left[s\right]$. Now we wish to apply the \nameref{Extended Extension Theorem} \ref{Extended Extension Theorem}. Let $N\in\omega_+$ and $\Xi\in\left[N+1,\infty\right]_\omega$ be given. Firstly, we see that for all $m\in\left[\Xi+1,\infty\right]_\omega$ we have
\begin{align*}
&\prod\limits_{j=\Xi}^{m-1}\left(\vartheta\left(j\right)\right)\sum\limits_{n=1}^{\Xi}\left(w\left(n\right)\prod\limits_{j=n}^{\Xi-1}\left(\vartheta\left(j\right)\right)\right)\\
=&\sum\limits_{k=1}^{\Xi}\left(h_g\left(k\right)\prod\limits_{j=\Xi}^{m-1}\left(\vartheta\left(j\right)\right)\max\left(\left\{t\in\omega\middle|t\prod\limits_{j=1}^{k-1}\left(\vartheta\left(j\right)\right)\leq\prod\limits_{j=\Psi\left(f_p',f_s'\right)}^{\Xi-1}\left(\vartheta\left(j\right)\right)\right\}\right)\right)
\end{align*}
We shall first show that there exists $m\in\left[\Xi+1,\infty\right]_\omega$ such that 
\begin{align*}
\left(\sum\limits_{k=1}^{\Xi}h_g\left(k\right)\right)\left(1+\hspace{-0.5cm}\sum\limits_{n=\Psi\left(f_p',f_s'\right)+1}^{m}\hspace{-0.5cm}\left(\vartheta\left(n-1\right)-1\right)\right)<\prod\limits_{j=N}^{m-1}\left(\vartheta\left(j\right)\right)
\end{align*}
We show this by taking $L\in\omega_+$ to be the minimal value such that 
\begin{align*}
\prod\limits_{j=N}^{L-1}\left(\vartheta\left(j\right)\right)\geq\sum\limits_{k=1}^{\Xi}h_g\left(k\right)
\end{align*}
Therefore, we only need to show that there exists $m\in\left[L+1,\infty\right]_\omega$ such that
\begin{align*}
u\left(m\right):=1+\hspace{-0.5cm}\sum\limits_{n=\Psi\left(f_p',f_s'\right)}^{m-1}\hspace{-0.5cm}\left(\vartheta\left(n\right)-1\right)<\prod\limits_{j=L}^{m-1}\left(\vartheta\left(j\right)\right)=:v\left(m\right)
\end{align*}
We see that for all $m\in\left[\max\left(\left\{L+1,\Psi\left(f_p',f_s'\right)+1\right\}\right),\infty\right]_\omega$ we have
\begin{align*}
\begin{array}{ll@{}}
U\left(m\right):=u\left(m+1\right)-u\left(m\right)=\vartheta\left(m\right)-1&\text{and}\\
V\left(m\right):=v\left(m+1\right)-v\left(m\right)=\left(\vartheta\left(m\right)-1\right)\prod\limits_{j=L}^{m-1}\left(\vartheta\left(j\right)\right)
\end{array}
\end{align*}
We see that by taking $m\geq L+1$ we have $\prod\limits_{j=L}^{m-1}\left(\vartheta\left(j\right)\right)\geq2$ and therefore 
\begin{align*}
V\left(m\right)\geq2\left(\vartheta\left(m\right)-1\right)>\vartheta\left(m\right)-1=U\left(m\right)
\end{align*}
as $\vartheta\left(m\right)\geq2$. Hence, $v$ is a strictly faster growing sequence of natural numbers and thus will eventually be larger than $u$. Take $m$ (from now on) to be the smallest such value. Then we have
\begin{align*}
&\prod\limits_{j=\Xi}^{m-1}\left(\vartheta\left(j\right)\right)\sum\limits_{n=1}^{\Xi}\left(w\left(n\right)\prod\limits_{j=n}^{\Xi-1}\left(\vartheta\left(j\right)\right)\right)\leq\sum\limits_{k=1}^{\Xi}\left(h_g\left(k\right)\max\left(\left\{t\in\omega\middle|t\prod\limits_{j=1}^{k-1}\left(\vartheta\left(j\right)\right)\leq\hspace{-0.5cm}\prod\limits_{j=\Psi\left(f_p',f_s'\right)}^{m-1}\hspace{-0.5cm}\left(\vartheta\left(j\right)\right)\right\}\right)\right)\\
<&\sum\limits_{k=1}^{\Xi}\left(h_g\left(k\right)\max\left(\left\{t\in\omega\middle|t\prod\limits_{j=1}^{k-1}\left(\vartheta\left(j\right)\right)\leq\hspace{-0.5cm}\prod\limits_{j=\Psi\left(f_p',f_s'\right)}^{m-1}\hspace{-0.5cm}\left(\vartheta\left(j\right)\right)\right\}\right)\right)+\prod\limits_{j=N}^{m-1}\left(\vartheta\left(j\right)\right)\\
-&\sum\limits_{k=1}^{\Xi}\left(h_g\left(k\right)\left(1+\hspace{-0.5cm}\sum\limits_{n=\Psi\left(f_p',f_s'\right)+1}^{m}\hspace{-0.5cm}\left(\vartheta\left(n-1\right)-1\right)\right)\right)
\end{align*}
where we used the properties of $m$. Furthermore, this means that
\begin{align*}
&&&\hspace{-1.5cm}\prod\limits_{j=\Xi}^{m-1}\left(\vartheta\left(j\right)\right)\sum\limits_{n=1}^{\Xi}\left(w\left(n\right)\prod\limits_{j=n}^{\Xi-1}\left(\vartheta\left(j\right)\right)\right)\\
&\hspace{-1.5cm}<&&\hspace{-1.5cm}\sum\limits_{k=1}^{\Xi}\left(h_g\left(k\right)\left(\max\left(\left\{t\in\omega\middle|t\prod\limits_{j=1}^{k-1}\left(\vartheta\left(j\right)\right)\leq\hspace{-0.5cm}\prod\limits_{j=\Psi\left(f_p',f_s'\right)}^{m-1}\hspace{-0.5cm}\left(\vartheta\left(j\right)\right)\right\}\right)-1-\hspace{-0.5cm}\sum\limits_{n=\Psi\left(f_p',f_s'\right)+1}^{m}\hspace{-0.5cm}\left(\vartheta\left(n-1\right)-1\right)\right)\right)+\prod\limits_{j=N}^{m-1}\left(\vartheta\left(j\right)\right)\\
&\hspace{-1.5cm}\leq&&\hspace{-1.5cm}\sum\limits_{k=1}^{\Xi}\left(h_g\left(k\right)\left(\max\left(\left\{t\in\omega\middle|t\prod\limits_{j=1}^{k-1}\left(\vartheta\left(j\right)\right)\leq\hspace{-0.5cm}\prod\limits_{j=\Psi\left(f_p',f_s'\right)}^{m-1}\hspace{-0.5cm}\left(\vartheta\left(j\right)\right)-1\right\}\right)-\hspace{-0.5cm}\sum\limits_{n=\Psi\left(f_p',f_s'\right)+1}^{m}\hspace{-0.5cm}\left(\vartheta\left(n-1\right)-1\right)\right)\right)+\prod\limits_{j=N}^{m-1}\left(\vartheta\left(j\right)\right)\\
\end{align*}
Finally, by using the identity $\prod\limits_{j=\Psi\left(f_p',f_s'\right)}^{m-1}\left(\vartheta\left(j\right)\right)-1=\hspace{-0.5cm}\sum\limits_{n=\Psi\left(f_p',f_s'\right)+1}^{m}\left(\left(\vartheta\left(n-1\right)-1\right)\prod\limits_{j=n}^{m-1}\left(\vartheta\left(j\right)\right)\right)$.
\begin{align*}
&&&\hspace{-1.5cm}\prod\limits_{j=\Xi}^{m-1}\left(\vartheta\left(j\right)\right)\sum\limits_{n=1}^{\Xi}\left(w\left(n\right)\prod\limits_{j=n}^{\Xi-1}\left(\vartheta\left(j\right)\right)\right)\\
&\hspace{-1.5cm}<&&\hspace{-1.5cm}\sum\limits_{k=1}^{\Xi}\left(h_g\left(k\right)\left(\max\left(\left\{t\in\omega\middle|t\prod\limits_{j=1}^{k-1}\left(\vartheta\left(j\right)\right)\leq\hspace{-0.5cm}\sum\limits_{n=\Psi\left(f_p',f_s'\right)+1}^{m}\hspace{-0.5cm}\left(\vartheta\left(n-1\right)-1\right)\prod\limits_{j=n}^{m-1}\left(\vartheta\left(j\right)\right)\right\}\right)-\hspace{-0.5cm}\sum\limits_{n=\Psi\left(f_p',f_s'\right)+1}^{m}\hspace{-0.5cm}\left(\vartheta\left(n-1\right)-1\right)\right)\right)\\
&&&\hspace{-1.5cm}+\prod\limits_{j=N}^{m-1}\left(\vartheta\left(j\right)\right)\\
&\hspace{-1.5cm}\leq&&\hspace{-1.5cm}\sum\limits_{k=1}^{\Xi}\left(h_g\left(k\right)\left(\sum\limits_{n=\Psi\left(f_p',f_s'\right)+1}^{m}\hspace{-0.5cm}\left(\vartheta\left(n-1\right)-1\right)\max\left(\left\{t\in\omega\middle|t\prod\limits_{j=1}^{k-1}\left(\vartheta\left(j\right)\right)\leq\prod\limits_{j=n}^{m-1}\left(\vartheta\left(j\right)\right)\right\}\right)\right)\right)+\prod\limits_{j=N}^{m-1}\left(\vartheta\left(j\right)\right)\\
&\hspace{-1.5cm}=&&\hspace{-1.5cm}\sum\limits_{k=1}^{\Xi}\left(h_g\left(k\right)\hspace{-0.5cm}\sum\limits_{n=\Psi\left(f_p',f_s'\right)+1}^{m}\hspace{-0.5cm}\left(\vartheta\left(n-1\right)-1\right)R\left(n,k,m\right)\right)+\prod\limits_{j=N}^{m-1}\left(\vartheta\left(j\right)\right)\\
&\hspace{-1.5cm}\leq&&\hspace{-1.5cm}\sum\limits_{k=1}^{m}\left(h_g\left(k\right)\hspace{-0.5cm}\sum\limits_{n=\Psi\left(f_p',f_s'\right)+1}^{m}\hspace{-0.5cm}\left(\vartheta\left(n-1\right)-1\right)R\left(n,k,m\right)\right)+\prod\limits_{j=N}^{m-1}\left(\vartheta\left(j\right)\right)=\sum\limits_{M=1}^{m}\left(s\left(M\right)\prod\limits_{j=M}^{m-1}\left(\vartheta\left(j\right)\right)\right)+\prod\limits_{j=N}^{m-1}\left(\vartheta\left(j\right)\right)
\end{align*}
Therefore, by the \nameref{Extended Extension Theorem} \ref{Extended Extension Theorem} we have that $\left[w\right]=\left[s\right]$ and thus $\left[f_p'*h_g\right]=\left[f_s'*h_g\right]$.\\
The proof for the case $\sigma\left(\left[f_p'*h_g\right]\right)=\sigma\left(\left[f_s'*h_g\right]\right)=0$ is the same as above.
\end{proof}

\begin{lemma}[Third Sub-Multiplication Auxiliary Lemma]\label{Third Sub-Multiplication Auxiliary Lemma}\noindent\\
For all auxiliary functions $f,g$ of the same non-zero-real number and for all non-zero auxiliary functions $h$ we have $\left[f*h\right]=\left[g*h\right]$.
\end{lemma}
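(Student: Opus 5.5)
The plan mirrors the proof of the \nameref{Third Sub-Addition Auxiliary Lemma} \ref{Third Sub-Addition Auxiliary Lemma}, with the \nameref{Second Sub-Multiplication Auxiliary Lemma} \ref{Second Sub-Multiplication Auxiliary Lemma} taking the place of its additive counterpart. Since $f$ and $g$ are auxiliary functions of the same non-zero real number, the construction of the base set gives only two possibilities. Either $f$ and $g$ agree on all of $\omega_+$ and differ at most at $0$ by an even amount. Or one is the primary and the other the secondary auxiliary function of that number. We split into these two cases.

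\textbf{Case 1: $f(n)=g(n)$ for all $n\in\omega_+$.} The positional part of sub-multiplication, $(f*h)(M)$ for $M\in\omega_+$, depends only on the values of $f$ and $h$ on $\omega_+$. So $(f*h)(M)=(g*h)(M)$ for every $M\in\omega_+$. At position $0$ we have
\begin{align*}
(f*h)(0)+(g*h)(0)=f(0)+g(0)+2h(0).
\end{align*}
This is even, because $\sigma(f)=\sigma(g)$ makes $f(0)+g(0)=2m$. Both products are finite by the \nameref{Sub-Multiplication Finiteness Lemma} \ref{Sub-Multiplication Finiteness Lemma}. The \nameref{Equivalence Lemma} \ref{Equivalence Lemma} then gives $[f*h]=[g*h]$.

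\textbf{Case 2: $f$ and $g$ differ somewhere on $\omega_+$.} Without loss of generality $f$ is primary and $g$ is secondary. Let $g_p'$ be the primary auxiliary function of $[g]=[f]$ with $g_p'(0)=g(0)$. This $g_p'$ is exactly the partner of $g$ in $W_1$ from the \nameref{Base Set} \ref{Base Set}. The \nameref{Second Sub-Multiplication Auxiliary Lemma} \ref{Second Sub-Multiplication Auxiliary Lemma}, applied to the finite function $g$ with the auxiliary function $h$ in the role of $h_g$, gives $[g_p'*h]=[g*h]$. Its secondary function is $g$ itself, since $g(0)$ matches. Next, $g_p'$ and $f$ are both primary auxiliary functions of the same number. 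By \nameref{PC Cor II} \ref{PC Cor II} they agree on $\omega_+$, so Case 1 gives $[g_p'*h]=[f*h]$. Chaining the two equalities yields $[f*h]=[g*h]$.

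The only step needing care is making sure the hypotheses of the \nameref{Second Sub-Multiplication Auxiliary Lemma} \ref{Second Sub-Multiplication Auxiliary Lemma} really apply. That lemma requires the primary and secondary functions to share the sign slot $f(0)$, and we must check that the partner $g_p'$ exists with precisely that value at $0$. This follows from the definition of $W_1$, which pairs functions with identical values on $[0,N^{g}_{\min}-1]_\omega$, and from the fact that an $R_1$-class contains such a pair for every admissible value at $0$. The hypothesis that $h$ is non-zero is not needed in the argument, because the zero case is already handled inside the second lemma. Everything else is routine.
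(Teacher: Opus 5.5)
Your proposal is correct and follows the paper's proof essentially step for step. The paper makes the same split: when $f$ and $g$ agree on $\omega_+$, the parity of $f(0)+g(0)+2h(0)$ and the Equivalence Lemma close the case, and otherwise it passes through the Second Sub-Multiplication Auxiliary Lemma to $g_p'$ and then reuses the first case. Your added checks (finiteness of the products, Paradise City Corollary II for the two primaries, and that $g$ is exactly the secondary function with sign slot $g(0)$) only make explicit what the paper leaves implicit.
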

\begin{proof} Suppose that $\forall n\in\omega_+,f\left(n\right)=g\left(n\right)$. Then $\sigma\left(f\right)=\sigma\left(g\right)$ and thus $f\left(0\right)+g\left(0\right)=2m$ for some $m\in\omega$. Furthermore, for all $n\in\omega_+$ we have $\left(f*h\right)\left(n\right)=\left(g*h\right)\left(n\right)$ and thus both of the functions have the same auxiliary function up to a difference for $0$. We shall see what this difference actually is. 
\begin{align*}
\left(f*h\right)\left(0\right)+\left(g*h\right)\left(0\right)=f\left(0\right)+h\left(0\right)+g\left(0\right)+h\left(0\right)=2m+2h\left(0\right)
\end{align*} 
Therefore, by the \nameref{Equivalence Lemma} \ref{Equivalence Lemma} both the auxiliary functions are in the same real number and therefore $\left[f*h\right]=\left[g*h\right]$.\\
Suppose that $\exists n\in\omega_+,f\left(n\right)\neq g\left(n\right)$. This means that one of the functions is a primary auxiliary function and the other one is a secondary auxiliary function. Take without loss of generality $f$ to be the primary auxiliary function and $g$ to be the secondary auxiliary function. By the \nameref{Second Sub-Multiplication Auxiliary Lemma} \ref{Second Sub-Multiplication Auxiliary Lemma} $\left[g*h\right]=\left[g_p'*h\right]$ where $g_p'$ is the primary auxiliary function of $\left[g\right]=\left[f\right]$ with $g_p'\left(0\right)=g\left(0\right)$. By the case above $\left[g_p'*h\right]=\left[f*h\right]$ and thus $\left[f*h\right]=\left[g*h\right]$.
\end{proof}

\begin{theorem}[Consistency Theorem]\label{Consistency Theorem''''}\noindent\\
\begin{subtheorems}
\subtheorem\label{5.I} For all non-zero finite functions $f$ and all finite functions $g$ and $w$ we have that
\begin{align*}
\left[f*g\right]=\left[f*w\right]\iff\left[g\right]=\left[w\right]
\end{align*}
\subtheorem\label{5.II} For all finite functions $f$, $g$, $w$ and $t$ we have that 
\begin{align*}
\left[f\right]=\left[g\right]\wedge\left[w\right]=\left[t\right]\implies\left[f*w\right]=\left[g*t\right]
\end{align*}
\subtheorem\label{5.III} For all finite functions $f$ and $g$ and non-zero finite functions $w$ and $t$ we have that
\begin{align*}
\left[f*w\right]=\left[g*t\right]\wedge\left[w\right]=\left[t\right]\implies\left[f\right]=\left[g\right]
\end{align*}
\end{subtheorems}
\end{theorem}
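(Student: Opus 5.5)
The plan mirrors the additive Consistency Theorem \ref{Consistency Theorem}. I would prove part II first, since it is the real well-definedness statement, and then derive the $(\impliedby)$ half of I and the whole of III from it together with an order argument.

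\textbf{Part II.} Let $\left[f\right]=\left[g\right]$ and $\left[w\right]=\left[t\right]$. By the \nameref{First Sub-Multiplication Auxiliary Lemma} \ref{First Sub-Multiplication Auxiliary Lemma},
\begin{align*}
\left[f*w\right]=\left[h_f*h_w\right]\qquad\text{and}\qquad\left[g*t\right]=\left[h_g*h_t\right].
\end{align*}
The functions $h_f,h_g$ are auxiliary functions of the same real number, and so are $h_w,h_t$.

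If one of the numbers is $\left[0\right]$, the \nameref{Null Lemma} \ref{Null Lemma} and the zero case already treated in the First Sub-Multiplication Auxiliary Lemma put both products in $\left[0\right]$.

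Otherwise I apply the \nameref{Third Sub-Multiplication Auxiliary Lemma} \ref{Third Sub-Multiplication Auxiliary Lemma} twice, using commutativity of sub-multiplication between the two uses:
\begin{align*}
\left[h_f*h_w\right]=\left[h_g*h_w\right]=\left[h_w*h_g\right]=\left[h_t*h_g\right]=\left[h_g*h_t\right].
\end{align*}
The $(\impliedby)$ direction of I is the special case $f=g$ of this.

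\textbf{The forward direction of I.} This is the main obstacle, because the additive proof relied on pointwise domination of sums, and that fails for products. I would assume $\left[g\right]\neq\left[w\right]$ with $f$ non-zero. By totality, say $\left[g\right]<\left[w\right]$, and I take the positive case first.

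By part II I may replace $g$ and $w$ by representatives. Using the definition of order, I choose $w'\in\left[w\right]$ that dominates $g$, and strictly dominates it at some position $m$ (strictness because the classes differ). Sub-multiplication is monotone in each factor because every $r(n,k,M)\geq0$. Hence $f*w'$ dominates $f*g$.

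For strictness I argue as in the non-vanishing step of the First Sub-Multiplication Auxiliary Lemma. Pick $m_1$ with $f(m_1)>0$ and take the least $M$ with $r(m_1,m,M)>0$. Then $(f*w')(M)>(f*g)(M)$. The \nameref{Domination Theorem} \ref{Domination Theorem} now gives $\left[f*w'\right]\neq\left[f*g\right]$, that is, $\left[f*w\right]\neq\left[f*g\right]$.

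Two cases need separate treatment. If one of $g,w$ is zero, the Null Lemma argument above shows that exactly one of the two products is zero. If $g$ and $w$ have opposite signs, the sign slot $f(0)+g(0)$ differs in parity from $f(0)+w(0)$, so the products have opposite signs and are both non-zero. Negative $f$ is handled by relabelling. If the strictness via $r$ proves delicate, I would fall back on the \nameref{Boundedness Theorem} \ref{Boundedness Theorem}: compare the weighted truncations using the \nameref{Rest Identity} \ref{Rest Identity}, which turns $\sum_M (f*w')(M)\prod_{p=M}^{K-1}\vartheta(p)$ into $\sum_{n,k} f(n)w'(k)R(n,k,K)$, where the comparison is monotone.

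\textbf{Part III.} Assume $\left[f*w\right]=\left[g*t\right]$ and $\left[w\right]=\left[t\right]$ with $w,t$ non-zero. Part II gives $\left[g*t\right]=\left[g*w\right]$. Commuting factors, $\left[w*f\right]=\left[w*g\right]$. Part I with the non-zero factor $w$ then yields $\left[f\right]=\left[g\right]$.
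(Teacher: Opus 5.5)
Your proposal is correct and follows the paper's proof essentially step for step. Well-definedness comes from the First and Third Sub-Multiplication Auxiliary Lemmas together with commutativity; you prove II directly and read off the $(\impliedby)$ half of I, where the paper does the reverse. Injectivity comes from a dominating representative, the least position $M$ with $r(m_1,m,M)>0$, and the Domination Theorem, with the sign and zero cases handled separately, and III follows from I and II. The only slip is cosmetic: the case needing relabelling is negative $[g],[w]$, where the order supplies a representative of the smaller class dominating the given representative of the larger. Negative $f$ needs no treatment, since its sign plays no role in the argument.
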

\begin{proof}[Proof of I]\noindent\\
$\left(\impliedby\right):$\\
By the \nameref{First Sub-Multiplication Auxiliary Lemma} \ref{First Sub-Multiplication Auxiliary Lemma}, we have that $\left[f*g\right]=\left[h_f*h_g\right]$ and $\left[f*w\right]=\left[h_f*h_w\right]$. By commutativity of sub-multiplication and the \nameref{Third Sub-Multiplication Auxiliary Lemma} \ref{Third Sub-Multiplication Auxiliary Lemma} we have 
\begin{align*}
\left[f*g\right]=\left[h_f*h_g\right]=\left[h_g*h_f\right]=\left[h_w*h_f\right]=\left[h_f*h_w\right]=\left[f*w\right]
\end{align*}
$\left(\implies\right):$\\
Suppose $\left[w\right]\neq\left[g\right]$. By totality $\left[w\right]\leq\left[g\right]$ or $\left[w\right]\geq\left[g\right]$. Suppose without loss of generality that $\left[w\right]\leq\left[g\right]$.\\
If $\sigma\left(\left[w\right]\right)=\sigma\left(\left[g\right]\right)=0$ then by the definition of order 
\begin{align*}
\forall g\in\left[g\right],\exists a\in\left[w\right],\forall n\in\omega_+,a\left(n\right)\geq g\left(n\right)
\end{align*}
This implies that for all $n\in\omega_+$ we have
\begin{align*}
\left(f*a\right)\left(n\right)\geq\left(f*g\right)\left(n\right)
\end{align*}
On top of that, as $\left[g\right]\neq\left[w\right]$ there has to exist $M\in\omega_+$ such that $a\left(M\right)>g\left(M\right)$ as if not, then as $\sigma\left(a\right)=\sigma\left(g\right)$ we would have that $\left[w\right]=\left[a\right]=\left[g\right]$ by the \nameref{Equivalence Lemma} \ref{Equivalence Lemma} as $a\left(0\right)+g\left(0\right)=2m$ for some $m\in\omega$ and for all $n\in\omega_+$ we have $a\left(n\right)=g\left(n\right)$. Hence, there exists $M\in\omega_+$ such that $a\left(M\right)>g\left(M\right)$.\\
As $f$ is non-zero, by the \nameref{Null Lemma} \ref{Null Lemma} there exists a value $N\in\omega_+$ such that $f\left(N\right)\geq1$. Let $M'\in\omega_+$ denote the minimal $t\in\omega_+$ such that
\begin{align*}
    \prod\limits_{j=1}^{N-1}\left(\vartheta\left(j\right)\right)\prod\limits_{j=1}^{M-1}\left(\vartheta\left(j\right)\right)\leq\prod\limits_{j=1}^{t-1}\left(\vartheta\left(j\right)\right)
\end{align*}
Thus, by the minimality of $M'$ we have $r\left(N,M,M'\right)\geq1$ and hence
\begin{align*}
\left(f*a\right)\left(M'\right)>\left(f*g\right)\left(M'\right)
\end{align*}
By the \nameref{Domination Theorem} \ref{Domination Theorem} we see that $\left[f*w\right]=\left[f*a\right]\neq\left[f*g\right]$.\\
The proof of $\sigma\left(\left[w\right]\right)=\sigma\left(\left[g\right]\right)=1$ is the same as the first case.\\
Furthermore, if $\sigma\left(\left[w\right]\right)\neq\sigma\left(\left[g\right]\right)$, then we have that $\sigma\left(\left[f*g\right]\right)\neq\sigma\left(\left[f*w\right]\right)$ (where we have shown in the proof of the \nameref{First Sub-Multiplication Auxiliary Lemma} \ref{First Sub-Multiplication Auxiliary Lemma} that these real numbers are not $\left[0\right]$).\\
Finally, if $w$ or $g$ is in $\left[0\right]$ then so is their respective product with $f$. However, as $\left[w\right]\neq\left[g\right]$ and $\left[f\right]\neq\left[0\right]$ we see that the other product is not in $\left[0\right]$ as shown in the proof of the \nameref{First Sub-Multiplication Auxiliary Lemma} \ref{First Sub-Multiplication Auxiliary Lemma}.\\
Ergo $\left[w\right]\neq\left[g\right]\implies\left[f*g\right]\neq\left[f*w\right]$. By contrapositive we have $\left[f*g\right]=\left[f*w\right]\implies\left[g\right]=\left[w\right]$
\end{proof}
\begin{proof}[Proof of II]\noindent\\
Let $f,g,w,t\in\mathcal{N}_{\mathcal{F}in}$ such that $\left[f\right]=\left[g\right]$ and $\left[w\right]=\left[t\right]$ be given. By using the \nameref{Consistency Theorem''''} \ref{5.I} \ref{Consistency Theorem''''} twice we obtain 
\begin{align*}
\begin{array}{ll@{}}
\left[w\right]=\left[t\right]\implies\left[f*w\right]=\left[f*t\right]&\text{and}\\
\left[f\right]=\left[g\right]\implies\left[t*f\right]=\left[t*g\right]
\end{array}
\end{align*}
(Notice that we used only the direction which did not require $f\not\in\left[0\right]$.) As sub-multiplication is commutative $\left[t*f\right]=\left[f*t\right]$. Thus using transitivity of equality we obtain $\left[f*w\right]=\left[f*t\right]=\left[t*f\right]=\left[t*g\right]=\left[g*t\right]\implies\left[f*w\right]=\left[g*t\right]$.
\end{proof}
\begin{proof}[Proof of III]\noindent\\
Let $f,g\in\mathcal{N}_{\mathcal{F}in}$ and $w,t\in\mathcal{N}_{\mathcal{F}in}^*$ such that $\left[f*w\right]=\left[g*t\right]$ and $\left[w\right]=\left[t\right]$ be given. By the \nameref{Consistency Theorem''''} \ref{5.I} \ref{Consistency Theorem''''} 
\begin{align*}
\left[w\right]=\left[t\right]\implies\left[f*w\right]=\left[f*t\right]
\end{align*} by the hypothesis $\left[g*t\right]=\left[f*w\right]$.\\
Therefore, we have $\left[g*t\right]=\left[f*t\right]$. By the commutativity of sub-multiplication and the \nameref{Consistency Theorem''''} \ref{5.I} \ref{Consistency Theorem''''} we have $\left[t*g\right]=\left[t*f\right]\implies\left[f\right]=\left[g\right]$.
\end{proof}

\begin{definition}[Multiplication]\noindent\\
We define multiplication of two real numbers $\left[f\right]$ and $\left[g\right]$ as:
\begin{align*}
\left[f\right]*\left[g\right]:=\left[f*g\right]
\end{align*}
for any functions $f\in\left[f\right]$ and $g\in\left[g\right]$.
\end{definition}

\begin{remark}The \nameref{Consistency Theorem''''} \ref{5.II} \ref{Consistency Theorem''''} ensures that the product of any two real numbers does not depend on the functions chosen to represent those real numbers. Therefore, multiplication is well-defined.
\end{remark}

\begin{theorem}[Consistency Theorem]\label{Consistency Theorem'''''}\noindent\\
\begin{subtheorems}
\subtheorem\label{6.I} For all non-zero real numbers $\left[f\right]$ and all real numbers $\left[g\right]$ and $\left[w\right]$ we have that
\begin{align*}
\left[f\right]*\left[g\right]=\left[f\right]*\left[w\right]\iff\left[g\right]=\left[w\right]
\end{align*}
\subtheorem\label{6.II} For all real numbers $\left[f\right]$, $\left[g\right]$, $\left[w\right]$ and $\left[t\right]$ we have that 
\begin{align*}
\left[f\right]=\left[g\right]\wedge\left[w\right]=\left[t\right]\implies\left[f\right]*\left[w\right]=\left[g\right]*\left[t\right]
\end{align*}
\subtheorem\label{6.III} For all real numbers $\left[f\right]$ and $\left[g\right]$ and non-zero real numbers $\left[w\right]$ and $\left[t\right]$ we have that 
\begin{align*}
\left[f\right]*\left[w\right]=\left[g\right]*\left[t\right]\wedge\left[w\right]=\left[t\right]\implies\left[f\right]=\left[g\right]
\end{align*}
\end{subtheorems}
\end{theorem}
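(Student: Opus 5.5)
The three parts of this theorem follow from the \nameref{Consistency Theorem''''} \ref{Consistency Theorem''''} for finite functions, in the same way that the \nameref{Consistency Theorem'''} \ref{Consistency Theorem'''} followed from its function-level counterpart. The plan is to translate each statement about real numbers into a statement about representatives using the definition $\left[f\right]*\left[g\right]:=\left[f*g\right]$. Well-definedness of that definition is itself Part II of the function-level theorem, so we may fix arbitrary $f\in\left[f\right]$, $g\in\left[g\right]$, $w\in\left[w\right]$ and $t\in\left[t\right]$. The only point needing care is that the non-zero hypotheses on the classes transfer to the representatives.

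\textbf{Part I.} If $\left[f\right]\neq\left[0\right]$, then by the \nameref{Unicity of Real Numbers} \ref{Unicity of Real Numbers} no representative $f$ lies in $\left[0\right]$, so $f$ is a non-zero finite function. The equivalence $\left[f\right]*\left[g\right]=\left[f\right]*\left[w\right]\iff\left[f*g\right]=\left[f*w\right]$ is just the definition, and \ref{5.I} turns the right-hand side into $\left[g\right]=\left[w\right]$.

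\textbf{Part II.} The hypotheses $\left[f\right]=\left[g\right]$ and $\left[w\right]=\left[t\right]$ say that $f,g$ lie in one class and $w,t$ in another. Then \ref{5.II} gives $\left[f*w\right]=\left[g*t\right]$, which by definition is $\left[f\right]*\left[w\right]=\left[g\right]*\left[t\right]$.

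\textbf{Part III.} As in Part I, the representatives $w$ and $t$ are non-zero finite functions because their classes are non-zero. Then \ref{5.III} applies directly.

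The main obstacle is the bookkeeping in Parts I and III. We must make sure that "non-zero real number" really implies "non-zero finite function" for every chosen representative, and that we never tacitly apply \ref{5.I} or \ref{5.III} to a zero representative. Unicity together with the definition of $\left[0\right]$ as $r_{b_1}\cup r_{b_2}$ in the \nameref{Set of Reals} \ref{Set of Reals} settles this: every function lying in a class different from $\left[0\right]$ lies outside $\bigcup\left[0\right]$. Once that is in place, the proof is a direct unwinding of definitions.
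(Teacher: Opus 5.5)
Your proposal is correct and follows the paper's proof, which consists of citing the definition of multiplication together with the function-level Consistency Theorem for sub-multiplication. Your extra check, that a non-zero class has only non-zero representatives so that the hypotheses of its Parts I and III are met, is bookkeeping the paper leaves implicit.
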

\begin{proof} By the definition of multiplication and the \nameref{Consistency Theorem''''} \ref{Consistency Theorem''''} these theorems follow.
\end{proof}
\subsection{Multiplicative Axioms}
\begin{axiom}[Multiplicative Identity]\noindent\\
There exists a real number $\left[x\right]$ such that for all real numbers $\left[a\right]$ we have $\left[x\right]*\left[a\right]=\left[a\right]$.
\end{axiom}
\begin{proof} We claim that $\left[1\right]:=\left[1_p\right]$ where $1_p:\omega\rightarrow\omega$ where
\begin{align*}
1_p\left(k\right)=\begin{cases}1&k=1\\
0&otherwise
\end{cases}
\end{align*}
We see that for any finite function $g$ we have
\begin{align*}
\left(1_p*g\right)\left(M\right)=\begin{cases}1_p\left(0\right)+g\left(0\right)=g\left(0\right)&M=0\\
\sum\limits_{n=1}^M\left(\sum\limits_{k=1}^M\left(1_p\left(n\right)g\left(k\right)r\left(n,k,M\right)\right)\right)=\sum\limits_{k=1}^M\left(g\left(k\right)r\left(1,k,M\right)\right)=g\left(M\right)&M\in\omega_+
\end{cases}
\end{align*}
Hence, $1_p*g=g$. Thus $\left[1\right]$ is the multiplicative identity.
\end{proof}

\begin{axiom}[Commutativity of Multiplication]\noindent\\
For all real numbers $\left[f\right]$ and $\left[g\right]$ we have $\left[f\right]*\left[g\right]=\left[g\right]*\left[f\right]$.
\end{axiom}
\begin{proof} As sub-multiplication of any two finite functions $f$ and $g$ is commutative so is multiplication of any two real numbers.
\end{proof}

\begin{axiom}[Associativity of Multiplication]\noindent\\
For all real numbers $\left[f\right]$, $\left[g\right]$ and $\left[w\right]$ we have that $\left(\left[f\right]*\left[g\right]\right)*\left[w\right]=\left[f\right]*\left(\left[g\right]*\left[w\right]\right)$.
\end{axiom}
\begin{proofidea} Expand each of the two associations into a triple sum over positions. Both collapse, by the \nameref{Rest Identity} \ref{Rest Identity} and the composition rule for rational floors, to the same symmetric expression, the number of whole units of the scale at the output position that fit inside the combined scale of the three input positions. Symmetry of that expression in its three arguments is the whole proof; the surrounding work is checking that the floor slack stays below one unit of scale, so that the \nameref{Extended Extension Theorem} \ref{Extended Extension Theorem} applies.
\end{proofidea}
\begin{proof} Let $f,g,w\in \mathcal{N}_{\mathcal{F}in}$ be given. Notice that if any of the functions is in $\left[0\right]$ then the resulting real numbers will both be equal to zero, by the \nameref{Null Lemma} \ref{Null Lemma} applied twice, thus associativity holds.\\\
Going forward we suppose that $f,g,w\in \mathcal{N}_{\mathcal{F}in}^*$. Because of this $\left(\left(f*g\right)*w\right)\not\in\left[0\right]$ and $\left(f*\left(g*w\right)\right)\not\in\left[0\right]$ by repeating the argument used in the proof of \nameref{First Sub-Multiplication Auxiliary Lemma} \ref{First Sub-Multiplication Auxiliary Lemma}.\\
We see that $\left(\left(f*g\right)*w\right)\left(0\right)=\left(f*\left(g*w\right)\right)\left(0\right)=f\left(0\right)+g\left(0\right)+w\left(0\right)$. Therefore, $\sigma\left(\left(f*g\right)*w\right)=\sigma\left(f*\left(g*w\right)\right)$. Also, for all $K\in\omega_+$ we have
\begin{align*}
\begin{array}{ll@{}}
\left(\left(f*g\right)*w\right)\left(K\right)&=\sum\limits_{M=1}^{K}\left(\sum\limits_{p=1}^K\left(\sum\limits_{n=1}^{M}\left(\sum\limits_{k=1}^{M}\left(f\left(n\right)g\left(k\right)r\left(n,k,M\right)\right)\right)w\left(p\right)\left(r\left(M,p,K\right)\right)\right)\right)\\
&=\sum\limits_{M=1}^{K}\left(\sum\limits_{p=1}^K\left(\sum\limits_{n=1}^{M}\left(\sum\limits_{k=1}^{M}\left(f\left(n\right)g\left(k\right)w\left(p\right)r\left(n,k,M\right)r\left(M,p,K\right)\right)\right)\right)\right)\\
&=\sum\limits_{n=1}^{K}\left(f\left(n\right)\sum\limits_{k=1}^{K}\left(g\left(k\right)\sum\limits_{p=1}^K\left(w\left(p\right)\hspace{-0.5cm}\sum\limits_{M=\max\left(\left\{n,k\right\}\right)}^K\hspace{-0.5cm}\left(r\left(n,k,M\right)r\left(M,p,K\right)\right)\right)\right)\right)\\
\left(f*\left(g*w\right)\right)\left(K\right)&=\sum\limits_{n=1}^{K}\left(\sum\limits_{M=1}^{K}\left(f\left(n\right)\sum\limits_{k=1}^{M}\left(\sum\limits_{p=1}^{M}\left(g\left(k\right)w\left(p\right)r\left(k,p,M\right)\right)\right)\right)r\left(n,M,K\right)\right)\\
&=\sum\limits_{n=1}^{K}\left(\sum\limits_{M=1}^{K}\left(\sum\limits_{k=1}^{M}\left(\sum\limits_{p=1}^{M}\left(f\left(n\right)g\left(k\right)w\left(p\right)r\left(k,p,M\right)r\left(n,M,K\right)\right)\right)\right)\right)\\
&=\sum\limits_{n=1}^{K}\left(f\left(n\right)\sum\limits_{k=1}^{K}\left(g\left(k\right)\sum\limits_{p=1}^K\left(w\left(p\right)\hspace{-0.5cm}\sum\limits_{M=\max\left(\left\{k,p\right\}\right)}^K\hspace{-0.5cm}\left(r\left(k,p,M\right)r\left(n,M,K\right)\right)\right)\right)\right)
\end{array}
\end{align*}
Now, in order to apply the \nameref{Extended Extension Theorem Corollary} \ref{Extended Extension Theorem Corollary} we inspect for all $\Xi\in\omega_+$ the value
\begin{align*}
&\sum\limits_{K=1}^{\Xi}\left(\left(\left(f*g\right)*w\right)\left(K\right)\prod\limits_{j=K}^{\Xi-1}\left(\vartheta\left(j\right)\right)\right)\\
=&\sum\limits_{n=1}^{\Xi}\left(f\left(n\right)\sum\limits_{k=1}^{\Xi}\left(g\left(k\right)\sum\limits_{p=1}^{\Xi}\left(w\left(p\right)\hspace{-0.5cm}\sum\limits_{K=\max\left(\left\{n,k,p\right\}\right)}^{\Xi}\left(\sum\limits_{M=\max\left(\left\{n,k\right\}\right)}^K\left(r\left(n,k,M\right)r\left(M,p,K\right)\prod\limits_{j=K}^{\Xi-1}\left(\vartheta\left(j\right)\right)\right)\right)\right)\right)\right)\\
=&\sum\limits_{n=1}^{\Xi}\left(f\left(n\right)\sum\limits_{k=1}^{\Xi}\left(g\left(k\right)\sum\limits_{p=1}^{\Xi}\left(w\left(p\right)\hspace{-0.5cm}\sum\limits_{M=\max\left(\left\{n,k\right\}\right)}^\Xi\left(\sum\limits_{K=\max\left(\left\{n,k,p,M\right\}\right)}^{\Xi}\left(r\left(n,k,M\right)r\left(M,p,K\right)\prod\limits_{j=K}^{\Xi-1}\left(\vartheta\left(j\right)\right)\right)\right)\right)\right)\right)\\
=&\sum\limits_{n=1}^{\Xi}\left(f\left(n\right)\sum\limits_{k=1}^{\Xi}\left(g\left(k\right)\sum\limits_{p=1}^{\Xi}\left(w\left(p\right)\hspace{-0.5cm}\sum\limits_{M=\max\left(\left\{n,k\right\}\right)}^\Xi\left(r\left(n,k,M\right)\hspace{-0.5cm}\sum\limits_{K=\max\left(\left\{M,p\right\}\right)}^{\Xi}\left(r\left(M,p,K\right)\prod\limits_{j=K}^{\Xi-1}\left(\vartheta\left(j\right)\right)\right)\right)\right)\right)\right)\\
=&\sum\limits_{n=1}^{\Xi}\left(f\left(n\right)\sum\limits_{k=1}^{\Xi}\left(g\left(k\right)\sum\limits_{p=1}^{\Xi}\left(w\left(p\right)\hspace{-0.5cm}\sum\limits_{M=\max\left(\left\{n,k\right\}\right)}^\Xi\left(r\left(n,k,M\right)R\left(M,p,\Xi\right)\right)\right)\right)\right)\\
\end{align*}
And similarly we have:
\begin{align*}
&\sum\limits_{K=1}^{\Xi}\left(\left(f*\left(g*w\right)\right)\left(K\right)\prod\limits_{j=K}^{\Xi-1}\left(\vartheta\left(j\right)\right)\right)\\
=&\sum\limits_{n=1}^{\Xi}\left(f\left(n\right)\sum\limits_{k=1}^{\Xi}\left(g\left(k\right)\sum\limits_{p=1}^{\Xi}\left(w\left(p\right)\sum\limits_{M=\max\left(\left\{k,p\right\}\right)}^\Xi\left(r\left(k,p,M\right)R\left(n,M,\Xi\right)\right)\right)\right)\right)
\end{align*}
Let $N\in\omega_+$ and $\Xi\in\left[N+1,\infty\right]_\omega$ be given. For any $m\in\left[\Xi+1,\infty\right]_\omega$ we see that
\begin{align*}
&\prod\limits_{j=\Xi}^{m-1}\left(\vartheta\left(j\right)\right)\sum\limits_{K=1}^{\Xi}\left(\left(f*g\right)*w\right)\left(K\right)\prod\limits_{j=K}^{\Xi-1}\left(\vartheta\left(j\right)\right)\\
=&\prod\limits_{j=\Xi}^{m-1}\left(\vartheta\left(j\right)\right)\sum\limits_{n=1}^{\Xi}\left(f\left(n\right)\sum\limits_{k=1}^{\Xi}\left(g\left(k\right)\sum\limits_{p=1}^{\Xi}\left(w\left(p\right)\right.\right.\right.\\
&\left.\left.\left.\sum\limits_{M=\max\left(\left\{n,k\right\}\right)}^\Xi\left(r\left(n,k,M\right)\max\left(\left\{t\in\omega\middle|t\prod\limits_{j=1}^{p-1}\left(\vartheta\left(j\right)\right)\leq\prod\limits_{j=M}^{\Xi-1}\left(\vartheta\left(j\right)\right)\right\}\right)\right)\right)\right)\right)\\
\leq&\prod\limits_{j=\Xi}^{m-1}\left(\vartheta\left(j\right)\right)\sum\limits_{n=1}^{\Xi}\left(f\left(n\right)\sum\limits_{k=1}^{\Xi}\left(g\left(k\right)\sum\limits_{p=1}^{\Xi}\left(w\left(p\right)\right.\right.\right.\\
&\left.\left.\left.\max\left(\left\{t\in\omega\middle|t\prod\limits_{j=1}^{p-1}\left(\vartheta\left(j\right)\right)\leq\hspace{-0.5cm}\sum\limits_{M=\max\left(\left\{n,k\right\}\right)}^\Xi\left(r\left(n,k,M\right)\prod\limits_{j=M}^{\Xi-1}\left(\vartheta\left(j\right)\right)\right)\right\}\right)\right)\right)\right)\\
=&\prod\limits_{j=\Xi}^{m-1}\left(\vartheta\left(j\right)\right)\sum\limits_{n=1}^{\Xi}\left(f\left(n\right)\sum\limits_{k=1}^{\Xi}\left(g\left(k\right)\sum\limits_{p=1}^{\Xi}\left(w\left(p\right)\right.\right.\right.\\
&\left.\left.\left.\max\left(\left\{t\in\omega\middle|t\prod\limits_{j=1}^{p-1}\left(\vartheta\left(j\right)\right)\leq\max\left(\left\{s\in\omega\middle|s\prod\limits_{j=1}^{n-1}\left(\vartheta\left(j\right)\right)\prod\limits_{j=1}^{k-1}\left(\vartheta\left(j\right)\right)\leq\prod\limits_{j=1}^{\Xi-1}\left(\vartheta\left(j\right)\right)\right\}\right)\right\}\right)\right)\right)\right)\\
=&\prod\limits_{j=\Xi}^{m-1}\left(\vartheta\left(j\right)\right)\sum\limits_{n=1}^{\Xi}\left(f\left(n\right)\sum\limits_{k=1}^{\Xi}\left(g\left(k\right)\sum\limits_{p=1}^{\Xi}\left(w\left(p\right)\right.\right.\right.\\
&\left.\left.\left.\max\left(\left\{t\in\omega\middle|t\prod\limits_{j=1}^{p-1}\left(\vartheta\left(j\right)\right)\prod\limits_{j=1}^{n-1}\left(\vartheta\left(j\right)\right)\prod\limits_{j=1}^{k-1}\left(\vartheta\left(j\right)\right)\leq\prod\limits_{j=1}^{\Xi-1}\left(\vartheta\left(j\right)\right)\right\}\right)\right)\right)\right)\\
\leq&\sum\limits_{n=1}^{\Xi}\left(f\left(n\right)\sum\limits_{k=1}^{\Xi}\left(g\left(k\right)\sum\limits_{p=1}^{\Xi}\left(w\left(p\right)\right.\right.\right.\\
&\left.\left.\left.\max\left(\left\{t\in\omega\middle|t\prod\limits_{j=1}^{p-1}\left(\vartheta\left(j\right)\right)\prod\limits_{j=1}^{n-1}\left(\vartheta\left(j\right)\right)\prod\limits_{j=1}^{k-1}\left(\vartheta\left(j\right)\right)\leq\prod\limits_{j=1}^{m-1}\left(\vartheta\left(j\right)\right)\right\}\right)\right)\right)\right)\\
\end{align*}
We know by the same reasoning as in the proof \nameref{Second Sub-Multiplication Auxiliary Lemma} \ref{Second Sub-Multiplication Auxiliary Lemma} that there exists $m\in\left[\Xi+1,\infty\right]_\omega$ such that 
\begin{align*}
&\sum\limits_{n=1}^{\Xi}f\left(n\right)\sum\limits_{k=1}^{\Xi}g\left(k\right)\sum\limits_{p=1}^{\Xi}w\left(p\right)\sum\limits_{M=\max\left(\left\{k,p\right\}\right)}^mr\left(k,p,M\right)\\
\leq&\left(\sum\limits_{n=1}^{\Xi}f\left(n\right)\right)\left(\sum\limits_{k=1}^{\Xi}g\left(k\right)\right)\left(\sum\limits_{p=1}^{\Xi}w\left(p\right)\right)\sum\limits_{M=\max\left(\left\{k,p\right\}\right)}^m\left(\vartheta\left(M-1\right)-1\right)<\prod\limits_{j=N}^{m-1}\left(\vartheta\left(j\right)\right)
\end{align*}
Furthermore, we see that
\begin{align*}
&\sum\limits_{K=1}^{m}\left(f*\left(g*w\right)\right)\left(K\right)\prod\limits_{j=K}^{m-1}\left(\vartheta\left(j\right)\right)+\prod\limits_{j=N}^{m-1}\left(\vartheta\left(j\right)\right)\\
=&\sum\limits_{n=1}^{m}\left(f\left(n\right)\sum\limits_{k=1}^{m}\left(g\left(k\right)\sum\limits_{p=1}^{m}\left(w\left(p\right)\sum\limits_{M=\max\left(\left\{k,p\right\}\right)}^m\left(r\left(k,p,M\right)\right.\right.\right.\right.\\
&\left.\left.\left.\left.\max\left(\left\{t\in\omega\middle|t\prod\limits_{j=1}^{n-1}\left(\vartheta\left(j\right)\right)\prod\limits_{j=1}^{M-1}\left(\vartheta\left(j\right)\right)\leq\prod\limits_{j=1}^{m-1}\left(\vartheta\left(j\right)\right)\right\}\right)\right)\right)\right)\right)+\prod\limits_{j=N}^{m-1}\left(\vartheta\left(j\right)\right)\\
\geq&\sum\limits_{n=1}^{\Xi}\left(f\left(n\right)\sum\limits_{k=1}^{\Xi}\left(g\left(k\right)\sum\limits_{p=1}^{\Xi}\left(w\left(p\right)\hspace{-0.5cm}\sum\limits_{M=\max\left(\left\{k,p\right\}\right)}^m\hspace{-0.5cm}\left(r\left(k,p,M\right)\right.\right.\right.\right.\\
&\left.\left.\left.\left.\max\left(\left\{t\in\omega\middle|t\prod\limits_{j=1}^{n-1}\left(\vartheta\left(j\right)\right)\prod\limits_{j=1}^{M-1}\left(\vartheta\left(j\right)\right)\leq\prod\limits_{j=1}^{m-1}\left(\vartheta\left(j\right)\right)\right\}\right)\right)\right)\right)\right)+\prod\limits_{j=N}^{m-1}\left(\vartheta\left(j\right)\right)\\
\geq&\sum\limits_{n=1}^{\Xi}\left(f\left(n\right)\sum\limits_{k=1}^{\Xi}\left(g\left(k\right)\sum\limits_{p=1}^{\Xi}\left(w\left(p\right)\right.\right.\right.\\
&\left.\left.\left.\left(\max\left(\left\{t\in\omega\middle|t\prod\limits_{j=1}^{n-1}\left(\vartheta\left(j\right)\right)\leq\hspace{-0.5cm}\sum\limits_{M=\max\left(\left\{k,p\right\}\right)}^m\hspace{-0.5cm}r\left(k,p,M\right)\prod\limits_{j=M}^{m-1}\left(\vartheta\left(j\right)\right)\right\}\right)\right)\right)\right)\right)\\
&+\prod\limits_{j=N}^{m-1}\left(\vartheta\left(j\right)\right)-\sum\limits_{n=1}^{\Xi}\left(f\left(n\right)\sum\limits_{k=1}^{\Xi}\left(g\left(k\right)\sum\limits_{p=1}^{\Xi}\left(w\left(p\right)\hspace{-0.5cm}\sum\limits_{M=\max\left(\left\{k,p\right\}\right)}^m\hspace{-0.5cm}r\left(k,p,M\right)\right)\right)\right)\\
=&\sum\limits_{n=1}^{\Xi}\left(f\left(n\right)\sum\limits_{k=1}^{\Xi}\left(g\left(k\right)\sum\limits_{p=1}^{\Xi}\left(w\left(p\right)\right.\right.\right.\\
&\left.\left.\left.\left(\max\left(\left\{t\in\omega\middle|t\prod\limits_{j=1}^{n-1}\left(\vartheta\left(j\right)\right)\prod\limits_{j=1}^{k-1}\left(\vartheta\left(j\right)\right)\prod\limits_{j=1}^{p-1}\left(\vartheta\left(j\right)\right)\leq\prod\limits_{j=1}^{m-1}\left(\vartheta\left(j\right)\right)\right\}\right)\right)\right)\right)\right)\\
&+\prod\limits_{j=N}^{m-1}\left(\vartheta\left(j\right)\right)-\sum\limits_{n=1}^{\Xi}\left(f\left(n\right)\sum\limits_{k=1}^{\Xi}\left(g\left(k\right)\sum\limits_{p=1}^{\Xi}\left(w\left(p\right)\hspace{-0.5cm}\sum\limits_{M=\max\left(\left\{k,p\right\}\right)}^m\hspace{-0.5cm}r\left(k,p,M\right)\right)\right)\right)\\
\geq&\sum\limits_{n=1}^{\Xi}\left(f\left(n\right)\sum\limits_{k=1}^{\Xi}\left(g\left(k\right)\sum\limits_{p=1}^{\Xi}\left(w\left(p\right)\right.\right.\right.\\
&\left.\left.\left.\max\left(\left\{t\in\omega\middle|t\prod\limits_{j=1}^{p-1}\left(\vartheta\left(j\right)\right)\prod\limits_{j=1}^{n-1}\left(\vartheta\left(j\right)\right)\prod\limits_{j=1}^{k-1}\left(\vartheta\left(j\right)\right)\leq\prod\limits_{j=1}^{m-1}\left(\vartheta\left(j\right)\right)\right\}\right)\right)\right)\right)\\
\geq&\prod\limits_{j=\Xi}^{m-1}\left(\vartheta\left(j\right)\right)\sum\limits_{K=1}^{\Xi}\left(\left(f*g\right)*w\right)\left(K\right)\prod\limits_{j=K}^{\Xi-1}\left(\vartheta\left(j\right)\right)
\end{align*}
Hence we may conclude that $\forall N\in\omega_+,\forall \Xi\in\left[N+1,\infty\right]_\omega,\exists m\in\left[\Xi+1,\infty\right]_\omega$ such that
\begin{align*}
\prod\limits_{j=\Xi}^{m-1}\left(\vartheta\left(j\right)\right)\sum\limits_{K=1}^{\Xi}\left(\left(f*g\right)*w\right)\left(K\right)\prod\limits_{j=K}^{\Xi-1}\left(\vartheta\left(j\right)\right)\leq\sum\limits_{K=1}^{m}\left(f*\left(g*w\right)\right)\left(K\right)\prod\limits_{j=K}^{m-1}\left(\vartheta\left(j\right)\right)+\prod\limits_{j=N}^{m-1}\left(\vartheta\left(j\right)\right)
\end{align*}
By the same proof, only relabelled, we have $\forall N\in\omega_+,\forall \Xi\in\left[N+1,\infty\right]_\omega,\exists m\in\left[\Xi+1,\infty\right]_\omega$ such that
\begin{align*}
\prod\limits_{j=\Xi}^{m-1}\left(\vartheta\left(j\right)\right)\sum\limits_{K=1}^{\Xi}\left(f*\left(g*w\right)\right)\left(K\right)\prod\limits_{j=K}^{\Xi-1}\left(\vartheta\left(j\right)\right)\leq\sum\limits_{K=1}^{m}\left(\left(f*g\right)*w\right)\left(K\right)\prod\limits_{j=K}^{m-1}\left(\vartheta\left(j\right)\right)+\prod\limits_{j=N}^{m-1}\left(\vartheta\left(j\right)\right)
\end{align*}
By the \nameref{Extended Extension Theorem Corollary} \ref{Extended Extension Theorem Corollary} we have $\left[\left(f*g\right)*w\right]=\left[f*\left(g*w\right)\right]$ which means that $\left(\left[f\right]*\left[g\right]\right)*\left[w\right]=\left[f\right]*\left(\left[g\right]*\left[w\right]\right)$
\end{proof}

\begin{axiom}[Distributivity]\noindent\\
For all real numbers $\left[f\right]$, $\left[g\right]$ and $\left[w\right]$ we have $\left(\left[f\right]+\left[g\right]\right)*\left[w\right]=\left(\left[f\right]*\left[w\right]\right)+\left(\left[g\right]*\left[w\right]\right)$.
\end{axiom}
\begin{proofidea} After the zero cases, everything turns on whether the two summands share a sign, the sign of the multiplier never matters, since the parity computation at position $0$ comes out the same either way. When they do share a sign the identity is a direct computation on the convolution, closed by the \nameref{Equivalence Lemma} \ref{Equivalence Lemma}. When they do not, their sum is the surplus of the larger over the smaller, represented by a tracking function, and distributing over that surplus returns us to the same-sign computation. The delicate step is choosing the representative: it must dominate pointwise and carry the sign of the larger summand, so the argument splits on which summand that is, with the \nameref{Absolute-Opposite Lemma} \ref{Absolute-Opposite Lemma} handling the case of equal magnitudes.
\end{proofidea}
\begin{proof}Firstly, if $\left[w\right]=\left[0\right]$ then we have seen in the proof of the \nameref{First Sub-Multiplication Auxiliary Lemma} \ref{First Sub-Multiplication Auxiliary Lemma} that $\left[0\right]*\left[t\right]=\left[0\right]$ for all real numbers $\left[t\right]$. Therefore,
\begin{align*}
\left(\left[f\right]+\left[g\right]\right)*\left[w\right]&=\left(\left[f\right]+\left[g\right]\right)*\left[0\right]=\left[0\right]=\left[0\right]+\left[0\right]\\
&=\left(\left[f\right]*\left[0\right]\right)+\left(\left[g\right]*\left[0\right]\right)=\left(\left[f\right]*\left[w\right]\right)+\left(\left[g\right]*\left[w\right]\right)
\end{align*}
going forward we suppose $\left[w\right]\neq\left[0\right]$.
\textbf{Case 1. $\left[f\right]=\left[0\right]\vee\left[g\right]=\left[0\right]$:} Take without loss of generality $\left[g\right]=\left[0\right]$. We have seen in the proof of the \nameref{First Sub-Multiplication Auxiliary Lemma} \ref{First Sub-Multiplication Auxiliary Lemma} that $\left[0\right]*\left[w\right]=\left[0\right]$ for all real numbers $\left[w\right]$. Thus, 
\begin{align*}
\left(\left[f\right]+\left[g\right]\right)*\left[w\right]&=\left(\left[f\right]+\left[0\right]\right)*\left[w\right]=\left[f\right]*\left[w\right]=\left(\left[f\right]*\left[w\right]\right)+\left[0\right]\\
&=\left(\left[f\right]*\left[w\right]\right)+\left(\left[0\right]*\left[w\right]\right)=\left(\left[f\right]*\left[w\right]\right)+\left(\left[g\right]*\left[w\right]\right)
\end{align*}
\textbf{Case 2. $\sigma\left(\left[f\right]\right)=\sigma\left(\left[g\right]\right)$:} Thus $f\left(0\right)+g\left(0\right)=2m$ for some $m\in\omega$. Then 
\begin{align*}
\begin{array}{lll@{}}
\left(\left(f+g\right)*w\right)\left(0\right)&=f\left(0\right)g\left(0\right)+w\left(0\right)&\text{and}\\
\left(\left(f*w\right)+\left(g*w\right)\right)\left(0\right)&=\left(f\left(0\right)+w\left(0\right)\right)\left(g\left(0\right)+w\left(0\right)\right)\\
&=f\left(0\right)g\left(0\right)+w\left(0\right)\left(f\left(0\right)+g\left(0\right)+w\left(0\right)\right)
\end{array}
\end{align*}
Thus, 
\begin{align*}
&\left(\left(f+g\right)*w\right)\left(0\right)+\left(f*w+g*w\right)\left(0\right)\\
=&f\left(0\right)g\left(0\right)+w\left(0\right)+f\left(0\right)g\left(0\right)+w\left(0\right)\left(f\left(0\right)+g\left(0\right)+w\left(0\right)\right)\\
=&2f\left(0\right)g\left(0\right)+w\left(0\right)\left(2m+w\left(0\right)+1\right)
\end{align*}
We see that this is even for any $w\left(0\right)$. Furthermore, as for all $M\in\omega_+$ we have
\begin{align*}
\left(\left(f+g\right)*w\right)\left(M\right)&=\sum\limits_{n=1}^{M}\left(\sum\limits_{k=1}^{M}\left(\left(f\left(n\right)+g\left(n\right)\right)w\left(k\right)r\left(n,k,M\right)\right)\right)\\
&=\sum\limits_{n=1}^{M}\left(\sum\limits_{k=1}^{M}\left(f\left(n\right)w\left(k\right)r\left(n,k,M\right)\right)\right)+\sum\limits_{n=1}^{M}\left(\sum\limits_{k=1}^{M}\left(g\left(n\right)w\left(k\right)r\left(n,k,M\right)\right)\right)\\
&=\left(\left(f*w\right)+\left(g*w\right)\right)\left(M\right)
\end{align*}
Therefore, by the \nameref{Equivalence Lemma} \ref{Equivalence Lemma} we have that $\left[\left(f+g\right)*w\right]=\left[\left(f*w\right)+\left(g*w\right)\right]$.\\
\textbf{Case 3. $\sigma\left(\left[f\right]\right)\neq\sigma\left(\left[g\right]\right)$:} Take without loss of generality $\left|\left[f\right]\right|\geq\left|\left[g\right]\right|$.\\
If $\left|\left[f\right]\right|=\left|\left[g\right]\right|$ then by the \nameref{Absolute-Opposite Lemma} \ref{Absolute-Opposite Lemma} we have that $\left[f\right]={}^o\left[g\right]$. Hence 
\begin{align*}
\left(\left[f\right]+\left[g\right]\right)*\left[w\right]=\left[0\right]*\left[w\right]=\left[0\right]
\end{align*}
Furthermore, by taking $f_p\in\left[f\right]$ and ${}^of_p\in\left[g\right]$ we have that
\begin{align*}
\left(f_p*w\right)\left(0\right)+\left({}^of_p*w\right)\left(0\right)=2\left(f_p\left(0\right)+w\left(0\right)\right)+1
\end{align*}
which is always odd and thus $\sigma\left(f_p*w\right)\neq\sigma\left({}^of_p*w\right)$. Hence, as for all $n\in\omega_+$ we have $f_p\left(n\right)={}^of_p\left(n\right)$ we have for all $K\in\omega_+$
\begin{align*}
\left|\left(f_p*w\right)\right|\left(K\right)=\left(f_p*w\right)\left(K\right)=\left({}^of_p*w\right)\left(K\right)=\left|\left({}^of_p*w\right)\right|\left(K\right)
\end{align*}
Thus, by the \nameref{Equivalence Lemma} \ref{Equivalence Lemma} as $\left|\left(f_p*w\right)\right|\left(0\right)+\left|\left({}^of_p*w\right)\right|\left(0\right)=0+0=2*0$ we have that $\left|\left(f_p*w\right)\right|=\left|\left({}^of_p*w\right)\right|$ and thus 
\begin{align*}
\left(\left[f\right]*\left[w\right]\right)+\left(\left[g\right]*\left[w\right]\right)=\left[0\right]=\left(\left[f\right]+\left[g\right]\right)*\left[w\right]
\end{align*}
If $\left|\left[f\right]\right|>\left|\left[g\right]\right|$ then by the definition of order we have a function $a\in\left|\left[f\right]\right|$ such that for all $n\in\omega_+$ we have 
\begin{align*}
a\left(n\right)\geq\left|g\right|\left(n\right)=g\left(n\right)
\end{align*}
By the \nameref{Absolute-Opposite Lemma} \ref{Absolute-Opposite Lemma} we have that $a\in\left[f\right]$ or $a\in{}^o\left[f\right]$. Define $b\in\left[f\right]$ as
\begin{align*}
b=\begin{cases}a&a\in\left[f\right]\\
{}^oa&a\in{}^o\left[f\right]
\end{cases}
\end{align*}
Notice that as for all $n\in\omega_+$ we have
\begin{align*}
{}^oa\left(n\right)=a\left(n\right)\geq g\left(n\right)
\end{align*}
And thus for all $n\in\omega_+$ we have $b\left(n\right)\geq g\left(n\right)$. We define $\xi\in\mathcal{T}_g^f$ where 
\begin{align*}
\xi\left(k\right)=\begin{cases}b\left(0\right)&k=0\\
b\left(k\right)-g\left(k\right)&k\in\omega_+
\end{cases}
\end{align*}
Hence, by the \nameref{Rearrangement Lemma} \ref{Rearrangement Lemma} we have $\left[\xi\right]=\left[f\right]+\left[g\right]$. Therefore,
\begin{align*}
\left(\left[f\right]+\left[g\right]\right)*\left[w\right]=\left[\xi\right]*\left[w\right]=\left[\xi*w\right]
\end{align*}
As for all $n\in\omega_+$ we have $b\left(n\right)\geq g\left(n\right)$ we also have that for all $M\in\omega_+$
\begin{align*}
\left(b*w\right)\left(M\right)\geq\left(g*w\right)\left(M\right)
\end{align*}
Therefore, as $b\left(0\right)+g\left(0\right)=2m+1$ we have 
\begin{align*}
\left(b*w\right)\left(0\right)+\left(g*w\right)\left(0\right)=b\left(0\right)+w\left(0\right)+g\left(0\right)+w\left(0\right)=2\left(m+w\left(0\right)\right)+1
\end{align*}
and thus $\sigma\left(b*w\right)\neq\sigma\left(g*w\right)$. Hence, by the \nameref{Rearrangement Lemma} \ref{Rearrangement Lemma} we have that $\left[\zeta\right]=\left[\left(b*w\right)+\left(g*w\right)\right]$ where $\zeta\in\mathcal{T}_{g*w}^{b*w}$ such that
\begin{align*}
\zeta\left(k\right)=\begin{cases}
\left(b*w\right)\left(0\right)=b\left(0\right)+w\left(0\right)&k=0\\
\left(b*w\right)\left(M\right)-\left(g*w\right)\left(M\right)&M\in\omega_+
\end{cases}
\end{align*}
Observe that we have for all $M\in\omega_+$ we have
\begin{align*}
\zeta\left(M\right)&=\sum\limits_{n=1}^{M}\left(\sum\limits_{k=1}^{M}\left(b\left(n\right)w\left(k\right)r\left(n,k,M\right)\right)\right)-\sum\limits_{n=1}^{M}\left(\sum\limits_{k=1}^{M}\left(g\left(n\right)w\left(k\right)r\left(n,k,M\right)\right)\right)\\
&=\sum\limits_{n=1}^{M}\left(\sum\limits_{k=1}^{M}\left(\left(b\left(n\right)-g\left(n\right)\right)w\left(k\right)r\left(n,k,M\right)\right)\right)=\sum\limits_{n=1}^{M}\left(\sum\limits_{k=1}^{M}\left(\xi\left(n\right)w\left(k\right)r\left(n,k,M\right)\right)\right)\\
&=\left(\xi*w\right)\left(M\right)
\end{align*}
And thus, as $\left(\xi*w\right)\left(0\right)=\xi\left(0\right)+w\left(0\right)=b\left(0\right)+w\left(0\right)$ we have 
\begin{align*}
\left(\left[f\right]*\left[w\right]\right)+\left(\left[g\right]*\left[w\right]\right)&=\left[\left(b*w\right)+\left(g*w\right)\right]=\left[\zeta\right]=\left[\xi*w\right]\\
&=\left[\left(f+g\right)*w\right]=\left(\left[f\right]+\left[g\right]\right)*\left[w\right]
\end{align*}
\end{proof}
\begin{lemma}[Strict Multiplicative Relation of Order Lemma]\label{Strict Multiplicative Relation of Order Lemma}\noindent\\
For any real numbers $\left[a\right]$ and $\left[b\right]$ such that $\left[a\right]<\left[b\right]$ and any real number $\left[c\right]$ such that $\sigma\left(\left[c\right]\right)=1$ we have $\left[a\right]*\left[c\right]<\left[b\right]*\left[c\right]$.
\end{lemma}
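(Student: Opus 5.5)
The plan is to reduce everything to the pointwise-domination mechanism already used for the \nameref{Strict Additive Relation of Order Lemma} \ref{Strict Additive Relation of Order Lemma}. Here $\sigma\left(\left[c\right]\right)=1$ means $\left[c\right]\neq\left[0\right]$ is positive, and sub-multiplication preserves domination, since every rest coefficient $r\left(n,k,M\right)$ is non-negative. We split by the sign configuration of $\left[a\right]$ and $\left[b\right]$.

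\textbf{Sign cases.} If $\left[a\right]=\left[0\right]$, or $\left[b\right]=\left[0\right]$, or $\sigma\left(\left[a\right]\right)=0<1=\sigma\left(\left[b\right]\right)$, we decide the comparison by signs alone. The proof of the \nameref{First Sub-Multiplication Auxiliary Lemma} \ref{First Sub-Multiplication Auxiliary Lemma} shows that $\left[0\right]*\left[c\right]=\left[0\right]$ and that a product of non-zero functions is non-zero. Position $0$ gives $\left(x*c\right)\left(0\right)=x\left(0\right)+c\left(0\right)$ with $c\left(0\right)$ even, so $\sigma\left(\left[x\right]*\left[c\right]\right)=\sigma\left(\left[x\right]\right)$ for non-zero $\left[x\right]$. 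The \nameref{Equivalent to the Definition of Strict Order} \ref{Equivalent to the Definition of Strict Order} then settles each of these cases.

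\textbf{Same-sign case.} Suppose $\sigma\left(\left[a\right]\right)=\sigma\left(\left[b\right]\right)=1$. We fix $a\in\left[a\right]$. By the definition of order there is $b\in\left[b\right]$ with $a\left(n\right)\leq b\left(n\right)$ for all $n\in\omega_+$. Since $\left[a\right]\neq\left[b\right]$, the \nameref{Equivalence Lemma} \ref{Equivalence Lemma} gives some $M$ with $a\left(M\right)<b\left(M\right)$. We then fix any $c\in\left[c\right]$. By non-negativity of $r$, $\left(a*c\right)\left(K\right)\leq\left(b*c\right)\left(K\right)$ for every $K\in\omega_+$.

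For strictness we copy the step in the proof of \nameref{Consistency Theorem''''} \ref{5.I}. By the \nameref{Null Lemma} \ref{Null Lemma} there is $N$ with $c\left(N\right)\geq1$. Let $M'$ be the least $t$ with $\prod_{j=1}^{M-1}\vartheta\left(j\right)\prod_{j=1}^{N-1}\vartheta\left(j\right)\leq\prod_{j=1}^{t-1}\vartheta\left(j\right)$. Then $r\left(M,N,M'\right)\geq1$, so the term $\left(b\left(M\right)-a\left(M\right)\right)c\left(N\right)r\left(M,N,M'\right)$ gives $\left(b*c\right)\left(M'\right)>\left(a*c\right)\left(M'\right)$. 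Both products have even value at $0$, so the \nameref{One-Case Lemma} \ref{One-Case Lemma} yields $\left[a*c\right]\leq\left[b*c\right]$. The \nameref{Domination Theorem} \ref{Domination Theorem} shows the two classes differ, so the inequality is strict.

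\textbf{Negative case.} If $\sigma\left(\left[a\right]\right)=\sigma\left(\left[b\right]\right)=0$, the order definition instead supplies, for each $b\in\left[b\right]$, some $a\in\left[a\right]$ dominating it. The identical computation shows $a*c$ strictly dominates $b*c$, and both products have odd sign. The negative branch of the \nameref{One-Case Lemma} \ref{One-Case Lemma} together with the \nameref{Domination Theorem} \ref{Domination Theorem} then gives $\left[a\right]*\left[c\right]<\left[b\right]*\left[c\right]$.

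The only real obstacle is the strictness step, namely producing an output position where the product strictly increases. That is handled by the minimal-$M'$ argument above, which guarantees $r\left(M,N,M'\right)\geq1$. At that position the inequality $\left(b*c\right)\left(M'\right)>\left(a*c\right)\left(M'\right)$ holds termwise, because every other summand is already at least as large in $b*c$ as in $a*c$.
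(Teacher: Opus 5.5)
Your proof is correct, but it takes a different route from the paper's.

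\textbf{The paper's route.} It never looks at representatives beyond position $0$. It applies the Strict Additive Relation of Order Lemma to get $[0]<[b]+{}^o[a]$. It then notes that $((b+{}^oa)*c)(0)$ is even and that a product of non-zero functions is non-zero, so $[0]<([b]+{}^o[a])*[c]=[b]*[c]+{}^o[a]*[c]$ by Distributivity. Finally it adds $[a]*[c]$ to both sides and collapses ${}^o[a]*[c]+[a]*[c]=({}^o[a]+[a])*[c]=[0]*[c]=[0]$.

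\textbf{Your route.} You work directly with representatives, in four steps:
\begin{itemize}
\item take dominating witnesses from the definition of order;
\item transport the domination through sub-multiplication using the non-negativity of $r(n,k,M)$;
\item get strictness from the minimal position $M'$, exactly as in the proof of part I of the multiplicative Consistency Theorem;
\item close with the One-Case Lemma and the Domination Theorem.
\end{itemize}
This is the multiplicative twin of Case 3 of the Strict Additive Relation of Order Lemma.

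\textbf{What each buys.} The paper's argument is short and uniform: translating by ${}^o[a]$ removes every sign configuration at once, so no case split is needed. The cost is dependence on Distributivity and on associativity and commutativity of addition. Your argument needs the four-way sign split. In exchange it is independent of Distributivity and of the additive structure, so it could have been proved immediately after the multiplicative consistency theorems. It also exhibits an explicit output position at which the two products separate.

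Two details you leave implicit are harmless. First, $[a]\le[b]$ follows from $[a]<[b]$ by totality, and you need this before invoking the definition of order. Second, the pair $(M,N)$ really does occur in the double sum defining $(b*c)(M')$. The condition $\prod_{j=1}^{M-1}\vartheta(j)\prod_{j=1}^{N-1}\vartheta(j)\le\prod_{j=1}^{M'-1}\vartheta(j)$, together with $\vartheta\geq 2$ making the partial products strictly increasing, forces $M'\ge\max\{M,N\}$.
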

\begin{proof} By the \nameref{Strict Additive Relation of Order Lemma} \ref{Strict Additive Relation of Order Lemma} we have 
\begin{align*}
\left[0\right]=\left[a\right]+{}^o\left[a\right]<\left[b\right]+{}^o\left[a\right]
\end{align*}
Hence, we have that $\sigma\left(\left[b\right]+{}^o\left[a\right]\right)=1$ and thus we have that $\left(b+{}^oa\right)\left(0\right)=2m$ for some $m\in\omega$. Furthermore, as $\sigma\left(c\right)=1$ we have $c\left(0\right)=2m'$ for some $m'\in\omega$. Therefore, we have that
\begin{align*}
\left(\left(b+{}^oa\right)*c\right)\left(0\right)=\left(b+{}^oa\right)\left(0\right)+c\left(0\right)=2\left(m+m'\right)
\end{align*}
Moreover, as neither $\left(b+{}^oa\right)$ nor $c$ is a zero-function we have by the argument from the proof of \nameref{First Sub-Multiplication Auxiliary Lemma} \ref{First Sub-Multiplication Auxiliary Lemma} that $\left(\left(b+{}^oa\right)*c\right)$ is not a zero function as well. Hence, we have that $\sigma\left(\left[\left(b+{}^oa\right)*c\right]\right)=1$ and thus
\begin{align*}
&\left[0\right]<\left(\left[b\right]+{}^o\left[a\right]\right)*\left[c\right]=\left[b\right]*\left[c\right]+{}^o\left[a\right]*\left[c\right]\\
\end{align*}
Which means that
\begin{align*}
\left[a\right]*\left[c\right]&<\left[b\right]*\left[c\right]+{}^o\left[a\right]*\left[c\right]+\left[a\right]*\left[c\right]=\left[b\right]*\left[c\right]+\left({}^o\left[a\right]+\left[a\right]\right)*\left[c\right]\\
&=\left[b\right]*\left[c\right]+\left(\left[0\right]\right)*\left[c\right]=\left[b\right]*\left[c\right]+\left[0\right]=\left[b\right]*\left[c\right]
\end{align*}
\end{proof}

\begin{axiom}[Multiplicative Relation of Order]\noindent\\
For all real numbers $\left[a\right]$ and $\left[b\right]$ such that $\left[a\right]\leq\left[b\right]$, and all real numbers $\left[c\right]$ such that $\sigma\left(\left[c\right]\right)=1$ we have $\left[a\right]*\left[c\right]\leq\left[b\right]*\left[c\right]$.
\end{axiom}
\begin{proof} $\left[a\right]\leq\left[b\right]$ implies $\left[a\right]<\left[b\right]$ or $\left[a\right]=\left[b\right]$. By the \nameref{Strict Multiplicative Relation of Order Lemma} \ref{Strict Multiplicative Relation of Order Lemma} the former case implies  $\left[a\right]*\left[c\right]<\left[b\right]*\left[c\right]$ which implies $\left[a\right]*\left[c\right]\leq\left[b\right]*\left[c\right]$. The latter case implies that $\left[a\right]*\left[c\right]=\left[b\right]*\left[c\right]$ by the \nameref{Consistency Theorem'''''} \ref{6.I} \ref{Consistency Theorem'''''}, which implies $\left[a\right]*\left[c\right]\leq\left[b\right]*\left[c\right]$.
\end{proof}
\subsection{Positive Denseness and Multiplicative Inverse}
\begin{lemma}[Positive Denseness Lemma]\label{Positive Denseness Lemma}\noindent\\
For all real numbers $\left[g\right]$ and $\left[r\right]$ such that $\left[0\right]\leq\left[g\right]<\left[r\right]$ we have $\left[\left[g\right],\left[r\right]\right]_{\left(\mathcal{S}_{\mathbb{R}},<\right)}\neq\emptyset$.
\end{lemma}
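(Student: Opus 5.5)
The closed interval contains $\left[g\right]$ itself, by the \nameref{Reflexivity of Order} \ref{Reflexivity of Order} together with $\left[g\right]<\left[r\right]$, so the statement as written is immediate. I will therefore prove the stronger statement that the lemma is evidently meant to supply for the multiplicative inverse. The claim is that there exists $\left[x\right]$ with $\left[g\right]<\left[x\right]<\left[r\right]$. The plan is to write down $x$ explicitly as a primary auxiliary function and then check both inequalities with the \nameref{Equivalent to the Definition of Strict Order} \ref{Equivalent to the Definition of Strict Order}.

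\textbf{Case $\left[g\right]=\left[0\right]$.} Here $\sigma\left(\left[r\right]\right)=1$, and by the \nameref{Null Lemma} \ref{Null Lemma} the primary auxiliary function $r_p$ (with $r_p\left(0\right)=0$) has a least position $m\in\omega_+$ with $r_p\left(m\right)\geq1$. Take $x\left(0\right)=0$, $x\left(m+1\right)=1$ and $x\left(k\right)=0$ otherwise. This is a primary auxiliary function: its digits lie below their bases and its tail is zero. It is non-zero by the \nameref{Null Lemma} \ref{Null Lemma}, so $\left[0\right]<\left[x\right]$. Moreover $\Psi\left(x,r_p\right)=m$ and $x\left(m\right)=0<r_p\left(m\right)$, so $\left[x\right]<\left[r\right]$.

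\textbf{Case $\left[g\right]\neq\left[0\right]$.} Then $\sigma\left(\left[g\right]\right)=\sigma\left(\left[r\right]\right)=1$. Set $\Psi:=\Psi\left(g_p,r_p\right)$, which is non-zero by the \nameref{Unicity of Real Numbers} \ref{Unicity of Real Numbers}. The strict-order characterisation gives $g_p\left(\Psi\right)<r_p\left(\Psi\right)$. Since $g_p$ is primary, there is a least $L\geq\Psi+1$ with $g_p\left(L\right)<\vartheta\left(L-1\right)-1$. Define
\begin{align*}
x\left(k\right)=\begin{cases}g_p\left(k\right)&k\in\left[0,L-1\right]_\omega\\ g_p\left(L\right)+1&k=L\\ 0&k\in\left[L+1,\infty\right]_\omega.\end{cases}
\end{align*}
Then $x\left(L\right)\leq\vartheta\left(L-1\right)-1$ and the tail is zero, so $x$ is a primary auxiliary function. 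Hence it is finite and is the primary auxiliary function of its class $\left[x\right]$, with sign $1$.

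The comparisons now follow by reading off first differences. First, $\Psi\left(g_p,x\right)=L$ and $x\left(L\right)>g_p\left(L\right)$, which gives $\left[g\right]<\left[x\right]$. Second, because $L>\Psi$ we have $\Psi\left(x,r_p\right)=\Psi$, and $x\left(\Psi\right)=g_p\left(\Psi\right)<r_p\left(\Psi\right)$, which gives $\left[x\right]<\left[r\right]$.

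The only point needing care is confirming that $x$ really is the primary auxiliary function of $\left[x\right]$, so that the strict-order criterion (phrased via primary functions) applies. This holds because $x$ itself lies in $A_p$, and by \nameref{PC Cor II} \ref{PC Cor II} the primary auxiliary function of a class is unique on $\omega_+$. Finally, $\left[g\right]\leq\left[x\right]\leq\left[r\right]$ places $\left[x\right]$ in the interval, which is therefore non-empty.
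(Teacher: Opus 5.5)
Your proof is correct. On reading the statement: the interval carries the subscript $(\mathcal{S}_\mathbb{R},<)$. By the paper's interval notation it therefore already denotes $\{[x]\mid [g]<[x]<[r]\}$. The strict version you prove is the literal content of the lemma, not a strengthening, and it is exactly how the lemma is used in the Multiplicative Inverse proof. Your remark that the statement is immediate comes from reading that subscript as $\leq$. Since you prove the strict version anyway, nothing is lost.

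Your route differs genuinely from the paper's. The paper argues additively:
\begin{itemize}
\item It takes the primary tracking function $\varrho_g^r$, which represents the gap $[r]-[g]$.
\item It broadens once at the first non-zero position $K$ and removes one unit at $K+1$, giving a strictly smaller non-zero $\varrho'$.
\item It checks pointwise that $g\leq\varrho'+g\leq b_K(\varrho_g^r)+g$, strictly at $K+1$, with the right-hand function in $[r]$.
\item It concludes with the One-Case Lemma and the Domination Theorem.
\end{itemize}
You never leave the order. You write the intermediate point down as an explicit primary auxiliary function: $g_p$ cut at a non-maximal digit beyond $\Psi(g_p,r_p)$, that digit raised by one, and zeros afterwards. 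Both inequalities are then read off the first-difference characterisation of strict order, using only uniqueness of primary names (PC Corollary II).

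What your approach buys:
\begin{itemize}
\item It is independent of the whole addition section: tracking functions, consistency theorems, and the domination arguments.
\item It avoids the sign bookkeeping of sub-addition. The paper treats this loosely when $[g]=[0]$: it writes $\varrho'+g$ as a pointwise sum, although for $g\in[0]$ sub-addition goes through the zero branch.
\item With a sign flip (build $x$ from $r_p$ when both are negative, and use $[0]$ when signs differ) it gives density of the whole line, not just of the non-negative part.
\item It shows the intermediate point can be chosen with a zero-tailed name, hence as a number possessing a secondary auxiliary function.
\end{itemize}
The paper's version has the familiar ordered-field shape (add a positive quantity smaller than the gap) and reuses machinery already built, but it pays for that with a much heavier dependency chain.
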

\begin{proof} As $\left[0\right]\leq\left[g\right]<\left[r\right]$ we have by the \nameref{Absolute-Opposite Lemma} \ref{Absolute-Opposite Lemma} that $\left|{}^o\left[g\right]\right|=\left[g\right]<\left[r\right]=\left|\left[r\right]\right|$.\\
Therefore, $\left(r+{}^og\right)=\varrho_g^r$. Furthermore, as $\varrho_g^r\not\in\left[0\right]$ by the \nameref{Null Lemma} \ref{Null Lemma} we have that there exist $k\in\omega_+$ such that $\varrho_g^r\left(k\right)\geq1$. Take minimal such $k$ and denote it as $K$. Define $\varrho':\omega\rightarrow\omega$ as
\begin{align*}
\varrho'\left(k\right)=\begin{cases}b_K\left(\varrho_g^r\right)\left(k\right)&k\neq K+1\\
b_K\left(\varrho_g^r\right)\left(K+1\right)-1&k=K+1
\end{cases}
\end{align*}
where $b_K$ is broadening about $K$. By the \nameref{Tying Theorem} \ref{Tying Theorem} we have that $\varrho'$ is finite as $b_K\left(\varrho_g^r\right)$ is finite and for all $n\in\omega_+$ we have $\varrho'\left(n\right)\leq b_K\left(\varrho_g^r\right)\left(n\right)$.\\
 Furthermore, as $\sigma\left(\varrho'\right)=\sigma\left(b_K\left(\varrho_g^r\right)\right)=\sigma\left(g\right)=\sigma\left(r\right)$ we have that for all $n\in\omega_+$
\begin{align*}
g\left(n\right)\leq\left(\varrho'+g\right)\left(n\right)=\varrho'\left(n\right)+g\left(n\right)\leq b_K\left(\varrho_g^r\right)\left(n\right)+g\left(n\right)=\left(b_K\left(\varrho_g^r\right)+g\right)\left(n\right)
\end{align*}
where $\left(b_K\left(\varrho_g^r\right)+g\right)\in\left[\varrho_g^r+g\right]=\left[r+{}^og+g\right]=\left[r\right]$. However, this shows by the \nameref{One-Case Lemma} that $\left[g\right]\leq\left[\varrho'+g\right]\leq\left[r\right]$.\\
Moreover, as 
\begin{align*}
g\left(K+1\right)<\varrho'\left(K+1\right)+g\left(K+1\right)<b_K\left(\varrho_g^r\right)\left(K+1\right)+g\left(K+1\right)
\end{align*}
we have $\left[\varrho'+g\right]\neq\left[g\right]$ and $\left[\varrho'+g\right]\neq\left[r\right]$ by the \nameref{Domination Theorem} \ref{Domination Theorem}. Therefore, $\left[\varrho'+g\right]\in\left[\left[g\right],\left[r\right]\right]_{\left(\mathcal{S}_{\mathbb{R}},<\right)}$ and thus the set is non-empty. This shows that the sub-set of elements greater or equal to zero in $\mathcal{S}_{\mathbb{R}}$ is dense.\\\\
\end{proof}

\begin{lemma}[Edging Lemma]\label{Edging Lemma}\noindent\\
For all real numbers $\left[a\right]$ and $\left[c\right]$ such that $\sigma\left(\left[a\right]\right)=\sigma\left(\left[c\right]\right)=1$ there exists a real number $\left[b\right]$ such that $\sigma\left(\left[b\right]\right)=1$ and $\left[a\right]*\left[b\right]<\left[c\right]$.
\end{lemma}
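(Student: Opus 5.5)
The plan is to take $\left[b\right]$ to be a single small positive digit placed deep enough that multiplying by it shrinks $\left[a\right]$ below $\left[c\right]$.

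\textbf{Step 1: a digit of $c$.} Since $\sigma\left(\left[c\right]\right)=1$, $\left[c\right]\neq\left[0\right]$. By the \nameref{Null Lemma} \ref{Null Lemma} the primary auxiliary function $c_p$ has some position $K\in\omega_+$ with $c_p\left(K\right)\geq1$. By the \nameref{Maximum Corollary} \ref{Maximum Corollary}, the truncation of $c_p$ at level $K$ is therefore at least $1$.

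\textbf{Step 2: a bound for $a$.} By the \nameref{Tying Theorem} \ref{Tying Theorem} there is $C\in\omega_+$ with
\[
\sum_{n=1}^{m}a_p\left(n\right)\prod_{j=n}^{m-1}\vartheta\left(j\right)<C\prod_{j=1}^{m-1}\vartheta\left(j\right)
\]
for all $m$.

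\textbf{Step 3: choosing $b$.} Choose $P$ so large that $\prod_{j=1}^{P-1}\vartheta\left(j\right)\geq 2C\prod_{j=1}^{K}\vartheta\left(j\right)$; this is possible since every $\vartheta\geq2$. Let $b$ be the function with $b\left(0\right)=0$, $b\left(P\right)=1$ and $0$ elsewhere. It is a primary auxiliary function, so it is finite and positive.

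\textbf{Step 4: the truncation estimate.} I will compute the level-$m$ truncations of $a_p*b$ exactly as in the \nameref{Sub-Multiplication Finiteness Lemma} \ref{Sub-Multiplication Finiteness Lemma}, using the \nameref{Rest Identity} \ref{Rest Identity}. Since $b$ has one nonzero digit, this truncation is $\sum_n a_p\left(n\right)R\left(n,P,m\right)$. The rational floor inequalities then bound it by
\[
\max\left\{t\,\middle|\,t\prod_{j=1}^{P-1}\vartheta\left(j\right)\leq\sum_{n}a_p\left(n\right)\prod_{j=n}^{m-1}\vartheta\left(j\right)\right\},
\]
which is less than $C\prod_{j=1}^{m-1}\vartheta\left(j\right)/\prod_{j=1}^{P-1}\vartheta\left(j\right)\leq\frac12\prod_{j=K+1}^{m-1}\vartheta\left(j\right)$. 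On the other hand, the truncation of $c_p$ at any $m>K$ is at least $\prod_{j=K}^{m-1}\vartheta\left(j\right)$, which is strictly larger. So for every $m>K$ the truncation of $c_p$ strictly exceeds that of $a_p*b$.

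\textbf{Step 5: conclusion.} Both functions have even sign slot: $\left(a_p*b\right)\left(0\right)=a_p\left(0\right)+0$ is even. The argument from the \nameref{First Sub-Multiplication Auxiliary Lemma} \ref{First Sub-Multiplication Auxiliary Lemma} shows $a_p*b$ is non-zero. The \nameref{Boundedness Theorem} \ref{Boundedness Theorem} then gives $\left[a\right]*\left[b\right]\leq\left[c\right]$.

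\textbf{Main obstacle: strictness.} Boundedness only yields $\leq$. To exclude equality I would argue by contradiction. If the two were equal, the \nameref{Maximum Corollary} \ref{Maximum Corollary} would make the truncation of $a_p*b$ at level $m$ at most the common primary truncation, with the secondary form differing by at most $1$ by \nameref{PC Cor I} \ref{PC Cor I}. That would force the gap to be at most $1$ for all large $m$. But Step 4 gives a gap of at least $\frac12\prod_{j=K+1}^{m-1}\vartheta\left(j\right)$, which grows without bound.

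Alternatively, I could halve more aggressively and compare with a $c'$ lying strictly between, obtained from the \nameref{Positive Denseness Lemma} \ref{Positive Denseness Lemma} applied to $\left[0\right]<\left[c\right]$. Then $\left[a\right]*\left[b\right]\leq\left[c'\right]<\left[c\right]$ and transitivity finishes. I expect this detour to be the cleanest way to get the strict inequality.
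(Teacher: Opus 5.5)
Your Steps 1--5 are, in substance, the paper's own argument. The multiplier has a single digit $1$, placed deep enough to beat the Tying constant of $a_p$. The level-$m$ truncations of $a_p*b$ are evaluated through the Rest Identity and the rational-floor inequality, and the Boundedness Theorem then gives $[a]*[b]\leq[c]$.

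\textbf{The gap is in your first strictness argument.} It is not true that $[a_p*b]=[c]$ would force the level-$m$ truncations of $c_p$ and $a_p*b$ to differ by at most $1$. The Paradise City Lemma and its Corollary III only bound a member's truncation from \emph{above} by the primary one. Paradise City Corollary I compares the primary with the secondary auxiliary function, not with an arbitrary member of the class. An arbitrary member can trail the primary by an unbounded amount, because its material may sit arbitrarily deep and not yet be contracted.

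For a concrete case, work in base ten with $f=(0;0,5,25,125,\dots)$, i.e.\ $f(n)=5^{n-1}$ for $n\geq2$. Its level-$m$ truncation is $10^{m-1}-5^{m-1}$. So $f$ is finite by the Tying Theorem with $C=1$, and the auxiliary function built in that proof is $(0;0,9,9,\dots)$, hence $f\in[1]$. Yet $f$ trails $(0;1,0,0,\dots)$ by $5^{m-1}$ at every level.

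What does hold is weaker. For $f\in[c]$ and any $N$, contraction invariance, used as inside the proof of the Boundedness Theorem, bounds the gap by $2\prod_{j=N}^{m-1}\vartheta(j)$ for all large $m$. Your gap exceeds $\tfrac32\prod_{j=K+1}^{m-1}\vartheta(j)$, a fixed fraction of the scale. So taking $N=K+2$ in that corrected bound would still give the contradiction, but you have to prove the bound; it cannot be read off the Maximum Corollary.

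\textbf{Your alternative is sound.} The Positive Denseness Lemma applied to $[0]\leq[0]<[c]$ yields $[c']$ with $[0]<[c']<[c]$, so $\sigma([c'])=1$. Rerunning Steps 1--5 with $c'$ in place of $c$ gives $[a]*[b]\leq[c']$. If $[c]\leq[a]*[b]$ held, transitivity would give $[c]\leq[c']$, contradicting $[c']<[c]$.

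The paper gets strictness by moving the multiplier instead of the target. Having $[a]*[b']\leq[c]$ for $b'$ with its digit at position $\kappa$, it takes $b$ with the digit at $\kappa+1$. It reads $[b]<[b']$ directly off the Equivalent to the Definition of Strict Order. It then concludes $[a]*[b]<[a]*[b']\leq[c]$ by the Strict Multiplicative Relation of Order Lemma, applied with the positive factor $[a]$.

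The paper's route needs no second estimate but leans on strict monotonicity of multiplication. Yours avoids the multiplicative order lemma at the cost of invoking denseness and rerunning the estimate for $c'$. With the first strictness argument deleted, or corrected as above, your proposal is a complete proof.
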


\begin{intuition}[Edging Lemma] This lemma supplies wiggle room. Given a positive number and any positive threshold, we can always find a positive multiplier small enough that the product slips just under the threshold. It is the multiplicative counterpart of being able to take a small enough step, and it is exactly what we need to squeeze a reciprocal from below without overshooting.
\end{intuition}

 \begin{proof} As $\left[c\right]\neq\left[0\right]$ we have by the \nameref{Null Lemma} \ref{Null Lemma} that $\delta:=\min\left(\left\{j\in\omega_+\middle|c_p\left(j\right)\neq0\right\}\right)\neq\emptyset$. Take $a_p\in\left[a\right]$. By the \nameref{Tying Theorem} \ref{Tying Theorem} we have that there exists a constant $C\in\omega_+$ such that for all $K\in\omega_+$ we have $\sum\limits_{n=1}^{K}\left(a_p\left(n\right)\prod\limits_{j=n}^{K-1}\left(\vartheta\left(j\right)\right)\right)\leq C\prod\limits_{j=1}^{K-1}\left(\vartheta\left(j\right)\right)$. Take $\kappa:=\min\left(\left\{k\in\omega_+\middle|\prod\limits_{n=\delta}^{k-1}\left(\vartheta\left(n\right)\right)>C\right\}\right)$. Define $b':\omega\rightarrow\omega$ 
\begin{align*}
b'\left(k\right)=\begin{cases}
1&k=\kappa\\
0&otherwise
\end{cases}
\end{align*}
As $b'$ is auxiliary it is finite. By definition we have that for all $K\geq\kappa+1:$
\begin{align*}
&\sum\limits_{M=1}^{K}\left(\left(a_p*b'\right)\left(M\right)\prod\limits_{j=M}^{K-1}\left(\vartheta\left(j\right)\right)\right)=\sum\limits_{M=1}^{K}\left(\sum\limits_{n=1}^{M}\left(\sum\limits_{k=1}^{M}\left(a_p\left(n\right)b'\left(k\right)r\left(n,k,M\right)\right)\right)\prod\limits_{j=M}^{K-1}\left(\vartheta\left(j\right)\right)\right)\\
=&\sum\limits_{n=1}^{K}\left(\sum\limits_{k=1}^{K}\left(a_p\left(n\right)b'\left(k\right)\sum\limits_{M=\max\left(\left\{n,k\right\}\right)}^{K}\left(r\left(n,k,M\right)\prod\limits_{j=M}^{K-1}\left(\vartheta\left(j\right)\right)\right)\right)\right)\\
=&\sum\limits_{n=1}^{K}\left(a_p\left(n\right)\max\left(\left\{t\in\omega\middle|t\prod\limits_{j=1}^{\kappa-1}\left(\vartheta\left(j\right)\right)\leq\prod\limits_{j=n}^{K-1}\left(\vartheta\left(j\right)\right)\right\}\right)\right)\\
\leq&\max\left(\left\{t\in\omega\middle|t\prod\limits_{j=1}^{\kappa-1}\left(\vartheta\left(j\right)\right)\leq\sum\limits_{n=1}^{K}\left(a_p\left(n\right)\prod\limits_{j=n}^{K-1}\left(\vartheta\left(j\right)\right)\right)\right\}\right)\\
\leq&\max\left(\left\{t\in\omega\middle|t\prod\limits_{j=1}^{\kappa-1}\left(\vartheta\left(j\right)\right)\leq C\prod\limits_{j=1}^{K-1}\left(\vartheta\left(j\right)\right)\right\}\right)=C\prod\limits_{j=\kappa}^{K-1}\left(\vartheta\left(j\right)\right)\\
<&\prod\limits_{j=\delta}^{K-1}\left(\vartheta\left(j\right)\right)\leq c_p\left(\delta\right)\prod\limits_{j=\delta}^{K-1}\left(\vartheta\left(j\right)\right)\leq\sum\limits_{n=1}^{K}c_p\left(n\right)\prod\limits_{j=n}^{K-1}\left(\vartheta\left(j\right)\right)
\end{align*}
Hence, as $\sigma\left(\left[a\right]*\left[b'\right]\right)=\sigma\left(\left[c\right]\right)=1$ we have by the \nameref{Boundedness Theorem} \ref{Boundedness Theorem} (as for any $T\in\omega_+$ we may find a $K\geq\kappa+1$), that $\left[a_p*b'\right]\leq\left[c_p\right]$ and thus $\left[a\right]*\left[b'\right]\leq\left[c\right]$. We define $b:\omega\rightarrow\omega$ as 
\begin{align*}
b\left(k\right)=\begin{cases}
1&k=\kappa+1\\
0&otherwise
\end{cases}
\end{align*}
Again, as $b$ is an auxiliary function it is finite. Moreover, we see that $\left[b\right]<\left[b'\right]$ by the \nameref{Equivalent to the Definition of Strict Order} \ref{Equivalent to the Definition of Strict Order}. Hence, by the \nameref{Strict Multiplicative Relation of Order Lemma} \ref{Strict Multiplicative Relation of Order Lemma}, as $\sigma\left(\left[a\right]\right)=1$ we have $\left[a\right]*\left[b\right]<\left[a\right]*\left[b'\right]\leq\left[c\right]$.
\end{proof}

\begin{axiom}[Multiplicative Inverse]\noindent\\
For all non-zero real numbers $\left[a\right]$ there exists a real number $\left[\xi\right]$ such that $\left[a\right]*\left[\xi\right]=\left[1\right]$.
\end{axiom}
\begin{proofidea} The reciprocal is located as a boundary rather than constructed digit by digit. Split the positive numbers into those which multiply $\left[a\right]$ up to at least $\left[1\right]$ and those which do not; completeness supplies the number sitting exactly at the divide. It remains to exclude both strict inequalities for that number, and each exclusion uses one of the two lemmas just proved: denseness produces a number strictly between, and the \nameref{Edging Lemma} \ref{Edging Lemma} produces a multiplier small enough to cross the gap without overshooting.
\end{proofidea}
\begin{proof} Let a non-zero real number $\left[a\right]$ be given. Define the set $\Xi:=\left\{\left[b\right]\in\mathcal{S}_{\mathbb{R}}\middle|\left[a\right]*\left[b\right]\geq\left[1\right]\right\}$. Notice that this set is not empty, as, by taking the auxiliary function $f:\omega\rightarrow\omega$ defined as
\begin{align*}
f\left(k\right)=\begin{cases}
a_p\left(0\right)&k=0\\
\prod\limits_{j=1}^{\delta-1}\left(\vartheta\left(j\right)\right)&k=1\\
0&otherwise
\end{cases}
\end{align*}
where $\delta:=\min\left(\left\{t\in\omega_+|a_p\left(t\right)\neq0\right\}\right)$. We see that for all $T\in\omega$ there exists $K\in\left[\max\left(\left\{\delta,T\right\}\right),\infty\right]_\omega\subset\left[T,\infty\right]_\omega$ we have
\begin{align*}
&\sum\limits_{M=1}^K\left(\left(a_p*f\right)\left(M\right)\prod\limits_{p=M}^{K-1}\left(\vartheta\left(p\right)\right)\right)\geq a_p\left(\delta\right)f\left(1\right)r\left(\delta,1,\delta\right)\prod\limits_{p=\delta}^{K-1}\left(\vartheta\left(p\right)\right)\\
=&a_p\left(\delta\right)\prod\limits_{j=1}^{\delta-1}\left(\vartheta\left(j\right)\right)1\prod\limits_{p=\delta}^{K-1}\left(\vartheta\left(p\right)\right)=a_p\left(\delta\right)\prod\limits_{j=1}^{K-1}\left(\vartheta\left(j\right)\right)\geq\prod\limits_{j=1}^{K-1}\left(\vartheta\left(j\right)\right)=\sum\limits_{M=1}^K\left(1_p\left(M\right)\prod\limits_{p=M}^{K-1}\left(\vartheta\left(p\right)\right)\right)
\end{align*}
Therefore, as $\sigma\left(a_p*f\right)=\sigma\left(1_p\right)=1$ we have by the \nameref{Boundedness Theorem} \ref{Boundedness Theorem} that $\left[a\right]*\left[f\right]=\left[a_p\right]*\left[f\right]\geq\left[1\right]$. Hence, $\Xi$ is not empty.\\
We see that $\Xi$ is bounded from below if $\sigma\left(\left[a\right]\right)=1$ as $\left[b\right]\leq\left[0\right]$ implies $\left[a\right]*\left[b\right]\leq\left[0\right]<\left[1\right]$ and from above if $\sigma\left(\left[a\right]\right)=0$ as then $\left[a\right]\leq\left[0\right]$ and thus $\left[b\right]>\left[0\right]$ implies $\sigma\left(\left[b\right]\right)=1$ which means that $\sigma\left(\left[a\right]*\left[b\right]\right)=0$ and hence $\left[a\right]*\left[b\right]\leq\left[0\right]<\left[1\right]$.\\
Suppose $\sigma\left(\left[a\right]\right)=1$ take $\left[\xi\right]:=\inf\left(\Xi\right)$. Define also 
\begin{align*}
\Phi:=\mathcal{S}_\mathbb{R}-\Xi=\left\{\left[b\right]\in\mathcal{S}_\mathbb{R}\middle|\neg\left(\left[a\right]*\left[b\right]\geq\left[1\right]\right)\right\}=\left\{\left[b\right]\in\mathcal{S}_\mathbb{R}\middle|\left[a\right]*\left[b\right]<\left[1\right]\right\}
\end{align*}
Thus, by definition, for all real numbers $\left[x\right]$ we have that 
\begin{align*}
\left(\left[x\right]\in\Xi\wedge\left[x\right]\not\in\Phi\right)\vee\left(\left[x\right]\not\in\Xi\wedge\left[x\right]\in\Phi\right)
\end{align*}
Furthermore, we see that for all $\left[b\right]\in\Xi$ and for all $\left[c\right]\in\Phi$ we have $\left[b\right]>\left[c\right]$. This is because, $\left[b\right]\neq\left[c\right]$ and $\left[b\right]\geq\left[c\right]$ as if $\left[b\right]\leq\left[c\right]$ then as $\sigma\left(\left[a\right]\right)=1$ we have $\left[a\right]*\left[b\right]\leq\left[a\right]*\left[c\right]<\left[1\right]$, a contradiction to $\left[b\right]\in\Xi$. Hence, $\Phi$ is bounded from above and thus it has a supremum. Moreover, any element of $\Xi$ is an upper bound of $\Phi$ and any element of $\Phi$ is a lower bound of $\Xi$.\\
Suppose $\sup\left(\Phi\right)<\left[\xi\right]$. Thus, as $\left[0\right]\in\Phi$ we have $\left[0\right]\leq\sup\left(\Phi\right)<\left[\xi\right]$. Thus, by the \nameref{Positive Denseness Lemma} \ref{Positive Denseness Lemma} there exists $\left[y\right]\in\left[\sup\left(\Phi\right),\left[\xi\right]\right]_{\left(\mathcal{S}_{\mathbb{R}},<\right)}$. But by definition $\left[y\right]>\sup\left(\Phi\right)$ means that $\left[a\right]*\left[y\right]\geq\left[1\right]$ and $\left[y\right]<\left[\xi\right]$ means that $\left[a\right]*\left[y\right]<\left[1\right]$ which is a contradiction. Therefore, $\sup\left(\Phi\right)\geq\left[\xi\right]$.\\
However, if $\sup\left(\Phi\right)>\left[\xi\right]$ then as $\left[0\right]\in\Phi$ we have that $\left[0\right]$ is a lower bound of $\Xi$ and thus by definition $\left[0\right]\leq\left[\xi\right]$. Hence, by the \nameref{Positive Denseness Lemma} \ref{Positive Denseness Lemma} there exists $\left[y\right]\in\left[\left[\xi\right],\sup\left(\Phi\right)\right]_{\left(\mathcal{S}_{\mathbb{R}},<\right)}$. But by definition either $\left[y\right]\in\Xi$ or $\left[y\right]\in\Phi$. In the first case $\left[y\right]$ is an upper bound of $\Phi$ which is a contradiction to $\left[y\right]<\sup\left(\Phi\right)$. In the second case $\left[y\right]$ is a lower bound of $\Xi$ which is a contradiction to $\left[y\right]>\inf\left(\Xi\right)=\left[\xi\right]$.\\
Therefore, $\sup\left(\Phi\right)=\left[\xi\right]$. We claim that $\left[a\right]*\left[\xi\right]=\left[1\right]$. Suppose not, then by totality we have two cases $\left[a\right]*\left[\xi\right]<\left[1\right]$ or $\left[a\right]*\left[\xi\right]>\left[1\right]$.\\
If $\left[a\right]*\left[\xi\right]<\left[1\right]$ then $\left[1\right]+{}^o\left(\left[a\right]*\left[\xi\right]\right)>0$ and thus by the \nameref{Edging Lemma} \ref{Edging Lemma} there exists a real number $\left[b\right]$ where $\sigma\left(\left[b\right]\right)=1$ and $\left[a\right]*\left[b\right]<\left[1\right]+{}^o\left(\left[a\right]*\left[\xi\right]\right)$ and thus $\left[a\right]*\left[b\right]+\left[a\right]*\left[\xi\right]<\left[1\right]$, which means that $\left[a\right]*\left(\left[b\right]+\left[\xi\right]\right)<\left[1\right]$. Thus $\left[b\right]+\left[\xi\right]\in\Phi$. However, as $\sigma\left(\left[b\right]\right)=1$ we have $\left[b\right]>\left[0\right]$ and thus $\left[b\right]+\left[\xi\right]>\left[\xi\right]$. This contradicts $\left[\xi\right]=\sup\left(\Phi\right)$.\\
If $\left[a\right]*\left[\xi\right]>\left[1\right]$ then $\left[a\right]*\left[\xi\right]+{}^o\left[1\right]>0$ and thus by the \nameref{Edging Lemma} \ref{Edging Lemma} there exists a real number $\left[b\right]$ where $\sigma\left(\left[b\right]\right)=1$ and $\left[a\right]*\left[b\right]<\left[a\right]*\left[\xi\right]+{}^o\left[1\right]$. This means that $\left[a\right]*\left[b\right]+\left[1\right]<\left[a\right]*\left[\xi\right]$ and therefore
\begin{align*}
&\left[a\right]*\left[b\right]+\left[a\right]*{}^o\left[b\right]+\left[1\right]=\left[a\right]*\left(\left[b\right]+{}^o\left[b\right]\right)+\left[1\right]=\left[a\right]*\left[0\right]+\left[1\right]=\left[0\right]+\left[1\right]=\left[1\right]\\
<&\left[a\right]*\left[\xi\right]+\left[a\right]*{}^o\left[b\right]=\left[a\right]*\left(\left[\xi\right]+{}^o\left[b\right]\right)
\end{align*}
Hence, $\left[1\right]<\left[a\right]*\left(\left[\xi\right]+{}^o\left[b\right]\right)$. Thus $\left[\xi\right]+{}^o\left[b\right]\in\Xi$. However, as $\left[b\right]>\left[0\right]$ we have ${}^o\left[b\right]<\left[0\right]$ and thus $\left[\xi\right]+{}^o\left[b\right]<\left[\xi\right]$. This contradicts $\left[\xi\right]=\inf\left(\Xi\right)$.\\
Therefore $\left[a\right]*\left[\xi\right]=\left[1\right]$, which means that $\exists\left[\xi\right]\in\mathcal{S}_{\mathbb{R}},\left[a\right]*\left[\xi\right]=1$.\\\\
If $\sigma\left(\left[a\right]\right)=0$ then $\left[a\right]<\left[0\right]$ and thus $\left[0\right]<{}^o\left[a\right]$ which means that $\sigma\left({}^o\left[a\right]\right)=1$. Hence, there exists a real number $\left[\xi\right]$ such that ${}^o\left[a\right]*\left[\xi\right]=\left[1\right]$. Hence, as, by direct calculation ${}^o\left[a\right]*\left[\xi\right]=\left[a\right]*{}^o\left[\xi\right]$, by taking $\left[\xi'\right]={}^o\left[\xi\right]$, we have shown that there exists a real number $\left[\xi'\right]$ such that $\left[a\right]*\left[\xi'\right]=1$.
\end{proof}
\begin{remark}
    It follows that for any non-zero real number $\left[a\right]\in\mathcal{S}_\mathbb{R}$ the inverse of $\left[a\right]$ is never $\left[0\right]$ as then the product would be too. On top of that, by the \nameref{Consistency Theorem'''''} \ref{6.I} \ref{Consistency Theorem'''''} the inverse is unique. We shall denote it as $\left[a\right]^{-1}$.
\end{remark}
\section{Models Fitted to a Question: Rationality and Cantor Series}\label{sec:cantor}
In this section we wish to present some results which arise from our construction. The main result has been shown by Cantor \cite{Cantor1869} and concerns itself with irrationality of a certain type of series. Here we present an alternative proof which is brought up to show that there are results which arise from working with particular models of real arithmetic. By this we do not mean to say that the result presented is extremely important by itself, but it serves as a proof of concept, as we obtain a classical result whose proof requires normally deeper knowledge of number theory through model theoretic means.\\
We shall call a model with a \nameref{System Function} \ref{System Function} $\vartheta:\omega_+\rightarrow\left[2,\infty\right]_\omega$ such that for all $N\in\omega_+$ there exists $m\in\omega$ such that
\begin{align*}
        N\mid\prod\limits_{n=1}^{m-1}\left(\vartheta\left(n\right)\right)
    \end{align*}
    a model with Cantor's divisibility property.\\
Also, going forward we adopt the notation $\left[a\right]-\left[b\right]$ for $\left[a\right]+^o\left[b\right]$ and $a-b$ for $a+^ob$.
\begin{definition}[Integer]
    A real number $\left[a\right]\in\mathcal{S}_\mathbb{R}$ is an integer if it contains a secondary auxiliary function and has $\Psi\left(a_p,a_s\right)=1$ or if $\left[a\right]=\left[0\right]$.\\
    The set of all integers shall be denoted as $\mathcal{S}_\mathbb{Z}$.
\end{definition}

\begin{definition}[Rational Number]
    A real number $\left[a\right]\in\mathcal{S}_\mathbb{R}$ is a rational number if there exist integers $\left[b\right],\left[c\right]\in\mathcal{S}_\mathbb{Z}$, $\left[c\right]\neq\left[0\right]$ such that $\left[a\right]=\left[b\right]*\left[c\right]^{-1}$.\\
    Any real number which is not rational is irrational.
\end{definition}
\begin{theorem}[Rational Number Representation]\label{Rational Number Representation}
    In a model with Cantor's divisibility property any rational number $\left[a\right]=\left[b\right]*\left[c\right]^{-1}$ where $\left[b\right],\left[c\right]\in\mathcal{S}_\mathbb{Z}$, $\left[c\right]\neq\left[0\right]$ contains the function $w:\omega\rightarrow\omega$ defined as
    \begin{align*}
        w\left(k\right)=\begin{cases}
            b_p\left(0\right)+c_p\left(0\right)&k=0\\
            b_p\left(1\right)*R&k=m\\
            0&otherwise
        \end{cases}
    \end{align*}
    where $m\in\omega_+$ is minimal such that
    \begin{align*}
        c_p\left(1\right)\mid\prod\limits_{n=1}^{m-1}\left(\vartheta(n)\right)
    \end{align*}
    and $R$ is a finite ordinal value
    \begin{align*}
        R:=\frac{\prod\limits_{n=1}^{m-1}\left(\vartheta(n)\right)}{c_p(1)}
    \end{align*}
\end{theorem}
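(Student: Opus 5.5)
The plan is to show that $\left[w\right]*\left[c\right]=\left[b\right]$ and then cancel $\left[c\right]$. First I would unpack what an integer looks like. If $\left[b\right]\neq\left[0\right]$ is an integer, it has a secondary auxiliary function with $\Psi\left(b_p,b_s\right)=1$. The definition of the \nameref{Base Set} \ref{Base Set} then forces $b_p\left(k\right)=0$ for all $k\geq2$, so $b_p=\left(b_p\left(0\right);b_p\left(1\right),0,0,\dots\right)$, and likewise for $c_p$. Since $\left[c\right]\neq\left[0\right]$, the \nameref{Null Lemma} \ref{Null Lemma} gives $c_p\left(1\right)\geq1$. Cantor's divisibility property then guarantees that the minimal $m$ exists, so $R$ is a well-defined finite ordinal.

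Next I would check that $w$ is finite. The case $\left[b\right]=\left[0\right]$ is handled separately: there $w$ vanishes on $\omega_+$ and lies in $\left[0\right]=\left[0\right]*\left[c\right]^{-1}$. Otherwise $w$ has a single non-zero digit on $\omega_+$, so its weighted truncations are bounded by $\left(b_p\left(1\right)R+1\right)\prod_{p=1}^{K-1}\vartheta\left(p\right)$. The \nameref{Tying Theorem} \ref{Tying Theorem} then gives $w\in\mathcal{N}_{\mathcal{F}in}$.

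The core computation is $w*c_p$. The only non-zero digit of $c_p$ on $\omega_+$ is at position $1$. I claim $r\left(n,1,M\right)=1$ if $M=n$ and $0$ otherwise. Indeed, the factor $\prod_{j=1}^{0}$ is empty, so $R\left(n,1,K\right)=\prod_{j=n}^{K-1}\vartheta\left(j\right)$ exactly, and the differences in the definition of the \nameref{Rest Function} \ref{Rest Function} vanish except at $M=n$. This can also be read off the \nameref{Rest Identity} \ref{Rest Identity}, as in the proof of the multiplicative identity.

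It follows that $\left(w*c_p\right)\left(M\right)=w\left(M\right)c_p\left(1\right)$. This equals $b_p\left(1\right)\prod_{n=1}^{m-1}\vartheta\left(n\right)$ at $M=m$ and $0$ at every other $M\in\omega_+$. At the sign slot, $\left(w*c_p\right)\left(0\right)=b_p\left(0\right)+2c_p\left(0\right)$, which has the parity of $b_p\left(0\right)$.

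Now I would recognise $w*c_p$ as the complete broadening of $b_p$ on $\left[1,m-1\right]_\omega$. By \eqref{eq:complete-broadening}, broadening $b_p$ completely there yields exactly $b_p\left(1\right)\prod_{n=1}^{m-1}\vartheta\left(n\right)$ at position $m$ and zeros elsewhere, with the sign slot unchanged. The \nameref{Shifting Theorem} \ref{Shifting Theorem} places this function in $\left[b\right]$. The \nameref{Equivalence Lemma} \ref{Equivalence Lemma} then absorbs the even difference at position $0$, so $w*c_p\in\left[b\right]$. When $m=1$ no broadening is needed at all.

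Hence $\left[w\right]*\left[c\right]=\left[b\right]$. Multiplying by $\left[c\right]^{-1}$ and using associativity, commutativity and the multiplicative inverse axiom gives $\left[w\right]=\left[b\right]*\left[c\right]^{-1}=\left[a\right]$. Equivalently, this follows from the \nameref{Consistency Theorem'''''} \ref{6.I} \ref{Consistency Theorem'''''} together with uniqueness of the inverse.

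I expect the main obstacle to be the bookkeeping in the rest function identity $r\left(n,1,M\right)=\delta_{nM}$. In particular, the $\Upsilon$ definition has to be handled carefully at $M=1$ and at $M=n$, with the empty products treated correctly. The rest of the argument is a direct chaining of earlier results.
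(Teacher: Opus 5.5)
Your proposal is correct and is essentially the paper's proof: compute $w*c_p$, carry it to a function agreeing with $b_p$ on $\omega_+$, absorb the sign slot with the Equivalence Lemma, and cancel $[c]$ via Consistency Theorem 6.I. The differences are minor: you run the carries as a complete broadening of $b_p$ up to position $m$ rather than the paper's contractions of $w*c_p$ down to position $1$; you spell out $r(n,1,M)=1$ for $M=n$ and $0$ otherwise, which the paper leaves to direct calculation; and you check finiteness of $w$ via the Tying Theorem, which the paper only asserts even though $w$ need not be auxiliary (in base ten with $b_p(1)=3$, $c_p(1)=2$ one gets $m=2$, $R=5$, $w(2)=15\geq\vartheta(1)$).
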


\begin{intuition}[Rational Number Representation] In a base rich enough that every integer eventually divides one of the running products, a rational number has a representation that stops, all of its material sitting at a single position. Which position depends on the denominator: it is the first at which the running product absorbs it. This is the functionary form of the schoolbook fact that a fraction terminates precisely once the base accommodates its denominator, and it is what connects rationality to the terminating shape the next theorem exploits.
\end{intuition}

\begin{proof}
    As $a$ is an auxiliary function it is finite. Then by direct calculation we obtain that
    \begin{align*}
        \left(w*c_p\right)\left(n\right)=\begin{cases}
            b_p\left(0\right)+2c_p\left(0\right)&n=0\\
            b_p\left(1\right)*\prod\limits_{n=1}^{m-1}\left(\vartheta(n)\right)&n=m\\
            0&otherwise
            \end{cases}
    \end{align*}
    Then by inductive contractions we obtain that $\left[w\right]*\left[c\right]=\left[w*c_p\right]=\left[h\right]$ where
    \begin{align*}
        h(n)=\begin{cases}
            b_p\left(0\right)+2c_p\left(0\right)&n=0\\
            b_p\left(1\right)&n=1\\
            0&otherwise
            \end{cases}
    \end{align*}
    Then by the \nameref{Equivalence Lemma} \ref{Equivalence Lemma} we have that $\left[h\right]=\left[b\right]$ as $b_p\left(0\right)+h\left(0\right)=2\left(b_p\left(0\right)+c_p\left(0\right)\right)$. Therefore, $\left[a\right]*\left[c\right]=\left[w\right]*\left[c\right]$ and thus by the \nameref{Consistency Theorem'''''} \ref{6.I} \ref{Consistency Theorem'''''} we have $\left[a\right]=\left[w\right]$, hence $w\in\left[a\right]$. 
\end{proof}

\begin{theorem}[Irrational Models Theorem]\label{Irrational Models Theorem}
    In a model with Cantor's divisibility property a real number $\left[a\right]$ is rational if and only if $\left[a\right]=\left[0\right]$ or $\left[a\right]$ has a secondary auxiliary function.
\end{theorem}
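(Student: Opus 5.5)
The proof has two directions. The forward direction uses the \nameref{Rational Number Representation} \ref{Rational Number Representation}. The reverse direction reads a rational value off a secondary auxiliary function via its eventually maximal tail. The case $\left[a\right]=\left[0\right]$ is immediate in both directions, since $\left[0\right]=\left[0\right]*\left[1\right]^{-1}$ with $\left[0\right],\left[1\right]\in\mathcal{S}_\mathbb{Z}$. Here $\left[1\right]$ is an integer because $1_p$ pairs in $W_1$ with the secondary function that is $0$ at position $1$ and $\vartheta\left(n-1\right)-1$ afterwards, so $\Psi=1$. From now on I assume $\left[a\right]\neq\left[0\right]$.

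\textbf{Rational implies secondary.} Write $\left[a\right]=\left[b\right]*\left[c\right]^{-1}$, with $\left[b\right]\neq\left[0\right]$ since $\left[a\right]\neq\left[0\right]$. By the \nameref{Rational Number Representation} \ref{Rational Number Representation}, $\left[a\right]$ contains the function $w$, which is zero everywhere except at the single position $m$, where it has the value $b_p\left(1\right)R\geq1$. I then contract $w$ about $m-1,m-2,\dots$ as far as it goes, in the spirit of the reduction in the \nameref{Tying Theorem} \ref{Tying Theorem}. The result is a finitely supported function with digits below their bases from position $2$ on, and by the \nameref{Shifting Theorem} \ref{Shifting Theorem} it lies in $\left[a\right]$. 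It is therefore a primary auxiliary function $a_p$ of $\left[a\right]$ with a last nonzero position $K\geq1$, and it is non-zero by the \nameref{Null Lemma} \ref{Null Lemma}. Next I define $a_s$ to agree with $a_p$ below $K$, to equal $a_p\left(K\right)-1$ at $K$, and to equal $\vartheta\left(n-1\right)-1$ for $n>K$. This pair is exactly of the form required in $W_1$ of the \nameref{Base Set} \ref{Base Set}. Since the classes were built so that a primary auxiliary function in $\bigcup\bigcup W_1$ lies in an $F$-class together with its partner, $\left[a\right]$ contains $a_s$. To make this airtight I would also check directly that $a_s\in r_b$: broaden $a_p$ at $K,K+1,\dots,N$, which matches $a_s$ on $\left[0,N\right]_\omega$, and then invoke the \nameref{Shifting Theorem} \ref{Shifting Theorem}.

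\textbf{Secondary implies rational.} Let $a_s\in\left[a\right]$ and set $N:=N_{\min}^{a_s}$, with partner $a_p$ vanishing beyond $N$. Let $P:=\prod_{j=1}^{N-1}\vartheta\left(j\right)$ and $B:=\sum_{n=1}^{N}a_p\left(n\right)\prod_{j=n}^{N-1}\vartheta\left(j\right)$. These are naturals, and $B$ is the complete broadening of $a_p$ into position $N$, as in \eqref{eq:complete-broadening}. I take $\left[b\right]$ to be the integer class whose primary function has $B$ at position $1$ and the sign of $a$ at position $0$. Similarly, $\left[c\right]$ is the positive integer class with $P$ at position $1$. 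Both are integers: any primary function concentrated at position $1$ with a nonzero value there has a secondary partner with $\Psi=1$, obtained as above with $K=1$.

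The claim $\left[a\right]=\left[b\right]*\left[c\right]^{-1}$ reduces, by the \nameref{Consistency Theorem'''''} \ref{6.I} \ref{Consistency Theorem'''''}, to $\left[a\right]*\left[c\right]=\left[b\right]$. I would compute $a_p*c_p$ using the \nameref{Multiplicative Identity}-style calculation: $c_p$ is supported at position $1$, so $r\left(n,1,M\right)=\left[n=M\right]$, and $\left(a_p*c_p\right)\left(n\right)=P\,a_p\left(n\right)$ for $n\geq1$. Broadening $b_p$ completely from position $1$ down to $N$ gives $BP$ at position $N$. Contracting $Pa_p$ completely onto position $1$ instead, the weighted truncations at level $N$ are $P\cdot B$ for both functions, so by \eqref{eq:contraction-invariance} they agree there, and both functions vanish beyond $N$. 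The cleanest way to conclude is the \nameref{Extension Theorem} \ref{Extension Theorem}: the two weighted truncations coincide at every $m\geq N$, since beyond $N$ both multiply by $\vartheta$ with zero digits. A parity check on position $0$ then finishes the argument, using the \nameref{Equivalence Lemma} \ref{Equivalence Lemma}.

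The main obstacle is the bookkeeping in the first direction, namely certifying that the contracted $w$ is genuinely a primary auxiliary function whose class contains the paired secondary function. This is a statement about the base-set construction rather than about arithmetic, so I would argue it directly from the \nameref{Base Set} \ref{Base Set} and the definition of $r_b$.
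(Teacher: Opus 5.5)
Your proof is correct and follows the paper's route. In one direction, the \emph{Rational Number Representation} gives a terminating representative, whose primary auxiliary function has a zero tail and hence a $W_1$ partner; you usefully spell out the contraction and the pairing, which the paper only asserts. In the other direction, you write $[a]$ as the integer $B$ over the partial product $P$. You multiply $a_p$ by $c_p$ and close with the Extension Theorem, where the paper first broadens $a_p$ completely onto position $\Psi(a_p,a_s)$ and then contracts $f*g$ onto position $1$; this difference is only cosmetic.

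Drop the proposed ``direct check'' that broadens $a_p$ towards $a_s$. Membership of $a_s$ in $r_b$ requires contracting $a_s$ itself towards an element of $b$. Since $a_s$ admits no contractions, it lies in $r_b$ only because the $W_1$ pairing places it in $b$. Functions of $[a]$ that agree with $a_s$ on ever longer initial segments therefore establish nothing.
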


\begin{intuition}[Irrational Models Theorem] This is the punchline of the demonstration: in a divisibility-rich base, a real number is rational exactly when it has a second name or it is zero. Rationality is precisely the $0.999\ldots=1$ phenomenon, having both a terminating form and a $\left(\vartheta-1\right)$-tailed twin, and irrationality is simply the absence of that second representation. The whole question of irrationality has been translated into a question about how a number can be written.
\end{intuition}

\begin{proof}
    By definition, if $\left[a\right]=\left[0\right]$ then $\left[a\right]$ is rational. Suppose now that $\left[a\right]$ has a secondary auxiliary function. Then the well-defined primary auxiliary function $a_p$ has the property that for all $n\in\left[\Psi\left(a_p,a_s\right)+1,\infty\right]_\omega$ we have $a_p\left(n\right)=0$. As shown in the proof of the \nameref{Paradise City Lemma} \ref{Paradise City Lemma} we have that the function defined as
    \begin{align*}
        f(n)=\begin{cases}
            a_p\left(0\right)&n=0\\
            \sum\limits_{n=1}^{\Psi\left(a_p,a_s\right)}\left(a_p\left(n\right)\prod\limits_{j=n}^{\Psi\left(a_p,a_s\right)-1}\left(\vartheta\left(j\right)\right)\right)&n=\Psi\left(a_p,a_s\right)\\
            0&otherwise
        \end{cases}
    \end{align*}
    is in $\left[a\right]$ (this is merely a complete broadening up to $\Psi\left(a_p,a_s\right)$). Then by taking the function defined as
    \begin{align*}
        g(n)=\begin{cases}
            \prod\limits_{j=1}^{\Psi\left(a_p,a_s\right)-1}\left(\vartheta\left(j\right)\right)&n=1\\
            0&otherwise
        \end{cases}
    \end{align*}
    which is a primary auxiliary function of an integer. We obtain by direct calculation that
    \begin{align*}
        \left(f*g\right)\left(n\right)=\begin{cases}
            a_p\left(0\right)&n=0\\
            \prod\limits_{j=1}^{\Psi\left(a_p,a_s\right)-1}\left(\vartheta\left(j\right)\right)*\sum\limits_{n=1}^{\Psi\left(a_p,a_s\right)}\left(a_p\left(n\right)\prod\limits_{j=n}^{\Psi\left(a_p,a_s\right)-1}\left(\vartheta\left(j\right)\right)\right)&n=\Psi\left(a_p,a_s\right)\\
            0&otherwise
        \end{cases}
    \end{align*}
    This can be then inductively contracted to become
    \begin{align*}
        h\left(n\right)=\begin{cases}
            a_p\left(0\right)&n=0\\
            \sum\limits_{n=1}^{\Psi\left(a_p,a_s\right)}\left(a_p\left(n\right)\prod\limits_{j=n}^{\Psi\left(a_p,a_s\right)-1}\left(\vartheta\left(j\right)\right)\right)&n=1\\
            0&otherwise
        \end{cases}
    \end{align*}
    which is a primary auxiliary function for an integer. Thus, $\left[f*g\right]=\left[h\right]$ by the \nameref{Shifting Theorem} \ref{Shifting Theorem}. This then shows that $\left[a\right]$ is rational, as $\left[f\right]=\left[f\right]*\left[g\right]*\left[g\right]^{-1}=\left[h\right]*\left[g\right]^{-1}$.\\
    Now suppose that $\left[a\right]$ is rational. If $\left[a\right]=\left[0\right]$ we are done. Suppose not, then $\left[a\right]=\left[b\right]*\left[c\right]^{-1}$ where $\left[b\right]\neq\left[0\right]$. By \nameref{Rational Number Representation} \ref{Rational Number Representation} $w\in\left[a\right]$ as constructed. By our construction of the function $w$ it has a tail of $0$s and thus its auxiliary function will have to have a tail of $0$s. Furthermore, as $\left[w\right]=\left[a\right]\neq\left[0\right]$ $w$ cannot be identically zero. Thus $\left[w\right]$ and hence $\left[a\right]$ has a secondary auxiliary function.
\end{proof}
\begin{example} The divisibility hypothesis is a real restriction, and it is
easy to see both sides of it.\\
Take the base of Example 2, $\vartheta(n)=n+1$. Its running products are $\prod_{j=1}^{m-1}(\vartheta(j))=2*3\cdots m=m!$, and every positive integer $k$ divides $k!$, so the hypothesis holds. Accordingly every rational terminates here: for instance $\nicefrac{1}{3}$ is $(0;0,0,2,0,\dots)$, since position $3$ carries weight $\nicefrac{1}{6}$ and $2*\nicefrac16=\nicefrac13$, and the digit $2$ is admissible because $\vartheta(2)=3$.\\
Now take the constant base ten. Its running products are the powers of ten, and $3$ divides none of them, so the hypothesis fails, and indeed $\nicefrac{1}{3}=(0;0,3,3,3,\dots)$ does not terminate, has no second name, and is therefore not detected as rational by the criterion above. The theorem is not asserting that irrationality depends on the base; it is asserting that in a base rich enough to absorb every denominator, and only in such a base, rationality becomes visible as a property of how a number is written.
\end{example}
\begin{theorem}[Model Series Theorem]\label{Model Series Theorem}
    Let a convergent infinite series of the form
    \begin{align*}
        \sum_{n=1}^{\infty}\frac{a_n}{\prod\limits_{k=1}^{n-1}b_k}
    \end{align*}
    where $\left(a_n\right),\left(b_n\right)$ are sequences of non-negative integers with $b_n\geq 2$ for all $n\in\omega$ be given. Then by taking $\vartheta:\omega_+\rightarrow\left[2,\infty\right]_\omega$ defined as $\vartheta\left(n\right)=b_n$ to be the \nameref{System Function} \ref{System Function} we have that the function $a:\omega\rightarrow\omega$ defined as
        \begin{align*}
            a(n)=\begin{cases}
                0&n=0\\
                a_n&otherwise
            \end{cases}
        \end{align*}
        is a finite function in the model of $\vartheta$ and the series converges and equals $\left[a\right]$.
\end{theorem}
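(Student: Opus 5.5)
We prove the two claims in turn: first that $a$ is finite, then that the sum of the series is the real number $\left[a\right]$.

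\textbf{Step 1: finiteness.} We use the \nameref{Tying Theorem} \ref{Tying Theorem}. Write $S_K:=\sum_{n=1}^{K}a_n/\prod_{k=1}^{n-1}b_k$ for the partial sums of the series. Multiplying $S_K$ by $\prod_{p=1}^{K-1}\vartheta(p)$ gives exactly the weighted truncation
\begin{align*}
\sum_{n=1}^{K}a(n)\prod_{p=n}^{K-1}\vartheta(p).
\end{align*}
The terms are non-negative and the series converges, so $S_K\leq S:=\sum_{n=1}^\infty a_n/\prod_{k=1}^{n-1}b_k$ for every $K$. Choose $C:=\lfloor S\rfloor+1\in\omega_+$, so that $S<C$. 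Then every truncation is strictly below $C\prod_{p=1}^{K-1}\vartheta(p)$, and the Tying Theorem gives $a\in\mathcal{N}_{\mathcal{F}in}$.

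\textbf{Step 2: identifying the value.} Since $\mathcal{S}_\mathbb{R}$ has now been shown to be a Dedekind-complete ordered field, the natural reading of ``the series equals $\left[a\right]$'' is internal. The integers $a_n$ and the products $\prod_{k=1}^{n-1}b_k$ are elements of $\mathcal{S}_\mathbb{Z}$, since the integer $N$ is represented by $(0;N,0,\dots)$. The partial sum $S_K$ is therefore an element of $\mathcal{S}_\mathbb{R}$, and the claim is $\left[a\right]=\sup_K S_K$.

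The plan is to show that $S_K$ is the class of the truncated function $a^{(K)}$, which agrees with $a$ on $[0,K]_\omega$ and vanishes afterwards. To see this, let $\delta_n$ be the function with value $1$ at position $n$ and $0$ elsewhere. Using the multiplicative identity and the \nameref{Rational Number Representation} \ref{Rational Number Representation} style computation, $(\delta_n)*\left(\prod_{k<n}b_k\text{ at position }1\right)$ contracts to $1_p$. Hence $\left[\delta_n\right]$ is the inverse of $\prod_{k<n}b_k$, and $a_n/\prod_{k<n}b_k=\left[a_n\delta_n\right]$. Same-sign sub-addition is digitwise, so $S_K=\left[a^{(K)}\right]$.

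It then remains to show $\sup_K\left[a^{(K)}\right]=\left[a\right]$. For the upper bound, $a$ dominates each $a^{(K)}$, so the \nameref{One-Case Lemma} \ref{One-Case Lemma} gives $\left[a^{(K)}\right]\leq\left[a\right]$. For the reverse inequality we use the \nameref{Extension Theorem} \ref{Extension Theorem} or the \nameref{Boundedness Theorem} \ref{Boundedness Theorem}. Suppose some upper bound $\left[x\right]$ satisfies $\left[x\right]<\left[a\right]$. The proof of the Boundedness Theorem then yields a fixed scale $N$ such that, for all large $m$, the truncations of $a$ exceed those of $x_p$ by at least $\prod_{j=N}^{m-1}\vartheta(j)$. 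Convergence of the series is what we play against this. The tail $\sum_{n>K}a_n/\prod_{k<n}b_k$ tends to $0$, so for $K$ large the truncations of $a$ and $a^{(K)}$ at depth $m$ differ by less than $\prod_{j=N}^{m-1}\vartheta(j)$. This forces $a^{(K)}$ strictly above $x_p$ at all deep levels, and the Boundedness Theorem then gives $\left[a^{(K)}\right]>\left[x\right]$, contradicting that $\left[x\right]$ is an upper bound.

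\textbf{Main obstacle.} The hard step is this last tail estimate. It has to convert classical convergence, stated with rational numbers outside the model, into the integer inequality on weighted truncations that the Boundedness Theorem consumes. We would handle it by multiplying the classical tail bound $<\varepsilon$ through by $\prod_{j=1}^{m-1}\vartheta(j)$ with $\varepsilon=1/\prod_{j=1}^{N-1}\vartheta(j)$, which stays entirely in non-negative integers. A secondary nuisance is that the dominating constructions in Step 2 must respect the sign slot $a(0)=0$; this is harmless because everything is positive.
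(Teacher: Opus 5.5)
Your Step 1 is the paper's argument; the paper takes the bounding integer from the Archimedean property of the model rather than as $\lfloor S\rfloor+1$, which makes no difference. Your identification of the partial sums with the truncations $a^{(K)}$ is also what the paper does, via the Rational Number Representation.

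The two routes separate at the last step. The paper proves convergence in $\epsilon$-form. It picks $m$ with $1/\prod_{k<m}b_k<\epsilon$ and represents that number by $s_m$, a single $1$ at position $m$. It then chooses $N\geq m$ whose tail lies below it, and applies the Boundedness Theorem, as stated, to the tail function $a-F_N$ against $s_m$. This gives $0\leq[a]-S_N<\epsilon$.

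You instead identify $[a]$ as $\sup_K[a^{(K)}]$ and argue by contradiction, taking the fixed-scale margin from inside the proof of the Boundedness Theorem, as the Extension Theorems also do. Your version never subtracts, so it needs neither tracking functions nor the Rearrangement Lemma, and it only ever compares a truncation with a primary auxiliary function. The paper's version uses the Boundedness Theorem only through its statement and gives convergence directly with an explicit error bound. It also avoids your extra (standard) step that in a complete ordered field an increasing sequence converges to its supremum.

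One step needs a line more than you give it. The Boundedness Theorem yields only $[a^{(K)}]\geq[x]$, not $[a^{(K)}]>[x]$, so as written there is no contradiction with $[x]$ being an upper bound. Strictness does hold. You have shown that the weighted truncation of $a^{(K)}$ strictly exceeds that of $x_p$ at some level. By the Paradise City Lemma and its first corollary, no member of $[x]$ has a weighted truncation above that of $x_p$. Hence $a^{(K)}\notin[x]$, the inequality is strict, and your contradiction goes through.
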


\begin{intuition}[Model Series Theorem] This theorem is the bridge from ordinary infinite series to the model. Given a series whose terms are laid out over the running products of a base, we simply read its numerators off as the digits of a function; the theorem verifies that this function is a genuine number and that the series converges to exactly the real number it names. Choosing the base to match the series turns an analytic object into a single point of our construction, which is what makes the model-theoretic proof of irrationality possible.
\end{intuition}

\begin{proof}
    By \nameref{Rational Number Representation} we see that each $\frac{a_n}{\prod_{k=1}^{n-1}b_k}$ is represented by $f_n:\omega\rightarrow\omega$ defined as
    \begin{align*}
        f_n\left(k\right)=\begin{cases}
            a_n&k=n\\
            0&otherwise
        \end{cases}
    \end{align*}
    By the definition of addition of functions $\sum_{n=1}^{N}\frac{a_n}{\prod_{k=1}^{n-1}b_k}$ equals a number represented by $F_N:\omega\rightarrow\omega$ defined as
    \begin{align*}
        F_N\left(k\right)=\begin{cases}
            a_k&k\in\left[1,N\right]_\omega\\
            0&otherwise
        \end{cases}
    \end{align*}
    As the series converges by the Archimedean property we take an integer $\left[C\right]\in\mathcal{S}_\mathbb{Z}$ such that $\left[C\right]>\sum_{n=1}^{\infty}\frac{a_n}{\prod_{k=1}^{n-1}b_k}\geq\sum_{n=1}^{N}\frac{a_n}{\prod_{k=1}^{n-1}b_k}$. Then $\left[C\right]\prod_{k=1}^{N-1}\left(b_k\right)>\sum_{n=1}^N\left(a_n*\prod_{k=n}^{N-1}\left(b_k\right)\right)$. Therefore, for all $N\in\omega_+$ we have
    \begin{align*}
        C_p\left(1\right)\prod_{k=1}^{N-1}\left(\vartheta\left(k\right)\right)>\sum_{n=1}^N\left(a\left(n\right)*\prod_{k=n}^{N-1}\left(\vartheta\left(k\right)\right)\right)
    \end{align*}
    and thus by the \nameref{Tying Theorem} \ref{Tying Theorem} $a$ is a finite function.\\
    Let $\epsilon>0$ be given. Take $m\in\omega$ such that $S_m:=\frac{1}{\prod_{k=1}^{m-1}b_k}<\epsilon$. Then by the \nameref{Rational Number Representation} \ref{Rational Number Representation} $S_m$ is represented by a function $s_m:\omega\rightarrow\omega$ defined as
    \begin{align*}
        s_m\left(n\right)=\begin{cases}
            1&n=m\\
            0&otherwise
        \end{cases}
    \end{align*}
    As $\sum\limits_{n=1}^{\infty}\frac{a_n}{\prod_{k=1}^{n-1}b_k}$ converges there exists $N\in\left[m,\infty\right]_\omega$ such that $\sum\limits_{n=N+1}^{\infty}\frac{a_n}{\prod_{k=1}^{n-1}b_k}<S_m$ and this means that for all $M\in\left[N+1,\infty\right]_\omega$ we have $\sum\limits_{n=N+1}^{M}\frac{a_n}{\prod_{k=1}^{n-1}b_k}<S_m=\frac{1}{\prod_{k=1}^{m-1}b_k}$ and thus
    \begin{align*}
        \sum_{n=N+1}^{M}\left(a_n\prod_{k=n}^{M-1}\left(b_k\right)\right)<\prod_{k=m}^{M-1}\left(b_k\right)
    \end{align*}
    Therefore
    \begin{align*}
        \sum_{n=1}^{M}\left((a-F_N)(n)\prod_{k=n}^{M-1}\left(b_k\right)\right)=\sum_{n=N+1}^{M}\left(a_n\prod_{k=n}^{M-1}\left(b_k\right)\right)<\prod_{k=m}^{M-1}\left(b_k\right)=\sum_{n=1}^{M}\left(s_m(n)\prod_{k=n}^{M-1}\left(b_k\right)\right)
    \end{align*}
    And thus, for all $T\in\omega_+$ there exists $M\in\left[\max\left(\left\{N+1,T\right\}\right),\infty\right]_\omega$ such that
    \begin{align*}
        \sum_{n=1}^{M}\left((a-F_N)(n)\prod_{k=n}^{M-1}\left(b_k\right)\right)<\sum_{n=1}^{M}\left(s_m(n)\prod_{k=n}^{M-1}\left(b_k\right)\right)
    \end{align*}
    and hence by the \nameref{Boundedness Theorem} \ref{Boundedness Theorem} $0\leq\left[a\right]-\sum_{n=1}^{N}\frac{a_n}{\prod_{k=1}^{n-1}b_k}<S_m<\epsilon$. Ergo $\sum\limits_{n=1}^{N}\frac{a_n}{\prod_{k=1}^{n-1}\left(b_k\right)}\rightarrow\left[a\right]$.
\end{proof}
\begin{theorem}[Cantor's Irrationality Theorem]
    A series of the form
    \begin{align*}
        \sum_{n=1}^\infty\frac{a_n}{\prod_{k=1}^{n-1}(b_k)}
    \end{align*}
    where
    \begin{align*}
        &\forall N\in[1,\infty]_\omega,\exists m\in\omega,&&N\mid\prod_{n=1}^{m-1}\left(b_n\right)\\
        &\exists N\in\omega,\forall n\in[N+1,\infty]_\omega,&&0\leq a_n\leq b_{n-1}-1\\
        &\forall N\in\omega,\exists n\in[N+1,\infty]_\omega,&&0<a_n\\
        &\forall N\in\omega,\exists n\in[N+1,\infty]_\omega,&&a_n<b_{n-1}-1\\
    \end{align*}
    is always convergent and equals an irrational real number.
\end{theorem}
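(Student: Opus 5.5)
Take $\vartheta(n):=b_n$ as the \nameref{System Function} \ref{System Function}. The first hypothesis is then exactly Cantor's divisibility property for this model. Define $a:\omega\rightarrow\omega$ by $a(0)=0$ and $a(n)=a_n$ for $n\in\omega_+$. The argument has three steps: prove convergence directly, identify the limit with $\left[a\right]$ through the \nameref{Model Series Theorem} \ref{Model Series Theorem}, and show that $\left[a\right]$ is non-zero and has no secondary auxiliary function. The \nameref{Irrational Models Theorem} \ref{Irrational Models Theorem} then gives irrationality.

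\textbf{Convergence and finiteness.} Let $N$ be the index from the second hypothesis. The terms with $n\leq N$ form a finite sum. For the tail, each term is bounded by $(b_{n-1}-1)/\prod_{k=1}^{n-1}b_k$, and these bounds telescope to at most $1/\prod_{k=1}^{N-1}b_k$, so the series converges. The same telescoping bound, written with $\omega$-valued weighted truncations as in the forward half of the \nameref{Tying Theorem} \ref{Tying Theorem}, gives a constant $C$ that makes $a$ finite. The \nameref{Model Series Theorem} \ref{Model Series Theorem} now shows that the series equals $\left[a\right]$. The third hypothesis supplies some $n$ with $a(n)>0$, so $\left[a\right]\neq\left[0\right]$ by the \nameref{Null Lemma} \ref{Null Lemma}.

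\textbf{Normal form of $\left[a\right]$.} Only finitely many positions $k\leq N$ may carry digits with $a(k)\geq\vartheta(k-1)$. A finite right-to-left sequence of contractions about $N-1,N-2,\dots,1$ removes these excesses, and the \nameref{Shifting Theorem} \ref{Shifting Theorem} keeps the result $h$ in $\left[a\right]$. This $h$ agrees with $a$ on $\left[N+1,\infty\right]_\omega$ and satisfies $h(k)<\vartheta(k-1)$ for every $k\geq2$. The fourth hypothesis guarantees infinitely many $n$ with $h(n)\neq\vartheta(n-1)-1$, so $h$ is a primary auxiliary function of $\left[a\right]$. By \nameref{PC Cor II} \ref{PC Cor II} it is therefore $a_p$ on $\omega_+$.

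\textbf{Main obstacle: excluding a secondary auxiliary function.} The step I expect to need care is showing that $\left[a\right]$ has none. Suppose $\left[a\right]$ had one. By the definition of $W_1$ in the \nameref{Base Set} \ref{Base Set}, the primary auxiliary function of the pair would have to vanish from $N_{\min}^{a_s}+1$ onward. But $a_p=h$ agrees with $a$ past $N$, and the third hypothesis gives infinitely many positions with $a_n>0$, a contradiction. The subtle point is that \nameref{PC Cor II} \ref{PC Cor II} makes the primary auxiliary function unique on $\omega_+$, so the primary in any $W_1$ pair must coincide with $h$ there. Since $\left[a\right]$ is non-zero and has no secondary auxiliary function, the \nameref{Irrational Models Theorem} \ref{Irrational Models Theorem} shows it is irrational, and the series equals this irrational number.
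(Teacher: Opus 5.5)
Your proposal is correct and follows the paper's route: take $\vartheta(n)=b_n$, get convergence by comparing the tail (your telescoping bound is a sharper form of the paper's geometric comparison), identify the sum with $\left[a\right]$ via the Model Series Theorem, use the third condition to get $\left[a\right]\neq\left[0\right]$, and conclude with the Irrational Models Theorem. The one difference is that you explicitly show $\left[a\right]$ has no secondary auxiliary function, which the paper's proof leaves implicit: you contract the finitely many over-full digits to exhibit the primary auxiliary function, certify it with the fourth condition, and then play the third condition against the $W_1$ pairing and Paradise City Corollary II.
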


\begin{intuition}[Cantor's Irrationality Theorem] Everything now converges. Reading such a series as a digit function places it inside a model whose base has the divisibility property; the growth conditions on the terms guarantee the function never terminates and never settles into an all-$\left(\vartheta-1\right)$ tail; so by the Irrational Models Theorem it has no second name and is therefore irrational. A classical result whose usual proof leans on number theory falls out of the way our numbers are represented, which is the entire point of choosing models to fit the question at hand.
\end{intuition}

\begin{proof} The convergence follows by the comparison test (on the tail of the series) to the series $\sum\frac{b_{n-1}}{\prod_{k=1}^{n-1}\left(b_k\right)}=\sum\frac{1}{\prod_{k=1}^{n-2}\left(b_k\right)}\leq\sum\frac{1}{2^{n-2}}$. Then by the \nameref{Model Series Theorem} \ref{Model Series Theorem} where we invoke the third condition to conclude that the series is not $\left[0\right]$ and the \nameref{Irrational Models Theorem} \ref{Irrational Models Theorem} the theorem follows.
\end{proof}
\begin{definition}[$b$-adically Terminating Rationals]
    A rational number $\left[a\right]$ is $b$-adically terminating for an integer $\left[b\right]\geq\left[2\right]:=\left[1\right]+\left[1\right]$ if there exist an integer $\left[c\right]\in\mathcal{S}_\mathbb{Z}$ and $N\in\omega$ such that
    \begin{align*}
        \left[a\right]=\left[c\right]*\left[b^N\right]^{-1}
    \end{align*}
    here $\left[b^N\right]$ of course means $\left[b\right]*\left[b\right]*\dots*\left[b\right]$ $N$ times.
\end{definition}
\begin{theorem}[$b$-adic Model Theorem]\label{b-adic}
    In a model with a constant system function $\vartheta\left(n\right)=b\geq2$ for all $n\in\omega_+$ we have that a non-zero number is $b$-adically terminating\footnote{With respect to the obvious integer with a primary auxiliary function $b_p:\omega\rightarrow\omega$ where $b_p\left(1\right)=b$ and $b_p\left(n\right)=0$ otherwise.} rational if and only if it has a secondary auxiliary function.
\end{theorem}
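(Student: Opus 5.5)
We prove the two implications separately, mirroring the proof of the \nameref{Irrational Models Theorem} \ref{Irrational Models Theorem}. The divisibility hypothesis used there is replaced by the exact arithmetic of the constant base: the running products are $\prod_{j=1}^{m-1}\vartheta(j)=b^{m-1}$.

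\emph{From a secondary auxiliary function to $b$-adic termination.} Suppose $[a]\neq[0]$ has a secondary auxiliary function, and put $\Psi:=\Psi(a_p,a_s)$.
\begin{enumerate}
\item By the construction of $W_1$, $a_p$ vanishes beyond $\Psi$.
\item As in the proof of the \nameref{Irrational Models Theorem} \ref{Irrational Models Theorem}, completely broaden $a_p$ on $[1,\Psi-1]_\omega$. The result is a function $f\in[a]$ whose only nonzero digit sits at position $\Psi$, with value
\begin{align*}
C:=\sum_{n=1}^{\Psi}a_p(n)\,b^{\Psi-n}.
\end{align*}
\item Let $g$ be the primary auxiliary function of the integer $[b^{\Psi-1}]$. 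To identify it, note first that $b_p*b_p$ has value $b^2$ at position $1$. Indeed $r(1,1,1)=1$, so the product is the single digit $b\cdot b$ at position $1$, and it is already primary since position $1$ carries no bound. Inductively, $[b^{N}]$ has primary auxiliary function $b^N$ at position $1$ and $0$ elsewhere, which is exactly the $g$ used in the earlier proof.
\item The computation already carried out there gives $[f]*[g]=[h]$, where $h$ is the integer with digit $C$ at position $1$. Hence
\begin{align*}
[a]=[h]*[b^{\Psi-1}]^{-1},
\end{align*}
which is $b$-adically terminating with $N=\Psi-1$.
\end{enumerate}

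\emph{From $b$-adic termination to a secondary auxiliary function.} Suppose $[a]=[c]*[b^N]^{-1}$ with $[a]\neq[0]$, so $[c]\neq[0]$.
\begin{enumerate}
\item Mimic the \nameref{Rational Number Representation} \ref{Rational Number Representation} with $m:=N+1$ and $R:=b^N/b^N=1$. The candidate is $w$ with $w(0)=c_p(0)+(b^N)_p(0)$, $w(N+1)=c_p(1)$, and zeros elsewhere.
\item Compute $w*(b^N)_p$ directly. Only the pair $(n,k)=(N+1,1)$ contributes, and the rest function places $c_p(1)\,b^N$ at position $N+1$, because $r(N+1,1,M)=1$ exactly when $M=N+1$ in constant base.
\item Contract inductively down to position $1$. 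This yields a representative of $[c]$, up to the parity of the sign slot, which the \nameref{Equivalence Lemma} \ref{Equivalence Lemma} absorbs.
\item By cancellation in the \nameref{Consistency Theorem'''''} \ref{6.I}, we get $w\in[a]$.
\item Since $w$ has a zero tail and is not identically zero, the \nameref{Null Lemma} \ref{Null Lemma} applies. Contracting $w$, which can only move mass leftward within finitely many positions, yields a primary auxiliary function with a zero tail that is nonzero on $\omega_+$. Such a function has a secondary partner in $W_1$, obtained by decrementing its last nonzero digit and filling the tail with $b-1$. So $[a]$ has a secondary auxiliary function.
\end{enumerate}

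\emph{Main obstacle.} The delicate step is the last one: we must confirm that the primary auxiliary function of a finite-support $w$ really has finite support. We will argue this from the Tying-style identity \eqref{eq:contraction-invariance}. Beyond the support of $w$, the truncations of $w$ are exact multiples of powers of $b$. By the \nameref{Maximum Corollary} \ref{Maximum Corollary}, the truncations of $a_p$ must then match them, which forces $a_p(n)=0$ for $n>N+1$.

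The other point to check carefully is the sign bookkeeping at position $0$ when multiplying by $[b^N]^{-1}$. It is handled exactly as in the \nameref{Rational Number Representation} \ref{Rational Number Representation}.
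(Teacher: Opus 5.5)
Your proposal is correct and follows the paper's route. The forward direction is the paper's argument: your $w$ (with $m=N+1$, $R=1$) is multiplied by $[b^N]$, contracted back to $c_p$, and cancelled. The converse is the Irrational Models argument, and there you usefully check a point the paper leaves implicit, namely that the integer $g$ used there is exactly $[b]^{*(\Psi-1)}$ in constant base.

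One small caution about the zero tail of the auxiliary function. It is best justified as in your step 5: reduce $w$ right to left with finitely many contractions to a non-contractable (hence auxiliary, hence primary) function in $[a]$. The Maximum Corollary route in your "main obstacle" paragraph does not work as stated, since that corollary by itself only gives $\sum_{n=1}^{m}a_p(n)b^{m-n}\geq\sum_{n=1}^{m}w(n)b^{m-n}$, not equality.
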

\begin{proof}
    As $\left[a\right]$ is $b$-adically terminating and non-zero, there exist a non-zero integer $\left[c\right]$ and $N\in\omega$ such that
    \begin{align*}
        \left[a\right]=\left[c\right]*\left[b^N\right]^{-1}
    \end{align*}
    Therefore, by defining the function $f:\omega\rightarrow\omega$ as
    \begin{align*}
        f\left(n\right)=\begin{cases}
            c_p\left(0\right)&n=0\\
            c_p\left(1\right)&n=N+1\\
            0&otherwise
        \end{cases}
    \end{align*}
    Then by inductive contractions we obtain that $\left[f\right]*\left[b^N\right]=\left[f*b^N_p\right]=\left[c_p\right]$\footnote{This is because $\left(f*b^N_p\right)\left(0\right)=f\left(0\right)+b^N_p\left(0\right)=f\left(0\right)+0=c_p\left(0\right)$} Therefore, $\left[a\right]*\left[b^N\right]=\left[f\right]*\left[b^N\right]$ and thus by the \nameref{Consistency Theorem'''''} \ref{6.I} \ref{Consistency Theorem'''''} we have $\left[a\right]=\left[f\right]$, hence $f\in\left[a\right]$. $f$ has a tail of $0$s and thus its auxiliary function will have to have a tail of $0$s. Furthermore, as $\left[f\right]=\left[a\right]\neq\left[0\right]$ $f$ cannot be identically zero. Thus $\left[f\right]$ and hence $\left[a\right]$ has a secondary auxiliary function.\\
    The other implication is done the same way as in the \nameref{Irrational Models Theorem} \ref{Irrational Models Theorem}. Notice that even though the theorem demanded Cantor's divisibility property, the part we invoke didn't and is independent of it.
\end{proof}
\section{Related Work}\label{sec:related}
Constructions of $\mathbb{R}$ are numerous enough to have been surveyed, and we do not claim novelty in having produced another; the survey of Weiss \cite{weiss2015survey} catalogues nineteen. What we claim is that a construction which carries the base as a parameter can be asked questions that a construction without one cannot, and this section is where that claim is made precise against the alternatives. We locate the construction among its relatives, compare it in detail with its nearest neighbour, distinguish our parametrisation from earlier parametrised families, and place the rationality theorem of Section \ref{sec:cantor} in the tradition it belongs to.
\subsection{Constructions of the real numbers}
Constructions of $\mathbb{R}$ fall into a few families: order-theoretic completions, of which Dedekind's cuts \cite{Dedekind1901} are the archetype; metric completions, beginning with Cantor \cite{Cantor1872} and including the nested-interval and uniform-space variants; constructions from approximate endomorphisms of $\mathbb{Z}$, the Eudoxus reals \cite{arthan2004eudoxus}; constructions from series and product expansions of rationals \cite{shiu1974,pintilie1988,knopfmacher1987,knopfmacher1988}; and constructions from digit strings, where the reals are expansions and the work lies in carrying. It is in this last family that the present construction belongs, alongside Faltin, Metropolis, Ross and Rota \cite{FALTIN1976271} and de Bruijn \cite{de1976defining}.

\subsection{Comparison with the wreath construction}
The closest relative is \cite{FALTIN1976271}, and the resemblance is not superficial. Faltin, Metropolis, Ross and Rota also abandon the requirement that digits be reduced below the base, for the reason we gave in Section~1: it is the reduction, not the carrying, that makes digitwise arithmetic awkward. They also restrict to strings satisfying a boundedness condition which, in base two, is the condition our Tying Theorem shows to characterise the finite functions. Two constructions meeting at this condition is no accident; it is what makes unreduced strings represent reals at all.\\
The constructions part company in how the identification is generated. In \cite{FALTIN1976271} the strings form a ring of formal Laurent series over $\mathbb{Z}$, a carry constant $K$ is singled out, and two strings are identified when they differ by a multiple of $K$ by a carry string; the arithmetic is then inherited from the ring at no cost, and the price is paid in recovering the order, which requires a normal form (``clear strings'') and a separate argument. We instead generate the identification by local, individually invertible moves and test it by agreement on finite initial segments. The order is then almost immediate, it is domination of representatives, and the price is paid in showing that the arithmetic descends to the classes, which is the content of our consistency theorems. Neither resolution dominates the other; they distribute the same difficulty differently. Two further differences are of presentation rather than substance: our digits are non-negative with the sign carried in a distinguished position, where \cite{FALTIN1976271} allows integer digits indexed by $\mathbb{Z}$; and our base is permitted to vary with position, where \cite{FALTIN1976271} is stated for a fixed base.

\subsection{Families of constructions}
That last difference is what yields a family rather than a single model, and it should be stated carefully, since parametrised constructions are not new. Pintilie \cite{pintilie1988} constructs $\mathbb{R}$ from an arbitrary sequence of positive rationals tending to zero with divergent sum, and Shiu \cite{shiu1974} observes that the harmonic series in his construction may be replaced by any such sequence; each therefore already yields continuum-many constructions. What we claim is narrower and, we think, more interesting than cardinality. Because the parameter here is the base, it acts directly on the representation theory: the set of numbers admitting a second representative is the $b$-adically terminating rationals when $\vartheta$ is constantly $b$ (\ref{b-adic}), is all of $\mathbb{Q}$ when every integer divides some partial product (\ref{Irrational Models Theorem}), and lies strictly between for intermediate choices. The models are indistinguishable as ordered fields, by categoricity, and differ precisely in how their elements are named. It is that variation, not the cardinality of the family, which makes the base worth choosing to suit a problem.
\section{Conclusion}\label{sec:conclusion}
The question this paper began with was where the multiplicity of names for a real number comes from, and the answer it arrives at is that the multiplicity is entirely a function of the base: fix $\vartheta$ and the set of numbers with two names is determined; vary $\vartheta$ and that set moves, over a range that includes both the $b$-adic rationals and all of $\mathbb{Q}$. Establishing this required building the models rather than assuming them, and it is worth recording where the labour actually fell. Not in the arithmetic, which unreduced digits make easy, and not in the order, which domination makes almost immediate; but in showing that operations defined on representatives descend to the classes at all. The consistency theorems are the longest arguments here for that reason, and we hope that having carried them out in full generality spares others the same work, leaving any particular model of the family ready to use.
 
The freedom this buys is a freedom of description rather than of substance, and it is worth being exact about which. Nothing about the real numbers changes when one moves through the family; what changes is which facts about them are visible on the surface of a name. Section \ref{sec:cantor} is our demonstration that the distinction has consequences: in a base whose partial products absorb every denominator, being rational and having a second name are the same property, and a criterion whose usual proof draws on number theory becomes a remark about how numbers are written. We expect this to be the general pattern, a well-chosen base does not make a hard problem easy, but it can move a problem from one register into another, and the second register may be the one in which the tools are sharper.
 
Two directions follow. The first is internal. Sequences of reals, functions of a real variable, and sequences of functions and much more can all be redefined through a functionary model. This redefinition (which has to be shown equivalent to the original definition) carries with itself new notions of convergence which we intend to develop. The second is the question Section \ref{sec:cantor} leaves open. Oppenheim removed the divisibility hypothesis from Cantor's criterion, and beyond it the rationality of general Cantor series is only partially understood. A base tuned to a given series is precisely the instrument that regime appears to call for; finding out how far it reaches is the work we intend to take up next.

\appendix
\section{Notation}\label{app:notation}
The table below collects the notation introduced throughout the paper, with the place of first definition. Symbols are grouped by role rather than by order of appearance, since a reader consulting the table generally knows what kind of object they are looking for.
\begin{longtable}{@{}>{$}l<{$}p{7.2cm}l@{}}
\caption{Notation used throughout.}\label{tab:notation}\\
\toprule
\textnormal{\textbf{Symbol}} & \textbf{Meaning} & \textbf{Introduced}\\
\midrule
\endfirsthead
\multicolumn{3}{@{}l}{\footnotesize\itshape Table \ref{tab:notation}, continued}\\
\toprule
\textnormal{\textbf{Symbol}} & \textbf{Meaning} & \textbf{Introduced}\\
\midrule
\endhead
\midrule
\multicolumn{3}{r@{}}{\footnotesize\itshape}\\
\endfoot
\bottomrule
\endlastfoot
 
\multicolumn{3}{@{}l}{\textit{Ordinals and intervals}}\\
\omega,\;\omega_+ & the finite ordinals; the non-zero finite ordinals &\ref{Notation}\\
\left[a,b\right]_\omega & the ordinals from $a$ to $b$ inclusive &\ref{Notation}\\
\max\left(\left\{t\mid t*a\leq b\right\}\right) & rational floor of $b$ by $a$ &\ref{Methods}\\
\addlinespace
\multicolumn{3}{@{}l}{\textit{Systems and moves}}\\
\vartheta & system function; $\vartheta\left(n\right)\geq2$ is the base at position $n$ &\ref{System Function}\\
c_n,\;b_n & contraction and broadening about position $n$ &\ref{Contraction} \& \ref{Broadening}\\
\mathcal{CB} & the set of CB-functions &\ref{Sequences of Contractions and Broadenings}\\
\mathcal{D}_{k=q}^{r} & a sequence of contractions and broadenings &\ref{Sequences of Contractions and Broadenings}\\
\mathcal{C}_{k=q}^{r} & a sequence of contractions only &\ref{Sequences of Contractions and Broadenings}\\
\mathcal{B}_{k=q}^{r} & a sequence of broadenings only &\ref{Sequences of Contractions and Broadenings}\\
\mathcal{FS} & the set of finite sequences &\ref{Sequences of Contractions and Broadenings}\\
\text{con}\left(\mathcal{D}_{k=q}^{r}\right) & the conditional of $\mathcal{D}_{k=q}^{r}$ &\ref{Sequences of Contractions and Broadenings}\\
\addlinespace
\multicolumn{3}{@{}l}{\textit{Functions and their auxiliaries}}\\
\sigma\left(f\right) & sign of a function: $0$ if $f\left(0\right)$ is odd, $1$ if even &\ref{Sign of a Function}\\
A_p,\;A_s & the primary and secondary auxiliary functions &\ref{Primary Auxiliary Function} \& \ref{Secondary Auxiliary Function}\\
h_f & an auxiliary function of $f$ &\ref{Paradise City Lemma}\\
f_p,\;f_s & the primary and secondary auxiliary functions of $\left[f\right]$ &\ref{PC Cor I}\\
\Psi\left(f,g\right) & least position at which $f$ and $g$ disagree &\ref{Unicity of Real Numbers}\\
N^{h}_{\min} & least position past which $h$ is constantly $\vartheta-1$ &\ref{Secondary Auxiliary Function}\\
\addlinespace
\multicolumn{3}{@{}l}{\textit{Real numbers}}\\
r_b & the real number for $b\in B$ &\ref{Real Number}\\
\mathcal{S}_\mathbb{R} & the set of real numbers &\ref{Set of Reals}\\
\mathcal{S}_\mathbb{R}^{*} & the set of non-zero real numbers &\ref{Set of Reals}\\
\left[a\right] & the real number containing $a$ &\ref{Set of Reals}\\
\sigma\left(\left[a\right]\right) & sign of a real number (undefined on $\left[0\right]$) &\ref{Sign of a Real Number}\\
\mathcal{N}_{\mathcal{F}in} & the set of finite functions &\ref{Set of Reals}\\
\mathcal{N}_{\mathcal{F}in}^{*} & the set of non-zero finite functions &\ref{Set of Reals}\\
\addlinespace
\multicolumn{3}{@{}l}{\textit{Operations}}\\
{}^{o}f,\;{}^{o}\left[a\right] & the opposite of a function, of a real number &\ref{Opposite Function} \& \ref{Opposite of a Real Number}\\
\left|f\right|,\;\left|\left[a\right]\right| & absolute value of a function, of a real number &\ref{Absolute Value of Functions} \& \ref{Absolute Value of a Real Number}\\
f+g & sub-addition of functions &\ref{Complete Sub-Addition of Functions}\\
\mathcal{T}^{f}_{g} & the set of tracking functions of $f$ over $g$ &\ref{Set of Tracking Functions}\\
\varrho^{f}_{g} & the primary tracking function &\ref{Primary Tracking Function}\\
\Upsilon\left(a,b\right) & the absolute difference of $a$ and $b$ &\ref{Extension Theorem}\\
f*g & sub-multiplication of functions &\ref{Sub-Multiplication}\\
r\left(n,k,M\right) & rest function: weight of $\left(n,k\right)$ landing at $M$ &\ref{Rest Function}\\
R\left(n,k,K\right) & the running total the rest function differences &\ref{Rest Identity}\\
\addlinespace
\multicolumn{3}{@{}l}{\textit{Ad hoc, local to their proofs}}\\
\Xi & a maximum over a condition set; also the set of upper bounds & as used\\
\Lambda,\;\Phi & the stage minima in the Completeness proof & \S5\\
\end{longtable}
 
\thanks{This work is dedicated to all the hurting people of the world. Those who wish to be seen, heard and acknowledged, this is the world seeing, hearing and acknowledging you.\\
With recognition of Royal Free, University College and Middlesex Hospitals Medical School Boat Club and its members.\\
I thank with love my parents.\\
I thank The Kellner Family Foundation for their support of my studies.\\
I would also like to thank all my professors of mathematics who have helped me all throughout my life, namely Sam Coskey and Michel Lenczner.\\
Last special thanks has to go to Sára Michalská, thank you for everything.}
\bibliography{refs}
\end{document}